\def\pdf{TF}
\numberwithin{equation}{section}
\theoremstyle{plain}
 \newtheorem{thm}{Theorem}[section]
 \newtheorem{cor}[thm]{Corollary}
 \newtheorem{lem}[thm]{Lemma}
 \newtheorem{prop}[thm]{Proposition}
\theoremstyle{definition}
 \newtheorem{defn}[thm]{Definition}
 \newtheorem{exmp}[thm]{Example}
 \newtheorem{conj}[thm]{Conjecture}
\theoremstyle{remark}
 \newtheorem{rem}[thm]{Remark}
\DeclareMathOperator{\ord}{ord}
\DeclareMathOperator{\RE}{Re}
\DeclareMathOperator{\IM}{Im}
\DeclareMathOperator{\rank}{rank}
\DeclareMathOperator{\Ridx}{Ridx}
\DeclareMathOperator{\idx}{idx}
\DeclareMathOperator{\End}{End}
\DeclareMathOperator{\Ad}{Ad}
\DeclareMathOperator{\Red}{R}
\DeclareMathOperator{\Adei}{Adei}
\DeclareMathOperator{\Ade}{Ade}
\DeclareMathOperator{\RAdei}{RAdei}
\DeclareMathOperator{\RAde}{RAde}
\DeclareMathOperator{\RAd}{RAd}
\DeclareMathOperator{\RAdeiL}{RAdeiL}
\DeclareMathOperator{\RAdeL}{RAdeL}
\DeclareMathOperator{\RAdL}{RAdL}
\DeclareMathOperator{\AdL}{AdL}
\DeclareMathOperator{\AdeL}{AdeL}
\DeclareMathOperator{\AdeiL}{AdeiL}
\DeclareMathOperator{\AdV}{AdV}
\DeclareMathOperator{\RAdV}{RAdV}
\DeclareMathOperator{\Lap}{L}
\DeclareMathOperator{\id}{id}
\DeclareMathOperator{\Pidx}{Pidx}
\DeclareMathOperator{\spc}{spc}
\DeclareMathOperator{\supp}{supp}
\DeclareMathOperator{\Top}{Top}
\DeclareMathOperator{\Trace}{Trace}
\DeclareMathOperator{\MC}{MC}
\DeclareMathOperator{\Mon}{Mon}
\DeclareMathOperator{\Arg}{Arg}
\DeclareMathOperator{\trace}{trace}
\DeclareMathOperator{\codim}{codim}
\def\p{\partial}
\title[Weyl algebra and Fuchsian differential equations]{Fractional calculus of Weyl algebra and Fuchsian differential equations}%
\author{Toshio Oshima}
\thanks{{\sl 2000 Mathematics Subject Classification.} 
Primary 34M35; Secondary  34M40, 34M15\\
\hspace*{12pt}Supported by Grant-in-Aid for Scientific Researches (A), 
No.\ 20244008, Japan Society of Promotion of Science\\
\hspace*{12pt}Toshio Oshima, Graduate School of Mathematical Sciences,
University of Tokyo, 7-3-1, Komaba, Meguro-ku, Tokyo 153-8914, Japan\\
\hspace*{12pt}{\sl e-mail address}\,:\,\texttt{oshima@ms.u-tokyo.ac.jp}}
\keywords{\textit{Fuchsian systems, middle convolution, 
hypergeometric functions, special functions, Weyl algebra}}
\begin{document}
\begin{abstract}
We give a unified interpretation of confluences, contiguity 
relations and Katz's middle convolutions for linear ordinary differential 
equations with polynomial coefficients and their generalization 
to partial differential equations.  
The integral representations and series expansions of their solutions 
are also within our interpretation.
As an application to Fuchsian differential equations on
the Riemann sphere, we construct a universal model of
Fuchsian differential equations with a given spectral type,
in particular, we construct single ordinary differential 
equations without apparent singularities corresponding to
the rigid local systems, whose existence was an open problem 
presented by Katz.
Furthermore we obtain an explicit solution to the 
connection problem for the rigid Fuchsian differential equations
and the necessary and sufficient condition for their irreducibility.
We give many examples calculated by our fractional calculus.
\end{abstract}
\maketitle
\tableofcontents
\section{Introduction}
Gauss hypergeometric functions and the functions in their family, 
such as Bessel functions, Whittaker functions, Hermite functions, 
Legendre polynomials and Jacobi polynomials etc.\ are the most fundamental 
and important special functions (cf.~\cite{EMO, Wa, WW}).  
Many formulas related to the family have been studied and clarified 
together with the theory of ordinary differential equations, the theory 
of holomorphic functions and relations with other fields.
They have been extensively used in various fields of mathematics,
mathematical physics and engineering.

Euler studied the hypergeometric equation
\begin{equation}\label{eq:G}
 x(1-x)y''+\bigl(c-(a+b+1)x\bigr)y'-aby=0
\end{equation}
with constant complex numbers $a$, $b$ and $c$ and he got the solution
\begin{equation}\label{eq:GaussSeries}
  F(a,b,c;x) := \sum_{k=0}^\infty\frac{a(a+1)\cdots(a+k-1)\cdot
   b(b+1)\cdots(b+k-1)}{c(c+1)\cdots(c+k-1)\cdot k!}x^k.
\end{equation}
\index{hypergeometric equation/function!Gauss}%
The series $F(a,b,c;x)$ is now called Gauss hypergeometric series
or function and Gauss proved the Gauss summation formula
\begin{equation}\label{eq:Gausssum}
 F(a,b,c;1) = \frac{\Gamma(c)\Gamma(c-a-b)}{\Gamma(c-a)\Gamma(c-b)}
\end{equation}
when the real part of $c$ is sufficiently large.
Then in the study of this function an important concept was introduced by 
Riemann. That is the Riemann scheme 
\begin{equation}\label{eq:RSGauss}
 \begin{Bmatrix}
  x = 0 & 1 & \infty\\
      0 & 0 & a&\!\!;\ x\\
      1-c&c-a-b & b
 \end{Bmatrix}
\end{equation}
which describes the property of singularities of the function
and Riemann proved that this property characterizes the Gauss 
hypergeometric function.

The equation \eqref{eq:G} is a second order Fuchsian differential 
equation on the Riemann sphere with the three singular points $\{0,1,\infty\}$.
One of the main purpose of this paper is to generalize these results to
the general Fuchsian differential equation on the Riemann sphere.
In fact, our study will be applied to the following three kinds of 
generalizations. 

One of the generalizations of the Gauss hypergeometric family is the
hypergeometric family containing the generalized 
hypergeometric function ${}_nF_{n-1}(\alpha,\beta;x)$  
or the solutions of Jordan-Pochhammer equations.
Some of their global structures are concretely described as in the
case of the Gauss hypergeometric family.

The second generalization is a class of Fuchsian differential 
equations such as the Heun equation which is of order 2 and has 4 singular 
points in the Riemann sphere.
In this case, there appear \textsl{accessory parameters}.
The global structure of the generic solution is quite transcendental
and the Painlev\'e equation which describes
the deformations preserving the monodromies of solutions of
the equations with an apparent singular point is interesting 
and has been quite deeply studied and now it becomes an important 
field of mathematics.

The third generalization is a class of hypergeometric functions 
of several variables, such as Appell's hypergeometric functions
(cf.~\cite{AK}), Gelfand's generalized hypergeometric functions 
(cf.~\cite{Ge}) and Heckman-Opdam's hypergeometric functions
(cf.~\cite{HO}).
The author and Shimeno \cite{OS} studied the ordinary differential
equations satisfied by the restrictions of Heckman-Opdam's 
hypergeometric function on singular lines through
the origin and we found that some of the equations belong to the even 
family classified by Simpson \cite{Si}, which is now called a class of
\textsl{rigid} differential equations and belongs to the first 
generalization in the above.

The author's original motivation related to the study in this note
is a generalization of Gauss summation formula, namely, to calculate 
a connection coefficient for a solution of this even family, 
which is solved in \S\ref{sec:C} as a direct 
consequence of the general formula \eqref{eq:Icon} 
of certain connection coefficients described in Theorem~\ref{thm:c}.
This paper is the author's first step to a unifying approach
for these generalizations and the recent development
in general Fuchsian differential equations described below
with the aim of getting concrete and computable results.
In this paper, we will avoid intrinsic arguments and results
if possible and hence the most results can be implemented in
computer programs.
Moreover the arguments in this paper will be 
understood without referring to other papers.


Rigid differential equations are the differential equations which are 
uniquely determined by the data describing the local structure of 
their solutions at the singular points.
From the point of view of the monodromy of the solutions, the rigid
systems are the local systems which are uniquely determined by local
monodromies around the singular points and Katz \cite{Kz} 
studied rigid local systems by defining and using the operations called 
\textsl{middle convolutions} and \textsl{additions}, which enables us to construct and analyze 
all the rigid local systems.
In fact, he proved that any irreducible rigid local system is 
transformed into a trivial equation $\frac{du}{dz}=0$ by successive application
of the operations.  In another word, any irreducible rigid local system
is obtained by successive applications
of the operations to the trivial equation because the operations are 
invertible.

The arguments there are rather intrinsic by using perverse sheaves.
Dettweiler-Reiter \cite{DR, DR2} interprets Katz's operations on 
monodromy generators and those on the systems of Fuchsian differential 
equations of Schlesinger canonical form
\begin{equation}
  \frac{du}{dx} = \sum_{j=1}^p\frac{A_j}{x-c_j}u
\end{equation}
with constant square matrices $A_1,\dots,A_p$.

Here $A_j$ are called the residue matrices of the system at the singular points
$x=c_j$, which describe the local structure of the solutions.
For example, the eigenvalues of the monodromy generator at $x=c_j$
are $e^{2\pi\sqrt{-1}\lambda_1},\dots, e^{2\pi\sqrt{-1}\lambda_n}$,
where $\lambda_1,\dots,\lambda_n$ are eigenvalues of $A_j$.
The residue matrix of the system at $x=\infty$ equals $A_0:=-(A_1+\cdots+A_p)$.
These operations are useful also for non-rigid Fuchsian systems.

\index{Deligne-Simpson problem}
Related to the Riemann-Hilbert problem, there is a natural problem to
determine the condition on matrices $B_0,B_1,\dots,B_p$ of 
Jordan canonical form such that there exists an irreducible system 
of Schlesinger canonical form with the residue matrices $A_j$ conjugate to 
$B_j$ for $j=0,\ldots,p$, respectively.
An obvious necessary condition is the equality $\sum_{j=0}^p\Trace B_j=0$.
A similar problem for monodromy generators, namely its multiplicative version, 
is equally formulated.
The latter is called a \textsl{mutiplicative} version and the former is called
an \textsl{additive} version.
Kostov \cite{Ko, Ko2} called them Deligne-Simpson problems and 
gave an answer under a certain genericity condition.
We note that the addition is a kind of a gauge transformation 
\[u(x)\mapsto (x-c)^\lambda u(x)\]
and the middle convolution is essentially an Euler transformation or a transformation by 
an Riemann-Liouville integral 
\[u(x)\mapsto \frac1{\Gamma(\mu)}\int_c^x u(t)(x-t)^{\mu-1}dt\]
or a fractional derivation.

Crawley-Boevey \cite{CB} found a relation between the Deligne-Simpson problem
and representations of certain quivers and gave an explicit
answer for the additive Deligne-Simpson problem in terms of a 
Kac-Moody root system.

Yokoyama \cite{Yo2} defined operations called extensions and restrictions on
the systems of Fuchsian ordinary differential equations 
of Okubo normal form
\begin{equation}
  \bigl(x - T\bigr)\frac{du}{dx} = Au.
\end{equation}
Here $A$ and $T$ are constant square matrices such that $T$ are 
diagonalizable.  
He proved that the irreducible rigid system of Okubo 
normal form is transformed into a trivial equation $\frac{du}{dz}=0$ by 
successive applications of his operations if the characteristic exponents 
are generic.

The relation between Katz's operations and Yokoyama's operations 
is clarified by \cite{O4} and it is proved there that their algorithms 
of reductions of Fuchsian systems are equivalent and so are those of 
the constructions of the systems.

These operations are quite powerful and
in fact if we fix the number of accessory parameters of the systems, 
they are connected into a finite number of fundamental systems
(cf.~\cite[Proposition~8.1 and Theorem~10.2]{O3} and 
Proposition~\ref{prop:Bineq}), which is a generalization of the fact 
that the irreducible rigid Fuchsian system is connected to the trivial equation.

Hence it is quite useful to understand how does the property of the 
solutions transform under these operations.
In this point of view, the system of the equations,  
the integral representation and the monodromy of the solutions
are studied by \cite{DR, DR2, HY} in the case of the Schlesinger canonical 
form.  Moreover the equation describing the deformation preserving 
the monodromy of the solutions doesn't change, which is proved
by \cite{HF}. 
In the case of the Okubo normal form the corresponding 
transformation of the systems, that of the integral representations of 
the solutions and that of their connection 
coefficients are studied by \cite{Yo2}, \cite{Ha} and \cite{Yo3}, 
respectively.
These operation are explicit and hence it will be expected 
to have explicit results in general Fuchsian systems.

To avoid the specific forms of the differential equations, such as
Schlesinger canonical form or Okubo normal form and moreover to
make explicit calculations easier under the transformations, we 
introduce certain operations on differential operators with 
polynomial coefficients in \S\ref{sec:frac}.
The operations in \S\ref{sec:frac} enables us 
to equally handle equations with irregular singularities or systems
of equations with several variables.

The ring of differential operators with polynomial coefficients
is called a \textsl{Weyl algebra} and denoted by $W[x]$ in this paper.
The endomorphisms of $W[x]$ do not give a wide class of operations 
and Dixmier \cite{Dix} conjectured that they are the automorphisms of 
$W[x]$.
But when we localize coordinate $x$, namely in the ring $W(x)$ 
of differential operators with coefficients in rational functions, 
we have a wider class of operations.  

For example, the transformation of the pair $(x, \frac{d}{dx})$ 
into $(x, \frac{d}{dx} - h(x))$ with any rational function $h(x)$ 
induces an automorphism of
$W(x)$.  
This operation is called a \textsl{gauge transformation}.
The addition in \cite{DR, DR2} corresponds to this operation
with $h(x) = \frac\lambda{x-c}$ and $\lambda,\, c\in\mathbb C$, which
is denoted by $\Ad\bigl((x-c)^\lambda\bigr)$.

The transformation of the pair $(x,\frac{d}{dx})$ into $(-\frac d{dx},x)$ 
defines an important automorphism $\Lap$ of $W[x]$, which
is called a \textsl{Laplace transformation}.
In some cases the Fourier transformation is introduced and it is 
a similar transformation. 
Hence we may also localize $\frac {d}{dx}$ and introduce the operators
such as $\lambda(\frac{d}{dx} - c)^{-1}$ and then the transformation of
the pair $(x,\frac{d}{dx})$ into 
$(x-\lambda(\frac{d}{dx})^{-1}, \frac{d}{dx})$
defines an endomorphism in this localized ring, which
corresponds to the middle convolution or an Euler transformation or
a fractional derivation and is denoted by $\Ad(\p^{-\lambda})$ or $mc_\lambda$.
But the simultaneous localizations of $x$ and $\frac{d}{dx}$ produce 
the operator
$(\frac{d}{dx})^{-1}\circ x^{-1}=\sum_{k=0}^\infty 
k!{x^{-k-1}}(\frac{d}{dx})^{-k-1}$ which is not algebraic in our sense
and hence we will not introduce such a microdifferential operator
in this paper and we will not allow the simultaneous localizations
of the operators.

Since our equation $Pu=0$ studied in this paper is defined on the 
Riemann sphere, we may replace the operator $P$ in $W(x)$ 
by a suitable representative $\tilde P\in \mathbb C(x)P\cap W[x]$ with
the minimal degree with respect to $x$ and we put $\Red P=\tilde P$.
Combining these operations including this replacement gives
a wider class of operations on the Weyl algebra $W[x]$.
In particular, the operator corresponding to the addition is 
$\RAd\bigl((x-c)^\lambda\bigr)$ and that corresponding to the middle 
convolution is $\RAd(\p^{-\mu})$ in our notation.
The operations introduced in \S\ref{sec:frac} correspond to certain 
transformations of solutions of the differential equations defined by
elements of Weyl algebra and we call the calculation using these 
operations \textsl{fractional calculus of Weyl algebra}.

To understand our operations, we show that, in Example~\ref{ex:midconv},
our operations enables us to construct Gauss hypergeometric equations, 
the equations satisfied by airy functions and Jordan-Pochhammer 
equations and to give integral representations of their solutions.

In this paper we mainly study ordinary differential equations and since
any ordinary differential equation is \textsl{cyclic}, 
namely, it is isomorphic to a single differential operator
$Pu=0$ (cf.~\S\ref{sec:ODE}),  we study a single ordinary 
differential equation $Pu=0$ with $P\in W[x]$.
In many cases, we are interested in a specific function $u(x)$ 
which is characterized by differential equations
and if $u(x)$ is a function with the single variable $x$, the 
differential operators $P\in W(x)$ 
satisfying $Pu(x)=0$ are generated by a single 
operator and hence it is naturally a single differential 
equation. 
A relation between our fractional calculus and Katz's middle convolution
is briefly explained in \S\ref{sec:DR}.

In \S\ref{sec:reg} we review fundamental results on Fuchsian ordinary 
differential equations.
Our Weyl algebra $W[x]$ is allowed to have some parameters $\xi_1,\ldots$ 
and in this case the algebra is denoted by $W[x;\xi]$.
The position of singular points of the equations and 
the characteristic exponents there are usually the parameters
and the analytic continuation of the parameters
naturally leads the confluence of \text{additions}
(cf.~\S\ref{sec:VAd}).


Combining this with our construction of equations
leads the confluence of the equations.  In the case of
Jordan-Pochhammer equations, we have versal Jordan-Pochhammer 
equations.  
In the case of Gauss hypergeometric equation, 
we have a unified expression of Gauss hypergeometric equation,
Kummer equation and Hermite-Weber equation and get 
a unified integral representation of their solutions
(cf.~Example~\ref{ex:VGHG}).
After this section in this paper, we mainly study
single Fuchsian differential equations on the Riemann sphere.
Equations with irregular singularities will be discussed elsewhere.

In \S\ref{sec:series} and \S\ref{sec:contig} we examine the transformation
of series expansions and contiguity relations of the solutions
of Fuchsian differential equations under our operations.

The Fuchsian equation satisfied by the generalized hypergeometric 
series
\index{hypergeometric equation/function!generalized}
\begin{equation}\label{eq:IGHG}\index{00Fn@${}_nF_{n-1}$}%
\index{000gammak@$(\gamma)_k,\ (\mu)_\nu$}
 \begin{gathered}
 {}_nF_{n-1}(\alpha_1,\dots,\alpha_n,\beta_1,\dots,\beta_{n-1};x)
  =\sum_{k=0}^\infty
  \frac{(\alpha_1)_k\dots(\alpha_n)_k}{
  (\beta_1)_k\dots(\beta_{n-1})_{n-1}k!}x^k\\
 \text{with\quad}(\gamma)_k :=\gamma(\gamma+1)\cdots(\gamma+k-1)
 \end{gathered}
\end{equation}
is characterized by the fact that it has $(n-1)$-dimensional 
local holomorphic solutions at $x=1$, which is more precisely as 
follows. 
The set of characteristic exponents of the equation at $x=1$ 
equals $\{0,1,\dots,n-1,-\beta_n\}$ with 
$\alpha_1+\dots+\alpha_n=\beta_1+\dots+\beta_n$
and those at $0$ and $\infty$ are
$\{1-\beta_1,\dots,1-\beta_{n-1},0\}$ and $\{\alpha_1,\dots,\alpha_n\}$, 
respectively.
Then if $\alpha_i$ and $\beta_j$ are generic, the Fuchsian differential
equation  $Pu=0$ is uniquely characterized by the fact that it has 
the above set of  characteristic exponents at each singular point 
$0$ or $1$ or $\infty$ and the monodromy generator around the point is 
\textsl{semisimple}, namely, the local solution around the singular 
point has no logarithmic term.
We express this condition by the (generalized) Riemann scheme
\begin{equation}\label{eq:GRSGHG}
 \begin{Bmatrix}
    x = 0 & 1 & \infty\\
     1-\beta_1    & [0]_{(n-1)}   & \alpha_1\\
     \vdots       &     & \vdots &;\,x\\
     1-\beta_{n-1}&     & \alpha_{n-1}\\
     0 &   -\beta_n    & \alpha_n
    \end{Bmatrix},\quad
 [\lambda]_{(k)}:=
 \begin{pmatrix}
 \lambda\\
 \lambda+1\\
 \vdots\\
 \lambda+k-1
 \end{pmatrix}.
\end{equation}
In particular, when $n=3$, the (generalized) Riemann scheme is
\[
 \begin{Bmatrix}
 x = 0 & 1 & \infty\\
 \begin{matrix}
 1 - \beta_1\\
 1 - \beta_2
 \end{matrix} &
 \begin{pmatrix}
 0\\
 1
 \end{pmatrix} &
 \begin{matrix}
 \alpha_1\\
 \alpha_2
 \end{matrix} &
 \!\!\!\begin{matrix}
 \\ &\!\!;\, x
 \end{matrix}\\
 0 & -\beta_3 & \alpha_3
\end{Bmatrix}.
\]
The corresponding usual Riemann scheme is obtained from the generalized
Riemann scheme by eliminating $\Big($ and $\Big)$.
Here $[0]_{(n-1)}$ in the above Riemann scheme means 
the characteristic exponents $0,1,\dots,n-2$ but it also
indicates that the corresponding monodromy generator is semisimple 
in spite of integer differences of the characteristic exponents.  
Thus the set of (generalized) characteristic exponents 
$\{[0]_{(n-1)},-\beta_n\}$ at $x=1$ is defined.
Here we remark that the coefficients of the Fuchsian differential
operator $P$ which is uniquely determined by the generalized Riemann scheme
for generic $\alpha_i$ and $\beta_j$ are polynomial functions of 
$\alpha_i$ and $\beta_j$ and hence $P$ is naturally defined for any 
$\alpha_i$ and $\beta_j$ as is given by \eqref{eq:GHP}.
Similarly the Riemann scheme of Jordan-Pochhammer equation 
of order $p$ is
\begin{equation}
\begin{gathered}
 \begin{Bmatrix}
 x = c_0  & c_1 &\cdots &c_{p-1} & \infty\\
    [0]_{(p-1)} & [0]_{(p-1)} & \cdots&[0]_{(p-1)}& [\lambda'_p]_{(p-1)}
     &;\,x\\
    \lambda_0 & \lambda_1 & \cdots &\lambda_{p-1}& \lambda_p
 \end{Bmatrix},\\
 \lambda_0+\cdots+\lambda_{p-1}+\lambda_p+(p-1)\lambda'_p=p-1.
\end{gathered}
\end{equation}
The last equality in the above is called \textsl{Fuchs relation}.

In \S\ref{sec:index} 
we define the set of generalized characteristic exponents at a
regular singular point of a differential equation $Pu=0$.
In fact, when the order of $P$ is $n$, it is the set 
$\{[\lambda_1]_{(m_1)},\dots,[\lambda_k]_{(m_k)}\}$
with a partition $n=m_1+\cdots+m_k$ and complex numbers 
$\lambda_1,\dots,\lambda_k$.
It means that the set of characteristic exponents at the point
equals $\{\lambda_j+\nu\,;\,\nu=0,\dots,m_j-1\text{ and }j=1,\dots,k\}$
and the corresponding monodromy generator is semisimple if
$\lambda_i-\lambda_j\not\in\mathbb Z$ for $1\le i<j\le k$.
In \S\ref{sec:Gexp} we define the set of generalized characteristic exponents
without the assumption $\lambda_i-\lambda_j\not\in\mathbb Z$ for 
$1\le i<j\le k$.  Here we only remark that 
when $\lambda_i=\lambda_1$ for $i=1,\dots,k$, it is
also characterized by the fact that the Jordan normal form of
the monodromy generator is defined by the dual partition of
$n=m_1+\cdots+m_k$ together with the usual characteristic exponents.

Thus for a single Fuchsian differential equation $Pu=0$ on the Riemann sphere
which has $p+1$ regular singular points $c_0,\dots,c_p$, 
we define a (generalized) Riemann scheme
\begin{equation}\label{eq:IGRS}
  \begin{Bmatrix}
   x = c_0 & c_1 & \cdots & c_p\\
  [\lambda_{0,1}]_{(m_{0,1})} & [\lambda_{1,1}]_{(m_{1,1})}&\cdots
    &[\lambda_{p,1}]_{(m_{p,1})}\\
  \vdots & \vdots & \vdots & \vdots&;\,x\\
    [\lambda_{0,n_0}]_{(m_{0,n_0})} & [\lambda_{1,n_1}]_{(m_{1,n_1})}&\cdots
    &[\lambda_{p,n_p}]_{(m_{p,n_p})}
  \end{Bmatrix}.
\end{equation}
Here $n=m_{j,1}+\cdots+m_{j,n_j}$ for $j=0,\dots,p$ and
$n$ is the order of $P$ and $\lambda_{j,\nu}\in\mathbb C$.
The $(p+1)$-tuple of partitions of $n$, which is denoted
by $\mathbf m=\bigl(m_{j,\nu}\bigr)_{\substack{j=0,\dots,p\\\nu=1,\dots,n_j}}$,
is called the \textsl{spectral type} of $P$ and the Riemann scheme 
\eqref{eq:IGRS}.  
Here we note that the Riemann scheme \eqref{eq:IGRS} should always satisfy 
the Fuchs relation
\begin{align}\label{eq:IFuchs}
 |\{\lambda_{\mathbf m}\}|&:=\sum_{j=0}^p\sum_{\nu=1}^{n_p}
  m_{j,\nu}\lambda_{j,\nu}- \ord\mathbf m + \tfrac12\idx\mathbf m
 =0,\\
 \idx\mathbf m&:=\sum_{j=0}^p\sum_{\nu=1}^{n_p}m_{j,\nu}^2
 -(p-1)\ord\mathbf m.
\end{align}
Here $\idx\mathbf m$ coincides with the \textsl{index of rigidity}
introduced by \cite{Kz}.

In \S\ref{sec:index}, after introducing certain
representatives of conjugacy classes of matrices and 
some notation and concepts related to tuples of partitions,
we define that the tuple $\mathbf m$ is \textsl{realizable} if 
there exists a Fuchsian differential operator $P$ with the Riemann 
scheme \eqref{eq:IGRS} for generic complex numbers $\lambda_{j,\nu}$ 
under the condition \eqref{eq:IFuchs}.  
Furthermore, if there exists such an operator $P$
so that $Pu=0$ is irreducible, we define that $\mathbf m$ is 
\textsl{irreducibly realizable}.

Lastly in \S\ref{sec:index}, we examine the generalized Riemann schemes
of the \textsl{product} of Fuchsian differential operators and the \textsl{dual} 
operators.

In \S\ref{sec:reduction} 
we examine the transformations of the Riemann scheme
under our operations corresponding to the additions and 
the middle convolutions, which define transformations within Fuchsian 
differential operators.
The operations induce transformations of spectral types of Fuchsian 
differential operators, which keep the indices of 
rigidity invariant but change the orders in general.
Looking at the spectral types, 
we see that the combinatorial aspect of the reduction of Fuchsian 
differential operators is parallel to that of systems of Schlesinger 
canonical form.
In this section, 
we also examine the combination of these transformation and the \textsl{fractional
linear transformations.}

As our interpretation of Deligne-Simpson problem introduced by Kostov,
we examine the condition for the existence of a given Riemann scheme
in \S\ref{sec:DS}.
We determine the conditions on $\mathbf m$ such that $\mathbf m$ is realizable
and irreducibly realizable, respectively, 
in Theorem~\ref{thm:univmodel}.
Moreover if $\mathbf m$ is realizable, Theorem~\ref{thm:univmodel} gives
an explicit construction of the \textsl{universal Fuchsian
differential operator} 
\begin{equation}
\begin{gathered}
 P_{\mathbf m}
 =\Bigl(\prod_{j=1}^p(x-c_j)^n\Bigr)\frac{d^n}{dx^n}
  +\sum_{k=0}^{n-1} a_k(x,\lambda, g)\frac{d^k}{dx^k},\\
 \lambda=\bigl(\lambda_{j,\nu}\bigr)
 _{\substack{j=0,\dots,p\\ \nu=1,\dots,n_j}},\quad
 g=(g_1,\dots,g_N)\in\mathbb C^N
\end{gathered}
\end{equation}
with the Riemann scheme \eqref{eq:IGRS},
which has the following properties.

For fixed complex numbers $\lambda_{j,\nu}$ satisfying \eqref{eq:IFuchs}
the operator with the Riemann scheme \eqref{eq:IGRS} satisfying 
$c_0=\infty$ equals $P_{\mathbf m}$ for a suitable $g\in\mathbb C^N$ up 
to a left multiplication by an element of $\mathbb C(x)$ if
$\lambda_{j,\nu}$ are ``generic", namely,
\begin{equation}
(\Lambda(\lambda)|\alpha)\notin
\bigl\{-1,-2,\dots,1-(\alpha|\alpha_{\mathbf m})\bigr\}\text{  for any }
\alpha\in\Delta(\mathbf m)
\text{ with } (\alpha|\alpha_{\mathbf m})>1
\end{equation}
under the notation used in \eqref{eq:KacIrr},
or $\mathbf m$ is fundamental or \textsl{simply reducible} 
(cf.~Definition \ref{def:fund} and \S\ref{sec:simpred}), etc.
Here $g_1,\dots,g_N$ are called \textsl{accessory parameters}
and if $\mathbf m$ is irreducibly realizable, $N=1-\frac12\idx\mathbf m$.
In particular, if there is an irreducible and \textsl{locally non-degenerate} 
(cf.~Definition~\ref{def:locnondeg}) operator $P$  with the Riemann scheme 
\eqref{eq:IGRS}, then $\lambda_{j,\nu}$ are ``generic".

The coefficients $a_k(x,\lambda, g)$ of the differential operator 
$P_{\mathbf m}$ are polynomials of the variables 
$x$, $\lambda$ and $g$.
The coefficients satisfy $\frac{\p^2 a_k}{\p g_\nu^2}=0$ and 
furthermore $g_\nu$ can be equal to suitable $a_{i_\nu,j_\nu}$ under 
the expression
$
 P_{\mathbf m}=
 \sum a_{i,j}x^i\frac{d^j}{dx^j}
$
and the pairs $(i_\nu,j_\nu)$ for $\nu=1,\dots,N$ are explicitly 
given in the theorem.

The universal operator $P_\mathbf m$ is a classically well-known
operator in the case of Gauss hypergeometric
equation, Jordan-Pochhammer equation or Heun's equation etc.\ and
the theorem assures the existence of such a good operator for
any realizable tuple $\mathbf m$.
We define the tuple $\mathbf m$ is \textsl{rigid}
if $\mathbf m $ is irreducibly realizable and 
moreover $N=0$, namely, $P_{\mathbf m}$ is free from accessory parameters.

In particular, the theorem gives the affirmative answer for the 
following question.
Katz asked a question in the introduction in the book \cite{Kz}
whether a rigid local system is realized by a single Fuchsian 
differential equation $Pu=0$ without apparent singularities
(cf.~Corollary~\ref{cor:irred} iii)).

It is a natural problem to examine the Fuchsian differential equation 
$P_{\mathbf m}u=0$ with an irreducibly realizable spectral type $\mathbf m$ 
which cannot be reduced to an equation with a lower order by additions
and middle convolutions.  The tuple $\mathbf m$ with this condition is
called \textsl{fundamental}.

The equation $P_{\mathbf m}u=0$ with an irreducibly realizable spectral type 
$\mathbf m$ can be transformed by the operation $\p_{max}$ 
(cf.~Definition~\ref{def:pell}) into a Fuchsian equation 
$P_{\mathbf m'}v=0$ with a fundamental spectral type $\mathbf m'$.
Namely, there exists a non-negative integer $K$ such that 
$P_{\mathbf m'}=\p_{\max}^KP_{\mathbf m}$ 
and we define $f\mathbf m:=\mathbf m'$.
Then it turns out that a realizable tuple $\mathbf m$ is rigid if and only if
the order of $f\mathbf m$, which is the order of $P_{f\mathbf m}$ by definition, 
equals 1.  Note that the operator $\p_{\max}$ is essentially a product of 
suitable operators $\RAd\bigl((x-c_j)^{\lambda_j}\bigr)$ and
$\RAd\bigl(\p^{-\mu}\bigr)$.

In this paper we study the transformations of several properties 
of the Fuchsian differential equation $P_{\mathbf m}u=0$ under the 
additions and middle convolutions.  If they are understood well, 
the study of the properties are reduced to those of the equation 
$P_{f\mathbf m}v=0$, which are of order 1 if $\mathbf m$ is rigid.
We note that there are many rigid spectral types $\mathbf m$ and for example
there are 187 different rigid spectral types $\mathbf m$ with 
$\ord\mathbf m\le 8$ as are given in \S\ref{sec:rigidEx}.

As in the case of the systems of Schlesinger canonical form
studied by \cite{CB}, 
the combinatorial aspect of transformations of the spectral type 
$\mathbf m$ of the Fuchsian differential operator $P$ induced from
our fractional operations is
described in \S\ref{sec:KacM} by using the terminology of a Kac-Moody
root system $(\Pi,W_{\!\infty})$.
Here $\Pi$ is the fundamental system of a Kac-Moody root system
with the following star-shaped Dynkin diagram and $W_{\!\infty}$ is the Weyl group
generated by the \textsl{simple reflections} $s_\alpha$ for 
$\alpha\in\Pi$.  The elements of $\Pi$ are called \textsl{simple roots}.

Associated to a tuple $\mathbf m$ of $(p+1)$ partitions of a positive integer $n$,
we define an element $\alpha_{\mathbf m}$ in the positive root 
lattice (cf.~\S\ref{sec:KM}, \eqref{eq:PIKac}):
\begin{equation}
\begin{split} \Pi&:=\{\alpha_0,\,\alpha_{j,\nu}\,;\,
 j=0,1,\ldots,\ \nu=1,2,\ldots\},\\
 W_{\!\infty}&:=\langle s_\alpha\,;\,\alpha\in\Pi\rangle,\\
\alpha_{\mathbf m}&:=n\alpha_0+\sum_{j=0}^p\sum_{\nu=1}^{n_j-1}
\Bigl(\sum_{i=\nu+1}^{n_j}m_{j,i}\Bigr)\alpha_{j,\nu},\\
(\alpha_{\mathbf m}&|\alpha_{\mathbf m})=\idx\mathbf m,
\end{split}\qquad
\begin{xy}
\ar@{-}               *++!D{\text{$\alpha_0$}}  *\cir<4pt>{}="O";
             (10,0)   *+!L!D{\text{$\alpha_{1,1}$}} *\cir<4pt>{}="A",
\ar@{-} "A"; (20,0)   *+!L!D{\text{$\alpha_{1,2}$}} *\cir<4pt>{}="B",
\ar@{-} "B"; (30,0)   *{\cdots}, 
\ar@{-} "O"; (10,-7)  *+!L!D{\text{$\alpha_{2,1}$}} *\cir<4pt>{}="C",
\ar@{-} "C"; (20,-7)  *+!L!D{\text{$\alpha_{2,2}$}} *\cir<4pt>{}="E",
\ar@{-} "E"; (30,-7)  *{\cdots}
\ar@{-} "O"; (10,8)   *+!L!D{\text{$\alpha_{0,1}$}} *\cir<4pt>{}="D",
\ar@{-} "D"; (20,8)   *+!L!D{\text{$\alpha_{0,2}$}} *\cir<4pt>{}="F",
\ar@{-} "F"; (30,8)   *{\cdots}
\ar@{-} "O"; (10,-13) *+!L!D{\text{$\alpha_{3,1}$}} *\cir<4pt>{}="G",
\ar@{-} "G"; (20,-13) *+!L!D{\text{$\alpha_{3,2}$}} *\cir<4pt>{}="H",
\ar@{-} "H"; (30,-13) *{\cdots},
\ar@{-} "O"; (7, -13),
\ar@{-} "O"; (4, -13),
\end{xy}
\end{equation}
We can define a fractional operation on $P_{\mathbf m}$ 
which is compatible with the action of $w\in W_{\!\infty}$ on the root lattice
(cf.~Theorem~\ref{thm:KatzKac}):
\begin{equation}\label{eq:mcKacdg}
 \begin{matrix}
\bigl\{P_{\mathbf m}:\,\text{Fuchsian differential operators}\bigr\}
  &\rightarrow
   &\bigl\{(\Lambda(\lambda),\alpha_{\mathbf m})\,;\,\alpha_{\mathbf m}\in\overline\Delta_+\bigr\}
 \\
 \\
  \downarrow \text{fractional operations}
  &\circlearrowright&\quad \downarrow {W_{\!\infty}\text{-action},\ 
  +\tau\Lambda^0_{0,j}}\\
 \\
\bigl\{P_{\mathbf m}:\,\text{Fuchsian differential operators}\bigr\}
   & \rightarrow
   & \bigl\{(\Lambda(\lambda), \alpha_{\mathbf m})\,;\,\alpha_{\mathbf m}
 \in\overline\Delta_+\bigr\}.
 \end{matrix}
\end{equation}
Here $\tau\in\mathbb C$ and
\begin{equation}
 \begin{split}
 \Lambda^0&:=\alpha_0+\sum_{\nu=1}^\infty(1+\nu)\alpha_{0,\nu}+
   \sum_{j=1}^p\sum_{\nu=1}^\infty(1-\nu)\alpha_{j,\nu},\\
 \Lambda^0_{i,j}&:=\sum_{\nu=1}^\infty \nu(\alpha_{i,\nu}-\alpha_{j,\nu}),\\
 \Lambda_0&:=\frac12\alpha_0
  +\frac12\sum_{j=0}^p\sum_{\nu=1}^\infty(1-\nu)\alpha_{j,\nu},\\
 \Lambda(\lambda)&:=-\Lambda_0-\sum_{j=0}^p\sum_{\nu=1}^\infty
  \Bigl(\sum_{i=1}^\nu\lambda_{j,i}
  \Bigr)\alpha_{j,\nu}
 \end{split}
\end{equation}
and these linear combinations of infinite simple roots are 
identified with each other if their differences are in $\mathbb C\Lambda^0$.
We note that 
\begin{equation}
 |\{\lambda_{\mathbf m}\}|
=(\Lambda(\lambda)+\tfrac12\alpha_{\mathbf m}|\alpha_{\mathbf m}).
\end{equation}

The realizable tuples exactly correspond to the elements of the set 
$\overline\Delta_+$ of positive integer multiples of the positive roots 
of the Kac-Moody root system whose support contains $\alpha_0$
and the rigid tuples exactly correspond to the positive real 
roots whose support contain $\alpha_0$.
For an element $w\in W_{\!\infty}$ and an element $\alpha\in\overline\Delta_+$ 
we do not consider $w\alpha$ in the commutative diagram \eqref{eq:mcKacdg}
when $w\alpha\notin \overline\Delta_+$.

Hence the fact that any irreducible rigid Fuchsian equation $P_{\mathbf m}u=0$ 
is transformed into the trivial equation $\frac{dv}{dx}=0$ by our 
invertible fractional operations corresponds to the fact that there 
exists $w\in W_{\!\infty}$ such that $w\alpha_{\mathbf m}=\alpha_0$ because 
$\alpha_{\mathbf m}$ is a positive real root. 
The \textsl{monotone} fundamental tuples of partitions correspond to
$\alpha_0$ or the positive imaginary roots $\alpha$ in the closed 
negative Weyl chamber which are indivisible or satisfies $(\alpha|\alpha)<0$.
A tuple of partitions $\mathbf m=\bigl(m_{j,\nu}\bigr)_{\substack{j=0,\dots,p\\\nu=1,\dots,n_j}}$ is said to be monotone if
$m_{j,1}\ge m_{j,2}\ge \cdots\ge m_{j,n_j}$ for $j=0,\dots,p$.
For example, we prove the exact estimate
\begin{equation}
 \ord\mathbf m\le 3|\idx\mathbf m|+6
\end{equation}
for any fundamental tuple $\mathbf m$ in \S\ref{sec:basic}.
Since we may assume
\begin{equation}
 p\le\tfrac12|\idx\mathbf m|+3
\end{equation}
for a fundamental tuple $\mathbf m$, there exist only finite number
of monotone fundamental tuples with a fixed index of rigidity.
We list the fundamental tuples of the index of rigidity $0$ or $-2$ in 
Remark~\ref{rem:bas0} or Proposition~\ref{prop:bas2}, respectively.

Our results in \S\ref{sec:series}, \S\ref{sec:reduction} and 
\S\ref{sec:DS} give an integral expression and a
power series expression of a local solution
of the universal equation $P_{\mathbf m}u=0$ corresponding to the
characteristic exponent whose multiplicity is free in the local 
monodromy.  These expressions are in \S\ref{sec:exp}.

In \S\ref{sec:MM} we review the monodromy of solutions of
a Fuchsian differential equation from the view point of our operations.
The theorems in this section are given by \cite{DR, DR2, Kz, Ko}.
In \S\ref{sec:Scott} we review Scott's lemma \cite{Sc} and related results
with their proofs, which are elementary but important for the study of 
the irreducibility of the monodromy.

In \S\ref{sec:reddirect} we examine the condition for
the decomposition $P_{\mathbf m}=P_{\mathbf m'}P_{\mathbf m''}$
of universal operators with or without fixing the exponents
$\{\lambda_{j,\nu}\}$, 
which implies the reducibility of the 
equation $P_{\mathbf m}u=0$.
In \S\ref{sec:redred} we study the value of spectral parameters which 
makes the equation reducible and obtain Theorem~\ref{thm:irred}.
In particular we have a necessary and sufficient condition on characteristic
exponents so that the monodromy of the solutions of the equation 
$P_{\mathbf m}u=0$ with a rigid spectral type $\mathbf m$ is irreducible,
which is given in Corollary \ref{cor:irred} or Theorem \ref{thm:irrKac}.
When $m_{j,1}\ge m_{j,2}\ge \cdots$ for any $j\ge0$, the condition equals
\begin{equation}\label{eq:KacIrr}
  (\Lambda(\lambda)|\alpha)\notin\mathbb Z\quad(\forall\alpha\in\Delta(\mathbf m)).
\end{equation}
Here $\Delta(\mathbf m)$ denotes the totality of real positive roots $\alpha$
such that $w_{\mathbf m}\alpha$ are negative
and $w_{\mathbf m}$ is the element of $W_{\!\infty}$ with the minimal length 
so that $\alpha_0=w_{\mathbf m}\alpha_{\mathbf m}$ 
(cf.~Definition~\ref{def:wm} and Proposition~\ref{prop:wm} v)).
The number of elements of $\Delta(\mathbf m)$ equals the length of 
$w_{\mathbf m}$, which is the minimal length of the expressions of 
$w_{\mathbf m}$ as products of simple reflections $s_\alpha$ with 
$\alpha\in\Pi$.

In \S\ref{sec:shift} we construct shift operators between rigid Fuchsian 
differential equations with the same spectral type such that the
differences of the corresponding characteristic exponents are integers.
Theorem~\ref{thm:shifm1} gives a recurrence relation of certain
solutions of the rigid Fuchsian equations, which is a generalization of 
the formula
\begin{equation}
 c\bigl(F(a,b+1,c;x) - F(a,b,c;x)\bigr) = axF(a+1,b+1,c+1;x)
\end{equation}
and moreover gives relations between the universal operators
and the shift operators
in Theorem~\ref{thm:shifm1} and Theorem~\ref{thm:sftUniv}.
In particular, Thorem~\ref{thm:sftUniv} gives a condition which assures that a 
universal operator is this shift operator.

The shift operators are useful for the study of Fuchsian differential equations
when they are reducible because of special values of the characteristic 
exponents.
Theorem~\ref{thm:isom} give a necessary condition and a sufficient 
condition so that the shift operator is bijective.
In many cases we get a necessary and sufficient condition by this theorem.
As an application of a shift operator we examine polynomial solutions
of a rigid Fuchsian differential equation of Okubo type in \S\ref{sec:polyn}.

In \S\ref{sec:C1} we study a connection problem of 
the Fuchsian differential equation  $P_{\mathbf m}u=0$.
First we give Lemma~\ref{lem:conn} which describes
the transformation of a connection coefficient under an addition and
a middle convolution.
In particular, for the equation $P_{\mathbf m}u=0$
satisfying $m_{0,n_0}=m_{1,n_1}=1$, Theorem~\ref{thm:GC} says that
the connection coefficient 
$c(c_0\!:\!\lambda_{0,n_0}\!\rightsquigarrow\!c_1\!:\!\lambda_{1,n_1})$ from 
the local solution corresponding to the exponent $\lambda_{0,n_0}$ to that 
corresponding to $\lambda_{1,n_1}$ in the Riemann scheme \eqref{eq:IGRS}
equals the connection coefficient of the reduced equation $P_{f\mathbf m}v=0$
up to the gamma factors which are explicitly calculated.

In particular, if the equation is rigid, Theorem~\ref{thm:c} gives
the connection coefficient as a quotient of products of gamma functions 
and an easier non-zero term. 
For example, when $p=2$, the easier term doesn't appear and the 
connection coefficient has the universal formula
\begin{equation}\label{eq:Icon}
 c(c_0\!:\!\lambda_{0,n_0}\!\rightsquigarrow\!c_1\!:\!\lambda_{1,n_1})
 =\frac
  {\displaystyle\prod_{\nu=1}^{n_0-1} 
    \Gamma\bigl(\lambda_{0,n_0}-\lambda_{0,\nu}+1\bigr)
   \cdot\prod_{\nu=1}^{n_1-1}
    \Gamma\bigl(\lambda_{1,\nu}-\lambda_{1,n_1}\bigr)
  }
  {\displaystyle\prod_{\substack{\mathbf m'\oplus\mathbf m''=\mathbf m\\
                    m'_{0,n_0}=m''_{1,n_1}=1}}
    \Gamma\bigl(|\{\lambda_{\mathbf m'}\}|\bigr)
  }.
\end{equation}
Here the notation \eqref{eq:IFuchs} is used and 
$\mathbf m=\mathbf m'\oplus\mathbf m''$ means that
$\mathbf m=\mathbf m'+\mathbf m''$ with rigid tuples
$\mathbf m'$ and $\mathbf m''$.
Moreover the number of gamma factors in the above denominator is equals
to that of the numerator.
The author conjectured this formula in 2007 and proved it in 2008
(cf.~\cite{O3}).  The proof in \S\ref{sec:C1} is different from the 
original proof, which is explained in \S\ref{sec:C2}.

Suppose $p=2$, $\ord\mathbf m=2$, $m_{j,\nu}=1$ for $0\le j\le 2$ and $1\le\nu\le 2$,
Then \eqref{eq:Icon} equals
\begin{equation}
 \frac{\Gamma(\lambda_{0,2}-\lambda_{0,1}+1)\Gamma(\lambda_{1,2}-\lambda_{1,1})}
      {\Gamma(\lambda_{0,1}+\lambda_{1,2}+\lambda_{2,1})
      \Gamma(\lambda_{0,1}+\lambda_{1,2}+\lambda_{2,2})},
\end{equation}
which implies \eqref{eq:Gausssum} under \eqref{eq:RSGauss}.

The hypergeometric series $F(a,b,c;x)$ satisfies
$\lim_{k\to+\infty}F(a,b,c+k;x)=1$ if $|x|\le 1$, which obviously 
implies $\lim_{k\to+\infty}F(a,b,c+k;1)=1$.
Gauss proves the summation formula \eqref{eq:Gausssum} by this limit formula
and the recurrence relation $F(a,b,c;1)=\frac{(c-a)(c-b)}{c(c-a-b)}F(a,b,c+1;1)$.
We have $\lim_{k\to+\infty}
c(c_0\!:\!\lambda_{0,n_0}+k\!\rightsquigarrow\!c_1\!:\!\lambda_{1,n_1}-k)=1$ 
in the connection formula \eqref{eq:Icon} (cf.~Corollary~\ref{cor:C}). 
This suggests a similar limit formula for a local solution of a general 
Fuchsian differential equation, which is given in \S\ref{sec:estimate}.

In \S\ref{sec:C2} we propose a procedure to calculate the connection coefficient
(cf.~Remark~\ref{rem:Cproc}), which is based on the calculation of its zeros 
and poles.
This procedure is different from the proof of Theorem~\ref{thm:c} in 
\S\ref{sec:C1} and useful to calculate a certain connection coefficient 
between local solutions with multiplicities in eigenvalues of local 
monodromies. The coefficient is defined in Definition~\ref{def:GC}.

In \S\ref{sec:ex} we show many examples which explain our fractional
calculus in this paper and also give concrete results of the calculus.
In \S\ref{sec:basicEx} we list all the fundamental tuples whose indices 
of rigidity are not smaller than $-6$ and in \S\ref{sec:rigidEx} we list
all the rigid tuples whose orders are not larger than 8, most of which
are calculated by a computer program \texttt{okubo} explained in 
\S\ref{sec:okubo}.
In \S\ref{sec:PoEx} and \S\ref{sec:GHG} we apply our fractional 
calculus to Jordan-Pochhammer equations and the hypergeometric family, 
respectively, which helps us to understand our unifying study of rigid Fuchsian 
differential equations.
In \S\ref{sec:EOEx} we apply our fractional 
calculus to the even/odd family classified by \cite{Si} and most of the
results there have been first obtained by the calculus.

In \S\ref{sec:4Ex}, \S\ref{sec:ord6Ex} and \S\ref{sec:Rob} we study the
rigid Fuchsian differential equations of order not larger than 4
and those of order 5 or 6 and the equations 
belonging to 12 submaximal series classified by \cite{Ro}, respectively.
Note that these 12 maximal series contain 
Yokoyama's list \cite{Yo}.
In \S\ref{sec:RobEx}, we explain how we read the condition of irreducibility,
connection coefficients, shift operators etc.\ of the corresponding differential 
equation from the data given in \S\ref{sec:4Ex}--\S\ref{sec:Rob}.
In \S\ref{sec:TriEx}, we show some interesting identities of 
trigonometric functions as a consequence of the concrete value \eqref{eq:Icon}
of connection coefficients.
We examine Appell's hypergeometric equations in \S\ref{sec:ApEx} by our 
fractional calculus, which will be further discussed in another paper.

In \S\ref{sec:prob} we give some problems to be studied related to the
results in this paper. 

In \S\ref{sec:appendix} a theorem on Coxeter groups is given, 
which was proved by K.~Nuida through a private communication between
the author and Nuida.
The theorem is useful for the study of the difference of 
various reductions of Fuchsian differential equations 
(cf.~Proposition~\ref{prop:wm} v)).
The author greatly thanks Nuida for allowing the author to
put the theorem with its proof in this paper.

The author express his sincere gratitude to Kazuo Okamoto and Yoshishige 
Haraoka for the guidance to the subjects developed in this paper and 
to Kazuki Hiroe for reading the manuscript of this paper.
\section{Fractional operations}\label{sec:frac}
\subsection{Weyl algebra}
In this section we define several operations on a Weyl algebra.
The operations are elementary or well-known but their combinations 
will be important.

Let $\mathbb C[x_1,\dots,x_n]$ denote the polynomial ring of $n$
independent variables $x_1,\dots,x_n$ over $\mathbb C$ and 
let $\mathbb C(x_1,\dots,x_n)$ denote the quotient field of
$\mathbb C[x_1,\dots,x_n]$.
\index{Weyl algebra}
The \textsl{Weyl algebra} $W[x_1,\dots,x_n]$ of $n$ variables $x_1,\dots,x_n$
is the algebra over $\mathbb C$ generated by $x_1,\dots,x_n$ and 
$\frac{\p}{\p x_1},\dots,\frac{\p}{\p x_n}$ with the 
fundamental relation
\begin{equation}
 [x_i,x_j]=[\tfrac{\p}{\p x_i},\tfrac{\p}{\p x_j}]=0, \quad
 [\tfrac{\p}{\p x_i},x_j]=\delta_{i,j}\qquad(1\le i,\,j\le n).
\end{equation}
We introduce a Weyl algebra $W[x_1,\dots,x_n][\xi_1,\dots,\xi_n]$
with parameters $\xi_1,\dots,\xi_N$ by 
\begin{equation*}
 W[x_1,\dots,x_n][\xi_1,\dots,\xi_N]:=
 \mathbb C[\xi_1,\dots,\xi_N]\underset{\mathbb C}\otimes W[x_1,\dots,x_n]
\end{equation*}
and put
\begin{align*}
 W[x_1,\dots,x_n;\xi_1,\dots,\xi_N]
  &:= \mathbb C(\xi_1,\dots,\xi_N)\underset{\mathbb C}\otimes W[x_1,\dots,x_n],\\
 W(x_1,\dots,x_n;\xi_1,\dots,\xi_N)
  &:= \mathbb C(x_1,\dots,x_n,\xi_1,\dots,\xi_N)
 \underset{\mathbb C[x_1,\dots,x_n]}\otimes W[x_1,\dots,x_n].
\end{align*}
Here we have
\begin{align}
[x_i,\xi_\nu]&=[\tfrac{\p}{\p{x_i}},\xi_\nu]=0
\quad(1\le i\le n,\ 1\le\nu\le N),\\
\begin{split}
\Bigl[\frac{\p}{\p{x_i}},\frac gf\Bigr]
    &=\frac{\p}{\p x_i}\left(\frac gf\right)\\
    &=\frac{\frac{\p g}{\p x_i}\cdot f
      -g\cdot\frac{\p f}{\p{x_i}}}{f^2}
\quad(f,\ g\in\mathbb C[x_1,\dots,x_n,\xi_1,\dots,\xi_N])
\end{split}
\end{align}
\index{00Wxxi@$W[x;\xi],\ W(x;\xi)$}
and $[\frac{\p}{\p x_i},f]=\frac{\p f}{\p{x_i}}\in 
\mathbb C[x_1,\dots,x_n,\xi_1,\dots,\xi_N]$.

For simplicity we put $x=(x_1,\dots,x_n)$ and $\xi=(\xi_1,\dots,\xi_N)$
and the algebras $\mathbb C[x_1,\dots,x_n]$, \ $\mathbb C(x_1,\dots,x_n)$, \ 
$W[x_1,\dots,x_n][\xi_1,\dots,\xi_N]$, \ 
$W[x_1,\dots,x_n;\xi_1,\dots,\xi_N]$,
$W(x_1,\dots,x_n;\xi_1,\dots,\xi_N)$ etc.\ are also denoted
by $\mathbb C[x]$, $\mathbb C(x)$, $W[x][\xi]$, $W[x;\xi]$,
$W(x;\xi)$ etc., respectively.
Then
\begin{equation}
 \mathbb C[x,\xi]\subset W[x][\xi]\subset W[x;\xi]\subset W(x;\xi).
\end{equation}

The element $P$ of $W(x;\xi)$ is uniquely written by
\begin{align}
 P = \sum_{\alpha=(\alpha_1,\dots,\alpha_n)\in {\mathbb Z}_{\ge 0}^n}
     p_\alpha(x,\xi)
     \frac{\p^{\alpha_1+\cdots+\alpha_n}}{\p x_1^{\alpha_1}
        \cdots \p x_n^{\alpha_n}}\qquad(p_\alpha(x,\xi)\in\mathbb C(x,\xi)).
\end{align}
\index{00Z@$\mathbb Z_{\ge 0},\ \mathbb Z_{>0}$}%
Here $\mathbb Z_{\ge0}=\{0,1,2,\dots\}$.
Similar we will denote the set of positive integers by $\mathbb Z_{>0}$.
If $P\in W(x;\xi)$ is not zero,
the maximal integer $\alpha_1+\dots+\alpha_n$ satisfying 
$p_\alpha(x,\xi)\ne 0$ is called the \textsl{order} of $P$ and 
denoted by $\ord P$.
\index{differential equation/operator!order}\index{00ord@$\ord$}
If $P\in W[x;\xi]$, $p_\alpha(x,\xi)$ are polynomials of $x$ with
coefficients in $\mathbb C(\xi)$ and the maximal degree of 
$p_\alpha(x,\xi)$ as polynomials of $x$ is called the \textsl{degree}
of $P$ and denoted by $\deg P$.
\index{differential equation/operator!degree}

\medskip
\subsection{Laplace and gauge transformations and reduced representatives}
First we will define some fundamental operations on $W[x;\xi]$.
\begin{defn}\label{def:Lap}
\textrm{i) }\index{00R@$\Red$}
For a non-zero element $P\in W(x;\xi)$ we choose an element
$(\mathbb C(x,\xi)\setminus\{0\})P\cap W[x;\xi]$ with the minimal degree and
denote it by $\Red P$ and call it a \textsl{reduced representative}
of $P$.  If $P=0$, we put $\Red P=0$.
Note that $\Red P$ is determined up to multiples by non-zero elements of
$\mathbb C(\xi)$. 
\index{reduced representative}

\textrm{ii)\ }\index{00L@$\Lap$}
For a subset $I$ of $\{1,\dots,n\}$ we define an automorphism
$\Lap_I$ of $W[x;\xi]$:
\begin{align}
  \Lap_I(\tfrac{\p}{\p x_i})=\begin{cases}
       x_i&(i\in I)\\
       \tfrac{\p}{\p x_i}&(i\not\in I)
               \end{cases},\ \  
  \Lap_I(x_i) =\begin{cases}
       -\tfrac{\p}{\p x_i}&(i\in I)\\
       x_i&(i\not\in I)
               \end{cases}
  \text{ and }\Lap_I(\xi_\nu)=\xi_\nu.
\end{align}
We put $\Lap=\Lap_{\{1,\dots,n\}}$ and call $\Lap$ the 
\textsl{Laplace transformation} of $W[x;\xi]$.
\index{Laplace transformation}

\textrm{iii)\ }
Let $W\!_L(x;\xi)$ be the algebra isomorphic to $W(x;\xi)$
which is defined by the Laplace transformation
\begin{equation}\label{eq:WL}
  \Lap: W(x;\xi)\ \overset{\sim}\to\ W\!_L(x;\xi) 
            \ \overset{\sim}\to\ W(x;\xi).
\end{equation}
For an element $P\in W\!_L(x;\xi)$ we define
\begin{equation}
    \Red_L(P):= \Lap^{-1}\circ \Red \circ \Lap(P).
\end{equation} 
\end{defn}

Note that the element of $W\!_L(x;\xi)$ is a finite sum of 
products of elements of $\mathbb C[x]$ and rational functions of 
$(\frac{\p}{\p x_1},\dots,\frac{\p}{\p x_n},\xi_1,\dots,\xi_N)$.

We will introduce an automorphism of $W(x;\xi)$.
\begin{defn}[gauge transformation]\label{def:gauge}\index{gauge transformation}
\index{00Ad@$\Ad$, $\Adei$}
Fix an element $(h_1,\dots,h_n)\in\mathbb C(x,\xi)^n$ satisfying
\begin{equation}
   \frac{\p h_i}{\p x_j} = \frac{\p h_j}{\p x_i}
   \qquad(1\le i,\ j\le n).
\end{equation}
We define an automorphism $\Adei(h_1,\dots,h_n)$ of $W(x;\xi)$
by
\begin{equation}
\begin{aligned}
  \Adei(h_1,\dots,h_n)(x_i)&=x_i&&(i=1,\dots,n),\\
  \Adei(h_1,\dots,h_n)(\tfrac{\p}{\p x_i})
  &=\tfrac{\p}{\p x_i}-h_i&&(i=1,\dots,n),\\
  \Adei(h_1,\dots,h_n)(\xi_\nu)&=\xi_\nu
   &&(\nu=1,\dots,N).
\end{aligned}
\end{equation}
Choose functions $f$ and $g$ satisfying
$\frac{\p g}{\p x_i}=h_i$ for $i=1,\dots,n$ and put
$f=e^g$ and
\begin{align}
  \Ad(f)&=\Ade(g)=\Adei(h_1,\dots,h_n).
\end{align}
\end{defn}

We will define a homomorphism of $W(x;\xi)$.
\index{coordinate transformation}
\begin{defn}[coordinate transformation]\label{def:coord}
\index{coordinate transformation}
\index{00T@$T_{\phi}^*$}
Let $\phi=(\phi_1,\dots,\phi_n)$ be an element of 
$\mathbb C(x_1,\dots,x_m,\xi)^n$ such that the rank of the matrix
\begin{equation}
 \Phi:=\Bigl(\frac{\p \phi_j}{\p x_i}
 \Bigr)_{\substack{1\le i\le m
\\ 1\le j\le n}}
\end{equation}
equals $n$ for a generic point $(x,\xi)\in\mathbb C^{m+N}$.
Let $\Psi=\bigl(\psi_{i,j}(x,\xi)\bigr)_{\substack{1\le i\le n\\1\le j\le m}}$
be an left inverse of $\Phi$, namely,
$\Psi\Phi$ is an identity matrix of size $n$
and $m\ge n$.
Then a homomorphism $T^*_{\phi}$ from 
$W(x_1,\dots,x_n;\xi)$ to 
$W(x_1,\dots,x_m;\xi)$ is defined by
\begin{equation}
 \begin{aligned}
  T^*_\phi(x_i)&=\phi_i(x)&&(1\le i\le n),\\
  T^*_\phi(\tfrac{\p}{\p x_i})&=\sum_{j=1}^m
  \psi_{i,j}(x,\xi)\tfrac{\p}{\p x_j}
  &&(1\le i\le n).
 \end{aligned}
\end{equation}
If $m>n$, we choose linearly independent elements
$h_\nu=(h_{\nu,1},\dots,h_{\nu,m})$ of $\mathbb C(x,\xi)^m$ for
$\nu=1,\dots,m-n$ such that $\psi_{i,1}h_{\nu,1}+\cdots+\psi_{i,m}h_{\nu,m}=0$
for $i=1,\dots,n$ and $\nu=1,\dots,m-n$ and put
\begin{equation}
 \mathcal K^*(\phi):=\sum_{\nu=1}^{m-n}\mathbb C(x,\xi)\sum_{j=1}^m 
 h_{\nu,j}\tfrac{\p}{\p x_j}\in W(x;\xi).
\end{equation}
\end{defn}
The meaning of these operations are clear as follows.
\begin{rem}
Let $P$ be an element of 
$W(x;\xi)$ and let $u(x)$ be an analytic solution of the equation 
$Pu=0$ with a parameter $\xi$.
Then under the notation in Definitions~\ref{def:Lap}--\ref{def:gauge}, we have
$(\Red P)u(x)=\bigl(\Ad(f)(P)\bigr)(f(x)u(x))=0$.
Note that $\Red P$ is defined up to the multiplications of non-zero
elements of $\mathbb C(\xi)$.

If a Laplace transform\index{Laplace transformation}
\begin{equation}
 (\mathcal R_k u)(x) 
 =\int_C e^{-x_1t_1-\cdots-x_kt_k}u(t_1,\dots,t_k,x_{k+1},\dots,x_n)
  dt_1\cdots dt_k
\end{equation}
of $u(x)$ is suitably defined, then 
$\bigl(\Lap_{\{1,\dots,k\}}(\Red P)\bigr)(\mathcal R_ku)=0$,
which follows from the equalities
$\frac{\p \mathcal R_ku}{\p x_i}=\mathcal R_k(-x_iu)$
and $0=\int_C\frac{\p}{\p t_i}\bigl(e^{-x_1t_1-\cdots-x_kt_k}
u(t,x_{k+1},\ldots)\bigr)dt
=-x_i\mathcal R_ku+\mathcal R_k(\frac{\p u}{\p t_i})$
for $i=1,\dots,k$.
Moreover we have 
\[
 f(x)\mathcal R_k \Red Pu=f(x)\bigl(L_{\{1,\dots,k\}}(\Red P)\bigr)(\mathcal R_k u)
 =\bigl(\Ad(f)L_{\{1,\dots,k\}}(\Red P)\bigr)\bigl(f(x)\mathcal R_k u\bigr).
\]
Under the notation of Definition~\ref{def:coord}, we have
$T_\phi^*(P) u(\phi_1(x),\dots,\phi_n(x))=0$ and $Q u\bigl(\phi_1(x),\dots,\phi_n(x)\bigr)=0$ for $Q\in\mathcal K^*(\phi)$.

Another transformation of $W[x;\xi]$ based on an integral transformation frequently used will be given in Proposition~\ref{prop:1-tx}.
\end{rem}

We introduce some notation for combinations of operators we have defined.
\begin{defn}
\index{00Rad@$\RAd$, $\RAdL$}
Retain the notation in Definition \ref{def:Lap}--\ref{def:coord}
and recall that $f=e^g$ and $h_i=\frac{\p g}{\p x_i}$.
\begin{align}
  \RAd(f)
   &\phantom{:}=
    \RAde(g)=\RAdei(h_1,\dots,h_n):=\Red\circ\Adei(h_1,\dots,h_n),\\
\begin{split}
  \AdL(f)&\phantom{:}=\AdeL(h)=\AdeiL(h_1,\dots,h_n)\\
   &:=\Lap^{-1}\circ\Adei(h_1,\dots,h_n)\circ\Lap,
\end{split}
\\
\begin{split}
  \RAdL(f)&\phantom{:}=\RAdeL(h)=\RAdeiL(h_1,\dots,h_n)\\
   &:=\Lap^{-1}\circ\RAdei(h_1,\dots,h_n)\circ\Lap,
\end{split}
\\
 \Ad(\p_{x_i}^{\mu})&:=\Lap^{-1}\circ\Ad(x_i^\mu)\circ \Lap,
\\
 \RAd(\p_{x_i}^{\mu})&:=\Lap^{-1}\circ\RAd(x_i^\mu)\circ \Lap.
\end{align}
Here $\mu$ is a complex number or an element of $\mathbb C(\xi)$
and $\Ad(\p_{x_i}^{\mu})$ defines an endomorphism of 
$W_L(x;\xi)$.
\end{defn}

We will sometimes denote $\frac{\p}{\p x_i}$ by $\p_{x_i}$ or $\p_i$
for simplicity. If $n=1$, we usually denote $x_1$ by $x$ and
$\frac{\p}{\p x_1}$ by 
$\frac d{dx}$ or $\p_x$ or $\partial$.
We will give some examples.

Since the calculation $\Ad\bigl(x^{-\mu})\p=x^{-\mu}\circ \p\circ x^{\mu}
=x^{-\mu}(x^{\mu}\p + \mu x^{\mu-1})=\p + \mu x^{-1}$ is allowed, 
the following calculation is justified by the isomorphism \eqref{eq:WL}:
\begin{align*}
 \Ad(\p^{-\mu})x^m&=\p^{-\mu}\circ x^m\circ \p^\mu\\
 &=(x^m\p^{-\mu}+\tfrac{(-\mu)m}{1!}x^{m-1}\p^{-\mu-1}+
\tfrac{(-\mu)(-\mu-1)m(m-1)}{2!}x^{m-2}\p^{-\mu-2}\\
 &\qquad{}
   +\cdots+\tfrac{(-\mu)(-\mu-1)\cdots(-\mu-m+1)m!}{m!}\p^{-\mu-m})\p^\mu\\
 &=\sum_{\nu=0}^m(-1)^\nu(\mu)_\nu\binom{m}{\nu}x^{m-\nu}\p^{-\nu}.
\end{align*}
This calculation is in a ring of certain pseudo-differential operators
according to Leibniz's rule.
In general, we may put $\Ad(\p^{-\mu})P=\p^{-\mu}\circ P\circ \p^\mu$
for $P\in W[x;\xi]$ under Leibniz's rule.
Here $m$ is a positive integer and we use the notation
\index{000gammak@$(\gamma)_k,\ (\mu)_\nu$}
\begin{equation}
  (\mu)_\nu:=\prod_{i=0}^{\nu-1}(\mu+i),\quad
  \binom{m}{\nu}:=\frac{\Gamma(m+1)}{\Gamma(m-\nu+1)\Gamma(\nu+1)}
=\frac{m!}{(m-\nu)!\nu!}.
\end{equation}
\subsection{Examples of ordinary differential operators}
In this paper we mainly study ordinary differential operators.
We give examples of the operations we have defined, which are related
to classical differential equations.
\begin{exmp}
[$n=1$]
For a rational function $h(x,\xi)$ of $x$ with a parameter $\xi$
we denote by $\int h(x,\xi)dx$ the function $g(x,\xi)$ satisfying 
$\frac{d}{dx}g(x,\xi)=h(x,\xi)$. 
Put  $f(x,\xi)=e^{g(x,\xi)}$ and define
\index{000thetaz@$\vartheta$}
\begin{equation}
 \vartheta := x\frac{d}{dx}.
\end{equation}
Then we have the following identities.
\begin{align}
\Adei(h)\p &= \p - h = \Ad(e^{\int h(x)dx})\p
           = e^{\int h(x)dx}\circ \p \circ  e^{-\int h(x)dx},
\allowdisplaybreaks\\
\Ad(f)x &=x,\quad \AdL(f)\p = \p,\\
\Ad(\lambda f)&=\Ad(f)\quad
\AdL(\lambda f)=\AdL(f),\\
\Ad(f)\p&=\p - h(x,\xi)\ \Rightarrow\ \AdL(f)x=x + h(\p,\xi),
\allowdisplaybreaks\\
\Ad\bigl((x-c)^\lambda\bigr)
    &=\Ade\bigl(\lambda\log(x-c)\bigr)
    =\Adei\bigl(\tfrac{\lambda}{x-c}\bigr),
\allowdisplaybreaks\\
\Ad\bigl((x-c)^\lambda\bigr)x&=x,\quad 
 \Ad\bigl((x-c)^\lambda\bigr)\p=\p - \tfrac{\lambda}{x-c},
\allowdisplaybreaks\\
\RAd\bigl((x-c)^\lambda\bigr)\p
 &=\Ad\bigl((x-c)^\lambda\bigr)\bigl((x-c)\p\bigr)
 =(x-c)\p - \lambda,
\allowdisplaybreaks\\
\begin{split}
  \RAdL\bigl((x-c)^\lambda\bigr)x&=
  L^{-1}\circ \RAd\bigl((x-c)^\lambda\bigr)(-\p)\\
  &= L^{-1}\bigl((x-c)(-\p)+\lambda\bigr)\\
  &=
  (\p-c)x+\lambda=x\p - cx +1+\lambda,
\end{split}
\allowdisplaybreaks\\
\RAdL\bigl((x-c)^\lambda\bigr)\p&=\p,\quad 
  \RAdL\bigl((x-c)^\lambda\bigr)\bigl((\p-c)x\bigr)
  =(\p-c)x+\lambda,
\allowdisplaybreaks\\
\Ad(\p^{\lambda})\vartheta &= \AdL(x^{\lambda})\vartheta=\vartheta+\lambda,
\allowdisplaybreaks\index{000deltas@$\p^{\lambda}$}\\
\Ad\bigl(e^{\tfrac {\lambda(x-c)^m}{m}}\bigr)x &= x,\quad 
\Ad\bigl(e^{\tfrac {\lambda(x-c)^m}{m}}\bigr)\p = \p - \lambda(x-c)^{m-1},
\allowdisplaybreaks\\
\RAdL\bigl(e^{\frac {\lambda(x-c)^m}{m}}\bigr)x&=
\begin{cases}
 x+\lambda(\p-c)^{m-1}&(m\ge1),\\
 (\p-c)^{1-m}x + \lambda&(m\le -1),
\end{cases}\allowdisplaybreaks\\
T_{(x-c)^m}^*(x)&=(x-c)^m,\quad 
T_{(x-c)^m}^*(\p)=\tfrac1m (x-c)^{1-m}\p.
\end{align}
Here $m$ is a non-zero integer and $\lambda$ is a non-zero complex
number.
\end{exmp}

Some operations are related to Katz's operations
defined by \cite{Kz}.
The operation $\RAd\bigl((x-c)^\mu\bigr)$ corresponds to
the \textsl{addition} given in \cite{DR} and the operator
\begin{equation}\index{00mc@$mc_\mu$}
 mc_\mu:=\RAd(\p^{-\mu})=\RAdL(x^{-\mu})
\end{equation}
corresponds to Katz's \textsl{middle convolution} and the Euler transformation
or the Riemann-Liouville integral
\index{Euler transformation}\index{Riemann-Liouville integral}
\index{fractional derivation}
(cf.~\cite[\S5.1]{Kh}) or the fractional derivation
\index{00Ic@$I_c^\mu,\ \tilde I_c^\mu$}
\begin{equation}\label{eq:fracdif}
  (I^\mu_c(u))(x) = \frac1{\Gamma(\mu)}\int_c^x u(t)(x-t)^{\mu-1}dt.
\end{equation}
\index{addition}\index{middle convolution}%
Here $c$ is suitably chosen.  In most cases, $c$ is a singular point
of the multi-valued holomorphic function $u(x)$.  The integration may be understood
through an analytic continuation with respect to a parameter or in the
sense of generalized functions.  
When $u(x)$ is a multi-valued holomorphic function on the punctured disk
around $c$, we can define the complex integral
\begin{equation}\label{eq:EuPh}
 (\tilde I_c^\mu (u))(x)
 := \int^{(x+,c+,x-,c-)}\!\!\!\!u(z)(x-z)^{\mu-1}dz\quad \ \ 
\raisebox{5pt}{\begin{xy}
(-2,-7) *{c}="c", (32,-7) *{x}="x",
(0,0)  *{\times} *\cir<3.0mm>{l_d} *\cir<4.5mm>{l_dr},
(30,0) *{\times} *\cir<3.0mm>{r_u} *\cir<4.5mm>{ur_lu},
(12,-7)  *{\text{starting point}}
\ar@{<-{*}} (2.8,0) ; (14,0);
\ar@{-}  (14,0); (27.2,0);
\ar@{-} (0,-2.85); (13.5,-2,85);
\ar@{->}  (13.5,-2.85); (27,-2.85);
\ar@{<-} (30,2.85) ; (3,2.85);
\ar@{->} (0,-4.3) ; (24,-4.3):
\ar@{-} (24,-4.3) ; (27,3);
\ar@{.>} "c";(0,0),
\ar@{.>} "x";(30,0),
\ar@{<.} (2.8,0);(1,-7)
\end{xy}}
\end{equation}
through Pochhammer contour\index{Pochhammer contour} 
$(x+,c+,x-,c-)$ along a double loop circuit (cf.\ \cite[12.43]{WW}).
If $(z-c)^{-\lambda}u(z)$ is a meromorphic function in a neighborhood of the 
point $c$, we have
\begin{equation}
 (\tilde I_c^\mu(u))(x)=\bigl(1-e^{2\pi\lambda\sqrt{-1}}\bigr)
 \bigl(1-e^{2\pi\mu\sqrt{-1}}\bigr)
 \int_c^xu(t)(x-t)^{\mu-1}dt.
\end{equation}
For example, we have
\begin{align}
 \begin{split}
 I_c^\mu\bigl((x-c)^\lambda\bigr)
  &=\frac1{\Gamma(\mu)}\int_c^x (t-c)^\lambda(x-t)^{\mu-1}dt\\
  &=\frac{(x-c)^{\lambda+\mu}}{\Gamma(\mu)}\int_0^1 s^\lambda(1-s)^{\mu-1}ds
   \quad(x-t=(1-s)(x-c))\\
  &=\frac{\Gamma(\lambda+1)}{\Gamma(\lambda+\mu+1)}(x-c)^{\lambda+\mu},
 \end{split}\\
 \tilde I_c^\mu\bigl((x-c)^\lambda\bigr)
  &=\frac{4\pi^2e^{\pi(\lambda+\mu)\sqrt{-1}}}
    {\Gamma(-\lambda)\Gamma(1-\mu)\Gamma(\lambda+\mu+1)}(x-c)^{\lambda+\mu+1}.
\end{align}
For $k\in\mathbb Z_{\ge0}$ we have
\begin{align}
 \tilde I_c^\mu\bigl((x-c)^k\log(x-c)\bigr)=
 \frac{-4\pi^2k!e^{\pi\lambda\sqrt{-1}}}{\Gamma(1-\mu)\Gamma(\mu+k+1)} (x-c)^{\mu+k+1}.
\end{align}

We note that since
\begin{align*}
  \tfrac{d}{dt}\big(u(t)(x-t)^{\mu-1}\bigr)
 &=u'(t)(x-t)^{\mu-1}
   -\tfrac{d}{dx}\bigl(u(t)(x-t)^{\mu-1}\bigr)\\
\intertext{and}
 \tfrac{d}{dt}\big(u(t)(x-t)^{\mu}\bigr)
 &=u'(t)(x-t)^{\mu}-u(t)\tfrac{d}{dx}(x-t)^{\mu}\\
 &=xu'(t)(x-t)^{\mu-1} - tu'(t)(x-t)^{\mu-1}
   -\mu u(t)(x-t)^{\mu-1},
\end{align*}
we have
\begin{equation}\label{eq:Icp}
 \begin{split}
   I_c^\mu(\p u)&=\p I_c^\mu(u),\\
   I_c^\mu(\vartheta u)&=(\vartheta - \mu)I_c^\mu(u).
 \end{split}
\end{equation}

\begin{rem}\label{rem:defIc}
{\rm i)} \ 
The integral \eqref{eq:fracdif} is naturally well-defined and
the equalities \eqref{eq:Icp} are valid if $\RE\lambda > 1$ and 
$\lim_{x\to c}x^{-1}u(x)=0$.
Depending on the definition of $I_c^\lambda$, they are also 
valid in many cases, which can be usually proved in this 
paper by analytic continuations with respect to certain parameters
(for example, cf.~\eqref{eq:IcP}).
Note that \eqref{eq:Icp} is valid if $I_c^\mu$ is replaced
by $\tilde I_c^\mu$ defined by \eqref{eq:EuPh}.

{\rm ii)}
Let $\epsilon$ be a positive number and let $u(x)$ be a holomorphic function 
on  
\[
  U^+_{\epsilon,\theta}:=\{x\in\mathbb C\,;\,|x-c|<\epsilon\text{ and }
  e^{-i\theta}(x-c)\notin(-\infty,0]\}.
\]
Suppose that there exists a positive number $\delta$ 
such that $|u(x)(x-c)^{-k}|$ is bounded on
$\{x\in U_{\epsilon.\theta}^+\,;\,|\Arg(x-c)-\theta|<\delta\}$ 
for any $k>0$.
Note that the function $Pu(x)$ also satisfies this estimate for $P\in W[x]$.
Then the integration \eqref{eq:fracdif} is defined along a suitable 
path $C:\gamma(t)$ $(0\le t\le 1$) such that $\gamma(0)=c$, $\gamma(1)=x$ and
$|\Arg\bigl(\gamma(t)-c\bigr)-\theta|<\delta$ for $0<t<\frac12$
and the equalities \eqref{eq:Icp} are valid.
\end{rem}

\begin{exmp}\label{ex:midconv}
We apply additions, middle convolutions and Laplace transformations 
to the trivial ordinary differential equation
\begin{equation}
 \frac{du}{dx} = 0,
\end{equation}
which has the solution $u(x)\equiv 1$.

\textrm{i)\ }(Gauss hypergeometric equation).%
\index{hypergeometric equation/function!Gauss}
Put
\begin{equation}
 \begin{split}
  P_{\lambda_1,\lambda_2,\mu}:\!&=
  \RAd\bigl(\p^{-\mu}\bigr)\circ\RAd\bigl(
   x^{\lambda_1}(1-x)^{\lambda_2}\bigr)\p\\
 &= \RAd(\p^{-\mu}\bigr)\circ\Red
   (\p-\tfrac{\lambda_1}{x}+\tfrac{\lambda_2}{1-x})\\
 &=\RAd(\p^{-\mu}\bigr)\bigl(x(1-x)\p
     -\lambda_1(1-x)+\lambda_2 x\bigr)\\
 &=\RAd(\p^{-\mu}\bigr)
  \bigl((\vartheta-\lambda_1)-x(\vartheta-\lambda_1-\lambda_2)\bigr)\\
 &=\Ad(\p^{-\mu}\bigr)
  \bigl((\vartheta+1-\lambda_1)\p-(\vartheta+1)(\vartheta-\lambda_1-\lambda_2)\bigr)\\
 &=(\vartheta+1-\lambda_1-\mu)\p-(\vartheta+1-\mu)(\vartheta-\lambda_1-\lambda_2-\mu)\\
 &=(\vartheta+\gamma)\p-(\vartheta+\beta)(\vartheta+\alpha)\\
 &=x(1-x)\p^2+\bigl(\gamma-(\alpha+\beta+1)x\bigr)\p-\alpha\beta
 \end{split}
\end{equation}
with
\begin{equation}
\begin{cases}
 \alpha=-\lambda_1-\lambda_2-\mu, \\
 \beta=1-\mu,\\
 \gamma=1-\lambda_1-\mu.
\end{cases}
\end{equation}
We have a solution
\begin{equation}\label{eq:Gmid}
 \begin{split}
   u(x)&=I_0^\mu(x^{\lambda_1}(1-x)^{\lambda_2})\\
       &=\frac1{\Gamma(\mu)}
         \int_0^x t^{\lambda_1}(1-t)^{\lambda_2}(x-t)^{\mu-1}dt\\
       &=\frac{x^{\lambda_1+\mu}}{\Gamma(\mu)}
         \int_0^1 s^{\lambda_1}(1-s)^{\mu-1}(1-xs)^{\lambda_2}ds
         \quad(t=xs)\\
       &=\frac{\Gamma(\lambda_1+1)x^{\lambda_1+\mu}}{\Gamma(\lambda_1+\mu+1)}
         F(-\lambda_2,\lambda_1+1,\lambda_1+\mu+1;x)\\
       &=\frac{\Gamma(\lambda_1+1)x^{\lambda_1+\mu}
         (1-x)^{\lambda_2+\mu}}{\Gamma(\lambda_1+\mu+1)}
         F(\mu,\lambda_1+\lambda_2+\mu,\lambda_1+\mu+1;x)\\
       &=\frac{\Gamma(\lambda_1+1)x^{\lambda_1+\mu}(1-x)^{-\lambda_2}}
         {\Gamma(\lambda_1+\mu+1)}
         F(\mu,-\lambda_2,\lambda_1+\mu+1;\frac x{x-1})
 \end{split}
\end{equation}
of the Gauss hypergeometric equation 
$P_{\lambda_1,\lambda_2,\mu}u=0$ with the Riemann scheme
\begin{equation}
  \begin{Bmatrix}
    x=0 & 1 & \infty\\
      0 &0&1-\mu&\!\!;\ x\\
      \lambda_1+\mu &\lambda_2+\mu&-\lambda_1-\lambda_2-\mu
  \end{Bmatrix},
\end{equation}
which is transformed by the middle convolution $mc_\mu$ from
the Riemann scheme\index{Riemann scheme}
\[
 \begin{Bmatrix}
   x= 0 & 1 & \infty\\
    \lambda_1 & \lambda_2 & -\lambda_1-\lambda_2&\!\!;x
 \end{Bmatrix}
\]
of $x^{\lambda_1}(1-x)^{\lambda_2}$.
Here using Riemann's $P$ symbol, we note that
\begin{align*}
  &P\begin{Bmatrix}
    x=0 & 1 & \infty\\
      0 &0&1-\mu&\!\!;\ x\\
      \lambda_1+\mu &\lambda_2+\mu&-\lambda_1-\lambda_2-\mu
  \end{Bmatrix}\allowdisplaybreaks\\
 &=x^{\lambda_1+\mu}
 P\begin{Bmatrix}
    x=0 & 1 & \infty\\
      -\lambda_1-\mu &0&\lambda_1+1&\!\!;\ x\\
       0 &\lambda_2+\mu&-\lambda_2
  \end{Bmatrix}\allowdisplaybreaks\\
 &=x^{\lambda_1+\mu}(1-x)^{\lambda_2+\mu}
 P\begin{Bmatrix}
    x=0 & 1 & \infty\\
      -\lambda_1-\mu &-\lambda_2-\mu&\lambda_1+\lambda_2+\mu+1&\!\!;\ x\\
       0 &0&\mu
  \end{Bmatrix}\allowdisplaybreaks\\
 &=x^{\lambda_1+\mu}
 P\begin{Bmatrix}
    x=0 & 1 & \infty\\
      -\lambda_1-\mu &\lambda_1+1&0&\!\!;\ \dfrac x{x-1}\\
       0 &-\lambda_2 &\lambda_2+\mu
  \end{Bmatrix}\allowdisplaybreaks\\
 &=x^{\lambda_1+\mu}(1-x)^{-\lambda_2}
 P\begin{Bmatrix}
    x=0 & 1 & \infty\\
      -\lambda_1-\mu &\lambda_1+\lambda_2+1&
      -\lambda_2&\!\!;\ \dfrac x{x-1}\\
       0 & 0 &\mu
  \end{Bmatrix}.
\end{align*}
In general, the Riemann scheme and its relation to $mc_\mu$ will be studied
in \S\ref{sec:index} and the symbol `$P$' will be omitted for simplicity.

The function $u(x)$ defined by \eqref{eq:Gmid} 
corresponds to the characteristic exponent $\lambda_1+\mu$ at the
origin and depends meromorphically on the parameters
$\lambda_1,\lambda_2$ and $\mu$.
The local solutions corresponding to the characteristic exponents
$\lambda_2+\mu$ at $1$ and $-\lambda_1-\lambda_2-\mu$ at $\infty$
are obtained by replacing $I_0^\mu$ by $I_1^\mu$ and $I_\infty^\mu$,
respectively.  

When we apply $\Ad(x^{\lambda_1'}(x-1)^{\lambda_2'})$ to 
$P_{\lambda_1,\lambda_2,\mu}$, the resulting Riemann scheme is
\begin{equation}
  \begin{Bmatrix}
    x=0 & 1 & \infty\\
      \lambda_1' &\lambda_2'&1-\lambda_1'-\lambda_2'-\mu&\!\!;\ x\\
      \lambda_1+\lambda_1'+\mu &\lambda_2+\lambda_2'+\mu&-\lambda_1-\lambda_2-\lambda_1'-\lambda_2'-\mu,
  \end{Bmatrix}.
\end{equation}
Putting $\lambda_{1,1}=\lambda_1'$, 
$\lambda_{1,2}=\lambda_1+\lambda_1'+\mu$,
$\lambda_{2,1}=\lambda_2'$, 
$\lambda_{2,2}=\lambda_2+\lambda_2'+\mu$,
$\lambda_{0,1}=1-\lambda_1'-\lambda_2'-\mu$
and $\lambda_{0,2}=-\lambda_1-\lambda_2-\lambda_1'-\lambda_2'-\mu$,
we have the Fuchs relation 
\begin{equation}
 \lambda_{0,1}+\lambda_{0,2}+\lambda_{1,1}+\lambda_{1,2}+\lambda_{2,1}
 +\lambda_{2,2}=1
\end{equation}
and the corresponding operator
\begin{equation}\label{eq:GH2}
\begin{split}
 P_{\lambda}&=x^2(x-1)^2\partial^2+x(x-1)
 \bigl((\lambda_{0,1}+\lambda_{0,2}+1)x+\lambda_{1,1}+\lambda_{1,2}-1\bigr)
 \partial\\
&{}\quad+\lambda_{0,1}\lambda_{0,2}x^2+
  (\lambda_{2,1}\lambda_{2,2}-\lambda_{0,1}\lambda_{0,2}-\lambda_{1,1}\lambda_{1,2}   )x+\lambda_{1,1}\lambda_{1,2}
\end{split}\end{equation}
has the Riemann scheme
\begin{equation}\label{eq:GRSGG}
  \begin{Bmatrix}
    x=0 & 1 & \infty\\
      \lambda_{0,1} &\lambda_{1,1}&\lambda_{2,1}&\!\!;\ x\\
      \lambda_{0,2} &\lambda_{1,2}&\lambda_{2,2}
  \end{Bmatrix}.
\end{equation}
By the symmetry of the transposition 
$\lambda_{j,1}$ and $\lambda_{j,2}$ for each $j$, we have integral 
representations of other local solutions.

\textrm{ii)\ }(Airy equations).\index{Airy equation}
For a positive integer $m$ we put
\begin{equation}
\begin{split}
 P_m &:= \Lap\circ\Ad(e^\frac{x^{m+1}}{m+1})\p\\
     &\phantom{:}= \Lap(\p - x^m) = x - (-\p)^m.
\end{split}
\end{equation}
Thus the equation
\begin{equation}
  \frac{d^mu}{dx^m}-(-1)^mxu=0
\end{equation}
has a solution
\begin{equation}
 u_j(x) = \int_{C_j}\exp\left(\frac{z^{m+1}}{m+1}-xz\right)
 dz\qquad(0\le j\le m),
\end{equation}
where the path $C_j$ of the integration is
\begin{equation*}
 C_j: z(t)= e^{\frac{(2j-1)\pi\sqrt{-1}}{m+1}-t}
          + e^{\frac{(2j+1)\pi\sqrt{-1}}{m+1}+t}
\quad(-\infty<t<\infty). 
\end{equation*}
Here we note that $u_0(x)+\cdots+u_m(x)=0$.
The equation has the symmetry under the rotation 
$x\mapsto e^{\frac{2\pi\sqrt{-1}}{m+1}}x$.

\textrm{iii)\ }(Jordan-Pochhammer equation).\index{Jordan-Pochhammer}
For $\{c_1,\dots,c_p\}\in\mathbb C\setminus\{0\}$ put
\begin{align*}
 P_{\lambda_1,\dots,\lambda_p,\mu} :\!&=
 \RAd(\p^{-\mu})\circ\RAd\Bigl(\prod_{j=1}^p (1-c_jx)^{\lambda_j}\Bigr)\p\\
 &=\RAd(\p^{-\mu})\circ
  \Red\Bigl(\p+\sum_{j=1}^p\frac{c_j\lambda_j}{1-c_jx}\Bigr)\\
 &=\RAd(\p^{-\mu})\Bigl(p_0(x)\p+q(x)\Bigr)\\
 &=\p^{-\mu+p-1}\Bigl(p_0(x)\p+q(x)\Bigr)\p^{\mu}
  =\sum_{k=0}^pp_k(x)\p^{p-k}
\intertext{with}
 p_0(x)&=\prod_{j=1}^p(1-c_jx),\quad
 q(x)=p_0(x)\sum_{j=1}^p\frac{c_j\lambda_j}{1-c_jx},\\
 p_k(x)&=\binom{-\mu+p-1}{k}
  p_0^{(k)}(x) + 
  \binom{-\mu+p-1}{k-1}
  q^{(k-1)}(x),\\
  \binom{\alpha}{\beta}
 :\!&=
  \frac{\Gamma(\alpha+1)}{\Gamma(\beta+1)\Gamma(\alpha-\beta+1)}
 \quad(\alpha,\beta\in\mathbb C).
\end{align*}
We have solutions
\[
 u_j(x)=\frac{1}{\Gamma(\mu)}\int_{\frac1{c_j}}^x\prod_{\nu=1}^p
   (1-c_\nu t)^{\lambda_\nu}(x-t)^{\mu-1}dt
 \quad(j=0,1,\dots,p,\ c_0=0)
\]
of the Jordan-Pochhammer equation $P_{\lambda_1,\dots,\lambda_p,\mu}u=0$ with
the Riemann scheme
\index{000lambda@$[\lambda]_{(k)}$}
\begin{equation}
 \begin{Bmatrix}
  x=\frac1{c_1} & \cdots & \frac1{c_p} & \infty\\
  [0]_{(p-1)} & \cdots & [0]_{(p-1)}& [1-\mu]_{(p-1)}&\!\!;\,x\\
  \lambda_1+\mu&\cdots & \lambda_p+\mu & -\lambda_1-\dots-\lambda_p-\mu
 \end{Bmatrix}.
\end{equation}

Here and hereafter we use the notation
\begin{equation}\label{eq:mult}
 [\lambda]_{(k)}:=
 \begin{pmatrix}
   \lambda \\ \lambda+1 \\ \vdots \\ \lambda+k-1
 \end{pmatrix}
\end{equation}
for a complex number $\lambda$ and a non-negative integer $k$.
If the component $[\lambda]_{(k)}$ is appeared in a Riemann scheme,
it means the corresponding local solutions with the exponents
$\lambda+\nu$ for $\nu=0,\dots,k-1$ have a semisimple local 
monodromy when $\lambda$ is generic.
\end{exmp}
\subsection{Ordinary differential equations}\label{sec:ODE}
We will study the ordinary differential equation
\begin{equation}\label{eq:M}
  \mathcal M : Pu = 0
\end{equation}
with an element $P\in W(x;\xi)$ in this paper.
The solution $u(x,\xi)$ of $\mathcal M$ is at least locally 
defined for $x$ and $\xi$ and holomorphically or meromorphically
depends on $x$ and $\xi$.
Hence we may replace $P$ by $\Red P$ and we similarly 
choose $P$ in $W[x;\xi]$.

We will identify $\mathcal M$ with the left $W(x;\xi)$-module
$W(x;\xi)/W(x;\xi)P$. 
Then we may consider \eqref{eq:M} as the fundamental relation of the generator 
$u$ of the module $\mathcal M$.

The results in this subsection are standard and well-known but for
our convenience we briefly review them.  First note that $W(x;\xi)$
is a (left) Euclidean ring:

Let $P$, $Q\in W(x;\xi)$ with $P\ne0$.
Then there uniquely exists $R$, $S\in W(x;\xi)$ such that
\begin{equation}\label{eq:Div}
 Q = SP + R\qquad(\ord R<\ord P).
\end{equation}
Hence we note that 
$\dim_{\mathbb C(x,\xi)}\bigl(W(x;\xi)/W(x;\xi)P\bigr)=\ord P$.
We get $R$ and $S$ in \eqref{eq:Div} by a simple algorithm as follows.
Put 
\begin{equation}\label{eq:PQ}
 P=a_n\p^n+\cdots+a_1\p+a_0 \text{ \ and \ } Q=b_m\p^m+\cdots+b_1\p+b_0
\end{equation}
with $a_n\ne 0$, $b_m\ne 0$.  Here $a_n$, $b_m\in\mathbb C(x,\xi)$.
The division \eqref{eq:Div} is obtained by the induction on $\ord Q$.  
If $\ord P>\ord Q$, \eqref{eq:Div} is trivial with $S=0$.  
If $\ord P \le \ord Q$, \eqref{eq:Div} is reduced to the equality
$Q'=S'P+R$ with $Q'=Q - a_n^{-1}b_m\p^{m-n}P$ and 
$S'=S-a_n^{-1}b_m\p^{m-n}$ and then we have $S'$ and $R$ satisfying
$Q'=S'P+R$ by the induction because $\ord Q'<\ord Q$.
The uniqueness of \eqref{eq:Div} is clear 
by comparing the highest order terms of \eqref{eq:Div} in the case when $Q=0$.

By the standard Euclid algorithm using the division 
\eqref{eq:Div} we have $M$, $N\in W(x;\xi)$ 
such that
\begin{equation}\label{eq:Euc}
 MP+NQ=U,\ P\in W(x;\xi)U \text{ \ and \ } Q\in W(x;\xi)U.
\end{equation}
Hence in particular any left ideal of $ W(x;\xi)$ is 
generated by a single element of $W[x;\xi]$, namely, $W(x;\xi)$
is a principal ideal domain.
\begin{defn}
The operators $P$ and $Q$ in $W(x;\xi)$ are defined to be 
\textsl{mutually prime} if one of the following equivalent conditions is valid.
\index{differential equation/operator!mutually prime}
\begin{align}
 &W(x;\xi)P + W(x;\xi)Q = W(x;\xi),\\
 &\text{there exists }R\in W(x;\xi)\text{ satisfying }
 RQu=u\text{ for the equation }Pu=0,\\
   &\begin{cases}
    \text{the simultaneous equation }Pu=Qu=0\text{ has not a non-zero solution}\\
    \text{for a generic value of }\xi.
    \end{cases}
\end{align}
\end{defn}

Moreover we have the following.
\index{differential equation/operator!cyclic}
\begin{equation}\label{eq:Wsing}
\text{Any left $W(x;\xi)$-module $\mathcal R$ with 
 $\dim_{\mathbb C(x,\xi)}\mathcal R<\infty$ is \textsl{cyclic},}
\end{equation}
namely, it is generated by a single element.
Hence any system of ordinary differential equations is
isomorphic to a single differential equation under the algebra 
$W(x;\xi)$.

To prove \eqref{eq:Wsing} it is sufficient to show that the direct sum 
$\mathcal M\oplus\mathcal N$ 
of $\mathcal M: Pu=0$ and $\mathcal N: Qv=0$ is cyclic.
In fact $M\oplus\mathcal N=W(x;\xi)w$ with 
$w=u+(x-c)^nv\in\mathcal M\oplus\mathcal N$ and $n=\ord P$ if 
$c\in\mathbb C$ is generic.
For the proof we have only to show
$\dim_{\mathbb C(x,\xi)}W(x;\xi)w\ge m+n$ and
we may assume that $P$ and $Q$ are in $W[x;\xi]$ and they are of
the form \eqref{eq:PQ}.
Fix $\xi$ generically and 
we choose $c\in\mathbb C$ such that $a_n(c)b_m(c)\ne 0$.
Since the function space 
$V=\{\phi(x)+(x-c)^n\varphi(x)\,;\,P\phi(x)=Q\varphi(x)=0\}$
is of dimension $m+n$ in a neighborhood of $x=c$, 
$\dim_{W(x;\xi)}W(x;\xi)w\ge m+n$
because the relation $Rw=0$ for an operator 
$R\in W(x;\xi)$ implies $R\psi(x)=0$ for $\psi\in V$.

Thus we have the following standard definition.
\begin{defn}\label{def:irred}
Fix $P\in W(x;\xi)$ with $\ord P>0$.
The equation \eqref{eq:M} is \textsl{irreducible} 
if and only if one of the following equivalent conditions is valid.
\index{differential equation/operator!irreducible}
\begin{align}
&\text{The left $W(x;\xi)$-module $\mathcal M$ is simple.}
\label{eq:irrsimp}\allowdisplaybreaks\\
&\text{The left $W(x;\xi)$-ideal $W(x;\xi)P$ is maximal.}\allowdisplaybreaks\\
&P=QR\text{ with }Q,\,R\in W(x;\xi)\text{ implies }
 \ord Q\cdot\ord R=0.\label{eq:QR}\allowdisplaybreaks\\
&\forall Q\not\in W(x;\xi)P,\ 
\exists M,\ N\in W(x;\xi)\text{ satisfying }MP+NQ=1.
\label{eq:irrEuc}\allowdisplaybreaks\\
 &\begin{cases}
   ST\in W(x;\xi)P\text{ with }S, T\in W(x;\xi)
   \text{ and }\ord S<\ord P \\
  \Rightarrow S=0\text{ or }T\in W(x;\xi)P.
  \end{cases}\label{eq:ST}
\end{align}
The equivalence of the above conditions is standard and easily proved.
The last condition may be a little non-trivial.

Suppose \eqref{eq:ST} and $P=QR$ and $\ord Q\cdot\ord R\ne 0$.
Then $R\notin W(x;\xi)P$ and therefore $Q=0$, which contradicts
to $P=QR$.  Hence \eqref{eq:ST} implies \eqref{eq:QR}.

Suppose \eqref{eq:irrsimp}, \eqref{eq:irrEuc}, 
$ST\in W(x;\xi)P$ and $T\notin W(x;\xi)P$.
Then there exists $P'$ such that 
$\{J\in W(x;\xi)\,;\,JT\in W(x;\xi)P\}= 
W(x;\xi)P'$,  $\ord P'=\ord P$ and moreover $P'v=0$ is also
simple. Since $Sv=0$ with $\ord S<\ord P'$, we have $S$=0.

In general, a system of ordinary differential equations
is defined to be irreducible if it is simple as a left 
$W(x;\xi)$-module.
\end{defn}

\begin{rem}
Suppose the equation $\mathcal M$ given in \eqref{eq:M} is irreducible.

{\rm i) }
Let $u(x,\xi)$ be a non-zero solution  of 
$\mathcal M$, which is locally defined for the variables $x$ and $\xi$
and meromorphically depends on $(x,\xi)$. 
If $S\in W[x;\xi]$ satisfies $Su(x,\xi)=0$, then
$S\in W(x;\xi)P$.
Therefore $u(x,\xi)$ determines $\mathcal M$.

{\rm ii) }
Suppose $\ord P>1$.
Fix $R\in W(x;\xi)$ such that $\ord R<\ord P$ and $R\ne 0$.
For $Q\in W(x;\xi)$ and a positive integer $m$,
the condition $R^mQu=0$ is equivalent to $Qu=0$.  
Hence for example, if $Q_1u+\p^mQ_2u=0$ with certain 
$Q_j\in W(x;\xi)$, we will 
allow the expression $\p^{-m}Q_1u+Q_2u=0$ and
$\p^{-m}Q_1u(x,\xi)+Q_2u(x,\xi)=0$. 

{\rm iii) }
For $T\not\in W(x;\xi)P$  we construct a differential
equation $Qv=0$ satisfied by $v=Tu$ as follows.  
Put $n=\ord P$.
We have $R_j\in  W(x;\xi)$ such that 
$\p^jTu=R_ju$ with $\ord R_j<\ord P$.
Then there exist $b_0,\dots,b_n\in\mathbb C(x,\xi)$  such that
$b_nR_n+\cdots+b_1R_1+b_0R_0=0$. Then $Q=b_n\p^n+\cdots+b_1\p+b_0$. 
\end{rem}
\subsection{Okubo normal form and Schlesinger canonical form}\label{sec:DR}
\index{Schlesinger canonical form}\index{Okubo normal form}

In this subsection we briefly explain the interpretation of Katz's middle 
convolution (cf.~\cite{Kz}) by \cite{DR} and its relation to our fractional 
operations.

For constant square matrices $T$ and $A$ of size $n'$, the ordinary
differential equation
\begin{equation}\label{eq:ONF}
  (xI_{n'}-T)\frac{du}{dx}=Au
\end{equation} 
is called \textsl{Okubo normal form} of Fuchsian system
when $T$ is a diagonal matrix.  
\index{Okubo normal form}
Then
\begin{equation}
  mc_\mu\bigl((xI_{n'}-T)\p -A\bigr)=(xI_{n'}-T)\p -(A+\mu I_{n'})
\end{equation}
for generic $\mu\in\mathbb C$, namely, the system is transformed into
\begin{equation}\label{eq:mcONF}
  (xI_{n'}-T)\frac{du_\mu}{dx}=\bigl(A+\mu I_{n'}\bigr)u_\mu
\end{equation} 
by the operation $mc_\mu$.
Hence for a solution $u(x)$ of \eqref{eq:ONF}, the 
Euler transformation $u_\mu(x)=I_c^\mu(u)$ of $u(x)$ 
satisfies \eqref{eq:mcONF}.

For constant square matrices $A_j$ of size $m$
and the \textsl{Schlesinger canonical form}
\begin{equation}\label{eq:SCF}
  \frac{d v}{dx} = \sum_{j=1}^p\frac{A_j}{x-c_j}v
\end{equation}
of a Fuchsian system of the Riemann sphere, we have\\[-20pt]
\begin{equation}\label{eq:S2O}
  \frac{du}{dx} = \sum_{j=1}^p
  \frac{\tilde A_j}{x-c_j}u,\quad
  \tilde A_j:= \bordermatrix{
      & & \cr
      & & \cr
      j)&A_1&\cdots&A_p\cr
      & & \cr}
  \text{ \ and \ }
  u:=\begin{pmatrix}
         \frac{v}{x-c_1}\\\vdots\\\frac{v}{x-c_p}
        \end{pmatrix}.
\end{equation}
Here $\tilde A_j$ are square matrices of size $pm$.
The addition $\Ad\bigl((x-c_k)^{\mu_k}\bigr)$ 
transforms $A_j$ into $A_j+\mu_k\delta_{j,k}I_m$ for $j=1,\dots,p$
in the system \eqref{eq:SCF}.
Putting 
\[A=\tilde A_1+\cdots+\tilde A_p\text{ \ and \ }
 T=\left(\begin{smallmatrix}
    c_1I_n\\&\ddots\\&&c_pI_n
   \end{smallmatrix}\right),
\]
the equation \eqref{eq:S2O} is equivalent to
\eqref{eq:ONF} with $n'=pm$.
Define square matrices of size $n'$ by 
\begin{align}
 \tilde A&:=
  \begin{pmatrix}
   A_1\\
    & \ddots\\
    && A_p
 \end{pmatrix},\\[-5pt]
 \tilde A_j(\mu)
 &:= \bordermatrix{
      &    &     &        & \underset{\smile}{j} \cr
      & & \cr
      j)&A_1&\cdots&A_{j-1}&A_j+\mu&A_{j+1}&\cdots& A_p\cr
      & & \cr}.\label{eq:conSch}
\end{align}
Then $\ker\tilde A$ and $\ker(A+\mu)$ are invariant
under $\tilde A_j(\mu)$ for $j=1,\dots,p$
and therefore $\tilde A_j(\mu)$ induce 
endomorphisms of 
$V:=\mathbb C^{pm}/\bigl(\ker\tilde A+\ker (A+\mu)\bigr)$,
which correspond to square matrices of size $N:=\dim V$, which
we put $\bar A_j(\mu)$,
respectively, under a fixed basis
of $V$.  Then the middle convolution $mc_\mu$ of \eqref{eq:SCF}
is the system
\begin{equation}\label{eq:mcS}
  \frac{dw}{dx} = \sum_{j=1}^p \frac{\bar A_j(\mu)}{x-c_j}w
\end{equation}
of rank $N$, which is defined and studied by \cite{DR, DR2}.
Here $\ker\tilde A \cap\ker(A+\mu)=\{0\}$ if $\mu\ne0$.

We define another realization of the middle convolution as in
\cite[\S2]{O2}. Suppose $\mu\ne0$.
The square matrices of size $n'$
\begin{align}
 A_j^\vee(\mu)
 &:= 
\bordermatrix{
         &     &&   \underset{\smile}{j} && \cr
      & && A_1\cr
      & && \vdots\cr
    j\,{\text{\tiny$)$}} &&& A_j+\mu && \cr
      & && \vdots\cr
      & && A_p
  }\text{ \ and \ }
 A^\vee(\mu) := A^\vee_1(\mu)+\cdots+A^\vee_p(\mu)
\end{align}
satisfy
\begin{align}
 \tilde A(A+\mu I_{n'}) &= A^\vee(\mu)\tilde A
 =
 \Bigl(A_iA_j+\mu\delta_{i,j}A_i\Bigr)
_{\substack{1\le i\le p \\ 1\le j\le p}}\in M(n',\mathbb C),\\
\tilde A (A+\mu I_{n'})\tilde A_j(\mu)
&=  A_j^\vee(\mu)\tilde A(A+\mu I_{n'}).
\end{align}
Hence $w^\vee :=\tilde A (A+\mu I_{n'}) u$ satisfies
\begin{align}
  \frac{d w^\vee}{dx} 
   &= \sum_{j=1}^p\frac{A_j^\vee(\mu)}{x-c_j}w^\vee,\\
  \sum_{j=1}^p\frac{A_j^\vee(\mu)}{x-c_j}
   &=\biggl(\frac{A_i+\mu\delta_{i,j}I_m}{x-c_j}\biggr)
     _{\substack{ 1\le i\le p,\\1\le j\le p}}\notag
\end{align}
and $\tilde A(A+\mu I_{n'})$ 
induces the isomorphism
\begin{equation}
\tilde A(A+\mu I_{n'}):
 V=\mathbb C^{n'}/(\mathcal K+\mathcal L_\mu)
 \ \overset{\sim}\to \ V^\vee:=\IM \tilde A(A+\mu I_{n'})
 \subset\mathbb C^{n'}.
\end{equation}
Hence putting $\bar A_j^\vee(\mu):=A_j^\vee(\mu)|_{V^\vee}$,
the system \eqref{eq:mcS} is isomorphic to the system
\begin{equation}\label{eq:SCFmcv}
  \frac{d w^\vee}{dx} = \sum_{j=1}^p\frac{\bar A_j^\vee(\mu)}{x-c_j}
  w^\vee
\end{equation}
of rank $N$,
which can be  regarded as a middle convolution $mc_\mu$ of 
\eqref{eq:SCF}.  Here
\begin{equation}
 w^\vee=\begin{pmatrix}w^\vee_1\\ \vdots\\ w_p^\vee\end{pmatrix},\quad
  w^\vee_j = \sum_{\nu=1}^p(A_jA_\nu + \mu\delta_{j,\nu})
            (u_\mu)_\nu 
            \quad(j=1,\dots,p)
\end{equation}
and if $v(x)$ is a solution of \eqref{eq:SCF}, then 
\begin{equation}
 w^\vee(x)=\biggl(\sum_{\nu=1}^p(A_jA_\nu+\mu \delta_{j,\nu})
 I_c^\mu\Bigl(\frac{v(x)}{x-c_\nu}\Bigr)\biggr)_{j=1,\dots,p}
\end{equation}
satisfies \eqref{eq:SCFmcv}.

Since any non-zero homomorphism between irreducible 
$W(x)$-modules is an isomorphism, we have the following
remark (cf.~\S\ref{sec:ODE} and \S\ref{sec:contig}).
\begin{rem}\label{rem:SCFmc}
Suppose that the systems \eqref{eq:SCF} and \eqref{eq:SCFmcv} are irreducible.
Moreover suppose the system \eqref{eq:SCF} is isomorphic to
a single Fuchsian differential equation $P\tilde u=0$ 
as left $W(x)$-modules and the equation
$mc_\mu(P)\tilde w=0$ is also irreducible.
Then the system \eqref{eq:SCFmcv} is isomorphic to the single equation
$mc_\mu(P)\tilde w=0$ because the differential equation satisfied by 
$I_c^\mu(\tilde u(x))$ is isomorphic to that of 
$I_c^\mu(Q\tilde u(x))$ for a non-zero solution $v(x)$ of $P\tilde u=0$ 
and an operator $Q\in W(x)$ with $Q\tilde u(x)\ne0$
(cf.~\S\ref{sec:contig}, Remark~\ref{rem:midisom} iii) and Proposition~\ref{prop:irred}).

In particular if the systems are rigid and their spectral parameters
are generic, all the assumptions here are satisfied
(cf.~Remark~\ref{rem:generic} ii) and Corollary~\ref{cor:irred}).
\end{rem}

Yokoyama \cite{Yo2} defines extension and restriction operations among
the systems of differential equations of Okubo normal form.
The relation of Yokoyama's operations to Katz's operations
is clarified by \cite{O4}, which shows that they are equivalent
from the view point of the construction and the reduction of systems
of Fuchsian differential equations.

\section{Confluences}
\subsection{Regular singularities}\label{sec:reg}
\index{regular singularity}
In this subsection we review fundamental facts related to the 
regular singularities of the ordinary differential equations.
\subsubsection{Characteristic exponents}\index{characteristic exponent}
The ordinary differential equation
\begin{equation}
  a_n(x)\tfrac{d^n u}{dx^n}+a_{n-1}(x)\tfrac{d^{n-1} u}{dx^{n-1}}+
  \cdots + a_1(x)\tfrac{d u}{dx}+a_0(x)u=0\label{eq:ode}
\end{equation}
of order $n$ with meromorphic functions $a_j(x)$ defined in a neighborhood
of $c\in\mathbb C$ has a singularity at $x=c$ if the function 
$\frac{a_j(x)}{a_n(x)}$ has a pole at $x=c$ for a certain $j$.
The singular point $x=c$ of the equation is a \textsl{regular singularity} 
if it is a removable singularity of 
the functions $b_j(x):=(x-c)^{n-j}a_j(x)a_n(x)^{-1}$ for $j=0,\dots,n$.
In this case $b_j(c)$ are complex numbers and the $n$ roots of 
the \textsl{indicial equation}\index{indicial equation}
\begin{equation}
  \sum_{j=0}^n b_j(c)s(s-1)\cdots(s-j+1)=0
\end{equation}
are called the \textsl{charactersitic exponents} of \eqref{eq:ode} at $c$.

Let $\{\lambda_1,\dots,\lambda_n\}$ be the set of these characteristic 
exponents at $c$.

If $\lambda_j-\lambda_1\notin\mathbb Z_{>0}$ for 
$1<j\le n$, then \eqref{eq:ode} has a unique solution 
$(x-c)^{\lambda_1}\phi_1(x)$
with a holomorphic function $\phi_1(x)$ in a neighborhood of $c$ satisfying
$\phi_1(c)=1$.
\begin{defn}\label{def:exp}
The regular singularity and the characteristic exponents
for the differential operator
\begin{equation}\label{eq:ODP}
  P=a_n(x)\tfrac{d^n}{dx^n}+a_{n-1}(x)\tfrac{d^{n-1}}{dx^{n-1}}+
  \cdots+a_1(x)\tfrac d{dx}+a_0(x)
\end{equation}
are defined by those of the equation \eqref{eq:ode}, respectively.
Suppose $P$ has a regular singularity at $c$. We say $P$ is 
\textsl{normalized at $c$} if $a_n(x)$ is holomorphic at $c$ and 
\index{regular singularity!normalized}
\begin{equation}
  a_n(c)=a_n^{(1)}(c)=\cdots=a_n^{(n-1)}(c)=0\text{ \ and \ }
  a_n^{(n)}(c)\ne 0.
\end{equation}
In this case $a_j(x)$ are analytic and have zeros
of order at least $j$ at $x=c$ for $j=0,\dots,n-1$.

\subsubsection{Local solutions}
\index{regular singularity!local solution}
\index{00O@$\mathcal O,\ \hat{\mathcal O},\ \mathcal O_c,\ \mathcal O_c(\mu,m)$}
The ring of convergent power series at $x=c$ is denoted by $\mathcal O_c$
and for a complex number $\mu$ and a non-negative integer $m$ we put
\begin{equation}\label{def:Ocm}
  \mathcal O_c(\mu,m) :=\bigoplus_{\nu=0}^m
  \mathcal (x-c)^\mu\log^\nu(x-c)\mathcal O_c.
\end{equation}
\end{defn}
Let $P$ be a differential operator of order $n$ which has a regular 
singularity at $x=c$ and let $\{\lambda_1,\cdots,\lambda_n\}$ be 
the corresponding characteristic exponents.
Suppose $P$ is normalized at $c$.
If a complex number $\mu$ satisfies $\lambda_j-\mu\notin\{0,1,2,\dots\}$
for $j=1,\dots,n$, then $P$
defines a linear bijective map
\begin{equation}\label{eq:Pbij}
 P: \mathcal O_c(\mu,m)\ \overset{\sim}{\to}\ \mathcal O_c(\mu,m)
\end{equation}
for any non-negative integer $m$.

Let $\hat {\mathcal O}_c$ be the ring of formal power series
$\sum_{j=0}^\infty a_j(x-c)^j$ $(a_j\in\mathbb C)$ of $x$ at $c$.
For a domain $U$ of $\mathbb C$ we denote by $\mathcal O(U)$ the ring of
holomorphic functions on $U$.
Put
\index{00Brc@$B_r(c)$}
\begin{equation}\label{eq:Brc}
  B_r(c):=\{x\in\mathbb C\,;\,|x-c|<r\}
\end{equation}
for $r>0$ and
\begin{align}
 \hat{\mathcal O}_c(\mu,m) &:=\bigoplus_{\nu=0}^m
 \mathcal (x-c)^\mu\log^\nu(x-c)\hat{\mathcal O}_c,\\
 \mathcal O_{B_r(c)}(\mu,m) &:= \bigoplus_{\nu=0}^m
 \mathcal (x-c)^\mu\log^\nu(x-c)\mathcal O_{B_r(c)} .
\end{align}
Then $\mathcal O_{B_r(c)}(\mu,m)\subset\mathcal O_c(\mu,m)
\subset \hat {\mathcal O}_c(\mu,m)$.

Suppose $a_j(x)\in\mathcal O\bigl(B_r(c)\bigr)$ and $a_n(x)\ne 0$ for 
$x\in B_r(c)\setminus\{c\}$
and moreover $\lambda_j-\mu\notin \{0,1,2,\ldots\}$, we have
\begin{align}
 P&: \mathcal O_{B_r(c)}(\mu,m) \ \overset{\sim}{\to} \ \ 
 \mathcal O_{B_r(c)}(\mu,m),\label{eq:PBr}\\
 P&: \hat{\mathcal O}_c(\mu,m) \ \overset{\sim}{\to} \ \ 
 \hat{\mathcal O}_c(\mu,m).\label{eq:Phat}
\end{align}
The proof of these results are reduced to the case when $\mu=m=c=0$ 
by the translation $x\mapsto x-c$,  the operation $\Ad\bigl(x^{-\mu}\bigr)$, 
and the fact $P(\sum_{j=0}^m f_j(x)\log^jx) = (Pf_m(x))\log^jx+
\sum_{j=0}^{m-1}\phi_j(x)\log^jx$ with suitable $\phi_j(x)$
and moreover we may assume
\begin{align*}
  P &= \prod_{j=0}^n(\vartheta -\lambda_j) - xR(x,\vartheta),\\
  xR(x,\vartheta) &= x\sum_{j=0}^{n-1}r_j(x)\vartheta^j
  \quad(r_j(x)\in\mathcal O\bigl(B_r(c)\bigr)).
\end{align*}
When $\mu=m=0$, \eqref{eq:Phat} is easy and \eqref{eq:PBr} and hence \eqref{eq:Pbij}
are also easily proved by the method of majorant series
(for example, cf.~\cite{O0}).

For the differential operator
\[
  Q = \tfrac{d^n}{dx^n}+b_{n-1}(x)\tfrac{d^{n-1}}{dx^{n-1}}
  + \cdots +b_1(x)\tfrac{d}{dx} + b_0(x)
\]
with $b_j(x)\in\mathcal O\bigl(B_r(c)\bigr)$, we have a bijection
\begin{equation}
  \begin{matrix}
  Q:&\mathcal O\bigl(B_r(c)\bigr) & \overset{\sim}{\to} & 
  \mathcal O\bigl(B_r(c)\bigr)\oplus\mathbb C^n\phantom{ABCDEFG}\\
   & \rotatebox{90}{$\in$} & &\rotatebox{90}{$\in$}\phantom{ABCD}\\
   &u(x)&\mapsto &Pu(x)\oplus \bigl(u^{(j)}(c)\bigr)_{0\le j\le n-1}
  \end{matrix}
\end{equation}
because $Q(x-c)^n$ has a regular singularity at $x=c$ and the characteristic exponents
are $-1,-2,\dots,-n$ and hence \eqref{eq:PBr}
assures that for any $g(x)\in\mathbb C[x]$ and $f(x)\in\mathcal O\bigl(B_r(c)\bigr)$ 
there uniquely exists $v(x)\in\mathcal O\bigl(B_r(c)\bigr)$ such that 
$Q(x-c)^nv(x)= f(x)-Qg(x)$.

If $\lambda_\nu-\lambda_1\notin\mathbb Z_{>0}$, the characteristic exponents of
$R:=\Ad\bigl((x-c)^{-\lambda_1-1}\bigr)P$ at $x=c$ are $\lambda_\nu-\lambda_1-1$
for $\nu=1,\dots,n$ and therefore $R=S(x-c)$ with a differential operator $R$
whose coefficients are in $\mathcal O\bigl(B_r(c)\bigr)$.  
Then there exists $v_1(x)\in\mathcal O\bigl(B_r(c)\bigr)$ such that $-S1=S(x-c) v_1(x)$,
which means $P\bigl((x-c)^{\lambda_1}(1+(x-c)v_1(x))\bigr)=0$.
Hence if $\lambda_i-\lambda_j\notin\mathbb Z$ for $1\le i<j\le n$,
we have solutions $u_\nu(x)$ of $Pu=0$ such that
\begin{equation}
  u_\nu(x)=(x-c)^{\lambda_\nu}\phi_\nu(x)
\end{equation}
with suitable $\phi_\nu\in\mathcal O\bigl(B_r(c)\bigr)$ satisfying $\phi_\nu(c)=1$
for $\nu=1,\dots,n$.

Put $k=\#\{\nu\,;\,\lambda_\nu=\lambda_1\}$ and 
$m=\#\{\nu\,;\,\lambda_\nu-\lambda_1\in\mathbb Z_{\ge0}\}$.
Then we have solutions $u_\nu(x)$ of $Pu=0$ for $\nu=1,\dots,k$ such that
\begin{equation}
  u_\nu(x) - (x-c)^{\lambda_1}\log^{\nu-1}(x-c)
  \in\mathcal O_{B_r(c)}(\lambda_1+1,m-1).
\end{equation}
If $\mathcal O_{B_r(c)}$ is replaced by $\hat{\mathcal O}_c$, 
the solution 
\[u_{\nu}(x) = (x-c)^{\lambda_1}\log^{\nu-1}(x-c)+
 \sum_{i=1}^\infty\sum_{j=0}^{m-1}c_{\nu,i,j}(x-c)^{\lambda_1+i}\log^j(x-c)
 \in \hat{\mathcal O}_c(\lambda_1,m-1)
\]
is constructed by inductively defining $c_{\nu,i,j}\in\mathbb C$.
Since
\[
\begin{split}
 &P\Bigl(\sum_{i=N+1}^\infty\sum_{j=0}^{m-1}c_{\nu,i,j}(x-c)^{\lambda_1+i}\log^j(x-c) \Bigr)=
 -P\Bigl((x-c)^{\lambda_1}\log^{\nu-1}(x-c)\\
 &\qquad +\sum_{i=1}^N c_{\nu,i,j}(x-c)^{\lambda_1+i}\log^j(x-c) \Bigr)
  \in\mathcal O_{B_r(c)}(\lambda_1+N,m-1)
\end{split}
\]
for an integer $N$ satisfying $\RE(\lambda_\ell-\lambda_1)<N$
for $\ell=1,\dots,n$, we have 
\[
 \sum_{i=N+1}^\infty\sum_{j=0}^{m-1}c_{\nu,i,j}(x-c)^{\lambda_1+i}\log^j(x-c)\in
 \mathcal O_{B_r(c)}(\lambda_1+N,m-1)
\]
because of \eqref{eq:PBr} and \eqref{eq:Phat},
which means $u_{\nu}(x)\in\mathcal O_{B_r(c)}(\lambda_1,m)$.

\subsubsection{Fuchsian differential equations}
\index{Fuchsian differential equation/operator}
The regular singularity at $\infty$ is similarly defined by
that at the origin under the coordinate transformation $x\mapsto\frac1x$.
When $P\in W(x)$ and the singular points of $P$ in 
$\overline{\mathbb C}:=\mathbb C\cup\{\infty\}$ are all regular singularities, 
the operator $P$ and the equation $Pu=0$ are called \textsl{Fuchsian}.
\index{Fuchsian differential equation/operator}
Let $\overline{\mathbb C}'$ be the subset of $\overline{\mathbb C}$ deleting
singular points $c_0,\ldots,c_p$ from $\overline{\mathbb C}$.  
Then the solutions of the equation $Pu=0$ defines a map
\begin{equation}\label{eq:Fmap}
  \mathcal F:\ \overline{\mathbb C}'\supset U:\text{(simply connected domain)}
  \mapsto \mathcal F(U)\subset\mathcal O(U)
\end{equation}
by putting $\mathcal F(U):=\{u(x)\in\mathcal O(U)\,;\,Pu(x)=0\}$.
Put
\[
  U_{j,\epsilon,R}=
 \begin{cases}
  \{x=c_j+re^{\sqrt{-1}\theta}\,;\,0<r<\epsilon,\ R<\theta<R+2\pi\}
   &(c_j\ne\infty)\\
  \{x = re^{\sqrt{-1}\theta}\,;\, r>\epsilon^{-1},\  R<\theta<R+2\pi\}
   &(c_j=\infty).
 \end{cases}
\]
For simply connected domains $U$, $V\subset\overline{\mathbb C}'$, 
the map $\mathcal F$ satisfies 
\begin{align}
 &\mathcal F(U) \subset\mathcal O(U)\text{ \ and \ } \dim\mathcal F(U)= n,
 \label{eq:F0}\\
 &V\subset U\ \Rightarrow\ \mathcal F(V)=\mathcal F(U)|_V,\label{eq:F1}\\
 &\begin{cases}
  \exists \epsilon > 0,\ \forall \phi\in\mathcal F(U_{j,\epsilon,R}),\ 
  \exists C>0, \exists m>0\text{\ such that\ }\\
  |\phi(x)|<
     \begin{cases}
        C|x-c_j|^{-m} & (c_j\ne\infty,\ x\in U_{j,\epsilon,R}),\\
        C|x|^m & (c_j=\infty,\ x\in U_{j,\epsilon,R})\\
        & \text{\quad for }j=0,\dots,p,\ \forall R\in\mathbb R.
     \end{cases}
  \end{cases}\label{eq:F2}
\end{align}
Then we have the bijection
\begin{equation}\label{eq:FisP}
 \begin{matrix}
  \bigl\{\p^n+\displaystyle\sum_{j=0}^{n-1}a_j(x)\p^j\in W(x)\,:\,\text{Fuchsian}\bigr\}
   &\overset{\sim}\to&
   \bigl\{\mathcal F\text{ satisfying \eqref{eq:F0}--\eqref{eq:F2}}\bigr\}\\[-8pt]
   \rotatebox{90}{$\in$}&&\rotatebox{90}{$\in$}\\
   P&\mapsto&\bigl\{U\mapsto\{u\in\mathcal O(U)\,;\,Pu=0\}\bigr\}.
 \end{matrix}
\end{equation}
Here if $\mathcal F(U)=\sum_{j=1}^n\mathbb C\phi_j(x)$, 
\begin{equation}\label{eq:F2E}
 a_j(x)=(-1)^{n-j}\frac{\det\Phi_j}{\det\Phi_n}\text{ \ with \ }
 \Phi_j=\begin{pmatrix}
        \phi_1^{(0)}(x) & \cdots & \phi_n^{(0)}(x)\\
        \vdots & \vdots & \vdots\\
        \phi_1^{(j-1)}(x) & \cdots & \phi_n^{(j-1)}(x)\\
        \phi_1^{(j+1)}(x) & \cdots & \phi_n^{(j+1)}(x)\\
        \vdots & \vdots & \vdots\\
        \phi_1^{(n)}(x) & \cdots & \phi_n^{(n)}(x)
        \end{pmatrix}.
\end{equation}
The elements $\mathcal F_1$ and $\mathcal F_2$ of the right hand side of
\eqref{eq:FisP} are naturally identified if there exists a simply connected 
domain $U$ such that $\mathcal F_1(U)=\mathcal F_2(U)$.

Let
\[
  P = \p^n+a_{n-1}(x)\p^{n-1}+\cdots+a_0(x)
\]
be a Fuchsian differential operator with $p+1$ regular singular points 
$c_0=\infty$,$c_1,\dots,c_p$ and let $\lambda_{j,1},\dots,\lambda_{j,n}$ be 
the characteristic exponents of $P$ at $c_j$, respectively.
Since $a_{n-1}(x)$ is holomorphic at $x=\infty$ and $a_{n-1}(\infty)=0$,
there exists $a_{n-1,j}\in\mathbb C$ such that $a_{n-1}(x)=
 -\sum_{j=1}^p\frac{a_{n-1,j}}{x-c_j}$.
For $c\in\mathbb C$ we have $x^n(\p^n - cx^{-1}\p^{n-1}\bigr)=
 \vartheta^n-\bigl(c+\frac{n(n-1)}2\bigr)\vartheta^{n-1}+
 c_{n-2}\vartheta^{n-2}+\cdots+c_0$ with $c_j\in\mathbb C$. 
Hence we have
\[
 \lambda_{j,1}+\cdots+\lambda_{j,n}
   =\begin{cases}
    -\sum_{j=1}^pa_{n-1,j}-\frac{n(n-1)}{2}&(j=0),\\
    a_{n-1,j}+\frac{n(n-1)}{2}&(j=1,\dots,p),
   \end{cases}
\]
and the \textsl{Fuchs relation}\index{Fuchs relation}
\begin{equation}\label{eq:FC0}
 \sum_{j=0}^p\sum_{\nu=1}^n\lambda_{j,\nu} = \frac{(p-1)n(n-1)}{2}.
\end{equation}

Suppose $Pu=0$ is reducible.
Then $P=SR$ with $S,\, R\in W(x)$ so that $n'=\ord R<n$.
Since the solution $v(x)$ of $Rv=0$ satisfies $Pv(x)=0$, $R$ is also Fuchsian.
Note that the set of $m$ characteristic exponents 
$\{\lambda'_{j,\nu}\,;\,\nu=1,\dots,n'\}$ of $Rv=0$ at $c_j$ is a subset of 
$\{\lambda_{j,\nu}\,;\,\nu=1,\dots,n\}$.
The operator $R$ may have other singular points $c_1',\ldots,c_q'$
called \textsl{apparent singular points}\index{apparent singularity}
where any local solutions at
the points is analytic.  Hence the set characteristic exponents
at $x=c'_j$ are $\{\lambda'_{j,\nu}\,\:\,\nu=1,\ldots,n'\}$
such that $0\le \mu_{j,1}<\mu_{j,2}<\cdots<\mu_{j,n'}$
and $\mu_{j,\nu}\in\mathbb Z$ for $\nu=1,\dots,n'$ and $j=1,\dots,q$.
Since $\mu_{j,1}+\cdots+\mu_{j,n'}\ge\frac{n'(n'-1)}2$, the Fuchs
relation for $R$ implies
\begin{equation}\label{eq:FC1}
 \mathbb Z\ni \sum_{j=0}^p\sum_{\nu=1}^{n'} \lambda'_{j,\nu}\le
 \frac{(p-1)n'(n'-1)}2.
\end{equation}

\index{differential equation/operator!irreducible}
Fixing a generic point $q$ and pathes $\gamma_j$ around $c_j$ as in \eqref{fig:mon}
and moreover a base $\{u_1,\dots,u_n\}$ of local solutions of the equation $Pu=0$ 
at $q$, we can define monodromy generators $M_j\in GL(n,\mathbb C)$.
We call the tuple $\mathbf M=(M_0,\dots,M_p)$ the \textsl{monodromy} of the 
equation $Pu=0$.  The monodromy $\mathbf M$ is defined to be \textsl{irreducible}
if there exists no subspace $V$ of $\mathbb C^n$ such that 
$M_j V\subset V_j$ for $j=0,\dots,p$ and $0<\dim V<n$, which is equivalent to 
the condition that $P$ is irreducible.
\index{monodromy}

Suppose $Qv=0$ is another Fuchsian differential equation of order $n$
with the same singular points.
The monodromy $\mathbf N=(N_0,\dots,N_p)$ is 
similarly defined by fixing a base $\{v_1,\dots,v_n\}$ of local solutions 
of $Qv=0$ at $q$.
Then 
\begin{equation}\label{eq:isoWM}
\begin{split}
\mathbf M\sim\mathbf N&\ \overset{\text{def}}\Leftrightarrow \ 
 \exists g\in GL(n,\mathbb C)\text{ such that }
 N_j=gM_jg^{-1}\ (j=0,\dots,p)\\
 & \ \Leftrightarrow\ Qv=0
 \text{ is $W(x)$-isomorphic to }
 Pu=0.
\end{split}
\end{equation}

If $Qv=0$ is $W(x)$-isomorphic to $Pu=0$, the 
isomorphism defines an isomorphism between their solutions and then 
$N_j=M_j$ under the bases corresponding to the isomorphism.

Suppose there exists $g\in GL(n,\mathbb C)$ such that
$N_j=gM_jg^{-1}$ for $j=0,\dots,p$.
The equations $Pu=0$ and $Qu=0$ are $W(x)$-isomorphic
to certain first order systems $U'=A(x)U$ and $V'=B(x)V$ of rank $n$,
respectively.
We can choose bases $\{U_1,\dots,U_n\}$ and $\{V_1,\dots,V_n\}$ 
of local solutions of $PU=0$ and $QV=0$ at $q$, respectively, 
such that their monodromy generators corresponding $\gamma_j$ are
same for each $j$.
Put $\tilde U=(U_1,\dots,U_n)$ and $\tilde V=(V_1,\dots,V_n)$.
Then the element of the matrix $\tilde V\tilde U^{-1}$ is holomorphic at $q$ and
can be extended to a rational function of $x$ and then  $\tilde V\tilde U^{-1}$ 
defines a $W(x)$-isomorphism between the equations $U'=A(x)U$ and $V'=B(x)V$.
\begin{exmp}[apparent singularity]
 The differential equation
\begin{equation}\label{eq:ApSing}
 x(x-1)(x-c)\tfrac{dy^2}{dx} + (x^2-2cx+c)\tfrac{dy}{dx}=0
\end{equation}
is a special case of Heun's equation \eqref{eq:Heun} with 
$\alpha=\beta=\lambda=0$ and $\gamma=\delta=1$.
It has regular singularities at $0$, $1$, $c$ and $\infty$ and its Riemann scheme
equals
\index{Heun's equation}
\begin{equation}
 \begin{Bmatrix}
   x= \infty & 0 & 1 & c\\
      0 & 0 & 0 & 0\\
      0 & 0 & 0 & 2
 \end{Bmatrix}.
\end{equation}

\index{Wronskian}
The local solution at $x=c$ corresponding to the characteristic exponent 0
is holomorphic at the point and therefore $x=c$ is an apparent singularity,
which corresponds to the zero of the Wronskian $\det\Phi_n$ in 
\eqref{eq:F2E}.
Note that the equation \eqref{eq:ApSing} has the solutions $1$ and 
$c\log x+(1-c)\log(x-1)$.
\index{apparent singularity}

The equation \eqref{eq:ApSing} is not $W(x)$-isomorphic to Gauss 
hypergeometric equation if $c\ne 0$ and $c\ne 1$, which follows from 
the fact that $c$ is a modulus of the isomorphic classes of the monodromy.
It is easy to show that any tuple of matrices 
$\mathbf M=(M_0,M_1,M_2)\in GL(2,\mathbb C)$ satisfying $M_2M_1M_0=I_2$ is 
realized as the monodromy of the equation obtained by applying a suitable addition 
$\RAd\bigl(x^{\lambda_0}(1-x)^{\lambda_1}\bigr)$ to a certain Gauss hypergeometric 
equation or the above equation.
\end{exmp}
\subsection{A confluence}
\index{confluence}
The non-trivial equation $(x-a)\tfrac{du}{dx} = \mu u$ 
obtained by the addition $\RAd\bigl((x-a)^\mu\bigr)\p$ 
has a solution $(x-a)^\mu$ and regular singularities at $x=c$ 
and $\infty$.  To consider 
the confluence of the point $x=a$ to $\infty$ we put $a=\frac 1c$.  
Then the equation is \[\bigl((1-cx)\p+c\mu\bigr)u=0\] and it has a 
solution $u(x)=(1-cx)^\mu$.

The substitution $c=0$ for the operator $(1-cx)\p+c\mu\in W[x;c,\mu]$
gives the trivial equation $\tfrac{du}{dx}=0$ with the trivial solution 
$u(x)\equiv1$.
To obtain a nontrivial equation we introduce the parameter 
$\lambda=c\mu$ and we have the equation 
\[\bigl((1-cx)\p+\lambda\bigr)u=0\] with the solution
$(1-cx)^{\frac \lambda c}$. 
The function $(1-cx)^{\frac \lambda c}$ has
the holomorphic parameters $c$ and $\lambda$ and the substitution
$c=0$ gives the equation $(\p+\lambda)u=0$ with the solution $e^{-\lambda x}$.
Here $(1-cx)\p+\lambda=\RAdei\bigl(\frac{\lambda}{1-cx}\bigr)\p
=\RAd\bigl((1-cx)^{\frac{\lambda}{c}}\bigr)\p$.

This is the simplest example of the confluence and we define
a confluence of simultaneous additions in this subsection.

\subsection{Versal additions}\label{sec:VAd}\index{versal!addition}
\index{confluence!addition}
For a function $h(c,x)$ with a holomorphic parameter $c\in\mathbb C$ 
we put
\begin{equation}
 \begin{split}
 h_n(c_1,\dots,c_n,x)
   &:=\frac1{2\pi\sqrt{-1}}\int_{|z|=R}
     \frac{h(z,x)dz}{\prod_{j=1}^n(z-c_j)}\\
   &=\sum_{k=1}^n\frac{h(c_k,x)}{\prod_{1\le i\le n,\, i\ne k}
   (c_k-c_i)}
 \end{split}
\end{equation}
with a sufficiently large $R>0$.
Put
\begin{equation}
 h(c,x):=c^{-1}\log(1-c x)= -x - \frac c2x^2-\frac{c^2}3x^3
-\frac{c^3}4x^4-\cdots.
\end{equation}
Then
\begin{equation}
  (1-c x)h'(c,x)=-1
\end{equation}
and
\begin{equation}\label{eq:hnid}
\begin{split}
 h'_n(c_1,\dots,c_n,x)\prod_{1\le i\le n}(1-c_ix)
  &=-\sum_{k=1}^n\frac{\prod_{1\le i\le n,\,i\ne k}(1-c_ix)}
        {\prod_{1\le i\le n,\,i\ne k}(c_k-c_i)}\\
  &=-x^{n-1}.
\end{split}
\end{equation}
The last equality in the above is obtained as follows.
Since the left hand side of \eqref{eq:hnid} 
is a holomorphic function of $(c_1,\dots,c_n)\in\mathbb C^n$ 
and the coefficient of $x^m$ is homogeneous of degree $m-n+1$, it is zero if
$m<n-1$.
The coefficient of $x^{n-1}$ proved to be $-1$ by putting $c_1=0$.
Thus we have
\begin{align}
  h_n(c_1,\dots,c_n,x)&=-\int_0^x\frac{t^{n-1}dt}{\prod_{1\le i\le n}(1-c_it)},\\
\begin{split}
  e^{\lambda_n h_n(c_1,\dots,c_n,x)}
  &\circ\Bigl(\prod_{1\le i\le n}\bigl(1-c_ix\bigr)\Bigr)\p\circ
  e^{-\lambda_n h_n(c_1,\dots,c_n,x)}\\&=
  \Bigl(\prod_{1\le i\le n}\bigl(1-c_ix\bigr)\Bigr)\p
  +\lambda_n x^{n-1},
\end{split}\\
  e^{\lambda_n h_n(c_1,\dots,c_n,x)}&=
 \prod_{k=1}^n\Bigl(1-c_kx\Bigr)^{\frac{\lambda_n}{c_k\prod_{\substack{1\le i\le n\\i\ne k}}(c_k-c_i)}}.
\end{align}
\begin{defn}[versal addition]
We put
\index{addition!confluence}\index{00AdV@$\AdV$, $\AdV^0$}
\begin{align}
\begin{split}
 \AdV_{(\frac1{c_1},\dots,\frac1{c_p})}(\lambda_1,\dots,\lambda_p)&:=
 \Ad\left(
 \prod_{k=1}^p
   \Bigl(1-c_kx\Bigr)^{
    \sum_{n=k}^p{\frac{\lambda_n}{c_k\prod_{\substack{1\le i\le n\\i\ne k}}
     (c_k-c_i)}}}\right)\\
 &\phantom{:}=\Adei\left(-\sum_{n=1}^p\frac{\lambda_n x^{n-1}}
  {\prod_{i=1}^n(1-c_ix)}\right),
\end{split}
\\
\RAdV_{(\frac1{c_1},\dots,\frac1{c_p})}(\lambda_1,\dots,\lambda_p)&=
 \Red\circ \AdV_{(\frac1{c_1},\dots,\frac1{c_p})}(\lambda_1,\dots,\lambda_p).
\end{align}
We call $\RAdV_{(\frac1{c_1},\dots,\frac1{c_p})}(\lambda_1,\dots,\lambda_p)$ 
a \textsl{versal addition} at the $p$ points
$\frac1{c_1},\dots,\frac1{c_p}$.
\end{defn}
\index{00RAdV@$\RAdV$}
Putting
\begin{align*}
 h(c,x) := \log(x-c),
\end{align*}
we have
\begin{align*}
 h'_n(c_1,\dots,c_n,x)\prod_{1\le i\le n}(x-c_i)
 = \sum _{k=1}^n \frac{\prod_{1\le i\le n,\ i\ne k}(x-c_i)}{\prod_{1\le i\le n,\ i\ne k}(c_k-c_i)}=1
\end{align*}
and the \textsl{conflunence of additions around the origin} is 
defined by
\index{addition!confluence!around the origin}
\begin{align}
\begin{split}
 \AdV_{(a_1,\dots,a_p)}^0(\lambda_1,\dots,\lambda_p):\!&=
 \Ad\left(
 \prod_{k=1}^p
    (x-a_k)^{\sum_{n=k}^p
    {\frac{\lambda_n}{\prod_{\substack{1\le i\le n\\i\ne k}}
     (a_k-a_i)}}}\right)\\
 &=\Adei
 \left(\sum_{n=1}^p\frac{\lambda_n}{\prod_{1\le i\le n}(x-a_i)}\right),
\end{split}
\\
\RAdV_{(a_1,\dots,a_p)}^0(\lambda_1,\dots,\lambda_p)&=
 \Red\circ \AdV_{(a_1,\dots,a_p)}^0(\lambda_1,\dots,\lambda_p).
\end{align}
\begin{rem}
Let $g_k(c,x)$ be meromorphic functions of $x$ with the holomorphic
parameter $c=(c_1,\dots,c_p)\in\mathbb C^p$ for $k=1,\dots,p$ such that
\[
  g_k(c,x) \in
  \sum_{i=1}^p\mathbb C\frac{1}{1-c_ix}
  \text{ \ if \ }0\ne c_i\ne c_j\ne 0\quad(1\le i< j\le p,\ 1\le k\le p).
\]
Suppose $g_1(c,x),\dots,g_p(c,x)$ are linearly independent for any 
fixed $c\in\mathbb C^p$.  Then there exist entire functions $a_{i,j}(c)$ of
$c\in\mathbb C^p$ such that
\[
  g_k(x,c) = \sum_{n=1}^p \frac{a_{k,n}(c)x^{n-1}}{\prod_{i=1}^n(1-c_ix)}
\]
and $\bigl(a_{i,j}(c)\bigr)\in GL(p,\mathbb C)$ for any $c\in\mathbb C^p$
(cf.~\cite[Lemma~6.3]{O1}).
Hence the versal addition is essentially unique.
\end{rem}
\subsection{Versal operators}\label{sec:Versal}\index{versal!operator}
If we apply a middle convolution to a versal addition of the trivial operator 
$\p$, we have a \textsl{versal Jordan-Pochhammer} operator. 
\index{Jordan-Pochhammer!versal}
\begin{align}
 P:\!&=\RAd(\p^{-\mu})\circ
     \RAdV_{(\frac1{c_1},\dots,\frac1{c_p})}(\lambda_1,\dots,\lambda_p)\p
 \\
 &=\RAd(\p^{-\mu})\circ\Red\Bigl(
   \p+\sum_{k=1}^p\frac{\lambda_kx^{k-1}}{\prod_{\nu=1}^k(1-c_\nu x)}
   \Bigr)
 \notag\\
 &=\p^{-\mu+p-1}\Bigl(p_0(x)\p+q(x)\Bigr)\p^{\mu}
  =\sum_{k=0}^p p_k(x)\p^{p-k}\notag
\intertext{with}
 p_0(x) &=\prod_{j=1}^p(1-c_jx),\quad
 q(x)    =\sum_{k=1}^p\lambda_kx^{k-1}\prod_{j=k+1}^p(1-c_jx),
\notag\\
 p_k(x)&=\binom{-\mu+p-1}{k}
  p_0^{(k)}(x) + 
  \binom{-\mu+p-1}{k-1}
  q^{(k-1)}(x)
.\notag
\end{align}
We naturally obtain the integral representation of solutions of
the versal Jordan-Pochhammer equation $Pu=0$,
which we show in the case $p=2$ as follows.
\begin{exmp}\label{ex:VGHG}
We have the \textsl{versal Gauss hypergeometric} operator
\index{hypergeometric equation/function!Gauss!versal}
\index{hypergeometric equation/function!Gauss!confluence}
\begin{align*}
 P_{c_1,c_2;\lambda_1,\lambda_2,\mu}:\!&=\RAd(\p^{-\mu})\circ
   \RAdV_{(\frac1{c_1},\frac1{c_2})}(\lambda_1,\lambda_2)\p\allowdisplaybreaks\\
  &=\RAd(\p^{-\mu})\circ
    \RAd\left((1-c_1x)^{\frac{\lambda_1}{c_1}+\frac{\lambda_2}{c_1(c_1-c_2)}}
(1-c_2x)^{\frac{\lambda_2}{c_2(c_2-c_1)}}\right)\\
  &=\RAd(\p^{-\mu})\circ
    \RAdei\left(-\tfrac{\lambda_1}{1-c_1x}
    -\tfrac{\lambda_2 x}{(1-c_1x)(1-c_2x)} \right)
    \p\allowdisplaybreaks\\
  &=\RAd(\p^{-\mu})\circ
   \Red\left(\p+\tfrac{\lambda_1}{1-c_1x}+\tfrac{\lambda_2x}{(1-c_1x)(1-c_2x)}
    \right)\allowdisplaybreaks\\
  &=\Ad(\p^{-\mu})\left(
     \p(1-c_1x)(1-c_2x)\p+\p(\lambda_1(1-c_2x)+\lambda_2 x)
    \right)\allowdisplaybreaks\\
  &=\bigl((1-c_1x)\p+c_1(\mu-1)\bigr)\bigl((1-c_2x)\p + c_2\mu\bigr)
 \\
  &\quad{}
   +\lambda_1\p + (\lambda_2-\lambda_1c_2)(x\p+1-\mu)\allowdisplaybreaks\\
  &=(1-c_1x)(1-c_2x)\p^2\\
  &\quad{}+\bigl((c_1+c_2)(\mu-1)+\lambda_1
   +(2c_1c_2(1-\mu)+\lambda_2-\lambda_1c_2)x\bigr)\p\\
  &\quad{}+
   (\mu-1)(c_1c_2\mu+\lambda_1c_2-\lambda_2),
\end{align*}
whose solution is obtained by applying $I_c^\mu$ to
\[
K_{c_1,c_2;\lambda_1,\lambda_2}(x)=
 (1-c_1x)^{\frac{\lambda_1}{c_1}+\frac{\lambda_2}{c_1(c_1-c_2)}}(1-c_2x)^{\frac{\lambda_2}{c_2(c_2-c_1)}}
\]
The equation $Pu=0$ has the Riemann scheme
\begin{equation}
  \begin{Bmatrix}
   x = \frac1{c_1} & \frac 1{c_2} & \infty\\
   0               & 0 &1-\mu&\!\!;\,x\\
   \frac{\lambda_1}{c_1}+\frac{\lambda_2}{c_1(c_1-c_2)} + \mu
    & \frac{\lambda_2}{c_2(c_2-c_1)} + \mu
    &-\frac{\lambda_1}{c_1}+\frac{\lambda_2}{c_1c_2}-\mu
  \end{Bmatrix}.
\end{equation}
Thus we have the following well-known confluent equations
\index{Kummer's equation}
\begin{align*}
P_{c_1,0;\lambda_1,\lambda_2,\mu}&=
 (1-c_1x)\p^2+\bigl(c_1(\mu-1)+\lambda_1+\lambda_2x\bigr)\p
 -\lambda_2(\mu-1),&&\text{(Kummer)}\\
K_{c_1,0;\lambda_1,\lambda_2}&=
 (1-c_1x)^{\frac{\lambda_1}{c_1}+\frac{\lambda_2}{c_1^2}}
 \exp({\tfrac{\lambda_2 x}{c_1}}),
\allowdisplaybreaks\\
P_{0,0;0,-1,\mu}
&=\p^2-x\p+(\mu-1),&&\text{(Hermite)}\\
\Ad(e^{\frac{1}4x^2})&P_{0,0;0,1,\mu}=
(\p-\tfrac{1}2x)^2+x(\p-\tfrac{1}2x)
 -(\mu-1)\allowdisplaybreaks\\
&=\p^2+(\tfrac12-\mu-\tfrac{x^2}4),
&&\text{(Weber)}\\
K_{0,0;0,\mp1}&=
 \exp\Bigl(\int_0^x \!\pm tdt\Bigr)=\exp(\pm \tfrac{x^2}{2}).
\end{align*}
The solution
\begin{align*}
 D_{-\mu}(x)&:=(-1)^{-\mu}e^{\frac{x^2}4}I_\infty^\mu(e^{-\frac{x^2}2})
  = \frac{e^{\frac{x^2}4}}{\Gamma(\mu)}
   \int_x^\infty e^{-\frac{t^2}2}(t-x)^{\mu-1}dt\\
 &= \frac{e^{\frac{x^2}4}}{\Gamma(\mu)}
   \int_0^\infty e^{-\frac{(s+x)^2}2}s^{\mu-1}ds
 = \frac{e^{-\frac{x^2}4}}{\Gamma(\mu)}
     \int_0^\infty e^{-xs-\frac{t^2}2}s^{\mu-1}ds\\
 &\sim x^{-\mu}e^{-\frac{x^2}4}
   {}_2F_0(\tfrac\mu2,\tfrac\mu2+\tfrac12;-\tfrac2{x^2})
  =\sum_{k=0}^\infty x^{-\mu}e^{-\frac{x^2}4}
   \frac{(\tfrac\mu2)_k(\tfrac\mu2+\tfrac12)_k}{k!}\bigl(-\tfrac2{x^2}\bigr)^k
\end{align*}
of Weber's equation $\frac{d^2u}{dx^2}=(\frac{x^2}4+\mu-\frac12)u$
is called a parabolic cylinder function
(cf.~\cite[\S16.5]{WW}).
Here the above last line is an asymptotic expansion when $x\to+\infty$.
\index{Weber's equation}\index{parabolic cylinder function}

The normal form of Kummer equation is obtained by the coordinate transformation
$y=x-\frac1{c_1}$ but we also obtain it as follows:
\begin{align*}
 P_{c_1;\lambda_1,\lambda_2,\mu}
  :\!&=\RAd(\p^{-\mu})\circ\Red\circ \Ad(x^{\lambda_2})\circ \AdV_{\frac1{c_1}}
(\lambda_1)\p
\allowdisplaybreaks\\
  &=\RAd(\p^{-\mu})\circ\Red\bigl(
   \p - \tfrac{\lambda_2}{x}+\tfrac{\lambda_1}{1-c_1x}\bigr)
\allowdisplaybreaks\\
  &=\Ad(\p^{-\mu})\bigl(\p x(1-c_1x)\p
   -\p(\lambda_2-(\lambda_1+c_1\lambda_2)x)\bigr)
\allowdisplaybreaks\\
  &=(x\p +1 -\mu)\bigl((1-c_1x)\p+c_1\mu)-\lambda_2\p
   +(\lambda_1+c_1\lambda_2)(x\p +1 -\mu)
\allowdisplaybreaks\\
  &=x(1-c_1x)\p^2+\bigl(1-\lambda_2-\mu
     +(\lambda_1+c_1(\lambda_2+2\mu-2))x\bigr)\p\\
  &\quad{}
     +(\mu-1)\bigl(\lambda_1+c_1(\lambda_2+\mu)\bigr),
\allowdisplaybreaks\\
 P_{0;\lambda_1,\lambda_2,\mu}
 &=x\p^2+(1-\lambda_2-\mu +\lambda_1x)\p
 +\lambda_1(\mu-1),\\
 P_{0;-1,\lambda_2,\mu}
 &=x\p^2+(1-\lambda_2-\mu - x)\p
 +1-\mu\qquad\text{(Kummer)},
\allowdisplaybreaks\\
 K_{c_1;\lambda_1,\lambda_2}(x)&:=x^{\lambda_2}
  (1-c_1x)^{\frac{\lambda_1}{c_1}},\quad
 K_{0;\lambda_1,\lambda_2}(x)=x^{\lambda_2}\exp(-\lambda_1x).
\end{align*}
The Riemann scheme of the equation $P_{c_1;\lambda_1,\lambda_2,\mu}u=0$ is 
\begin{align}
 \begin{Bmatrix}
  x=0 & \frac1{c_1} & \infty \\
    0 & 0           &1-\mu&\!\!;\,x\\
   \lambda_2+\mu & \frac{\lambda_1}{c_1}+\mu&-\frac{\lambda_1}{c_1}-\lambda_2-\mu 
 \end{Bmatrix}
\end{align}
and the local solution at the origin corresponding to the characteristic 
exponent $\lambda_2+\mu$ is given by
\begin{align*}
I_0^\mu(K_{c_1;\lambda_1,\lambda_2})(x)
 = \frac1{\Gamma(\mu)}\int_0^x
  t^{\lambda_2}
  (1-c_1t)^{\frac{\lambda_1}{c_1}}(x-t)^{\mu-1}dt.
\end{align*} 
In particular, 
we have a solution
\begin{align*}
 u(x)&=I_0^\mu(K_{0;-1,\lambda_2})(x)=
 \frac1{\Gamma(\mu)}\int_0^x t^{\lambda_2}e^{t}(x-t)^{\mu-1}dt\\
 &=\frac{x^{\lambda_2+\mu}}{\Gamma(\mu)}\int_0^1
   s^{\lambda_2}(1-s)^{\mu-1}e^{xs}ds\qquad(t=xs)\\
 &=\frac{\Gamma(\lambda_2+1)x^{\lambda_2+\mu}}{\Gamma(\lambda_2+\mu+1)}
   {}_1F_1(\lambda_2+1,\mu+\lambda_2+1;x)
\end{align*}
of the Kummer equation $P_{0;-1,\lambda_2,\mu}u=0$ corresponding to the
exponent $\lambda_2+\mu$ at the origin.
If $c_1\notin(-\infty,0]$ and $x\notin[0,\infty]$ and 
$\lambda_2\notin\mathbb Z_{\ge0}$, 
the local solution at $-\infty$ corresponding to the exponent
$-\lambda_2-\frac{\lambda_1}{c_1}-\mu$ is given by
\begin{align*}
 &\frac1{\Gamma(\mu)}
   \int_{-\infty}^x
  (-t)^{\lambda_2}
  (1-c_1t)^{\frac{\lambda_1}{c_1}}(x-t)^{\mu-1}dt\\
 &=\frac{(-x)^{\lambda_2}}{\Gamma(\mu)}\int_0^\infty 
   \left(1-\frac{s}{x}\right)^{\lambda_2}
   \bigl(1+c_1(s-x)\bigr)^{\frac{\lambda_1}{c_1}}
   s^{\mu-1}ds\qquad(s=x-t)
 \\
  &\hspace{-20pt}\xrightarrow[c_1\to+0]{\lambda_1=-1}\quad\\
 &
   \frac{(-x)^{\lambda_2}}{\Gamma(\mu)}\int_{0}^\infty
   \left(1-\frac{s}{x}\right)^{\lambda_2}e^{x-s}
   s^{\mu-1}ds\\
&=
   \frac{(-x)^{\lambda_2}e^x}{\Gamma(\mu)}\int_{0}^\infty
   s^{\mu-1}e^{-s}\left(1-\frac{s}{x}\right)^{\lambda_2}ds\\
&\sim \sum_{n=0}^\infty \frac{\Gamma(\mu+n)\Gamma(-\lambda_2+n)}{\Gamma(\mu)\Gamma(-\lambda_2)n!x^n}(-x)^{\lambda_2}
 e^x =(-x)^{\lambda_2}e^{x}{}_2F_0(-\lambda_2,\mu;\tfrac1x).
\end{align*}
Here the above last line is an asymptotic expansion of a rapidly decreasing
solution of the Kummer equation when $\mathbb R\ni-x\to+\infty$.
The Riemann scheme of the equation $P_{0;-1,\lambda_2,\mu}u=0$ can be expressed by
\begin{equation}
 \begin{Bmatrix}
  x = 0            & \infty     & (1)\\
      0            & 1-\mu      &  0 \\
     \lambda_2+\mu & -\lambda_2 &  1
 \end{Bmatrix}.
\end{equation}\index{Riemann scheme}%
In general, the expression
$
  \begin{Bmatrix}
      \infty  & (r_1) & \cdots & (r_k)\\
      \lambda & \alpha_1 & \cdots &\alpha_k
  \end{Bmatrix}
$
with $0<r_1<\cdots<r_k$ means the existence of a solution $u(x)$ satisfying 
\begin{equation}
 u(x)\sim x^{-\lambda}
 \exp\Bigl(\sum_{\nu=1}^k\alpha_\nu\frac{x^{r_\nu}}{r_\nu}\Bigr)
\text{ \ for \ }|x|\to\infty
\end{equation}
under a suitable restriction of $\Arg x$.
Here $k\in\mathbb Z_{\ge 0}$ and $\lambda$, $\alpha_\nu\in\mathbb C$.
\end{exmp}
\section{Series expansion}\label{sec:series}
In this section we review the Euler transformation 
and remark on its relation to middle convolutions.

First we note the following which will be frequently used:
\begin{gather}
 \int_0^1 t^{\alpha-1}(1-t)^{\beta-1} dt = 
 \frac{\Gamma(\alpha)\Gamma(\beta)}{\Gamma(\alpha+\beta)},\label{eq:beta}
 \allowdisplaybreaks\\
 \begin{split}
 (1-t)^{-\gamma}&=\sum_{\nu=0}^\infty
     \frac{(-\gamma)(-\gamma-1)\cdots(-\gamma-\nu+1)}{\nu!}(-t)^\nu\\
     &= \sum_{\nu=0}^\infty\frac{\Gamma(\gamma+\nu)}{\Gamma(\gamma)\nu!}t^\nu
     = \sum_{\nu=0}^\infty\frac{(\gamma)_\nu}{\nu!} t^\nu.\noindent
 \end{split}\label{eq:seriesP}
\end{gather}
The integral \eqref{eq:beta} converges if $\RE \alpha>0$ and $\RE \beta>0$
and the right hand side is meromorphically continued to $\alpha\in\mathbb C$
and $\beta\in\mathbb C$.  If the integral in \eqref{eq:beta} is interpreted
in the sense of generalized functions, \eqref{eq:beta} is valid if
$\alpha\notin\{0,-1,-2,\ldots\}$ and $\beta\notin\{0,-1,-2,\ldots\}$.

Euler transformation $I_c^\mu$ is sometimes expressed by
$\p^{-\mu}$ and as is shown in (\cite[\S5.1]{Kh}), we have
\begin{align}
\begin{split}
 I_c^\mu u(x):\!&=\frac{1}{\Gamma(\mu)}\int_c^x (x-t)^{\mu-1} u(t) dt
             \label{eq:Icdef}\\
             &=\frac{(x-c)^{\mu}}{\Gamma(\mu)}\int_0^1
 (1-s)^{\mu-1}u((x-c)s+c)ds,
\end{split}\allowdisplaybreaks\\
 I_c^\mu\circ I_c^{\mu'} &= I_c^{\mu+\mu'},\allowdisplaybreaks\label{eq:Icprod}\\
 I_c^{-n}u(x)&=\frac{d^n}{dx^n}u(x),\allowdisplaybreaks\label{eq:Icdif}\\
\begin{split}
 I_c^{\mu}\sum_{n=0}^\infty c_n(x-c)^{\lambda+n}&=
  \sum_{n=0}^\infty
  \frac{\Gamma(\lambda+n+1)}{\Gamma(\lambda+\mu+n+1)}
  c_n(x-c)^{\lambda+\mu+n}\\
  &=\frac{\Gamma(\lambda+1)}{\Gamma(\lambda+\mu+1)}
  \sum_{n=0}^\infty\frac{(\lambda+1)_nc_n}{(\lambda+\mu+1)_n}(x-c)^{\lambda+\mu+n},
\end{split}\label{eq:IcP}\allowdisplaybreaks\\
 I_\infty^{\mu}\sum_{n=0}^\infty c_n x^{\lambda-n}&=
 e^{\pi\sqrt{-1}\mu}\sum_{n=0}^\infty
     \frac{\Gamma(-\lambda-\mu+n)}{\Gamma(-\lambda+n)}
  c_n x^{\lambda+\mu-n}.\label{eq:IinfP}
\end{align}
Moreover the following equalities which follow from \eqref{eq:Gmid} 
are also useful.
\begin{equation}
\begin{split}
 &I_0^{\mu}\sum_{n=0}^\infty c_nx^{\lambda+n}(1-x)^\beta\\
 &\ =\frac{\Gamma(\lambda+1)}{\Gamma(\lambda+\mu+1)}
   \sum_{m,\,n=0}^\infty\frac{(\lambda+1)_{m+n}(-\beta)_mc_n}
   {(\lambda+\mu+1)_{m+n}m!}x^{\lambda+\mu+m+n}\\
 &\ =\frac{\Gamma(\lambda+1)}{\Gamma(\lambda+\mu+1)}(1-x)^{-\beta}
    \sum_{m,\,n=0}^\infty\frac{(\lambda+1)_n(\mu)_m(-\beta)_mc_n}
   {(\lambda+\mu+1)_{m+n}m!}x^{\lambda+\mu+n}\Bigl(\frac x{x-1}\Bigr)^m.
\end{split}\label{eq:I0H}
\end{equation}

If $\lambda\notin\mathbb Z_{<0}$ (resp.\ $\lambda+\mu\notin\mathbb Z_{\ge0})$  
and moreover the power series $\sum_{n=0}^\infty c_nt^n$ has a positive
radius of convergence, the equalities \eqref{eq:IcP} (resp,\  
\eqref{eq:IinfP}) is valid since $I_c^\mu$ (resp.\ $I_\infty^\mu$) can be 
defined through analytic continuations with respect to the parameters 
$\lambda$ and $\mu$.
Note that $I_c^\mu$ is an invertible map of $\mathcal O_c(x-c)^\lambda$ 
onto $\mathcal O_c(x-c)^{\lambda+\mu}$
if $\lambda\notin\{-1,-2,-3,\ldots\}$ and 
$\lambda+\mu \notin\{-1,-2,-3,\ldots\}$.

\begin{prop}\label{prop:RAdIc}
Let\/ $\lambda$ and\/ $\mu$ be complex numbers satisfying\/ 
$\lambda\notin\mathbb Z_{<0}$. 
Differentiating the equality \eqref{eq:IcP} with respect to\/ $\lambda$,
we have the linear map
\begin{equation}\label{eq:Iclog}
  I_c^\mu: \mathcal O_c(\lambda,m) \to \mathcal O_c(\lambda+\mu,m)
\end{equation}
under the notation \eqref{def:Ocm}, 
which is also defined by \eqref{eq:Icdef} if\/ $\RE\lambda > -1$ and\/
$\RE\mu > 0$.
Here\/ $m$ is a non-negative integer. 
Then we have
\begin{equation}\label{eq:Iclogtop}
 I_c^\mu\bigl(\sum_{j=0}^m \phi_j\log^j(x-c)\bigr)
 - I_c^\mu(\phi_m)\log^m(x-c)\in\mathcal O(\lambda+\mu,m-1)
\end{equation}
for\/ $\phi_j\in\mathcal O_c$ and\/ $I_c^\mu$ satisfies \eqref{eq:Icp}.
The map \eqref{eq:Iclog} is bijective if\/ $\lambda+\mu\notin\mathbb Z_{<0}$.
In particular for\/ $k\in\mathbb Z_{\ge0}$ we have\/
$I_c^\mu\p^k=\p^kI_c^\mu=I_c^{\mu-k}$ on\/ 
$\mathcal O_c(\lambda,m)$ if\/ $\lambda-k\notin\{-1,-2,-3,\dots\}$.

Suppose that\/ $P\in W[x]$ and\/ $\phi\in\mathcal O_c(\lambda,m)$ satisfy\/
$P\phi=0$, $P\ne0$ and\/ $\phi\ne0$.
Let\/ $k$ and\/ $N$ be non-negative integers such that 
\begin{equation}
 \p^k P = \sum_{i=0}^N\sum_{j\ge 0}a_{i,j}\p^i\bigl((x-c)\p\bigr)^j
\end{equation}
with suitable\/ $a_{j,j}\in\mathbb C$ and put\/ 
$Q=\sum_{i=0}^N\sum_{j\ge0}c_{i,j}\p^i\bigl((x-c)\p - \mu\bigr)^j$.
Then if\/ $\lambda\notin\{N-1,N-2,\ldots,0,-1,\ldots\}$, we have
\begin{equation}
 I_c^\mu\p^kPu=QI_c^\mu(u) \text{ \ for \ }u\in\mathcal O_c(\lambda,m)
\end{equation}
and in particular\/ $QI_c^\mu(\phi)=0$.

Fix\/ $\ell\in\mathbb Z$.
For\/ $u(x)=\sum_{i=\ell}^\infty\sum_{j=0}^mc_{i,j}(x-c)^i\log^j(x-c)
\in\mathcal O_c(\ell,m)$ we
put\/ $(\Gamma_Nu)(x)=\sum_{\nu=\max\{\ell,N-1\}}^\infty
\sum_{j=0}^mc_{i,j}(x-c)^i\log^j(x-c)$.
Then
\[
  \Bigl(\prod_{\ell-N\le\nu\le N-1}\bigl((x-c)\p-\nu\bigr)^{m+1}\Bigr)
  \p^kP\bigl(u(x) - (\Gamma_Nu)(x)\bigr)=0
\]
ane therefore
\begin{equation}
 \begin{split}
  &\Bigl(\prod_{\ell-N\le\nu\le N-1}\bigl((x-c)\p-\mu-\nu\bigr)^{m+1}\Bigr)
  QI_c^\mu(\Gamma_Nu)\\
  &\quad=
  I_c^\mu\Bigl(\prod_{\ell-N\le\nu\le N-1}\bigl((x-c)\p-\nu\bigr)^{m+1}\Bigr)
  \p^kPu.
 \end{split}
\end{equation}
In particular, $\prod_{\ell-N\le\nu\le N-1}\bigl((x-c)\p-\mu-\nu\bigr)^{m+1}
\cdot QI_c^\mu\bigl(\Gamma_N(u)\bigr)=0$ if\/ $Pu=0$.

Suppose moreover\/ $\lambda\notin\mathbb Z$ and $\lambda+\mu\notin\mathbb Z$
and\/ $Q=ST$ with\/ $S$, $T\in W[x]$ such that\/ $x=c$ is not a singular point 
of the operator\/ $S$. 
Then\/ $TI_c^\mu(\phi)=0$.  In particular, 
\begin{equation}
\bigl(\RAd(\p^{-\mu})P\bigr)I_c^\mu(\phi)=0.
\end{equation}
Hence if the differential equation\/ $\bigl(\RAd(\p^{-\mu})P\bigr)v=0$
is irreducible, we have
\begin{equation}
  W(x)\bigl(\RAd(\p^{-\mu})P\bigr)=\{T\in W(x)\,;\,
  TI_c^\mu(\phi)=0\}.
\end{equation}
The statements above are also valid even if we replace\/ 
$x-c$,  $I_c^\mu$ by \/$\frac1x$, $I_\infty^\mu$, respectively.
\end{prop}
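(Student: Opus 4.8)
The plan is to build up the statement in the same layered way it is phrased, starting from the basic behaviour of $I_c^\mu$ on the spaces $\mathcal O_c(\lambda,m)$ and then bootstrapping to the operator-level identities. First I would establish the map \eqref{eq:Iclog}: for $\RE\lambda>-1$, $\RE\mu>0$ this is just \eqref{eq:Icdef} applied term by term, and \eqref{eq:IcP} gives the explicit action on $(x-c)^{\lambda+n}\mathcal O_c$; differentiating \eqref{eq:IcP} with respect to $\lambda$ (legitimately, since the series converges locally uniformly and $\Gamma$ is analytic away from poles) produces the action on the $\log^j$-components and yields \eqref{eq:Iclogtop}, with the top-degree logarithmic term transported by $I_c^\mu$ itself and lower-order corrections. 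Analytic continuation in $\lambda$ and $\mu$ extends all of this to $\lambda\notin\mathbb Z_{<0}$; bijectivity when additionally $\lambda+\mu\notin\mathbb Z_{<0}$ follows by exhibiting $I_c^{-\mu}$ as inverse via \eqref{eq:Icprod} and \eqref{eq:Icdif}, noting that the obstruction to invertibility is exactly a pole of one of the gamma-quotients in \eqref{eq:IcP}. The identity $I_c^\mu\p^k=\p^kI_c^\mu=I_c^{\mu-k}$ on $\mathcal O_c(\lambda,m)$ with $\lambda-k\notin\{-1,-2,\dots\}$ is then a special case of \eqref{eq:Icp} combined with \eqref{eq:Icprod}--\eqref{eq:Icdif}.

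Next I would prove the intertwining relation $I_c^\mu\p^kPu=QI_c^\mu(u)$. The key computational input is \eqref{eq:Icp}: $I_c^\mu\p=\p I_c^\mu$ and $I_c^\mu\vartheta_c=(\vartheta_c-\mu)I_c^\mu$, where $\vartheta_c:=(x-c)\p$. Writing $\p^kP=\sum_{i,j}a_{i,j}\p^i\vartheta_c^{\,j}$ and applying these two commutation rules monomial by monomial immediately gives $I_c^\mu\p^kP=\bigl(\sum_{i,j}a_{i,j}\p^i(\vartheta_c-\mu)^j\bigr)I_c^\mu=QI_c^\mu$ as operators on $\mathcal O_c(\lambda,m)$. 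The hypothesis $\lambda\notin\{N-1,\dots,0,-1,\dots\}$ is precisely what guarantees that every intermediate application of $I_c^\mu$ stays within the regime where \eqref{eq:Icp} and \eqref{eq:Iclog} are valid (the shifts by at most $N$ caused by the $\p^i$ and $\vartheta_c^{\,j}$ with $i\le N$ never land on a forbidden value). Specializing to $u=\phi$ with $P\phi=0$ gives $QI_c^\mu(\phi)=0$. The assertion about $\Gamma_N$ is then bookkeeping: $u-\Gamma_N u$ is a finite sum $\sum_{\ell\le i\le N-2}\sum_j c_{i,j}(x-c)^i\log^j(x-c)$, and each such monomial is annihilated by $\prod_{\ell-N\le\nu\le N-1}(\vartheta_c-\nu)^{m+1}$ after applying $\p^k$ — more precisely one checks $\p^kP$ maps this finite-dimensional space into the span of $(x-c)^i\log^j(x-c)$ with $\ell-N\le i\le N-2$, which is killed by that product of $(\vartheta_c-\nu)^{m+1}$. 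Conjugating by $I_c^\mu$ via \eqref{eq:Icp} turns $\vartheta_c-\nu$ into $\vartheta_c-\mu-\nu$, and combining with the intertwining relation on the truncation $\Gamma_N u$ yields the displayed identity; the case $Pu=0$ then gives the annihilation of $QI_c^\mu(\Gamma_N u)$.

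Finally, for the factorization claim: suppose $\lambda,\lambda+\mu\notin\mathbb Z$ and $Q=ST$ with $x=c$ not a singular point of $S$. We know $Q I_c^\mu(\phi)=0$ and $I_c^\mu(\phi)\in\mathcal O_c(\lambda+\mu,m)$. The point is that $x=c$ being a nonsingular point of $S$ means $S$ defines a bijection of $\hat{\mathcal O}_c(\lambda+\mu,m)$ onto itself (its indicial roots at $c$ are $0,1,\dots,\operatorname{ord}S-1\in\mathbb Z$, and $\lambda+\mu\notin\mathbb Z$, so $\lambda+\mu-s\notin\mathbb Z_{\ge0}$ for each root $s$; apply \eqref{eq:Pbij} / \eqref{eq:Phat}). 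Hence $S(TI_c^\mu(\phi))=0$ with $TI_c^\mu(\phi)\in\mathcal O_c(\lambda+\mu,m)\subset\hat{\mathcal O}_c(\lambda+\mu,m)$ forces $TI_c^\mu(\phi)=0$. Specializing to $Q=\RAd(\p^{-\mu})P=\operatorname{R}\bigl(\p^{-\mu}P\p^\mu\bigr)$: by definition $\p^{-\mu}\p^kP\p^\mu$ equals (a power of $\p$ times) $Q$ in the sense above with the appropriate $a_{i,j}$, and $\RAd(\p^{-\mu})P$ divides $Q$ on the right with the complementary left factor $S$ being a suitable power of $\p$, which is nonsingular at $c$; thus $\bigl(\RAd(\p^{-\mu})P\bigr)I_c^\mu(\phi)=0$. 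If moreover $\bigl(\RAd(\p^{-\mu})P\bigr)v=0$ is irreducible, then by Remark~\ref{rem:SCFmc}-type reasoning (any $W(x)$-operator annihilating a nonzero solution of an irreducible equation lies in the corresponding maximal left ideal, cf.~\S\ref{sec:ODE}) the annihilator of $I_c^\mu(\phi)$ is exactly $W(x)\bigl(\RAd(\p^{-\mu})P\bigr)$. The statement with $\frac1x$ and $I_\infty^\mu$ in place of $x-c$ and $I_c^\mu$ is obtained by the coordinate change $x\mapsto\frac1x$ together with \eqref{eq:IinfP}; I expect the main obstacle to be tracking the precise exceptional sets for $\lambda$ in the intertwining step — i.e.\ making sure the integer shifts induced by the $\p^i\vartheta_c^{\,j}$ with $i\le N$ never hit a pole of the gamma-quotients — rather than anything structural.
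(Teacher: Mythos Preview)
Your approach is correct and mirrors the paper's (very terse) proof: establish \eqref{eq:Iclog} and \eqref{eq:Iclogtop} by differentiating \eqref{eq:IcP} in $\lambda$, deduce bijectivity from \eqref{eq:IcP}, obtain the intertwining $I_c^\mu\p^kP = QI_c^\mu$ monomial-by-monomial from \eqref{eq:Icp}, and handle $Q=ST$ by observing that $TI_c^\mu(\phi)$ lies in $\mathcal O_c(\lambda+\mu-N',m)$ with $\lambda+\mu\notin\mathbb Z$ while solutions of $Sv=0$ are holomorphic at $c$. Two small imprecisions worth tightening: first, in the last paragraph you write ``Specializing to $Q=\RAd(\p^{-\mu})P$'' but you mean specializing to $T=\RAd(\p^{-\mu})P$; second, the left factor $S$ with $Q=S\cdot\RAd(\p^{-\mu})P$ is not literally a power of $\p$ but a polynomial in $\p$ (an element of $\mathbb C[\p]$) --- this follows since under $\Lap$ the operator $\Ad(x^{-\mu})$ only introduces powers of $x^{-1}$, so $\Red$ differs from $x^k\cdot(\,)$ by a factor in $\mathbb C[x]$, which $\Lap^{-1}$ sends to $\mathbb C[\p]$; either way $S$ has no singular points and your conclusion stands.
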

\begin{proof}
It is clear that \eqref{eq:Iclog} is well-defined and \eqref{eq:Iclogtop}
is valid.  
Then \eqref{eq:Iclog} is bijective because of \eqref{eq:IcP} 
and \eqref{eq:Iclogtop}.
Since \eqref{eq:Icp} is valid when $m=0$, it is also valid when
$m=1,2,\ldots$ by the definition of \eqref{eq:Iclog}.

The equalities \eqref{eq:IcP} and \eqref{eq:IinfP} assure that
$QI_c^\mu(\phi)=0$.
Note that $TI_c^\mu(\phi)\in\mathcal O(\lambda+\mu-N,m)$
with a suitable positive integer $N$.
Since $\lambda+\mu-N\notin\mathbb Z$ and any solution of the equation
$Sv=0$ is holomorphic at $x=c$, the equality 
$S\bigl(TI_c^\mu(\phi)\bigr)=0$ implies $TI_c^\mu(\phi)=0$.

The remaining claims in the theorem are similarly clear.
\end{proof}
\begin{rem} 
{\rm i)} Let $\gamma: [0,1]\to \mathbb C$ be a path such that $\gamma(0)=c$ and
$\gamma(1)=x$.  Suppose $u(x)$ is holomorphic along the path $\gamma(t)$
for $0<t\le 1$ and $u(\gamma(t))=\phi(\gamma(t))$ for $0<t\ll 1$ with
a suitable function $\phi\in\mathcal O_c(\lambda,m)$.
Then $I_c^\mu(u)$ is defined by the integration along the path $\gamma$.
In fact, if the path $\gamma(t)$ with $t\in[0,1]$ splits into the
three paths corresponding to the decomposition $[0,1]=[0,\epsilon]\cup
[\epsilon,1-\epsilon]\cup[1-\epsilon,1]$ with $0<\epsilon\ll1$. 
Let $c_1=c,\dots,c_p$ be points in $\mathbb C^n$ and suppose moreover
$u(x)$ is extended to a multi-valued holomorphic function on 
$\mathbb C\setminus\{c_1,\dots,c_p\}$.
Then $I_c^x(u)$ also defines a multi-valued holomorphic function on 
$\mathbb C\setminus\{c_1,\dots,c_p\}$.

{\rm ii)}
Proposition~\ref{prop:RAdIc} is also valid if we replace
$\mathcal O_c(\lambda,m)$ by the space of functions given in
Remark~\ref{rem:defIc} ii).   
In fact the above proof also works in this case.
\end{rem}
\section{Contiguity relation}\label{sec:contig}
The following proposition is clear from 
Proposition~\ref{prop:RAdIc}. 
\begin{prop}\label{prop:config0}
Let $\phi(x)$ be a non-zero solution of an ordinary differential equation
$Pu=0$ with an operator $P\in W[x]$.
Let $P_j$ and $S_j\in W[x]$ for $j=1,\dots,N$ so that
$\sum_{j=1}^N P_jS_j\in W[x]P$.
Then for a suitable $\ell\in\mathbb Z$ we have
\begin{equation}
  \sum Q_j\bigl(I_c^\mu(\phi_j)\bigr)=0
\end{equation}
by putting
\begin{equation}
  \begin{split}
   \phi_j&=S_j\phi,\\
   Q_j &= \p^{\ell-\mu}\circ P_j\circ\p^{\mu}\in W[x],
  \end{split}
  \quad(j=1,\dots,N),
\end{equation}
if $\phi(x)\in\mathcal O(\lambda,m)$ with a non-negative integer $m$ and 
a complex number $\lambda$ satisfying
$\lambda\notin\mathbb Z$ and $\lambda+\mu\notin\mathbb Z$
or $\phi(x)$ is a function  given in Remark~\ref{rem:defIc} ii).   
If $P_j=\sum_{k\ge 0,\ \ell\ge 0}c_{j,k,\ell}\p^k\vartheta^\ell$ with
$c_{j,k,\ell}\in\mathbb C$, then we can assume $\ell\le 0$ in the above.
Moreover we have
\begin{equation}
  \p\bigl(I_c^{\mu+1}(\phi_1)\bigr) = I_c^\mu(\phi_1).
\end{equation}
\end{prop}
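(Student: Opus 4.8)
The plan is to obtain this by a term-by-term application of Proposition~\ref{prop:RAdIc}, as the sentence preceding the statement already announces. First I would note that $\sum_{j=1}^N P_jS_j\in W[x]P$ means $\sum_{j=1}^N P_jS_j=RP$ for some $R\in W[x]$, so that, $\phi$ being a solution of $Pu=0$,
\[
  \sum_{j=1}^N P_j\phi_j=\Bigl(\sum_{j=1}^N P_jS_j\Bigr)\phi=RP\phi=0
\]
as an identity of (multivalued) holomorphic functions, where $\phi_j:=S_j\phi$. Since $\phi\in\mathcal O_c(\lambda,m)$ and each $S_j\in W[x]$, we have $\phi_j\in\mathcal O_c(\lambda-\ord S_j,m)$; as $\lambda\notin\mathbb Z$, each exponent $\lambda-\ord S_j$ (and $\lambda-\ord S_j+\mu$) avoids $\mathbb Z$, so $I_c^\mu$ is defined on every space $\mathcal O_c(\lambda-\ord S_j,m)$ and the intertwining properties of Proposition~\ref{prop:RAdIc} apply there. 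When $\phi$ is instead a function of the type in Remark~\ref{rem:defIc} ii), the remark following Proposition~\ref{prop:RAdIc} gives the same facts.

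Next I would normalize each $P_j$. Choose one integer $\ell\in\mathbb Z_{\ge0}$, large enough to serve all $j$ at once, for which $\p^\ell P_j=\sum_{a,b\ge0}c^{(j)}_{a,b}\p^a\bigl((x-c)\p\bigr)^b$ with constants $c^{(j)}_{a,b}$; this is possible because left multiplication by a sufficiently high power of $\p$ turns any element of $W[x]$ into a constant-coefficient polynomial in $\p$ and $(x-c)\p$, and enlarging $\ell$ preserves the shape. Put $Q_j:=\sum_{a,b\ge0}c^{(j)}_{a,b}\p^a\bigl((x-c)\p-\mu\bigr)^b\in W[x]$. Using $\Ad(\p^{-\mu})\p=\p$ and $\Ad(\p^{-\mu})\vartheta=\vartheta-\mu$ (see \S\ref{sec:frac}), whence $\Ad(\p^{-\mu})\bigl((x-c)\p\bigr)=(x-c)\p-\mu$, one gets $Q_j=\Ad(\p^{-\mu})\bigl(\p^\ell P_j\bigr)=\p^{\ell-\mu}\circ P_j\circ\p^\mu$, so $Q_j$ has precisely the form demanded by the statement and is a genuine element of $W[x]$. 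Applying $I_c^\mu$ to $\sum_j\p^\ell P_j\phi_j=\p^\ell\bigl(\sum_j P_j\phi_j\bigr)=0$, using its linearity together with the intertwining identity $I_c^\mu\bigl(\p^\ell P_ju\bigr)=Q_jI_c^\mu(u)$ for $u\in\mathcal O_c(\lambda-\ord S_j,m)$ supplied by Proposition~\ref{prop:RAdIc} (whose hypothesis holds since $\lambda-\ord S_j\notin\mathbb Z$), gives
\[
  \sum_{j=1}^N Q_j\bigl(I_c^\mu(\phi_j)\bigr)=\sum_{j=1}^N I_c^\mu\bigl(\p^\ell P_j\phi_j\bigr)=I_c^\mu(0)=0,
\]
which is the assertion. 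If $P_j=\sum_{a,b\ge0}c^{(j)}_{a,b}\p^a\vartheta^b$ with constant $c^{(j)}_{a,b}$, then after normal-ordering $\vartheta=(x-c)\p+c\p$ one finds $P_j$ is already of the shape $\sum\p^a\bigl((x-c)\p\bigr)^b$, so $\ell=0$ works (and $\ell<0$ when $P_j$ is left-divisible by a power of $\p$); hence one may take $\ell\le0$. Finally $\p\bigl(I_c^{\mu+1}(\phi_1)\bigr)=I_c^\mu(\phi_1)$ is immediate from $\p I_c^{\mu'}=I_c^{\mu'-1}$ of Proposition~\ref{prop:RAdIc} with $\mu'=\mu+1$, valid on $\mathcal O_c(\lambda-\ord S_1,m)$ since $\lambda-\ord S_1\notin\mathbb Z$.

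The step that needs genuine care --- and the one I would expect to be the main obstacle, were this done from scratch rather than by quotation --- is the normal-ordering: checking that $\p^\ell P_j$ acquires constant-coefficient $\p$-and-$(x-c)\p$ form for $\ell\gg0$, and, more delicately, that the expression $\p^{\ell-\mu}\circ P_j\circ\p^\mu$, which a priori lies only in a ring of pseudodifferential operators, collapses to the honest differential operator $Q_j\in W[x]$, and that the identities $I_c^\mu(\p u)=\p I_c^\mu u$, $I_c^\mu\bigl((x-c)\p\,u\bigr)=\bigl((x-c)\p-\mu\bigr)I_c^\mu u$ and $\p I_c^{\mu+1}u=I_c^\mu u$ truly persist on all of $\mathcal O_c(\lambda',m)$ as soon as $\lambda'\notin\mathbb Z$. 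All of this is exactly the content already established in Proposition~\ref{prop:RAdIc} and in the discussion of $I_c^\mu$ preceding it, so in the present proof it is enough to cite those.
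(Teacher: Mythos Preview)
Your proof is correct and follows essentially the same route as the paper: normalize each $\p^\ell P_j$ into constant-coefficient form in $\p$ and $(x-c)\p$ (the paper writes this with $\vartheta$), apply $I_c^\mu$ to the vanishing sum $\sum_j \p^\ell P_jS_j\phi=0$, and invoke the intertwining identity of Proposition~\ref{prop:RAdIc} term by term; the final identity $\p I_c^{\mu+1}=I_c^\mu$ is likewise the same citation (the paper phrases it via \eqref{eq:Icprod} and \eqref{eq:Icdif}). Your write-up is more explicit than the paper's three-line argument, but the content is identical.
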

\begin{proof}
Fix an integer $k$ such that 
$\p^k P_j=\tilde P_j(\p,\vartheta)=\sum_{i_1,i_2}
c_{i_1,i_2}\p^{i_1}\vartheta^{i_2}$ with $c_{i_1,i_2}\in\mathbb C$.
Since $0=\sum_{j=1}^N \p^k P_jS_j\phi$, 
Proposition~\ref{prop:RAdIc} proves
$0=\sum_{j=1}^N I_c^\mu(\tilde P_j(\p,\vartheta)S_j\phi)=
\sum_{j=1}^N \tilde P_j(\p,\vartheta-\mu)I_c^\mu(S_j\phi)$,
which implies the first claim of the proposition.

The last claim is clear from \eqref{eq:Icprod} and \eqref{eq:Icdif}.
\end{proof}
\begin{cor}
Let $P(\xi)$ and $K(\xi)$ be non-zero elements of $W[x;\xi]$.
If we substitute $\xi$ and $\mu$ by generic complex numbers, we assume
that there exists a solution $\phi_\xi(x)$ satisfying the assumption in 
the preceding proposition and that $I_c^\mu(\phi_\xi)$
and $I_c^\mu(K(\xi)\phi_\xi)$ satisfy irreducible differential equations
$T_1(\xi,\mu)v_1=0$ and $T_2(\xi,\mu)v_2=0$ with
$T_1(\xi,\mu)$ and $T_2(\xi,\mu)\in W(x;\xi,\mu)$, 
respectively.
Then the differential equation $T_1(\xi,\mu)v_1=0$ is isomorphic to 
$T_2(\xi,\mu)v_2=0$ as $W(x;\xi,\mu)$-modules.
\end{cor}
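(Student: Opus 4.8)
The plan is to show that the two function realizations $v_1=I_c^\mu(\phi_\xi)$ and $v_2=I_c^\mu\bigl(K(\xi)\phi_\xi\bigr)$ generate one and the same left $W(x;\xi,\mu)$-submodule of the space of functions on which $W(x;\xi,\mu)$ acts, and to deduce the asserted isomorphism from this. Write $M_i:=W(x;\xi,\mu)/W(x;\xi,\mu)T_i(\xi,\mu)$ for $i=1,2$; by the assumed irreducibility of $T_iv_i=0$ these are simple $W(x;\xi,\mu)$-modules. Since $v_1$ and $v_2$ are nonzero and lie in a common left $W(x;\xi,\mu)$-module $\mathcal V$ of (multivalued) holomorphic functions near $c$, and since $T_iv_i=0$, the assignment $[R]\mapsto Rv_i$ defines a nonzero $W(x;\xi,\mu)$-homomorphism $M_i\to\mathcal V$; its kernel is a proper submodule of the simple module $M_i$, hence is $0$, so $M_i\cong W(x;\xi,\mu)v_i$. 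Thus it suffices to prove $W(x;\xi,\mu)v_1=W(x;\xi,\mu)v_2$ inside $\mathcal V$.

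To connect the two generators, I would first choose, as in Proposition~\ref{prop:RAdIc}, a nonnegative integer $k$ with $\p^kK(\xi)=\sum_{i,j}b_{i,j}(\xi)\,\p^i\bigl((x-c)\p\bigr)^j$ and $b_{i,j}(\xi)\in\mathbb C(\xi)$. Because $\phi_\xi$ satisfies the hypothesis of Proposition~\ref{prop:config0} (namely $\phi_\xi\in\mathcal O_c(\lambda,m)$ with $\lambda\notin\mathbb Z$ and $\lambda+\mu\notin\mathbb Z$, or $\phi_\xi$ is a function as in Remark~\ref{rem:defIc} ii)), the intertwining relations $I_c^\mu\p=\p I_c^\mu$ and $I_c^\mu\bigl((x-c)\p\cdot u\bigr)=\bigl((x-c)\p-\mu\bigr)I_c^\mu(u)$ established in the proof of Proposition~\ref{prop:RAdIc} apply, and they give
\[
 \p^kv_2=\p^kI_c^\mu\bigl(K(\xi)\phi_\xi\bigr)=I_c^\mu\bigl(\p^kK(\xi)\phi_\xi\bigr)=\widetilde K\,v_1,\qquad
 \widetilde K:=\sum_{i,j}b_{i,j}(\xi)\,\p^i\bigl((x-c)\p-\mu\bigr)^j\in W(x;\xi,\mu).
\]
Hence $\p^kv_2=\widetilde Kv_1$ belongs to $W(x;\xi,\mu)v_1\cap W(x;\xi,\mu)v_2$.

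It remains to see that this common element is nonzero, and this is the point that genuinely needs care. If $\p^kv_2=0$ then $v_2$ is a polynomial in $x$; but $K(\xi)\phi_\xi$ is a nonzero element of some $\mathcal O_c(\lambda',m)$ with $\lambda'\in\lambda+\mathbb Z$, so $\lambda'\notin\mathbb Z$ and $\lambda'+\mu\notin\mathbb Z$, and then by the invertibility of $I_c^\mu$ on $\mathcal O_c(\lambda',m)$ together with \eqref{eq:IcP} the function $v_2=I_c^\mu\bigl(K(\xi)\phi_\xi\bigr)$ carries a nonzero term with exponent $\lambda'+\mu\notin\mathbb Z$, contradicting that it is a nonzero polynomial; in the case of Remark~\ref{rem:defIc} ii) one uses instead that $I_c^\mu$ sends a function with super-polynomial decay at $c$ to another such function, which again cannot be a nonzero polynomial. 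Therefore $W(x;\xi,\mu)(\p^kv_2)$ is a nonzero submodule of the simple module $W(x;\xi,\mu)v_1$ and also of the simple module $W(x;\xi,\mu)v_2$, hence equals each of them, so $W(x;\xi,\mu)v_1=W(x;\xi,\mu)v_2$ and $M_1\cong M_2$, as desired. The nonroutine ingredient is exactly this nonvanishing step — equivalently, that neither of $v_1,v_2$ generates a proper sub- or quotient module of the other — together with the bookkeeping needed to apply the identities of Propositions~\ref{prop:RAdIc} and \ref{prop:config0} only on a function space where they are valid; this is precisely where the genericity of $\xi,\mu$ and the conditions $\lambda\notin\mathbb Z$, $\lambda+\mu\notin\mathbb Z$ enter.
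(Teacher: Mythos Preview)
Your argument is correct and follows essentially the same route as the paper: both produce, via the intertwining identities behind Proposition~\ref{prop:config0}, a relation of the form $\widetilde K\,v_1=\p^k v_2$ (the paper writes it as $Q(\xi,\mu)I_c^\mu(\phi_\xi)=\p^\ell I_c^\mu(K(\xi)\phi_\xi)$ with $Q=\p^{\ell-\mu}\circ K(\xi)\circ\p^\mu$), observe this element is nonzero, and then invoke irreducibility to conclude that the cyclic modules generated by $v_1$ and $v_2$ coincide. Your treatment of the nonvanishing step is actually more explicit than the paper's, which simply asserts it.
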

\begin{proof}
Since $K(\xi)\cdot 1 - 1\cdot K(\xi)=0$, 
we have $Q(\xi,\mu)I_c^\mu(\phi_\xi)=\p^\ell I_c^\mu(K(\xi)\phi_\xi)$
with $Q(\xi,\mu)=\p^{\ell-\mu}\circ K(\xi)\circ\p^{\mu}$.
Since $\p^{\ell}I_c^\mu(\phi_\xi)\ne0$ 
and the equations $T_j(\xi,\mu)v_j=0$ are irreducible for $j=1$ and $2$,
there exist $R_1(\xi,\mu)$ and $R_2(\xi,\mu)\in W(x;\xi,\mu)$ such
that  $I_c^\mu(\phi_\xi)=R_1(\xi,\mu)Q(\xi,\mu)I_c^\mu(\phi_\xi)
=R_1(\xi,\mu)\p^\ell I_c^\mu(K(\xi)\phi_\xi)$ 
and $I_c^\mu(K(\xi)\phi_\xi)=R_2(\xi,\mu)\p^\ell I_c^\mu(K(\xi)\phi_\xi)
=R_2(\xi,\mu)Q(\xi,\mu)I_c^\mu(\phi_\xi)$.
Hence we have the corollary.
\end{proof}
Using the proposition, we get the contiguity relations 
with respect to the parameters corresponding to powers 
of linear functions defining additions and the middle convolutions. 

For example, in the case of Gauss hypergeometric functions, we have
\begin{align*}
u_{\lambda_1,\lambda_2,\mu}(x):\!&=I_0^\mu(x^{\lambda_1}(1-x)^{\lambda_2}),\\
u_{\lambda_1,\lambda_2,\mu-1}(x)&=\p u_{\lambda_1,\lambda_2,\mu}(x),\\
\p u_{\lambda_1+1,\lambda_2,\mu}(x)&=(x\p+1-\mu)u_{\lambda_1,\lambda_2,\mu}(x),\\
\p u_{\lambda_1,\lambda_2+1,\mu}(x)&=((1-x)\p+\mu-1)u_{\lambda_1,\lambda_2,\mu}(x).
\end{align*}
Here Proposition~\ref{prop:config0} with 
$\phi=x^{\lambda_1}(1-x)^{\lambda_2}$, 
$(P_1,S_1,P_2,S_2)=(1,x,-x,1)$ and $\ell=1$ gives the above third identity.

Since $P_{\lambda_1,\lambda_2,\mu}u_{\lambda_1,\lambda_2,\mu}(x)=0$
with
\begin{align*}
P_{\lambda_1,\lambda_2,\mu}&
 =\bigl(x(1-x)\p+(1-\lambda_1-\mu-(2-\lambda_1-\lambda_2-2\mu)x\bigr)\p\\
&\quad{}
 -(\mu-1)(\lambda_1+\lambda_2+\mu)
\intertext{as is given in Example~\ref{ex:midconv}, the inverse of the relation
$u_{\lambda_1,\lambda_2,\mu-1}(x)=\p u_{\lambda_1,\lambda_2,\mu}(x)$ is}
u_{\lambda_1,\lambda_2,\mu}(x)&=
-\frac{x(1-x)\p+(1-\lambda_1-\mu-(2-\lambda_1-\lambda_2-2\mu)x\bigr)}
{(\mu-1)(\lambda_1+\lambda_2+\mu)}
u_{\lambda_1,\lambda_2,\mu-1}(x).
\end{align*}
The equalities 
$u_{\lambda_1,\lambda_2,\mu-1}(x)=\p u_{\lambda_1,\lambda_2,\mu}(x)$
and \eqref{eq:Gmid} mean
\begin{align*}
&\frac{\Gamma(\lambda_1+1)x^{\lambda_1+\mu-1}}{\Gamma(\lambda_1+\mu)}
F(-\lambda_2,\lambda_1+1,\lambda_1+\mu;x)\\
&=\frac{\Gamma(\lambda_1+1)x^{\lambda_1+\mu-1}}{\Gamma(\lambda_1+\mu)}
F(-\lambda_2,\lambda_1+1,\lambda_1+\mu+1;x)\\
&\quad{}+\frac{\Gamma(\lambda_1+1)x^{\lambda_1+\mu}}{\Gamma(\lambda_1+\mu+1)}
 \frac{d}{dx}F(-\lambda_2,\lambda_1+1,\lambda_1+\mu+1;x)
\end{align*}
and therefore 
$u_{\lambda_1,\lambda_2,\mu-1}(x)=\p u_{\lambda_1,\lambda_2,\mu}(x)$
is equivalent to
\[
  (\gamma-1)F(\alpha,\beta,\gamma-1;x)=(\vartheta+\gamma-1)F(\alpha,\beta,\gamma;x).
\]
The contiguity relations are very important for the study of differential
equations.
For example the author's original proof of the connection formula
\eqref{eq:Icon} announced in \cite{O3} is based on the relations
(cf.~\S\ref{sec:C2}).

Some results related to contiguity relations will be given in \S\ref{sec:shift}
but we will not go further in this subject and it will 
be discussed in another paper.
\section{Fuchsian differential equation and generalized Riemann scheme}
\label{sec:index}
\subsection{Generalized characteristic exponents}\label{sec:Gexp}
We examine the Fuchsian differential equations 
\begin{equation}
  P = a_n(x)\tfrac{d^n}{dx^n}
      +a_{n-1}(x)\tfrac{d^{n-1}}{dx^{n-1}}+\cdots+a_0(x)
  \label{eq:P}
\end{equation}
with given local monodromies at regular singular points.
For this purpose
we first study the condition so that monodromy generators of the solutions of 
a Fuchsian differential equation is semisimple even when its exponents
are not free of multiplicity.
\begin{lem}\label{lem:block}
Suppose that the operator \eqref{eq:P} defined in a neighborhood of the origin
has a regular singularity at the origin.
We may assume $a_\nu(x)$ are holomorphic at $0$ and
$a_n(0) = a_n'(0)=\cdots=a_n^{(n-1)}(0)=0$ and $a_n^{(n)}(0)\ne0$.
Then the following conditions are equivalent
for a positive integer $k$.
\begin{align}
 P &= x^kR&&\text{with a suitable holomorphic differential operator $R$}
    \label{C:divblock}\\[-2pt]
  & && \text{at the origin,}\notag
\allowdisplaybreaks\\
 Px^\nu &= o(x^{k-1})&&\text{for \  \ }\nu=0,\ldots,k-1,
 \label{C:applyblock}
\allowdisplaybreaks\\
 Pu &=0&&\text{has a solution $x^\nu+o(x^{k-1})$ for }
   \nu=0,\dots,k-1,\label{C:solblock}
\allowdisplaybreaks\\
 P  &= \sum_{j\ge0}x^jp_j(\vartheta)&&\text{with polynomials }p_j
    \text{\ satisfying\ }p_j(\nu)=0\label{C:formblock}\\[-10pt]
    & &&\text{for \ }0\le\nu< k-j\text{ \ and \ }
        j=0,\dots,k-1.\notag
\end{align}
\end{lem}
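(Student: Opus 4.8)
The plan is to prove the chain of equivalences by working with the expansion of $P$ in the basis $x^j p_j(\vartheta)$ where $\vartheta = x\frac{d}{dx}$. First I would establish that, since $P$ has a regular singularity at $0$ with $a_n$ vanishing to order exactly $n$, the operator $P$ can be written as $P = \sum_{j\ge 0} x^j p_j(\vartheta)$ where $p_j$ are polynomials, $p_0$ has degree $n$ (its roots being the characteristic exponents), and for $j\ge 1$ the polynomial $p_j$ has degree at most $n$; this is the standard normal form near a regular singular point and follows from the discussion of indicial equations in \S\ref{sec:reg}. The key computational fact I would use repeatedly is that $\vartheta(x^\nu) = \nu x^\nu$, hence $x^j p_j(\vartheta)(x^\nu) = p_j(\nu)\, x^{\nu+j}$, so that
\begin{equation*}
 P(x^\nu) = \sum_{j\ge 0} p_j(\nu)\, x^{\nu+j}.
\end{equation*}

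With this in hand, the implications become bookkeeping. For \eqref{C:formblock} $\Rightarrow$ \eqref{C:divblock}: if $p_j(\nu)=0$ for $0\le\nu<k-j$ and $j<k$, then $p_j(\vartheta)$ is divisible (as a polynomial in $\vartheta$) by $\vartheta(\vartheta-1)\cdots(\vartheta-(k-j-1))$; writing $x^\ell(\vartheta-a)$ in terms of $x^{\ell+1}\cdot(\text{stuff})$ is not quite right, so instead I would use the identity $x^j\cdot \vartheta(\vartheta-1)\cdots(\vartheta-m+1) = x^{j+m}\frac{d^m}{dx^m}$ — indeed $\vartheta(\vartheta-1)\cdots(\vartheta-m+1) = x^m \frac{d^m}{dx^m}$ — to conclude $x^j p_j(\vartheta)$ is of the form $x^{j+(k-j)}(\cdots) = x^k(\cdots)$ for each $j<k$, while for $j\ge k$ the term $x^j p_j(\vartheta)$ is manifestly divisible by $x^k$. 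Summing gives $P = x^k R$ with $R$ holomorphic. For \eqref{C:divblock} $\Rightarrow$ \eqref{C:applyblock}: if $P = x^k R$ with $R$ holomorphic, then $P(x^\nu) = x^k R(x^\nu)$ is $O(x^k) = o(x^{k-1})$ for every $\nu\ge 0$, in particular for $\nu=0,\dots,k-1$. For \eqref{C:applyblock} $\Rightarrow$ \eqref{C:formblock}: from $P(x^\nu) = \sum_{j\ge 0} p_j(\nu) x^{\nu+j}$, the condition $P(x^\nu)=o(x^{k-1})$ forces the coefficient of $x^i$ to vanish for all $i\le k-1$, i.e. $p_j(\nu)=0$ whenever $\nu+j\le k-1$, which is exactly the stated condition after reindexing.

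It remains to fit \eqref{C:solblock} into the cycle; I would prove \eqref{C:formblock}/\eqref{C:divblock} $\Leftrightarrow$ \eqref{C:solblock}. For the forward direction, assume $P = \sum_j x^j p_j(\vartheta)$ with the vanishing conditions. Fix $\nu\in\{0,\dots,k-1\}$ and seek a solution $u = x^\nu + \sum_{i\ge k} c_i x^i$ (note $o(x^{k-1})$ means the correction starts at order $\ge k$, since the exponents in between — if any coincide with characteristic exponents — would be handled by the same recursion; more carefully, one takes the correction supported on exponents $>k-1$, possibly with $\log$ terms, and the normalization in \S\ref{sec:reg}, equations \eqref{eq:PBr}–\eqref{eq:Phat}, guarantees solvability). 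The point is that $P(x^\nu) \in x^k\mathcal{O}_0$ by the equivalence already shown, and by \eqref{eq:PBr} applied to a shifted operator the equation $P v = -P(x^\nu)$ has a solution $v\in\mathcal{O}_0$ vanishing to order $\ge k$ — here I need that the characteristic exponents that are $\ge k$ do not obstruct, which is where a little care with \eqref{eq:Phat} and the majorant argument is needed. For the converse, if $Pu=0$ has a solution $x^\nu + o(x^{k-1})$ for each $\nu=0,\dots,k-1$, then applying $P$ to $x^\nu$ gives $P(x^\nu) = -P(o(x^{k-1}))$; since $P$ maps $o(x^{k-1})$ into... this needs the normalization ($a_\nu$ vanishing to order $\ge\nu$) to ensure $P$ does not lower orders below $k-1$, giving $P(x^\nu) = o(x^{k-1})$, i.e. \eqref{C:applyblock}.

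\textbf{Main obstacle.} The genuinely delicate step is the equivalence involving \eqref{C:solblock}: interpreting $o(x^{k-1})$ correctly and guaranteeing that the formal solution converges (and that the recursion for its coefficients is unobstructed even when some characteristic exponents are nonnegative integers $\ge k$, which could a priori introduce logarithmic terms). I expect this to be handled by invoking \eqref{eq:PBr} and \eqref{eq:Phat} from \S\ref{sec:reg} — the bijectivity of $P$ on $\mathcal{O}_{B_r(c)}(\mu,m)$ and $\hat{\mathcal{O}}_c(\mu,m)$ for $\mu$ avoiding the obstructing integers — applied after conjugating so that the relevant exponent region is shifted past all characteristic exponents. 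The purely algebraic equivalences \eqref{C:divblock} $\Leftrightarrow$ \eqref{C:applyblock} $\Leftrightarrow$ \eqref{C:formblock} are routine given the formula for $P(x^\nu)$ and the identity $\vartheta(\vartheta-1)\cdots(\vartheta-m+1) = x^m\frac{d^m}{dx^m}$.
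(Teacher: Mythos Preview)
Your chain of equivalences \eqref{C:divblock} $\Leftrightarrow$ \eqref{C:applyblock} $\Leftrightarrow$ \eqref{C:formblock} via the expansion $P=\sum_j x^jp_j(\vartheta)$ and the identity $\vartheta(\vartheta-1)\cdots(\vartheta-m+1)=x^m\partial^m$ is correct, but the paper takes a shorter route for \eqref{C:applyblock} $\Rightarrow$ \eqref{C:divblock}: it works directly with the coefficient functions $a_j(x)$. From $P\cdot 1=a_0$, $P\cdot x=xa_0+a_1$, and inductively $P\cdot x^\nu=\nu!\,a_\nu+(\text{terms in }a_0,\dots,a_{\nu-1})$, the hypothesis forces $a_0,\dots,a_{k-1}$ to vanish to order $k$; the regular singularity already gives $a_j\in x^j\mathcal O_0$ for $j\ge k$, so every $a_j\in x^k\mathcal O_0$ and $P=x^kR$ immediately. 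This avoids the detour through \eqref{C:formblock} and the $\vartheta$--factorial identity, though your route is perfectly valid.

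More importantly, you have inverted the difficulty of the steps. You flag \eqref{C:solblock} as the ``main obstacle'' and propose invoking the bijectivity statements \eqref{eq:PBr}--\eqref{eq:Phat} after a shift, worrying about integer characteristic exponents $\ge k$ and logarithmic obstructions. The paper disposes of \eqref{C:applyblock} $\Leftrightarrow$ \eqref{C:solblock} in one word (``clear''): for the direction \eqref{C:solblock} $\Rightarrow$ \eqref{C:applyblock}, if $u=x^\nu+v$ with $v=o(x^{k-1})$ solves $Pu=0$, then $Px^\nu=-Pv$, and the normalization of $P$ at a regular singular point guarantees $P$ does not lower orders, so $Pv=o(x^{k-1})$. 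For the converse, once $0,\dots,k-1$ are known to be characteristic exponents (since $p_0(\nu)=0$), the standard Frobenius construction yields for each $\nu$ a solution with leading term $x^\nu$; the correction term is $o(x^{k-1})$, with log terms permitted in that notation when integer resonances $\ge k$ occur. Your plan to invoke \eqref{eq:PBr} literally would actually fail in the resonant case, since those bijectivity results require the exponents to avoid the relevant integers---but the statement of \eqref{C:solblock} is not asking for a holomorphic correction, only for $o(x^{k-1})$.
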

\begin{proof}
\eqref{C:divblock} $\Rightarrow$ \eqref{C:applyblock} $\Leftrightarrow$ 
\eqref{C:solblock} is clear.

Assume \eqref{C:applyblock}.
Then $Px^\nu=o(x^{k-1})$ for $\nu=0,\dots,k-1$ implies 
$a_j(x)=x^kb_j(x)$ for $j=0,\dots,k-1$.  
Since $P$ has a regular singularity at the origin, $a_j(x)=x^jc_j(x)$
for $j=0,\dots,n$.  Hence we have \eqref{C:divblock}.

Since $Px^\nu = \sum_{j=0}^\infty x^{\nu+j}p_j(\nu)$, the equivalence
\eqref{C:applyblock} $\Leftrightarrow$ \eqref{C:formblock} is clear.
\end{proof}
\begin{defn}
Suppose $P$ in \eqref{eq:P} has a regular singularity at $x=0$.
Under the notation \eqref{eq:mult}
we define that $P$ has a (\textsl{generalized}) \textsl{characteristic exponent}
\index{characteristic exponent!generalized}
$[\lambda]_{(k)}$ at $x=0$ if 
$x^{n-k}\Ad(x^{-\lambda})(a_n(x)^{-1}P)\in W[x]$.
\end{defn}

Note that Lemma~\ref{lem:block} shows that $P$ has a characteristic
exponent $[\lambda]_{(k)}$ at $x=0$ if and only if
\begin{equation}
 x^na_n(x)^{-1}P=\sum_{j\ge 0}x^jq_j(\vartheta)\prod_{0\le i< k-j}
(\vartheta - \lambda- i) 
\end{equation}
with polynomials $q_j(t)$.
By a coordinate transformation we can define generalized characteristic exponents
for any regular singular point as follows.
\begin{defn}[generalized characteristic exponents]
\label{def:RScheme}
Suppose  $P$ in \eqref{eq:P} has regular singularity at $x=c$.
Let $n=m_1+\cdots+m_q$ be a partition of the positive integer $n$
and let $\lambda_1,\dots,\lambda_q$ be complex numbers.
We define that $P$  has  
the (set of \textsl{generalized}) \textsl{characteristic exponents}
$\{[\lambda_1]_{(m_1)},\dots,[\lambda_q]_{(m_q)}\}$ and the
\textsl{spectral type} $\{m_1,\dots,m_q\}$ at 
$x=c\in\mathbb C\cup\{\infty\}$ if there exist polynomials $q_\ell(s)$
such that
\begin{equation}\label{eq:GEXP}
  (x-c)^na_n(x)^{-1}P=\sum_{\ell\ge 0}(x-c)^\ell
  q_\ell\bigl((x-c)\p\bigr)
  \prod_{\nu=1}^q\,\prod_{0\le i< m_\nu-\ell}
  \bigl((x-c)\p-\lambda_\nu-i\bigr)
\end{equation}
in the case when $c\ne\infty$ and
\begin{equation}\label{eq:gcexp}
  x^{-n}a_n(x)^{-1}P=\sum_{\ell\ge 0}x^{-\ell}
  q_\ell\bigl(\vartheta)
  \prod_{\nu=1}^q\,\prod_{0\le i< m_\nu-\ell}
  \bigl(\vartheta+\lambda_\nu+i\bigr)
\end{equation}
in the case when $c=\infty$.
Here if $m_j=1$,
$[\lambda_j]_{(m_j)}$ may be simply written as $\lambda_j$.
\end{defn}
\begin{rem}\label{rem:GCexp}
{\rm i) }
In Definition~\ref{def:RScheme} we may replace the left hand side of
\eqref{eq:GEXP} by $\phi(x)a_n(x)^{-1}P$ where $\phi$ is analytic
function in a neighborhood of $x=c$ such that
$\phi(c)=\cdots=\phi^{(n-1)}(c)=0$ and $\phi^{(n)}(c)\ne 0$.
In particular when $a_n(c)=\cdots=a_n^{(n)}(c)=0$ and $a_n(c)\ne 0$,
$P$ is said to be \textsl{normalized} at the singular point $x=c$ 
and the left hand side of \eqref{eq:GEXP} can be replaced by $P$.
\index{Fuchsian differential equation/operator!normalized}

In particular when $c=0$ and $P$ is normalized at the regular singular point
$x=0$, the condition \eqref{eq:GEXP} is equivalent to
\begin{equation}
  \prod_{\nu=1}^k\prod_{0\le i<m_\nu-\ell}(s-\lambda_\nu-i)\bigm|p_j(s)
  \qquad(\forall\ell=0,1,\dots,\max\{m_1,\dots,m_k\}-1)
\end{equation}
under the expression
$
  P = \sum_{j=0}^\infty x^jp_j(\vartheta)
$.

{\rm ii) }
In Definition~\ref{def:RScheme} the condition that 
the operator $P$ has a set of 
generalized characteristic exponents $\{\lambda_1,\dots,\lambda_n\}$ 
is equivalent to the condition that it is the set of the usual 
characteristic exponents.

{\rm iii) }
Any one of 
$\{\lambda,\lambda+1,\lambda+2\}$,
$\{[\lambda]_{(2)},\lambda+2\}$ and 
$\{\lambda,[\lambda+1]_{(2)}\}$
is the set of characteristic exponents of
\[
  P= (\vartheta-\lambda)(\vartheta-\lambda-1)(\vartheta - \lambda - 2+x)
    +x^2(\vartheta-\lambda+1)
\]
at $x=0$ but $\{[\lambda]_{(3)}\}$ is not.

{\rm iv) }
Suppose $P$ has a holomorphic parameter $t\in B_1(0)$ (cf.~\eqref{eq:Brc})
and $P$ has regular singularity at $x=c$.
Suppose the set of the corresponding characteristic exponents is
$\{[\lambda_1(t)]_{(m_1)},\ldots,[\lambda_q(t)]_{(m_q)}\}$
for $t\in B_1(0)\setminus\{0\}$ with 
$\lambda_\nu(t)\in\mathcal O\bigl(B_1(0)\bigr)$.
Then this is also valid in the case $t=0$, which
clearly follows from the definition.

When
\[
  P=\sum_{\ell\ge 0}x^{-\ell}
  q_\ell\bigl((x-c)\p\bigr)
  \prod_{\nu=1}^q\,\prod_{0\le i< m_\nu-\ell}
  \bigl((x-c)\p-\lambda_\nu-i\bigr),
\]
we put
\[
  P_t=\sum_{\ell\ge 0}x^{-\ell}
  q_\ell\bigl((x-c)\p\bigr)
  \prod_{\nu=1}^q\,\prod_{0\le i< m_\nu-\ell}
  \bigl((x-c)\p-\lambda_\nu-\nu t-i\bigr).
\]
Here $\lambda_\nu\in\mathbb C$, $q_0\ne 0$ and $\ord P = m_1+\dots+m_q$.
Then the set of the characteristic 
exponents of $P_t$ is $\{[\tilde\lambda_1(t)]_{(m_1)},\ldots,
[\tilde\lambda_q(t)]_{(m_q)}\}$ with $\tilde\lambda_j(t)=\lambda_j+jt$.
Since $\tilde \lambda_i(t)-\tilde\lambda_j(t)\notin\mathbb Z$
for $0<|t|\ll 1$, we can reduce certain claims to the case when 
the values of characteristic exponents are generic.
Note that we can construct local independent solutions which holomorphically 
depend on $t$ (cf.~\cite{Or}).
\end{rem}
\begin{lem}\label{lem:GRS}
{\rm i) }
Let $\lambda$ be a complex number and let 
$p(t)$ be a polynomial such that $p(\lambda)\ne 0$.
Then for non-negative integers $k$ and $m$ we have the exact sequence
\begin{equation*}
  0\longrightarrow 
   \mathcal O_0(\lambda,k-1)
  \longrightarrow
   \mathcal O_0(\lambda,m+k-1)
  \xrightarrow{p(\vartheta)(\vartheta-\lambda)^k}{}
   \mathcal O_0(\lambda,m-1)
  \longrightarrow 0
\end{equation*}
under the notation \eqref{def:Ocm}.

{\rm ii) }  Let $m_1,\dots,m_q$ be non-negative integers.
Let $P$ be a differential operator of order $n$ whose coefficients
are in $\mathcal O_0$ such that
\begin{equation}\label{eq:GRS0}
  P  = \sum_{\ell=0}^\infty x^\ell r_\ell(\vartheta)
       \prod_{\nu=1}^q\,\prod_{0\le k< m_\nu-\ell}\bigl(\vartheta-k\bigr)
\end{equation}
with polynomials $r_\ell$.  
Put $m_{max}=\max\{m_1,\dots,m_q\}$ and
suppose $r_0(\nu)\ne 0$ for $\nu=0,\dots,m_{max}-1$.

Let\/ ${\mathbf m}^\vee=(m^\vee_1,\dots,m^\vee_{m_{max}})$ 
be the dual partition of 
$\mathbf m:=(m_1,\dots,m_q)$, 
namely,\index{dual partition}
\begin{equation}\label{eq:dualP}
   m^\vee_\nu=\#\{j\,;\,m_j\ge \nu\}.
\end{equation}
Then for $i=0,\ldots,m_{max}-1$ and $j=0,\ldots,m^\vee_{i+1}-1$
we have the functions
\begin{equation}
  u_{i,j}(x) = x^i\log^j x+ \sum_{\mu=i+1}^{m_{max}-1}
               \sum_{\nu=0}^{j}
               c_{i,j}^{\mu,\nu}x^\mu \log^\nu x
\end{equation}
such that $c_{i,j}^{\mu,\nu}\in\mathbb C$ 
and $Pu_{i,j}\in\mathcal O_0(m_{max},j)$.  

{\rm iii) } 
Let\/ $m'_1,\dots,m'_{q'}$ be non-negative integers and
let $P'$ be a differential operator of order $n'$ whose coefficients
are in $\mathcal O_0$ such that
\begin{equation}
  P'= \sum_{\ell=0}^\infty x^\ell r'_\ell(\vartheta)
       \prod_{\nu=1}^q\,\prod_{0\le k< m'_\nu-\ell}
        \bigl(\vartheta-m_\nu - k\bigr)
\end{equation}
with polynomials $q'_\ell$.
Then for a differential operator $P$ of the form \eqref{eq:GRS0} we have
\begin{equation}
  P'P= \sum_{\ell=0}^\infty x^\ell\Bigl(\sum_{\nu=0}^\ell
         r'_{\ell-\nu}(\vartheta+\nu) r_\nu(\vartheta)\Bigr)
       \prod_{\nu=1}^q\,\prod_{0\le k< m_\nu+m'_\nu-\ell}
        \bigl(\vartheta- k\bigr).
\end{equation}
\end{lem}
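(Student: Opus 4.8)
I would prove the three parts in the order (i), (ii), (iii), since part (i) is the linear‑algebra engine used in the recursion of part (ii). For part (i), the plan is to conjugate by $(x-c)^\lambda$ (i.e.\ apply $\Ad\bigl((x-c)^{-\lambda}\bigr)$ and translate $x\mapsto x-c$) so as to reduce to $\lambda=c=0$. Writing $t=\log x$ and letting $\partial_t$ denote differentiation in $t$, the operator $(\vartheta-\lambda)^k=\vartheta^k$ drops the $t$-degree by $k$, so it carries $\mathcal O_0(\lambda,m+k-1)$ onto $\mathcal O_0(\lambda,m-1)$ with kernel exactly the part of $t$-degree $\le k-1$, namely $\mathcal O_0(\lambda,k-1)$; and because $p(\lambda)\ne0$, the operator $p(\vartheta)$ is $p(\lambda)$ times a $t$-degree-lowering unipotent, hence an automorphism of each $\mathcal O_0(\lambda,j)$. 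Composing these two observations gives the asserted short exact sequence; this is the form (with $\lambda=0$ and polynomial-in-$\log x$ data) in which part (i) will be invoked below.

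For part (ii) the plan is to build each $u_{i,j}$ by successive approximation in the $x$-adic order, using the indicial shape \eqref{eq:GRS0}. The one computational input I would record is that, for any polynomial $g$, $g(\vartheta)\bigl(x^\mu f(\log x)\bigr)=x^\mu g(\mu+\partial_t)f(\log x)$; applying this termwise to \eqref{eq:GRS0} gives
\[
 P\bigl(x^\mu f(\log x)\bigr)=\sum_{\ell\ge0}x^{\mu+\ell}\,\Phi_\ell(\mu+\partial_t)f(\log x),\qquad \Phi_\ell(s):=r_\ell(s)\prod_{\nu=1}^q\prod_{0\le a<m_\nu-\ell}(s-a).
\]
The key observation, read off from \eqref{eq:dualP}, is that for $\mu\in\mathbb Z_{\ge0}$ the linear factor $(s-\mu)$ occurs in $\prod_\nu\prod_{0\le a<m_\nu-\ell}(s-a)$ with multiplicity $\#\{\nu:m_\nu>\mu+\ell\}=m^\vee_{\mu+\ell+1}$; hence $\Phi_\ell(\mu+\partial_t)$ is divisible by $\partial_t^{\,m^\vee_{\mu+\ell+1}}$, and for $0\le\mu\le m_{max}-1$ the hypothesis $r_0(\mu)\ne0$ yields $\Phi_0(\mu+\partial_t)=A_\mu\,\partial_t^{\,m^\vee_{\mu+1}}$ with $A_\mu$ an automorphism of every $\mathbb C[\log x]_{\le M}$.

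Now I would set $u_{i,j}=\sum_{\mu=i}^{m_{max}-1}x^\mu f_\mu(\log x)$ with $f_i=\log^j x$, and determine $f_{i+1},\dots,f_{m_{max}-1}\in\mathbb C[\log x]_{\le j}$ recursively in $\mu$, demanding that the coefficient of $x^r$ in $Pu_{i,j}$ vanish for $i\le r\le m_{max}-1$. For $r=i$ that coefficient equals $\Phi_0(i+\partial_t)\log^j x=A_i\,\partial_t^{\,m^\vee_{i+1}}\log^j x=0$, since $j\le m^\vee_{i+1}-1$. For $i<r\le m_{max}-1$ it equals $\Phi_0(r+\partial_t)f_r+g_r$, where $g_r:=\sum_{\mu=i}^{r-1}\Phi_{r-\mu}(\mu+\partial_t)f_\mu$ is already known; by the observation (applied with $\mu+\ell+1=\mu+(r-\mu)+1=r+1$) each summand of $g_r$ is divisible by $\partial_t^{\,m^\vee_{r+1}}$, so $g_r$ has $\log$-degree $\le j-m^\vee_{r+1}$, and since $\Phi_0(r+\partial_t)=A_r\,\partial_t^{\,m^\vee_{r+1}}$, part (i) supplies $f_r\in\mathbb C[\log x]_{\le j}$ with $\Phi_0(r+\partial_t)f_r=-g_r$. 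The $u_{i,j}$ so obtained has the asserted form, and $Pu_{i,j}=\sum_{r\ge m_{max}}x^r\bigl(\sum_\mu\Phi_{r-\mu}(\mu+\partial_t)f_\mu\bigr)\in\mathcal O_0(m_{max},j)$ because every inner term has $\log$-degree $\le j$; finally the number of admissible pairs $(i,j)$ is $\sum_{i=0}^{m_{max}-1}m^\vee_{i+1}=\sum_\nu m^\vee_\nu=n$.

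Part (iii) is a direct computation, and I expect it to be the least delicate: using $g(\vartheta)x^a=x^a g(\vartheta+a)$ to push every power of $x$ to the left of $P'P$ and collecting the term in $x^\ell$ with $\ell=a+b$ ($a$ contributed by $P$, $b$ by $P'$), one gets, for each fixed $\nu$, the product of $\prod_{0\le i<m'_\nu-b}(\vartheta+a-m_\nu-i)$ — whose roots are the consecutive integers $m_\nu-a,\dots,m_\nu+m'_\nu-\ell-1$ — with $\prod_{0\le i<m_\nu-a}(\vartheta-i)$ — whose roots are $0,\dots,m_\nu-a-1$; these assemble into the single consecutive block $0,1,\dots,m_\nu+m'_\nu-\ell-1$, i.e.\ the two products equal $\prod_{0\le k<m_\nu+m'_\nu-\ell}(\vartheta-k)$, and pulling this common factor out of every $a$-summand leaves the coefficient $\sum_{a=0}^\ell r'_{\ell-a}(\vartheta+a)r_a(\vartheta)$, which is the stated formula. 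The main obstacle is the $\log$-degree bookkeeping in part (ii): one must check that at every order $x^r$ with $r\le m_{max}-1$ the resonant part of the indicial operator is precisely $\partial_t^{\,m^\vee_{r+1}}$ while the forcing term $g_r$ has already lost $m^\vee_{r+1}$ powers of $\log x$ — this is exactly what the dual‑partition multiplicity count of the key observation supplies, and it is what makes each step of the recursion solvable inside $\mathbb C[\log x]_{\le j}$ and forces the final containment $Pu_{i,j}\in\mathcal O_0(m_{max},j)$.
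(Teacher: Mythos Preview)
Your approach is correct and essentially the same as the paper's for all three parts. The only difference is presentational: in part~(ii) the paper packages your order-by-order recursion for the $f_\mu$ as a formal geometric series $u_{i,j}=\sum_{\nu\ge0}Q^\nu(x^i\log^jx)$, where $Q$ is the nilpotent ``solve $p_0(\vartheta)$ against the lower-order forcing'' step, while you write the same recursion out explicitly.
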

\begin{proof} {\rm i) }
The claim is easy if $(p,k) = (1,1)$ or $(\vartheta-\mu,0)$ with 
$\mu\ne\lambda$.
Then the general case follows from induction on $\deg p(t)+k$.

{\rm ii) }
Put $P=\sum_{\ell\ge 0}x^\ell p_\ell(\vartheta)$ and $m^\vee_\nu=0$ 
if $\nu>m_{max}$.
Then for a non-negative integer $\nu$,
the multiplicity of the root $\nu$ of the equation $p_\ell(t)=0$
is equal or larger than $m^\vee_{\nu+\ell+1}$ for $\ell=1,2,\dots$.
If $0\le \nu\le m_{max}-1$, the multiplicity of the root $\nu$ of the 
equation $p_0(t)=0$ equals $m^\vee_{\nu+1}$.

For non-negative integers $i$ and $j$, we have
\[
 x^\ell p_\ell(\vartheta)x^i\log^j x=x^{i+\ell}\sum_{0\le \nu\le j - m^\vee_{i+\ell+1}}
 c_{i,j,\ell,\nu}\log^\nu x
\]
with suitable $c_{i,j,\ell,\nu}\in\mathbb C$.
In particular, $p_0(\vartheta)x^i\log^j x=0$ if $j<m^\vee_i$.
If $\ell>0$ and $i+\ell<m_{\max}$, there exist functions
\[
  v_{i,j,\ell} =  x^{i+\ell} \sum_{\nu=0}^j a_{i,j,\ell,\nu}
   \log^\nu x
\]
with suitable $a_{i,j,\ell,\nu}\in\mathbb C$ 
such that $p_0(\vartheta)v_{i,j,\ell} = x^\ell p_\ell(\vartheta)x^i\log^j x$
and we define a $\mathbb C$-linear map $Q$ by
\[
  Qx^i\log^j x= -\sum_{\ell=1}^{m_{max}-i-1}v_{i,j,\ell}
  = -\sum_{\ell=1}^{m_{max}-i-1}\sum_{\nu=0}^j
   a_{i,j,\ell,\nu}x^{i+\ell}\log^\nu x,
\]
which implies $p_0(\vartheta)Qx^i\log^jx
=-\sum_{\ell=1}^{m_{max}-i-1}x^\ell p_\ell(\vartheta)x^i\log^j$
and $Q^{m_{max}}=0$.
Putting $Tu:=\sum_{\nu=0}^{m_{max}-1}Q^\nu u$ for 
$u\in\sum_{i=0}^{m_{max}-1}\sum_{j=0}^{q-1}\mathbb Cx^i\log^jx$, 
we have 
\begin{align*}
 P Tu &\equiv
   p_0(\vartheta)Tu+\sum_{\ell=1}^{m_{max}-1}x^\ell p_\ell(\vartheta)Tu
    &&\mod \mathcal O_0(m_{max},j)\\
   &\equiv p_0(\vartheta)(1-Q)Tu &&\mod \mathcal O_0(m_{max},j)\\
   &\equiv p_0(\vartheta)(1-Q)(1+Q+\cdots+Q^{m_{max}-1})u 
    &&\mod \mathcal O_0(m_{max},j)\\
   &= p_0(\vartheta)u.
\end{align*}
Hence if $j<m^\vee_i$, $PTx^i\log^j x\equiv0\mod\mathcal O_0(m_{max},j)$ and  
$u_{i,j}(x):=Tx^i\log^j x$ are required functions.

{\rm iii) } Since
\begin{align*}
&x^{\ell'} r'_{\ell'}(\vartheta)
       \prod_{\nu=1}^q\,\prod_{0\le k'< m'_{\nu}-\ell'}
 (\vartheta - m_\nu - k')\cdot
x^{\ell} r_{\ell}(\vartheta)
       \prod_{\nu=1}^q\,\prod_{0\le k< m_\nu-\ell}
 (\vartheta - k)\\
&=x^{\ell+\ell'}r'_{\ell'}(\vartheta+\ell)r_\ell(\vartheta)
    \prod_{\nu=1}^q
    \prod_{0\le k'< m'_{\nu}-\ell'}
   (\vartheta - m_\nu - k'+\ell)
    \prod_{0\le k< m_\nu-\ell}
   (\vartheta - k)\\
&=x^{\ell+\ell'}r'_{\ell'}(\vartheta+\ell)r_\ell(\vartheta)
    \prod_{\nu=1}^q
    \prod_{0\le k< m_\nu+m_{\nu'}-\ell-\ell'}
   (\vartheta - k),
\end{align*}
we have the claim. 
\end{proof}
\begin{defn}[generalized Riemann scheme]\label{def:GRS}
\index{Riemann scheme!generalized}
Let $P\in W[x]$. Then we call $P$ is \textsl{Fuchsian}
in this paper when $P$ has at most regular singularities
in $\mathbb C\cup\{\infty\}$.
Suppose $P$ is Fuchsian with regular singularities at 
$x=c_0=\infty$, $c_1$,\dots, $c_p$
and the functions $\frac{a_j(x)}{a_n(x)}$ 
are holomorphic on $\mathbb C\setminus\{c_1,\dots,c_p\}$
for $j=0,\dots,n$.
Moreover suppose $P$ has the set of characteristic exponents 
$\{[\lambda_{j,1}]_{(m_{j,1})},\dots,[\lambda_{j,n_j}]_{(m_{j,n_j})}\}$
at $x=c_j$.  Then we define the Riemann scheme of $P$ or the equation $Pu=0$
by
\begin{equation}\label{eq:GRS}
  \begin{Bmatrix}
   x = c_0=\infty & c_1 & \cdots & c_p\\
  [\lambda_{0,1}]_{(m_{0,1})} & [\lambda_{1,1}]_{(m_{1,1})}&\cdots
    &[\lambda_{p,1}]_{(m_{p,1})}\\
  \vdots & \vdots & \vdots & \vdots\\
    [\lambda_{0,n_0}]_{(m_{0,n_0})} & [\lambda_{1,n_1}]_{(m_{1,n_1})}&\cdots
    &[\lambda_{p,n_p}]_{(m_{p,n_p})}
  \end{Bmatrix}.
\end{equation}
\end{defn}
\begin{rem}
The Riemann scheme \eqref{eq:GRS} always satisfies
the Fuchs relation\index{Fuchs relation}
(cf.~\eqref{eq:FC0}):
\begin{equation}\label{eq:Fuchs}
 \sum_{j=0}^p\sum_{\nu=1}^{n_j}\sum_{i=0}^{m_{j,\nu}-1}
  \bigl(\lambda_{j,\nu}+i\bigr)
 =\frac{(p-1)n(n-1)}{2}.
\end{equation}
\end{rem}
\begin{defn}[spectral type]\index{spectral type|see{tuple of partitions}}
In Definition~\ref{def:GRS} we put
\[
 \mathbf m=(m_{0,1},\dots,m_{0,n_0};
 m_{1,1},\dots;m_{p,1},\dots,m_{p,n_p}),
\]
which will be also written as $m_{0,1}m_{0,2}\cdots 
m_{0,n_0},m_{1,1}\cdots,m_{p,1}\cdots m_{p,n_p}$ for simplicity.
Then $\mathbf m$ is a
$(p+1)$-tuple of partitions of $n$ and
we define that $\mathbf m$ is the \textsl{spectral type} of $P$.
\index{spectral type}

If the set of (usual) characteristic exponents 
\begin{equation}\label{eq:Lambdaj}
 \Lambda_j:=\{\lambda_{j,\nu}+i\,;\,0\le i\le m_{j,\nu}-1\text{ and } 
\nu=1,\dots,n_\nu\}
\end{equation}
of the Fuchsian differential operator
$P$ at every regular singular point $x=c_j$ are $n$ different complex 
numbers, $P$ is said to have \textsl{distinct exponents}.
\index{characteristic exponent!distinct}
\end{defn}
\begin{rem}
We remark that the Fuchsian differential equation $\mathcal M: Pu=0$
is irreducible (cf.~Definition~\ref{def:irred})
if and only if the monodromy of the equation is irreducible.

If $P=QR$ with $Q$ and $R\in W(x;\xi)$, 
the solution space of the equation $Qv=0$ is a subspace of that of $\mathcal M$
and closed under the monodromy and therefore the monodromy is reducible.
Suppose the space spanned by certain linearly independent solutions 
$u_1,\dots,u_m$ is invariant under the monodromy.
We have a non-trivial simultaneous solution of the linear relations
$b_mu^{(m)}_j+\cdots+b_1u^{(1)}_j+b_0u_j=0$ for $j=1,\dots,m$.
Then $\frac{b_j}{b_m}$ are single-valued holomorphic functions on
$\mathbb C\cup\{\infty\}$ excluding finite number of singular points.
In view of the local behavior of solutions, the singularities of 
$\frac{b_j}{b_m}$ are at most poles and hence they are rational 
functions. Then we may assume $R=b_m\p^m+\cdots+b_0\in W(x;\xi)$
and $P\in W(x;\xi)R$.

Here we note that $R$ is Fuchsian but $R$ may have a singularity
which is not a singularity of $P$ and is an 
\textsl{apparent singularity}. 
\index{apparent singularity}
For example, we have
\begin{equation}
x(1-x)\p^2+(\gamma-\alpha x)\p+\alpha
=\Bigl(\frac\gamma\alpha -x\Bigr)^{-1}
\Bigl(x(1-x)\p +(\gamma - \alpha x)\Bigr)
\biggl(\Bigl(\frac\gamma\alpha-x\Bigr)\p+1\biggr).
\end{equation}
We also note that the equation $\p^2u=xu$ is irreducible and 
the monodromy of its solutions is reducible.
\end{rem}
\subsection{Tuples of partitions}\index{tuple of partitions}
For our purpose it will be better to allow some $m_{j,\nu}$ equal 0
and we generalize the notation of tuples of partitions as in \cite{O3}.
\begin{defn}\label{def:tuples}
Let $\mathbf m
=\bigl(m_{j,\nu}\bigr)_{\substack{j=0,1,\ldots\\ \nu=1,2,\ldots}}$ 
be an ordered set of infinite number of non-negative integers 
indexed by non-negative integers $j$ and positive integers $\nu$.
Then $\mathbf m$ is called a \textsl{$(p+1)$-tuple of partitions of $n$} 
if the following two conditions are satisfied.
\begin{align}
  \sum_{\nu=1}^\infty m_{j,\nu}&=n\qquad(j=0,1,\ldots),\\
  m_{j,1} &= n\qquad(\forall j > p).
\end{align}
A $(p+1)$-tuple of partition $\mathbf m$ is called 
\textsl{monotone}\index{tuple of partitions!monotone} if\begin{equation}
  m_{j,\nu} \ge m_{j,\nu+1}\quad(j=0,1,\ldots,\ \nu=1,2,\ldots)
\end{equation}
\index{tuple of partitions!trivial}%
and called \textsl{trivial} if $m_{j,\nu}=0$ for $j=0,1,\ldots$ and 
$\nu=2,3,\ldots$.
\index{tuple of partitions!standard}%
Moreover $\mathbf m$ is called \textsl{standard} if $\mathbf m$ is
monotone and $m_{j,2}>0$ for $j=0,\dots,p$.
\index{tuple of partitions!indivisible}%
\index{tuple of partitions!divisible}%
\index{00gcd@$\gcd$}%
The greatest common divisor of $\{m_{j,\nu};j=0,1,\ldots,\ \nu=1,2,\ldots\}$
is denoted by $\gcd\mathbf m$ and $\mathbf m$ is called 
\textsl{divisible} (resp.~\textsl{indivisible})
if $\gcd\mathbf m\ge2$ (resp.~$\gcd\mathbf m=1$).
The totality of $(p+1)$-tuples of partitions of $n$ are denoted by
${\mathcal P}_{p+1}^{(n)}$ and we put
\index{00P@$\mathcal P,\ \mathcal P_{p+1},\ 
\mathcal P^{(n)},\ \mathcal P_{p+1}^{(n)}$}
\index{00ord@$\ord$}%
\index{00Pidx@$\Pidx$}
\begin{align}
  {{\mathcal P}}_{p+1} &:=
    \bigcup_{n=0}^\infty {{\mathcal P}}_{p+1}^{(n)},\quad
  {{\mathcal P}}^{(n)} :=
    \bigcup_{p=0}^\infty {{\mathcal P}}_{p+1}^{(n)},\quad
  {{\mathcal P}}       :=
    \bigcup_{p=0}^\infty {{\mathcal P}}_{p+1},\\
 \ord\mathbf m &:= n\quad\text{if \ }
 \mathbf m\in{{\mathcal P}}^{(n)},
\allowdisplaybreaks\\
 \mathbf 1&:=(1,1,\ldots)=
  \bigl(m_{j,\nu}=\delta_{\nu,1}\bigr)_{\substack{j=0,1,\ldots\\\nu=1,2,\ldots}}\in\mathcal P^{(1)},
\allowdisplaybreaks\\
 \idx(\mathbf m,\mathbf m')&:=
 \sum_{j=0}^p\sum_{\nu=1}^\infty m_{j,\nu}m'_{j,\nu}
 -(p-1)\ord\mathbf m\cdot\ord\mathbf m',
\allowdisplaybreaks\index{00idx@$\idx$}\\
\idx \mathbf m&:=\idx(\mathbf m,\mathbf m) = 
 \sum_{j=0}^p\sum_{\nu=1}^\infty m_{j,\nu}^2-(p-1)\ord\mathbf m^2,
\allowdisplaybreaks\\
\Pidx\mathbf m&:=1-\frac{\idx\mathbf m}2.
\end{align}
\end{defn}
Here $\ord\mathbf m$ is called the \textsl{order} of $\mathbf m$.
\index{tuple of partitions!order}
For $\mathbf m,\,\mathbf m'\in\mathcal P$ and a non-negative integer $k$,
$\mathbf m+k\mathbf m'\in\mathcal P$ is naturally defined.
Note that
\begin{align}
 \idx(\mathbf m+\mathbf m') &= 
 \idx\mathbf m+\idx\mathbf m'+2\idx(\mathbf m,\mathbf m'),\\
 \Pidx(\mathbf m+\mathbf m') &=
 \Pidx\mathbf m+\Pidx\mathbf m'-\idx(\mathbf m,\mathbf m')-1.
\end{align}
\index{00Pidx@$\Pidx$}
For $\mathbf m\in{{\mathcal P}}_{p+1}^{(n)}$ we choose
integers $n_0,\dots,n_k$ so that $m_{j,\nu}=0$
for $\nu>n_j$ and $j=0,\dots,p$ and we will sometimes express
$\mathbf m$ as
\begin{align*}
 \mathbf m&=(\mathbf m_0,\mathbf m_1,\dots,\mathbf m_p)\\
          &=m_{0,1},\dots,m_{0,n_0};\ldots;m_{k,1},\dots,m_{p,n_p}\\
          &=m_{0,1}\cdots m_{0,n_0},m_{1,1}\cdots m_{1,n_1},\dots,
           m_{k,1}\cdots m_{p,n_p}
\end{align*}
if there is no confusion.
Similarly $\mathbf m=(m_{0,1},\dots,m_{0,n_0})$ 
if $\mathbf m\in\mathcal P_1$. Here
\begin{equation*}
  \mathbf m_j = (m_{j,1},\dots,m_{j,n_j}) \text{ \ and \ }
  \ord\mathbf m=m_{j,1}+\cdots+m_{j,n_j}\quad(0\le j\le p).
\end{equation*}
For example $\mathbf m=(m_{j,\nu})\in{{\mathcal P}}_{3}^{(4)}$
with $m_{1,1}=3$ and
$m_{0,\nu}=m_{2,\nu}=m_{1,2}=1$ for $\nu=1,\dots,4$
will be expressed by
\begin{equation*}
 \mathbf m=1,1,1,1;3,1;1,1,1,1=1111,31,1111=1^4,31,1^4.
\end{equation*}
Let $\mathfrak S_\infty$ be the restricted permutation group of
the set of indices $\{0,1,2,3,\ldots\}=\mathbb Z_{\ge 0}$, 
which is generated by the transpositions $(j,j+1)$ with $j\in\mathbb Z_{\ge0}$.
Put $\mathfrak S_\infty'=\{\sigma\in\mathfrak S_\infty\,;\,\sigma(0)=0\}$,
which is isomorphic to $\mathfrak S_\infty$.
\begin{defn}\label{def:Sinfty}
\index{00s@$s$, $s\mathbf m$}%
\index{00Sinfty@$S_\infty,\ S_\infty'$}%
The transformation groups  $S_\infty$ and $S_\infty'$ of
$\mathcal P$ are defined by
\begin{equation}\label{eq:S_infty}
 \begin{split}
 S_\infty :\!&=H\ltimes S_\infty',\\
 S_\infty':\!&=\{(\sigma_i)_{i=0,1,\ldots}\,;\,\sigma_i\in \mathfrak S_\infty',\ 
 \sigma_i=1\ \ (i\gg1)\},\quad
H\simeq\mathfrak S_\infty,\\
 m'_{j,\nu} &= m_{\sigma(j),\sigma_j(\nu)}\qquad(j=0,1,\ldots,\ \nu=1,2,\ldots)
 \end{split}
\end{equation}
for $g = (\sigma,\sigma_1,\ldots) \in S_\infty$, 
$\mathbf m=(m_{j,\nu})\in \mathcal P$ and $\mathbf m'=g\mathbf m$.
A tuple $\mathbf m\in\mathcal P$ is \textsl{isomorphic} to a tuple
$\mathbf m'\in\mathcal P$ if there exists $g\in S_\infty$ such that 
$\mathbf m'=g\mathbf m$.
\index{tuple of partitions!isomorphic}
We denote by $s\mathbf m$ the unique monotone element in 
$S'_\infty\mathbf m$.
\end{defn}
\begin{defn}\label{def:FRLM}
For a tuple of partitions 
$\mathbf m=\Bigl(m_{j,\nu}\Bigr)
_{\substack{1\le\nu\le n_j\\0\le j\le p}}
\in\mathcal P_{p+1}$
and $\lambda=\Bigl(\lambda_{j,\nu}\Bigr)_{\substack{1\le\nu\le n_j\\0\le j\le p}}$ with $\lambda_{j,\nu}\in\mathbb C$, 
we define
\index{000lambda@$\arrowvert$\textbraceleft$\lambda_{\mathbf m}$\textbraceright$\arrowvert$}
\begin{equation}
 \bigl|\{\lambda_{\mathbf m}\}\bigr|
 :=\sum_{j=0}^{p}\sum_{\nu=1}^{n_j}
  m_{j,\nu}\lambda_{j,\nu}-\ord\mathbf m
  +\frac{\idx\mathbf m}2.
\end{equation}
\end{defn}
We note that the Fuchs relation \eqref{eq:Fuchs} is equivalent to
\begin{equation}\label{eq:Fuchidx}\index{Fuchs relation}
 |\{\lambda_{\mathbf m}\}|=0
\end{equation}
because
\begin{align*}
 \sum_{j=0}^p\sum_{\nu=1}^{n_j}\sum_{i=0}^{m_{j,\nu}-1}i
 &=\frac12\sum_{j=0}^p\sum_{\nu=1}^{n_j} m_{j,\nu}(m_{j,\nu}-1)
 =\frac12\sum_{j=0}^p\sum_{\nu=1}^{n_j}m_{j,\nu}^2-\frac12(p+1)n\\
 &=\frac12\Bigl(\idx\mathbf m+(p-1)n^2\Bigr)-\frac12(p+1)n\\
 &=\frac12\idx\mathbf m-n+\frac{(p-1)n(n-1)}2.
\end{align*}
\subsection{Conjugacy classes of matrices}
Now we review on the conjugacy classes of matrices.
For $\mathbf m=(m_1,\dots,m_N)\in\mathcal P^{(n)}_1$ 
and $\lambda=(\lambda_1,\dots,\lambda_N)\in\mathbb C^N$
we define a matrix $L(\mathbf m;\lambda)\in M(n,\mathbb C)$ 
as follows, which is introduced and effectively used by \cite{Os}
and \cite{O3}:

If $\mathbf m$ is monotone, then
\begin{equation}\label{eq:OSNF}
\begin{split}
 L(\mathbf m;\mathbf \lambda) 
  :\!&= \Bigl(A_{ij}\Bigr)_{\substack{1\le i\le N\\1\le j\le N}},\quad
 A_{i,j}\in M(m_i,m_j,\mathbb C),\\
 A_{ij} &= \begin{cases}
          \lambda_i I_{m_i}&(i=j),\\
          I_{m_i,m_j}:=
          \Bigl(\delta_{\mu\nu}\Bigr)
          _{\substack{1\le \mu\le n_i\\1\le \nu\le n_j}}
	=
          \begin{pmatrix}
          I_{m_j} \\ 0
          \end{pmatrix}&(i=j-1),\\
          0            &(i\ne j,\ j-1).
          \end{cases}
\end{split}
\end{equation}
\index{00Lmlambda@$L(\mathbf m;\lambda)$}%
\index{00Mm@$M(n,\mathbb C)$, $M(m_1,m_2,\mathbb C)$}%
Here $I_{m_i}$ denote the identity matrix of size $m_i$ and
$M(m_i,m_j,\mathbb C)$ means the set of matrices of size $m_i\times m_j$
with components in $\mathbb C$ and
$M(m,\mathbb C):=M(m,m,\mathbb C)$.

For example
\begin{equation*}
 L(2,1,1;\lambda_1,\lambda_2,\lambda_3):=
 \begin{pmatrix}
 \lambda_1 & 0       &1\\
 0         &\lambda_1& 0\\
           &         &\lambda_2&1\\
           &         &         &\lambda_3\\
 \end{pmatrix}.
\end{equation*}
If $\mathbf m$ is not monotone, we fix a permutation 
$\sigma$ of $\{1,\dots,N\}$ so that 
$(m_{\sigma(1)},\ldots,m_{\sigma(N)})$ is monotone and put 
$L(\mathbf m;\mathbf \lambda)=L(m_{\sigma(1)},\ldots,
m_{\sigma(N)};\lambda_{\sigma(1)},\ldots,\lambda_{\sigma(N)})$.

When $\lambda_1=\cdots=\lambda_N=\mu$, $L(\mathbf m;\lambda)$ may be
simply denoted by $L(\mathbf m,\mu)$.

We denote $A\sim B$ for $A$, $B\in M(n,\mathbb C)$ if and only if
there exists $g\in GL(n,\mathbb C)$ with $B=gAg^{-1}$.

When $A\sim L(\mathbf m;\lambda)$, 
$\mathbf m$ is called the \textsl{spectral type} of $A$ and 
denoted by $\spc A$ with a monotone $\mathbf m$.

\begin{rem}\label{rm:1}
{\rm i)\ }
If $\mathbf m=(m_1,\dots,m_K)\in\mathcal P_1^{(n)}$ is monotone, we have 
\begin{equation*}
 A\sim L(\mathbf m;\lambda)\ \Leftrightarrow\ 
  \rank\prod_{\nu=1}^j(A-\lambda_\nu)
 = n - (m_1+\cdots+m_j)\quad(j=0,1,\dots,K).
\end{equation*}

{\rm ii)\ } For $\mu\in\mathbb C$, put 
\begin{equation}\label{eq:Msub}
 (\mathbf m;\lambda)_\mu
    =(m_{i_1},\ldots,m_{i_K};\mu)
\text{ \ with \ }\{i_1,\dots,i_K\}=\{i\,;\,\lambda_i=\mu\}.
\end{equation}
Then we have
\begin{equation}\label{eq:Leigen}
  L(\mathbf m;\lambda) \sim\bigoplus_{\mu\in\mathbb C}
  L\bigl((\mathbf m;\lambda)_\mu\bigr).
\end{equation}

{\rm iii)\ } Suppose $\mathbf m$ is monotone.  
Then for $\mu\in\mathbb C$ 
\begin{equation}\label{eq:LJordan}
 \begin{split}
  L(\mathbf m,\mu) &\sim
  \bigoplus_{j=1}^{m_1} J\bigl(\max\{\nu\,;\,m_\nu\ge j\},\mu\bigr),\\
  J(k,\mu) :\!&=L(1^k,\mu)\in M(k,\mathbb C).
 \end{split}
\end{equation}

{\rm iv)\ }  For $A\in M(n,\mathbb C)$, we put $Z(A)=Z_{M(n,\mathbb C)}(A)
:=\{X\in M(n,\mathbb C)\,;\,AX=XA\}$.  
\index{00Z@$Z(A)$, $Z(\mathbf M)$}%
Then 
\begin{equation*}
  \dim Z_{M(n,\mathbb C)}\bigl(L(\mathbf m,\lambda)\bigr)
  = m_1^2+m_2^2+\cdots
\end{equation*} 

{\rm v)\ } (cf.~\cite[Lemma~3.1]{O5}).
Let $\mathbf A(t):\,[0,1)\to M(n,\mathbb C)$ be a continuous function.
Suppose there exist a continuous function 
$\lambda=(\lambda_1,\dots,\lambda_K):\,[0,1)\to\mathbb C^K$
such that $A(t)\sim L(\mathbf m;\lambda(t))$ for $t\in(0,1)$.
Then
\begin{equation}
  A(0)\sim L\bigl(\mathbf m;\lambda(0)\bigr)
  \text{ \ if and only if \ }\dim Z\bigl(A(0)\bigr)=m_1^2+\cdots+m_K^2.
\end{equation}
\end{rem}
Note that the Jordan canonical form of $L(\mathbf m;\lambda)$ is 
easily obtained by \eqref{eq:Leigen} and \eqref{eq:LJordan}.
For example, $L(2,1,1;\mu)\simeq J(3,\mu)\oplus J(1,\mu)$.
\subsection{Realizable tuples of partitions}
\begin{prop}
Let $Pu=0$ be a differential equation of order $n$ 
which has a regular singularity at $0$.
Let $\{[\lambda_1]_{(m_1)},\dots,[\lambda_q]_{(m_q)}\}$
be the corresponding set of the characteristic exponents.
Here $\mathbf m=(m_1,\dots,m_q)$ a partition of $n$.

{\rm i) }
Suppose there exists $k$ such that
\begin{gather*}
 \lambda_1=\lambda_2=\cdots=\lambda_k,\\
 m_1\ge m_2\ge \cdots\ge m_k,\\
 \lambda_j-\lambda_1\notin\mathbb Z\qquad(j=k+1,\dots,q).
\end{gather*}
Let $\mathbf m^\vee=(m_1^\vee,\dots,m^\vee_r)$ be the
dual partition of $(m_1,\dots,m_k)$ {\rm (cf.~\eqref{eq:dualP})}.
Then for $i=0,\ldots,m_1-1$ and $j=0,\ldots,m^\vee_{i+1}-1$
the equation has the solutions
\begin{equation}
  u_{i,j}(x) = \sum_{\nu=0}^j x^{\lambda_1+i}\log^\nu x\cdot\phi_{i,j,\nu}(x).
\end{equation}
Here $\phi_{i,j,\nu}(x)\in\mathcal O_0$ and $\phi_{i,\nu,j}(0)=\delta_{\nu,j}$
for $\nu=0,\dots,j-1$.

{\rm ii) }
Suppose
\begin{equation}
 \lambda_i-\lambda_j\ne\mathbb Z\setminus\{0\}
 \qquad(0\le i<j\le q).
\end{equation}
In this case we say that the set of characteristic exponents
$\{[\lambda_1]_{(m_1)},\dots,[\lambda_q]_{(m_q)}\}$ is 
\textsl{distinguished}.
\index{characteristic exponent!distinguished}
Then the monodromy generator of the solutions of the
equation at $0$ is conjugate to
\[
 L\bigl(\mathbf m;(e^{2\pi\sqrt{-1}\lambda_1},\dots,
  e^{2\pi\sqrt{-1}\lambda_q})\bigr).
\]
\end{prop}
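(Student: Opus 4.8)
The plan is to reduce both parts to the case of ``generic'' exponents, where the structure is easiest, and then obtain the general case by the deformation argument from Remark~\ref{rem:GCexp} iv).

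For part i), I would first apply the gauge transformation $\Ad(x^{-\lambda_1})$, so that after replacing $P$ by $x^{n-k}\Ad(x^{-\lambda_1})(a_n(x)^{-1}P)$ we may assume $P$ is normalized at $0$, that $\lambda_1=0$, that the exponents grouped at $0$ are exactly $\{[0]_{(m_1)},\dots,[0]_{(m_k)}\}$, and that the remaining exponents $\lambda_{k+1},\dots,\lambda_q$ are not integers. Writing $P=\sum_{\ell\ge0}x^\ell p_\ell(\vartheta)$, the hypothesis on the characteristic exponents together with Remark~\ref{rem:GCexp} i) tells us that $\prod_{\nu=1}^k\prod_{0\le i<m_\nu-\ell}(\vartheta-i)$ divides $p_\ell(\vartheta)$ for every $\ell$, and that $p_0(\nu)\ne0$ for the remaining non-integer roots. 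Thus $P$ has exactly the shape of the operator in Lemma~\ref{lem:GRS} ii) (the factors $(\vartheta-i)$ with $i<m_\nu-\ell$ are present, and $r_0(\nu)=p_0(\nu)\ne0$ for $\nu=0,\dots,m_{\max}-1$ precisely because no $\lambda_j$ with $j>k$ is a non-negative integer $<m_1$). Lemma~\ref{lem:GRS} ii) then produces, for $i=0,\dots,m_1-1$ and $j=0,\dots,m^\vee_{i+1}-1$, functions $u_{i,j}(x)=x^i\log^jx+\sum_{\mu>i}\sum_{\nu\le j}c^{\mu,\nu}_{i,j}x^\mu\log^\nu x$ with $Pu_{i,j}\in\mathcal O_0(m_{\max},j)$. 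It remains to correct these approximate solutions to exact solutions: since $m_{\max}=m_1$ is strictly larger than every exponent appearing, and the only characteristic exponents of $P$ that are $\ge m_1$ are the non-integer $\lambda_j+\mathbb Z_{\ge0}$ (never hit by $\mathcal O_0(m_1,j)$ when we expand in $x^{m_1}\log^\nu x\cdot(\text{power series})$), the bijectivity statements \eqref{eq:PBr}, \eqref{eq:Phat} (applied in the form used at the end of \S\ref{sec:reg} to build logarithmic solutions) let us add an element of $\mathcal O_0(m_1,j)$ to kill $Pu_{i,j}$ entirely. Undoing $\Ad(x^{-\lambda_1})$ restores the factor $x^{\lambda_1}$ and gives the asserted solutions with $\phi_{i,j,\nu}\in\mathcal O_0$ and $\phi_{i,j,\nu}(0)=\delta_{\nu,j}$ for $\nu<j$.

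For part ii), under the distinguished (distinct modulo $\mathbb Z_{\ne0}$) hypothesis I would group the indices by the values of $\lambda_j$ modulo $\mathbb Z$; by part i) applied at each eigenvalue cluster, the space of local solutions decomposes according to these clusters, and within the cluster at $\lambda_1$ the solutions $u_{i,j}$ for $0\le j<m^\vee_{i+1}$ form a basis of the corresponding generalized eigenspace of the monodromy generator $M_0$. The monodromy acts on $x^{\lambda_1+i}\log^\nu x$ by $x\mapsto e^{2\pi\sqrt{-1}}x$, which sends $\log x\mapsto\log x+2\pi\sqrt{-1}$, so on the span of the $u_{i,j}$ the operator $(2\pi\sqrt{-1})^{-1}\bigl(e^{-2\pi\sqrt{-1}\lambda_1}M_0-1\bigr)$ is nilpotent and, read off from the leading terms $x^{\lambda_1+i}\log^jx$, its Jordan type is precisely the one encoded by the numbers $\#\{j:j<m^\vee_{i+1}\}=m^\vee_{i+1}$, i.e. the dual partition of $(m_1,\dots,m_k)$. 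By Remark~\ref{rm:1} iii), $L(\mathbf m_{\text{clust}},e^{2\pi\sqrt{-1}\lambda_1})$ has exactly this Jordan structure for each eigenvalue, and by \eqref{eq:Leigen} the direct sum over eigenvalues $e^{2\pi\sqrt{-1}\lambda_\nu}$ reassembles to $L\bigl(\mathbf m;(e^{2\pi\sqrt{-1}\lambda_1},\dots,e^{2\pi\sqrt{-1}\lambda_q})\bigr)$; comparing dimensions of centralizers via Remark~\ref{rm:1} iv) confirms the conjugacy.

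The main obstacle I anticipate is the passage in part i) from approximate solutions $u_{i,j}$ with $Pu_{i,j}\in\mathcal O_0(m_1,j)$ to genuine solutions. This is exactly the place where one must be careful that adding a correction term in $\mathcal O_0(m_1,j)$ does not create a resonance: one needs $P$ restricted to $\mathcal O_0(\lambda_1+m_1,j)$ (more precisely to the subspace of series supported in exponents $\ge m_1$ that are not of the form $\lambda_\nu+\mathbb Z_{\ge0}$, $\nu>k$) to be bijective, which follows from \eqref{eq:PBr}/\eqref{eq:Phat} and the non-integrality of the $\lambda_\nu-\lambda_1$ for $\nu>k$, but the bookkeeping of exponents and log-powers — especially verifying that the finitely many ``bad'' exponents $i<m_1$ have already been completely accounted for by Lemma~\ref{lem:GRS} ii) so that no further obstruction arises — is the delicate point. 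The rest, including the convergence of the corrected series on a fixed disc, is routine majorant-series estimation already recorded in \S\ref{sec:reg}.
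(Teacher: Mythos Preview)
Your proof is correct and follows essentially the same route as the paper: Lemma~\ref{lem:GRS}~ii) produces approximate solutions $u_{i,j}$ with $Pu_{i,j}\in\mathcal O_0(\lambda_1+m_1,j)$, and the bijectivity \eqref{eq:Pbij} (equivalently \eqref{eq:PBr}, \eqref{eq:Phat}) supplies the correction $v_{i,j}\in\mathcal O_0(\lambda_1+m_1,j)$ with $Pv_{i,j}=Pu_{i,j}$, so that $u_{i,j}-v_{i,j}$ is an honest solution of the required shape; part~ii) then follows by reading off the Jordan type of the local monodromy from the basis just constructed, exactly as you describe via Remark~\ref{rm:1}.

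One cosmetic point: your opening sentence announces a deformation argument via Remark~\ref{rem:GCexp}~iv), but the proof you actually write never uses it --- you work directly with Lemma~\ref{lem:GRS}~ii), which already handles the non-generic (repeated-exponent) case. You should drop that sentence. Otherwise your verification that the bijectivity hypothesis of \eqref{eq:Pbij} holds on $\mathcal O_0(\lambda_1+m_1,j)$ (no characteristic exponent of the form $\lambda_1+m_1+\mathbb Z_{\ge0}$, since the $\lambda_1$-block exponents are all $<\lambda_1+m_1$ and the others are non-integral shifts of $\lambda_1$) is exactly the point the paper leaves implicit, and your treatment of part~ii) spells out what the paper compresses into ``follows from i)''.
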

\begin{proof} 
Lemma~\ref{lem:GRS} ii) shows that there exist $u_{i,j}(x)$ of the form
stated in i) which satisfy $Pu_{i,j}(x)\in\mathcal O_0(\lambda_1+m_1,j)$
and then we have $v_{i,j}(x)\in\mathcal O_0(\lambda_1+m_1,j)$ such that 
$Pu_{i,j}(x)=Pv_{i,j}(x)$ because of \eqref{eq:Pbij}.
Thus we have only to replace $u_{i,j}(x)$ by
$u_{i,j}(x)-v_{i,j}(x)$ to get the claim in i).
The claim in ii) follows from that of i).
\end{proof}
\begin{rem} 
{\rm i) } Suppose $P$ is a Fuchsian differential operator with regular 
singularities at $x=c_0=\infty, c_1,\dots,c_p$ and moreover suppose 
$P$ has distinct exponents.
Then the Riemann scheme of $P$ is \eqref{eq:GRS} if and only if 
$Pu=0$ has local solutions $u_{j,\nu,i}(x) $ of the form
\begin{equation}
  u_{j,\nu,i}(x) =
  \begin{cases}
    (x-c_j)^{\lambda_{j,\nu}+i}\bigl(1+o(|x-c_j|^{m_j,{\nu}-i-1})\bigr)\\
    \qquad\qquad(x\to c_j,\ i=0,\dots,m_{j,\nu}-1,\ j=1,\dots,p),\\
    x^{-\lambda_{0,\nu}-i}\bigl(1+o(x^{-m_{0,\nu}+i+1})\bigr)\\
    \qquad\qquad(x\to\infty,\ i=0,\dots,m_{0,\nu}).
  \end{cases}
\end{equation}
Moreover suppose $\lambda_{j,\nu}-\lambda_{j,\nu'}\notin\mathbb Z$
for $1\le\nu<\nu'\le n_j$ and $j=0,\dots,p$.
Then
\begin{equation}
  u_{j,\nu,i}(x)=
  \begin{cases}
     (x-c_j)^{\lambda_{j,\nu}+i}\phi_{j,\nu,i}(x) &(1\le j\le p)\\
     x^{-\lambda_{0,\nu}-i}\phi_{0,\nu,i}(x) & (j=0)
  \end{cases}
\end{equation}
with $\phi_{j,\nu,i}(x)\in\mathcal O_{c_j}$ satisfying
$\phi_{j,\nu,i}(c_j)=1$.
In this case $P$ has the Riemann scheme \eqref{eq:GRS} if and only if
at the each singular point $x=c_j$, 
the set of characteristic exponents of the equation $Pu=0$ equals
$\Lambda_j$ in \eqref{eq:Lambdaj} and the monodromy generator of its 
solutions is semisimple.

{\rm ii) }
Suppose $P$ has the Riemann scheme \eqref{eq:GRS} and
$\lambda_{1,1}=\dots=\lambda_{1,n_1}$.
Then the monodromy generator of the solutions of $Pu=0$ at $x=c_1$ has 
the eigenvalue $e^{2\pi\sqrt{-1}\lambda_{1,1}}$ with multiplicity $n$.
Moreover the monodromy generator is conjugate to
the matrix 
$L\bigl((m_{1,1},\dots,m_{1,n_1}),e^{2\pi\sqrt{-1}\lambda_{1,1}}\bigr)$,
 which is also conjugate to
\begin{align*}
  J(m^\vee_{1,1},e^{2\pi\sqrt{-1}\lambda_{1,1}})\oplus\cdots\oplus 
  J(m^\vee_{1,n'_1},e^{2\pi\sqrt{-1}\lambda_{1,1}}).
\end{align*}
Here $(m^\vee_{1,1},\dots,m^\vee_{1,n^\vee_1})$ is the dual partition of 
$(m_{1,1},\dots,m_{1,n_1})$.
A little weaker condition for $\lambda_{j,\nu}$ assuring the same conclusion
is given in Proposition~\ref{prop:nondeg}.
\end{rem}
\begin{defn}[realizable spectral type]\label{defn:real}
\index{tuple of partitions!realizable}
Let $\mathbf m=(\mathbf m_0,\dots,\mathbf m_p)$ be a $(p+1)$-tuple
of partitions of a positive integer $n$.  Here
$\mathbf m_j = (m_{j,1},\dots,m_{j,n_j})$ and 
$n=m_{j,1}+\dots+m_{j,n_j}$ for $j=0,\dots,p$ and
$m_{j,\nu}$ are non-negative numbers.
Fix $p$ different points $c_j$ ($j=1,\dots,p$) in $\mathbb C$
and put $c_0=\infty$.

Then $\mathbf m$ is a \textsl{realizable spectral type} if
there exists a Fuchsian operator $P$ with the Riemann scheme
\eqref{eq:GRS} for generic $\lambda_{j,\nu}$ satisfying
the Fuchs relation \eqref{eq:Fuchs}.
Moreover in this case if there exists such $P$ so that the equation
$Pu=0$ is irreducible, which is equivalent to say that
the monodromy of the equation is irreducible, then $\mathbf m$ is 
\textsl{irreducibly realizable}.
\index{tuple of partitions!realizable}
\index{tuple of partitions!irreducibly realizable}
\end{defn}
\begin{rem}\label{rem:generic}
 {\rm i)} \ 
In the above definition $\{\lambda_{j,\nu}\}$ are generic if,
for example, $0<m_{0,1}<\ord\mathbf m$ and
$\{\lambda_{j,\nu}\,;\,(j,\nu)\ne (0,1),\ j=0,\dots,p,\ 1\le\nu\le n_j\}
\cup\{1\}$ are linearly independent over $\mathbb Q$.

{\rm ii) }
It follows from the facts (cf.~\eqref{eq:FC1}) in 
\S\ref{sec:reg} that if $\mathbf m\in\mathcal P$ satisfies
\begin{equation}\label{eq:FC12}
 \begin{split}
  &|\{\lambda_{\mathbf m'}\}|\notin\mathbb Z_{\le 0}
  =\{0,-1,-2,\ldots\}
  \text{ for any }\mathbf m',\,\mathbf m''\in\mathcal P\\
  &\quad\text{ satisfying }\mathbf m=\mathbf m'+\mathbf m''
  \text{ and }0<\ord\mathbf m'<\ord\mathbf m,
 \end{split}
\end{equation}
the Fuchsian differential equation with the Riemann scheme 
\eqref{eq:GRS} is irreducible.
Hence if $\mathbf m$ is indivisible and realizable, 
$\mathbf m$  is irreducibly realizable.
\end{rem}
Fix distinct $p$ points $c_1,\dots,c_p$ in $\mathbb C$ and put $c_0=\infty$.
The Fuchsian differential operator $P$ with regular singularities
at $x=c_j$ for $j=1,\dots,n$ has the \textsl{normal form}
\index{Fuchsian differential equation/operator!normal form}
\begin{equation}\label{eq:FNF}
 P = \Bigl(\prod_{j=1}^p (x-c_j)^n\Bigr)\p^n
     + a_{n-1}(x)\p^{n-1}+\cdots+a_1(x)\p + a_0(x),
\end{equation}
where $a_i(x)\in\mathbb C[x]$ satisfy
\begin{align}
 \deg a_i(x)&\le (p-1)n+i,\label{C:RSfin}\\
 (\p^\nu a_i)(c_j) &=0
 \quad(0\le\nu \le i-1)\label{C:RSinfin}
\end{align}
for $i=0,\dots,n-1$.

Note that the condition \eqref{C:RSfin} (resp.~\eqref{C:RSinfin})
corresponds to the fact
that $P$ has regular singularities at $x=c_j$ for $j=1,\dots,p$ 
(resp.~at $x=\infty$).

Since $a_i(x)=b_i(x)\prod_{j=1}^p(x-c_j)^i$ with
$b_i(x)=\sum_{r=0}^{(p-1)(n-i)}b_{i,r}x^r \in W[x]$ satisfying
$\deg b_i(x)\le (p-1)n+i - pi=(p-1)(n-i)$, 
the operator $P$ has the parameters $\{b_{i,r}\}$.  
The numbers of the parameters equals
\[
  \sum_{i=0}^{n-1}\bigl((p-1)(n-i)+1\bigr)
 =\frac{(pn +p-n+1)n}{2},
\]
The condition $(x-c_j)^{-k}P\in W[x]$ implies $(\p^\ell a_i)(c_j)=0$
for $0\le \ell\le k-1$ and $0\le i\le n$, 
which equals $(\p^\ell b_i)(c_j)=0$ for $0\le \ell\le k-1-i$
and $0\le i\le k-1$.  
Therefore the condition
\begin{equation}\label{C:eachblock}
 (x-c_j)^{-m_{j,\nu}}\Ad\bigl((x-c_j)^{-\lambda_{j,\nu}}\bigr)P\in W[x]
\end{equation}
gives $\frac{(m_{j,\nu}+1)m_{j,\nu}}{2}$ independent linear equations 
for $\{b_{\nu,r}\}$ since $\sum_{i=0}^{m_{j,\nu}-1} (m_{j,\nu}-i)=
\frac{(m_{j,\nu}+1)m_{j,\nu}}{2}$.
If all these equations have a simultaneous solution and they are 
independent except for the relation caused by the Fuchs relation,
the number of the parameters of the solution equals
\begin{equation}
 \begin{split}
 &\frac{(pn +p-n+1)n}{2} - 
  \sum_{j=0}^p\sum_{\nu=1}^{n_j}\frac{m_{j,\nu}(m_{j,\nu}+1)}{2}+1\\
 &=\frac{(pn +p-n+1)n}{2} - 
  \sum_{j=0}^p\sum_{\nu=1}^{n_j}\frac{m_{j,\nu}^2}{2}-(p+1)\frac{n}{2}+1\\
 &=\frac12\Bigl((p-1)n^2 - \sum_{j=0}^p\sum_{\nu=1}^{n_j}m_{j,\nu}^2+1
   \Bigr)=\Pidx\mathbf m.
 \end{split}
\end{equation}
\begin{rem}[{cf.~\cite[\S5]{O3}}]
Katz \cite{Kz} introduced the \textsl{index of rigidity}
of an irreducible local system by the number  $\idx\mathbf m$ 
whose spectral type equals 
$\mathbf m=(m_{j,\nu})_{\substack{ j=0,\dots,p\ \,\\ \nu=1,\dots,n_{j}}}$
and proves $\idx\mathbf m\le 2$, if the local system is irreducible.
\index{tuple of partitions!index of rigidity}

Assume the local system is irreducible.
Then Katz \cite{Kz} shows that
the local system is uniquely determined by the local monodromies
if and only if $\idx\mathbf m=2$ and in this case the local system and
the tuple of partition $\mathbf m$ are called \textsl{rigid}.
\index{tuple of partitions!rigid}
If $\idx\mathbf m>2$, the corresponding system of differential 
equations of \textsl{Schleginger normal form}
\begin{equation}
  \frac{du}{dx} = \sum_{j=1}^p\frac{A_j}{x-a_j}u
\end{equation}
has $2\Pidx\mathbf m$ parameters
which are independent from the characteristic exponents and 
local monodromies.
They are called \textsl{accessory parameters}.
\index{accessory parameter}
Here $A_j$ are constant square matrices of size $n$.
The number of accessory parameters of the single Fuchsian
differential operator without apparent singularities will be the
half of this number $2\Pidx\mathbf m$ (cf.~Theorem~\ref{thm:univmodel} and 
\cite{Sz}). 
\end{rem}
Lastly in this subsection we calculate the Riemann scheme of the products and 
the dual of Fuchsian differential operators.
\begin{thm}\label{thm:prod} 
Let $P$ be a Fuchsian differential operator with
the Riemann scheme \eqref{eq:GRS}.
Suppose $P$ has the normal form \eqref {eq:FNF}.

{\rm i)}
Let $P'$ be a Fuchsian differential operator
with regular singularities 
also at $x=c_0=\infty,c_1,\dots,c_p$.
Then if $P'$ has the Riemann scheme 
\begin{equation}\label{eq:GRSP}
   \begin{Bmatrix}
   x = c_0=\infty & c_j\quad(j=1,\dots,p)\\
  [\lambda_{0,1}+m_{0,1}-(p-1)\ord\mathbf m]_{(m'_{0,1})} & 
  [\lambda_{j,1}+m_{j,1}]_{(m'_{j,1})} \\
   \vdots & \vdots\\
    [\lambda_{0,n_0}+m_{0,n_0}-(p-1)\ord\mathbf m]_{(m'_{0,n_0})} 
   & [\lambda_{j,n_j}+m_{j,n_j}]_{(m'_{j,n_j})}
  \end{Bmatrix},
\end{equation}
the Fuchsian operator $P'P$ has the spectral type $\mathbf m+\mathbf m'$ and
the Riemann scheme
\begin{equation}\label{eq:GRSA}
  \begin{Bmatrix}
   x = c_0 =\infty & c_1 & \cdots & c_p\\
  [\lambda_{0,1}]_{(m_{0,1}+m'_{0,1})} 
   & [\lambda_{1,1}]_{(m_{1,1}+m'_{1,1})}&\cdots
    &[\lambda_{p,1}]_{(m_{p,1}+m'_{p,1})}\\
  \vdots & \vdots & \vdots & \vdots\\
    [\lambda_{0,n_0}]_{(m_{0,n_0}+m'_{0,n_0})} & 
     [\lambda_{1,n_1}]_{(m_{1,n_1}+m'_{1,n_1})}&\cdots
    &[\lambda_{p,n_p}]_{(m_{p,n_p}+m'_{1,n_p})}
  \end{Bmatrix}.
\end{equation}
Suppose the Fuchs relation \eqref{eq:Fuchidx} for \eqref{eq:GRS}.
Then the Fuchs relation for \eqref{eq:GRSP} is valid if and only if
so is the Fuchs relation for \eqref{eq:GRSA}.

{\rm ii)} For $Q=\sum_{k\ge0} q_k(x)\p^k \in W(x)$, we define
\begin{equation}
  Q^* := \sum_{k\ge 0}(-\p)^kq_k(x)
\end{equation}
and the dual operator $P^\vee$ of $P$ by
\index{00Pvee@$P^\vee,\ P^*$}
\index{differential equation/operator!dual}
\index{Fuchsian differential equation/operator!dual}
\begin{equation}\label{eq:dualop}
  P^\vee := a_n(x)(a_n(x)^{-1}P)^*
\end{equation}
when $P=\sum_{k=0}^na_k(x)\p^k$.
Then the Riemann scheme of $P^\vee$ equals
\begin{equation}
\begin{Bmatrix}
    x = c_0 =\infty & c_j\quad(j=1,\dots,p)\\
  [2-n-m_{0,1}-\lambda_{0,1}]_{(m_{0,1})} & 
  [n-m_{j,1}-\lambda_{j,1}]_{(m_{j,1})}\\
  \vdots & \vdots\\
  [2-n-m_{0,n_0}-\lambda_{0,n_0}]_{(m_{0,n_0})} & 
  [n-m_{j,n_j}-\lambda_{j,n_j}]_{(m_{j,n_j})}
\end{Bmatrix}.
\end{equation}
\end{thm}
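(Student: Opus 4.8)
The plan is to reduce both statements to purely local computations at the regular singular points $c_0=\infty,c_1,\dots,c_p$. Since the set of characteristic exponents at a point is a local invariant, it suffices to show that the operators in question are Fuchsian with all singularities contained in $\{c_0,\dots,c_p\}$ and with leading coefficients $\prod_{j=1}^p(x-c_j)^{n+n'}$ for $P'P$ and $(-1)^n\prod_{j=1}^p(x-c_j)^n$ for $P^\vee$; the normal-form conditions \eqref{C:RSfin}, \eqref{C:RSinfin} are then automatic, and the Riemann schemes are read off from the local exponents. That $P'P$ has no singularity outside $\{c_0,\dots,c_p\}$ is immediate from its leading coefficient, and for $P^\vee=a_n(x)\bigl(a_n(x)^{-1}P\bigr)^*$ it follows because the formal adjoint preserves the singular locus and carries regular singularities to regular singularities. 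Finally the two Fuchs-relation assertions are arithmetic identities of the type used to rewrite \eqref{eq:Fuchs} as \eqref{eq:Fuchidx}: for i) one substitutes the exponent shifts of \eqref{eq:GRSP} and checks $|\{\lambda_{\mathbf m+\mathbf m'}\}|=|\{\lambda_{\mathbf m}\}|$ (equivalently, that the Fuchs relation for \eqref{eq:GRSP} is equivalent to that for \eqref{eq:GRSA} given the one for \eqref{eq:GRS}), and for ii) one computes $\sum_{j=0}^p\sum_{\nu}\sum_{i=0}^{m_{j,\nu}-1}(\lambda^\vee_{j,\nu}+i)$ with $\lambda^\vee_{j,\nu}$ as claimed and recovers $\tfrac{(p-1)n(n-1)}2$.

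For part i), fix $c=c_j$ with $j\ge1$, set $s=(x-c)\partial$, normalize $P$ and $P'$ at $c$, and use Definition~\ref{def:RScheme} together with Remark~\ref{rem:GCexp} i) to write
\[
 (x-c)^na_n(x)^{-1}P=\sum_{\ell\ge0}(x-c)^\ell q_\ell(s)\prod_\nu\prod_{0\le i<m_{j,\nu}-\ell}\bigl(s-\lambda_{j,\nu}-i\bigr)
\]
with $q_0$ a nonzero constant, and likewise for $P'$ with exponents $\lambda_{j,\nu}+m_{j,\nu}$ and multiplicities $m'_{j,\nu}$. The key step is to apply Lemma~\ref{lem:GRS} iii). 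That lemma is stated for exponents based at $0$ and at $m_\nu$, but its proof uses only $s(x-c)^\ell=(x-c)^\ell(s+\ell)$ and the identity $\prod_{0\le k<a}(s-\mu-k)\cdot\prod_{0\le k'<b}(s-\mu-a-k')=\prod_{0\le k<a+b}(s-\mu-k)$, so it goes through verbatim when the base exponent $0$ is replaced by an arbitrary $\lambda_{j,\nu}\in\mathbb C$. The resulting expression for the normalized product again has the shape of \eqref{eq:GEXP}, with exponents $\lambda_{j,\nu}$ and multiplicities $m_{j,\nu}+m'_{j,\nu}$, and its $\ell=0$ coefficient $q'_0q_0$ is a nonzero constant; hence $P'P$ has characteristic exponents $\{[\lambda_{j,\nu}]_{(m_{j,\nu}+m'_{j,\nu})}\}$ at $c_j$. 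One technical point is that the operator to which Lemma~\ref{lem:GRS} iii) is applied is the normalization of $P'P$ only up to a unit of $\mathcal O_c$, which is harmless since left multiplication by a unit, or transposing a unit past a differential operator, does not alter the roots $\lambda_{j,\nu}+k$ of the indicial factors. The computation at $c_0=\infty$ is identical after replacing $s$ by $\vartheta$ and $(x-c)^\ell$ by $x^{-\ell}$, using \eqref{eq:gcexp} in place of \eqref{eq:GEXP}; here the normal-form leading coefficient $\prod_{j=1}^p(x-c_j)^n$ has degree $pn$ rather than $n$, which is exactly the source of the $-(p-1)\ord\mathbf m$ shift of the exponents of $P'$ at $\infty$ in \eqref{eq:GRSP} (compare $x^n\bigl(\partial^n-cx^{-1}\partial^{n-1}\bigr)=\vartheta^n-\bigl(c+\tfrac{n(n-1)}2\bigr)\vartheta^{n-1}+\cdots$).

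For part ii), the mechanism is that $Q\mapsto Q^*$ reverses composition, $(QR)^*=R^*Q^*$, is $\mathbb C$-linear, fixes functions, and satisfies $s^*=\bigl((x-c)\partial\bigr)^*=-\partial(x-c)=-(s+1)$ at a finite point and $\vartheta^*=-(\vartheta+1)$ at $\infty$. Starting from
\[
 a_n(x)^{-1}P=\sum_{\ell\ge0}(x-c)^{\ell-n}q_\ell(s)\prod_\nu\prod_{0\le i<m_{j,\nu}-\ell}\bigl(s-\lambda_{j,\nu}-i\bigr),
\]
applying $*$, and then moving the powers $(x-c)^{\ell-n}$ back to the left via $g(s)(x-c)^{\ell-n}=(x-c)^{\ell-n}g(s+\ell-n)$, each factor $s-\lambda_{j,\nu}-i$ becomes $-\bigl(s-(n-m_{j,\nu}-\lambda_{j,\nu})-k\bigr)$ after the reindexing $k=m_{j,\nu}-\ell-1-i$, so that $(x-c)^n\bigl((-1)^na_n(x)\bigr)^{-1}P^\vee$ again takes the form \eqref{eq:GEXP} with new exponents $n-m_{j,\nu}-\lambda_{j,\nu}$ and unchanged multiplicities $m_{j,\nu}$; as before the $\ell=0$ coefficient stays a nonzero constant, so these are the characteristic exponents of $P^\vee$ at $c_j$. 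At $\infty$ the same bookkeeping with $\vartheta$ and \eqref{eq:gcexp} yields the exponents $2-n-m_{0,\nu}-\lambda_{0,\nu}$, the extra additive $2$ arising from $\vartheta^*=-(\vartheta+1)$ combined with the $x^{-n}$-normalization in \eqref{eq:gcexp}; the Fuchs-relation check of the first paragraph confirms that this is the correct shift.

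The main difficulty I anticipate is bookkeeping rather than conceptual: keeping straight the normalizing units, the sign factors $(-1)^{m_{j,\nu}-\ell}$ produced by the adjoint, and the index substitutions, and verifying carefully that the generalization of Lemma~\ref{lem:GRS} iii) to arbitrary base exponents really is a line-by-line repetition of the original proof. Isolating the indicial term (the $\ell=0$ contribution) and checking that it remains a nonzero constant is what upgrades the computed sets from ``contained in the set of characteristic exponents'' to ``equal to it''.
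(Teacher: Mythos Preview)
Your proposal is correct and follows essentially the same approach as the paper: both parts reduce to local computations at each singular point using the expansion \eqref{eq:GEXP}/\eqref{eq:gcexp}, with Lemma~\ref{lem:GRS}~iii) (suitably shifted to base exponent $\lambda_{j,\nu}$) handling the product in~i), and the adjoint identities $\bigl((x-c)\partial\bigr)^*=-\bigl((x-c)\partial+1\bigr)$, $\vartheta^*=-(\vartheta+1)$ together with the commutation $g(s)(x-c)^{\ell-n}=(x-c)^{\ell-n}g(s+\ell-n)$ handling~ii). The paper is slightly more explicit about the treatment of $\infty$ in~i), writing $x^{-(p-1)(n+n')}P'P=\bigl(\Ad(x^{-(p-1)n})x^{-(p-1)n'}P'\bigr)\cdot x^{-(p-1)n}P$ to reduce directly to the finite-point form of the lemma, and it carries out the Fuchs-relation arithmetic via $\idx(\mathbf m,\mathbf m')$; but your sketch covers the same ground.
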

\begin{proof} i) \ 
It is clear that $P'P$ is a Fuchsian differential operator of the
normal form if so is $P'$ and Lemma~\ref{lem:GRS} iii) shows that
the characteristic exponents of $P'P$ at $x=c_j$ for $j=1,\dots,p$ are 
just as given in the Riemann scheme \eqref{eq:GRSA}.
Put $n=\ord\mathbf m$ and $n'=\mathbf m'$.
We can also apply Lemma~\ref{lem:GRS} iii) to $x^{-(p-1)n}P$ and 
$x^{-(p-1)n'}P'$ under the coordinate transformation $x\mapsto\frac1x$,
we have the set of characteristic exponents as is given in \eqref{eq:GRSA}
because $x^{-(p-1)(n+n')}P'P=
\bigl(\Ad(x^{-(p-1)n})x^{-(p-1)n'}P'\bigr)(x^{-(p-1)n})P$.

The Fuchs relation for \eqref{eq:GRSP} equals
\[
 \sum_{j=0}^p\sum_{\nu=1}^{n_j} m'_{j,\nu}\bigl(\lambda_{j,\nu}+m_{j,\nu}
  -\delta_{j,0}(p-1)\ord\mathbf m\bigr) =\ord\mathbf m'-\frac{\idx\mathbf m'}2.
\]
Since
\[
 \sum_{j=0}^p\sum_{\nu=1}^{n_j}
  m'_{j,\nu}\bigl(m_{j,\nu} - \delta_{j,0}(p-1)\ord\mathbf m\bigr)
 =\idx(\mathbf m,\mathbf m'),
\]
the condition is equivalent to
\begin{equation}
  \sum_{j=0}^p\sum_{\nu=1}^{n_j} m'_{j,\nu}\lambda_{j,\nu}
  =\ord\mathbf m' -\frac{\idx\mathbf m}2 - \idx(\mathbf m,\mathbf m')
\end{equation}
and also to
\begin{equation}
  \sum_{j=0}^p\sum_{\nu=1}^{n_j} 
  (m_{j,\nu}+m'_{j,\nu})\lambda_{j,\nu}
  =\ord(\mathbf m + \mathbf m') -\frac{\idx(\mathbf m+\mathbf m')}2
\end{equation}
under the condition \eqref{eq:Fuchidx}.

ii) \ We may suppose $c_1=0$.  Then
\begin{align*}
  a_n(x)^{-1}P &= \sum_{\ell\ge 0}x^{\ell-n}q_\ell(\vartheta)
      \prod_{\substack{1\le\nu\le n_1\\0\le i< m_{1,\nu}-\ell}}
      (\vartheta-\lambda_{1,\nu}-i),\\
  a_n(x)^{-1}P^\vee &=
      \sum_{\ell\ge 0}q_\ell(-\vartheta-1)
      \prod_{\substack{1\le\nu\le n_1\\0\le i< m_{1,\nu}-\ell}}
      (-\vartheta-\lambda_{1,\nu}-i-1)x^{\ell-n}\\
      &=\sum_{\ell\ge 0}x^{\ell-n}s_\ell(\vartheta)
        \prod_{\substack{1\le\nu\le n_1\\0\le i< m_{1,\nu}-\ell}}
      (\vartheta+\lambda_{1,\nu}+i+1+\ell-n)\\
    &=\sum_{\ell\ge 0}x^{\ell-n}s_\ell(\vartheta)
        \prod_{\substack{1\le\nu\le n_1\\0\le j< m_{1,\nu}-\ell}}
      (\vartheta+\lambda_{1,\nu}-j+m_{1,\nu}-n)
\end{align*}
with suitable polynomials $q_\ell$ and $s_\ell$ such that 
$q_0,\,s_0\in\mathbb C^\times$.
Hence the set of characteristic exponents of $P^\vee$ at $c_1$
is $\{[n-m_{1,\nu}-\lambda_{1,\nu}]_{(m_{1,\nu})}\,;\,\nu=1,\dots,n_1\}$.

At infinity we have
\begin{align*}
 a_n(x)^{-1}P&=\sum_{\ell\ge 0}x^{-\ell-n}
               q_\ell(\vartheta)
              \prod_{\substack{1\le\nu\le n_1\\0\le i< m_{0,\nu}-\ell}}
              (\vartheta+\lambda_{0,\nu}+i),\\
(a_n(x)^{-1}P)^*&=\sum_{\ell\ge 0}x^{-\ell-n}
               s_\ell(\vartheta)
               \prod_{\substack{1\le\nu\le n_0\\0\le i< m_{0,\nu}-\ell}}
               (\vartheta-\lambda_{0,\nu}-i+1-\ell-n)\\
              &=\sum_{\ell\ge 0}x^{-\ell-n}
               s_\ell(\vartheta)
               \prod_{\substack{1\le\nu\le n_1\\0\le j< m_{0,\nu}-\ell}}
               (\vartheta-\lambda_{0,\nu}+j+2-n-m_{0,\nu})
\end{align*}
with suitable polynomials $q_\ell$ and $s_\ell$ with 
$q_0,\,s_0\in\mathbb C^\times$ and
the set of characteristic exponents of $P^\vee$ at $c_1$
is $\{[2-n - m_{0,\nu}-\lambda_{0,\nu}]_{(m_{0,\nu})}\,;\,\nu=1,\dots,n_0\}$
\end{proof}
\begin{exmp}
The Riemann scheme of the dual $P_{\lambda_1,\dots,\lambda_p,\mu}^\vee$ 
of Jordan-Pochhammer operator
$P_{\lambda_1,\dots,\lambda_p,\mu}^\vee$ given in Example~\ref{ex:midconv} iii)
is
\[
 \begin{Bmatrix}
  \frac1{c_1} & \cdots & \frac1{c_p} & \infty\\
   [1]_{(p-1)}  &  \cdots & [1]_{(p-1)} & [2-2p+\mu]_{(p-1)}\\
   \lambda_1-\mu+p-1 & \cdots & -\lambda_p-\mu+p-1 & 
   \lambda_1+\cdots+\lambda_p+\mu-p+1
 \end{Bmatrix}.
\]
\end{exmp}
\section{Reduction of Fuchsian differential equations}\label{sec:reduction}
Additions and middle convolutions introduced in \S\ref{sec:frac}
are transformations within Fuchsian differential operators
and we examine how their Riemann schemes change under the
transformations.
\begin{prop}\label{prop:invred}
{\rm i)}
Let $Pu=0$ be a Fuchsian differential equation.
Suppose there exists $c\in\mathbb C$ such that $P\in(\p-c)W[x]$.
Then $c=0$.

{\rm ii)}
For $\phi(x)\in \mathbb C(x)$, $\lambda\in\mathbb C$, $\mu\in\mathbb C$ and 
$P\in W[x]$, we have
\begin{align}
  P&\in\mathbb C[x]\RAdei\bigl(-\phi(x)\bigr)
    \circ\RAdei\bigl(\phi(x)\bigr)P,\\
  P&\in\mathbb C[\p]\RAd\bigl(\p^{-\mu}\bigr)
    \circ\RAd\bigl(\p^\mu\bigr)P.
\end{align}
In particular, if the equation $Pu=0$ is irreducible and $\ord P>1$, 
$\RAd\bigl(\p^{-\mu}\bigr)
    \circ\RAd\bigl(\p^\mu\bigr)P = cP$ with $c\in\mathbb C^\times$.
\end{prop}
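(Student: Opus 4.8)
The plan is to handle the two parts separately: part i) by a growth estimate at $\infty$, part ii) by unwinding the definitions of $\Red$, $\Adei$ and $\Lap$.

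For i), write $P=(\p-c)Q$ with $Q\in W[x]$. Since $Q\neq0$, on a small disc avoiding the singular points of $P$ and the zeros of the leading coefficient of $Q$ there is an analytic function $v_0$ with $Qv_0=e^{cx}$; then $Pv_0=(\p-c)e^{cx}=0$, so $v_0$ is a solution of $Pu=0$ with $Qv_0=e^{cx}\neq0$. I would continue $v_0$ analytically to a sector $U_{0,\epsilon,R}$ at $\infty$. As $P$ is Fuchsian, its local solutions at $\infty$ are finite combinations of functions $x^{-\lambda}\log^jx\cdot\psi(1/x)$ with $\psi$ holomorphic at $0$, so $v_0$ and its first $\ord Q$ derivatives all satisfy polynomial bounds $|\,\cdot\,|\le C|x|^m$ on $U_{0,\epsilon,R}$ (this is \eqref{eq:F2} together with a Cauchy estimate). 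Hence $|e^{cx}|=|Qv_0(x)|$ is bounded by a polynomial in $|x|$ on every such sector; choosing the sector so that $\RE(cx)\to+\infty$, which is possible exactly when $c\neq0$, gives a contradiction, and therefore $c=0$.

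For ii), the first identity rests on three elementary facts: $\Adei(\phi)$ and $\Adei(-\phi)$ are mutually inverse automorphisms of $W(x;\xi)$ fixing $\mathbb C(x,\xi)$ pointwise; $\Red$ of a non-zero operator is a non-zero $\mathbb C(x,\xi)$-multiple of it and depends only on its $\mathbb C(x,\xi)^\times$-orbit; and for $R\in W[x;\xi]$ the reduced representative is, up to a scalar in $\mathbb C(\xi)^\times$, the quotient $R/d(x)$ by the content $d(x)\in\mathbb C[x;\xi]$ of $R$, so that $R\in\mathbb C[x]\cdot\Red R$. Chasing the definitions, $\RAdei(\phi)P=f_1\cdot\Adei(\phi)P$ with $f_1\in\mathbb C(x,\xi)^\times$, hence $\Adei(-\phi)\RAdei(\phi)P=f_1P$ and therefore $\RAdei(-\phi)\RAdei(\phi)P=\Red(f_1P)=\Red P$, which gives $P\in\mathbb C[x]\cdot\RAdei(-\phi)\RAdei(\phi)P$. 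For the second identity I would use $\RAd(x^{\pm\mu})=\RAdei(\pm\mu/x)$ and $\RAd(\p^{\pm\mu})=\Lap^{-1}\circ\RAd(x^{\pm\mu})\circ\Lap$, so that $\RAd(\p^{-\mu})\circ\RAd(\p^\mu)=\Lap^{-1}\circ\RAdei(-\mu/x)\circ\RAdei(\mu/x)\circ\Lap$ as maps $W[x]\to W[x]$ (here $\Lap$ is the automorphism of $W[x]$ interchanging $x$ and $\p$); applying the first identity to $\Lap P\in W[x]$ and then $\Lap^{-1}$, which carries a polynomial in $x$ to a polynomial in $\p$, yields $P\in\mathbb C[\p]\cdot\RAd(\p^{-\mu})\RAd(\p^\mu)P$.

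For the last assertion I would write $P=d(\p)Q$ with $Q:=\RAd(\p^{-\mu})\RAd(\p^\mu)P$ and $d(\p)\in\mathbb C[\p]$ as produced above (absorbing the scalar), and invoke irreducibility of $Pu=0$: by \eqref{eq:QR}, either $\ord d(\p)=0$ or $\ord Q=0$. In the second case $Q=q(x)$ and $P=d(\p)q(x)$ with $\ord d(\p)=\ord P>1$; since $q(x)$ is invertible in $W(x)$, the image of $q(x)$ in $\mathcal M:=W(x)/W(x)P$ is a non-zero element with annihilator $W(x)d(\p)$, so $\mathcal M\cong W(x)/W(x)d(\p)$; but over $\mathbb C$ the operator $d(\p)$ factors into operators of lower positive order, so $W(x)/W(x)d(\p)$ is not simple, contradicting irreducibility. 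Hence $\ord Q\neq0$, which forces $\ord d(\p)=0$, and since a content is non-zero, $d(\p)=c\in\mathbb C^\times$ and $\RAd(\p^{-\mu})\RAd(\p^\mu)P=c^{-1}P$. I expect part i) to be the main obstacle: part ii) is essentially formal bookkeeping with $\Red$, $\Adei$ and $\Lap$, whereas i) genuinely needs the analytic input that a Fuchsian equation has only moderate growth at $\infty$, the factor $\p-c$ with $c\neq0$ being the basic example of an irregular singularity there.
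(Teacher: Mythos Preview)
Your proof is correct and follows essentially the same approach as the paper: part i) via the moderate-growth estimate \eqref{eq:F2} for Fuchsian solutions at $\infty$, and part ii) by unwinding $\Adei(-\phi)\circ\Adei(\phi)=\id$ together with the fact that $\Adei(\phi)$ commutes with left multiplication by rational functions, then transporting through $\Lap$. Your treatment of the ``in particular'' clause is more explicit than the paper's (which leaves it to the reader); note that the case $\ord Q=0$ can be dispatched even more quickly by factoring $d(\p)$ over $\mathbb C$ to produce a nontrivial factorization $P=(\p-a)\bigl(d'(\p)q(x)\bigr)$ directly contradicting \eqref{eq:QR}, but your module-theoretic argument is equally valid.
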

\begin{proof}
{\rm i)} Put $P=(\p-c)Q$.  Then there is a function $u(x)$ satisfying
$Qu(x)=e^{cx}$.  Since $Pu=0$ has at most a regular singularity at 
$x=\infty$, there exist $C>0$ and $N>0$ such that $|u(x)|<C|x|^N$ for 
$|x|\gg1$ and $0\le \arg x\le 2\pi$, which implies $c=0$.

{\rm ii)}
This follows from the fact
\begin{align*}
 &\Adei\bigl(-\phi(x)\bigr)\circ\Adei\bigl(\phi(x)\bigr)=\id,\\ 
 &\Adei\bigl(\phi(x)\bigr)f(x)P=f(x)\Adei\bigl(\phi(x)\bigr)P
 \quad(f(x)\in\mathbb C(x))
\end{align*}
and the definition of $\RAdei\bigl(\phi(x)\bigr)$ and $\RAd(\p^\mu)$.
\end{proof}
\if0
Now we prepare a lemma related to middle convolutions.
\begin{lem}\label{lem:red}
Fix a positive integer $k$.

{\rm i)  }
Let $q(\vartheta)$ be a polynomial of $\vartheta$.
Then $q(\vartheta)=\p^k r(x,\p)$ with $r\in W[x]$ if and only if
$q(\nu)=0$ for $\nu=-1,-2,\dots,-k$.

{\rm ii) }
Let $Q$ be a differential operator of the form
\begin{align}\label{eq:RedF}
  Q = \sum_{j=0}^N \p^j q_j(\vartheta)
\end{align}
with polynomials $q_j(\vartheta)$.
The condition
\begin{align}\label{C:infred}
 q_j(-\nu) = 0\ \ &\text{for \ }
 \nu=\mu+1,\mu+2,\dots,\mu+k-j
  \text{ \ and \ }
 j=0,\dots,k-1
\end{align}
is necessary and sufficient for
\begin{equation}
 \Ad(\p^{-\mu})Q\in \p^kW[x].
\end{equation}

{\rm iii) }
Suppose $Q$ is a Fuchsian differential operator of order $n$ and $\deg q_0=n$. 
Then the condition
\begin{equation}\label{C:redexp}
[\mu+1]_{(k)}\text{ is a generalized exponent of }Q\text{ at }x=\infty
\end{equation}
is necessary for \eqref{C:infred}.  
The condition \eqref{C:redexp} is also sufficient for 
\eqref{C:infred} if $k=1$ or $\mu\ne -1,-2,\ldots,1-k$.
\end{lem}
\begin{proof} We have $q(\vartheta)
=\sum_{k\ge0} c_i\p^ix^i$ with some $c_k\in\mathbb C$ and
\begin{equation}
\p^ix^i = (\vartheta+1)(\vartheta+2)\cdots(\vartheta+i)
\end{equation}
by the induction on $i$
and the claims i) and ii) follow from this equality and
\[
  \Ad(\p^{-\mu})Q=\sum_{j=0}^N\p^jq_j(\vartheta-\mu).
\]

Put $y=x^{-1}$.
Then $x\frac{d}{dx}=-y\frac{d}{dy}$ and $\frac{d}{dx}=-y\cdot y\frac{d}{dy}$.
Hence $Q$ is normalized (cf.~Definition~\ref{def:exp})
at $y=0$ under the coordinate $y=x^{-1}$ and 
\begin{equation}\label{eq:infx}
 \begin{split}
  &\p^jq_j(\vartheta)x^{-(\mu+i)}\\
  &\quad= (-1)^j
 (\mu+i)(\mu+i+1)\cdots(\mu+i+j-1)q_j\bigl(-(\mu+i)\bigr)x^{-(\mu+i+j)}
 \end{split}
\end{equation}
and the condition \eqref{C:redexp} is equivalent to the condition that
\eqref{eq:infx} equals 0 if $i\ge 1$, $j\ge 0$ and $i+j\le k$
(cf.~\eqref{C:formblock}).
Hence we have the claim iii).
\end{proof}
\begin{cor}\label{cor:mid}
Let $Q$ be a Fuchsian differential operator of the form 
\eqref{eq:RedF} with $q_0\ne0$.
Let \eqref{eq:GRS} be the Riemann scheme of $Q$.
Suppose $m_{0,1}\le 1$ or $\lambda_{0,1}\ne 0, -1,\ldots,2-m_{0,1}$.
Then $\p^{-m_{0,1}}\!\Ad(\p^{1-\lambda_{0,1}})Q\in W[x]$.

If $[\lambda_{0,\nu}]_{(m_{0,1}+1)}$ 
is not a generalized characteristic exponent of $Q$ at $x=\infty$,
\begin{equation}
\RAd(\p^{1-\lambda_{0,1}})Q=
 \p^{-m_{0,1}}\!\Ad(\p^{1-\lambda_{0,1}})Q.
\end{equation}
\end{cor}
\fi
The addition and the middle convolution
transform the Riemann scheme of the Fuchsian differential equation
as follows.

\begin{thm}\label{thm:GRSmid}
Let $Pu=0$ be a Fuchsian differential equation with 
the Riemann scheme \eqref{eq:GRS}.
We assume that $P$ has the normal form \eqref{eq:FNF}. 

{\rm i) (addition)} \ 
The operator $\Ad\bigl((x-c_j)^\tau \bigr)P$ has the Riemann scheme
\begin{equation*}
   \begin{Bmatrix}
   x = c_0=\infty & c_1 & \cdots &c_j&\cdots& c_p\\
  [\lambda_{0,1}-\tau]_{(m_{0,1})} & [\lambda_{1,1}]_{(m_{1,1})}&\cdots
    & [\lambda_{j,1}+\tau]_{(m_{j,1})}&\cdots &[\lambda_{p,1}]_{(m_{p,1})}\\
  \vdots & \vdots & \vdots & \vdots& \vdots& \vdots\\
    [\lambda_{0,n_0}-\tau]_{(m_{0,n_0})} 
    & [\lambda_{1,n_1}]_{(m_{1,n_1})}&\cdots
    & [\lambda_{j,n_j}+\tau]_{(m_{j,1})}&\cdots&[\lambda_{p,n_p}]_{(m_{p,n_p})}
  \end{Bmatrix}.
\end{equation*}

{\rm ii) (middle convolution)} \ 
Fix $\mu\in\mathbb C$.
By allowing the condition $m_{j,1}=0$, we may assume 
\begin{equation}\label{eq:midgen}
 \mu=\lambda_{0,1}-1\text{ \ and \ }
 \lambda_{j,1}=0\text{ for }j=1,\dots,p
\end{equation}
and $\#\{j\,;\,m_{j,1}<n\}\ge 2$ and $P$ is of the normal form \eqref{eq:FNF}.
Putting \begin{equation}\label{eq:defd}
 d:=\sum_{j=0}^p m_{j,1} - (p-1)n,
\end{equation}
we suppose
\begin{align}
   &\quad m_{j,1}\ge d\text{ \ for \ }j=0,\dots,p,\label{eq:redC1}\\
   &\begin{cases}
    \lambda_{0,\nu}\notin\{0,-1,-2,\ldots,m_{0,1}-m_{0,\nu}-d+2\}\\
    \text{if \ }m_{0,\nu}+\cdots+m_{p,1}-(p-1)n\ge 2,\ 
     m_{1,1}\cdots m_{p,1}\ne0\text{ \ and \ }\nu\ge 1,
   \end{cases}\label{eq:redC2}\\
   &\begin{cases}
    \lambda_{0,1}+\lambda_{j,\nu}\notin\{0,-1,-2,\ldots,m_{j,1}-m_{j,\nu}-d+2\}\\
    \text{if \ }m_{0,1}+\cdots+m_{j-1,1}+m_{j,\nu}+m_{j+1,1}+\cdots+m_{p,1}
    -(p-1)n \ge2,\\
    m_{j,1}\ne0,\ 1\le j\le p\text{ \ and \ }\nu\ge 2.
   \end{cases}\label{eq:redC3}
\end{align}
Then $S:=\p^{-d}\!\Ad(\p^{-\mu})
\prod_{j=1}^p(x-c_j)^{-m_{j,1}}P \in W[x]$
and the Riemann scheme of $S$ equals
\begin{equation}\label{eq:midR}
 \begin{Bmatrix}
   x = c_0=\infty & c_1 & \cdots & c_p\\
   [1-\mu]_{(m_{0,1}-d)}&[0]_{(m_{1,1}-d)}&\cdots
    &[0]_{(m_{p,1}-d)}\\
   [\lambda_{0,2}-\mu]_{(m_{0,2})} & [\lambda_{1,2}+\mu]_{(m_{1,2})}&\cdots
    &[\lambda_{p,2}+\mu]_{(m_{p,2})}\\
  \vdots & \vdots & \vdots & \vdots\\
    [\lambda_{0,n_0}-\mu]_{(m_{0,n_0})} 
    & [\lambda_{1,n_1}+\mu]_{(m_{1,n_1})}&\cdots
    &[\lambda_{p,n_p}+\mu]_{(m_{p,n_p})}
 \end{Bmatrix}.
\end{equation}
More precisely, the condition \eqref{eq:redC1} and the condition 
\eqref{eq:redC2} for $\nu=1$ assure $S\in W[x]$.
In this case the condition \eqref{eq:redC2} {\rm (resp.~\eqref{eq:redC3} for a 
fixed $j$)} assures that the sets of characteristic exponents of 
$P$ at $x=\infty$ {\rm (resp.~$c_j$)} are equal to the sets given in 
\eqref{eq:midR}, respectively.

Here we have $\RAd(\p^{-\mu})\Red P=S$, if
\begin{equation}
   \begin{cases}
   \lambda_{j,1}+m_{j,1}\text{ \ are not 
    characteristic exponents of $P$}\\
   \quad
    \text{ at  $x=c_j$ for }j=0,\dots,p,\text{ respectively},
\end{cases}\label{eq:redC4}
\end{equation}
and moreover
\begin{equation}
 m_{0,1}=d\text{ \ or \ }
 \lambda_{0,1}\notin\{-d,-d-1,\dots,1-m_{0,1}\}.\label{eq:redC5}
\end{equation}

Using the notation in Definition~\ref{def:coord}, we have 
\begin{equation}\label{eq:redcoord}
 \begin{split}
 S &= \Ad\bigl((x-c_1)^{\lambda_{0,1}-2}\bigr)(x-c_1)^{d}T_{\frac1{x-c_1}}^*
    (-\p)^{-d}\Ad(\p^{-\mu})T_{\frac1x+c_1}^*\\
   &\qquad\cdot
    (x-c_1)^{d}
    \prod_{j=1}^p(x-c_j)^{-m_{j,1}}\Ad\bigl((x-c_1)^{\lambda_{0,1}}\bigr)P
\end{split}
\end{equation}
under the conditions \eqref{eq:redC1} and
\begin{equation}
\begin{cases}
    \lambda_{0,\nu}\notin\{0,-1,-2,\ldots,m_{0,1}-m_{0,\nu}-d+2\}\\
    \text{if \ }m_{0,\nu}+\cdots+m_{p,1}-(p-1)n\ge 2,\ 
     m_{1,1}\ne0\text{ \ and \ }\nu\ge 1.
\end{cases}
\end{equation}

{\rm iii)} Suppose $\ord P>1$ and $P$ is irreducible in {\rm ii)}.
Then the conditions \eqref{eq:redC1}, \eqref{eq:redC2}, \eqref{eq:redC3} 
are valid.
The condition \eqref{eq:redC5} is also valid if $d\ge 1$.

All these conditions in {\rm ii)} are valid if $\#\{j\,;\,m_{j,1}<n\}\ge 2$ and 
$\mathbf m$ is realizable and moreover $\lambda_{j,\nu}$ are generic under 
the Fuchs relation with $\lambda_{j,1}=0$ for $j=1,\dots,p$.

{\rm iv)}
Let $\mathbf m=\bigl(m_{j,\nu}\bigr)_{\substack{j=0,\dots,p\\\nu=1,\dots,n_j}}
\in\mathcal P^{(n)}_{p+1}$. Define $d$ by \eqref{eq:defd}.
Suppose $\lambda_{j,\nu}$ are complex numbers satisfying 
\eqref{eq:midgen}.
Suppose moreover $m_{j,1}\ge d$ for $j=1,\dots,p$.
Defining $\mathbf m'\in\mathcal P^{(n)}_{p+1}$ and $\lambda_{j,\nu}'$ by
\begin{align}
 m'_{j,\nu}&=m_{j,\nu}-\delta_{\nu,1}d\quad(j=0,\dots,p,\ \nu=1,\dots,n_j),\\
 \lambda'_{j,\nu}&=
 \begin{cases}
   2-\lambda_{0,1}&(j=0,\ \nu=1),\\
   \lambda_{j,\nu}-\lambda_{0,1}+1&(j=0,\ \nu>1),\\
   0 &(j>0,\ \nu=1),\\
   \lambda_{j,\nu}+\lambda_{0,1}-1&(j>0,\ \nu>1),
 \end{cases}
\end{align}
we have
\begin{equation}\label{eq:midinv}
 \idx\mathbf m=\idx\mathbf m',\quad
 |\{\lambda_{\mathbf m}\}|=|\{\lambda'_{\mathbf m'}\}|.
\end{equation}
\end{thm}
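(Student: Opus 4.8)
The plan is to treat the four parts in turn, reading each transformed Riemann scheme off the normalized operator via Definition~\ref{def:RScheme} and Lemma~\ref{lem:GRS}, and exploiting that every operation in sight acts transparently on the solution space: left multiplication by a nonzero rational function and left division by $\p$ leave the solutions unchanged, $\Ad\bigl((x-c_j)^\tau\bigr)$ multiplies them by $(x-c_j)^\tau$, and $\Ad(\p^{-\mu})$ realizes the Euler transform $I_c^\mu$ of Proposition~\ref{prop:RAdIc}. Part~(i) is immediate: conjugation by $(x-c_j)^\tau$ sends $(x-c_j)\p$ to $(x-c_j)\p-\tau$ and fixes $(x-c_k)\p$ for $k\ne j$, so in the expansion \eqref{eq:GEXP} at $c_j$ every factor $(x-c_j)\p-\lambda_{j,\nu}-i$ becomes $(x-c_j)\p-(\lambda_{j,\nu}+\tau)-i$; in \eqref{eq:gcexp} it sends $\vartheta$ to $\vartheta+\tau$, shifting the exponents at $\infty$ by $-\tau$; at the remaining $c_k$ it changes nothing. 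Conjugation by a function with scalar monodromy commutes with every monodromy matrix, so the semisimplicity encoded by the bracket notation is preserved, and the stated scheme follows.

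The crux is part~(ii). Having applied~(i), we may assume \eqref{eq:midgen}. First, the exponents $[0]_{(m_{j,1})}$ at $c_j$ ($j\ge1$) mean, through Definition~\ref{def:RScheme} and the identity $x^i\p^i=\vartheta(\vartheta-1)\cdots(\vartheta-i+1)$, that $(x-c_j)^{m_{j,1}}$ divides every polynomial coefficient of $P$; hence $P_1:=\prod_{j=1}^p(x-c_j)^{-m_{j,1}}P\in W[x]$, with the same solution space as $P$ and therefore the Riemann scheme \eqref{eq:GRS}. Second, one needs the ``power-of-$\p$'' divisibility criterion: writing $Q\in W[x]$ as $Q=\sum_{j\ge0}\p^j q_j(\vartheta)$ and using $\p^ix^i=(\vartheta+1)\cdots(\vartheta+i)$ together with $\Ad(\p^{-\mu})\colon\vartheta\mapsto\vartheta-\mu$, one obtains $\Ad(\p^{-\mu})Q=\sum_j\p^j q_j(\vartheta-\mu)$, so $\Ad(\p^{-\mu})Q\in\p^kW[x]$ precisely when $q_j(-\nu)=0$ for $j=0,\dots,k-1$ and $\nu=\mu+1,\dots,\mu+k-j$; read back on $Q$, this is governed by the generalized exponent $[\mu+1]_{(k)}$ of $Q$ at $\infty$ (a genuine equivalence when $k=1$ or $\mu\notin\{-1,\dots,1-k\}$). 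Applying this with $Q=P_1$ and $k=d$ as in \eqref{eq:defd}, conditions \eqref{eq:redC1} and \eqref{eq:redC2} for $\nu=1$ supply the needed sub-block of $[\lambda_{0,1}]_{(m_{0,1})}$ at $\infty$, whence $S:=\p^{-d}\Ad(\p^{-\mu})P_1\in W[x]$. Third, for the Riemann scheme of $S$: at each finite $c_j$ use Lemma~\ref{lem:GRS}~iii) (multiplicativity of operators of the special form), which is also the content of the identity \eqref{eq:redcoord}, obtained by pushing $\p^{-d}\Ad(\p^{-\mu})$ through a Laplace transform and the change $x\mapsto\tfrac1{x-c_1}$; at $\infty$ combine the standard effect of $\Ad(\p^{-\mu})$ on the exponents there (recorded by \eqref{eq:IinfP}) with the drop of the order by $d$ produced by the left division by $\p^d$. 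Conditions \eqref{eq:redC2} and \eqref{eq:redC3} are exactly what prevents an exponent produced by the transformation from coalescing with a pre-existing block, so the surviving exponents are literally those of \eqref{eq:midR}, and Proposition~\ref{prop:RAdIc} with \eqref{eq:IcP}, \eqref{eq:IinfP} confirms that $S$ does carry the displayed local solutions $I_c^\mu(\cdot)$. Finally, $S=\RAd(\p^{-\mu})\Red P$ under \eqref{eq:redC4} and \eqref{eq:redC5}: the first prevents $\Red$ (and the reduction inside the middle convolution) from stripping off additional rational or $\p$-factors at any $c_j$, the second handles the borderline case at $\infty$, so the minimal-degree reduced representative of $mc_\mu(P)$ is exactly $S$.

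Part~(iii): if $P$ is irreducible with $\ord P>1$, each of \eqref{eq:redC1}--\eqref{eq:redC3} (and \eqref{eq:redC5} when $d\ge1$) must hold, for otherwise the relevant block of local exponents at $\infty$ or at some $c_j$ is arranged so that a proper nonzero subspace of solutions closes up, contradicting irreducibility --- the same mechanism as the reducibility inequality \eqref{eq:FC1} in \S\ref{sec:reg}. When $\mathbf m$ is realizable with $\#\{j\,;\,m_{j,1}<n\}\ge2$ and the $\lambda_{j,\nu}$ are generic under the Fuchs relation in the strong sense of Remark~\ref{rem:generic}, each forbidden membership would force a nontrivial $\mathbb Q$-linear relation among the $\lambda_{j,\nu}$ and hence does not occur, while realizability furnishes an operator to which~(ii) applies.

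Part~(iv) is a direct computation. With $\mathbf 1=(m_{j,\nu}=\delta_{\nu,1})\in\mathcal P^{(1)}_{p+1}$ we have $\mathbf m'=\mathbf m-d\,\mathbf 1$, and $\idx(\mathbf m,\mathbf 1)=\sum_{j=0}^p m_{j,1}-(p-1)n=d$ by \eqref{eq:defd}, while $\idx\mathbf 1=(p+1)-(p-1)=2$; bilinearity of $\idx$ then gives
\[
 \idx\mathbf m'=\idx\mathbf m-2d\,\idx(\mathbf m,\mathbf 1)+d^2\,\idx\mathbf 1=\idx\mathbf m-2d^2+2d^2=\idx\mathbf m .
\]
For the second identity, $\ord\mathbf m'=\ord\mathbf m-d$, and expanding $\sum_{j,\nu}m'_{j,\nu}\lambda'_{j,\nu}$ term by term, using $\lambda_{j,1}=0$ for $j\ge1$ (part of \eqref{eq:midgen}) and $\sum_{j\ge1}m_{j,1}=d-m_{0,1}+(p-1)n$, one gets $\sum_{j,\nu}m'_{j,\nu}\lambda'_{j,\nu}=\sum_{j,\nu}m_{j,\nu}\lambda_{j,\nu}-d$; substituting this together with $\idx\mathbf m'=\idx\mathbf m$ into Definition~\ref{def:FRLM} yields $|\{\lambda'_{\mathbf m'}\}|=|\{\lambda_{\mathbf m}\}|$. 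The main obstacle lies entirely in part~(ii): securing $\p^{-d}\Ad(\p^{-\mu})P_1\in W[x]$ under precisely \eqref{eq:redC1}--\eqref{eq:redC2}, and then checking that \eqref{eq:redC2}--\eqref{eq:redC3} are sharp enough to exclude every degeneration of the scheme \eqref{eq:midR} while \eqref{eq:redC4}--\eqref{eq:redC5} identify $S$ as the reduced representative of $mc_\mu(\Red P)$; the rest, part~(iv) included, is bookkeeping.
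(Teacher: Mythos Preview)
Your parts~(i) and~(iv) match the paper; in particular the bilinear identity $\idx\mathbf m'=\idx\mathbf m-2d\cdot\idx(\mathbf m,\mathbf 1)+d^2\idx\mathbf 1=\idx\mathbf m$ is precisely the paper's computation in compressed form.

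There are two genuine gaps.

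\textbf{Part~(ii).} Your divisibility criterion for $\p^k\mid\Ad(\p^{-\mu})Q$ is correct and is the first step, but the Riemann scheme~\eqref{eq:midR} is not established. The appeal to Lemma~\ref{lem:GRS}~iii) is misplaced: that lemma computes the scheme of a \emph{product} $P'P$ of two operators already in the special form of Definition~\ref{def:RScheme}, whereas here one must analyze the single operator $S$. You also invoke \eqref{eq:redcoord} as a tool to deduce the scheme at finite $c_j$; in the paper it is a \emph{conclusion}, established by a direct (and rather long) calculation, and the scheme at $c_j$ is obtained independently. What the paper actually does is write $P'=\prod_j(x-c_j)^{-m_{j,1}}P$ in two ways: once as $\sum_{\ell}x^{N-\ell}r_\ell(\vartheta)\prod_{\nu,i}(\vartheta+\lambda_{0,\nu}+i)$ to read off the scheme at $\infty$ after applying $\p^{m_{0,1}-d}$, $\Ad(\p^{-\mu})$, and $\p^{-m_{0,1}}$, and once as $\sum_{\ell}\p^{m_{1,1}-\ell}q_\ell(\vartheta)\prod_{\nu\ge2,i}(\vartheta-\lambda_{1,\nu}-i)+\cdots$ to read off the scheme at $c_1$. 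Condition~\eqref{eq:redC3} enters exactly here: it guarantees no factor $\vartheta-\mu-\lambda_{1,\nu}-i$ equals $\vartheta+1$, which would spuriously raise the $\p$-divisibility and destroy the count $m_{1,1}-d$ for the $[0]$-block. The solutions-based heuristic via Proposition~\ref{prop:RAdIc} shows only that $S$ \emph{annihilates} functions with the right ordinary exponents; it does not by itself yield the divisibility of the $q_\ell$ required by Definition~\ref{def:RScheme}, which is what the paper's explicit operator computation supplies.

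\textbf{Part~(iii).} The assertion that a failure of \eqref{eq:redC1}--\eqref{eq:redC3} would ``close up a proper nonzero subspace of solutions'' is not justified, and the pointer to \eqref{eq:FC1} runs the wrong way: \eqref{eq:FC1} is a constraint on the exponents of a factor \emph{given} reducibility, not a mechanism producing a factor from a numerical coincidence. The paper's tool is Lemma~\ref{lem:irrred}, proved via Scott's lemma (Lemma~\ref{lem:Scott}): irreducibility of the monodromy gives $\sum_j m_{j,\ell_j}\le(p-1)n$ whenever $\sum_j\lambda_{j,\ell_j}\in\mathbb Z$ (this is \eqref{eq:SL1}), and $\sum_j m_{j,\ell_j}-(p-1)n\le m_{k,\ell_k}$ unconditionally (this is \eqref{eq:SL0}). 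Specializing $\ell=(1,\dots,1)$ gives \eqref{eq:redC1} and, when $d\ge1$, \eqref{eq:redC5}; specializing $\ell=(\nu,1,\dots,1)$ or $(1,\dots,1,\nu,1,\dots,1)$ (and using $\lambda_{j,1}=0$ for $j\ge1$) gives $\lambda_{0,\nu}\notin\mathbb Z$, resp.\ $\lambda_{0,1}+\lambda_{j,\nu}\notin\mathbb Z$, whenever the relevant sum of $m$'s exceeds $(p-1)n$, which is strictly stronger than \eqref{eq:redC2} and \eqref{eq:redC3}. Without this lemma your part~(iii) is unproved.
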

\begin{proof}
The claim i) is clear from the definition of the Riemann scheme.

{\rm ii)}
Suppose \eqref{eq:redC1}, \eqref{eq:redC2} and \eqref{eq:redC3}.
Then
\begin{equation}
  P' := \Bigl(\prod_{j=1}^p (x-c_j)^{-m_{j,1}}\Bigr)P\in W[x].
\end{equation}
Note that $\Red P=P'$ under the condition \eqref{eq:redC4}.
Put $Q:=\p^{(p-1)n - \sum_{j=1}^pm_{j,1}}P'$.
Here we note that \eqref{eq:redC1} assures 
$(p-1)n - \sum_{j=1}^pm_{j,1}\ge0$.

Fix a positive integer $j$ with $j\le p$.
For simplicity suppose $j=1$ and $c_j=0$.
Since $P'=\sum_{j=0}^n a_j(x)\p^j$ with 
$\deg a_j(x)\le (p-1)n+j-\sum_{j=1}^pm_{j,1}$, we have
\begin{align*}
  x^{m_{1,1}}P' &= 
         \sum_{\ell=0}^Nx^{N-\ell}
            r_\ell(\vartheta)
            \prod_{\substack{1\le \nu\le n_0\\
              0\le i< m_{0,\nu}-\ell}}
            (\vartheta+\lambda_{0,\nu}+i)
\end{align*}
and
\[
  N:=(p-1)n-\sum_{j=2}^pm_{j,1}=m_{0,1}+m_{1,1}-d
\]
with suitable polynomials $r_\ell$ such that $r_0\in\mathbb C^\times$.
Suppose 
\begin{equation}\label{eq:redC0}
   \prod_{\substack{1\le \nu\le n_0\\0\le i< m_{0,\nu}-\ell}}
            (\vartheta+\lambda_{0,\nu}+i)
   \notin x W[x]
   \text{ \ if \ }N-m_{1,1}+1\le \ell\le N.
\end{equation}
Since $P'\in W[x]$, we have
\[
    x^{N-\ell}r_\ell(\vartheta) =
    x^{N-\ell}x^{\ell-N+m_{1,1}}\p^{\ell-N+m_{1,1}}s_\ell(\vartheta)
    \text{ \ if \ }N-m_{1,1}+1\le \ell\le N
\]
for suitable polynomials $s_\ell$. 
Putting $s_\ell=r_\ell$ for $0\le\ell\le N-m_{1,1}$, we have
\begin{equation}\begin{split}
    P'   &=\sum_{\ell=0}^{N-m_{1,1}}
           x^{N-m_{1,1}-\ell}
            s_\ell(\vartheta)
           \prod_{\substack{1\le \nu\le n_0\\0\le i< m_{0,\nu}-\ell}}
           (\vartheta+\lambda_{0,\nu}+i)\\
         &\quad+\sum_{\ell=N-m_{1,1}+1}^N\p^{\ell-N+m_{1,1}}
            s_\ell(\vartheta)
           \prod_{\substack{1\le \nu\le n_0\\0\le i< m_{0,\nu}-\ell}}
           (\vartheta+\lambda_{0,\nu}+i).
\end{split}\label{eq:Pdash}
\end{equation}
Note that $s_0\in\mathbb C^\times$ and 
the condition \eqref{eq:redC0} is equivalent to the condition
$
 \lambda_{0,\nu}+i\ne 0
$
for any $\nu$ and $i$ such that there exists an integer $\ell$
with $0\le i\le m_{0,\nu}-\ell-1$ and $N-m_{1,1}+1\le \ell\le N$.
This condition is valid if \eqref{eq:redC2} is valid, namely, $m_{1,1}=0$ or 
\[
  \lambda_{0,\nu}\notin\{0,-1,\dots,m_{0,1}-m_{0,\nu}-d+2\}
\]
for $\nu$ satisfying $m_{0,\nu}\ge m_{0,1}-d+2$.
Under this condition we have
\begin{align*}
  Q &= \sum_{\ell=0}^N\p^\ell
   s_\ell(\vartheta)
   \prod_{1\le i\le N-m_{1,1}-\ell}(\vartheta+i)\cdot
   \prod_{\substack{1\le \nu\le n_0\\0\le i< m_{0,\nu}-\ell}}
   (\vartheta+\lambda_{0,\nu}+i),
   \allowdisplaybreaks\\
 \Ad(\p^{-\mu})Q&=
   \sum_{\ell=0}^N\p^\ell
            s_\ell(\vartheta-\mu)
            \prod_{1\le i\le N-m_{1,1}-\ell}(\vartheta-\mu+i)
             \\
  &\quad\cdot
            \prod_{1\le i\le m_{0,1}-\ell}(\vartheta+i)
            \cdot
            \prod_{\substack{2\le \nu\le n_0\\0\le i< m_{0,\nu}-\ell}}
            (\vartheta-\mu+\lambda_{0,\nu}+i)
\end{align*}
since $\mu=\lambda_{0,1}-1$.
Hence $\p^{-m_{0,1}}\! \Ad(\p^{-\mu})Q$ equals
\[
\begin{split}
  &\sum_{\ell=0}^{m_{0,1}-1} x^{m_{0,1}-\ell}s_\ell(\vartheta-\mu)
  \prod_{1\le i\le N-m_{1,1}-\ell}(\vartheta-\mu+i)
  \prod_{\substack{2\le \nu\le n_0\\ 0\le i< m_{0,\nu}-\ell}}
  (\vartheta-\mu+\lambda_{0,\nu}+i)\\
  &+\sum_{\ell=m_{0,1}}^N
   \p^{\ell-m_{0,1}}
            s_\ell(\vartheta-\mu)
            \prod_{1\le i\le N-m_{1,1}-\ell}(\vartheta-\mu+i)
            \prod_{\substack{2\le \nu\le n_0\\ 0\le i< m_{0,\nu}-\ell}}
            (\vartheta-\mu+\lambda_{0,\nu}+i)
\end{split}
\]
and then the set of characteristic exponents of this operator
at $\infty$ is \[\{[1-\mu]_{(m_{0,1}-d)},[\lambda_{0,2}-\mu]_{(m_{0,2})},\dots,
[\lambda_{0,n_0}-\mu]_{(m_{0,n_0})}\}.\]
Moreover
$\p^{-m_{0,1}-1}\! \Ad(\p^{-\mu})Q\notin W[x]$ if $\lambda_{0,1}+m_{0,1}$
is not a characteristic exponent of $P$ at $\infty$ and
$-\lambda_{0,1}+1+i\ne m_{0,1}+1$ for $1\le i\le N-m_{1,1}=m_{0,1}-d$,
which assures 
$x^{m_{0,1}}s_0
  \prod_{1\le i\le N-m_{1,1}}(\vartheta-\mu+i)
  \prod_{\substack{2\le \nu\le n_0\\ 0\le i< m_{0,\nu}}}
  (\vartheta-\mu+\lambda_{1,\nu}+i)\notin\p W[x]$.

Similarly we have
\begin{align*}
  P' &= \sum_{\ell=0}^{m_{1,1}}\p^{m_{1,1}-\ell}
            q_\ell(\vartheta)
            \prod_{\substack{2\le \nu\le n_1\\0\le i< m_{1,\nu}-\ell}}
            (\vartheta-\lambda_{1,\nu}-i)\\
         &\quad+\sum_{\ell=m_{1,1}+1}^Nx^{\ell-m_{1,1}}
            q_\ell(\vartheta)
            \prod_{\substack{2\le \nu\le n_1\\0\le i< m_{1,\nu}-\ell}}
            (\vartheta-\lambda_{1,\nu}-i),\allowdisplaybreaks\\
  Q &= \sum_{\ell=0}^{m_{1,1}}
   \p^{N-\ell}q_\ell(\vartheta)
   \prod_{\substack{2\le \nu\le n_1\\0\le i< m_{1,\nu}-\ell}}
   (\vartheta+\lambda_{1,\nu}-i)\\
   &\quad+\sum_{\ell=m_{1,1}+1}^N \p^{N-\ell}q_\ell(\vartheta)
            \prod_{i=1}^{\ell-m_{1,1}}(\vartheta+i)
            \prod_{\substack{2\le \nu\le n_1\\0\le i< m_{1,\nu}-\ell}}
            (\vartheta-\lambda_{1,\nu}-i).\allowdisplaybreaks\\
 \Ad(\p^{-\mu})Q&=
   \sum_{\ell=0}^N\p^{N-\ell}
            q_\ell(\vartheta-\mu)
            \prod_{1\le i\le\ell-m_{1,1}}(\vartheta-\mu+i)\\
 &\quad\cdot
           \prod_{\substack{2\le \nu\le n_1\\0\le i< m_{1,\nu}-\ell}}
           (\vartheta-\mu-\lambda_{1,\nu}-i)
\end{align*}
with $q_0\in\mathbb C^\times$.
Then the set of characteristic exponents of $\p^{-m_{0,1}}\!\Ad(p^{-\mu})Q$
equals
\[
 \{[0]_{(m_{1,1}-d)},[\lambda_{1,2}+\mu]_{(m_{1,2})},\dots,
 [\lambda_{1,n_1}+\mu]_{(m_{1,n_1})}\}
\]
if
\begin{equation*}
  \prod_{\substack{2\le \nu\le n_1\\0\le i< m_{1,\nu}-\ell}}
  (\vartheta-\mu-\lambda_{1,\nu}-i)
\notin \p W[x]
\end{equation*}
for any integers $\ell$ satisfying $0\le\ell\le N$ and $N-\ell<m_{0,1}$.
This condition is satisfied if
\eqref{eq:redC3} is valid, namely, $m_{0,1}=0$ or 
\begin{equation*}
 \begin{split}
 &\lambda_{0,1}+\lambda_{1,\nu}\notin\{0,-1,\dots,m_{1,1}-m_{1,\nu}-d+2\}\\
 &\qquad
 \text{ \ for \ }\nu\ge 2\text{ \ satisfying \ }
 m_{1,\nu}\ge m_{1,1}-d+2
 \end{split}
\end{equation*}
because $m_{1,\nu}-\ell-1\le m_{1,\nu}+m_{0,1}-N-2=m_{1,\nu}-m_{1,1}+d-2$
and the condition $\vartheta-\mu-\lambda_{1,\nu}-i\in \p W[x]$ means 
$-1=\mu+\lambda_{1,\nu}+i=\lambda_{0,1}-1+\lambda_{1,\nu}+i$.

Now we will prove \eqref{eq:redcoord}.
Under the conditions, it follows from \eqref{eq:Pdash} that
\begin{align*}
 \tilde P:\!&=x^{m_{0,1}-N}\Ad\bigl(x^{\lambda_{0,1}}\bigr)\prod_{j=2}^p(x-c_j)^{-{m_{j,1}}}P
\allowdisplaybreaks\\
         &=x^{m_{0,1}+m_{1,1}-N}
            \Ad\bigl(x^{\lambda_{0,1}}\bigr)P'
\allowdisplaybreaks\\
         &=\sum_{\ell=0}^N x^{m_{0,1}-\ell}\Ad\bigl(x^{\lambda_{0,1}}\bigr)
           s_\ell(\vartheta)
           \prod_{0\le\nu<\ell-N+m_{1,1}}\!\!\!(\vartheta-\nu)
           \prod_{\substack{1\le\nu\le n_0\\0\le i<m_{0,\nu}-\ell}}
           (\vartheta+\lambda_{0,\nu}+i),
\allowdisplaybreaks\\
 \tilde Q:\!&=(-\p)^{N-m_{0,1}}T^*_{\frac1x}\tilde P
\allowdisplaybreaks\\
         &=(-\p)^{N-m_{0,1}}\sum_{\ell=0}^N x^{\ell-m_{0,1}}
           s_\ell(-\vartheta-\lambda_{0,1})
           \prod_{0\le\nu<\ell-N+m_{1,1}}\!\!\!(-\vartheta-\lambda_{0,1}-\nu)
   \\&\quad\cdot
           \prod_{\substack{2\le\nu\le n_0\\0\le i<m_{0,\nu}-\ell}}
           (-\vartheta+\lambda_{0,\nu}-\lambda_{0,1}+i)
           \prod_{0\le i\le m_{0,1}-\ell}(-\vartheta+i)
\allowdisplaybreaks\\
   &=\sum_{\ell=0}^N
           (-\p)^{N-\ell}
           s_\ell(-\vartheta-\lambda_{0,1})
          \!\!\!\prod_{1\le i\le \ell-m_{0,1}}(-\vartheta-i)
        \\&\quad\cdot
           \prod_{0\le\nu<\ell-N+m_{1,1}}\!\!\!\!\!(-\vartheta-\lambda_{0,1}-\nu)
           \prod_{\substack{2\le\nu\le n_0\\0\le i<m_{0,\nu}-\ell}}\!\!\!
           (-\vartheta+\lambda_{0,\nu}-\lambda_{0,1}+i)
\end{align*}
and therefore
\begin{align*}
 \Ad(\p^{-\mu})\tilde Q
  &=\sum_{\ell=0}^{N}
           (-\p)^{N-\ell}s_\ell(-\vartheta-1)
          \prod_{1\le i\le \ell-m_{0,1}}\!\!\!
          (-\vartheta+\lambda_{0,1}-1-i)
\\
  &
   \cdot\prod_{0\le\nu<\ell-N+m_{1,1}}\!\!\!\!\!(-\vartheta-1-\nu)
           \prod_{\substack{2\le\nu\le n_0\\0\le i<m_{0,\nu}-\ell}}\!\!\!
           (-\vartheta+\lambda_{0,\nu}-1+i).
\end{align*}
Since
\begin{align*}
  (-\p)^{N-\ell-m_{1,1}}\prod_{0\le\nu<\ell-N+m_{1,1}}
  \!\!\!\!\!(-\vartheta-1-\nu)
 &=\begin{cases}
    x^{\ell-N+m_{1,1}}&(N-\ell<m_{1,1}),\\
    (-\p)^{N-\ell-m_{1,1}}&(N-\ell\ge m_{1,1}),
  \end{cases}
\allowdisplaybreaks\\
 &=x^{\ell-N+m_{1,1}}\prod_{0\le\nu< N-\ell-m_{1,1}}
   (-\vartheta+\nu),
\end{align*}
we have
\begin{align*}
 \tilde Q'&:=(-\p)^{-m_{1,1}}\Ad(\p^{-\mu})
 \tilde Q
=\sum_{\ell=0}^N x^{\ell-N+m_{1,1}}\!\!\!\!\!
          \prod_{0\le\nu< N-\ell-m_{1,1}}\!\!\!\!\!\!\!\!\!(-\vartheta+\nu)\\
        &\qquad\cdot
          s_\ell(-\vartheta-1) \prod_{0\le \nu< \ell-m_{0,1}}\!\!\!\!\!
          (-\vartheta+\lambda_{0,1}-2-\nu)
           \prod_{\substack{2\le\nu\le n_0\\0\le i<m_{0,\nu}-\ell}}\!\!\!
           (-\vartheta+\lambda_{0,\nu}-1+i)
\intertext{and}
&x^{m_{0,1}+m_{1,1}-N}\Ad(x^{\lambda_{0,1}-2})T^*_{\frac1x}
\tilde Q'
=
\sum_{\ell=0}^N x^{m_{0,1}-\ell}\!\!\!\!\!
         \prod_{0\le \nu< \ell-m_{0,1}}\!\!\!\!\!
          (\vartheta-\nu)\cdot
         s_\ell(\vartheta-\lambda_{0,1}+1)\\
        &\quad
          \cdot\prod_{0\le\nu< N-m_{1,1}-\ell}\!\!\!\!\!\!\!\!\!
          (\vartheta-\lambda_{0,1}+2+\nu)
           \prod_{\substack{2\le\nu\le n_0\\0\le i<m_{0,\nu}-\ell}}\!\!\!
           (\vartheta+\lambda_{0,\nu}-\lambda_{0,1}+1+i),
\end{align*}
which equals $\p^{-m_{0,1}}\! \Ad(\p^{-\mu})Q$ 
because $\prod_{0\le \nu<k}(\vartheta-\nu)
=x^k\p^k$ for $k\in\mathbb Z_{\ge 0}$. 

iv) (Cf.~Remark~\ref{rem:KacGRS} ii) for another proof.) \ \ 
Since
\begin{align*}
 \idx\mathbf m-\idx\mathbf m'&=\sum_{j=0}^p m_{j,1}^2 - (p-1)n^2 -
                  \sum_{j=0}^p(m_{j,1}-d)^2+(p-1)(n-d)^2
\allowdisplaybreaks\\
                &=2d\sum_{j=0}^pm_{j,1}-(p+1)d^2-2(p-1)nd+(p-1)d^2
\allowdisplaybreaks\\
                &=d\Bigl(2\sum_{j=0}^pm_{j,1}-2d -2(p-1)n\Bigr)=0
\intertext{and}
 \sum_{j=0}^p\sum_{\nu=1}^{n_j}m_{j,\nu}\lambda_{j,\nu}&
 -\sum_{j=0}^p\sum_{\nu=1}^{n_j}m'_{j,\nu}\lambda'_{j,\nu}
\allowdisplaybreaks\\
 &=m_{0,1}(\mu+1)-(m_{0,1}-d)(1-\mu)+\mu(n-m_{0,1}-\sum_{j=1}^p(n-m_{j,1}))
\allowdisplaybreaks\\
 &=\Bigl(\sum_{j=0}^pm_{j,1} - d - (p-1)n\Bigr)\mu
  -m_{0,1}d-(m_{0,1}-d) =d,
\end{align*}
we have the claim.

The claim iii) follows from the following lemma when $P$ is irreducible.

Suppose $\lambda_{j,\nu}$ are generic in the sense of the claim iii).
Put $\mathbf m=\gcd(\mathbf m)\overline{\mathbf m}$.
Then an irreducible subspace of the solutions of $Pu=0$ has the spectral
type $\ell'\overline{\mathbf m}$ with $1\le \ell'\le \gcd(\mathbf m)$
and the same argument as in the proof of the following lemma
shows iii).
\end{proof}
The following lemma is known which follows from Scott's lemma
(cf.~\S\ref{eq:rigididx}).
\begin{lem}\label{lem:irrred}
Let $P$ be a Fuchsian differential operator with the Riemann scheme
\eqref{eq:GRS}.
Suppose $P$ is irreducible.
Then
\begin{equation}\label{eq:rigididx}
 \idx\mathbf m\le 2.
\end{equation}

Fix $\ell =(\ell_0,\dots,\ell_p)\in\mathbb Z_{>0}^{p+1}$
and suppose $\ord P>1$.
Then
\begin{equation}\label{eq:SL0}
 m_{0,\ell_0}+m_{1,\ell_1}+\cdots+m_{p,\ell_p}-(p-1)\ord\mathbf m \le  m_{k,\ell_k}
 \text{ \ for \ }k=0,\dots,p.
\end{equation}
Moreover the condition
\begin{equation}
 \lambda_{0,\ell_0}+\lambda_{1,\ell_1}+\cdots+\lambda_{p,\ell_p}\in\mathbb Z
\end{equation}
implies
\begin{equation}\label{eq:SL1}
 m_{0,\ell_0}+m_{1,\ell_1}+\cdots+m_{p,\ell_p}\le (p-1)\ord\mathbf m.
\end{equation}\end{lem}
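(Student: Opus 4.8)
The plan is to pass to the monodromy representation and deduce everything from Scott's lemma (§\ref{sec:Scott}) together with Schur's lemma; the only genuinely non‑formal ingredient is the local dictionary between the Riemann scheme of $P$ and the conjugacy classes of its monodromy matrices. So, first I would fix a base point, let $\mathbf M=(M_0,\dots,M_p)$ be the monodromy of $Pu=0$ on $V:=\mathbb C^n$ with $n=\ord\mathbf m$, recall $M_0M_1\cdots M_p=I$, and note that since $P$ is irreducible, $\mathbf M$ acts irreducibly on $V$. Writing $\rho_{j,\nu}:=e^{2\pi\sqrt{-1}\lambda_{j,\nu}}$, the local theory of regular singular points (§\ref{sec:reg}, Lemma~\ref{lem:GRS}~ii), Remark~\ref{rm:1}) gives the centralizer identity $\dim Z_{M(n,\mathbb C)}(M_j)=\sum_\nu m_{j,\nu}^2$ for $0\le j\le p$, and, for each $j,\nu$ with $m_{j,\nu}>0$, the geometric multiplicity bound $\dim\ker(M_j-\rho_{j,\nu}I)\ge m_{j,\nu}$ (the $m_{j,\nu}$ logarithm‑free local solutions of Lemma~\ref{lem:GRS}~ii) attached to the block $[\lambda_{j,\nu}]_{(m_{j,\nu})}$ are eigenvectors of the monodromy with eigenvalue $\rho_{j,\nu}$). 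I will use Scott's lemma in the form: for $g_0,\dots,g_p\in GL(W)$ with $g_0\cdots g_p=\id_W$,
\[
 \dim\bigcap_{j=0}^p\ker(g_j-1)+\dim\bigcap_{j=0}^p\ker(g_j^{*}-1)\ \ge\ 2\dim W-\sum_{j=0}^p\rank(g_j-1).
\]

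For $\idx\mathbf m\le 2$ I would apply this to $\Ad M_0,\dots,\Ad M_p$ on $W:=M(n,\mathbb C)$, whose product is $\Ad(M_0\cdots M_p)=\id$. Here $\ker(\Ad M_j-1)=Z(M_j)$, so $\rank(\Ad M_j-1)=n^2-\sum_\nu m_{j,\nu}^2$; the intersection $\bigcap_j\ker(\Ad M_j-1)$ equals $\End_{\mathbf M}(V)=\mathbb C I$ by Schur's lemma, and since $(\Ad M_j)^{*}=\Ad(M_j^{-1})$ with respect to the trace form, the second intersection is likewise $\mathbb C I$. Scott's lemma then gives $2\ge 2n^2-\sum_j\bigl(n^2-\sum_\nu m_{j,\nu}^2\bigr)=\sum_{j,\nu}m_{j,\nu}^2-(p-1)n^2=\idx\mathbf m$.

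For the two rank inequalities I set $\rho_j:=\rho_{j,\ell_j}$ and $r_j:=\rank(M_j-\rho_jI)$, so that $m_{j,\ell_j}\le n-r_j$ by the multiplicity bound, and recall $n>1$. For \eqref{eq:SL0}, fix $k$ and observe $\bigcap_{j\ne k}\ker(M_j-\rho_jI)=0$: a nonzero vector $v$ there would satisfy $M_jv=\rho_jv$ for all $j\ne k$, hence $M_kv=(\prod_{j\ne k}\rho_j)^{-1}v$ by $M_0\cdots M_p=I$, so $\mathbb C v$ would be an $\mathbf M$‑invariant line — impossible. Therefore $\sum_{j\ne k}r_j\ge n$, whence $\sum_{j\ne k}m_{j,\ell_j}\le\sum_{j\ne k}(n-r_j)=pn-\sum_{j\ne k}r_j\le (p-1)n$, which is exactly \eqref{eq:SL0}. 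For \eqref{eq:SL1}, the hypothesis $\lambda_{0,\ell_0}+\cdots+\lambda_{p,\ell_p}\in\mathbb Z$ means $\rho_0\cdots\rho_p=1$, so $N_j:=\rho_j^{-1}M_j$ satisfies $N_0\cdots N_p=I$ with $(N_j)$ still irreducible and $\rank(N_j-1)=r_j$. By irreducibility $\bigcap_j\ker(N_j-1)=0$ (an $\mathbf M$‑invariant line) and $\bigcap_j\ker(N_j^{*}-1)=0$ (a common fixed covector would have $\mathbf M$‑invariant kernel, a hyperplane); Scott's lemma gives $0\ge 2n-\sum_j r_j$, i.e. $\sum_j r_j\ge 2n$, and therefore $\sum_j m_{j,\ell_j}\le\sum_j(n-r_j)=(p+1)n-\sum_j r_j\le (p-1)n$.

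The step I expect to be the main obstacle is not the combinatorics above but the first one: justifying the local dictionary — the centralizer identity and the multiplicity bound — from the statement that $P$ has Riemann scheme \eqref{eq:GRS}. This is essentially standard (it is built into §\ref{sec:reg} and §\ref{sec:MM}), but some care is warranted because, as Remark~\ref{rem:GCexp}~iii) shows, a single $P$ may admit several generalized Riemann schemes. For \eqref{eq:SL0}–\eqref{eq:SL1} only the multiplicity bound is needed, and that follows directly from the logarithm‑free local solutions of Lemma~\ref{lem:GRS}~ii), so it is robust; for $\idx\mathbf m\le 2$ one needs the centralizer identity, which — if it is not taken to be part of the meaning of the Riemann scheme — is obtained by the perturbation $\lambda_{j,\nu}\mapsto\lambda_{j,\nu}+\nu t$ of Remark~\ref{rem:GCexp}~iv): for small $t\ne 0$ the exponents become distinct, the local monodromies semisimple, and irreducibility persists since the reducible locus of the holomorphically varying tuple $\mathbf M(t)$ is closed (it is the union over $0<d<n$ of the images of the compact Grassmannians of $d$‑dimensional invariant subspaces), while $\idx\mathbf m$ is unchanged.
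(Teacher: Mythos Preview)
Your argument is correct and essentially coincides with the paper's: the bound $\idx\mathbf m\le 2$ is exactly the paper's appeal to Corollary~\ref{cor:katz} (whose proof is Scott's lemma applied to $\Ad M_j$, as you do), and your treatment of \eqref{eq:SL1} via the rescaling $N_j=\rho_j^{-1}M_j$ is the monodromy-side version of the paper's gauge transformation $u\mapsto\prod_j(x-c_j)^{-\lambda_{j,\ell_j}}u$. For \eqref{eq:SL0} you give a direct common-eigenvector argument, while the paper instead reduces formally to \eqref{eq:SL1} by adjoining a fictitious exponent $\lambda_{0,n_0+1}:=-\sum_{j\ge1}\lambda_{j,\ell_j}$ with $m_{0,n_0+1}=0$; your route is slightly more transparent.

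One simplification: your final paragraph worries about the centralizer \emph{identity} $\dim Z(M_j)=\sum_\nu m_{j,\nu}^2$, but the computation only needs the \emph{inequality} $\dim Z(M_j)\ge\sum_\nu m_{j,\nu}^2$ (this is also what the paper uses). Indeed, from $\rank(\Ad M_j-1)=n^2-\dim Z(M_j)\le n^2-\sum_\nu m_{j,\nu}^2$ one gets
\[
2\ \ge\ 2n^2-\sum_j\rank(\Ad M_j-1)\ \ge\ 2n^2-\sum_j\Bigl(n^2-\sum_\nu m_{j,\nu}^2\Bigr)=\idx\mathbf m,
\]
so no perturbation is needed. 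The inequality itself follows from the local perturbation of Remark~\ref{rem:GCexp}~iv) applied at a single singular point (upper semicontinuity of $\dim Z$), without any global Fuchs-relation or irreducibility constraint; this avoids the delicate step in your perturbation argument.
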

\begin{proof}
Let $M_j$ be the monodromy generators of the solutions of $Pu=0$
at $c_j$, respectively.
Then $\dim Z(M_j)\ge \sum_{\nu=1}^{n_j}m_{j,\nu}^2$ and therefore
$\sum_{j=0}^p\codim Z(M_j)\le (p+1)n^2 -\bigl(\idx\mathbf m +(p-1)n^2)
=2n^2-\idx\mathbf m$. 
Hence Corollary \ref{cor:katz} (cf.~\eqref{eq:iridx}) 
proves \eqref{eq:rigididx}.
\index{00Z@$Z(A)$, $Z(\mathbf M)$}%

We may assume $\ell_j=1$ for $j=0,\dots,p$ and $k=0$ to prove the lemma.
By the map $u(x)\mapsto \prod_{j=1}^p(x-c_j)^{-\lambda_{j,1}}u(x)$
we may moreover assume $\lambda_{j,\ell_j}=0$ for $j=1,\dots,p$.
Suppose $\lambda_{0,1}\in\mathbb Z$.
We may assume $M_p\cdots M_1M_0=I_n$.
Since $\dim\ker M_j\ge m_{j,1}$, 
Scott's lemma (Lemma~\ref{lem:Scott}) assures \eqref{eq:SL1}.

The condition \eqref{eq:SL0} is reduced to \eqref{eq:SL1}
by putting $m_{0,\ell_0}=0$ and $\lambda_{0,\ell_0}=-\lambda_{1,\ell_1}-\cdots
-\lambda_{p,\ell_p}$ because we may assume $k=0$ and $\ell_0=n_0+1$.
\end{proof}

\begin{rem}\label{rem:midisom}
{\rm i) }
Retain the notation in Theorem~\ref{thm:GRSmid}.
The operation in Theorem~\ref{thm:GRSmid} i) corresponds to the \textsl{addition}
and the operation in Theorem~\ref{thm:GRSmid} ii)  corresponds to
Katz's \textsl{middle convolution} (cf.~\cite{Kz}), which are 
studied by \cite{DR} for the systems of Schlesinger canonical form.

The operation $c(P):=\Ad(\p^{-\mu})\p^{(p-1)n}P$ is always well-defined
for the Fuchsian differential operator of the normal form which 
has $p+1$ singular points including $\infty$.
This corresponds to the \textsl{convolution} defined by Katz.
Note that the equation $Sv=0$ is a quotient of the equation $c(P)\tilde u=0$.
\index{convolution}

{\rm ii) }
Retain the notation in the previous theorem.
Suppose the equation $Pu=0$ is irreducible and
$\lambda_{j,\nu}$ are generic complex numbers satisfying 
the assumption in Theorem~\ref{thm:GRSmid}.  
Let $u(x)$ be a local solution of the equation $Pu=0$ corresponding 
to the characteristic exponent $\lambda_{i,\nu}$ at $x=c_i$.
Assume $0\le i\le p$ and $1<\nu\le n_i$.
Then the irreducible equations $\bigl(\Ad\bigl((x-c_j)^r\bigr)P\bigr)u_1=0$ and
$\bigl(\RAd(\p^{-\mu})\circ\Red P\bigr)u_2=0$ are characterized by the equations
satisfied by $u_1(x)=(x-c_j)^ru(x)$ and $u_2(x)=I_{c_i}^\mu(u(x))$, respectively.

Moreover for any integers $k_0,k_1,\dots,k_p$ the irreducible equation $Qu_3=0$
satisfied by $u_3(x)=I_{c_i}^{\mu+k_0}\bigl(\prod_{j=1}^p(x-c_j)^{k_j}u(x)\bigr)$ 
is isomorphic to the equation $\bigl(\RAd(\p^{-\mu})\circ\Red P\bigr)u_2=0$ as 
$W(x)$-modules (cf.~\S\ref{sec:ODE} and \S\ref{sec:contig}).
\end{rem}
\begin{exmp}[exceptional parameters]\label{ex:outuniv}
\index{Jordan-Pochhammer!exceptional parameter}
\index{tuple of partitions!rigid!21,21,21,21}

The Fuchsian differential equation
with the Riemann scheme
\begin{equation*}
 \begin{Bmatrix}
  x=\infty & 0 & 1 & c\\
  [\delta]_{(2)} & [0]_{(2)} & [0]_{(2)}  & [0]_{(2)}\\
  2-\alpha-\beta-\gamma-2\delta & \alpha & \beta & \gamma
 \end{Bmatrix}
\end{equation*}
is a Jordan-Pochhammer equation (cf.~Example~\ref{ex:midconv} ii))
if $\delta\ne0$, which is proved by the reduction using the operation
$\RAd(\p^{1-\delta})\Red$ given in Theorem~\ref{thm:GRSmid} ii).

The Riemann scheme of the operator
\begin{align*}
 P_r&=x(x-1)(x-c)\p^3\\
 &\quad
 -\bigl((\alpha+\beta+\gamma-6)x^2-((\alpha+ \beta-4)c+\alpha+\gamma-4)x
  +(\alpha-2)c \bigr)\p^2\\
&\quad
 -\bigl(2(\alpha+\beta+\gamma-3)x+(\alpha+\beta-2)c+\alpha+\gamma-2+r\bigr)\p
\end{align*}
equals
\begin{equation*}
 \begin{Bmatrix}
  x=\infty & 0 & 1 & c\\
  [0]_{(2)} & [0]_{(2)} & [0]_{(2)}  & [0]_{(2)}\\
  2-\alpha-\beta-\gamma & \alpha & \beta & \gamma
 \end{Bmatrix},
\end{equation*}
which corresponds to a Jordan-Pochhammer operator when $r=0$.
If the parameters are generic, $\RAd(\p)P_r$ 
is Heun's operator \eqref{eq:Heun} with the Riemann scheme
\index{Heun's equation}
\[
\begin{Bmatrix}
 x=\infty & 0 & 1 & c\\
 2& 0 & 0 & 0\\
 3-\alpha-\beta-\gamma&\alpha-1&\beta-1&\gamma-1
\end{Bmatrix},
\]
which contains the accessory parameter $r$.
This transformation doesn't satisfy \eqref{eq:redC2} for $\nu=1$.

The operator $\RAd(\p^{1-\alpha-\beta-\gamma})P_r$ has the Riemann scheme
\[
\begin{Bmatrix}
 x=\infty & 0 & 1 & c\\
 \alpha+\beta+\gamma-1& 0 & 0 & 0\\
 \alpha+\beta+\gamma&1-\beta-\gamma&1-\gamma-\alpha&1-\alpha-\beta
\end{Bmatrix}
\]
and the monodromy generator at $\infty$ is semisimple
if and only if $r=0$.
This transformation doesn't satisfy \eqref{eq:redC2} for $\nu=2$.
\end{exmp}
\begin{defn}\label{def:pell}
Let
\[
  P=a_n(x)\p^n+a_{n-1}(x)\p^{n-1}+\cdots+a_0(x)
\]
be a Fuchsian differential operator with the Riemann scheme 
\eqref{eq:GRS}.
Here some $m_{j,\nu}$ may be 0. 
Fix $\ell=(\ell_0,\dots,\ell_p)\in\mathbb Z^{p+1}_{>0}$ with
$1\le\ell_j\le n_j$.  
Suppose
\begin{equation}\label{eq:redok}
 \#\{j\,;\,m_{j,\ell_j}\ne n\text{ and }0\le j\le p\}\ge 2.
\end{equation}
Put
\index{00dm@$d_\ell(\mathbf m)$}
\begin{equation}
 d_{\mathbf\ell}(\mathbf m):=m_{0,\ell_0}+\cdots+m_{p,\ell_p}
   - (p-1)\ord\mathbf m\label{eq:dm}
\end{equation}
and
\begin{equation}
 \begin{split}
   \p_\ell P:=&\,
   \Ad\bigl(\prod_{j=1}^p(x-c_j)^{\lambda_{j,\ell_j}})
   \prod_{j=1}^p(x-c_j)^{m_{j,\ell_j}-d_\ell(\mathbf m)}\p^{-m_{0,\ell_0}}
   \Ad(\p^{1-\lambda_{0,\ell_0}-\cdots-\lambda_{p,\ell_p}})\\
   &\ 
   \cdot\p^{(p-1)n-m_{1,\ell_1}-\cdots-m_{p,\ell_p}}
   a_n^{-1}(x)\prod_{j=1}^n(x-c_j)^{n-m_{j,\ell_j}}
   \Ad\bigl(\prod_{j=1}^p(x-c_j)^{-\lambda_{j,\ell_j}})
   P.
  \end{split}\label{eq:opred}
\end{equation}
\index{000deltaell@$\p,\ \p_\ell,\ \p_{max}$}
If $\lambda_{j,\nu}$ are generic under 
the Fuchs relation or $P$ is irreducible,
$\p_\ell P$ is well-defined as an element of $W[x]$ and
\begin{align}
 \p_\ell^2 P &= P\text{ \ with $P$ of the form \eqref{eq:FNF}},\label{eq:pellP2}\\
 \begin{split}
   \p_\ell P&\in W(x)\RAd\bigl(\prod_{j=1}^p(x-c_j)^{\lambda_{j,\ell_j}})
  \RAd(\p^{1-\lambda_{0,\ell_0}-\cdots-\lambda_{p,\ell_p}})\\
  &\quad
  \cdot\RAd\bigl(\prod_{j=1}^p(x-c_j)^{-\lambda_{j,\ell_j}})P
 \end{split}\label{eq:defpell}
\end{align}
and $\p_\ell$ gives a correspondence between differential operators 
of normal form \eqref{eq:FNF}.
Here the spectral type $\p_\ell\mathbf m$ 
of $\p_\ell P$ is given by
\begin{align}
 \p_\ell\mathbf m
  &:=\bigr(m'_{j,\nu}\bigr)_{\substack{0\le j\le p\\ 1\le\nu\le n_j}}
  \text{ \ and \ }
  m_{j,\nu}'=
 m_{j,\nu}-\delta_{\ell_j,\nu}\cdot d_{\mathbf\ell}(\mathbf m)
 \label{eq:redm}
\end{align}
and the Riemann scheme of $\p_\ell P$ equals
\begin{equation}\label{eq:pellGRS}
 \p_\ell\bigl\{\lambda_{\mathbf m}\bigr\}
 :=\bigl\{\lambda'_{\mathbf m'}\bigr\}\text{ \ with \ }
 \lambda'_{j,\nu}=
 \begin{cases}
   \lambda_{0,\nu} - 2\mu_\ell&(j=0,\ \nu=\ell_0)\\
   \lambda_{0,\nu} -\mu_\ell&(j=0,\ \nu\ne\ell_0)\\
   \lambda_{j,\nu}          &(1\le j\le p,\ \nu=\ell_j)\\
   \lambda_{0,\nu} +\mu_\ell&(1\le j\le p,\ \nu\ne\ell_j)
 \end{cases}
\end{equation}
by putting
\begin{equation}
  \mu_\ell := \sum_{j=0}^p \lambda_{j,\ell_j} -1.
\end{equation}
It follows from Theorem~\ref{thm:GRSmid} that 
the above assumption is satisfied if 
\begin{equation}\label{eq:non-neg}
  m_{j,\ell_j}\ge d_\ell(\mathbf m)\qquad(j=0,\dots,p)
\end{equation}
and 
\begin{equation}\label{eq:dless2}
\begin{split}
&\sum_{j=0}^p\lambda_{j,\ell_j+(\nu-\ell_j)\delta_{j,k}}
  \notin\bigl\{i\in\mathbb Z\,;\,
 (p-1)n-\sum_{j=0}^p m_{j,\ell_j+(\nu-\ell_j)\delta_{j,k}}+2\le i\le 0\bigr\}\\
&\qquad\text{for }k=0,\dots,p\text{ and }\nu=1,\dots,n_k.
\end{split}
\end{equation}

Note that $\p_\ell \mathbf m\in\mathcal P_{p+1}$ is \textsl{well-defined} 
for a given $\mathbf m\in\mathcal P_{p+1}$ if \eqref{eq:non-neg} is valid.
Moreover we define
\begin{align}
 \p\mathbf m&:=\p_{(1,1,\ldots)}\mathbf m,\\
 \begin{split}
 \p_{max}\mathbf m&:=\p_{\ell_{max}(\mathbf m)}\mathbf m\text{ \ with \ }\\
  \ell_{max}(\mathbf m)_j&:=\min\bigl\{\nu\,;\,m_{j,\nu}
 =\max\{m_{j,1},m_{j,2},\ldots\}\bigr\},
 \end{split}\label{eq:pmax}\\
 d_{max}(\mathbf m)
 &:=\sum_{j=0}^p\max\{m_{j,1},m_{j,2},\dots,m_{j,n_j}\}-(p-1)\ord\mathbf m.
\end{align}
\index{000deltaell@$\p,\ \p_\ell,\ \p_{max}$}
\index{00dmax@$d_{max},\ \ell_{max}$}
For a Fuchsian differential operator $P$ with the Riemann 
scheme \eqref{eq:GRS} we define 
\begin{equation}
 \p_{max}P:=\p_{\ell_{max}(\mathbf m)}P
 \text{ \ and \ }\p_{max}\bigl\{\lambda_{\mathbf m}\bigr\}
 =\p_{\ell_{max}(\mathbf m)}\bigl\{\lambda_{\mathbf m}\bigr\}.
\end{equation}
A tuple $\mathbf m\in\mathcal P$ is called \textsl{basic}
if $\mathbf m$ is indivisible and $d_{max}(\mathbf m)\le 0$.
\index{tuple of partitions!basic}
\end{defn}
\begin{prop}[linear fractional transformation]\label{prop:coordf}
\index{linear fractional transformation}
\index{coordinate transformation!linear fractional}
Let $\phi$ be a linear fractional transformation of $\mathbb P^1(\mathbb C)$,
namely there exists 
$\left(\begin{smallmatrix}\alpha&\beta\\ \gamma&\delta\end{smallmatrix}
\right)\in GL(2,\mathbb C)$
such that $\phi(x)=\frac{\alpha x+\beta}{\gamma x +\delta}$.
Let $P$ be a Fuchsian differential operator with the Riemann scheme \eqref{eq:GRS}.
We may assume $-\frac\delta\gamma=c_j$ with a suitable $j$
by putting $c_{p+1}=-\frac\delta\gamma$, $\lambda_{p+1,1}=0$ and 
$m_{p+1,1}=n$ if necessary.
Fix $\ell=(\ell_0,\dotsm\ell_p)\in\mathbb Z_{>0}^{p+1}$.  
If \eqref{eq:non-neg} and \eqref{eq:dless2} are valid, we have
\begin{equation}
 \begin{split}
 \p_\ell P&\in W(x)\Ad\bigl((\gamma x+\delta)^{2\mu}\bigr)
 T^*_{\phi^{-1}}\p_\ell T^*_{\phi}P,\\
 \mu&=\lambda_{0,\ell_0}+\cdots+\lambda_{p,\ell_p}-1.
 \end{split}
\end{equation}
\end{prop}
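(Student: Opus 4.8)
The plan is to reduce the asserted identity to the way the three elementary pieces that make up $\p_\ell$ transform under a coordinate change. Recall from \eqref{eq:defpell} that, modulo left multiplication by a nonzero element of $W(x)$ — which is exactly the freedom built into the membership ``$\p_\ell P\in W(x)\cdots$'' and into the $\Red$'s occurring in \eqref{eq:opred} — one has
\[
 \p_\ell P\ \sim\ \RAd\Bigl(\prod_{j=1}^p(x-c_j)^{\lambda_{j,\ell_j}}\Bigr)\circ
 \RAd\bigl(\p^{-\mu}\bigr)\circ
 \RAd\Bigl(\prod_{j=1}^p(x-c_j)^{-\lambda_{j,\ell_j}}\Bigr)P,
\]
with $\mu=\lambda_{0,\ell_0}+\cdots+\lambda_{p,\ell_p}-1$, i.e.\ precisely the $\mu$ of the statement. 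Thus $\p_\ell$ is a composition of additions at the finite singular points, one middle convolution with the \emph{coordinate–independent} parameter $\mu$, and the inverse additions; and $T^*_{\phi^{-1}}\circ\p_\ell\circ T^*_\phi$ applied to $P$ performs these same three steps in the $\phi$–coordinate. So the proof reduces to computing $T^*_\phi\circ\RAd(\p^{-\mu})\circ T^*_{\phi^{-1}}$ and $T^*_\phi\circ\Ad\bigl((x-c)^\tau\bigr)\circ T^*_{\phi^{-1}}$ and collecting the discrepancy.

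First I would reduce to $\phi(x)=1/x$. Since $GL(2,\mathbb C)$ is generated by the affine maps together with the inversion, and for affine $\phi$ one has $\gamma=0$, so $\gamma x+\delta$ is a nonzero constant, $\Ad\bigl((\gamma x+\delta)^{2\mu}\bigr)=\id$, and $T^*_\phi$ commutes up to scalars with $\p$, with additions at the translated/rescaled points, and with $\RAd(\p^{-\mu})$; the identity is then trivial. Because $\mu$ is built from characteristic exponents it is unchanged by a coordinate transformation, and the automorphy factor obeys the cocycle relation $j(\phi_1\phi_2,x)=j(\phi_1,\phi_2 x)\,j(\phi_2,x)$ with $j(\phi,x)=\gamma x+\delta$, so the reduction to $\phi(x)=1/x$ is legitimate. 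In that case $-\delta/\gamma=0$, which by the normalization in the statement is some $c_{j_0}$, $\phi^{-1}=\phi$, and the claim becomes $\p_\ell P\in W(x)\,\Ad(x^{2\mu})\,T^*_{1/x}\,\p_\ell\,T^*_{1/x}P$.

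Next I would conjugate the three pieces by $T^*_{1/x}$. Since $T^*_{1/x}(x-c_j)=\tfrac{1-c_jx}{x}$, an addition at a finite nonzero point turns into an addition at $\tfrac1{c_j}$ twisted by a power of $x$, while an addition at $0=c_{j_0}$ stays an addition at $0$. For the middle convolution, substituting $t=\tfrac1s$ in $I^\mu_c u(y)=\tfrac1{\Gamma(\mu)}\int_c^y(y-t)^{\mu-1}u(t)\,dt$ with $y=\tfrac1x$, using $\tfrac1x-\tfrac1s=\tfrac{s-x}{xs}$ and $dt=-s^{-2}\,ds$, gives
\[
 I^\mu_c(u)\bigl(\tfrac1x\bigr)=(-1)^\mu\,x^{1-\mu}\,
 \tfrac1{\Gamma(\mu)}\int_{1/c}^{x}(x-s)^{\mu-1}\,s^{-\mu-1}\,u\bigl(\tfrac1s\bigr)\,ds,
\]
legitimate as a transform of local solutions by Proposition~\ref{prop:RAdIc} and Remark~\ref{rem:midisom}; on operators this means $T^*_{1/x}\circ\RAd(\p^{-\mu})\circ T^*_{1/x}$ equals, up to $\Red$ and scalars, $\Ad(x^{1-\mu})\circ\RAd(\p^{-\mu})\circ\Ad(x^{-\mu-1})$. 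Collecting everything — the $x$-power twists from the two blocks of finite additions, the factors $x^{1-\mu}$ and $x^{-\mu-1}$, and the fact (\eqref{eq:IcP}, \eqref{eq:IinfP}) that $\RAd(\p^{-\mu})$ carries a power of $x$ to a shifted power of $x$ — the net effect of $T^*_{1/x}\p_\ell T^*_{1/x}$ differs from $\p_\ell$ only by the harmless relabelling $c_j\mapsto\tfrac1{c_j}$, by the exchange of $\infty$ and $c_{j_0}=0$ as ``the exponent consumed by the middle convolution'', and by an overall factor $x^{-2\mu}$, which the prescribed gauge $\Ad(x^{2\mu})$ restores. Conditions \eqref{eq:non-neg} and \eqref{eq:dless2} involve only the $m_{j,\nu}$ at $\ell$ and the combination $\sum_j\lambda_{j,\ell_j}$, hence are coordinate-invariant, so Theorem~\ref{thm:GRSmid} guarantees that every intermediate operator on both sides is in $W[x]$ with the predicted Riemann scheme; in particular $\p_\ell P$ and $\Ad(x^{2\mu})T^*_{1/x}\p_\ell T^*_{1/x}P$ are Fuchsian of the same order with the same Riemann scheme \eqref{eq:pellGRS}, and by Remark~\ref{rem:midisom} they annihilate the same transform of a generic local solution of $Pu=0$; thus one lies in $W(x)$ times the other. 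As the identity is between operators with coefficients rational in $x$ and meromorphic in $\lambda$, it suffices to argue for generic $\lambda$, where $\p_\ell P$ is irreducible and one generic solution determines the equation, and the general case follows by analytic continuation.

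The hard part will be the bookkeeping at the point $c_{j_0}=-\delta/\gamma$ that is interchanged with $\infty$: one must verify that, after all the powers of $(\gamma x+\delta)$ produced by the conjugated additions and by the Euler–Möbius substitution are gathered, precisely $(\gamma x+\delta)^{-2\mu}$ remains — with no spurious integer shift of the exponents at $c_{j_0}$ and with the correct branch — so that $\Ad\bigl((\gamma x+\delta)^{2\mu}\bigr)$ reproduces exactly the Riemann scheme \eqref{eq:pellGRS}; one must also check that \eqref{eq:non-neg} and \eqref{eq:dless2} in the $\phi$–coordinate follow from the same conditions for $P$. Once these are settled, the proposition follows from the form \eqref{eq:defpell} of $\p_\ell$ together with the behaviour of additions and of $\RAd(\p^{-\mu})$ under coordinate changes recorded above.
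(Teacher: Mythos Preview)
Your reduction to $\phi(x)=1/x$ via generation of $GL(2,\mathbb C)$ by affine maps and the inversion is exactly what the paper does, and your observation that the affine case is trivial because $\gamma x+\delta$ is then constant is the paper's first sentence.

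For the inversion case, however, you are working much harder than necessary. The paper's entire proof of that case is the single phrase ``the claim follows from \eqref{eq:redcoord}''. That identity, established in the course of proving Theorem~\ref{thm:GRSmid}~ii), already expresses the operator $S=\p^{-d}\Ad(\p^{-\mu})\prod_j(x-c_j)^{-m_{j,1}}P$ in terms of the same construction performed after the coordinate change $T^*_{1/(x-c_1)}$, with the $\Ad$ by the appropriate power of $(x-c_1)$ sitting in front. So all the bookkeeping you describe as ``the hard part'' --- tracking the powers of $x$ produced by the conjugated additions and the Euler--M\"obius substitution, and checking that exactly $(\gamma x+\delta)^{-2\mu}$ survives --- has already been carried out explicitly in that earlier proof, at the level of operators rather than solutions. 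Your integral substitution $t=1/s$ is precisely \eqref{eq:Iinv}, which the paper itself notes ``corresponds to \eqref{eq:redcoord}''; you are rederiving that correspondence instead of citing it.

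In short: your approach is correct and coincides with the paper's, but you should recognize that the nontrivial computation is already packaged in \eqref{eq:redcoord}, so the proposition is an immediate corollary once you have reduced to $\phi(x)=1/x$.
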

\begin{proof}
The claim is clear if $\gamma=0$.
Hence we may assume $\phi(x)=\frac1x$ and the claim follows from 
\eqref{eq:redcoord}.
\end{proof}
\begin{rem}\label{rem:idxFuchs}
{\rm i) \ }
Fix $\lambda_{j,\nu}\in\mathbb C$.
If $P$ has the Riemann scheme $\{\lambda_{\mathbf m}\}$
with $d_{max}(\mathbf m)=1$, $\p_\ell P$ is 
well-defined and $\p_{max}P$ has the Riemann scheme 
$\p_{max}\{\lambda_{\mathbf m}\}$.
This follows from the fact that the conditions \eqref{eq:redC1}, 
\eqref{eq:redC2} and \eqref{eq:redC3} are valid when we apply 
Theorem~\ref{thm:GRSmid} to the operation $\p_{max}:P\mapsto \p_{max}P$. 

{\rm ii) \ }
We remark that
\begin{align}
 \idx\mathbf m &= \idx\p_\ell \mathbf m,\\
 \ord\p_{max}\mathbf m&=\ord\mathbf m-d_{max}(\mathbf m).
\end{align}
Moreover if $\idx\mathbf m > 0$, we have
\begin{equation}
 d_{max}(\mathbf m)> 0
\end{equation}
because of the identity
\begin{equation}\label{eq:idxd}
 \Bigl(\sum_{j=0}^km_{j,\ell_j}-(p-1)\ord\mathbf m\Bigr)
 \cdot\ord\mathbf m
 =  \idx\mathbf m +  \sum_{j=0}^p\sum_{\nu=1}^{n_j}
 (m_{j,\ell_j}-m_{j,\nu})\cdot m_{j,\nu}.
\end{equation}
If $\idx\mathbf m=0$, then $d_{max}(\mathbf m)\ge 0$ and
the condition  $d_{max}(\mathbf m)=0$ implies 
$m_{j,\nu}=m_{j,1}$ for $\nu=2,\dots,n_j$ and $j=0,1,\dots,p$
(cf.~Corollary~\ref{cor:idx0}).

{\rm iii)} The set of indices $\ell_{max}(\mathbf m)$ is defined
in \eqref{eq:pmax} so that it is uniquely determined.
It is sufficient to impose only the condition
\begin{equation}\label{eq:lmaxe}
  m_{j,\ell_{max}(\mathbf m)_j}=\max\{m_{j,1},m_{j,2},\ldots\}
 \qquad(j=0,\dots,p)
\end{equation}
on $\ell_{max}(\mathbf m)$ for the arguments in this paper.
\end{rem}

Thus we have the following result.
\begin{thm}\label{thm:realizable}
A tuple $\mathbf m\in\mathcal P$ is realizable if and only if
$s\mathbf m$ is trivial {\rm (cf.~Definitions~\ref{def:tuples} and 
\ref{def:Sinfty})}
or $\p_{max} \mathbf m$ is well-defined and realizable.
\end{thm}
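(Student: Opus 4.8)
The plan is to deduce Theorem~\ref{thm:realizable} from Theorem~\ref{thm:GRSmid}, which already carries all the analytic weight; what is left is bookkeeping with tuples of partitions together with a single index estimate. First I would note that realizability depends only on the unordered collection of partitions at each singular point --- permuting the indices $\nu$ at a fixed point just reorders the rows of the Riemann scheme \eqref{eq:GRS} --- so it is invariant under the group $S'_\infty$ of Definition~\ref{def:Sinfty}; hence $\mathbf m$ is realizable iff $s\mathbf m$ is, and we may assume $\mathbf m$ monotone. If $\mathbf m$ is trivial (in particular if $\ord\mathbf m=1$) then $s\mathbf m=\mathbf m$ is trivial and there is nothing to prove, so assume $n:=\ord\mathbf m>1$ and $\mathbf m$ nontrivial. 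The key preliminary claim is that such a realizable $\mathbf m$ satisfies $\#\{j\,;\,m_{j,1}<n\}\ge 2$. Indeed, if at most one partition were nontrivial, say the one at $j=j_0$, then $\idx\mathbf m=\sum_\nu m_{j_0,\nu}^2+n^2>2$; writing $\mathbf m=\gcd(\mathbf m)\,\overline{\mathbf m}$, the core $\overline{\mathbf m}$ has the same shape, is nontrivial and of order $\ge 2$ (otherwise $\mathbf m$ would itself be trivial), so $\idx\overline{\mathbf m}=\sum_\nu\overline m_{j_0,\nu}^2+(\ord\overline{\mathbf m})^2>2$ as well; but a Fuchsian realization of $\mathbf m$ for generic exponents contains an irreducible subquotient of spectral type $\ell'\overline{\mathbf m}$ with $1\le\ell'\le\gcd(\mathbf m)$ (the last paragraph of the proof of Theorem~\ref{thm:GRSmid}), whence Lemma~\ref{lem:irrred} forces $(\ell')^2\idx\overline{\mathbf m}=\idx(\ell'\overline{\mathbf m})\le 2$, a contradiction. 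Since $\mathbf m$ is monotone, $\ell_{max}(\mathbf m)=(1,\dots,1)$ and $\p_{max}=\p$.

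For the forward implication let $\mathbf m$ be realizable; choose generic $\lambda_{j,\nu}$ satisfying the Fuchs relation with $\lambda_{j,1}=0$ for $j\ge 1$ and a realization $P$ of normal form \eqref{eq:FNF}. By Theorem~\ref{thm:GRSmid} iii) the hypotheses of Theorem~\ref{thm:GRSmid} ii) are satisfied --- in particular the inequality \eqref{eq:SL0}, which is exactly \eqref{eq:non-neg}, holds, so $\p_{max}\mathbf m$ is well-defined --- and by Theorem~\ref{thm:GRSmid} ii) the operator $S=\p_{max}P$ is Fuchsian of normal form with Riemann scheme $\p_{max}\{\lambda_{\mathbf m}\}$, hence of spectral type $\p_{max}\mathbf m$. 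As $S$ is produced whenever the exponents are generic, $\p_{max}\mathbf m$ is realizable.

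For the converse, if $s\mathbf m$ is trivial then $\Ad\bigl(\prod_{j=1}^p(x-c_j)^{\lambda_{j,1}}\bigr)\p^{n}$ is a Fuchsian operator with spectral type $s\mathbf m$, its characteristic exponents being arbitrary subject only to $\sum_{j=0}^p\lambda_{j,1}=1-n$, which here is the Fuchs relation; so $s\mathbf m$, hence $\mathbf m$, is realizable. If instead $\p_{max}\mathbf m$ is well-defined and realizable, put $\ell=\ell_{max}(\mathbf m)$; from \eqref{eq:redm} one checks $d_\ell(\p_\ell\mathbf m)=-d_\ell(\mathbf m)$ and $\p_\ell(\p_\ell\mathbf m)=\mathbf m$, while \eqref{eq:midinv} shows that the affine map $\{\lambda_{\mathbf m}\}\mapsto\p_{max}\{\lambda_{\mathbf m}\}$ carries the Fuchs hyperplane of $\mathbf m$ isomorphically onto that of $\p_{max}\mathbf m$. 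Hence for generic $\lambda_{j,\nu}$ the exponents $\p_{max}\{\lambda_{\mathbf m}\}$ are generic for $\p_{max}\mathbf m$, so a realization $P'$ exists; by Definition~\ref{def:pell} (genericity branch) $\p_\ell P'\in W[x]$ is well-defined and of normal form, and \eqref{eq:pellP2}, \eqref{eq:pellGRS} give it spectral type $\mathbf m$ and Riemann scheme $\{\lambda_{\mathbf m}\}$. Thus $\mathbf m$ is realizable, and the two implications together prove the theorem.

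The points that will demand care, rather than the easy cases, are: transferring genericity across the affine reparametrization of exponents (so that the realization $P'$ of $\p_{max}\mathbf m$ may be taken with exponents exactly $\p_{max}\{\lambda_{\mathbf m}\}$, and so that the non-degeneracy conditions of Theorem~\ref{thm:GRSmid} and Definition~\ref{def:pell} for the inverse operation $\p_\ell$ actually hold there); the reduction of a divisible $\mathbf m$ to its indivisible core via the subquotient argument, which is where Lemma~\ref{lem:irrred} and Remark~\ref{rem:generic} ii) enter; and keeping the involutivity $\p_\ell\p_\ell=\id$ on operators of normal form under control, since the intermediate tuple $\p_{max}\mathbf m$ need not be monotone and one may have to conjugate by an element of $S'_\infty$ before applying or undoing a reduction.
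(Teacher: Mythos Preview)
Your proof is correct and follows the same overall route as the paper: reduce to monotone $\mathbf m$, split off the degenerate case, and in the main case $\#\{j:m_{j,1}<n\}\ge 2$ invoke Theorem~\ref{thm:GRSmid} in both directions. Your converse direction is in fact more explicit than the paper's (which contains the evident slip ``$\p_{max}\mathbf m$ is realizable if and only if $\p_{max}\mathbf m$ is realizable''); you spell out the involutivity $\p_\ell^2=\id$ on tuples and Riemann schemes and the affine isomorphism of Fuchs hyperplanes via \eqref{eq:midinv}, which is exactly what is needed.

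The one genuine difference is how you handle the edge case $\#\{j:m_{j,1}<n\}\le 1$. The paper does this in one line: after an addition normalizing the exponents at the trivial partitions to zero, $P$ has no finite singularity, so in normal form $P=\p^n$, whose spectral type is trivial --- hence a nontrivial such $\mathbf m$ is never realizable. You instead argue via the rigidity index and Lemma~\ref{lem:irrred}: a realizable $\mathbf m$ with one nontrivial partition would have an irreducible subquotient of type $\ell'\overline{\mathbf m}$ with $\idx(\ell'\overline{\mathbf m})=(\ell')^2\idx\overline{\mathbf m}\ge 6$, contradicting $\idx\le 2$. This is valid --- the Fuchs-relation argument behind ``subquotient of type $\ell'\overline{\mathbf m}$'' (Remark~\ref{rem:generic}~ii) and \eqref{eq:FC1}) does not actually need $\#\{j:m_{j,1}<n\}\ge 2$, and Lemma~\ref{lem:irrred} applies to the subquotient since apparent singularities contribute trivial monodromy --- but your citation of ``the last paragraph of the proof of Theorem~\ref{thm:GRSmid}'' is slightly off, as that passage is written under the hypothesis $\#\{j:m_{j,1}<n\}\ge 2$. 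The paper's direct argument is shorter and avoids Scott's lemma altogether; your route is heavier but perfectly sound.
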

\begin{proof}
We may assume $\mathbf m\in\mathcal P_{p+1}^{(n)}$ is monotone.

Suppose $\#\{j\,;\,m_{j,1}<n\}<2$.
Then $\p_{max}\mathbf m$ is not well-defined.
We may assume $p=0$ and 
the corresponding equation $Pu=0$ has no singularities in $\mathbb C$
by applying a suitable addition to the equation and then $P\in W(x)\p^n$.
Hence $\mathbf m$ is realizable if and only if 
$\#\{j\,;\,m_{j,1}<n\}=0$, namely, $\mathbf m$ is trivial. 

Suppose $\#\{j\,;\,m_{j,1}<n\}\ge 2$.
Then Theorem~\ref{thm:GRSmid} assures that $\p_{max}\mathbf m$ is 
realizable if and only if $\p_{max}\mathbf m$ is realizable.
\end{proof}

In the next section we will prove that $\mathbf m$ is realizable 
if $d_{max}(\mathbf m)\le 0$.  
Thus we will have a criterion whether a given $\mathbf m\in\mathcal P$ 
is realizable or not by successive applications of $\p_{max}$.

\begin{exmp}
There are examples of successive applications of $s\circ\p$ to monotone 
elements of $\mathcal P$:

$\underline411,\underline411,\underline42,\underline33
\overset{15-2\cdot6=3}\longrightarrow\underline111,\underline111,\underline21
\overset{4-3=1}\longrightarrow{\underline1}1,{\underline1}1,{\underline1}1
\overset{3-2=1}\longrightarrow1,1,1$ (rigid)

$\underline211,\underline211,\underline1111\overset{5-4=1}\longrightarrow
\underline111,\underline111,\underline111\overset{3-3=0}\longrightarrow 111,111,111$ (realizable, not rigid)

$\underline211,\underline211,\underline211,\underline31
\overset{9-8=1}\longrightarrow
\underline111,\underline111,\underline111,\underline21
\overset{5-6=-1}\longrightarrow$ (realizable, not rigid)

${\underline2}2,{\underline2}2,{\underline1}111\overset{5-4=1}\longrightarrow
{\underline2}1,{\underline2}1,{\underline1}11
\overset{5-3=2}\longrightarrow\times$ (not realizablej\\
The numbers on the above arrows are $d_{(1,1,\dots)}(\mathbf m)$.
We sometimes delete the trivial partition as above.
\end{exmp}

The transformation of the generalized Riemann scheme of the application
of $\p_{max}^k$ is described in the following definition.

\begin{defn}[Reduction of Riemann schemes]\label{def:redGRS}
Let $\mathbf m=\bigl(m_{j,\nu}\bigr)_{\substack{j=0,\dots,p\\\nu=1,\dots,n_j}}
\in\mathcal P_{p+1}$ and $\lambda_{j,\nu}\in\mathbb C$ for $j=0,\dots,p$ and
$\nu=1,\dots,n_j$.
Suppose $\mathbf m$ is realizable.
Then there exists a positive integer $K$ such that
\begin{equation}
\begin{split}
 &\ord\mathbf m > \ord\p_{max}\mathbf m > \ord\p_{max}^2\mathbf m > \cdots
  > \ord\p_{max}^K\mathbf m\\
 &\qquad\text{ and }
  s\p_{\max}^K\mathbf m\text{ \ is trivial or \ }
d_{max}\bigl(\p_{max}^K\mathbf m\bigr)\le 0.
\end{split}
\end{equation}
Define $\mathbf m(k)\in\mathcal P_{p+1}$, $\ell(k)\in\mathbb Z$, 
$\mu(k)\in\mathbb C$ and $\lambda(k)_{j,\nu\in\mathbb C}$ for 
$k=0,\dots,K$ by
\index{00mk@$\mathbf m(k),\ \lambda(k),\ \mu(k),\ \ell(k)$}
\begin{align}
 \mathbf m(0)&=\mathbf m\text{ \ and \ }
 \mathbf m(k)=\p_{max}\mathbf m(k-1)\quad(k=1,\dots,K),\\
 \ell(k) &= \ell_{max}\bigl(\mathbf m(k)\bigr)
 \text{ \ and \ }
 d(k) = d_{max}\bigl(\mathbf m(k)\bigr)
 ,\\
 \bigl\{\lambda(k)_{\mathbf m(k)}\bigr\}
  &=\p_{max}^k\bigl\{\lambda_{\mathbf m}\bigr\}
 \text{ \ and \ }\mu(k)=\lambda(k+1)_{1,\nu} - \lambda(k)_{1,\nu}
 \quad(\nu\ne\ell(k)_1).
\end{align}
Namely, we have
\begin{align}
 \lambda(0)_{j,\nu}&=\lambda_{j,\nu}\quad(j=0,\dots,p,\ \nu=1,\dots,n_j),
\allowdisplaybreaks\\
 \mu(k) &= \sum_{j=0}^p\lambda(k)_{j,\ell(k)_j}-1,
\allowdisplaybreaks\\
\begin{split}
 \lambda(k+1)_{j,\nu}
 &=\begin{cases}
     \lambda(k)_{0,\nu}-2\mu(k)&(j=0,\ \nu=\ell(k)_0),\\
     \lambda(k)_{0,\nu}-\mu(k)&(j=0,\ 1\le\nu\le n_0,\ \nu\ne \ell(k)_0),\\
     \lambda(k)_{j,\nu}&(1\le j\le p,\ \nu=\ell(k)_j),\\
     \lambda(k)_{j,\nu}+\mu(k)&(1\le j\le p,\ 1\le\nu\le n_j,\ \nu\ne \ell(k)_j)
   \end{cases}\\
 &=\lambda(k)_{j,\nu}+\bigl((-1)^{\delta_{j,0}}-\delta_{\nu,\ell(k)_j}\bigr)
   \mu(k),
\end{split}\\
\bigl\{\lambda_{\mathbf m}\bigr\}\xrightarrow{\p_{\ell(0)}}&\cdots
\longrightarrow
\bigl\{\lambda(k)_{\mathbf m(k)}\bigr\}\xrightarrow{\p_{\ell(k)}}
\bigl\{\lambda(k+1)_{\mathbf m(k+1)}\bigr\}\xrightarrow{\p_{\ell(k+1)}}\cdots.
\end{align}
\end{defn}

\section{Deligne-Simpson problem}\label{sec:DS}
In this section we give an answer for the existence and the construction
of Fuchsian differential equations with given Riemann schemes and examine
the irreducibility for generic spectral parameters.

\subsection{Fundamental lemmas}
First we prepare two lemmas to construct Fuchsian differential operators
with a given spectral type.
\begin{defn}\label{def:Nnu}
For $\mathbf m=\bigl(m_{j,\nu}\bigr)_{\substack{j=0,\dots,p\\ 1\le \nu\le n_j}}
\in\mathcal P^{(n)}_{p+1}$, we put \index{00Nnu@$N_\nu(\mathbf m)$}
\begin{align}
 \begin{split}
   N_\nu(\mathbf m)&:=(p-1)(\nu+1)+1\\
   &\qquad-\#\{(j,i)\in\mathbb Z^2\,;\,
   i\ge 0,\ 0\le j\le p,\ \widetilde m_{j,i}\ge n-\nu\},\label{eq:Nnu}
 \end{split}\\
 \widetilde m_{j,i}&:=\sum_{\nu=1}^{n_j}\max\bigl\{m_{j,\nu}-i,0\bigr\}.
 \label{eq:tildem}
\end{align}
\end{defn}
See the Young diagram in \eqref{eq:young} and its explanation for
an interpretation of the number $\widetilde m_{j,i}$.

\begin{lem}\label{lem:ex1}
We assume that 
$\mathbf m=\bigl(m_{j,\nu}\bigr)_{\substack{j=0,\dots,p\\ 1\le \nu\le n_j}}
\in\mathcal P^{(n)}_{p+1}$
satisfies
\begin{equation}\label{eq:NTP}
 m_{j,1}\ge m_{j,2}\ge \cdots\ge m_{j,n_j}>0\text{ \ and \ }
 n>m_{0,1}\ge m_{1,1}\ge \cdots\ge m_{p,1}
\end{equation}
and 
\begin{equation}\label{eq:NRed}
  m_{0,1}+\cdots+m_{p,1}\le (p-1)n.
\end{equation}
Then
\begin{equation}\label{eq:LDS}
  N_\nu(\mathbf m)\ge 0
  \qquad(\nu=2,3,\dots,n-1)
\end{equation}
if and only if $\mathbf m$ is not any one of
\begin{equation}\label{eq:MAF}
\begin{gathered}
(k,k;k,k;k,k;k,k),\quad
(k,k,k;k,k,k;k,k,k),\\
(2k,2k;k,k,k,k;k,k,k,k)\\
\text{and \ }
(3k,3k;2k,2k,2k;k,k,k,k,k,k)
\text{ \ with \ }k\ge 2.
\end{gathered}
\end{equation}
\end{lem}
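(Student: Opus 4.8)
The plan is to analyze the quantities $N_\nu(\mathbf m)$ directly from their combinatorial definition and determine exactly when the nonnegativity $N_\nu(\mathbf m)\ge0$ fails for some $\nu$ in the stated range. First I would unwind Definition~\ref{def:Nnu}: for each level $i\ge0$ and each index $j$, the number $\widetilde m_{j,i}=\sum_\nu\max\{m_{j,\nu}-i,0\}$ counts the boxes of the $j$-th Young diagram strictly to the right of the $i$-th column (equivalently, the cells remaining after deleting the first $i$ columns), so the count $\#\{(j,i)\,:\,\widetilde m_{j,i}\ge n-\nu\}$ that enters \eqref{eq:Nnu} is a sum over $j$ of the number of "deep enough" column-truncations of $\mathbf m_j$. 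Rewriting $N_\nu(\mathbf m)=(p-1)(\nu+1)+1-\sum_{j=0}^p f_j(n-\nu)$ where $f_j(t):=\#\{i\ge0:\widetilde m_{j,i}\ge t\}$, I would record the elementary monotonicity and convexity properties of $f_j$ as a function of the threshold $t$, together with the boundary values $f_j(0)$ (essentially $n_j$, using the partition bookkeeping) and the fact that $f_j(n)=1$ exactly when $m_{j,1}=n$. Under hypotheses \eqref{eq:NTP} and \eqref{eq:NRed}, the partitions are standard-type and the top parts are controlled, which pins down the extreme values of these functions.

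The key step is then to show that the minimum of $N_\nu(\mathbf m)$ over $\nu\in\{2,\dots,n-1\}$ is attained in a predictable place and to compute it. I expect a discrete second-difference argument: the map $\nu\mapsto N_\nu(\mathbf m)$ has second differences governed by the second differences of the $f_j$'s, which are convex, so $N_\nu$ is a concave-like function of $\nu$ and its minimum over the interval is at an endpoint or determined by a single balance condition. Comparing $N_\nu$ at the two endpoints $\nu=2$ and $\nu=n-1$ against the interior, and using \eqref{eq:NRed} to bound $\sum_j m_{j,1}$, I would derive that $N_\nu(\mathbf m)\ge0$ for all $\nu$ in the range unless the "budget" $(p-1)(\nu+1)+1$ is too small relative to $\sum_j f_j(n-\nu)$ for some $\nu$; this forces $p$ small (one checks $p\le3$, i.e.\ at most four singular points, since for $p\ge4$ the linear term $(p-1)(\nu+1)$ dominates) and forces the partitions to be "balanced" rectangles, because any strict inequality $m_{j,\nu}>m_{j,\nu+1}$ creates slack in the $f_j$ count.

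With $p$ restricted to $\{1,2,3\}$ and the partitions constrained to near-rectangular shapes, the final step is a short finite case check. For $p=1,2$ one shows \eqref{eq:LDS} always holds under \eqref{eq:NRed} (the budget is ample). For $p=3$ one shows that, after normalizing by $\gcd\mathbf m$ so $\mathbf m=\gcd(\mathbf m)\overline{\mathbf m}$, the only indivisible shapes for which some $N_\nu<0$ are $(1,1;1,1;1,1;1,1)$, $(1,1,1;1,1,1;1,1,1)$, $(2,2;1,1,1,1;1,1,1,1)$, and $(3,3;2,2,2;1^6)$ — precisely the four affine-$D_4$/$E_6$/$E_7$/$E_8$-type diagrams — and then scaling by $k$ gives the list \eqref{eq:MAF}; the constraint $k\ge2$ comes in because at $k=1$ these are rigid tuples for which the relevant $N_\nu$ are in fact $\ge0$ (indeed the construction of $P_{\mathbf m}$ for rigid $\mathbf m$ proceeds differently). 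The main obstacle I anticipate is the bookkeeping in the $p=3$ case: proving that \emph{every} deviation from a rectangular partition strictly increases all the $N_\nu$, so that the four exceptional families are genuinely the complete list and nothing near them survives. This is where I would be most careful, setting up the argument so that a single unit decrease of some $m_{j,\nu}$ (preserving the monotone order and the column sums) changes each $f_j(n-\nu)$ by at most the expected amount, making the inequalities tight only on the exceptional shapes.
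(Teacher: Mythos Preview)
Your approach has a genuine gap and also a factual error that derails the case analysis.

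\textbf{The concavity argument does not work.} You propose a discrete second-difference argument showing that $\nu\mapsto N_\nu(\mathbf m)$ is ``concave-like,'' so that its minimum sits at an endpoint. But $N_\nu$ is neither convex nor concave in general: for $\mathbf m=221,221,221$ the paper's own table gives $(N_1,N_2,N_3,N_4)=(0,1,-1,0)$, which goes up--down--up. The functions $f_j$ you define are not convex (they involve a floor of an inverse of a convex function, which produces staircase-shaped jumps of varying size), so no second-difference control is available. This is not a bookkeeping issue; the monotonicity/convexity framework you set up simply does not pin down where $N_\nu$ is minimized.

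\textbf{The case split on $p$ is wrong.} You claim that for $p\in\{1,2\}$ the inequality \eqref{eq:LDS} always holds and that only $p=3$ produces exceptions. But three of the four exceptional families in \eqref{eq:MAF} are triplets ($p=2$): the $\tilde E_6$, $\tilde E_7$, $\tilde E_8$ types. Only the $\tilde D_4$ type has $p=3$. So your ``$p=1,2$ always fine'' step is false, and the ``short finite case check'' at $p=3$ would miss most of the list.

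\textbf{The reason for $k\ge2$ is misidentified.} The $k=1$ tuples are not rigid; they are the basic tuples with index of rigidity $0$. The actual reason $k=1$ is excluded is that the equality analysis forces the violating $\nu$ to be a common multiple of $n_0,\dots,n_p$; when $k=1$ the least common multiple equals $n$ itself, which lies outside the range $2\le\nu\le n-1$.

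\textbf{What the paper does instead.} The key idea you are missing is to compare $\phi_j(t):=\widetilde m_{j,t}$ (extended piecewise-linearly) with the \emph{linear} function $\bar\phi_j(t)=n(1-t/m_{j,1})$. Convexity of $\phi_j$ gives $\phi_j\le\bar\phi_j$ on $[0,m_{j,1}]$, with equality iff $n=m_{j,1}n_j$ (rectangular partition). This yields the sharp estimate
\[
\sum_{j=0}^p f_j(n-\nu)\le\sum_{j=0}^p\Bigl(\frac{\nu m_{j,1}}{n}+1\Bigr)\le(p-1)(\nu+1)+2,
\]
using \eqref{eq:NRed} for the last step. Hence $N_\nu\ge-1$ always, and $N_\nu=-1$ forces equality throughout: rectangular partitions, equality in \eqref{eq:NRed}, and $\phi_j^{-1}(n-\nu)=\nu/n_j\in\mathbb Z$ for every $j$. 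The first two constraints give $\sum_j 1/n_j=p-1$, whence $(p;n_0,\dots,n_p)\in\{(3;2,2,2,2),(2;3,3,3),(2;2,4,4),(2;2,3,6)\}$, and the integrality constraint forces $\nu$ to be a common multiple of the $n_j$, hence $k\ge2$.
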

\begin{proof}  Put
\begin{align*}
 \phi_j(t)&:=\sum_{\nu=1}^{n_j}\max\{m_{j,\nu}-t,0\}\text{ \ and \ }
 \bar \phi_j(t):=n\Bigl(1-\frac{t}{m_{j,1}}\Bigr)
 \text{ \ for \ }j=0,\dots,p.
\end{align*}
Then $\phi_j(t)$ and $\bar\phi_j(t)$ are strictly decreasing continuous 
functions of $t\in[0,m_{j,1}]$ and
\begin{align*}
\phi_j(0)&=\bar \phi_j(0)=n,\\ 
\phi_j(m_{j,1})&=\bar \phi_j(m_{j,1})=0,\\ 
2\phi_j(\tfrac{t_1+t_2}2)&\le \phi_j(t_1)+\phi_j(t_2)
&(0\le t_1\le t_2\le m_{j,1}),\\
\phi_j'(t)&=-n_j\le -\tfrac{n}{m_{j,1}}=\bar \phi_j'(t) &(0<t<1).
\end{align*}
Hence we have
\begin{align*}
  \phi_j(t) &= \bar\phi_j(t) &&(0<t<m_{j,1},\ n=m_{j,1}n_j),\\
  \phi_j(t) &< \bar\phi_j(t) &&(0<t<m_{j,1},\ n<m_{j,1}n_j)
\end{align*}
and for $\nu=2,\dots,n-1$
\begin{equation*}
 \begin{split}
\sum_{j=0}^p  \#\{i\in\mathbb Z_{\ge 0}\,;\,
  \phi_j(i)\ge n-\nu\}
 &=\sum_{j=0}^p\bigl[\phi_j^{-1}(n-\nu)+1\bigr]\\
 &\le \sum_{j=0}^p\bigl(\phi_j^{-1}(n-\nu)+1\bigr)\\
 &\le \sum_{j=0}^p\bigl(\bar\phi_j^{-1}(n-\nu)+1\bigr)
 =\sum_{j=0}^p \Bigl(\frac{\nu m_{j,1}}n+1\Bigr)\\
 &\le (p-1)\nu+(p+1)=(p-1)(\nu+1)+2.
 \end{split}
\end{equation*}
Here $[r]$ means the largest integer which is not larger than a real
number $r$.

Suppose there exists $\nu$ with $2\le \nu \le n-1$ such that 
\eqref{eq:LDS} doesn't hold.
Then the equality holds in the above each line, which means
\begin{equation}\label{eq:LDSs}
 \begin{aligned}
  \phi_j^{-1}(n-\nu)&\in\mathbb Z&(j=0,\dots,p),\\
  n&=m_{j,1}n_j&(j=0,\dots,p),\\
  (p-1)n&=m_{0,1}+\cdots+m_{p,1}.
 \end{aligned}
\end{equation}
Note that $n=m_{j,1}n_j$ implies $m_{j,1}=\cdots=m_{j,n_j}=\frac{n}{n_j}$
and $p-1=\frac{1}{n_0}+\cdots+\frac{1}{n_p}\le \frac{p+1}2$.
Hence $p=3$ with $n_0=n_1=n_2=n_3=2$ or
$p=2$ with $1=\frac{1}{n_0}+\frac{1}{n_1}+\frac{1}{n_2}$.
If $p=2$, $\{n_0,n_1,n_2\}$ equals
$\{3,3,3\}$ or $\{2,4,4\}$ or $\{2,3,6\}$.
Thus we have \eqref{eq:MAF} with $k=1,2,\ldots$.
Moreover since
\[
   \phi_j^{-1}(n-\nu)=\bar\phi_j^{-1}(n-\nu)
   =\frac{\nu m_{j,1}}{n}=\frac{\nu}{n_j}\in\mathbb Z
   \quad(j=0,\dots,p),
\]
$\nu$ is a common multiple of $n_0,\dots,n_p$ and thus $k\ge 2$.
If $\nu$ is the least common multiple of $n_0,\dots,n_p$ and $k\ge 2$,
then \eqref{eq:LDSs} is valid and the equality holds in the above each line
and hence \eqref{eq:LDS} is not valid.
\end{proof}
\begin{cor}[Kostov \cite{Ko3}]\label{cor:idx0}%
\index{tuple of partitions!basic!index of rigidity $=0$}
Let $\mathbf m\in\mathcal P$ satisfying
$\idx\mathbf m=0$ and $d_{max}(\mathbf m)\le 0$.
Then $\mathbf m$ is isomorphic to one of the tuples in \eqref{eq:MAF} 
with $k=1,2,3,\ldots$.
\end{cor}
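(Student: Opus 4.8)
The plan is to feed the two hypotheses $\idx\mathbf m=0$ and $d_{max}(\mathbf m)\le 0$ into the identity \eqref{eq:idxd}, which will pin down the shape of $\mathbf m$ completely, and then to finish with the elementary Diophantine classification that is already carried out inside the proof of Lemma~\ref{lem:ex1}.

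Since $\idx$, $\ord$ and $d_{max}$ depend only on the underlying multiset of partitions, they are invariant under the $S_\infty$-action on $\mathcal P$; replacing $\mathbf m$ by $s\mathbf m$ (cf.\ Definition~\ref{def:Sinfty}) I may assume $\mathbf m=(m_{j,\nu})$ is monotone, with $n:=\ord\mathbf m\ge 1$. Put $M_j:=m_{j,1}=\max_\nu m_{j,\nu}$ and take $\ell=\ell_{max}(\mathbf m)$, so that $m_{j,\ell_j}=M_j$ for every $j$. Then \eqref{eq:idxd} reads
\[
  d_{max}(\mathbf m)\cdot n=\idx\mathbf m+\sum_{j=0}^p\sum_{\nu=1}^{n_j}\bigl(M_j-m_{j,\nu}\bigr)m_{j,\nu}.
\]
The right-hand side is a sum of non-negative terms, while the left-hand side is $\le 0$ because $\idx\mathbf m=0$ and $n\ge 1$; hence both sides vanish. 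From $(M_j-m_{j,\nu})m_{j,\nu}=0$ for all $j,\nu$ and the monotonicity of $\mathbf m$ I conclude that every component is rectangular, $\mathbf m_j=(M_j,\dots,M_j)$ with $n_j$ positive parts and $M_j n_j=n$, and that $d_{max}(\mathbf m)=0$.

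It remains to classify such $\mathbf m$. A component with $n_j=1$ is trivial ($M_j=n$); as $\idx\mathbf m$ is unchanged by appending or deleting trivial partitions, and deleting one is realized by an element of $S_\infty$, I may assume every component is nontrivial, say there are $p'+1\ge 2$ of them, each with $n_j\ge 2$. Then $\idx\mathbf m=\sum_{j=0}^{p'}n_jM_j^2-(p'-1)n^2=n^2\bigl(\sum_{j=0}^{p'}n_j^{-1}-(p'-1)\bigr)$, so $\idx\mathbf m=0$ is equivalent to $\sum_{j=0}^{p'}n_j^{-1}=p'-1$. With each $n_j\ge 2$ this forces $(p'+1)/2\ge p'-1$, hence $p'\le 3$, while $p'\ge 2$ since the sum is positive. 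For $p'=3$ equality holds throughout, so $n_0=\dots=n_3=2$; for $p'=2$ the equation $n_0^{-1}+n_1^{-1}+n_2^{-1}=1$ with $2\le n_0\le n_1\le n_2$ admits only $(3,3,3)$, $(2,4,4)$, $(2,3,6)$. Writing $k$ for the smallest of the numbers $M_j$ (so $k=n/2,\ n/3,\ n/4,\ n/6$ in the four respective cases) and reordering the components, these possibilities are exactly the four families displayed in \eqref{eq:MAF}, now allowing $k=1,2,3,\dots$; conversely each of those tuples has $\idx=0$ and $d_{max}=0$. This yields the corollary.

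The argument is essentially routine. The only place that demands a little care is the passage from a tuple in $\mathcal P_{p+1}$ possibly containing trivial components to one with none — i.e.\ checking that $\idx$, $d_{max}$ and the isomorphism class are all compatible with this reduction (which amounts to the invariance of $\idx\mathbf m$ under appending trivial partitions) — after which the Diophantine step is immediate.
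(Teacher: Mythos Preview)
Your proof is correct and follows essentially the same route as the paper: the identity \eqref{eq:idxd} (this is exactly what the paper invokes via Remark~\ref{rem:idxFuchs}) forces $d_{max}(\mathbf m)=0$ and the rectangularity $m_{j,\nu}=M_j=n/n_j$, after which the Diophantine step $\sum_j n_j^{-1}=p'-1$ with $n_j\ge 2$ is precisely the ``final part of the proof of Lemma~\ref{lem:ex1}'' that the paper cites.
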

\begin{proof}
Remark~\ref{rem:idxFuchs} assures that $d_{max}(\mathbf m)=0$ and $n=m_{j,1}n_j$.
Then the proof of the final part of Lemma~\ref{lem:ex1} shows the corollary.
\end{proof}
\begin{lem}\label{lem:polydef}
Let $c_0,\dots,c_p$ be $p+1$ distinct points in\/ $\mathbb C\cup\{\infty\}$.
Let $n_0,n_1,\dots,n_p$ be non-negative integers and
let $a_{j,\nu}$ be complex numbers for $j=0,\dots,p$
and $\nu=1,\dots,n_j$.
Put $\tilde n:=n_0+\cdots+n_p$.
Then there exists a unique polynomial $f(x)$ of degree
$\tilde n-1$ such that
\begin{equation}\label{eq:PCond}
 \begin{split}
 f(x)&=a_{j,1}+a_{j,2}(x-c_j)+\cdots
      +a_{j,n_j}(x-c_j)^{n_j-1}\\
    &\quad{}+o(|x-c_j|^{n_j-1})
     \qquad(x\to c_j,\ c_j\ne\infty),\\
 x^{1-\tilde n}f(x)&=a_{j,1} + a_{j,2}x^{-1}
            +a_{j,n_j}x^{1-n_j}+o(|x|^{1-n_j})\\
    &\qquad(x\to \infty,\ c_j=\infty).
 \end{split}
\end{equation}
Moreover the coefficients of $f(x)$ are linear functions of
the $\tilde n$ variables $a_{j,\nu}$.
\end{lem}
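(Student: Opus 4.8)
The plan is to read the conditions \eqref{eq:PCond} as a system of $\tilde n$ linear equations on the $\tilde n$-dimensional vector space $V$ of polynomials of degree at most $\tilde n-1$, and then to show that the resulting linear map is bijective, so that existence, uniqueness, and linear dependence on the data all follow at once.

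First I would make the conditions explicit. Writing $f(x)=\sum_{k=0}^{\tilde n-1}f_k x^k$, the requirement at a finite point $c_j$ says precisely that the first $n_j$ Taylor coefficients of $f$ at $c_j$ equal the prescribed numbers $a_{j,1},\dots,a_{j,n_j}$, i.e.\ $f^{(i)}(c_j)/i!=a_{j,i+1}$ for $0\le i\le n_j-1$. If $c_{j_0}=\infty$, then expanding $x^{1-\tilde n}f(x)=f_{\tilde n-1}+f_{\tilde n-2}x^{-1}+\cdots$ shows that the condition there prescribes the top $n_{j_0}$ coefficients, namely $f_{\tilde n-\nu}=a_{j_0,\nu}$ for $\nu=1,\dots,n_{j_0}$. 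In every case one obtains exactly $\tilde n=n_0+\cdots+n_p$ linear equations in the $\tilde n$ unknowns $f_0,\dots,f_{\tilde n-1}$, with the $a_{j,\nu}$ on the right-hand side. Let $\Phi\colon V\to\mathbb C^{\tilde n}$ be the linear map sending $f$ to the tuple of these prescribed coefficients.

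Since $\dim V=\tilde n$, it suffices to prove that $\Phi$ is injective; then $\Phi$ is an isomorphism, the unique $f\in V$ satisfying \eqref{eq:PCond} is $f=\Phi^{-1}\bigl((a_{j,\nu})\bigr)$, and the coefficients $f_k$ are linear functions of the $a_{j,\nu}$ because $\Phi^{-1}$ is linear. For injectivity, suppose $f\in\ker\Phi$. At each finite $c_j$ the vanishing of $a_{j,1},\dots,a_{j,n_j}$ forces $f$ to vanish to order at least $n_j$ at $c_j$, hence $(x-c_j)^{n_j}\mid f(x)$. If none of the $c_j$ equals $\infty$, then $\prod_{j=0}^p(x-c_j)^{n_j}$ divides $f$, but this product has degree $\tilde n>\tilde n-1\ge\deg f$, so $f=0$. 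If $c_{j_0}=\infty$, then the vanishing of $a_{j_0,1},\dots,a_{j_0,n_{j_0}}$ means the top $n_{j_0}$ coefficients of $f$ vanish, so $\deg f\le\tilde n-1-n_{j_0}$, while $\prod_{j\ne j_0}(x-c_j)^{n_j}\mid f$ and this product has degree $\tilde n-n_{j_0}>\deg f$; again $f=0$.

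This is just Hermite interpolation, so I expect no genuine obstacle; the only point requiring care is the uniform treatment of the point at infinity and the verification that the number of conditions matches $\dim V$. As an alternative one could exhibit $f$ explicitly, either via an interpolation formula or, when $\infty\notin\{c_0,\dots,c_p\}$, via the partial fraction expansion of $f(x)\big/\prod_j(x-c_j)^{n_j}$, but the dimension-count argument is cleaner and makes the asserted linearity in the $a_{j,\nu}$ immediate.
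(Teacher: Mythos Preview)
Your proof is correct. The argument via injectivity of the linear map $\Phi$ on the $\tilde n$-dimensional space of polynomials of degree at most $\tilde n-1$, combined with the divisibility observation $\prod_{c_j\ne\infty}(x-c_j)^{n_j}\mid f$ for $f\in\ker\Phi$, is the standard Hermite interpolation argument and handles the point at infinity cleanly.

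The paper takes a different, more constructive route. It reduces to the case $c_p=\infty$ (allowing $n_p=0$), builds an explicit basis $f_0,\dots,f_{\tilde n-1}$ of the space by setting $f_k(x)=(x-c_i)^{k-\tilde n_i}\prod_{\nu<i}(x-c_\nu)^{n_\nu}$ for $\tilde n_i\le k<\tilde n_{i+1}$ (where $\tilde n_i=n_0+\cdots+n_{i-1}$), and observes that $\deg f_k=k$ so these are independent. Writing $f=\sum u_kf_k$ and ordering the data $a_{j,\nu}$ into a vector $v$, the map $u\mapsto v$ is then a triangular matrix with nonzero diagonal. Your approach is cleaner and makes the linearity in the $a_{j,\nu}$ transparent; the paper's approach is more explicit and yields an actual algorithm for computing $f$ by back-substitution, which is in the spirit of the inductive construction of coefficients in the theorem that follows.
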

\begin{proof}
We may assume $c_p=\infty$ with allowing $n_p=0$.
Put $\tilde n_i=n_0+\cdots+n_{i-1}$ and $\tilde n_0=0$.
For $k=0,\dots,\tilde n-1$ we define
\[
  f_k(x):=
   \begin{cases}
    (x-c_i)^{k-\tilde n_i}\prod_{\nu=0}^{i-1}(x-c_\nu)^{n_\nu}
    &(\tilde n_i\le k<\tilde n_{i+1},\ 0\le i< p),\\
    x^{k-\tilde n_p}\prod_{\nu=0}^{n_{p-1}}(x-c_\nu)^{n_\nu}
    &(\tilde n_p\le k< \tilde n).
   \end{cases}
\]
Since $\deg f_k(x)=k$, the polynomials 
$f_0(x),f_1(x),\dots,f_{\tilde n-1}(x)$ are linearly independent over 
$\mathbb C$.
Put $f(x)=\sum_{k=0}^{\tilde n-1} u_k f_k(x)$ with $c_k\in\mathbb C$ and 
\[
  v_k = \begin{cases}
         a_{i,k-\tilde n_i+1}&(\tilde n_i\le k<\tilde n_{i+1},\ 0\le i< p),\\
         a_{p,\tilde n-k}&(\tilde n_p\le k<\tilde n)
        \end{cases}
\]
by \eqref{eq:PCond}. 
The correspondence which maps the column vectors 
$u:=(u_k)_{k=0,\ldots,\tilde n-1}\in\mathbb C^{\tilde n}$ to
the column vectors $v:=(v_k)_{k=0,\ldots,\tilde n-1}
\in\mathbb C^{\tilde n}$ 
is given by $v=Au$ with a square matrix $A$ of size $\tilde n$.
Then $A$ is an upper triangular matrix of size $\tilde n$ with non-zero 
diagonal entries and therefore the lemma is clear.
\end{proof}
\subsection{Existence theorem}
\begin{defn}[top term]
\index{00Top@$\Top$}
Let 
\[
  P = a_n(x)\tfrac{d^n}{dx^n}+a_{n-1}(x)\tfrac{d^{n-1}}{dx^{n-1}}
     + \cdots+a_1(x)\tfrac{d}{dx} + a_0(x)
\]
be a differential operator with polynomial coefficients.
Suppose $a_n\ne0$.
If $a_n(x)$ is a polynomial of degree $k$ with respect to $x$,
we define $\Top P := a_{n,k}x^k\p^n$ with the coefficient $a_{n,k}$ of the
term $x^k$ of $a_n(x)$.
We put $\Top P=0$ when $P=0$.
\end{defn}
\index{00Nnu@$N_\nu(\mathbf m)$}
\begin{thm}\label{thm:ExF}
Suppose $\mathbf m\in\mathcal P^{(n)}_{p+1}$ 
satisfies \eqref{eq:NTP}.
Retain the notation in Definition~\ref{def:Nnu}.

{\rm i) } We have $N_1(\mathbf m) = p-2$ and
\begin{equation}\label{eq:sumN}
 \sum_{\nu=1}^{n-1} N_\nu(\mathbf m) = \Pidx\mathbf m.
\end{equation}

{\rm ii) }
Suppose $p\ge 2$ and $N_\nu(\mathbf m)\ge 0$ for $\nu=2,\dots,n-1$.
Put
\begin{align}
  q^0_\nu&:=\#\{i\,;\,\widetilde m_{0,i}\ge n-\nu,\ i\ge 0\},\\
  I_{\mathbf m}&:=
  \{(j,\nu)\in\mathbb Z^2\,;\,
    q^0_\nu\le j< q^0_\nu+N_\nu(\mathbf m)\text{ and }
    1\le \nu\le n-1\}.
\end{align}
Then there uniquely exists a 
Fuchsian differential operator $P$ of the normal form 
\eqref{eq:FNF} which has the Riemann scheme \eqref{eq:GRS}
with $c_0=\infty$ under the Fuchs relation \eqref{eq:Fuchs} 
and satisfies
\begin{equation}
 \frac{1}{(\deg P-j-\nu)!}
 \frac{d^{\deg P-j-\nu} a_{n-\nu-1}}{dx^{\deg P-j-\nu}}(0) 
  = g_{j,\nu}\qquad(\forall(j,\nu)\in I_{\mathbf m}).
\end{equation}
Here $\bigl(g_{j,\nu}\bigr)_{(j,\nu)\in I_{\mathbf m}}
 \in\mathbb C^{\Pidx\mathbf m}$ is arbitrarily given.
Moreover the coefficients of $P$ are polynomials of
$x$, $\lambda_{j,\nu}$ and $g_{j,\nu}$ and satisfy
\begin{equation}\label{eq:TopAcc}
 x^{j+\nu}\Top
 \Bigl(\frac{\p P}{\p g_{j,\nu}}\Bigr)\p^{\nu+1}=\Top P\text{ \ and \ }
 \frac{\p^2 P}{\p g_{j,\nu}^2}=0.
\end{equation}

Fix the characteristic exponents $\lambda_{j,\nu}\in\mathbb C$ 
satisfying the Fuchs relation.
Then all the Fuchsian differential operators of the normal form 
with the Riemann scheme \eqref{eq:GRS} are parametrized by
$(g_{j,\nu})\in\mathbb C^{\Pidx \mathbf m}$.
Hence the operators are unique if and only if\/ $\Pidx\mathbf m=0$.
\end{thm}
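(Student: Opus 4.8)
The plan is to settle part~i) by a direct computation and then to derive part~ii), and the final assertion with it, from an explicit triangular construction of the coefficients of $P$. For i): the hypothesis \eqref{eq:NTP} forces $n_j\ge 2$ for every $j$, since $m_{j,1}+\dots+m_{j,n_j}=n$ while $m_{j,1}\le m_{0,1}<n$; hence $\widetilde m_{j,i}\le n-n_j\le n-2<n-1$ for $i\ge 1$, so in \eqref{eq:Nnu} with $\nu=1$ only the $p+1$ pairs $(j,0)$ are counted and $N_1(\mathbf m)=2(p-1)+1-(p+1)=p-2$. For \eqref{eq:sumN} I would interchange the order of summation, $\sum_{\nu=1}^{n-1}\#\{(j,i)\,;\,\widetilde m_{j,i}\ge n-\nu\}=\sum_{j,\,i\ge 0}\min\{n-1,\widetilde m_{j,i}\}=(p+1)(n-1)+\sum_{j=0}^p\sum_{i\ge 1}\widetilde m_{j,i}$ (using $\widetilde m_{j,0}=n$ and $\widetilde m_{j,i}\le n-1$ for $i\ge1$), combine with $\sum_{i\ge 1}\widetilde m_{j,i}=\sum_\nu\tfrac{m_{j,\nu}(m_{j,\nu}-1)}{2}$, and subtract from $\sum_{\nu=1}^{n-1}\bigl((p-1)(\nu+1)+1\bigr)$; after the terms linear in $n$ cancel one is left with $1-\tfrac12\bigl(\sum_{j,\nu}m_{j,\nu}^2-(p-1)n^2\bigr)=\Pidx\mathbf m$. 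This is elementary.

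For ii) I would write $P$ in the normal form \eqref{eq:FNF}, so $a_i=b_i(x)\prod_{j=1}^p(x-c_j)^i$ with $b_i\in\mathbb C[x]$, $\deg b_i\le(p-1)(n-i)$, which builds in \eqref{C:RSfin} and \eqref{C:RSinfin}; the unknowns are the $\tfrac{(pn+p-n+1)n}{2}$ coefficients of the $b_i$. By Lemma~\ref{lem:block} and Remark~\ref{rem:GCexp}, the condition \eqref{C:eachblock} demanding the generalized exponent $[\lambda_{j,\nu}]_{(m_{j,\nu})}$ at $c_j$ amounts to $\tfrac{m_{j,\nu}(m_{j,\nu}+1)}{2}$ inhomogeneous linear equations in these unknowns, with coefficients affine in the $\lambda_{j,\nu}$. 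The crux is that the full system is consistent and its solution set is an affine space of dimension exactly $\Pidx\mathbf m$; combined with the count already carried out in the paragraph preceding Theorem~\ref{thm:prod}, this is the same as saying the equations are independent modulo the single relation forced by \eqref{eq:Fuchs}.

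I would prove this by determining, for $\nu=0,1,\dots,n-1$ in turn, the coefficient $b_{n-1-\nu}$ of $\p^{\,n-1-\nu}$, once $b_n,\dots,b_{n-\nu}$ have been chosen. The space of candidate $b_{n-1-\nu}$ has dimension $(p-1)(\nu+1)+1$; after subtracting the contributions of the already–chosen higher coefficients, the exponent conditions that first become active at stage $\nu$ reduce to $\#\{(j,i)\,;\,\widetilde m_{j,i}\ge n-\nu\}$ linear conditions on the Taylor coefficients of $b_{n-1-\nu}$ at the points $c_j$. At stage $\nu=0$ these are $p+1$ conditions (one exponent–sum per singular point) on the $p$ coefficients of $b_{n-1}$, solvable precisely because the prescribed exponent sums satisfy one compatibility relation, which is exactly \eqref{eq:Fuchs}; for $\nu\ge 1$, Lemma~\ref{lem:polydef} solves the conditions and leaves $N_\nu(\mathbf m)$ coefficients free, this number being $\ge 0$ by hypothesis — Lemma~\ref{lem:ex1} identifies the tuples \eqref{eq:MAF} excluded there as exactly those with some $N_\nu<0$. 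The procedure is triangular in $\nu$, so once one fixes the $\sum_{\nu\ge1} N_\nu(\mathbf m)=\Pidx\mathbf m$ residual parameters the operator $P$ is pinned down; taking these to be the Taylor coefficients of $a_{n-\nu-1}$ at $0$ indexed by $I_{\mathbf m}$ (the length-$N_\nu(\mathbf m)$ block immediately after the $q^0_\nu$ coefficients already forced by the condition at $c_0=\infty$) gives the stated parametrization. Polynomiality of the coefficients of $P$ in $x$, $\lambda_{j,\nu}$, $g_{j,\nu}$ is read off from the triangular solution, and \eqref{eq:TopAcc} holds because each $g_{j,\nu}$ enters linearly and, by construction, only as the coefficient of the single monomial of degree $\deg P-j-\nu$ in $a_{n-\nu-1}$. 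Finally, for fixed $\lambda_{j,\nu}$ satisfying \eqref{eq:Fuchs} the map $(g_{j,\nu})_{(j,\nu)\in I_{\mathbf m}}\mapsto P$ is a bijection of $\mathbb C^{\Pidx\mathbf m}$ onto the set of all normal–form Fuchsian operators with Riemann scheme \eqref{eq:GRS}, so the operator is unique if and only if $\Pidx\mathbf m=0$.

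The hard part is the sharp dimension count just sketched: verifying that the exponent conditions that first appear at stage $\nu$ really cut the remaining freedom down by exactly $\#\{(j,i)\,;\,\widetilde m_{j,i}\ge n-\nu\}$ — so the surplus is genuinely $N_\nu(\mathbf m)$ and no smaller — with the only accumulated redundancy being the Fuchs relation at stage $\nu=0$. Everything else — the two identities of i), the polynomial dependence, \eqref{eq:TopAcc}, and the concluding uniqueness criterion — is routine once that is secured.
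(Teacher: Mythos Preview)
Your approach matches the paper's: part~i) is the same double-counting, and part~ii) is the same inductive determination of $a_{n-k}$ one order at a time, invoking the Fuchs relation at $k=1$ and Lemma~\ref{lem:polydef} for $k\ge 2$ to show the exponent conditions at each stage are consistent and independent with exactly $N_{k-1}(\mathbf m)$ free parameters (the paper imposes the vanishing conditions on $a_{n-k}$ whereas you absorb them into $b_{n-1-\nu}$, a cosmetic difference). One small slip: your phrase ``only as the coefficient of the single monomial'' is too strong --- $g_{j,\nu}$ does propagate into lower-degree monomials of $a_{n-\nu-1}$ via Lemma~\ref{lem:polydef} and into $a_i$ for $i<n-\nu-1$ via the induction --- but since $a_n,\dots,a_{n-\nu}$ do not involve $g_{j,\nu}$ and its first appearance is as the indicated monomial of $a_{n-\nu-1}$, $\Top(\partial P/\partial g_{j,\nu})$ is still what you claim and \eqref{eq:TopAcc} follows; also, the appeal to Lemma~\ref{lem:ex1} is unneeded here since $N_\nu\ge 0$ is assumed directly.
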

\begin{proof}
i) 
Since $\widetilde m_{j,1}=n-n_j\le n-2$, 
$N_1(\mathbf m)=2(p-1)+1-(p+1)=p-2$ and
\begin{align*}
  \sum_{\nu=1}^{n-1}
  &\#\{(j,i)\in\mathbb Z^2\,;\,i\ge 0,\ 0\le j\le p,\ 
  \widetilde m_{j,i}\ge n-\nu\}
\allowdisplaybreaks\\
 &=\sum_{j=0}^p\Bigl(\sum_{\nu=0}^{n-1}\#\{i\in\mathbb Z_{\ge 0}\,;\,
   \widetilde m_{j,i}\ge n-\nu\}-1\Bigr)
\allowdisplaybreaks\\
 &=\sum_{j=0}^p\Bigl(\sum_{i=0}^{m_{j,1}}\widetilde m_{j,i} -1\Bigr) =
   \sum_{j=0}^p\Bigl(\sum_{i=0}^{m_{j,1}}\sum_{\nu=1}^{n_j}
   \max\{m_{j,\nu}-i,0\}-1\Bigr)
\allowdisplaybreaks\\
 &=\sum_{j=0}^p
 \Bigl(\sum_{\nu=1}^{n_j}\frac{m_{j,\nu}(m_{j,\nu}+1)}{2}-1\Bigr)\\
 &=\frac12\Bigl(\sum_{j=0}^p\sum_{\nu=1}^{n_j}m_{j,\nu}^2 + (p+1)(n-2) \Bigr),
\allowdisplaybreaks\\
 \sum_{\nu=1}^{n-1}N_\nu(\mathbf m) &=
 (p-1)\Bigl(\frac{n(n+1)}2-1\Bigr)+(n-1)
 -\frac12\Bigl(\sum_{j=0}^p\sum_{\nu=1}^{n_j}m_{j,\nu}^2 +(p+1)(n-2)\Bigr)
\allowdisplaybreaks\\
 &= \frac12\Bigl((p-1)n^2+2-\sum_{j=0}^p\sum_{\nu=1}^{n_j}m_{j,\nu}^2\Bigr)
 = \Pidx\mathbf m.
\end{align*}

{\rm ii) }  Put
\begin{align*}
 P &= \sum_{\ell=0}^{pn}x^{pn-\ell}p^P_{0,\ell}(\vartheta)\\
   &= \sum_{\ell=0}^{pn}(x-c_j)^\ell p^P_{j,\ell}\bigl(
     (x-c_j)\p\bigr)\qquad(1\le j\le n),\\
  h_{j,\ell}(t):\!&=
 \begin{cases}
     \prod_{\nu=1}^{n_0}\prod_{0\le i<m_{0,\nu}-\ell}
     \bigl(t+\lambda_{0,\nu}+i\bigr)&(j=0),\\
     \prod_{\nu=1}^{n_j}\prod_{0\le i<m_{j,\nu}-\ell}
     \bigl(t-\lambda_{j,\nu}-i\bigr)
     &(1\le j\le p),
 \end{cases}\\
 p^P_{j,\ell}(t)&=q^P_{j,\ell}(t)h_{j,\ell}(t)+r^P_{j,\ell}(t)
 \qquad(\deg r^P_{j,\ell}(t) < \deg h_{j,\ell}(t)).
\end{align*}
Here $p^P_{j,\ell}(t)$, $q^P_{j,\ell}(t)$, 
$r^P_{j,\ell}(t)$ and $h_{j,\ell}(t)$
are polynomials of $t$ and
\begin{equation}
  \deg h_{j,\ell} = \sum_{\nu=1}^{n_j}\max\{m_{j,\nu}-\ell,0\}.
\end{equation}
The condition that $P$ of the form \eqref{eq:FNF} have the Riemann 
scheme \eqref{eq:GRS}
if and only if $r^P_{j,\ell}=0$ for any $j$ and $\ell$.
Note that $a_{n-k}(x)\in\mathbb C[x]$ should satisfy
\begin{equation}
  \deg a_{n-k}(x)\le pn-k\text{ \ and \ }
  a_{n-k}^{(\nu)}(c_j)=0\quad(0\le\nu\le n-k-1,\ 1\le k\le n),
\end{equation}
which is equivalent to the condition that $P$ is of the Fuchsian type.

Put $P(k):=\bigl(\prod_{j=1}^p(x-c_j)^n\bigr)\frac{d^n}{dx^n}+a_{n-1}(x)
 \frac{d^{n-1}}{dx^{n-1}}+\cdots
 +a_{n-k}(x)\frac{d^{n-k}}{dx^{n-k}}$.

Assume that $a_{n-1}(x),\ldots,a_{n-k+1}(x)$ have already defined so that
$\deg r_{j,\ell}^{P(k-1)}<n-k+1$ and we will define $a_{n-k}(x)$ so that
$\deg r_{j,\ell}^{P(k)}<n-k$.

When $k=1$, we put
\[
  a_{n-1}(x) = -a_n(x)\sum_{j=1}^p(x-c_j)^{-1}
  \sum_{\nu=1}^{n_j}\sum_{i=0}^{m_{j,\nu}-1}(\lambda_{j,\nu}+i)
\]
and then we have $\deg r_{j,\ell}^{P(1)}<n-1$ for $j=1,\dots,p$.
Moreover we have $\deg r_{0,\ell}^{P(1)}<n-1$ because of the Fuchs relation.

Suppose $k\ge 2$ and put
\[
 a_{n-k}(x) =
   \begin{cases}
       \sum_{\ell\ge 0}c_{0,k,\ell}x^{pn-k-\ell},\\
       \sum_{\ell\ge 0}c_{j,k,\ell}(x-c_j)^{n-k+\ell} &(j=1,\dots,p)
  \end{cases}
\]
with $c_{i,j,\ell}\in\mathbb C$.
Note that
\begin{align*}
 a_{n-k}(x)\p^{n-k}
 &=\sum_{\ell\ge 0}c_{0,k,\ell}x^{(p-1)n-\ell}\prod_{i=0}^{n-k-1}
 (\vartheta-i)\\
 &=\sum_{\ell\ge0}c_{j,k,\ell}(x-c_j)^{\ell}\prod_{i=0}^{n-k-1}
 \bigl((x-c_j)\p-i\bigr).
\end{align*}
Then $\deg r_{j,\ell}^{P(k)}<n-k$ if and only if $\deg h_{j,\ell}\le n-k$ or
\begin{equation}\label{eq:cfc1}
  c_{j,k,\ell} = -\frac1{(n-k)!}
   \Bigl(\frac{d^{n-k}}{dt^{n-k}}r_{j,\ell}^{P(k-1)}(t)\Bigr)\Bigl|_{t=0}.
\end{equation}
Namely, we impose the condition \eqref{eq:cfc1} for all $(j,\ell)$
satisfying
\[
  {\widetilde m}_{j,\ell}=\sum_{\nu=1}^{n_j}\max\{m_{j,\nu}-\ell,0\} > n-k.
\]
The number of the pairs $(j,\ell)$ satisfying this condition equals
$(p-1)k+1- N_{k-1}(\mathbf m)$.  
Together with the conditions
$a_{n-k}^{(\nu)}(c_j)=0$ for $j=1,\dots,p$ and $\nu=0,\dots,n-k-1$,
the total number of conditions imposing to 
the polynomial $a_{n-k}(x)$ of degree $pn-k$ equals
\[
  p(n-k)+(p-1)k+1- N_{k-1}(\mathbf m)=(pn-k+1)-N_{k-1}(\mathbf m).
\]
Hence Lemma~\ref{lem:polydef} shows that $a_{n-k}(x)$ is uniquely defined
by giving $c_{0,k,\ell}$ arbitrarily for 
$q_{k-1}^0\le\ell< q_{k-1}^0+N_{k-1}(\mathbf m)$
because $q_{k-1}^0=\#\{\ell\ge0\,;\,\widetilde m_{0,\ell}>n-k\}$.
Thus we have the theorem.
\end{proof}
\begin{rem}  
The numbers $N_\nu(\mathbf m)$ don't change if we replace a 
$(p+1)$-tuple $\mathbf m$ of partitions of $n$  by
the $(p+2)$-tuple of partitions of $n$ defined 
by adding a trivial partition $n=n$ of $n$ to $\mathbf m$.
\end{rem}
\index{00Nnu@$N_\nu(\mathbf m)$}
\begin{exmp}\label{ex:Nj}
We will examine the number $N_\nu(\mathbf m)$ in Theorem~\ref{thm:ExF}.
In the case of the Simpson's list (cf.~\S\ref{sec:rigidEx}) 
we have the following.
\begin{align*}
 \mathbf m&= n-11,1^n,1^n\tag{$H_n$: hypergeometric family}\\
    \widetilde{\mathbf m} &= n,n-2,n-3,\ldots1;n;n
\allowdisplaybreaks\\
 \mathbf m&=mm,mm-11,1^{2m}\tag{$EO_{2m}$: even family}\\
    \widetilde{\mathbf m} &= 2m,2m-2,\dots,2;2m,2m-3,\dots,1;2m
\allowdisplaybreaks\\
 \mathbf m&= m+1m,mm1,1^{2m+1}\tag{$EO_{2m+1}$: odd family}\\
    \widetilde{\mathbf m} &=2m+1,2m-1,\dots,1;2m+1,2m-2,\dots,2;2m+1
\allowdisplaybreaks\\
 \mathbf m&= 42,222,1^6\tag{$X_6$: extra case}\\
    \widetilde{\mathbf m} &=6,4,2,1;6,3;6
\end{align*}
In these cases $p=2$ and we have $N_\nu(\mathbf m)=0$ for $\nu=1,2,\dots,n-1$ 
because
\begin{equation}\label{eq:tildebm}
 \begin{split}
 \widetilde{\mathbf m}
 :\!&=\{\widetilde m_{j,\nu}\,;\,\nu=0,\dots,m_{j,1}-1,\ j=0,\dots,p\bigr\}\\
 &= \{n,n,n,n-2,n-3,n-4,\dots,2,1\}.
 \end{split}
\end{equation}
\index{00Hn@$H_n$}\index{00EOn@$EO_n$}\index{00X6@$X_6$}%
See Proposition~\ref{prop:sred} ii) for the condition that 
$N_\nu(\mathbf m)\ge 0$ for $\nu=1,\dots,\ord\mathbf m-1$.

We give other examples:
\smallskip

\begin{tabular}{|c|c|l|l|}
\hline
$\hfill\mathbf m\hfill$ & $\Pidx$ & 
$\hfill\widetilde{\mathbf m}\hfill$ 
 & $N_1,N_2,\dots,N_{\ord\mathbf m-1}$
\\ \hline\hline
$221,221,221$  &0& $52,52,52$ & $0,1,-1,0$ \\ \hline
$21,21,21,21\,(P_3)$  &0& $31,31,31,31$ & $1,-1$ \\ \hline
$22,22,22$     &$-3$& $42,42,42$ & $0,-2,-1$ \\ \hline
$11,11,11,11\,(\tilde D_4)$  &1& $2,2,2,2$& $1$ \\ \hline
$111,111,111\,(\tilde E_6)$  &1& $3,3,3$ & $0,1$ \\ \hline
$22,1111,1111\,(\tilde E_7)$ &1& $42,4,4$ & $0,0,1$ \\ \hline
$33,222,111111\,(\tilde E_8)$ &1& $642,63,6$ & $0,0,0,0,1$ \\ \hline
$21,21,21,111$ &1& $31,31,31,3$ & $1,0$ \\ \hline
$222,222,222$ &1& $63,63,63$ & $0,1,-1,0,1$ \\ \hline
$11,11,11,11,11$ &2& $2,2,2,2,2$ & $2$ \\ \hline
$55,3331,22222$ & 2 & $10,8,6,4,2;10,6,3;10,5$ &
$0,0,1,0,0,0,0,0,1$ \\ \hline
$22,22,22,211$ &2& $42,42,42,41$ & $1,0,1$ \\ \hline
$22,22,22,22,22$ &5& $42,42,42,42,42$ & $2,0,3$ \\ \hline
$32111,3221,2222$ &8& $831,841,84$ & $0,1,2,1,1,2,1$ \\ \hline
\end{tabular}
\end{exmp}
Note that if $\Pidx \mathbf m=0$, in particular, if $\mathbf m$ is rigid,
then $\mathbf m$ doesn't satisfy \eqref{eq:NRed}.
The tuple $222,222,222$ of partitions is 
the second case in \eqref{eq:MAF} with $k=2$.
\begin{rem}\label{rem:bas0}
Note that 
\cite[Proposition~8.1]{O3} proves that there exit only finite basic tuples
of partitions with a fixed index of rigidity.  

\index{tuple of partitions!basic!index of rigidity $=0$}%
Those with index of rigidity $0$ are of only 4 types, which are $\tilde D_4$, 
$\tilde E_6$, $\tilde E_7$ and $\tilde E_8$ given in the above 
(cf.~Corollary~\ref{cor:idx0}, Kostov \cite{Ko3}).
Namely, those are in the $S_\infty$-orbit of
\begin{equation}\label{eq:basic0}
  \{11,11,11,11\quad111,111,111\quad22,1111,1111\quad33,222,111111\}
\end{equation}
and the operator $P$ in Theorem~\ref{thm:ExF} with any one of this spectral 
type has one accessory parameter in its $0$-th order term.

The equation corresponding to $11,11,11,11$ is called Heun's equation
(cf.~\cite{SW,WW}),
which is given by the operator
\index{Heun's equation}
\begin{equation}\label{eq:Heun}
 \begin{split}
 P_{\alpha,\beta,\gamma,\delta,\lambda} &= x(x-1)(x-c)\p^2
  +\bigl(\gamma(x-1)(x-c)+\delta x(x-c)\\
   &\quad{}+(\alpha+\beta+1-\gamma-\delta)x(x-1)\bigr)\p
  +\alpha\beta x-\lambda
 \end{split}
\end{equation}
with the Riemann scheme
\begin{equation}
 \begin{Bmatrix}
   x = 0 & 1 & c & \infty\\
    0 & 0 & 0 & \alpha &;\,x\\
   1-\gamma & 1-\delta & \gamma+\delta-\alpha-\beta & \beta&;\,\lambda
 \end{Bmatrix}.
\end{equation}
Here $\lambda$ is an accessory parameter.
Our operation cannot decrease the order of 
$P_{\alpha,\beta,\gamma,\delta,\lambda}$ but gives the following 
transformation.
\begin{equation}
 \begin{split}
 &\Ad(\p^{1-\alpha})P_{\alpha,\beta,\gamma,\delta,\lambda}
 =P_{\alpha',\beta',\gamma',\delta',\lambda'},\\
 &\begin{cases}
  \alpha'=2-\alpha,\ \beta'=\beta-\alpha+1,\ \gamma'=\gamma-\alpha+1,
  \ \delta'=\delta-\alpha+1,\\
  \lambda'=\lambda+(1-\alpha)\bigl(\beta-\delta+1+(\gamma+\delta-\alpha)c\bigr).
 \end{cases}
 \end{split}
\end{equation}
\end{rem}
\index{tuple of partitions!basic!index of rigidity $=-2$}
\begin{prop}{\rm(\cite[Proposition~8.4]{O3})}.\label{prop:bas2}
The basic tuples of partitions with index of rigidity $-2$ are in
the $S_\infty$-orbit of the set of the $13$ tuples
\begin{align*}
\bigl\{&
    11,11,11,11,11
\ \ 21,21,111,111
\ \ 31,22,22,1111
\ \ 22,22,22,211
\\&
   211,1111,1111
\ \ \ 221,221,11111
\ \ 32,11111,11111
\ \ 222,222,2211
\\&
 33,2211,111111
\ \ 44,2222,22211
\ \ 44,332,11111111
\ \ 55,3331,22222
\\&
    66,444,2222211
\bigr\}.
\end{align*}
\end{prop}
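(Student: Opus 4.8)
The statement to be proved is the classification of basic tuples of partitions with index of rigidity $-2$, asserting that up to isomorphism (i.e.\ the action of $S_\infty$) there are exactly the thirteen listed tuples. The plan is to combine the finiteness mechanism already recorded in the excerpt with a bounded, essentially combinatorial enumeration. Recall that a tuple $\mathbf m$ is \emph{basic} when it is indivisible and $d_{max}(\mathbf m)\le 0$, and by Remark~\ref{rem:idxFuchs} the sign condition $\idx\mathbf m>0$ forces $d_{max}(\mathbf m)>0$; hence a basic tuple necessarily has $\idx\mathbf m\le 0$, and here we have fixed $\idx\mathbf m=-2$. The first step is to bound the data: using the estimate $\ord\mathbf m\le 3|\idx\mathbf m|+6$ for fundamental tuples quoted in the introduction (note every basic tuple is fundamental, since $\p_{max}$ cannot reduce it) together with $p\le\tfrac12|\idx\mathbf m|+3$, we get $\ord\mathbf m\le 12$ and $p\le 4$ for the tuples in question. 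Thus only finitely many monotone candidates $\mathbf m$ remain to be inspected.

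Next I would set up the enumeration systematically. For each admissible number of singular points $p\in\{1,2,3,4\}$ and each order $n\le 12$, list the monotone $(p+1)$-tuples $\mathbf m=(\mathbf m_0,\dots,\mathbf m_p)$ of partitions of $n$ satisfying the two defining constraints: first $\idx\mathbf m=\sum_{j,\nu}m_{j,\nu}^2-(p-1)n^2=-2$, and second the basic condition $d_{max}(\mathbf m)=\sum_{j}\max_\nu m_{j,\nu}-(p-1)n\le 0$. The identity \eqref{eq:idxd} relating $d_\ell(\mathbf m)\cdot\ord\mathbf m$ to $\idx\mathbf m$ plus a manifestly nonnegative sum is the key arithmetic tool here: it shows $d_{max}(\mathbf m)\ge 0$ once $\idx\mathbf m\ge -\,(\text{that sum})$, and in the present case it pins $d_{max}(\mathbf m)$ to the value $0$ or forces a narrow range, so that "basic with $\idx=-2$" becomes a tight Diophantine condition on the parts $m_{j,\nu}$. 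Working through $p=4$ first (where $n$ is smallest because $(p-1)n^2$ grows), then $p=3$, $p=2$, $p=1$, one checks that the solutions of $\sum m_{j,\nu}^2=(p-1)n^2-2$ subject to monotonicity, indivisibility and $d_{max}\le 0$ are exactly the thirteen tuples in the list (and their $S_\infty$-translates), with $p=1$ yielding nothing. Each individual check is routine: it amounts to solving a bounded system of squares, for which the number of partitions of $n\le 12$ to examine is small.

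The one genuinely delicate point — and the step I expect to be the main obstacle — is making the enumeration provably exhaustive rather than merely plausible: one must be certain that the a priori bounds $\ord\mathbf m\le 12$, $p\le 4$ are correct in this regime (the constant $3|\idx\mathbf m|+6$ is sharp only asymptotically, so a direct sharper bound adapted to $\idx=-2$ may be needed to keep the case list humane), and that no monotone representative is overlooked within those bounds. I would handle this by appealing to the structural input of \cite[Proposition~8.1]{O3} cited in Remark~\ref{rem:bas0}, which already guarantees finiteness of basic tuples with a fixed index of rigidity, and then organize the search by the quantity $\sum_j n_j$ (the total number of parts), which \eqref{eq:sumN}/\eqref{eq:idxd}-type identities bound in terms of $\Pidx\mathbf m=1-\tfrac12\idx\mathbf m=2$. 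Once the candidate list is closed, verifying that each listed tuple is indeed basic (indivisible, and $d_{max}\le 0$ by direct computation of $\max_\nu m_{j,\nu}$ in each slot) and of index $-2$ is a finite, mechanical check, completing the proof.
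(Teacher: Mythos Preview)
Your strategy of bounding $\ord\mathbf m$ and $p$ a priori and then enumerating is sound in principle, but there is a circularity: the bound $p\le \tfrac12|\idx\mathbf m|+3$ you invoke is Proposition~\ref{prop:Bineq}\,\eqref{eq:pineq}, and if you look at its proof in \S\ref{sec:basic} you will see that it begins by checking the cases $\idx=0$ and $\idx=-2$ directly, \emph{citing Proposition~\ref{prop:bas2} itself} for the latter. So you cannot use $p\le 4$ as an input here. (The order bound \eqref{eq:bineq} does not depend on Proposition~\ref{prop:bas2}, so $\ord\mathbf m\le 12$ is legitimately available; from this one can recover a crude bound on $p$ via, say, $(p+1)(n-1)\le n^2+1$ using that each partition is non-trivial, but this leaves a considerably larger search than you suggest, and you do not carry it out.)

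The paper's proof is different and more efficient than bound-and-enumerate. It uses the identity \eqref{eq:idxd} incisively: for monotone basic $\mathbf m$ with $\idx\mathbf m=-2$ one has
\[
d_{max}(\mathbf m)\cdot n \;=\; -2 + \sum_{j}\sum_{\nu\ge2}(m_{j,1}-m_{j,\nu})m_{j,\nu},
\]
with the left side $\le 0$ and the sum $\ge 0$, forcing that sum to be $0$ or $2$. This immediately pins down the \emph{shapes} of the partitions to five cases: either all $\mathbf m_j$ are rectangular and $n=2$ (case~E), or the non-rectangular contribution is concentrated in at most two slots in a controlled way (cases A--D: one slot of shape $2\cdots211$, or $3\cdots31$, or $3\cdots32$, or two slots of shape $2\cdots21$). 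Each case is then dispatched by rewriting \eqref{eq:idxd} as a unit-fraction condition $\sum_j m_{j,1}/n = p-1$ (with a small correction), which bounds $p$ and the relevant $n_j$ simultaneously and yields the thirteen tuples directly. Your proposal correctly identifies \eqref{eq:idxd} as the key tool but only uses it to constrain $d_{max}$; the paper's sharper use of it to constrain the partition shapes is what replaces the brute-force enumeration and avoids the circular dependence on \eqref{eq:pineq}.
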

\begin{proof}
Here we give the proof in \cite{O3}.

Assume that $\mathbf m\in\mathcal P_{p+1}$ is
basic and monotone and $\idx\mathbf m=-2$.
Note that \eqref{eq:idxd} shows
\[
   0\le \sum_{j=0}^p\sum_{\nu=2}^{n_j}
 (m_{j,1}-m_{j,\nu})\cdot m_{j,\nu}\le -\idx\mathbf m=2.
\]
Hence \eqref{eq:idxd} implies
$\sum_{j=0}^p\sum_{\nu=2}^{n_j} (m_{j,1}-m_{j,\nu})m_{j,\nu}=0$ or $2$ 
and we have only to examine the following 5 possibilities.

(A) \ $m_{0,1}\cdots m_{0,n_0} = 2\cdots211$
      and $m_{j,1}=m_{j,n_j}$ for $1\le j\le p$.

(B) \ $m_{0,1}\cdots m_{0,n_0} = 3\cdots31$
    and $m_{j,1}=m_{j,n_j}$ for $1\le j\le p$.

(C) \ $m_{0,1}\cdots m_{0,n_0} = 3\cdots32$
    and $m_{j,1}=m_{j,n_j}$ for $1\le j\le p$.

(D) \ $m_{i,1}\cdots m_{i,n_0} = 2\cdots21$
     and $m_{j,1}=m_{j,n_j}$ 
     for $0\le i\le 1<j\le p$.

(E) \ $m_{j,1}=m_{j,n_j}$ for $0\le j\le p$ and $\ord\mathbf m=2$.

\smallskip
Case (A). \ 
If $2\cdots211$ is replaced by $2\cdots22$, $\mathbf m$
is transformed into $\mathbf m'$ with $\idx\mathbf m'=0$.
If $\mathbf m'$ is indivisible, $\mathbf m'$ is basic and
$\idx\mathbf m'=0$ and therefore
$\mathbf m$ is $211,1^4,1^4$ or $33,2211,1^6$.
If $\mathbf m'$ is not indivisible, $\frac12\mathbf m'$ is basic and
$\idx\frac12\mathbf m'=0$ and hence
$\mathbf m$ is one of the tuples in
\[\{211,22,22,22\ \ 2211,222,222\ \ 
    22211,2222,44\ \ 2222211,444,66\}.\]

Put $m=n_0-1$ and examine the identity
\[
\sum_{j=0}^p\frac{m_{j,1}}{\ord\mathbf m}
=p-1 + (\ord\mathbf m)^{-2}\Bigl(\idx\mathbf m
  +  \sum_{j=0}^p\sum_{\nu=1}^{n_j}(m_{j,1}-m_{j,\nu})m_{j,\nu}\Bigr)
\]

\smallskip
Case (B). \ 
Note that $\ord\mathbf m=3m+1$ and therefore
$\frac3{3m+1}+\frac1{n_1}+\cdots+\frac1{n_p}
=p-1$.
Since $n_j\ge 2$, we have $\frac12p -1\le\frac{3}{3m+1}<1$
and $p\le 3$.

If $p=3$, we have $m=1$, $\ord\mathbf m=4$,
$\frac1{n_1}+\frac1{n_2}+\frac1{n_3}=\frac54$,
$\{n_1,n_2,n_3\}=\{2,2,4\}$ and 
$\mathbf m=31,22,22,1111$.

Assume $p=2$.  Then
$\frac1{n_1}+\frac1{n_2}=1-\frac{3}{3m+1}$.
If $\min\{n_1,n_2\}\ge 3$, $\frac1{n_1}+\frac1{n_2}\le\frac23$
and $m\le2$.
If $\min\{n_1,n_2\}=2$, $\max\{n_1,n_2\}\ge 3$ and $\frac3{3m+1}\ge\frac16$ 
and $m\le 5$.
Note that
$\frac1{n_1}+\frac1{n_2}=\frac{13}{16}$, $\frac{10}{13}$, $\frac{7}{10}$,
$\frac{4}{7}$ and $\frac{1}{4}$
according to $m=5$, $4$, $3$, $2$ and $1$, respectively.
Hence we have $m=3$, $\{n_1,n_2\}=\{2,5\}$ and $\mathbf m=3331,55,22222$.

\smallskip
Case (C). \ 
We have $\frac{3}{3m+2}+\frac1{n_1}+\cdots+\frac1{n_p}=p-1$.
Since $n_j\ge 2$, $\frac12p-1\le \frac{3}{3m+2}<1$
and $p\le 3$.
If $p=3$, then $m=1$, $\ord\mathbf m=5$ and 
$\frac1{n_1}+\frac1{n_2}+\frac1{n_3}=\frac75$, which never occurs.

Thus we have $p=2$, 
$\frac1{n_1}+\frac1{n_2}=1-\frac{3}{3m+2}$ and 
hence $m\le 5$ as in Case (B).
Then $\frac1{n_1}+\frac1{n_2}=\frac{14}{17}$, 
$\frac{11}{14}$, $\frac{8}{11}$, $\frac{5}{8}$ 
and $\frac{2}{5}$ according to $m=5$, $4$, $3$, $2$ and $1$, respectively.
Hence we have $m=1$ and $n_1=n_2=5$ and
$\mathbf m=32,11111,11111$
or $m=2$ and $n_1=2$ and $n_2=8$ and $\mathbf m=332,44,11111111$.

\smallskip
Case (D). \ 
We have $\frac2{2m+1}+\frac2{2m+1}+\frac1{n_2}+\cdots+\frac1{n_p}=p-1$.
Since $n_j\ge3$ for $j\ge2$, we have
$p-1\le \frac32\frac{4}{2m+1}=\frac{6}{2m+1}$ and $m\le2$.
If $m=1$, then $p=3$ and $\frac1{n_2}+\frac1{n_3}=2-\frac43=\frac23$ and 
we have $\mathbf m=21,21,111,111$.
If $m=2$, then $p=2$, $\frac1{n_2}=1-\frac{4}{5}$ and
$\mathbf m=221,221,11111$.

\smallskip
Case (E). \ 
Since $m_{j,1}=1$ and \eqref{eq:idxd} means $-2=\sum_{j=0}^p 2m_{j,1}-4(p-1)$, 
we have $p=4$ and $\mathbf m=11,11,11,11,11$.
\end{proof}
\subsection{Divisible spectral types}
\begin{prop}\label{prop:dividx0}\index{tuple of partitions!divisible}
Let $\mathbf m$ be any one of the partition of
type $\tilde D_4$, $\tilde E_6$, $\tilde E_7$ or $\tilde E_8$
in Example~\ref{ex:Nj} and put $n=\ord\mathbf m$.
Then $k\mathbf m$ is realizable but it isn't 
irreducibly realizable for $k=2,3,\ldots$.
Moreover we have the operator $P$ of order $k\ord\mathbf m$
satisfying the properties in\/ {\rm Theorem~\ref{thm:ExF} ii)} for the 
tuple $k\mathbf m$.
\end{prop}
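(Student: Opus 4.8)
The plan is to establish all three assertions by one explicit construction: realize $P$ as a composition of $k$ universal operators of spectral type $\mathbf m$ with integer-shifted spectral parameters, so that the product structure simultaneously yields the operator, the realizability of $k\mathbf m$, and its reducibility. First I would record the relevant numerics. For each of $\tilde D_4,\tilde E_6,\tilde E_7,\tilde E_8$ one has $\idx\mathbf m=0$, hence
\[
 \idx(k\mathbf m)=k^2\idx\mathbf m=0,\qquad \Pidx(k\mathbf m)=1,\qquad d_{max}(k\mathbf m)=k\,d_{max}(\mathbf m)=0;
\]
moreover $k\mathbf m$ is precisely one of the exceptional tuples of \eqref{eq:MAF}, so $N_\nu(k\mathbf m)<0$ for some $\nu$ and Theorem~\ref{thm:ExF} ii) does not apply to $k\mathbf m$ directly — this is exactly why a product construction is needed. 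A short computation with $\bigl|\{\lambda_{\mathbf m}\}\bigr|$, using $\idx\mathbf m=0$, gives $\bigl|\{\lambda_{k\mathbf m}\}\bigr|=k\bigl|\{\lambda_{\mathbf m}\}\bigr|$; thus the Fuchs relation for the Riemann scheme of $k\mathbf m$ with exponents $\lambda_{j,\nu}$ is equivalent to $\bigl|\{\lambda_{\mathbf m}\}\bigr|=0$, and the same identity applied to the shifted parameters $\lambda^{(i)}_{j,\nu}:=\lambda_{j,\nu}+i\bigl(m_{j,\nu}-\delta_{j,0}(p-1)n\bigr)$ shows that each $\lambda^{(i)}$ ($0\le i\le k-1$) satisfies the Fuchs relation for $\mathbf m$.

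For realizability and the operator $P$, fix $\lambda_{j,\nu}$ satisfying the Fuchs relation. Since $N_\nu(\mathbf m)\ge0$ for all $\nu$ (Example~\ref{ex:Nj}), Theorem~\ref{thm:ExF} ii) furnishes, for each $i$, the universal Fuchsian operator $P^{(i)}$ of normal form \eqref{eq:FNF}, of order $n$, with spectral type $\mathbf m$, characteristic exponents $\lambda^{(i)}_{j,\nu}$, a single accessory parameter $g_i$, coefficients polynomial in $x$, $\lambda$, $g_i$ and linear in $g_i$. Set $P:=P^{(k-1)}\circ\cdots\circ P^{(0)}$. The key point is that the Riemann scheme of $P^{(i)}$ is exactly the scheme \eqref{eq:GRSP} prescribed by Theorem~\ref{thm:prod} i) for left-multiplying the partial product $P^{(i-1)}\circ\cdots\circ P^{(0)}$, which has spectral type $i\mathbf m$, so as to raise every multiplicity by $m_{j,\nu}$; iterating, $P$ is Fuchsian of normal form, of order $kn$, with spectral type $k\mathbf m$ and the Riemann scheme \eqref{eq:GRS} for the original exponents $\lambda_{j,\nu}$. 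Hence $k\mathbf m$ is realizable. The coefficients of $P$ are polynomials in $x$, $\lambda$ and $g=(g_0,\dots,g_{k-1})$, multilinear in the $g_i$, in particular $\p^2P/\p g_i^2=0$, and $\Top P=\prod_i\Top P^{(i)}=x^{kpn}\p^{kn}$. After identifying the $k$ parameters $g_i$ with the appropriate coefficient functionals of $P$ — the index set $I_{k\mathbf m}$, which has exactly $k$ elements because the positive $N_\nu(k\mathbf m)$ sum to $k$ and the negative ones to $-(k-1)$, with total $\Pidx(k\mathbf m)=1$ — one verifies the relations \eqref{eq:TopAcc} term by term and obtains the analogue of the statements in Theorem~\ref{thm:ExF} ii). (Here the operators with a fixed generic Riemann scheme form a $k$-parameter family rather than a $\Pidx(k\mathbf m)=1$-parameter one, precisely because some $N_\nu(k\mathbf m)$ is negative.)

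It remains to see that $k\mathbf m$ is not irreducibly realizable. The operator $P$ just built is a composition of $k\ge2$ operators of positive order, hence $Pu=0$ is reducible; but one must rule out every Fuchsian operator carrying the Riemann scheme of $k\mathbf m$. This follows from Crawley-Boevey's answer to the additive Deligne–Simpson problem recalled in the introduction (\cite{CB}): an irreducible realization of a tuple $\mathbf n$ can exist only if $\Pidx\mathbf n>\sum_i\Pidx\mathbf n_i$ for every decomposition $\mathbf n=\mathbf n_1+\cdots+\mathbf n_r$ with $r\ge2$ into tuples whose roots $\alpha_{\mathbf n_i}$ are positive roots and which are compatible with the spectral data; since $\bigl|\{\lambda_{\mathbf m}\}\bigr|=0$, the decomposition $k\mathbf m=\mathbf m+\cdots+\mathbf m$ ($k$ copies) is compatible, and it gives $\Pidx(k\mathbf m)=1<k=\sum_{i=1}^k\Pidx\mathbf m$, so the inequality fails. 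Equivalently, in the Kac–Moody language of \S\ref{sec:KacM}, $\alpha_{k\mathbf m}=k\,\alpha_{\mathbf m}$ is a proper multiple of the isotropic imaginary root $\alpha_{\mathbf m}$, as $(\alpha_{\mathbf m}|\alpha_{\mathbf m})=\idx\mathbf m=0$. The conceptual input is thus entirely the product construction together with $\idx(k\mathbf m)=k^2\idx\mathbf m=0$; the expected main obstacle is the combinatorial bookkeeping in the second paragraph — matching the $g_i$ to $I_{k\mathbf m}$ and verifying \eqref{eq:TopAcc}.
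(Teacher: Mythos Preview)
Your product construction $P=P^{(k-1)}\circ\cdots\circ P^{(0)}$ via Theorem~\ref{thm:prod} is exactly the paper's, and it gives realizability. Where you diverge is in the remaining two claims.

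The paper does not invoke Crawley--Boevey. Having built the products $P_k(c_1,\dots,c_k)$, it re-runs the inductive coefficient construction from the proof of Theorem~\ref{thm:ExF} for the tuple $k\mathbf m$: first it computes $N_\nu(k\mathbf m)$ explicitly (equal to $+1$ for $\nu\equiv n-1\bmod n$, to $-1$ for $\nu\equiv 0\bmod n$, and to $0$ otherwise), and then observes that at each of the $k-1$ steps where $N_\nu=-1$ the linear system for the next coefficient $a_{n-k}$ is overdetermined by one --- but the mere \emph{existence} of the $k$-parameter product family forces these overdetermined conditions to be automatically compatible. Hence the full family of Fuchsian operators of normal form with Riemann scheme \eqref{eq:MAFGRS} is exactly $k$-dimensional and coincides with the product family. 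This single argument delivers both the analogue of Theorem~\ref{thm:ExF}~ii) (with $k$ accessory parameters, located as the theorem describes) and the non-irreducible-realizability: every such operator is a product, hence reducible.

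Your proposal asserts parenthetically that the operators form a $k$-parameter family, but the justification is precisely the compatibility argument above; without it you have only exhibited a $k$-parameter subfamily, and ``the properties in Theorem~\ref{thm:ExF}~ii)'' include parametrizing \emph{all} such operators. For non-irreducibility you instead appeal to \cite{CB}, which treats systems of Schlesinger canonical form; the passage from an irreducible single Fuchsian equation to an irreducible Schlesinger system still needs to be argued. The paper's self-contained route avoids this detour and simultaneously yields the exhaustiveness of the product family, which is what is actually used downstream in Theorem~\ref{thm:univmodel}~iii).
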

\begin{proof}
Let $P(k,c)$ be the operator of the normal form with the Riemann scheme
\[
  \begin{Bmatrix}
     x=c_0=\infty &              x=c_j\ (j=1,\dots,p)\\
     [\lambda_{0,1}-k(p-1)n+km_{0,1}]_{(m_{0,1})} 
       & [\lambda_{j,1}+km_{j,1}]_{(m_{j,1})}\\
     \vdots&\vdots\\
     [\lambda_{0,n_1}-k(p-1)n+km_{0,1}]_{(m_{0,n_1})} 
       & [\lambda_{j,n_j}+km_{j,n_j}]_{(m_{j,n_j})}
  \end{Bmatrix}
\]
of type $\mathbf m$.
Here $\mathbf m=\bigl(m_{j,\nu}\bigr)_{\substack{j=0,\dots,p\\ \nu=1,\dots,n_j}}$, 
$n=\ord\mathbf m$ and 
$c$ is the accessory parameter contained in the coefficient of the
0-th order term of $P(k,c)$.
Since $\Pidx\mathbf m=0$ means
\begin{align*}
  \sum_{j=0}^{p}\sum_{\nu=1}^{n_j} m_{j,\nu}^2
  =(p-1)n^2=\sum_{\nu=0}^{n_0}(p-1)nm_{0,\nu},
\end{align*}
the Fuchs relation \eqref{eq:Fuchs} is valid for any $k$.
Then it follows from Lemma~\ref{lem:block} that the  Riemann scheme of 
the operator $P_k(c_1,\dots,c_k)=P(k-1,c_k)P(k-2,c_{k-1})\cdots P(0,c_1)$
equals
\begin{equation}\label{eq:MAFGRS}
  \begin{Bmatrix}
     x=c_0=\infty &              x=c_j\ (j=1,\dots,p)\\
     [\lambda_{0,1}]_{(km_{0,1})} 
       & [\lambda_{j,1}]_{(km_{j,1})}\\
     \vdots&\vdots\\
     [\lambda_{0,n_1}]_{(km_{0,n_1})} 
       & [\lambda_{j,n_j}]_{(km_{j,n_j})}
  \end{Bmatrix}
\end{equation}
and it contain an independent accessory parameters in the coefficient
of $\nu n$-th order term of $P_k(c_1,\dots,c_k)$ for $\nu=0,\dots,k-1$
because for the proof of this statement we may assume $\lambda_{j,\nu}$ 
are generic under the Fuchs relation.

Note that
\[
  N_\nu(k\mathbf m) = \begin{cases}
          1 & (\nu\equiv n-1 \mod n),\\
         -1 & (\nu\equiv 0 \mod n),\\
          0 & (\nu\not\equiv 0,\ n-1\mod n)
        \end{cases}
\]
for $\nu=1,\dots,kn-1$ because
\[
  \widetilde{k\mathbf m}=
 \begin{cases}
  \{2i,2i,2i,2i\,;\,i=1,2,\ldots,k\}\quad\text{if }\mathbf m\text{ is of type }
    \tilde D_4,\\
  \{ni,ni,ni,ni-2,ni-3,\ldots,ni-n+1\,;\,i=1,2,\ldots,k\}\\
  \qquad\qquad\qquad\qquad\qquad\qquad\quad\ 
    \text{if }\mathbf m \text{ is of type }
    \tilde E_6,\,\tilde E_7\text{ or }\tilde E_8
 \end{cases}
\]
under the notation \eqref{eq:tildem} and \eqref{eq:tildebm}.
Then the operator
$P_k(c_1,\dots,c_k)$ shows that when we inductively 
determine the coefficients of the operator with the Riemann scheme 
\eqref{eq:MAFGRS} as in the proof of Theorem~\ref{thm:ExF}, 
we have a new accessory parameter in the coefficient 
of the $\bigl((k-j)n\bigr)$-th order term and then the conditions for the 
coefficients of the
$\bigl((k-j)n-1\bigr)$-th order term are overdetermined but 
they are automatically compatible for $j=1,\dots,k-1$.

Thus we can conclude that the operators of the normal form 
with the Riemann scheme \eqref{eq:MAFGRS} are $P_k(c_1,\dots,c_k)$,
which are always reducible.
\end{proof}
\begin{prop}\label{prop:irred}
Let $k$ be a positive integer and let\/ $\mathbf m$ be an indivisible 
$(p+1)$-tuple of partitions of $n$.  
Suppose $k\mathbf m$ is realizable and\/
$\idx\mathbf m<0$.
Then any Fuchsian differential equation with the Riemann scheme 
\eqref{eq:MAFGRS} is always irreducible if $\lambda_{j,\nu}$ is 
generic under the Fuchs relation
\begin{equation}\label{eq:FCdiv}
  \sum_{j=0}^p\sum_{\nu=1}^{n_j} m_{j,\nu}\lambda_{j,\nu}
  = \ord \mathbf m - k\frac{\idx\mathbf m}2.
\end{equation}
\end{prop}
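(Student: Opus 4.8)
The plan is to use the reduction machinery of \S\ref{sec:reduction} together with Scott's lemma (via Lemma~\ref{lem:irrred}). The basic point is that irreducibility is essentially a combinatorial obstruction, and that obstruction is invariant under $\p_{\max}$. First I would set up a would-be decomposition: suppose $P$ is a Fuchsian operator with Riemann scheme \eqref{eq:MAFGRS} and $P=QR$ with $0<\ord R=n'<kn$. Then $R$ is Fuchsian, and at each singular point $c_j$ the set of characteristic exponents of $R$ is obtained from $\Lambda_j$ (counted with multiplicity) by selecting a sub-multiset; since the exponents of $P$ are $\{\lambda_{j,\nu}+i\}$ with $\lambda_{j,\nu}$ generic modulo \eqref{eq:FCdiv}, the only integer relations available among them are the ones forced by the Fuchs relation. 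Consequently the spectral type $\mathbf n$ of $R$ must satisfy $\mathbf n=\ell'\overline{\mathbf m}$-type restrictions; more precisely, writing $\mathbf n$ for the spectral type of $R$ at the $p+1$ points, genericity of $\lambda_{j,\nu}$ forces the Fuchs relation for $\mathbf n$ to hold, and then \eqref{eq:FC1} (the argument in \S\ref{sec:reg}) plus the genericity forces $n_{j,\nu}/m_{j,\nu}$ to be a common value, i.e.\ $\mathbf n=\ell\mathbf m$ for some integer $\ell$ with $0<\ell<k$ (apparent singularities contribute a nonnegative integer, and genericity kills everything except the divisible-by-$\mathbf m$ possibilities).

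Next I would run the reduction. By Theorem~\ref{thm:GRSmid} and Definition~\ref{def:pell}, applying $\p_{\max}$ repeatedly to $P$ produces operators whose spectral types are $\p_{\max}^r(k\mathbf m)=k\,\p_{\max}^r\mathbf m$ (the factor $k$ is carried along because $d_{\max}(k\mathbf m)=k\,d_{\max}(\mathbf m)$ and $\ell_{\max}$ only depends on the shape), and the characteristic exponents transform by the explicit affine formulas of Definition~\ref{def:redGRS}; genericity of the $\lambda$'s is preserved at each step, and the conditions \eqref{eq:redC1}--\eqref{eq:redC3} hold by Theorem~\ref{thm:GRSmid} iii). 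Since $\mathbf m$ is indivisible with $\idx\mathbf m<0$, after finitely many steps we reach a basic tuple $\mathbf m'=f\mathbf m$ with $d_{\max}(\mathbf m')\le 0$ and $\idx\mathbf m'=\idx\mathbf m<0$, hence $\ord\mathbf m'\ge 2$. The operations $\p_{\max}$ are invertible (up to left multiplication by rational functions) on the relevant operators, so $P$ is irreducible iff the reduced operator $P'$ with spectral type $k\mathbf m'$ is irreducible; in particular a nontrivial factorization $P=QR$ with $\ord R=\ell n$, $0<\ell<k$, would transport to a factorization of $P'$ with a factor of spectral type $\ell\mathbf m'$.

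The crux is then to rule out, for a \emph{basic} tuple $\mathbf m'$ with $\idx\mathbf m'<0$, the existence of a subquotient of spectral type $\ell\mathbf m'$ with $0<\ell<k$ (equivalently, a monodromy-invariant subspace of that dimension), for generic exponents. Here I would invoke Scott's lemma in the form of Lemma~\ref{lem:irrred}: if such a subspace existed, the submodule would have spectral type $\mathbf n$ with $n_{j,\nu}\le (k\mathbf m')_{j,\nu}$ at every $(j,\nu)$ and $\ord\mathbf n=\ell n'$; applying the inequality \eqref{eq:SL1} and \eqref{eq:SL0} to $\mathbf n$ (using that the relevant exponent sums are integers, which holds because of the Fuchs relation for $\mathbf n$ forced by genericity) together with $d_{\max}(\mathbf m')\le 0$ gives a numerical contradiction: $\sum_j n_{j,\ell_j} - (p-1)\ord\mathbf n \le 0$ combined with $\idx(\mathbf n,\mathbf n)$ estimates and $\idx\mathbf m'<0$ forces $\mathbf n=0$ or $\mathbf n=k\mathbf m'$. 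The main obstacle I expect is precisely the bookkeeping in this last step: one must show that genericity of $\lambda_{j,\nu}$ under \eqref{eq:FCdiv} really does force any invariant subspace to have spectral type proportional to $\mathbf m'$ (so that only the "divisible" obstruction survives, and the index computation $\idx(\ell\mathbf m',(k-\ell)\mathbf m')=\ell(k-\ell)\idx\mathbf m'<0$ together with the Fuchs-relation value $|\{\lambda_{\ell\mathbf m'}\}|\notin\mathbb Z_{\le 0}$ applies), and to handle the apparent singularities that $R$ may acquire without breaking the count. Once the reduction to basic $\mathbf m'$ and the Scott-lemma inequalities are in place, the genericity argument of Remark~\ref{rem:generic} ii) applied to each pair $\ell\mathbf m'\oplus(k-\ell)\mathbf m'$ closes the proof.
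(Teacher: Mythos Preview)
Your proposal reaches the right endpoint but by a circuitous route, and the step you call the ``crux'' does not work as stated. The paper's proof is a three-line computation with no reduction machinery and no Scott's lemma.

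The paper argues directly: if $P$ is reducible, Remark~\ref{rem:generic}~ii) produces a subtuple $\mathbf m'$ of $k\mathbf m$ with $0<\ord\mathbf m'<k\ord\mathbf m$ and $|\{\lambda_{\mathbf m'}\}|\in\mathbb Z_{\le 0}$. Since the $\lambda_{j,\nu}$ are generic under the single linear constraint \eqref{eq:FCdiv} and $\mathbf m$ is indivisible, the condition $|\{\lambda_{\mathbf m'}\}|\in\mathbb Z$ already forces $\mathbf m'=\ell\mathbf m$ for some integer $0<\ell<k$. One then simply computes, using \eqref{eq:FCdiv},
\[
|\{\lambda_{\ell\mathbf m}\}|
  =\ell\sum_{j,\nu}m_{j,\nu}\lambda_{j,\nu}-\ell\ord\mathbf m+\tfrac{\ell^2}{2}\idx\mathbf m
  =\tfrac{\ell(\ell-k)}{2}\,\idx\mathbf m>0,
\]
since $\idx\mathbf m<0$ and $0<\ell<k$. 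This contradicts $|\{\lambda_{\mathbf m'}\}|\le 0$, and the proof is finished.

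Your detour through $\p_{\max}$ to reach a basic tuple $\mathbf m'=f\mathbf m$ is unnecessary: the computation above works for $\mathbf m$ itself, with no hypothesis on $d_{\max}$. More seriously, the step where you invoke Lemma~\ref{lem:irrred} to force $\mathbf n\in\{0,k\mathbf m'\}$ does not give what you claim. For $\mathbf n=\ell\mathbf m'$ with $\mathbf m'$ basic, the quantity $\sum_j n_{j,\ell_j}-(p-1)\ord\mathbf n=\ell\, d_{\max}(\mathbf m')\le 0$ is \emph{satisfied}, not violated, so no contradiction comes from \eqref{eq:SL0}--\eqref{eq:SL1}; and $\idx(\ell\mathbf m')=\ell^2\idx\mathbf m'<0\le 2$ is also compatible with \eqref{eq:rigididx}. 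There is no purely combinatorial obstruction to $\ell\mathbf m'$ occurring as a factor's spectral type --- the obstruction lives entirely in the exponent arithmetic. You do mention $|\{\lambda_{\ell\mathbf m'}\}|\notin\mathbb Z_{\le 0}$ in your last sentence: that one-line computation \emph{is} the whole proof, and it needs neither the reduction to basic tuples nor Scott's lemma.
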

\begin{proof}
The above Fuchs relation follows from \eqref{eq:Fuchidx}
with the identities $\ord k\mathbf m=k\ord\mathbf m$ and
$\idx k\mathbf m=k^2\idx\mathbf m$.

Suppose $Pu=0$ is reducible.
Then Remark~\ref{rem:generic} ii) says that there exist 
$\mathbf m'$, $\mathbf m''\in\mathcal P$ such that 
$k\mathbf m=\mathbf m'+\mathbf m''$ and 
$0<\ord\mathbf m'<k\ord\mathbf m$ and 
$|\{\lambda_{\mathbf m'}\}|\in\{0,-1,-2,\ldots\}$.
Suppose $\lambda_{j,\nu}$ are generic under \eqref{eq:FCdiv}.
Then the condition $|\{\lambda_{\mathbf m'}\}|\in\mathbb Z$ implies
$\mathbf m'=\ell\mathbf m$ with a positive integer satisfying $\ell<k$
and
\begin{align*}
 |\{\lambda_{\ell\mathbf m}\}|
  &=\sum_{j=0}^p\sum_{\nu=1}^{n_j}\ell m_{j,\nu}\lambda_{j,\nu}-\ord\ell\mathbf m
    +\ell^2\idx\mathbf m\\
 &= \ell\Bigl(\ord\mathbf m - k\frac{\idx\mathbf m}2\Bigr)
    - \ell \ord\mathbf m +\ell^2\idx\mathbf m\\
 &= \ell(\ell-k)\idx\mathbf m>0.
\end{align*}
Hence $|\{\lambda_{\mathbf m'}\}|>0$. 
\end{proof}
\subsection{Universal model}\index{universal model}
\index{Fuchsian differential equation/operator!universal operator}
Now we have a main result in \S\ref{sec:DS} which assures the existence
of Fuchsian differential operators with given spectral types.
\begin{thm}\label{thm:univmodel} 
Fix a tuple\/ $\mathbf m=\bigl(m_{j,\nu}\bigr)
_{\substack{0\le j\le p\\ 1\le \nu\le n_j}}\in\mathcal P^{(n)}_{p+1}$.

{\rm i) }
Under the notation in Definitions~\ref{def:tuples}, \ref{defn:real}
and \ref{def:pell}, the tuple\/ $\mathbf m$ is realizable if and 
only if there exists a non-negative integer\/ $K$ such that\/
$\p_{max}^i \mathbf m$ are well-defined for $i=1,\dots,K$ and
\begin{equation}\label{eq:decord}
 \begin{split}
 &\ord\mathbf m>\ord\p_{max}\mathbf m>\ord\p_{max}^2\mathbf m>\cdots
  >\ord\p_{max}^K\mathbf m,\\
 &d_{max}(\p_{max}^K\mathbf m)=2\ord \p_{max}^K\mathbf m\text{ \ or \ }
  d_{max}(\p_{max}^K\mathbf m)\le0.
 \end{split}
\end{equation}

{\rm ii) }
Fix complex numbers $\lambda_{j,\nu}$.
If there exists an irreducible Fuchsian operator with the 
Riemann scheme \eqref{eq:GRS} such that it is locally non-degenerate
{\rm (cf.~Definition~\ref{def:locnondeg}),} 
then\/ $\mathbf m$ is irreducibly realizable.

Here we note that if $P$ is irreducible and\/ $\mathbf m$ is rigid, 
$P$ is locally non-degenerate {\rm (cf.~Definition~\ref{def:locnondeg}).} 

Hereafter in this theorem we assume\/ $\mathbf m$ is realizable.

{\rm iii) }
$\mathbf m$ is irreducibly realizable if and only if\/
$\mathbf m$ is indivisible or $\idx\mathbf m<0$.

{\rm iv) }
There exists a \textsl{universal model} $P_{\mathbf m}u=0$ 
associated with\/ $\mathbf m$ which has the following property.

Namely, $P_{\mathbf m}$ 
is the Fuchsian differential operator of the form
\begin{equation}\label{eq:uinvPm}
\begin{split}
 P_{\mathbf m}&=\Bigl(\prod_{j=1}^p(x-c_j)^n\Bigr)\frac{d^n}{dx^n}
 + a_{n-1}(x)\frac{d^{n-1}}{dx^{n-1}}+\cdots+a_1(x)\frac{d}{dx}+a_0(x),\\
 &a_j(x)\in\mathbb C[\lambda_{j,\nu},g_1,\dots,g_N]
 \end{split}
\end{equation}
such that $P_{\mathbf m}$ has 
regular singularities at $p+1$ fixed points $x=c_0=\infty, c_1,\dots,c_p$
and the Riemann scheme of $P_{\mathbf m}$ equals \eqref{eq:GRS}
for any $g_i\in\mathbb C$  and $\lambda_{j,\nu}\in\mathbb C$
under the Fuchs relation \eqref{eq:Fuchs}.
Moreover the coefficients 
$a_j(x)$ are polynomials of $x$, $\lambda_{j,\nu}$ and 
$g_i$ with the degree at most $(p-1)n+j$ for $j=0,\dots,n$, respectively.
Here $g_i$ are called accessory parameters and
we call $P_{\mathbf m}$ the \textsl{universal operator} of type\/ $\mathbf m$.

The non-negative integer $N$ will be denoted by\/ $\Ridx\mathbf m$
and given by
\begin{equation}
 N=\Ridx\mathbf m:=\begin{cases}
   0&(\idx\mathbf m > 0),\\
   \gcd \mathbf m&(\idx\mathbf m=0),\\
   \Pidx\mathbf m&(\idx\mathbf m<0).
   \end{cases}
\end{equation}\index{00Ridx@$\Ridx$}
Put\/ $\overline{\mathbf m}=\bigl({\overline m}_{j,\nu}\bigr)_
{\substack{0\le j\le p\\ 1\le \nu\le n_j}}:=\p_{max}^K\mathbf m$
with the non-negative integer $K$ given in\/ {\rm i)}.

When\/ $\idx\mathbf m\le 0$, we define
\begin{align*}
  q^0_\ell &:= \#\{i\,;\,\sum_{\nu=1}^{\bar n_0}
   \max\{{\overline m}_{0,\nu}-i,0\}\ge\ord\overline{\mathbf m}-\ell,\ i\ge 0\},\\
  I_{\mathbf m}&:=
  \{(j,\nu)\in\mathbb Z^2\,;\,
    q^0_\nu\le j\le q^0_\nu+N_\nu-1,\ 
  1\le \nu\le\ord\overline{\mathbf m}-1\}.
\end{align*}
When\/ $\idx\mathbf m> 0$, we put $I_{\mathbf m}=\emptyset$.

Then\/ $\#I_{\mathbf m}=\Ridx\mathbf m$ and
we can define\/ $I_i$ such that $I_{\mathbf m}=\{I_i\,;\,i=1,\dots,N\}$
and $g_i$ satisfy \eqref{eq:TopAcc} by putting $g_{I_i}=g_i$
for $i=1,\dots,N$.

{\rm v) } Retain the notation in Definition~\ref{def:redGRS}.  If
$\lambda_{j,\nu}\in\mathbb C$ satisfy
\begin{equation}\label{eq:inUniv}
 \begin{cases}
 \sum_{j=0}^p
 \lambda(k)_{j,\ell(k)_j+\delta_{j,j_o}(\nu_o-\ell(k)_j)}\\
 \ 
 \notin\{0,-1,-2,-3,\ldots,m(k)_{j_o,\ell(k)_{j_o}}-m(k)_{j_o,\nu_o}-d(k)+2\}\\
 \quad\text{for any }k=0,\dots,K-1\text{ and }(j_0,\nu_o)\text{ satisfying}\\
 \quad m(k)_{j_o,\nu_o}\ge m(k)_{j_o,\ell(k)_{j_o}} - d(k)+2,
 \end{cases}
\end{equation}
any Fuchsian differential operator $P$ of the normal form 
which has the Riemann scheme \eqref{eq:GRS} 
belongs to $P_{\mathbf m}$ with a suitable 
$(g_1,\dots,g_N)\in\mathbb C^N$.
\begin{align}
&\begin{cases}
\text{If\/ $\mathbf m$ is a scalar multiple of a fundamental tuple
or simply reducible, }\\
\text{\eqref{eq:inUniv} is always valid for any $\lambda_{j,\nu}$.}
\end{cases}\allowdisplaybreaks\label{eq:CSR}\\
&\begin{cases}
\text{Fix $\lambda_{j,\nu}\in\mathbb C$.
Suppose there is an irreducible Fuchsian differential}\\
\text{operator with the
Riemann scheme \eqref{eq:GRS} such that the operator is}\\
\text{locally non-degenerate or $K\le 1$, 
then \eqref{eq:inUniv} is valid.}
\end{cases}\label{eq:irinuniv}
\end{align}

Suppose\/ $\mathbf m$ is monotone.
Under the notation in {\rm\S\ref{sec:KM}}, 
the condition \eqref{eq:inUniv} is equivalent to
\begin{equation}\label{eq:KinUniv}
 \begin{split}
  &(\Lambda(\lambda)|\alpha)+1\notin
  \{0,-1,\ldots,2-(\alpha|\alpha_{\mathbf m})\}\\
  &\qquad
  \text{ for any }\alpha\in\Delta(\mathbf m)
  \text{ satisfying }(\alpha|\alpha_{\mathbf m})>1.
 \end{split}
\end{equation}
\end{thm}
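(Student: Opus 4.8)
The plan is to prove Theorem~\ref{thm:univmodel} by reducing everything to the existence/uniqueness result of Theorem~\ref{thm:ExF} and the behaviour of $\p_{max}$ established in Theorem~\ref{thm:GRSmid}. First I would handle part~i): a realizable $\mathbf m$ must, after finitely many applications of $\p_{max}$ (which strictly decreases the order whenever $d_{max}>0$ by Remark~\ref{rem:idxFuchs} ii)), arrive at a tuple with $d_{max}\le 0$ (or at the rigid border case $d_{max}=2\ord$). Conversely, Theorem~\ref{thm:realizable} reduces realizability of $\mathbf m$ to that of $\p_{max}\mathbf m$, so it suffices to show that any $\mathbf m$ with $d_{max}(\mathbf m)\le 0$ is realizable. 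For this one applies a suitable $S_\infty$-transformation to make $\mathbf m$ satisfy \eqref{eq:NTP} and \eqref{eq:NRed}, then invokes Lemma~\ref{lem:ex1}: either $N_\nu(\mathbf m)\ge 0$ for all $\nu$, in which case Theorem~\ref{thm:ExF} ii) directly produces the operator $P$ with the given Riemann scheme, or $\mathbf m$ is one of the finitely many exceptional tuples \eqref{eq:MAF}, which are scalar multiples of $\tilde D_4,\tilde E_6,\tilde E_7,\tilde E_8$ and are handled by Proposition~\ref{prop:dividx0}. Parts~ii) and iii) then follow: iii) by combining Remark~\ref{rem:generic} ii) (indivisible $+$ realizable $\Rightarrow$ irreducibly realizable), Proposition~\ref{prop:irred} (for $\idx\mathbf m<0$ and divisible), and Proposition~\ref{prop:dividx0} (for $\idx\mathbf m=0$ and divisible, showing irreducible realization fails); ii) is a formal consequence of iii) together with the remark that an irreducible locally non-degenerate operator forces the associated $\alpha_{\mathbf m}$ to lie in $\overline\Delta_+$, hence $\mathbf m$ indivisible or $\idx\mathbf m<0$.

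For part~iv), the construction of the universal operator $P_{\mathbf m}$ proceeds by induction on $K$. When $d_{max}(\mathbf m)\le 0$ (the base case), $P_{\mathbf m}$ is exactly the operator of Theorem~\ref{thm:ExF} ii), whose coefficients are polynomials in $x$, the $\lambda_{j,\nu}$ and the accessory parameters $g_{j,\nu}$ indexed by $I_{\mathbf m}$, satisfying \eqref{eq:TopAcc}; the count $\#I_{\mathbf m}=\sum_\nu N_\nu(\mathbf m)=\Pidx\mathbf m$ is \eqref{eq:sumN}, which matches $\Ridx\mathbf m$ when $\idx\mathbf m<0$, and when $\idx\mathbf m=0$ the divisible/exceptional analysis of Proposition~\ref{prop:dividx0} gives $N=\gcd\mathbf m$; when $\idx\mathbf m>0$ there are no accessory parameters and $P_{\mathbf m}$ is just the rigid operator built from $P_{\overline{\mathbf m}}$ (order $1$). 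For the inductive step, given $P_{\overline{\mathbf m}}$ for $\overline{\mathbf m}=\p_{max}\mathbf m$ one defines $P_{\mathbf m}$ by applying the inverse reduction $\p_{\ell_{max}}$ (which by \eqref{eq:pellP2} is an involution on operators of normal form) combined with the explicit $\Red\circ\Ad$ formulas; one must check that polynomial dependence on $x,\lambda,g$ is preserved, which is clear from the explicit shape of \eqref{eq:opred} since each factor there is a differential operator with polynomial (in the relevant variables) coefficients, and $\Red$ amounts to dividing out a fixed power of $\prod(x-c_j)$. The identifications $g_{I_i}=g_i$ and the fact that the accessory parameters can be read off the top coefficients of the lower-order terms come from \eqref{eq:TopAcc} propagated through $\p_{max}$.

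For part~v), the claim is that under the inequalities \eqref{eq:inUniv} — which say precisely that none of the ``exceptional parameter'' obstructions of Theorem~\ref{thm:GRSmid} ii) (conditions \eqref{eq:redC2}, \eqref{eq:redC3} read along the reduction chain of Definition~\ref{def:redGRS}) is violated — every Fuchsian operator $P$ of normal form with Riemann scheme \eqref{eq:GRS} equals $P_{\mathbf m}$ for some $g\in\mathbb C^N$. I would argue this by induction on $K$ again: Theorem~\ref{thm:GRSmid} ii) tells us that under \eqref{eq:inUniv} at level $0$ the operator $\p_{max}P$ is well-defined and has Riemann scheme $\p_{max}\{\lambda_{\mathbf m}\}$; by the inductive hypothesis $\p_{max}P = P_{\overline{\mathbf m}}(g')$ for some $g'$, and then applying the inverse reduction $\p_{\ell_{max}}$ and using that it is injective on operators of normal form recovers $P = P_{\mathbf m}(g)$, with $g$ determined by $g'$ plus the finitely many new accessory parameters introduced at this level. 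The special cases \eqref{eq:CSR} and \eqref{eq:irinuniv} are then checked separately: for fundamental $\mathbf m$ (or scalar multiples, or simply reducible) the reduction chain is trivial or has no obstruction by definition, so \eqref{eq:inUniv} holds vacuously; for the irreducible locally non-degenerate case one uses Theorem~\ref{thm:GRSmid} iii), which says precisely that irreducibility forces conditions \eqref{eq:redC1}--\eqref{eq:redC3}, applied inductively along the chain (irreducibility and local non-degeneracy being preserved by $\p_{max}$, which I would cite from the results referenced in Remark~\ref{rem:midisom}). Finally the translation of \eqref{eq:inUniv} into the Kac--Moody form \eqref{eq:KinUniv} is a direct unwinding of the definitions of $\Lambda(\lambda)$, $\alpha_{\mathbf m}$, $\Delta(\mathbf m)$ and $w_{\mathbf m}$ in \S\ref{sec:KM}: each root $\alpha\in\Delta(\mathbf m)$ with $(\alpha|\alpha_{\mathbf m})>1$ corresponds to one reduction step and one ``$m_{j_o,\nu_o}\ge m_{j_o,\ell}-d+2$'' obstruction, with $(\alpha|\alpha_{\mathbf m})-1 = m(k)_{j_o,\ell(k)_{j_o}}-m(k)_{j_o,\nu_o}-d(k)+2$ and $(\Lambda(\lambda)|\alpha) = -\sum_j\lambda(k)_{j,\ldots}$ up to the standard shift.

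The main obstacle I expect is the inductive step in parts~iv) and v): one must verify carefully that the inverse reduction operator $\p_{\ell_{max}}$, applied to the polynomial family $P_{\overline{\mathbf m}}$, produces again a polynomial family in $(x,\lambda,g)$ of the asserted degrees, and that the new accessory parameters combine with the old ones exactly as counted — i.e. that no relations are lost or spuriously introduced. This is essentially a bookkeeping argument using \eqref{eq:opred}, \eqref{eq:redcoord} and \eqref{eq:TopAcc}, but making it fully rigorous (especially controlling the degrees $\deg a_j\le (p-1)n+j$ under the chain of $\Ad$, $\Red$ and coordinate transformations) is where the real work lies. The realizability direction, the irreducibility dichotomy, and the Kac--Moody translation are comparatively routine once Lemma~\ref{lem:ex1}, Theorem~\ref{thm:ExF}, Theorem~\ref{thm:GRSmid} and Propositions~\ref{prop:dividx0}--\ref{prop:irred} are in hand.
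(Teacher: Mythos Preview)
Your overall strategy matches the paper's: reduce to the fundamental case via the $\p_{max}$ chain, build the universal operator there by Theorem~\ref{thm:ExF} and Proposition~\ref{prop:dividx0}, and then pull back by the involution $\p_{\ell}^2=\id$. Parts i), iv), v) and the Kac--Moody translation are handled essentially as in the paper.

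There is, however, a genuine gap in your treatment of part~ii). You write that ii) is ``a formal consequence of iii) together with the remark that an irreducible locally non-degenerate operator forces $\alpha_{\mathbf m}\in\overline\Delta_+$''. But iii) is stated \emph{after} the clause ``Hereafter assume $\mathbf m$ is realizable'', whereas ii) comes before it; so ii) must itself establish realizability, and you cannot simply invoke iii). Your ``remark'' is exactly the nontrivial content: why does the existence of a single irreducible operator at specific $\lambda$ force realizability (a statement about generic $\lambda$)? The paper does not argue on the operator side here. Instead it passes to the monodromy tuple $\mathbf M=(M_0,\dots,M_p)$ and applies the \emph{multiplicative} middle convolution of Theorem~\ref{thm:Mmid}: irreducibility of $\mathbf M(k)$ is preserved, and irreducibility (via Scott's lemma, as in Lemma~\ref{lem:irrred}) forces the inequalities \eqref{eq:redC1}--\eqref{eq:redC3} at each step, so the combinatorial $\p_{max}$ chain on $\mathbf m$ terminates and part~i) applies. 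This monodromy argument is also what proves \eqref{eq:irinuniv}; your later sketch for \eqref{eq:irinuniv} has the right shape, but the citation to Remark~\ref{rem:midisom} for preservation of irreducibility and local non-degeneracy under $\p_{max}$ is not the right reference---you need Theorem~\ref{thm:Mmid} (or Corollary~\ref{cor:irredred}) working on the monodromy side.

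A smaller point on ordering: the paper proves iv) \emph{before} iii), because the ``$\idx\mathbf m=0$ and divisible $\Rightarrow$ not irreducibly realizable'' step needs the parameter count from iv). One constructs (as in Proposition~\ref{prop:dividx0}) a product of $\gcd\mathbf m$ copies of the indivisible universal operator; this product already has $\gcd\mathbf m$ accessory parameters, and since iv) says the universal model has \emph{exactly} $\gcd\mathbf m$ parameters, every operator with that Riemann scheme is such a product, hence reducible. Your ordering (iii) before iv)) leaves this argument incomplete. You also omit the easy case $\idx\mathbf m>0$ and divisible in iii), where $\idx\mathbf m\ge 8>2$ contradicts Lemma~\ref{lem:irrred}.
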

Example~\ref{ex:outuniv} gives a Fuchsian differential operator with 
the rigid spectral type $21,21,21,21$ which doesn't belong to the corresponding
universal operator.

The fundamental tuple and the simply reducible tuple are defined as follows.
\begin{defn}\label{def:fund}
{\rm i) }(fundamental tuple)
\index{tuple of partitions!fundamental}\index{00fm@$f\mathbf m$}
An irreducibly realizable tuple $\mathbf m\in\mathcal P$ 
is called \textsl{fundamental} if $\ord\mathbf m=1$ 
or $d_{\max}(\mathbf m)\le 0$.

For an irreducibly realizable tuple 
$\mathbf m\in\mathcal P$, there exists a non-negative 
integer $K$ such that $\p^K_{max}\mathbf m$ is fundamental
and satisfies \eqref{eq:decord}.
Then we call $\p^K_{max}\mathbf m$ is 
a fundamental tuple corresponding to $\mathbf m$ and define 
$f\mathbf m:=\p^K_{max}\mathbf m$.

{\rm ii) }(simply reducible tuple)
\index{tuple of partitions!simply reducible}
A tuple $\mathbf m$ is \textsl{simply reducible} if 
there exists a positive integer 
$K$ satisfying \eqref{eq:decord} and
$\ord\p_{max}^K\mathbf m=\ord\mathbf m-K$.
\end{defn}
\begin{proof}[Proof of Theorem~\ref{thm:univmodel}]
{\rm i)}
We have proved that $\mathbf m$ is realizable 
if $d_{max}(\mathbf m)\le 0$.
Note that the condition $d_{max}(\mathbf m)=2\ord\mathbf m$ is equivalent
to the fact that $s\mathbf m$ is trivial.
Hence Theorem~\ref{thm:realizable} proves the claim.

{\rm iv) }
Now we use the notation in Definition~\ref{def:redGRS}.
The existence of the universal operator
is clear if $s\mathbf m$ is trivial.
If $d_{max}(\mathbf m)\le 0$, Theorem~\ref{thm:ExF} and 
Proposition~\ref{prop:dividx0} with Corollary~\ref{cor:idx0}
assure the existence of the universal operator $P_{\mathbf m}$
claimed in iii).
Hence iii) is valid for the tuple 
$\mathbf m(K)$ and we have a universal operator $P_K$
with the Riemann scheme $\{\lambda(K)_{\mathbf m(K)}\}$.

The universal operator $P_k$ with the Riemann scheme 
$\{\lambda(k)_{\mathbf m(k)}\}$ are inductively obtained by 
applying $\p_{\ell(k)}$ to the universal operator $P_{k+1}$
with the Riemann scheme $\{\lambda(k+1)_{\mathbf m(k+1)}\}$ for 
$k=K-1,K-2,\dots,0$.
Since the claims in iii) such as \eqref{eq:TopAcc} 
are kept by the operation $\p_{\ell(k)}$, we have iv).

{\rm iii) }
Note that $\mathbf m$ is irreducibly realizable if $\mathbf m$ is indivisible
(cf.~Remark~\ref{rem:generic} ii)).
Hence suppose $\mathbf m$ is not indivisible.  Put $k=\gcd\mathbf m$ and
$\mathbf m=k\mathbf m'$.  Then $\idx\mathbf m
=k^2\idx\mathbf m'$.

If $\idx\mathbf m>0$, then $\idx\mathbf m>2$ and the inequality 
\eqref{eq:rigididx} in Lemma~\ref {lem:irrred} implies that 
$\mathbf m$ is not irreducibly realizable.
If $\idx\mathbf m<0$, Proposition~\ref{prop:irred} assures that $\mathbf m$ 
is irreducibly realizable.

Suppose $\idx\mathbf m=0$.
Then the universal operator $P_{\mathbf m}$ has $k$ accessory parameters.
Using the argument in the first part of the proof of 
Proposition~\ref{prop:dividx0}, we can construct a Fuchsian differential 
operator $\tilde P_{\mathbf m}$ with the Riemann scheme 
$\bigl\{\lambda_{\mathbf m}\bigr\}$.
Since $\tilde P_{\mathbf m}$ is a product of $k$ copies of
the universal operator $P_{\overline{\mathbf m}}$ and it has $k$
accessory parameters, the operator $P_{\mathbf m}$ coincides with 
the reducible operator $\tilde P_{\mathbf m}$ and hence $\mathbf m$ is not 
irreducibly realizable.

{\rm v) } 
Fix $\lambda_{j,\nu}\in\mathbb C$.
Let $P$ be a Fuchsian differential operator with the Riemann
scheme $\{\lambda_{\mathbf m}\}$.
Suppose $P$ is of the normal form.

Theorem~\ref{thm:ExF} and Proposition~\ref{prop:dividx0} assure that 
$P$ belongs to $P_{\mathbf m}$ if $K=0$.

Theorem~\ref{thm:GRSmid} proves that if $\p_{max}^kP$ has the Riemann
scheme $\{\lambda(k)_{\mathbf m(k)}\}$ and \eqref{eq:inUniv} is valid, 
then $\p_{max}^{k+1}P=\p_{\ell(k)}\p_{max}^kP$ is well-defined and has
the Riemann scheme $\{\lambda(k+1)_{\mathbf m(k+1)}\}$ for $k=0,\dots,K-1$
and hence it follows from \eqref{eq:pellP2} that 
$P$ belongs to the universal operator $P_{\mathbf m}$
because $\p_{max}^K P$ belongs to the universal operator $P_{\mathbf m(K)}$.

If $\mathbf m$ is simply reducible, $d(k)=1$ and therefore 
\eqref{eq:inUniv} is valid because 
$m(k)_{j,\nu}\le m(k)_{j,\ell(k)_\nu}<m(k)_{j,\ell(k)_\nu}-d(k)+2$
for $j=0,\dots,p$ and $\nu=1,\dots,n_j$ and $k=0,\dots,K-1$.

The equivalence of the conditions \eqref{eq:inUniv} and \eqref{eq:KinUniv}
follows from the argument in \S\ref{sec:KM},
Proposition~\ref{prop:wm} and Theorem~\ref{thm:irrKac}.

{\rm ii) }
Suppose there exists an irreducible operator 
$P$ with the Riemann scheme \eqref{eq:GRS}.
Let $\mathbf M=(M_0,\dots,M_p)$ be the tuple of monodromy generators of 
the equation $Pu=0$
and put $\mathbf M(0)=\mathbf M$.
Let $\mathbf M(k+1)$ be the tuple of matrices applying the operations
in \S\ref{sec:MM} to $\mathbf M(k)$ corresponding to the operations
$\p_{\ell(k)}$ for $k=0,1,2,\ldots$.

Comparing the operations on $\mathbf M(k)$ and $\p_{\ell(k)}$,
we can conclude that there exists a non-negative integer $K$ 
satisfying the claim in i).
In fact Theorem~\ref{thm:Mmid} proves that $\mathbf M(k)$ are irreducible, 
which assures that the conditions \eqref{eq:redC2} and \eqref{eq:redC3} 
corresponding to the operations $\p_{\ell(k)}$ are always valid
(cf.~Corollary~\ref{cor:irred}).
Therefore $\mathbf m$ is realizable and moreover we can conclude
that \eqref{eq:irinuniv} implies \eqref{eq:inUniv}.
If $\idx\mathbf m$ is divisible and $\idx\mathbf m=0$, then $P_{\mathbf m}$
is reducible for any fixed parameters $\lambda_{j,\nu}$ and $g_i$.
Hence $\mathbf m$ is irreducibly realizable.
\end{proof}
\begin{rem}\label{rem:inuniv}
{\rm i) }
The uniqueness of the universal operator in Theorem~\ref{thm:univmodel}
is obvious.  But it is not valid in the case of systems of Schlesinger 
canonical form (cf.~Example~\ref{ex:univSch}).

{\rm ii)}
The assumption that $Pu=0$ is locally non-degenerate seems to be not
necessary in Theorem~\ref{thm:univmodel} ii) and \eqref{eq:irinuniv}.  
When $K=1$, this is clear from the proof of the theorem.
For example, the rigid irreducible operator with the spectral
type $31,31,31,31,31$ belongs to the universal operator
of type $211,31,31,31,31$.
\end{rem}
\subsection{Simply reducible spectral type}\label{sec:simpred}
\index{tuple of partitions!simply reducible}
In this subsection we characterize the tuples of the simply reducible 
spectral type.
\index{00Nnu@$N_\nu(\mathbf m)$}
\begin{prop}\label{prop:sred}
{\rm i)}
A realizable tuple\/ $\mathbf m\in\mathcal P^{(n)}$ satisfying
$m_{0,\nu}=1$ for $\nu=1,\ldots,n$ is simply reducible
if\/ $\mathbf m$ is not fundamental.

{\rm ii)}
The simply reducible rigid tuple corresponds to the tuple in 
Simpson's list {\rm(cf.~\S\ref{sec:rigidEx})} or it is isomorphic to\/
$21111,222,33$.
\index{tuple of partitions!rigid!21111,222,33}

{\rm iii)}
Suppose\/ $\mathbf m\in\mathcal P_{p+1}$ is not fundamental.
Then\/ $\mathbf m$ satisfies the condition $N_\nu(\mathbf m)\ge 0$ 
for $\nu=2,\dots,\ord\mathbf m-1$ in Definition~\ref{def:Nnu}
if and only if\/ $\mathbf m$ is realizable and simply reducible.

{\rm iv)}
Let $\mathbf m\in\mathcal P_{p+1}$ be a realizable 
monotone tuple.  Suppose\/ $\mathbf m$ is not fundamental.
Then under the notation in {\rm\S\ref{sec:KM}},
$\mathbf m$ is simply reducible if and only if
\begin{equation}
 (\alpha|\alpha_{\mathbf m})=1\quad(\forall\alpha\in\Delta(\mathbf m)),
\end{equation}
namely\/ $[\Delta(\mathbf m)]=1^{\#\Delta(\mathbf m)}$ {\rm(cf.~Remark~\ref{rem:length}~ii)).}
\end{prop}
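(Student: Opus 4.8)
\textbf{Proof proposal for Proposition~\ref{prop:sred}.}
The plan is to prove the four assertions in order, since ii), iii) and iv) all rely on the structural picture established in i) and in Theorem~\ref{thm:ExF}. For i), let $\mathbf m\in\mathcal P^{(n)}$ be realizable, not fundamental, and satisfy $m_{0,\nu}=1$ for all $\nu$, so $n_0=n$ and the partition $\mathbf m_0$ is the finest possible. Since $\mathbf m$ is not fundamental, $d_{max}(\mathbf m)>0$, and the first component $\mathbf m_0=1^n$ attains its maximal part $m_{0,1}=1$ already at $\nu=1$, so $\ell_{max}(\mathbf m)_0=1$. First I would compute $d_{max}(\mathbf m)=1+\sum_{j=1}^p m_{j,\ell_{max}(\mathbf m)_j}-(p-1)n$; the point is that, because each $m_{j,\ell_{max}(\mathbf m)_j}\le n$ and $m_{0,\ell_{max}(\mathbf m)_0}=1<n$, the inequality \eqref{eq:SL0} of Lemma~\ref{lem:irrred} (applied to the irreducible model guaranteed by realizability, or via the realizability criterion directly) forces $d_{max}(\mathbf m)\le m_{0,1}=1$, hence $d_{max}(\mathbf m)=1$. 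Applying $\p_{max}$ once reduces the order by exactly $1$ and, by Remark~\ref{rem:idxFuchs} ii), leaves $\idx$ unchanged and keeps $m_{0,\nu}\le 1$, so the hypothesis of i) propagates; iterating until the tuple becomes fundamental shows each step decreases the order by $1$, which is precisely the definition of simply reducible.

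For ii), I would argue that a simply reducible rigid tuple $\mathbf m$ has $f\mathbf m$ of order $1$ (rigidity, via Theorem~\ref{thm:univmodel}), and that the condition $\ord\p_{max}^K\mathbf m=\ord\mathbf m-K$ with $\idx=2$ throughout severely restricts the sequence of reductions: at each step $d_{max}=1$, which by the identity \eqref{eq:idxd} means $2=\idx\mathbf m=\ord\mathbf m-\sum_{j,\nu}(m_{j,\ell_{max}}-m_{j,\nu})m_{j,\nu}+\text{(correction)}$, forcing the partitions at each stage to be very close to rectangular. A finite case analysis of the possible non-fundamental rigid tuples with $d_{max}=1$ — organized by $\ord\mathbf m$ and by which parts exceed $1$ — will show the only possibilities are the hypergeometric family $H_n$, the even family $EO_{2m}$, the odd family $EO_{2m+1}$, the extra case $X_6$ (all of which appear in Simpson's list and have $N_\nu=0$ by Example~\ref{ex:Nj}), together with the single exceptional isomorphism class $21111,222,33$, which must be checked by direct reduction: $21111,222,33\xrightarrow{\p_{max}}\cdots$ decreases the order by $1$ at each step down to a rigid tuple of order $1$.

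For iii), the forward direction follows from Theorem~\ref{thm:ExF} i): $\sum_{\nu=1}^{n-1}N_\nu(\mathbf m)=\Pidx\mathbf m$, and $N_1(\mathbf m)=p-2$; combined with $N_\nu(\mathbf m)\ge 0$ for $\nu\ge 2$ this gives realizability via the construction in Theorem~\ref{thm:ExF} ii) once one also rules out the exceptional divisible cases, and simple reducibility is then read off by tracking $d_{max}$ through the reduction exactly as in i) (the condition $N_\nu\ge0$ is stable under $\p_{max}$ by the Remark following Theorem~\ref{thm:ExF} together with Remark~\ref{rem:idxFuchs}). Conversely, if $\mathbf m$ is realizable and simply reducible, each application of $\p_{max}$ drops the order by $1$; pulling back the statement $N_\nu\ge0$ (trivially true for a fundamental or order-one tuple, and for the basic tuples of \eqref{eq:MAF} directly) through each reduction step, using that the $N_\nu$ are governed by the Young-diagram quantities $\widetilde m_{j,i}$ of \eqref{eq:tildem} which transform predictably under $\p_{max}$, yields $N_\nu(\mathbf m)\ge 0$. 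For iv), I would invoke the dictionary of \S\ref{sec:KM}: $\alpha_{\mathbf m}$ is a positive root with $(\alpha_{\mathbf m}|\alpha_{\mathbf m})=\idx\mathbf m$, the reduction $\p_{max}$ corresponds to the simple reflection decreasing $(\alpha|\alpha_{\mathbf m})$, and $d_{\ell}(\mathbf m)=(\alpha|\alpha_{\mathbf m})$ for the relevant $\alpha\in\Delta(\mathbf m)$ (Proposition~\ref{prop:wm}); thus $d_{max}=1$ at every step of the reduction of $\mathbf m$ down to $\alpha_0$ is equivalent to $(\alpha|\alpha_{\mathbf m})=1$ for every $\alpha\in\Delta(\mathbf m)$, i.e. $[\Delta(\mathbf m)]=1^{\#\Delta(\mathbf m)}$.

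The main obstacle will be the finite case analysis in ii): one must enumerate all non-fundamental rigid tuples whose $\p_{max}$-reduction chain decreases the order by exactly one at every step, and verify that nothing beyond Simpson's four families and $21111,222,33$ survives. The bookkeeping of the Young-diagram quantities $\widetilde m_{j,i}$ and their behavior under $\p_{max}$ — needed to confirm stability of the condition $N_\nu\ge0$ in both directions of iii) — is the other delicate point, though it is routine once set up carefully; I would lean on the already-established identities \eqref{eq:sumN}, \eqref{eq:idxd} and Remark~\ref{rem:idxFuchs} to keep it mechanical rather than ad hoc.
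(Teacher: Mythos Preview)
Your outline for i), ii), and iv) is essentially correct and matches the paper (which dispatches i) as ``obvious,'' handles ii) by exactly the inductive case-check you sketch, and deduces iv) from Proposition~\ref{prop:wm}).

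The real gap is in iii). You never show that $N_\nu(\mathbf m)\ge 0$ forces $d_{max}(\mathbf m)=1$. Your reference to ``exactly as in i)'' does not apply: in i) the bound $d_{max}\le m_{0,1}=1$ came for free from the hypothesis $\mathbf m_0=1^n$, which is absent here; and the Remark after Theorem~\ref{thm:ExF} concerns adjoining a trivial partition, not stability under $\p_{max}$. The missing step is a direct computation: for monotone $\mathbf m$ one has $\widetilde m_{j,i}\ge 1$ if and only if $i<m_{j,1}$, whence from Definition~\ref{def:Nnu}
\[
N_{n-1}(\mathbf m)=(p-1)n+1-\sum_{j=0}^p m_{j,1}=1-d_{max}(\mathbf m),
\]
so $N_{n-1}\ge 0$ together with ``not fundamental'' gives $d_{max}=1$. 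The paper argues this slightly differently: it tracks how the Young-diagram sets $\mathfrak m_j=\{\widetilde m_{j,k}:k\ge 0\}$ transform under $\p_{max}$, shows $N_\nu(\p_{max}\mathbf m)\le N_\nu(\mathbf m)$, and then observes that if $d\ge 2$ one would have $1\in\mathfrak m_j$ for every $j$, forcing $N_{n-2}(\mathbf m)-N_{n-1}(\mathbf m)=2$ in contradiction with both being zero. Once $d=1$ is established, the paper proves $N_\nu(\p_{max}\mathbf m)=N_\nu(\mathbf m)$ for $\nu\le n-2$ and $N_{n-1}(\mathbf m)=0$, and both directions of iii) follow by induction on $\ord\mathbf m$. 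A further slip: the tuples of \eqref{eq:MAF} with $k\ge 2$ do \emph{not} satisfy $N_\nu\ge 0$ --- that is precisely the content of Lemma~\ref{lem:ex1} --- so the reason you give for your base case is wrong; fortunately no $\mathbf m$ with $d_{max}(\mathbf m)=1$ can have $\p_{max}\mathbf m$ equal to such a tuple, so the induction is unaffected.
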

\begin{proof}
{\rm i)}
The claim is obvious from the definition.

{\rm ii)}
Let $\mathbf m'$ be a simply reducible rigid tuple.
We have only to prove that $\mathbf m=\p_{max}\mathbf m'$ is in the
Simpson's list or $21111,222,33$ and $\ord\mathbf m'=\ord\mathbf m+1$
and $d_{max}(\mathbf m)=1$, then $\mathbf m'$ is in Simpson's list
or $21111,222,33$.
The condition $\ord\mathbf m'=\ord\mathbf m+1$ implies 
$\mathbf m\in\mathcal P_3$.
We may assume $\mathbf m$ is monotone and 
$\mathbf m'=\p_{\ell_0,\ell_1,\ell_2}\mathbf m$.
The condition $\ord\mathbf m'=\ord\mathbf m+1$ also implies
\[
 (m_{0,1}-m_{0,\ell_0})+(m_{1,1}-m_{1,\ell_0})+(m_{2,1}-m_{2,\ell_0})=2.
\]
Since $\p_{max}\mathbf m'=\mathbf m$, we have $m_{j,\ell_j}\ge m_{j,1}-1$
for $j=0,1,2$.  Hence there exists an integer $k$ with $0\le k\le 2$ such that 
$m_{j,\ell_j} = m_{j,1}-1+\delta_{j,k}$ for $j=0,1,2$.
Then the following claims are easy, which assures the proposition.

If $\mathbf m=11,11,11$, $\mathbf m'$ is isomorphic to $1^3,1^3,21$.

If $\mathbf m=1^3,1^3,21$, $\mathbf m'$ is isomorphic to
$1^4,1^4,31$ or $1^4,211,22$.

If $\mathbf m=1^n,1^n,n-11$ with $n\ge 4$, $\mathbf m'=1^{n+1},1^{n+1},n1$.

If $\mathbf m=1^{2n},nn-11,nn$ with $n\ge 2$, $\mathbf m'=1^{2n+1},nn1,n+1n$.

If $\mathbf m=1^5,221,32$, then $\mathbf m'=1^6,33,321$ or $1^6,222,42$ or 
$21111,222,33$.

If $\mathbf m=1^{2n+1},n+1n,nn1$ with $n\ge 3$, $\mathbf m'=1^{2n+2},n+1n+1,n+1n1$.

If $\mathbf m=1^6,222,42$ or $\mathbf m=21111,222,33$, $\mathbf m'$ 
doesn't exists.

{\rm iii)}
Note that Theorem~\ref{thm:ExF} assures that the condition 
$N_\nu(\mathbf m)\ge 0$ for $\nu=1,\dots,\ord\mathbf m-1$ implies
that $\mathbf m$ is realizable.

We may assume $\mathbf m\in\mathcal P_{p+1}^{(n)}$ is standard.
Put $d=m_{0,1}+\cdots+m_{p,1}-(p-1)n>0$ and $\mathbf m'=\p_{max}\mathbf m$.
Then $m'_{j,\nu}=m_{j,\nu}-\delta_{\nu,1}d$ for $j=0,\dots,p$ and $\nu\ge 1$.
Under the notation in Definition~\ref{def:Nnu}
the operation $\p_{max}$ transforms the sets
\begin{equation*}
 \mathfrak m_j:=\{\widetilde m_{j,k}\,;\,
  k=0,1,2,\ldots\text{ and }\widetilde m_{j,k}>0\}
\end{equation*}
into 
\begin{equation*}
  \mathfrak m_j'
    =\bigl\{\widetilde m_{j,k} - \min\{d,m_{j,1}-k\}\,;\,k=0,\dots,
    \max\{m_{j,1}-d,m_{j,2}-1\}\bigr\},
\end{equation*}
respectively because $\widetilde m_{j,i}=\sum_\nu\max\{m_{j,\nu}-i,0\}$.
Therefore $N_\nu(\mathbf m')\le N_\nu(\mathbf m)$ for 
$\nu=1,\dots,n-d-1=\ord\mathbf m'-1$.
Here we note that
\begin{equation*}
 \sum_{\nu=1}^{n-1}N_\nu(\mathbf m) 
 = \sum_{\nu=1}^{n-d-1}N_\nu(\mathbf m') =\Pidx\mathbf m.
\end{equation*}
Hence $N_\nu(\mathbf m)\ge 0$ for $\nu=1,\dots,n-1$ if and only if 
$N_\nu(\mathbf m')=N_\nu(\mathbf m)$
for $\nu=1,\dots,(n-d)-1$ and moreover $N_\nu(\mathbf m)=0$ for 
$\nu=n-d,\dots,n-1$.
Note that the condition that $N_\nu(\mathbf m')= N_\nu(\mathbf m)$ 
for $\nu=1,\dots,(n-d)-1$ equals
\begin{equation}\label{eq:young}
 m_{j,1}-d\ge m_{j,2}-1\text{ \  for \ }j=0,\dots,p.\qquad\qquad
\raisebox{-15pt}{\scalebox{.5}{
\begin{Young}
 & &+&+&+&$-$&$-$&$-$\cr
 & &+&+&+&+\cr
 & &+&+&+&+\cr
 & &+\cr
 \cr
\end{Young}}}
\end{equation}
This is easy to see by using a Young diagram.
For example, when $\{8,6,6,3,1\}=
\{m_{0,1},m_{0,2},m_{0,3},m_{0,4},m_{0,5}\}$ is 
a partition of $n=24$, the corresponding Young diagram 
is as above and then $\widetilde m_{0,2}$ equals 15,
namely, the number of boxes with the sign + or $-$.
Moreover when $d=3$, the boxes with the sign $-$ are deleted by $\p_{max}$
and the number $\widetilde m_{0,2}$ changes into 12.
In this case $m_0=\{24,19,15,11,8,5,2,1\}$ and 
$m_0'=\{21,16,12,8,5,2\}$.

If $d\ge2$, then $1\in\mathfrak m_j$ for $j=0,\dots,p$ and therefore
$N_{n-2}(\mathbf m)-N_{n-1}(\mathbf m)=2$, 
which means $N_{n-1}(\mathbf m)\ne 0$ or $N_{n-2}(\mathbf m)\ne 0$.
When $d=1$, we have $N_\nu(\mathbf m)=N_\nu(\mathbf m')$ for 
$\nu=1,\dots,n-2$ and $N_{n-1}(\mathbf m)=0$.
Thus we have the claim.

iv) The claim follows from Proposition~\ref{prop:wm}.
\end{proof}
\begin{exmp}\label{ex:SR0}
\index{tuple of partitions!simply reducible}
We show the simply reducible tuples with index 0
whose fundamental tuple is of type $\tilde D_4$, $\tilde E_6$,
$\tilde E_7$ or $\tilde E_8$ (cf.~Example~\ref{ex:Nj}).

$\tilde D_4$: $21,21,21,111\quad 22,22,31,211\quad 22,31,31,1111\quad$

$\tilde E_6$: $211,211,1111\ \  221,221,2111\ \  221,311,11111\ \  222,222,3111
\ \ 222,321,2211$

$\qquad\!222,411,111111 \ \  322,331,2221 \ \  332,431,2222 \ \ 333,441,3222$

$\tilde E_7$: $11111,2111,32 \ \ 111111,2211,42 \ \ 21111,2211,33 \ \ 
111111,3111,33$

$\qquad\!22111,2221,43 \ \ 1111111,2221,52 \ \ 22211,2222,53 \ \ 
11111111,2222,62$

$\qquad\!32111,2222,44 \ \ 22211,3221,53$

$\tilde E_8$: $1111111,322,43 \ \ 11111111,332,53 \ \ 2111111, 332,44 \ \ 
11111111,422,44$

$\qquad\!2211111,333,54\ \ 111111111,333,63 \ \ 2221111,433,55 \ \ 
2222111,443,65$

$\qquad\!3222111,444,66 \ \ 2222211,444,75  \ \ 2222211,543,66 \ \ 2222221,553,76$

$\qquad\!2222222,653,77$
\end{exmp}
In general, we have the following proposition.
\begin{prop}
There exist only a finite number of standard and simply reducible tuples
with any fixed non-positive index of rigidity.
\end{prop}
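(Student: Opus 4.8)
The plan is to reduce the finiteness statement to a bound on the order and on the number of singular points in terms of the index of rigidity, exactly as in the fundamental case discussed in the introduction. Recall from Proposition~\ref{prop:sred} iii) that a realizable non-fundamental tuple $\mathbf m\in\mathcal P_{p+1}$ is simply reducible if and only if $N_\nu(\mathbf m)\ge 0$ for $\nu=2,\dots,\ord\mathbf m-1$, and recall the identity $\sum_{\nu=1}^{n-1}N_\nu(\mathbf m)=\Pidx\mathbf m=1-\tfrac12\idx\mathbf m$ from Theorem~\ref{thm:ExF} i), together with $N_1(\mathbf m)=p-2$. Since all the $N_\nu(\mathbf m)$ with $\nu\ge 2$ are then non-negative and sum to $\Pidx\mathbf m - (p-2)$, we immediately get $p-2\le \Pidx\mathbf m$, i.e.\ $p\le 3-\tfrac12\idx\mathbf m$, which bounds $p$ for fixed non-positive $\idx\mathbf m$; this is the multiplicative analogue of the bound $p\le\tfrac12|\idx\mathbf m|+3$ quoted for fundamental tuples.

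Next I would bound $\ord\mathbf m$. First I would reduce to the fundamental tuple: by Theorem~\ref{thm:univmodel} i) and Definition~\ref{def:fund}, applying $\p_{max}$ repeatedly takes a simply reducible $\mathbf m$ to its fundamental tuple $f\mathbf m=\p_{max}^K\mathbf m$ with $\ord f\mathbf m=\ord\mathbf m-K$ (the defining property of simple reducibility), and $\idx\mathbf m=\idx f\mathbf m$ is preserved. Since $f\mathbf m$ is fundamental with the same index of rigidity, the exact estimate $\ord f\mathbf m\le 3|\idx f\mathbf m|+6$ stated in the introduction for fundamental tuples applies, so it remains to bound $K$. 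Here I would use the $N_\nu$-analysis from the proof of Proposition~\ref{prop:sred} iii): each step of $\p_{max}$ decreases $\ord$ by $d_{max}=1$ in the simply reducible case, and the condition that $N_\nu$ be preserved under $\p_{max}$ forces the strong monotonicity $m_{j,1}-1\ge m_{j,2}-1$ displayed in \eqref{eq:young}. This rigidity, combined with $N_\nu(\mathbf m)\ge 0$ and $\sum N_\nu=\Pidx\mathbf m$ being a fixed number, limits how many distinct partition shapes can occur along the reduction chain, hence bounds $K$; for a standard (monotone with $m_{j,2}>0$) tuple the partitions cannot grow arbitrarily long without violating $N_\nu\ge 0$ at some intermediate $\nu$, because $\sum_\nu N_\nu$ is fixed while adding parts or increasing multiplicities tends to make some $N_\nu$ negative.

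Putting these together: for fixed $\idx\mathbf m=-2c\le 0$ there are only finitely many possible values of $p$, of $\ord\mathbf m$, and hence only finitely many standard tuples $\mathbf m\in\mathcal P_{p+1}^{(n)}$ altogether with that index, so in particular only finitely many simply reducible ones. The main obstacle I anticipate is making the bound on $K$ (equivalently, on $\ord\mathbf m$ itself) fully rigorous and uniform: one must show that the chain $\mathbf m,\p_{max}\mathbf m,\dots,f\mathbf m$ of standard tuples, each satisfying $N_\nu\ge0$ and $\sum_\nu N_\nu=\Pidx\mathbf m$, cannot be too long. I expect this to follow by a careful bookkeeping argument with the Young-diagram picture in \eqref{eq:young} — tracking the quantities $\widetilde m_{j,i}$ and the ``defect'' between $\ord\mathbf m$ and $\ord f\mathbf m$ — analogous to the proof of Proposition~8.1 of \cite{O3} for fundamental tuples, but this is the step requiring genuine work rather than a direct appeal to results already in the excerpt.
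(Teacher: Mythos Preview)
Your bound on $p$ is correct and clean: from $N_1(\mathbf m)=p-2$, $N_\nu(\mathbf m)\ge 0$ for $\nu\ge 2$ (Proposition~\ref{prop:sred}~iii)), and $\sum_{\nu=1}^{n-1}N_\nu(\mathbf m)=\Pidx\mathbf m$ you get $p\le \Pidx\mathbf m+2$. But this is not where the difficulty lies, and your proposal stops short at the crucial step.

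The gap is the bound on $K$ (equivalently on $\ord\mathbf m$). Your heuristic --- that fixed $\sum N_\nu$ together with $N_\nu\ge 0$ ``limits how many distinct partition shapes can occur along the reduction chain'' --- does not work as stated. The proof of Proposition~\ref{prop:sred}~iii) shows that in the simply reducible regime one has $N_\nu(\p_{max}\mathbf m)=N_\nu(\mathbf m)$ for $\nu=1,\dots,n-2$ and $N_{n-1}(\mathbf m)=0$; so the $N_\nu$-profile is literally the same along the entire chain, merely truncated by one zero at each step. The $N_\nu$ therefore carry no new information from one step to the next and cannot by themselves bound the length of the chain. Something finer than the $N_\nu$ must be tracked.

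The paper's argument is organized differently: it assumes an infinite chain
$\mathbf m(0)\xleftarrow{\p_{max}}\mathbf m(1)\xleftarrow{\p_{max}}\cdots$
with $d_{max}(\mathbf m(k))=1$ for all $k\ge 1$ and derives a contradiction by tracking, for each $j$, the triple $(\bar m(k)_j,a(k)_j,b(k)_j)$ where $\bar m(k)_j$ is the largest part of $\mathbf m(k)_j$, $a(k)_j$ its multiplicity, and $b(k)_j$ the multiplicity of $\bar m(k)_j-1$. One shows $a(k)_j+b(k)_j$ is non-increasing in $k$, hence eventually constant; at that point the possible configurations $((a_0,b_0),\dots,(a_3,b_3))$ are pinned down to a short explicit list. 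The first two entries of that list force $\mathbf m(N)$ to be $H_n$ or $EO_n$ (hence rigid, contradicting $\idx\mathbf m\le 0$), and the remaining entries are ruled out by the arithmetic constraint $\sum_j \bar m(k)_j=(p-1)\ord\mathbf m(k)+1$ via elementary floor-function inequalities. This combinatorial stabilization of the ``top'' of each partition is the missing idea in your sketch; the $N_\nu$ alone, being invariant along the chain, cannot substitute for it.
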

\begin{proof}
First note that $\mathbf m\in\mathcal P_{p+1}$ if 
$d_{max}(\mathbf m)=1$ and $\ord\mathbf m>3$
and $\p_{max}\mathbf m\in\mathcal P_{p+1}$.
Since there exist only finite basic tuples with any fixed index 
of rigidity (cf.~Remark~\ref{rem:Fbasic}), 
we have only to prove the non-existence of the infinite sequence
\[
 \mathbf m(0)\xleftarrow{\p_{max}}\mathbf m(1)\xleftarrow{\p_{max}}\cdots\cdots
 \xleftarrow{\p_{max}}\mathbf m(k)\xleftarrow{\p_{max}}\mathbf m(k+1)
 \xleftarrow{\p_{max}}\cdots
\]
such that $d_{max}(\mathbf m(k))=1$ for $k\ge 1$ and $\idx\mathbf m(0)\le 0$.

Put
\begin{align*}
  \bar m(k)_j&=\max_{\nu}\{m(k)_{j,\nu}\},\\
  a(k)_j&=\#\{\nu\,;\,m(k)_{j,\nu}=\bar m(k)_j\},\\ 
  b(k)_j&=\begin{cases} 
       \#\{\nu\,;\,m(k)_{j,\nu}=\bar m(k)_j-1\} &(\bar m(k)_j>1),\\
       \infty&(\bar m(k)_j=1).
      \end{cases}
\end{align*}
The assumption $d_{max}(\mathbf m(k))=d_{max}(\mathbf m(k+1))=1$
implies that there exist indices $0\le j_k<j'_k$ such that
\begin{equation}\label{eq:SRP1}
 (a(k+1)_j,b(k+1)_j)
 =\begin{cases}
   (a(k)_j+1,b(k)_j-1) &(j=j_k\text{ or }j'_k),\\
   (1,a(k)_j-1)        &(j\ne j_k\text{ and }j'_k)
  \end{cases}
\end{equation}
and
\begin{equation}\label{eq:RSM1}
 \bar m(k)_0+\cdots+\bar m(k)_p=(p-1)\ord\mathbf m(k)+1\qquad(p\gg 1)
\end{equation}
for $k=1,2,\dots$.
Since $a(k+1)_j+b(k+1)_j\le a(k)_j+b(k)_j$, there exists a positive integer
$N$ such that $a(k+1)_j+b(k+1)_j = a(k)_j+b(k)_j$ for $k\ge N$, which means
\begin{equation}\label{eq:SRP2}
  b(k)_j\begin{cases}
          >0&(j=j_k\text{ or }j'_k),\\
          =0&(j\ne j_k\text{ and }j'_k).
         \end{cases}
\end{equation}
Putting $(a_j,b_j)=(a(N)_j,b(N)_j)$,
we may assume $b_0\ge b_1>b_2=b_3=\cdots=0$ and $a_2\ge a_3\ge\cdots$.
Moreover we may assume $j'_{N+1}\le 3$, which means $a_j=1$ for $j\ge 4$.
Then the relations \eqref{eq:SRP1} and \eqref{eq:SRP2} for 
$k=N, N+1, N+2$ and $N+3$ prove that $\bigl((a_0,b_0),\cdots,(a_3,b_3)\bigr)$
is one of the followings:
\begin{align}
&((a_0,\infty),(a_1,\infty),(1,0),(1,0)),\label{eq:RSC1}\\
&((a_0,\infty),(1,1),(2,0),(1,0)),\label{eq:RSC2}\\
&((2,2),(1,1),(4,0),(1,0)),\ ((1,3),(3,1),(2,0),(1,0)),\label{eq:RSC3}\\
&((1,2),(2,1),(3,0),(1,0)),\label{eq:RSC4}\\
&((1,1),(1,1),(2,0),(2,0)).\label{eq:RSC5}
\end{align}
In fact if $b_1>1$, $a_2=a_3=1$ and we have \eqref{eq:RSC1}.
Thus we may assume $b_1=1$.
If $b_0=\infty$, $a_3=1$ and we have \eqref{eq:RSC2}.
If $b_0=b_1=1$, we have easily \eqref{eq:RSC5}.
Thus we may moreover assume $b_1=1<b_0<\infty$ and $a_3=1$.
In this case the integers $j''_k$ satisfying
$b(k)_{j''_k}=0$ and $0\le j''_k\le 2$ for $k\ge N$ are
uniquely determined and we have easily \eqref{eq:RSC3} or 
\eqref{eq:RSC4}.

Put $n=\ord\mathbf m(N)$.
We may suppose $\mathbf m(N)$ is standard.
Let $p$ be an integer such that $m_{j,0}<n$ if and only 
if $j\le p$.  Note that $p\ge 2$.
Then if $\mathbf m(N)$ satisfies \eqref{eq:RSC1} 
(resp.~\eqref{eq:RSC2}), \eqref{eq:RSM1} implies 
$\mathbf m(N)=1^n,1^n,n-11$ (resp.~$1^n,mm-11,mm$ or $1^n,m+1m,mm1$) 
and $\mathbf m(N)$ is rigid.

Suppose one of \eqref{eq:RSC3}--\eqref{eq:RSC5}.
Then it is easy to check that $\mathbf m(N)$ doesn't satisfy 
\eqref{eq:RSM1}.
For example, suppose \eqref{eq:RSC4}.
Then $3m_{0,1}-2\le n$, $3m_{1,1}-1\le n$ and $3m_{2,1}\le n$
and we have $m_{0,1}+m_{1,1}+m_{2,1}\le 
[\frac {n+2}3]+[\frac{n+1}3]+[\frac{n}3]=n$, 
which contradicts to \eqref{eq:RSM1}.
The relations
$[\frac{n+2}{4}]+[\frac n2]+[\frac{n}4]\le n$
and
$2[\frac {n+1}2]+2[\frac n2]=2n$ 
assure the same conclusion in the other cases.
\end{proof}
%
%
%
\section{A Kac-Moody root system}\label{sec:KacM}
\subsection{Correspondence with a Kac-Moody root system}\label{sec:KM}
\index{Kac-Moody root system}
We review a Kac-Moody root system to describe the combinatorial
structure of middle convolutions on the spectral types.
Its relation to Deligne-Simpson problem is first clarified by \cite{CB}.

Let
\begin{equation}
  I:=\{0,\,(j,\nu)\,;\,j=0,1,\ldots,\ \nu=1,2,\ldots\}.
\end{equation}
\index{00I@$I,\ I'$}%
be a set of indices and 
let $\mathfrak h$ be an infinite dimensional real vector space with 
the set of basis $\Pi$, where
\begin{equation}
 \Pi=\{\alpha_i\,;\,i\in I\}
    =\{\alpha_0,\ \alpha_{j,\nu}\,;\,j=0,1,2,\ldots,\ \nu=1,2,\ldots\}.
\end{equation}
\index{000Pi@$\Pi,\ \Pi',\ \Pi(\mathbf m)$}
Put
\begin{align}
 I' &:= I\setminus\{0\},\qquad \Pi':=\Pi\setminus\{\alpha_0\},\\
 Q  &:=\sum_{\alpha\in\Pi}\mathbb Z\alpha\ \supset\ 
 Q_+:=\sum_{\alpha\in\Pi}\mathbb Z_{\ge0}\alpha.\index{00Qp@$Q_+$}
\end{align}
We define an indefinite symmetric bilinear form on $\mathfrak h$ by
\begin{equation}\label{eq:PIKac}%
\index{000alpha0@$\alpha_0$, $\alpha_\ell$, $\alpha_{\mathbf m}$}%
 \begin{split}
 (\alpha|\alpha)
 &= 2\qquad\ \,(\alpha\in\Pi),\\
 (\alpha_0|\alpha_{j,\nu})
 &=-\delta_{\nu,1},\\
 (\alpha_{i,\mu}|\alpha_{j,\nu})
 &=\begin{cases}
    0 &(i\ne j\text{ \ \ or \ \ }|\mu-\nu|>1),\\
    -1&(i=j\text{ \ and \ }|\mu-\nu|=1).
  \end{cases}
 \end{split}\quad\ \ \raisebox{10pt}{\text{\small
\begin{xy}
\ar@{-}               *++!D{\text{$\alpha_0$}}  *\cir<4pt>{}="O";
             (10,0)   *+!L!D{\text{$\alpha_{1,1}$}} *\cir<4pt>{}="A",
\ar@{-} "A"; (20,0)   *+!L!D{\text{$\alpha_{1,2}$}} *\cir<4pt>{}="B",
\ar@{-} "B"; (30,0)   *{\cdots}, 
\ar@{-} "O"; (10,-7)  *+!L!D{\text{$\alpha_{2,1}$}} *\cir<4pt>{}="C",
\ar@{-} "C"; (20,-7)  *+!L!D{\text{$\alpha_{2,2}$}} *\cir<4pt>{}="E",
\ar@{-} "E"; (30,-7)  *{\cdots}
\ar@{-} "O"; (10,8)   *+!L!D{\text{$\alpha_{0,1}$}} *\cir<4pt>{}="D",
\ar@{-} "D"; (20,8)   *+!L!D{\text{$\alpha_{0,2}$}} *\cir<4pt>{}="F",
\ar@{-} "F"; (30,8)   *{\cdots}
\ar@{-} "O"; (10,-13) *+!L!D{\text{$\alpha_{3,1}$}} *\cir<4pt>{}="G",
\ar@{-} "G"; (20,-13) *+!L!D{\text{$\alpha_{3,2}$}} *\cir<4pt>{}="H",
\ar@{-} "H"; (30,-13) *{\cdots},
\ar@{-} "O"; (7, -13),
\ar@{-} "O"; (4, -13),
\end{xy}}}
\end{equation}

\index{Weyl group}
\index{simple root}
\index{simple reflection}
The element of $\Pi$ is called the \textsl{simple root} of a 
Kac-Moody root system and the \textsl{Weyl group} 
$W_{\!\infty}$ of this Kac-Moody root system is generated by the 
\textsl{simple reflections} $s_i$ with $i\in I$.  
Here the \textsl{reflection} with respect to an element 
$\alpha\in\mathfrak h$ satisfying $(\alpha|\alpha)\ne0$ is 
the linear transformation
\index{00salpha@$s_\alpha,\ s_i$}\index{reflection}
\begin{equation}
 s_\alpha\,:\,\mathfrak h\ni x\mapsto
 x-2\frac{(x|\alpha)}{(\alpha|\alpha)}\alpha\in\mathfrak h
\end{equation}
and
\begin{equation}\label{eq:Kzri}
  s_i=s_{\alpha_i} \text{ \ for \ } i\in I.
\end{equation}
In particular $s_i(x)=x-(\alpha_i|x)\alpha_i$ for $i\in I$
and the subgroup of $W_{\!\infty}$ generated by $s_i$ for 
$i\in I\setminus\{0\}$ is denoted by $W'_{\!\infty}$.

The Kac-Moody root system is determined by the set of 
simple roots $\Pi$ and its Weyl group $W_{\!\infty}$ and 
it is denoted by $(\Pi,W_{\!\infty})$.
\index{00Winfty@$W_{\!\infty},\ W'_{\!\infty},\ \widetilde W_{\!\infty}$}

Denoting $\sigma(\alpha_0)=\alpha_0$ and $\sigma(\alpha_{j,\nu})=
\alpha_{\sigma(j),\nu}$ for $\sigma\in\mathfrak S_\infty$, we put
\begin{equation}\label{eq:Kzouter}
  \widetilde W_{\!\infty}:=\mathfrak S_\infty\ltimes W_{\!\infty},
\end{equation}
which is an automorphism group of the root system.
\begin{rem}[\cite{Kc}]\label{rem:Kac}
\index{000Delta@$\Delta,\ \Delta_+,\ \Delta_-$}
\index{000Delta0@$\Delta^{re},\ \Delta^{re}_+,\ \Delta^{re}_-,\ \Delta^{im}, \Delta^{im}_+,\ \Delta^{im}_-$}
\index{00B@$B$}
The set $\Delta^{re}$ of \textsl{real roots} equals the $W_{\!\infty}$-orbit
of $\Pi$, which also equals $W_{\!\infty}\alpha_0$.
Denoting 
\begin{equation}
 B:=\{\beta\in Q_+\,;\,\supp\beta\text{ is connected and }
 (\beta,\alpha)\le 0\quad(\forall\alpha\in\Pi)\},
\end{equation}
the set of \textsl{positive imaginary roots} 
$\Delta^{im}_+$ equals $W_{\!\infty} B$.
Here 
\begin{equation}
\supp\beta:=\{\alpha\in\Pi\,;\,n_\alpha\ne0\}\text{ \  if  \ }
\beta=\sum_{\alpha\in\Pi} n_\alpha\alpha.
\end{equation} 

The set $\Delta$ of roots equals 
$\Delta^{re}\cup\Delta^{im}$ by denoting
$\Delta_-^{im}=-\Delta_+^{im}$ and 
$\Delta^{im}=\Delta_+^{im}\cup\Delta_-^{im}$.
Put $\Delta_+=\Delta\cap Q_+$, $\Delta_-=-\Delta_+$, 
$\Delta^{re}_+=\Delta^{re}\cap Q_+$ and $\Delta^{re}_-=-\Delta^{re}_+$.
Then
$\Delta=\Delta_+\cup\Delta_-$,
$\Delta^{im}_+\subset\Delta_+$ and $\Delta^{re}=\Delta^{re}_+\cup\Delta^{re}_-$.
The root in $\Delta$ is called \textsl{positive} if and only if $\alpha\in Q_+$.
\index{real root}\index{imaginary root}\index{positive root}

A subset $L\subset\Pi$ is called \textsl{connected}
if the decomposition $L_1\cup L_2= L$ with 
$L_1\ne\emptyset$ and $L_2\ne\emptyset$ always implies the 
existence of $v_j\in L_j$ satisfying $(v_1|v_2)\ne0$.
Note that $\supp\alpha\ni\alpha_0$ for $\alpha\in\Delta^{im}$.

The subset $L$ is called classical if it corresponds to the classical Dynkin
diagram, which is equivalent to the condition that the group generated by
the reflections with respect to the elements in $L$ is a finite group.

The connected subset $L$ is called \textsl{affine} if it corresponds to
affine Dynkin diagram and in our case it corresponds to $\tilde D_4$
or $\tilde E_6$ or $\tilde E_7$ or $\tilde E_8$ with the following 
Dynkin diagram, respectively.

\index{Dynkin diagram}
\index{affine}
\index{00D4@$\tilde D_4,\ \tilde E_6,\ \tilde E_7,\ \tilde E_8$}
{\small
\begin{equation}\index{Dynkin diagram}\label{eq:Dynkinidx0}
\begin{gathered}
\begin{xy}
\ar@{-}               *++!D{1}  *\cir<4pt>{};
             (10,0)   *+!L+!D{2}*\cir<4pt>{}="A",
\ar@{-} "A"; (20,0)   *++!D{1}  *\cir<4pt>{},
\ar@{-} "A"; (10,-10) *++!L{1}  *\cir<4pt>{},
\ar@{-} "A"; (10,10)  *++!L{1}  *\cir<4pt>{},
\ar@{} (10,-14) *{11,11,11,11}
\end{xy}
\quad
\begin{xy}
\ar@{-}               *++!D{2}  *\cir<4pt>{};
             (10,0)   *++!D{4}  *\cir<4pt>{}="A",
\ar@{-} "A"; (20,0)   *+!L+!D{6}*\cir<4pt>{}="B",
\ar@{-} "B"; (30,0)   *++!D{5}  *\cir<4pt>{}="C",
\ar@{-} "C"; (40,0)   *++!D{4}  *\cir<4pt>{}="D",
\ar@{-} "D"; (50,0)   *++!D{3}  *\cir<4pt>{}="E",
\ar@{-} "E"; (60,0)   *++!D{2}  *\cir<4pt>{}="F",
\ar@{-} "F"; (70,0)   *++!D{1}  *\cir<4pt>{},
\ar@{-} "B"; (20,10)  *++!L{3}  *\cir<4pt>{}
\ar@{} (20,-4) *{33,222,111111}
\end{xy}
\allowdisplaybreaks\\[-.8cm]
\begin{xy}
\ar@{-}               *++!D{1}  *\cir<4pt>{};
             (10,0)   *++!D{2}  *\cir<4pt>{}="A",
\ar@{-} "A"; (20,0)   *++!D{3}  *\cir<4pt>{}="B",
\ar@{-} "B"; (30,0)   *+!L+!D{4}*\cir<4pt>{}="C",
\ar@{-} "C"; (40,0)   *++!D{3}  *\cir<4pt>{}="D",
\ar@{-} "D"; (50,0)  *++!D{2}   *\cir<4pt>{}="E",
\ar@{-} "E"; (60,0)  *++!D{1}   *\cir<4pt>{},
\ar@{-} "C"; (30,10)  *++!L{2}  *\cir<4pt>{},
\ar@{} (30,-4) *{22,1111,1111}
\end{xy}
\quad
\begin{xy}
\ar@{-}               *++!D{1}  *\cir<4pt>{};
             (10,0)   *++!D{2}  *\cir<4pt>{}="O",
\ar@{-} "O"; (20,0)   *+!L+!D{3}*\cir<4pt>{}="A",
\ar@{-} "A"; (30,0)   *++!D{2}  *\cir<4pt>{}="B",
\ar@{-} "B"; (40,0)   *++!D{1}  *\cir<4pt>{}
\ar@{-} "A"; (20,10)  *++!L{2}  *\cir<4pt>{}="C",
\ar@{-} "C"; (20,20)  *++!L{1}  *\cir<4pt>{},
\ar@{} (20,-4) *{111,111,111}
\end{xy}
\end{gathered}\end{equation}}
\end{rem}
\smallskip

\noindent
Here the circle correspond to simple roots and the numbers attached
to simple roots are the coefficients $n$ and $n_{j,\nu}$
in the expression \eqref{eq:root2part} of a root $\alpha$.
\medskip

For a tuple of partitions $\mathbf m
 =\bigl(m_{j,\nu}\bigr)_{j\ge 0,\ \nu\ge 1}
\in\mathcal P^{(n)}$, we define
\index{000alpha0@$\alpha_0$, $\alpha_\ell$, $\alpha_{\mathbf m}$}%
\begin{equation}\label{eq:Kazpart}
 \begin{split}
  n_{j,\nu}&:=m_{j,\nu+1}+m_{j,\nu+2}+\cdots,\\
  \alpha_{\mathbf m}&:=n\alpha_0
   + \sum_{j=0}^\infty\sum_{\nu=1}^\infty n_{j,\nu}\alpha_{j,\nu}
 \in Q_+,\\
 \kappa(\alpha_{\mathbf m})&:=\mathbf m.
 \end{split}
\end{equation}
As is given in \cite[Proposition~2.22]{O3} we have
\begin{prop}\label{prop:Kac}
{\rm i)} \ 
\hfill$\idx(\mathbf m,\mathbf m')=
 (\alpha_{\mathbf m}|\alpha_{\mathbf m'})$.\hfill\phantom{ABCDEFG}

{\rm ii)} \ 
Given $i\in I$, we have
$\alpha_{\mathbf m'} = s_i(\alpha_\mathbf m)$ with
\begin{equation*}\index{tuple of partitions!index}
 \mathbf m'=
 \begin{cases}
   \p\mathbf m&(i=0),\\
   (m_{0,1}\dots,m_{j,1}\dots
   \overset{\underset{\smallsmile}\nu}{m_{j,\nu+1}}
   \overset{\underset{\smallsmile}{\nu+1}}{m_{j,\nu}}\dots,\dots)
   &\bigl(i=(j,\nu)\bigr).
 \end{cases}
\end{equation*}
Moreover for $\ell=(\ell_0,\ell_1,\ldots)\in\mathbb Z_{>0}^\infty$
satisfying $\ell_\nu=1$ for $\nu\gg1$ we have
\index{000alpha0@$\alpha_0$, $\alpha_\ell$, $\alpha_{\mathbf m}$}%
\begin{align}
 \alpha_\ell:=\alpha_{\mathbf 1_\ell} &=\alpha_0
     +\sum_{j=0}^\infty\sum_{\nu=1}^{\ell_j-1}\alpha_{j,\nu}
 =\biggl(\prod_{j\ge 0}
   s_{j,\ell_j-1}\cdots s_{j,2}s_{j,1}\biggr)(\alpha_0),\label{eq:alpl}\\
 \alpha_{\p_\ell(\mathbf m)} &=s_{\alpha_\ell}(\alpha_{\mathbf m})
  = \alpha_{\mathbf m} 
  - 2\frac{(\alpha_{\mathbf m}|\alpha_\ell)}
     {(\alpha_\ell|\alpha_\ell)}\alpha_\ell
  = \alpha_{\mathbf m}-(\alpha_{\mathbf m}|\alpha_\ell)\alpha_\ell.\label{eq:M2K}
\end{align}
\end{prop}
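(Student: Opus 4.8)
The statement to prove is Proposition~\ref{prop:Kac}, which has two parts: the identity $\idx(\mathbf m,\mathbf m')=(\alpha_{\mathbf m}|\alpha_{\mathbf m'})$, and the description of the action of the simple reflections $s_i$ on $\alpha_{\mathbf m}$ together with the formulas \eqref{eq:alpl} and \eqref{eq:M2K}. The plan is to treat the two parts in sequence, since the reflection formulas depend on the bilinear form identity.

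For part (i), the approach is a direct computation using the definition \eqref{eq:Kazpart} of $\alpha_{\mathbf m}$ in terms of the partial sums $n_{j,\nu}=m_{j,\nu+1}+m_{j,\nu+2}+\cdots$ and the explicit Gram matrix \eqref{eq:PIKac}. First I would expand $(\alpha_{\mathbf m}|\alpha_{\mathbf m'})$ bilinearly. The diagonal terms contribute $2(n n' + \sum_{j,\nu} n_{j,\nu}n'_{j,\nu})$; the $\alpha_0$--$\alpha_{j,1}$ cross terms contribute $-\sum_j (n\, n'_{j,1}+n'\, n_{j,1})$; and the $\alpha_{j,\nu}$--$\alpha_{j,\nu+1}$ cross terms along each arm contribute $-2\sum_{j,\nu} n_{j,\nu}n'_{j,\nu+1}$ (symmetrized). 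The key algebraic step is then the Abel-summation identity: since $m_{j,\nu}=n_{j,\nu-1}-n_{j,\nu}$ with $n_{j,0}=n$, one gets $\sum_\nu m_{j,\nu}m'_{j,\nu} = n n' - \sum_{j,\nu}(n_{j,\nu}n'_{j,\nu+1}+n_{j,\nu+1}n'_{j,\nu}) + 2\sum_{j,\nu}n_{j,\nu}n'_{j,\nu}$ truncated appropriately; collecting everything and using that the number of arms $j$ runs to $p$ effectively (all but finitely many $m_{j,\nu}$ vanish for $\nu\ge 2$) yields $\sum_{j=0}^p\sum_\nu m_{j,\nu}m'_{j,\nu}-(p-1)nn'$, which is exactly $\idx(\mathbf m,\mathbf m')$. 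Taking $\mathbf m'=\mathbf m$ recovers $(\alpha_{\mathbf m}|\alpha_{\mathbf m})=\idx\mathbf m$, consistent with the excerpt.

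For part (ii), I would compute $s_i(\alpha_{\mathbf m})=\alpha_{\mathbf m}-(\alpha_i|\alpha_{\mathbf m})\alpha_i$ case by case. When $i=(j,\nu)$, the coefficient $(\alpha_{j,\nu}|\alpha_{\mathbf m})$ equals $2n_{j,\nu}-n_{j,\nu-1}-n_{j,\nu+1}$ (with $n_{j,0}=n$), which is $-m_{j,\nu+1}+m_{j,\nu}=m_{j,\nu}-m_{j,\nu+1}$; subtracting this multiple of $\alpha_{j,\nu}$ from $\alpha_{\mathbf m}$ changes only the coefficient $n_{j,\nu}$, and one checks that the resulting tuple is precisely the one obtained by swapping $m_{j,\nu}$ and $m_{j,\nu+1}$, i.e.\ $\kappa$ of the new root. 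When $i=0$, $(\alpha_0|\alpha_{\mathbf m})=2n-\sum_j n_{j,1}=2n-\sum_j(n-m_{j,1})=-(n-\sum_j m_{j,1})+\ldots$; more carefully, with $p+1$ nontrivial arms this is $d-n$ where $d=\sum_{j=0}^p m_{j,1}-(p-1)n$ as in \eqref{eq:defd}, so $s_0(\alpha_{\mathbf m})$ has $\alpha_0$-coefficient $n-d$ and the formula matches $\alpha_{\p\mathbf m}$ via \eqref{eq:redm}. The formula \eqref{eq:alpl} for $\alpha_\ell$ follows by specializing to $\mathbf m=\mathbf 1_\ell$ (the tuple with $m_{j,\nu}=1$ for $\nu\le\ell_j$, else $0$), whose partial sums are $n_{j,\nu}=\ell_j-\nu$ for $\nu<\ell_j$; applying the arm reflections $s_{j,1},s_{j,2},\dots,s_{j,\ell_j-1}$ in order to $\alpha_0$ produces exactly $\alpha_0+\sum_{j,\nu<\ell_j}\alpha_{j,\nu}$ by the case-$i=(j,\nu)$ analysis. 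Finally \eqref{eq:M2K} is immediate from $(\alpha_\ell|\alpha_\ell)=2$ (it is a real root, being in the $W_\infty$-orbit of $\alpha_0$) together with part (i), which gives $(\alpha_{\mathbf m}|\alpha_\ell)=\idx(\mathbf m,\mathbf 1_\ell)=d_\ell(\mathbf m)$, matching \eqref{eq:dm}.

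The main obstacle I anticipate is purely bookkeeping: keeping the index ranges straight in the Abel-summation step of part (i), in particular handling the boundary term at $\nu=0$ (where $n_{j,0}=n$) and making sure the "$-(p-1)$" coefficient emerges correctly from the $p+1$ contributions $-n\,n'_{j,1}$ combined with the $2nn'$ from the $\alpha_0$ diagonal term. Once that identity is in hand, everything else is a short verification, and the correspondence $\kappa$ between roots and tuples of partitions makes the reflection formulas transparent. Since the excerpt explicitly cites \cite[Proposition~2.22]{O3} for this statement, I would also remark that the proof is essentially a transcription of the argument there, adapted to the present normalization of the bilinear form.
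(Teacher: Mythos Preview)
Your approach is correct in outline, and indeed is essentially the only way to prove this: the paper does not give its own proof but simply cites \cite[Proposition~2.22]{O3}, where the argument is precisely the direct bilinear-form expansion you describe. So your plan matches what the cited reference does.

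That said, there are a few computational slips you should clean up. First, in the $i=(j,\nu)$ case you write $(\alpha_{j,\nu}|\alpha_{\mathbf m})=m_{j,\nu}-m_{j,\nu+1}$, but the correct value is $m_{j,\nu+1}-m_{j,\nu}$: from $n_{j,\nu-1}-n_{j,\nu}=m_{j,\nu}$ one gets $2n_{j,\nu}-n_{j,\nu-1}-n_{j,\nu+1}=-m_{j,\nu}+m_{j,\nu+1}$. Second, in the $i=0$ case, $(\alpha_0|\alpha_{\mathbf m})=2n-\sum_{j=0}^p(n-m_{j,1})=\sum_{j=0}^p m_{j,1}-(p-1)n=d$, not $d-n$; your conclusion that the new $\alpha_0$-coefficient is $n-d$ is right, but it follows from $(\alpha_0|\alpha_{\mathbf m})=d$, and with your stated value $d-n$ it would give $2n-d$ instead. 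Third, your description of $\mathbf 1_\ell$ is wrong: the tuple with $m_{j,\nu}=1$ for all $\nu\le\ell_j$ does not even have constant order across $j$. The correct tuple is $m_{j,\nu}=\delta_{\nu,\ell_j}$, so that $\ord\mathbf 1_\ell=1$ and $n_{j,\nu}=1$ for $\nu<\ell_j$, $0$ otherwise; this is what makes $\alpha_{\mathbf 1_\ell}=\alpha_0+\sum_j\sum_{\nu<\ell_j}\alpha_{j,\nu}$ come out right. None of these affect the structure of the argument, but they would need fixing in a written-up proof.
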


Note that
\begin{equation}
 \begin{split}\alpha&=n\alpha_0+\sum_{j\ge 0}
 \sum_{\nu\ge 1}n_{j,\nu}\alpha_{j,\nu}\in\Delta^+
\text{ with }n>0\\
 &\quad
 \Rightarrow \ 
 n\ge n_{j,1}\ge n_{j,2}\ge \cdots\qquad(j=0,1,\ldots).
 \end{split}\label{eq:root2part}
\end{equation}
In fact, for a sufficiently large $K\in\mathbb Z_{>0}$,
we have $n_{j,\mu}=0$ for $\mu\ge K$ and
\[
  s_{\alpha_{j,\nu}+\alpha_{j,\nu+1}+\cdots+\alpha_{j,K}}\alpha
  = \alpha + (n_{j,\nu-1}-n_{j,\nu})
    (\alpha_{j,\nu}+\alpha_{j,\nu+1}+\cdots+\alpha_{j,K})\in\Delta^+
\]
for $\alpha\in\Delta_+$ in \eqref{eq:root2part}, 
which means $n_{j,\nu-1}\ge n_{j,\nu}$ for $\nu\ge1$.
Here we put $n_{j,0}=n$ and $\alpha_{j,0}=\alpha_0$.
Hence for $\alpha\in\Delta_+$ with $\supp\alpha\ni\alpha_0$,
there uniquely exists $\mathbf m\in\mathcal P$ satisfying
$\alpha=\alpha_{\mathbf m}$.

It follows from \eqref{eq:M2K} that
under the identification $\mathcal P\subset Q_+$
with \eqref{eq:Kazpart}, our operation $\p_\ell$ corresponds
to the reflection with respect to the root $\alpha_\ell$.
Moreover the rigid (resp.\ indivisible realizable) tuple 
of partitions corresponds to the positive real root (resp.\ 
indivisible positive root) whose support contains $\alpha_0$,
which were first established by \cite{CB} in the case of Fuchsian
systems of Schlesinger canonical form (cf.~\cite{O3}).

The corresponding objects with this identification are as follows,
which will be clear in this subsection.
Some of them are also explained in \cite{O3}.
\medskip

\noindent
\begin{longtable}{|c|c|}\hline
$\mathcal P$
 & Kac-Moody root system\rule{0pt}{11.5pt}\\ \hline\hline
$\mathbf m$ 
 & $\alpha_{\mathbf m}$ \ (cf.~\eqref{eq:Kazpart})\rule{0pt}{11.5pt}\\ \hline
{$\mathbf m$ : monotone}
 & $\alpha\in Q_+$\,:\, $(\alpha|\beta)\le 0
  \ \ (\forall\beta\in\Pi')$\rule{0pt}{11.5pt}\\ \hline
$\mathbf m$ : realizable
 & $\alpha\in\overline\Delta_+$
\rule{0pt}{11.5pt}\\ \hline
$\mathbf m$ : rigid
 & $\alpha\in\Delta_+^{re}\,:\,\supp\alpha\ni\alpha_0$
\rule{0pt}{11.5pt}\\ \hline
$\mathbf m$ : monotone and fundamental
 & $\alpha\in Q_+$\,:$\,\alpha=\alpha_0\text{ or }
  (\alpha|\beta)\le 0
  \ \ (\forall\beta\in\Pi)$\rule{0pt}{11.5pt}\\ \hline
\raisebox{-6.5pt}{$\mathbf m$ : irreducibly realizable}
 & $\alpha\in\Delta_+,\ 
\,\,\supp\alpha\ni\alpha_0$\\[-4pt] 
 & indivisible or $(\alpha|\alpha)<0$\\ \hline
\raisebox{-6.5pt}{$\mathbf m$ : basic and monotone}
 & $\alpha\in Q_+$\,:\, $(\alpha|\beta)\le 0
  \ \ (\forall\beta\in\Pi)$\rule{0pt}{11.5pt}\\[-2pt]
 & indivisible 
\\ \hline
\raisebox{-6.5pt}{$\mathbf m$\,:\! simply reducible and monotone}
 & $\alpha\in\Delta_+$\,:\,$(\alpha|\alpha_{\mathbf m})=1\quad(\forall\alpha\in
\Delta(\mathbf m))$\rule{0pt}{11.5pt}\\[-2pt]
& $\alpha_0\in\Delta(\mathbf m),\ \ (\alpha|\beta)\le 0\ \ (\forall\beta\in\Pi')$
\\ \hline
$\ord\mathbf m$ & $n_0\,:\,\alpha=n_0\alpha_0+\sum_{i,\nu}n_{i,\nu}\alpha_{i,\nu}$
\rule{0pt}{11.5pt}\\ \hline
$\idx(\mathbf m,\mathbf m')$ 
& $(\alpha_{\mathbf m}|\alpha_{\mathbf m'})$
\rule{0pt}{11.5pt}\\ \hline
$\idx\mathbf m$ 
& $(\alpha_{\mathbf m}|\alpha_{\mathbf m})$
\rule{0pt}{11.5pt}\\ \hline
$d_{\ell}(\mathbf m)$ \ (cf.~\eqref{eq:dm})
& $(\alpha_\ell|\alpha_{\mathbf m})$ \ (cf.~\eqref{eq:alpl})
\rule{0pt}{11.5pt}\\ \hline
$\Pidx\mathbf m+\Pidx\mathbf m'=\Pidx(\mathbf m+\mathbf m')$
 &$(\alpha_{\mathbf m}|\alpha_{\mathbf m'})=-1$
\rule{0pt}{11.5pt}\\ \hline
$(\nu,\nu+1)\in G_j\subset S_\infty'$ \ (cf.~\eqref{eq:S_infty})
 & 
 $s_{j,\nu}\in W_{\!\infty}'$ \ (cf.~\eqref{eq:Kzri})
\rule[-4.5pt]{0pt}{16pt}\\ \hline
$H\simeq \mathfrak S_\infty$ \ (cf.~\eqref{eq:S_infty})
 & $\mathfrak S_\infty$ in \eqref{eq:Kzouter}
\rule{0pt}{11.5pt}\\ \hline 
$\p_{\bf 1}$  & $s_0$\rule{0pt}{11.5pt}\\ \hline
$\p_{\ell}$  & $s_{\alpha_\ell}$  \ (cf.~\eqref{eq:alpl})
\rule{0pt}{11.5pt}\\ \hline
$\langle\p_{\bf 1},\,S_\infty\rangle$
 & $\widetilde W_{\!\infty}\rule{0pt}{11.5pt}$ \ (cf.~\eqref{eq:Kzouter})
\rule{0pt}{11.5pt}\\ \hline
$\{\lambda_{\mathbf m}\}$
 &
$(\Lambda(\lambda),\alpha_{\mathbf m})$ \ (cf.~\eqref{eq:defKacGRS})
\rule{0pt}{11.5pt}\\ \hline
$|\{\lambda_{\mathbf m}\}|$
 &
$(\Lambda(\lambda)+\frac12\alpha_{\mathbf m}|\alpha_{\mathbf m})$
\rule[-4.5pt]{0pt}{16pt}\\ \hline
$\Ad\bigl((x-c_j)^\tau\bigr)$
 & $+\tau\Lambda^0_{0,j}$ \ (cf.~\eqref{eq:defKacGRS})
\rule[-4.5pt]{0pt}{16pt}\\ \hline
\end{longtable}\index{000lambda@$\arrowvert$\textbraceleft$\lambda_{\mathbf m}$\textbraceright$\arrowvert$}
\medskip

Here\index{000Delta00@$\overline\Delta_+$}
\begin{equation}
\overline\Delta_+:=\{k\alpha\,;\,\alpha\in\Delta_+,\ k\in\mathbb Z_{>0},
\ \supp\alpha\ni\alpha_0\},
\end{equation}
$\Delta(\mathbf m)\subset \Delta_+^{re}$ is given in \eqref{def:Deltam}
and $\Lambda(\lambda)\in{\overline{\mathfrak h}}_p$ is defined as follows.

\begin{defn}
Fix a positive integer $p$ which may be $\infty$.  
Put
\begin{equation} 
 I_p:=\{0,\ (j,\nu)\,;\,j=0,1,\dots,p,\ \nu=1,2,\ldots\}\subset I
\index{00Ip@$I_p$}
\end{equation}
for a positive integer $p$ and $I_\infty=I$. 

Let $\mathfrak h_p$ be the $\mathbb R$-vector space of finite linear combinations 
the elements of $\Pi_p:=\{\alpha_i\,;\,i\in\Pi_p\}$ and
let $\mathfrak h_p^\vee$ be the $\mathbb C$-vector space whose elements
are linear combinations of infinite or finite elements of $\Pi_p$,
which is identified with $\Pi_{i\in I_p}\mathbb C\alpha_i$ and contains
${\mathfrak h}_p$.%
\index{00hp@$\mathfrak h_p$, $\mathfrak h_p^\vee$, $\overline{\mathfrak h}_p$}

The element $\Lambda\in\mathfrak h_p^\vee$
naturally defines a linear form of $\mathfrak h_p$ by 
$(\Lambda|\ \cdot\ )$ and the group
$\widetilde W_{\!\infty}$ acts on $\mathfrak h_p^\vee$.
If $p=\infty$, we assume that the element $\Lambda=\xi_0\alpha_0+
\sum \xi_{j,\nu}\alpha_{j,\nu}\in\mathfrak h_\infty^\vee$ always satisfies
$\xi_{j,1}=0$ for sufficiently large $j\in\mathbb Z_{\ge0}$.
Hence we have naturally $\mathfrak h_p^\vee\subset\mathfrak h_{p+1}^\vee$
and $\mathfrak h_\infty^\vee=\bigcup_{j\ge0}\mathfrak h_j^\vee$.

Define the elements of $\mathfrak h_p^\vee$:
\begin{equation}
 \begin{split}
 \Lambda_0&:=\frac12\alpha_0+
   \frac12\sum_{j=0}^p\sum_{\nu=1}^\infty(1-\nu)\alpha_{j,\nu},\\
 \Lambda_{j,\nu}&:=\sum_{i=\nu+1}^\infty(i-\nu)\alpha_{j,i}
   \quad(j=0,\dots,p,\ \nu=0,1,2,\ldots)\\
 \Lambda^0&:=2\Lambda_0-2\Lambda_{0,0}
  =\alpha_0+\sum_{\nu=1}^\infty(1+\nu)\alpha_{0,\nu}+
   \sum_{j=1}^p\sum_{\nu=1}^\infty(1-\nu)\alpha_{j,\nu},\\
 \Lambda^0_{j,k}&:=\Lambda_{j,0}-\Lambda_{k,0}=
  \sum_{\nu=1}^\infty\nu(\alpha_{k,\nu}-\alpha_{j,\nu})
  \quad(0\le j<k\le p),\\
 \Lambda(\lambda)&:=-\Lambda_0-\sum_{j=0}^p\sum_{\nu=1}^\infty
  \Bigl(\sum_{i=1}^\nu\lambda_{j,i}
  \Bigr)\alpha_{j,\nu}\\
   &\;=-\Lambda_0+\sum_{j=0}^p\sum_{\nu=1}^\infty\lambda_{j,\nu}
    (\Lambda_{j,\nu-1}-\Lambda_{j,\nu}).
 \end{split}\label{eq:defKacGRS}
\end{equation}
\end{defn}
Under the above definition we have
\begin{align}\index{000lambda0@$\Lambda_0,\ \Lambda_{j,\nu},\ \Lambda^0,\ \Lambda^0_{j,\nu},\ \Lambda(\lambda)$}
 (\Lambda^0|\alpha)&=(\Lambda^0_{j,k}|\alpha)=0
  \quad(\forall\alpha\in\Pi_p),\\
 (\Lambda_{j,\nu}|\alpha_{j',\nu'})&=\delta_{j,j'}\delta_{\nu,\nu'}
 \quad(j,\,j'=0,1,\ldots,\ \nu,\,\nu'=1,2,\ldots)\allowdisplaybreaks\\
 (\Lambda_0|\alpha_i)&=(\Lambda_{j,0}|\alpha_i)=
   \delta_{i,0} \ \quad(\forall i\in \Pi_p),
 \allowdisplaybreaks\\
 |\{\lambda_{\mathbf m}\}|&=
 (\Lambda(\lambda)+\tfrac12 \alpha_{\mathbf m}|\alpha_{\mathbf m}),
 \allowdisplaybreaks\index{Fuchs relation}\\
 \begin{split}
 s_0(\Lambda(\lambda))&=-\Bigl(\sum_{j=0}^p\lambda_{j,1}-1\Bigr)\alpha_0 
  +\Lambda(\lambda)\\
  &=-\mu\Lambda^0-\Lambda_0-
   \sum_{\nu=1}^\infty\Bigl(\sum_{i=1}^\nu(\lambda_{0,i}-(1+\delta_{i,0})\mu
  \Bigr)\alpha_{0,\nu}\\
  &\quad{}-
  \sum_{j=1}^p\sum_{\nu=1}^\infty
   \Bigl(\sum_{i=1}^\nu(\lambda_{j,i}+(1-\delta_{i,0})\mu)
   \Bigr)\alpha_{j,\nu}
 \end{split}\label{eq:s0Kac}
\end{align}
with $\mu=\sum_{j=0}^p\lambda_{j,1}-1$.

We identify the elements of $\mathfrak h_p^\vee$ if their difference are 
in $\mathbb C\Lambda^0$, namely, consider them in 
$\overline{\mathfrak h}_p:=\mathfrak h_p^\vee/\mathbb C\Lambda^0$.  
Then the elements have the unique representatives in $\mathfrak h_p^\vee$
whose coefficients of $\alpha_0$ equal $-\frac12$.
\index{00hp@$\mathfrak h_p$, $\mathfrak h_p^\vee$, $\overline{\mathfrak h}_p$}
\begin{rem}\label{rem:KacGRS}
i) \ 
If $p<\infty$, we have
\begin{equation}
 \{\Lambda\in\mathfrak h_p^\vee\,;\,(\Lambda|\alpha)=0\quad(\forall \alpha\in\Pi_p)\}
 =\mathbb C\Lambda^0+\sum_{j=1}^p\mathbb C\Lambda^0_{0,j}.
\end{equation}

ii) The invariance of the bilinear form $(\ |\ )$ under the Weyl 
group $W_{\!\infty}$ proves \eqref{eq:midinv}.

iii) The addition given in Theorem~\ref{thm:GRSmid} i) 
corresponds to the map $\Lambda(\lambda)\mapsto\Lambda(\lambda)
+\tau\Lambda^0_{0,j}$ with $\tau\in\mathbb C$ and $1\le j\le p$.

iii) 
Combining the action of $s_{j,\nu}$ on $\mathfrak h_p^\vee$ with 
that of $s_0$, we have
\begin{equation}\label{eq:KacGRS}
  \Lambda(\lambda') - s_{\alpha_\ell}
  \Lambda(\lambda)\in\mathbb C\Lambda^0\text{ \ and \ }
  \alpha_{\mathbf m'}=s_{\alpha_\ell}\alpha_{\mathbf m}
  \quad\text{when \ }\{\lambda'_{\mathbf m'}\}=\p_\ell\{\lambda_\mathbf m\}
\end{equation}
because of \eqref{eq:pellGRS} and \eqref{eq:s0Kac}.
\end{rem}
Thus we have the following theorem.
\begin{thm}\label{thm:KatzKac}
Under the above notation we have
the commutative diagram
\begin{equation*}
 \begin{matrix}
\bigl\{P_{\mathbf m}:\,\text{Fuchsian differential operators with }
\{\lambda_{\mathbf m}\}\bigr\}
  &\rightarrow
   &\bigl\{(\Lambda(\lambda),\alpha_{\mathbf m})\,;\,\alpha_{\mathbf m}\in\overline\Delta_+\bigr\}
 \\
 \\
  \downarrow \text{fractional operations}
  &\circlearrowright&\quad \downarrow {W_{\!\infty}\text{-action},\ 
  +\tau\Lambda^0_{0,j}}\\
 \\
\bigl\{P_{\mathbf m}:\,\text{Fuchsian differential operators with }
\{\lambda_{\mathbf m}\}\bigr\}
   & \rightarrow
   & \bigl\{(\Lambda(\lambda), \alpha_{\mathbf m})\,;\,\alpha_{\mathbf m}
 \in\overline\Delta_+\bigr\}.
 \end{matrix}
\end{equation*}
Here the defining domain of $w\in W_{\!\infty}$ is
$\{\alpha\in\overline\Delta_+\,;\,w\alpha\in\overline\Delta_+\}$.
\end{thm}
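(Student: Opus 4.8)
\textbf{Proof plan for Theorem~\ref{thm:KatzKac}.}
The plan is to assemble the commutative diagram from the pieces already established in the preceding sections, since essentially every arrow has already been constructed. First I would make precise the two horizontal maps. For fixed $\lambda_{j,\nu}$ and a realizable $\mathbf m$, Theorem~\ref{thm:univmodel} iv) produces the universal operator $P_{\mathbf m}$ with Riemann scheme $\{\lambda_{\mathbf m}\}$; to such a $P_{\mathbf m}$ we attach the pair $(\Lambda(\lambda),\alpha_{\mathbf m})$, where $\alpha_{\mathbf m}\in\overline\Delta_+$ by the identification $\mathcal P\subset Q_+$ of \eqref{eq:Kazpart} together with the fact (stated in \S\ref{sec:KM}, following \cite{CB} and \cite{O3}) that realizable tuples correspond exactly to the elements of $\overline\Delta_+$ whose support contains $\alpha_0$, and $\Lambda(\lambda)\in\overline{\mathfrak h}_p$ is the element defined in \eqref{eq:defKacGRS}, taken modulo $\mathbb C\Lambda^0$. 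I would note that the pair determines $\{\lambda_{\mathbf m}\}$: the tuple $\mathbf m=\kappa(\alpha_{\mathbf m})$ is recovered by $\kappa$, and the individual exponents $\lambda_{j,\nu}$ are recovered from $(\Lambda(\lambda)|\alpha_{j,\nu})$ using $(\Lambda_{j,\nu}|\alpha_{j',\nu'})=\delta_{j,j'}\delta_{\nu,\nu'}$, so the horizontal maps are the natural bijections onto their images.

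Next I would identify the two vertical arrows. On the left, a ``fractional operation'' means a composite of the reductions $\p_\ell$ of Definition~\ref{def:pell} and the additions $\Ad\bigl((x-c_j)^\tau\bigr)$ of Theorem~\ref{thm:GRSmid} i); by \eqref{eq:defpell} and Remark~\ref{rem:midisom} i) these send a universal operator with a given Riemann scheme to a universal operator with the transformed Riemann scheme, provided the genericity hypothesis \eqref{eq:inUniv} (equivalently \eqref{eq:KinUniv}) holds so that $\p_{max}$ behaves as in Theorem~\ref{thm:GRSmid} ii). On the right, the corresponding arrow is the action of $W_{\!\infty}$ on $\overline{\mathfrak h}_p$ together with the affine shifts $\Lambda(\lambda)\mapsto\Lambda(\lambda)+\tau\Lambda^0_{0,j}$. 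The commutativity of the square then reduces to two facts already recorded: for an addition, Remark~\ref{rem:KacGRS} iii) says it corresponds precisely to adding $\tau\Lambda^0_{0,j}$ while leaving $\alpha_{\mathbf m}$ fixed; for a reduction $\p_\ell$, Remark~\ref{rem:KacGRS} iv) (the displayed \eqref{eq:KacGRS}) says $\alpha_{\mathbf m'}=s_{\alpha_\ell}\alpha_{\mathbf m}$ and $\Lambda(\lambda')-s_{\alpha_\ell}\Lambda(\lambda)\in\mathbb C\Lambda^0$, where $\{\lambda'_{\mathbf m'}\}=\p_\ell\{\lambda_{\mathbf m}\}$. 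Both of these in turn rest on Proposition~\ref{prop:Kac} ii), the formula \eqref{eq:pellGRS} for $\p_\ell\{\lambda_{\mathbf m}\}$, and the computation \eqref{eq:s0Kac} of $s_0(\Lambda(\lambda))$, all of which I may assume. Composing over a word in the generators then yields commutativity for an arbitrary fractional operation against the corresponding element of $\widetilde W_{\!\infty}$ extended by the $\Lambda^0_{0,j}$-shifts.

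Finally I would address the domain issue flagged at the end of the statement: $w\in W_{\!\infty}$ is only applied on $\{\alpha\in\overline\Delta_+\,;\,w\alpha\in\overline\Delta_+\}$, and correspondingly $\p_\ell$ is only applied when $\p_\ell\mathbf m$ is again realizable, i.e.\ when the non-negativity condition \eqref{eq:non-neg} of Definition~\ref{def:pell} holds; by \eqref{eq:M2K} this is exactly the condition $s_{\alpha_\ell}\alpha_{\mathbf m}\in\overline\Delta_+$, so the two defining domains match, and \eqref{eq:pellP2} guarantees $\p_\ell$ is invertible on this domain with inverse $\p_\ell$ itself, matching the involutivity of the reflection $s_{\alpha_\ell}$. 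The main obstacle, such as it is, is bookkeeping rather than mathematics: one must check that the genericity condition \eqref{eq:inUniv} is stable under the reductions being composed, so that each intermediate operator is still the universal model and Theorem~\ref{thm:GRSmid} ii) applies at every stage — this is precisely the content of the proof of Theorem~\ref{thm:univmodel} v), which I would invoke. With that in hand, the diagram commutes by induction on the length of the word expressing the fractional operation, and the theorem follows.
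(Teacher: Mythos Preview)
Your assembly of the horizontal maps and the verification that each generator $\p_\ell$ (resp.\ each addition) commutes with the corresponding reflection $s_{\alpha_\ell}$ (resp.\ shift $+\tau\Lambda^0_{0,j}$) on pairs is correct and is exactly what Proposition~\ref{prop:Kac}, \eqref{eq:pellGRS}, \eqref{eq:s0Kac} and Remark~\ref{rem:KacGRS} provide. But this only shows that for each \emph{word} in the generators the square commutes at the level of Riemann schemes. The point the paper's proof actually addresses, and which your plan skips, is that the left vertical arrow is a well-defined action of the \emph{group} $W_{\!\infty}$ on operators: two different reduced expressions of the same $w$ must give the same fractional operation on $P_{\mathbf m}$, including its accessory parameters. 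Since the horizontal map forgets the accessory parameters, commutativity on the right side cannot be used to deduce this.

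Concretely, the paper introduces operations $T_i$ on operators for each simple reflection $s_i$ and checks the Coxeter relations $(T_iT_j)^{m_{i,j}}=\id$. The relations with $i,j\in I'$ are trivial since those $T_i$ only relabel indices; the substantive case is $(T_{j,1}T_0)^3=\id$. After using Proposition~\ref{prop:coordf} to reduce to $j=0$ and normalizing by additions, the paper carries out an explicit three-step computation of Riemann schemes under $T_{0,1}T_0$ and observes that the composite is $\Ad(\p^{\lambda_{0,2}-1})\circ\Ad(\p^{\lambda_{0,1}-\lambda_{0,2}})\circ\Ad(\p^{1-\lambda_{0,1}})\Red P=\Red P$, so $(T_{0,1}T_0)^3P\in\mathbb C[x]\Red P$. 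This is the missing step in your plan. Your identification of the ``main obstacle'' as the genericity bookkeeping of Theorem~\ref{thm:univmodel}~v) is off target: genericity is what makes each individual $\p_\ell$ well-defined on operators, but it does not by itself give the braid relations among the $T_i$.
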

\begin{proof}
Let $T_i$ denote the corresponding operation on 
$\{(P_{\mathbf m},\{\lambda_{\mathbf m}\})\}$ for $s_i\in W_\infty$ 
with $i\in I$.
Then $T_0$ corresponds to $\p_{\mathbf 1}$ and when $i\in I'$, $T_i$ is naturally defined 
and it doesn't change  $P_{\mathbf m}$.
The fractional transformation of the Fuchsian operators and their Riemann schemes
corresponding to an element $w\in W_{\!\infty}$ is defined through the 
expression of $w$ by the product of simple reflections.
It is clear that the transformation of their Riemann schemes do not depend on
the expression. 

Let $i\in I$ and $j\in I$.
We want to prove that $(T_iT_j)^k=id$ if $(s_is_j)^k=id$ for a non-negative
integer $k$.  
Note that $T_i^2=id$ and the addition commutes with $T_i$.
Since $T_i=id$ if $i\in I'$, we have only to prove that
$(T_{j,1}T_0)^3=id$.
Moreover Proposition~\ref{prop:coordf} assures that we may assume $j=0$.

Let $P$ be a Fuchsian differential operator with the Riemann scheme 
\eqref{eq:GRS}.
Applying suitable additions to $P$, 
we may assume $\lambda_{j,1}=0$ for $j\ge 1$ to prove 
$(T_{0,1}T_0)^3P=P$ and then this easily follows from the definition
of $\p_{\mathbf 1}$ (cf.~\eqref{eq:opred})
and the relation
\begin{align*}
&\begin{Bmatrix}
\infty & c_j\ (1\le j\le p)\\
[\lambda_{0,1}]_{(m_{0,1})}&[0]_{(m_{j,1})}\\
[\lambda_{0,2}]_{(m_{0,2})}&[\lambda_{j,2}]_{(m_{j,2})}\\
[\lambda_{0,\nu}]_{(m_{0,\nu})}&[\lambda_{j,\nu}]_{(m_{j,\nu})}
\end{Bmatrix}\quad(d=m_{0,1}+\cdots+m_{p,1}-\ord\mathbf m)
 \allowdisplaybreaks\\
&\quad\xrightarrow[\p^{1-\lambda_{0,1}}]{T_{0,1}T_0}
\begin{Bmatrix}
\infty & c_j\ (1\le j\le p)\\
[\lambda_{0,2}-\lambda_{0,1}+1]_{(m_{0,1})}&[0]_{(m_{j,1}-d)}\\
[-\lambda_{0,1}+2]_{(m_{0,2}-d)}&[\lambda_{j,2}+\lambda_{0,1}-1]_{(m_{j,2})}\\
[\lambda_{0,\nu}-\lambda_{0,1}+1]_{(m_{0,\nu})}&[\lambda_{j,\nu}+\lambda_{0,1}-1]_{(m_{j,\nu})}
\end{Bmatrix}
 \allowdisplaybreaks\\
&\quad
\xrightarrow[\p^{\lambda_{0,1}-\lambda_{0,2}}]{T_{0,1}T_0}
\begin{Bmatrix}
\infty & c_j\ (1\le j\le p)\\
[-\lambda_{0,2}+2]_{(m_{0,1}-d)}&[0]_{(m_{j,1}+m_{0,1}-m_{0,2}-d)}\\
[\lambda_{0,1}-\lambda_{0,2}+1]_{(m_{0,1})}&[\lambda_{j,2}+\lambda_{0,2}-1]_{(m_{j,2})}\\
[\lambda_{0,\nu}-\lambda_{0,2}+1]_{(m_{0,\nu})}&[\lambda_{j,\nu}+\lambda_{0,2}-1]_{(m_{j,\nu})}
\end{Bmatrix}
 \allowdisplaybreaks\\
&\quad
\xrightarrow[\p^{\lambda_{0,2}-1}]{T_{0,1}T_0}
\begin{Bmatrix}
\infty & c_j\ (1\le j\le p)\\
[\lambda_{0,1}]_{(m_{0,1})}&[0]_{(m_{j,1})}\\
[\lambda_{0,2}]_{(m_{0,2})}&[\lambda_{j,2}]_{(m_{j,2})}\\
[\lambda_{0,\nu}]_{(m_{0,\nu})}&[\lambda_{j,\nu}]_{(m_{j,\nu})}
\end{Bmatrix}
\end{align*}
and $(T_{0,1}T_0)^3P
 \in \mathbb C[x]
 \Ad(\p^{\lambda_{0,2}-1})\circ\Ad(\p^{\lambda_{0,2}-\lambda_{0,1}})\circ
 \Ad(\p^{1-\ \lambda_{0,1}})\Red P=\mathbb C[x]\Red P$.
\end{proof}
\begin{defn}
For an element $w$ of the Weyl group $W_{\!\infty}$ we put
\begin{equation}
  \Delta(w):=\Delta^{re}_+\cap w^{-1}\Delta^{re}_-.
\end{equation}
\index{000Delta0s@$\Delta(\mathbf m)$, $\Delta(w)$}
If $w=s_{i_1}s_{i_2}\cdots s_{i_k}$ with $i_\nu\in I$ 
is the \textsl{minimal expression}
\index{Weyl group!minimal expression} of $w$ as the products of simple 
reflections which means $k$ is minimal by definition, we have
\index{minimal expression}
\begin{equation}\label{eq:Lw}
 \Delta(w)=\bigl\{
   \alpha_{i_k},s_{i_k}(\alpha_{i_{k-1}}),s_{i_k}s_{i_{k-1}}(\alpha_{i_{k-2}}),
   \ldots,s_{i_k}\cdots s_{i_2}(\alpha_{i_1})\bigr\}.
\end{equation}
\end{defn}
The number of the elements of $\Delta(w)$ equals the number of the simple 
reflections in the minimal expression of $w$, which is called the \textsl{length} 
of $w$ and denoted by $L(w)$.\index{00Lw@$L(w)$}
\index{Weyl group!length}
The equality \eqref{eq:Lw} follows from the following lemma.
\begin{lem}\label{lem:minrep}
Fix $w\in W_{\!\infty}$ and $i\in I$. 
If $\alpha_i\in\Delta(w)$, there exists 
a minimal expression $w=s_{i'_1}s_{i'_2}\cdots s_{i'_k}$ 
with $s_{i'_k}=s_i$ and $L(ws_i)=L(w)-1$ and $\Delta(ws_i)
 =s_i\bigl(\Delta(w)\setminus\{\alpha_i\}\bigr)$.
If $\alpha_i\notin\Delta(w)$, $L(ws_i)=L(w)+1$ and
$\Delta(ws_i)=s_i\Delta(w)\cup\{\alpha_i\}$.
Moreover if $v\in W_{\!\infty}$ satisfies $\Delta(v)=\Delta(w)$,
then $v=w$.
\end{lem}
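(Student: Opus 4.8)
\textbf{Proof proposal for Lemma~\ref{lem:minrep}.}
The plan is to follow the standard combinatorics of Weyl groups (as in Kac's book), but carried out entirely in terms of the set $\Delta(w)=\Delta^{re}_+\cap w^{-1}\Delta^{re}_-$, which behaves well under right multiplication by simple reflections. The key elementary fact I would establish first is that for a simple reflection $s_i$, the map $s_i$ permutes $\Delta^{re}_+\setminus\{\alpha_i\}$ and sends $\alpha_i$ to $-\alpha_i$; this is because any positive real root other than $\alpha_i$ has support meeting $\Pi\setminus\{\alpha_i\}$ and $s_i$ only changes the coefficient of $\alpha_i$, so positivity is preserved. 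With this in hand, the ``cocycle'' identity
\begin{equation}
 \Delta(w_1w_2)=\Delta(w_2)\mathbin{\triangle}w_2^{-1}\Delta(w_1)
\end{equation}
(symmetric difference, after identifying $\alpha$ with $-\alpha$ when needed) follows directly from the definition; in particular, taking $w_1=w$ and $w_2=s_i$ gives $\Delta(ws_i)=\Delta(s_i)\mathbin{\triangle}s_i\Delta(w)=\{\alpha_i\}\mathbin{\triangle}s_i\Delta(w)$, which unwinds to the two stated cases depending on whether $\alpha_i\in\Delta(w)$ or not. This is essentially a bookkeeping step.

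Next I would tie $\#\Delta(w)$ to the length $L(w)$: by induction on $L(w)$, using the previous paragraph, $\#\Delta(ws_i)=\#\Delta(w)\pm1$ exactly as $L(ws_i)=L(w)\pm1$, and a minimal expression realizes the length, so $\#\Delta(w)=L(w)$ and formula \eqref{eq:Lw} drops out by iterating. This also shows that if $\alpha_i\in\Delta(w)$ then $L(ws_i)=L(w)-1$, hence by the exchange property there is a minimal expression of $w$ ending in $s_i$; writing $w=s_{i'_1}\cdots s_{i'_{k-1}}s_i$ and applying the formula for $\Delta(ws_i)$ in the decreasing case gives $\Delta(ws_i)=s_i(\Delta(w)\setminus\{\alpha_i\})$. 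The case $\alpha_i\notin\Delta(w)$ is the increasing one and is immediate from the same formula.

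For the final assertion — that $\Delta(v)=\Delta(w)$ forces $v=w$ — I would argue by induction on the common value $k=\#\Delta(v)=\#\Delta(w)=L(v)=L(w)$. If $k=0$ then $\Delta(w)=\emptyset$ means $w^{-1}$ sends no positive real root to a negative one, so $w^{-1}=e$, hence $w=e=v$. If $k>0$, pick any simple reflection $s_i$ with $\alpha_i\in\Delta(w)=\Delta(v)$ (such exists since $\Delta(w)\neq\emptyset$ and any nonempty $\Delta(w)$ must contain a simple root — otherwise every element would have support meeting $\Pi$ nontrivially in a way that... here I would invoke that $w\neq e$ implies some $\alpha_i\in\Delta(w)$, the standard fact that a non-identity element sends some simple root to a negative root under $w^{-1}$). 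Then $\Delta(ws_i)=s_i(\Delta(w)\setminus\{\alpha_i\})=s_i(\Delta(v)\setminus\{\alpha_i\})=\Delta(vs_i)$, both of size $k-1$, so by induction $ws_i=vs_i$, whence $w=v$.

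The main obstacle I anticipate is purely technical: the root system here is an infinite (Kac–Moody) one built on the star-shaped diagram of \eqref{eq:PIKac}, so I must be careful that all the ``standard'' finite-type facts (the $s_i$-permutation lemma on positive real roots, the exchange condition, the existence of a simple root in $\Delta(w)$ for $w\neq e$) are genuinely valid here. They are — they hold for arbitrary Kac–Moody Weyl groups acting on real roots, as in \cite{Kc} — but the cleanest route is to prove the $s_i$-permutation statement by hand from the explicit bilinear form rather than citing it, since that is the one place where the infinite rank could conceivably cause trouble, and everything else then follows formally by the symmetric-difference calculus for $\Delta(\cdot)$.
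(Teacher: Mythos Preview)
Your approach is correct and essentially matches the paper's terse sketch, which invokes the same exchange-type argument (track where the sign of $s_{i_{\ell+1}}\cdots s_{i_k}\alpha_i$ flips, using that $s_j$ permutes $\Delta^{re}_+\setminus\{\alpha_j\}$, to get $ws_i=s_{i_1}\cdots\widehat{s_{i_\ell}}\cdots s_{i_k}$ and hence a reduced expression ending in $s_i$); your set-theoretic formulas for $\Delta(ws_i)$ and the induction for the uniqueness clause are the natural elaboration of that sketch. One small caveat: the general cocycle identity $\Delta(w_1w_2)=\Delta(w_2)\mathbin{\triangle}w_2^{-1}\Delta(w_1)$ is not literally correct as stated (since $w_2^{-1}\Delta(w_1)$ need not consist of positive roots), but the special case $w_2=s_i$ that you actually use is fine.
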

\begin{proof}
The proof is standard as in the case of classical root system,
which follows from the fact that the condition
$\alpha_i=s_{i_k}\cdots s_{i_{\ell+1}}(\alpha_{i_\ell})$ implies
\begin{equation}
 s_i=s_{i_k}\cdots s_{i_{\ell+1}}s_{i_\ell}s_{i_{\ell+1}}\cdots s_{i_k}
\end{equation}
and then $w=ws_is_i=s_{i_1}\cdots s_{i_{\ell-1}}s_{i_{\ell+1}}\cdots s_{i_k}s_i$.
\end{proof}

\begin{defn}\label{def:wm}
For $\alpha\in Q$, put
\begin{equation}
 h(\alpha):=n_0+\sum_{j\ge 0}\sum_{\nu\ge 1}n_{j,\nu}
 \text{ \ if \ }
 \alpha=n_0\alpha_0+\sum_{j\ge 0}\sum_{\nu\ge 1}n_{j,\nu}\alpha_{j,\nu}\in Q.
\end{equation}\index{00halpha@$h(\alpha)$}%
Suppose $\mathbf m\in\mathcal P_{p+1}$ is irreducibly realizable.
Note that $sf \mathbf m$ is the monotone fundamental element
determined by $\mathbf m$, namely,
$\alpha_{sf\mathbf m}$ is the unique element of
$W\alpha_{\mathbf m}\cap \bigl(B\cup\{\alpha_0\}\bigr)$.
We inductively define $w_{\mathbf m}\in W_{\!\infty}$ satisfying
$w_{\mathbf m}\alpha_{\mathbf m}=\alpha_{sf{\mathbf m}}$.
We may assume $w_{\mathbf m'}$ has already defined if $h(\alpha_{\mathbf m'})
<h(\alpha_{\mathbf m})$.
If $\mathbf m$ is not monotone, there exists $i\in I\setminus\{0\}$
such that $(\alpha_{\mathbf m}|\alpha_i)>0$ 
and then $w_{\mathbf m}=w_{\mathbf m'}s_i$
with $\alpha_{\mathbf m'}=s_i\alpha_{\mathbf m}$.
If $\mathbf m$ is monotone and $\mathbf m\ne f\mathbf m$,
$w_{\mathbf m}=w_{\p\mathbf m}s_0$.
\index{00wm@$w_{\mathbf m}$}

We moreover define
\begin{align}\label{def:Deltam}
 \Delta(\mathbf m)&:= \Delta(w_{\mathbf m}).
\end{align}
\end{defn}
\index{000Delta0s@$\Delta(\mathbf m)$, $\Delta(w)$}
Suppose $\mathbf m$ is monotone, irreducibly realizable
and $\mathbf m\ne sf{\mathbf m}$.
We define $w_{\mathbf m}$ so that there exists 
$K\in\mathbb Z_{>0}$ and $v_1,\dots,v_K\in W'_{\!\infty}$
satisfying
\begin{equation}\label{eq:vmrp}
 \begin{split}
  w_{\mathbf m}&=v_Ks_0\cdots v_2s_0v_1s_0,\\
  (v_k s_0\cdots v_1s_0\alpha_{\mathbf m}|\alpha)&\le 0
  \quad(\forall\alpha\in\Pi\setminus\{0\},\ k=1,\dots,K),
 \end{split}
\end{equation}
which uniquely characterizes $w_{\mathbf m}$.
Note that
\begin{equation}\label{eq:vmrps}
 v_k s_0\cdots v_1s_0\alpha_{\mathbf m}=\alpha_{(s\p)^k\mathbf m}
 \quad(k=1,\dots,K).
\end{equation}

The following proposition gives the correspondence between the reduction of 
realizable tuples of partitions and the minimal expressions of the elements
of the Weyl group.

\begin{prop}\label{prop:wm}
Definition~\ref{def:wm} naturally gives the product expression
$w_{\mathbf m}=s_{i_1}\cdots s_{i_k}$ with $i_\nu\in I\ \ (1\le\nu\le k)$.

{\rm i) } We have
\begin{align}
 L(w_{\mathbf m})&=k,\\
 (\alpha|\alpha_{\mathbf m})&>0\quad(\forall\alpha\in\Delta(\mathbf m)),
 \label{eq:amplus}\\
 h(\alpha_{\mathbf m})&=h(\alpha_{sf\mathbf m})+
 \sum_{\alpha\in\Delta(\mathbf m)}(\alpha|\alpha_{\mathbf m}).
 \label{eq:dht}
\end{align}
Moreover $\alpha_0\in\supp\alpha$ for $\alpha\in\Delta(\mathbf m)$ 
if\/ $\mathbf m$ is monotone.

{\rm ii)} Suppose\/ $\mathbf m$ is monotone and 
$f\mathbf m\ne\mathbf m$. 
Fix maximal integers $\nu_j$ 
such that $m_{j,1}-d_{max}(\mathbf m)<m_{j,\nu_j+1}$ for $j=0,1,\dots$
Then 
\begin{gather}
 \begin{aligned}
 \Delta(\mathbf m)&=s_0\Bigl(\prod_{\substack{j\ge 0\\ \nu_j>0}}
 s_{j,1}\cdots s_{j,\nu_j}\Bigr)
 \Delta(s\p\mathbf m)\cup\{\alpha_0\}\label{eq:Ddelta}\\
 &\qquad\cup\{\alpha_0+\alpha_{j,1}+\cdots+\alpha_{j,\nu}\,;\,
   1\le \nu\le\nu_j\text{ and } j=0,1,\ldots\},
 \end{aligned}\\
 (\alpha_0+\alpha_{j,1}+\cdots+\alpha_{j,\nu}|\alpha_{\mathbf m})=
 d_{max}(\mathbf m)+m_{j,\nu+1}-m_{j,1}\quad(\nu\ge0).
 \label{eq:Dd2}
\end{gather}

{\rm iii)}
Suppose\/ $\mathbf m$ is not rigid. Then 
$\Delta(\mathbf m)=\{\alpha\in\Delta^{re}_+\,;\,(\alpha|\alpha_{\mathbf m})>0\}$.

{\rm iv)}
Suppose\/ $\mathbf m$ is rigid.
Let $\alpha\in\Delta^{re}_+$ satisfying $(\alpha|\alpha_{\mathbf m})>0$
and $s_{\alpha}(\alpha_{\mathbf m})\in\Delta_+$.
Then
\begin{equation}
\begin{cases}
\alpha\in\Delta(\mathbf m)
 &\text{if \ }(\alpha|\alpha_{\mathbf m})>1,\\
\#\Bigl(\{\alpha,\,\alpha_{\mathbf m}-\alpha\}\cap\Delta(\mathbf m)\Bigr)
 = 1
 &\text{if \ }(\alpha|\alpha_{\mathbf m})=1.
\end{cases}
\end{equation}
Moreover if a root $\gamma\in\Delta(\mathbf m)$ satisfies 
$(\gamma|\alpha_{\mathbf m})=1$, then 
$\alpha_{\mathbf m}-\gamma\in\Delta^{re}_+$ and $\alpha_0\in
\supp(\alpha_{\mathbf m}-\gamma)$.

{\rm v)}
$w_{\mathbf m}$ is the unique element with the minimal length satisfying
$w_{\mathbf m}\alpha_{\mathbf m}=\alpha_{sf{\mathbf m}}$.
\end{prop}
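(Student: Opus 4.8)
The plan is to prove Proposition~\ref{prop:wm}~v) by combining the length computation in part i) with the uniqueness clause of Lemma~\ref{lem:minrep}. First I would recall what Definition~\ref{def:wm} produces: an element $w_{\mathbf m}$ together with a distinguished product expression $w_{\mathbf m}=s_{i_1}\cdots s_{i_k}$, obtained by peeling off simple reflections $s_{i}$ (with $i\in I\setminus\{0\}$ when $\mathbf m$ is not monotone, and $s_0$ when $\mathbf m$ is monotone but not fundamental) until one reaches $sf\mathbf m$. By part i) this expression has $L(w_{\mathbf m})=k$, i.e.\ it is already a minimal expression, and $w_{\mathbf m}\alpha_{\mathbf m}=\alpha_{sf\mathbf m}$ by construction. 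So the content of v) is: \emph{any} $v\in W_{\!\infty}$ with $v\alpha_{\mathbf m}=\alpha_{sf\mathbf m}$ satisfies $L(v)\ge k$, with equality forcing $v=w_{\mathbf m}$.

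The key step is to identify $\Delta(w_{\mathbf m})=\Delta(\mathbf m)$ intrinsically, and then to show that any $v$ with $v\alpha_{\mathbf m}=\alpha_{sf\mathbf m}$ has $\Delta(v)\supseteq\Delta(\mathbf m)$. For this I would argue by induction on $h(\alpha_{\mathbf m})$, using the peeling process. If $\mathbf m$ is not monotone, choose $i\in I\setminus\{0\}$ with $(\alpha_{\mathbf m}|\alpha_i)>0$; then $\alpha_i\in\Delta^{re}_+\cap v^{-1}\Delta^{re}_-$ is forced for any $v$ fixing the value $\alpha_{sf\mathbf m}$, since $\alpha_{sf\mathbf m}$ is monotone (lies in $B\cup\{\alpha_0\}$) and hence cannot have a positive inner product with any $\alpha_i$, $i\neq 0$; so $\alpha_i\in\Delta(v)$, and $vs_i$ has smaller length (Lemma~\ref{lem:minrep}) and still maps $s_i\alpha_{\mathbf m}=\alpha_{\mathbf m'}$ to $\alpha_{sf\mathbf m'}=\alpha_{sf\mathbf m}$, so induction applies. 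If $\mathbf m$ is monotone but $\neq f\mathbf m$, then by part ii) every $v$ with $v\alpha_{\mathbf m}=\alpha_{sf\mathbf m}$ must send $\alpha_0$ (and the roots $\alpha_0+\alpha_{j,1}+\cdots+\alpha_{j,\nu}$ with $1\le\nu\le\nu_j$) to negative roots, because their inner products with $\alpha_{\mathbf m}$ are $d_{max}(\mathbf m)>0$ (for $\alpha_0$, since $\idx\mathbf m=(\alpha_{\mathbf m}|\alpha_{\mathbf m})$ and monotonicity give $(\alpha_0|\alpha_{\mathbf m})=d_{max}(\mathbf m)$) resp.\ as in \eqref{eq:Dd2}, while the target $\alpha_{sf\mathbf m}\in B\cup\{\alpha_0\}$ has $\le 0$ inner product with every simple root; this forces those roots into $\Delta(v)$, so again one can peel off and reduce $h$.

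The uniqueness then follows: the induction shows $\Delta(w_{\mathbf m})\subseteq\Delta(v)$ for every competitor $v$, whence $L(w_{\mathbf m})=\#\Delta(w_{\mathbf m})\le\#\Delta(v)=L(v)$, giving minimality; and if $L(v)=L(w_{\mathbf m})$ then $\Delta(v)=\Delta(w_{\mathbf m})$, so $v=w_{\mathbf m}$ by the last assertion of Lemma~\ref{lem:minrep}. I expect the main obstacle to be the bookkeeping in the monotone case: one must verify that the set of roots forced to be negative under any value-preserving $v$ is exactly large enough — i.e.\ that the specific roots appearing in \eqref{eq:Ddelta} are precisely those detected by the inner-product criterion against $\alpha_{\mathbf m}$ together with the sign constraint coming from $\alpha_{sf\mathbf m}\in B\cup\{\alpha_0\}$ — and that after applying $s_0\bigl(\prod s_{j,1}\cdots s_{j,\nu_j}\bigr)$ the remaining element is genuinely $w_{s\p\mathbf m}$ with strictly smaller $h$, so the induction is well-founded. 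Parts ii), iii) and iv) of the proposition, already available, should supply exactly the combinatorial identities needed to close this gap without further root-system computations.
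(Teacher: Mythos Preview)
Your strategy works cleanly in the non-rigid case and essentially coincides with the paper's argument there: when $\alpha_{sf\mathbf m}\in B$ we have $(\alpha_j|\alpha_{sf\mathbf m})\le 0$ for \emph{every} simple root $\alpha_j$, hence $(\gamma|\alpha_{sf\mathbf m})\le 0$ for every positive root $\gamma$, and so any $\alpha\in\Delta^{re}_+$ with $(\alpha|\alpha_{\mathbf m})>0$ is forced into $\Delta(v)$ for every $v$ with $v\alpha_{\mathbf m}=\alpha_{sf\mathbf m}$. This gives $\Delta(\mathbf m)\subseteq\Delta(v)$ via part~iii), and uniqueness follows from the last clause of Lemma~\ref{lem:minrep}.

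The rigid case, however, contains a genuine gap. When $\mathbf m$ is rigid, $\alpha_{sf\mathbf m}=\alpha_0$ and $(\alpha_0|\alpha_0)=2>0$, so a positive root supported on $\alpha_0$ can have positive inner product with $\alpha_0$; your inference ``$(v\alpha_i|\alpha_{sf\mathbf m})>0\Rightarrow v\alpha_i\in\Delta^-$'' breaks down. In fact the containment $\Delta(\mathbf m)\subseteq\Delta(v)$ you aim for is \emph{false}. Take $\alpha_{\mathbf m}=2\alpha_0+\alpha_{0,1}+\alpha_{1,1}+\alpha_{2,1}$ (Gauss hypergeometric) and $w_{\mathbf m}=s_{0,1}s_{1,1}s_{2,1}s_0$. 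Since $(\alpha_{0,1}|\alpha_{\mathbf m})=0$, the element $v:=w_{\mathbf m}s_{0,1}$ also sends $\alpha_{\mathbf m}$ to $\alpha_0$; a direct computation gives $v(\alpha_0+\alpha_{1,1})=\alpha_0+\alpha_{2,1}\in\Delta^+$, so $\alpha_0+\alpha_{1,1}\in\Delta(\mathbf m)\setminus\Delta(v)$. In terms of your induction: after peeling off $s_0$ you are at $\alpha_{\p\mathbf m}=\alpha_0+\alpha_{0,1}+\alpha_{1,1}+\alpha_{2,1}$ with $(\alpha_{1,1}|\alpha_{\p\mathbf m})=1>0$, yet $vs_0\,\alpha_{1,1}=v(\alpha_0+\alpha_{1,1})\in\Delta^+$, so $\alpha_{1,1}\notin\Delta(vs_0)$, and your inductive step fails.

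The paper treats the rigid case by an entirely different mechanism: it invokes Theorem~\ref{thm:Nuida} (due to Nuida, proved in the Appendix), which guarantees uniqueness of the minimal-length element of $W^\beta_{\beta'}$ under the no-odd-cycle hypothesis~\eqref{eq:noloop}, automatically satisfied here since the Dynkin diagram is a tree. Corollary~\ref{cor:Nuida} then characterises this minimal element by $w\Delta^{\alpha_{\mathbf m}}_\Pi\subset\Delta^+_\Pi$, and \eqref{eq:amplus} in part~i) shows that $w_{\mathbf m}$ satisfies this criterion. The essential ingredient, not visible to your peeling argument, is Brink's theorem that the stabiliser $W^{\alpha_{\mathbf m}}$ is the reflection subgroup generated by $\{s_\gamma:(\gamma|\alpha_{\mathbf m})=0\}$.
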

\begin{proof}
Since $h(s_{i'}\alpha)-h(\alpha)=-(\alpha_{i'}|\alpha)
 =(s_{i'}\alpha_{i'}|\alpha)$,
we have
\[
 \begin{split}
 h(s_{i'_\ell}\cdots s_{i'_1}\alpha)-h(\alpha)
  &=\sum_{\nu=1}^\ell\Bigl(h(s_{i'_\nu}\cdots s_{i'_1}\alpha)
  -h(s_{i'_{\nu-1}}\cdots s_{i'_1}\alpha)\Bigr)
 \\
  &=\sum_{\nu=1}^\ell
   (\alpha_{i'_\nu}|s_{i'_\nu}\cdots s_{i'_1}\alpha)
  = \sum_{\nu=1}^\ell
  (s_{i'_\ell}\cdots s_{i'_{\nu+1}}\alpha_{i'_\nu}|
  s_{i'_\ell}\cdots s_{i'_1}\alpha)
\end{split}
\]
for $i',\,i'_{\nu}\in I$ and $\alpha\in\Delta$.

i)
We show by the induction on $k$.
We may assume $k\ge 1$.
Put $w'=s_{i_1}\cdots s_{i_{k-1}}$ and 
$\alpha_{\mathbf m'}=s_{i_k}\alpha_{\mathbf m}$
and $\alpha(\nu)=s_{i_{k-1}}\cdots s_{i_{\nu+1}}\alpha_{i_\nu}$
for $\nu=1,\dots,k-1$.
The hypothesis of the induction assures
$L(w')=k-1$, $\Delta(\mathbf m')=\{\alpha(1),\dots,\alpha(k-1)\}$
and $(\alpha(\nu)|\alpha_{\mathbf m'})>0$ for $\nu=1,\dots,k-1$.
If $L(w_{\mathbf m})\ne k$, there exists $\ell$ such that 
$\alpha_{i_k}=\alpha(\ell)$ and 
$w_{\mathbf m}=s_{i_1}\cdots s_{i_{\ell-1}}s_{i_{\ell+1}}\cdots s_{i_{k-1}}$
is a minimal expression.
Then $h(\alpha_{\mathbf m})-h(\alpha_{\mathbf m'})
=-(\alpha_{i_k}|\alpha_{\mathbf m'})=-(\alpha(\ell)|\alpha_{\mathbf m'})<0$, which contradicts to the definition
of $w_{\mathbf m}$. Hence we have i).
Note that \eqref{eq:amplus} implies $\supp\alpha\ni\alpha_0$
if $\alpha\in\Delta(\mathbf m)$ and $\mathbf m$ is monotone.

ii) The equality \eqref{eq:Ddelta} follows from
\[
  \Delta(\p\mathbf m)\cap\!\!
  \sum_{\alpha\in\Pi\setminus\{0\}}\!\!\!\mathbb Z\alpha
 =\{\alpha_{j,1}+\cdots+\alpha_{j,\nu_j}\,;\,\nu=1,\dots,\nu_j,\ \nu_j>0
  \text{ and }j=0,1,\ldots\}
\]
because $\Delta(\mathbf m)=s_0\Delta(\p\mathbf m)\cup\{\alpha_0\}$
and $\Bigl(\prod_{\substack{j\ge 0\\ \nu_j>0}}
 s_{j,\nu_j}\cdots s_{j,1}\Bigr)
 \alpha_{\p\mathbf m}=\alpha_{s\p\mathbf m}$.

The equality \eqref{eq:Dd2} is clear because
$(\alpha_0|\alpha_{\mathbf m})=d_{\mathbf 1}(\mathbf m)=d_{max}(\mathbf m)$
and $(\alpha_{j,\nu}|\alpha_{\mathbf m})=m_{j,\nu+1}-m_{j,\nu}$.

iii) 
Note that $\gamma\in\Delta(\mathbf m)$ satisfies 
$(\gamma|\alpha_{\mathbf m})>0$.

Put $w_\nu=s_{i_{\nu+1}}\cdots s_{i_{k-1}}s_{i_k}$ for 
$\nu=0,\dots,k$.
Then $w_{\mathbf m}=w_0$ and 
$\Delta(\mathbf m)=\{w_\nu^{-1}\alpha_{i_\nu}\,;\,\nu=1,\dots,k\}$.
Moreover
$w_{\nu'}w_\nu^{-1}\alpha_{i_\nu}\in\Delta^{re}_-$ if and only if
$0\le \nu'<\nu$.

Suppose $\mathbf m$ is not rigid.
Let $\alpha\in\Delta^{re}_+$ with $(\alpha|\alpha_{\mathbf m})>0$.
Since $(w_{\mathbf m}\alpha|\alpha_{\overline{\mathbf m}})>0$, 
$w_{\mathbf m}\alpha\in\Delta^{re}_-$.
Hence there exists $\nu$ such that $w_{\nu}\alpha\in\Delta_+$ and
$w_{\nu-1}\alpha\in\Delta_-$, which implies
$w_{\nu}\alpha=\alpha_{i_\nu}$ and the claim.

iv)
Suppose $\mathbf m$ is rigid.
Let $\alpha\in\Delta^{re}_+$.
Put $\ell=(\alpha|\alpha_{\mathbf m})$.
Suppose $\ell>0$ and $\beta:=s_{\alpha}\alpha_{\mathbf m}\in\Delta_+$.
Then $\alpha_{\mathbf m}=\ell\alpha+\beta$,
$\alpha_0=\ell w_{\mathbf m}\alpha+w_{\mathbf m}\beta$ and
$(\beta|\alpha_{\mathbf m})
=(\alpha_{\mathbf m}-\ell\alpha|\alpha_{\mathbf m})=2-\ell^2$.
Hence if $\ell\ge 2$, $\mathbb R\beta\cap\Delta(\mathbf m)=\emptyset$ 
and the same argument as in the proof of iii) 
assures $\alpha\in\Delta(\mathbf m)$.

Suppose $\ell=1$. There exists $\nu$ such that 
$w_{\nu}\alpha$ or $w_{\nu}\beta$
equals $\alpha_{i_\nu}$.
We may assume $w_{\nu}^{-1}\alpha=\alpha_{i_\nu}$.
Then $\alpha\in\Delta(\mathbf m)$.

Suppose there exists $w_{\nu'}\beta=\alpha_{i_{\nu'}}$.
We may assume $\nu'< \nu$.  Then $w_{\nu'}\alpha_{\mathbf m}=
w_{\nu'-1}\alpha+w_{\nu'-1}\beta\in\Delta^{re}_-$,
which contradicts to the definition of $w_\nu$.
Hence $w_{\nu'}\beta=\alpha_{i_{\nu'}}$ for $\nu'=1,\dots,k$ and
therefore $\beta\notin \Delta(\mathbf m)$.

Let $\gamma=w_\nu^{-1}\alpha_{i_\nu}\in\Delta(\mathbf m)$ and 
$(\gamma|\alpha_{\mathbf m})=1$.
Put $\beta=\alpha_{\mathbf m}-\alpha=s_{\alpha}\alpha_{\mathbf m}$.
Then $w_{\nu-1}\alpha_{\mathbf m}=w_{\nu}\beta\in\Delta^{re}_+$.
Since $\beta\notin\Delta(\mathbf m)$, we have $\beta\in\Delta^{re}_+$.

Replacing $\mathbf m$ by $s\mathbf m$, 
we may assume $\mathbf m$ is monotone to prove $\alpha_0\in\supp\beta$.
Since $(\beta|\alpha_{\mathbf m})=1$ and $(\alpha_i|\alpha_{\mathbf m})\le 0$
for $i\in I\setminus\{0\}$, we have $\alpha_0\in\supp\beta$.

v)
The uniqueness of $w_{\mathbf m}$ follows from iii) when $\mathbf m$ is not rigid.
It follows from \eqref{eq:amplus}, Theorem~\ref{thm:Nuida} and 
Corollary~\ref{cor:Nuida} when $\mathbf m$ is rigid.
\end{proof}
\begin{cor}
Let\/ $\mathbf m$, $\mathbf m'$, $\mathbf m''\in\mathcal P$ and
$k\in\mathbb Z_{>0}$ such that
\begin{equation}
 \mathbf m=k\mathbf m'+\mathbf m'',\ \idx\mathbf m=\idx\mathbf m''
 \text{ and\/ $\mathbf m'$ is rigid}.
\end{equation}
Then\/ $\mathbf m$ is irreducibly realizable if and only if
so is $\mathbf m''$.

Suppose\/ $\mathbf m$ is irreducibly realizable.
If\/ $\idx\mathbf m\le 0$ or $k>1$, then\/ 
$\mathbf m'\in\Delta(\mathbf m)$.
If $\idx\mathbf m=2$, then\/
$\{\alpha_{\mathbf m'},\,\alpha_{\mathbf m''}\}\cap\Delta(\mathbf m)
=\{\alpha_{\mathbf m'}\}$ or $\{\alpha_{\mathbf m''}\}$.
\end{cor}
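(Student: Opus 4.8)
The plan is to derive this corollary from Proposition~\ref{prop:wm} together with the numerical identities for $\idx$. First I would unwind the hypothesis $\mathbf m=k\mathbf m'+\mathbf m''$ and $\idx\mathbf m=\idx\mathbf m''$ into root-lattice language: under the identification $\kappa$ of \eqref{eq:Kazpart} it says $\alpha_{\mathbf m}=k\alpha_{\mathbf m'}+\alpha_{\mathbf m''}$ with $(\alpha_{\mathbf m}|\alpha_{\mathbf m})=(\alpha_{\mathbf m''}|\alpha_{\mathbf m''})$, and since $\mathbf m'$ is rigid, $(\alpha_{\mathbf m'}|\alpha_{\mathbf m'})=2$. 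Expanding $(\alpha_{\mathbf m}|\alpha_{\mathbf m})$ bilinearly and subtracting $(\alpha_{\mathbf m''}|\alpha_{\mathbf m''})$ gives $k^2\cdot 2+2k(\alpha_{\mathbf m'}|\alpha_{\mathbf m''})=0$, hence $(\alpha_{\mathbf m'}|\alpha_{\mathbf m''})=-k$ and therefore $(\alpha_{\mathbf m'}|\alpha_{\mathbf m})=k\cdot 2+(\alpha_{\mathbf m'}|\alpha_{\mathbf m''})=k>0$. Also $s_{\alpha_{\mathbf m'}}(\alpha_{\mathbf m})=\alpha_{\mathbf m}-(\alpha_{\mathbf m'}|\alpha_{\mathbf m})\alpha_{\mathbf m'}=\alpha_{\mathbf m}-k\alpha_{\mathbf m'}=\alpha_{\mathbf m''}$, so applying $s_{\alpha_{\mathbf m'}}$ interchanges (a $k$-fold shift of) $\mathbf m$ and $\mathbf m''$; since $s_{\alpha_{\mathbf m'}}\in W_{\!\infty}$ preserves the bilinear form and maps $\overline\Delta_+$-membership questions to each other, and since the characterization of irreducible realizability is "$\alpha\in\Delta_+$, $\supp\alpha\ni\alpha_0$, indivisible or $(\alpha|\alpha)<0$" (the table in \S\ref{sec:KM}, together with Theorem~\ref{thm:univmodel} iii)), I would check that $\supp\alpha_{\mathbf m}\ni\alpha_0$ iff $\supp\alpha_{\mathbf m''}\ni\alpha_0$, that $(\alpha_{\mathbf m}|\alpha_{\mathbf m})=(\alpha_{\mathbf m''}|\alpha_{\mathbf m''})$ is given, and that indivisibility transfers (if $\alpha_{\mathbf m''}=c\beta$ then $\alpha_{\mathbf m}=kc\alpha_{\mathbf m'}\cdot$... — here one uses that $\alpha_{\mathbf m'}$ rigid forces the relevant gcd argument; when $(\alpha|\alpha)<0$ the divisibility condition is irrelevant). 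This yields the equivalence of irreducible realizability of $\mathbf m$ and $\mathbf m''$.

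For the second part, assume $\mathbf m$ is irreducibly realizable. The claim splits according to the sign of $\idx\mathbf m$. If $\idx\mathbf m\le 0$, then $\alpha_{\mathbf m}$ is not a positive real root, so $\mathbf m$ is not rigid, and Proposition~\ref{prop:wm}~iii) says $\Delta(\mathbf m)=\{\alpha\in\Delta^{re}_+\,;\,(\alpha|\alpha_{\mathbf m})>0\}$. Since $\mathbf m'$ is rigid, $\alpha_{\mathbf m'}\in\Delta^{re}_+$, and by the computation above $(\alpha_{\mathbf m'}|\alpha_{\mathbf m})=k>0$; hence $\alpha_{\mathbf m'}\in\Delta(\mathbf m)$, which I would state as $\mathbf m'\in\Delta(\mathbf m)$ (identifying tuples with the corresponding roots). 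If $k>1$ but $\idx\mathbf m=2$, then $\mathbf m$ is rigid; here I use Proposition~\ref{prop:wm}~iv) with $\alpha=\alpha_{\mathbf m'}$: we have $\alpha_{\mathbf m'}\in\Delta^{re}_+$, $(\alpha_{\mathbf m'}|\alpha_{\mathbf m})=k>1$, and $s_{\alpha_{\mathbf m'}}(\alpha_{\mathbf m})=\alpha_{\mathbf m''}$, which lies in $\Delta_+$ because $\alpha_{\mathbf m''}=\alpha_{\mathbf m}-k\alpha_{\mathbf m'}$ is a nonnegative combination of simple roots (it corresponds to the genuine tuple $\mathbf m''\in\mathcal P$, so all its coefficients $n_{j,\nu}$ are $\ge 0$); Proposition~\ref{prop:wm}~iv) then gives $\alpha_{\mathbf m'}\in\Delta(\mathbf m)$ directly from $(\alpha_{\mathbf m'}|\alpha_{\mathbf m})>1$. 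This covers the case "$\idx\mathbf m\le 0$ or $k>1$."

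Finally, for the remaining case $\idx\mathbf m=2$ and $k=1$: then $\mathbf m$ is rigid, $\alpha_{\mathbf m}=\alpha_{\mathbf m'}+\alpha_{\mathbf m''}$, and $(\alpha_{\mathbf m'}|\alpha_{\mathbf m})=1$, so we are exactly in the $\ell=1$ branch of Proposition~\ref{prop:wm}~iv). That branch asserts $\#\bigl(\{\alpha,\alpha_{\mathbf m}-\alpha\}\cap\Delta(\mathbf m)\bigr)=1$ with $\alpha=\alpha_{\mathbf m'}$ and $\alpha_{\mathbf m}-\alpha=\alpha_{\mathbf m''}$, i.e. exactly one of $\alpha_{\mathbf m'},\alpha_{\mathbf m''}$ lies in $\Delta(\mathbf m)$ — which is the stated conclusion $\{\alpha_{\mathbf m'},\alpha_{\mathbf m''}\}\cap\Delta(\mathbf m)=\{\alpha_{\mathbf m'}\}$ or $\{\alpha_{\mathbf m''}\}$ (and one checks $\alpha_{\mathbf m''}\in\Delta^{re}_+$ with $\alpha_0\in\supp\alpha_{\mathbf m''}$ from the last sentence of Proposition~\ref{prop:wm}~iv), so that $\mathbf m''$ is itself a rigid tuple, consistent with $\idx\mathbf m''=2$). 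The main obstacle I anticipate is not any of the root-system steps, which are essentially bookkeeping with the bilinear form, but rather verifying cleanly that membership in $\overline\Delta_+$ and the genericity/indivisibility conditions transfer between $\mathbf m$ and $\mathbf m''$ under $s_{\alpha_{\mathbf m'}}$ — in particular making sure that $\alpha_{\mathbf m''}$, a priori only known to be $s_{\alpha_{\mathbf m'}}(\alpha_{\mathbf m})$, really has connected support containing $\alpha_0$ and really is the $\alpha_{\mathbf m''}$ attached to the given tuple $\mathbf m''\in\mathcal P$. This is where I would be most careful, probably invoking \eqref{eq:root2part} to see that a genuine tuple's root has all partial sums monotone, and $\supp$ connected through $\alpha_0$ whenever $n_0>0$.
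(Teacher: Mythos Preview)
Your proposal is correct and follows essentially the same approach as the paper: compute $(\alpha_{\mathbf m'}|\alpha_{\mathbf m''})=-k$ from the bilinear expansion, deduce $s_{\alpha_{\mathbf m'}}\alpha_{\mathbf m}=\alpha_{\mathbf m''}$, use the $W_{\!\infty}$-compatibility (the paper cites Theorem~\ref{thm:KatzKac} rather than the table directly) for the irreducible-realizability equivalence, and then invoke Proposition~\ref{prop:wm}~iii) and~iv) exactly as you do for the remaining claims. Your explicit unpacking of the cases $\idx\mathbf m\le 0$, $k>1$, and $k=1$ with $\idx\mathbf m=2$ is precisely what the paper's one-line ``the remaining claims follow from Proposition~\ref{prop:wm}'' means.
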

\begin{proof}
The assumption implies $(\alpha_{\mathbf m}|\alpha_{\mathbf m})=
2k^2+2k(\alpha_{\mathbf m'}|\alpha_{\mathbf m''})+
(\alpha_{\mathbf m''}|\alpha_{\mathbf m''})$ and hence
$(\alpha_{\mathbf m'}|\alpha_{\mathbf m''})=-k$ and
$s_{\alpha_{\mathbf m'}}\alpha_{\mathbf m''}=\alpha_{\mathbf m}$.
Thus we have the first claim (cf.~Theorem~\ref{thm:KatzKac}).
The remaining claims follow from Proposition~\ref{prop:wm}.
\end{proof}
\begin{rem}\label{rem:length}
i) \ 
In general, $\gamma\in\Delta(\mathbf m)$ does not always imply
$s_\gamma\alpha_{\mathbf m}\in\Delta_+$.

Put $\mathbf m=32,32,32,32$, $\mathbf m'=10,10,10,10$
and $\mathbf m''=01,01,01,01$.
Putting $v=s_{0,1}s_{1,1}s_{2,1}s_{3,1}$, we have
$\alpha_{\mathbf m'}=\alpha_0$,
$\alpha_{\mathbf m''}=v\alpha_0$,
$(\alpha_{\mathbf m'}|\alpha_{\mathbf m''})=-2$,
$s_0\alpha_{\mathbf m''}=2\alpha_{\mathbf m'}+\alpha_{\mathbf m''}$,
$vs_0\alpha_{\mathbf m''}=\alpha_0+2\alpha_{\mathbf m''}$ and
$s_0vs_0v\alpha_0=s_0vs_0\alpha_{\mathbf m''}
=3\alpha_{\mathbf m'}+2\alpha_{\mathbf m''}=\alpha_{\mathbf m}$.

Then $\gamma:=
s_0v\alpha_0=2\alpha_{\mathbf m'}+\alpha_{\mathbf m''}\in\Delta(\mathbf m)$,
$(\gamma|\alpha_{\mathbf m})=
(s_0v\alpha_{\mathbf m'}|s_0vs_0v\alpha_{\mathbf m'})
=(\alpha_{\mathbf m'}|s_0v\alpha_{\mathbf m'})
=(\alpha_{\mathbf m'}|2\alpha_{\mathbf m'}+\alpha_{\mathbf m''})
=2$ and $s_\gamma(\alpha_{\mathbf m})=
(3\alpha_{\mathbf m'}+2\alpha_{\mathbf m''})
-2(2\alpha_{\mathbf m'}+\alpha_{\mathbf m''})=
-\alpha_{\mathbf m'}\in\Delta_-$.

{\rm ii)} 
Define
\begin{equation}\label{eq:tDelta}
 \index{000Delta1@$[\Delta(\mathbf m)]$}%
 [\Delta(\mathbf m)]:=
 \bigl\{(\alpha|\alpha_{\mathbf m})\,;\,\alpha\in\Delta(\mathbf m)\bigr\}.
\end{equation}
Then $[\Delta(\mathbf m)]$ gives a partition of the non-negative integer
$h(\alpha_{\mathbf m})-h(sf\mathbf m)$, which we call \textsl{the type of}\/ 
$\Delta(\mathbf m)$.
It follows from \eqref{eq:dht} that
\begin{equation}
 \#\Delta(\mathbf m)\le h(\alpha_{\mathbf m})-h(sf\mathbf m)
\end{equation}
for a realizable tuple $\mathbf m$
and the equality holds in the above if $\mathbf m$ is monotone and simply 
reducible.  Moreover we have
\begin{align}
 [\Delta(\mathbf m)]&=[\Delta(s\p\mathbf m)]\cup\{d(\mathbf m)\}
 \cup\bigcup_{j=0}^p\{m_{j,\nu}-m_{j,1}-d(\mathbf m)\in\mathbb Z_{>0}\,;\,
  \nu>1\},\label{eq:DmInd}\\
 \#\Delta(\mathbf m)&=
 \#\Delta(s\p\mathbf m)
  +\sum_{j=0}^p\Bigl(\min\bigl\{\nu\,;\,m_{j,\nu}>m_{j,1}-d(\mathbf m)\bigr\}
  -1\Bigr)+1
 ,\\
 h(\mathbf m)&=h(sf\mathbf m)+\sum_{i\in[\Delta(\mathbf m)]}i\label{eq:sumDelta}
\end{align}
if $\mathbf m\in\mathcal P_{p+1}$ is monotone, irreducibly realizable and 
not fundamental.
Here we use the notation in Definitions~\ref{def:Sinfty}, \ref{def:pell} and 
\ref{def:fund}.  
For example,
\[
\begin{tabular}{|c|c|c|c|}\hline
type&$\mathbf m$ & $h(\alpha_{\mathbf m})$ & $\#\Delta(\mathbf m)$\\
\hline\hline
$H_n\rule[-2pt]{0pt}{12pt}$ & $1^n,1^n,n-11$ & $n^2+1$ & $n^2$ \\ \hline
$EO_{2m}\rule[-5pt]{0pt}{15pt}$ 
 & $1^{2m},mm,mm-11$ & {\small $2m^2+3m+1$} & $\binom{2m}{2}+4m$
 \\ \hline
$\!EO_{2m+1}\rule[-5pt]{0pt}{15pt}\!$
 & $1^{2m+1},m+1m,mm1$ & {\small $2m^2+5m+3$} & $\binom{2m+1}{2}+4m+2$\\ \hline
$X_6\rule[-2pt]{0pt}{12pt}$ & $111111,222,42$ & 29 & 28 \\ \hline
\rule[-2pt]{0pt}{12pt}&$21111,222,33$ & 25 & 24\\ \hline
$\!P_n\rule[-2pt]{0pt}{12pt}\!$
 & {\small $n-11,n-11,\ldots\in\mathcal P_{n+1}^{(n)}$} & $2n+1$ & 
 {\!\small$[\Delta(\mathbf m)]:$\,}$1^{n+1}\cdot${\small$(n-1)$}\!\\ \hline
 $\!P_{4,2m+1}\!\rule[-2pt]{0pt}{12pt}$
 & {\small $m+1m,m+1m,m+1m,m+1m$} & $6m+1$ & 
 {\small$[\Delta(\mathbf m)]:\,$}$1^{4m}\cdot2^m$\\ \hline
\end{tabular}
\]
\end{rem}

Suppose $\mathbf m\in\mathcal P_{p+1}$ is basic.  
We may assume \eqref{eq:NTP}.  
Suppose $(\alpha_{\mathbf m}|\alpha_0)=0$, which is equivalent
to $\sum_{j=0}^p m_{j,1}=(p-1)\ord\mathbf m$.
Let $k_j$ be positive integers such that
\begin{equation}\label{eq:classic}
(\alpha_{\mathbf m}|\alpha_{j,\nu})=0\text{ \  for  \ }
1\le \nu<k_j \text{ \ and \ }
(\alpha_{\mathbf m}|\alpha_{j,k_j})<0,
\end{equation}
which is equivalent to
$m_{j,1}=m_{j,2}=\cdots=m_{j,k_j}>m_{j,k_j+1}$
for $j=0,\dots,p$. Then
\begin{equation}\label{eq:R2ineq}
 \sum_{j=0}^p\frac1{k_j}\ge 
 \sum_{j=0}^p\frac{m_{j,1}}{\ord\mathbf m}=p-1.
\end{equation}
If the equality holds in the above, we have $k_j\ge 2$ and $m_{j,k_j+1}=0$
and therefore $\mathbf m$ is of one of the types 
$\tilde D_4$ or $\tilde E_6$ or $\tilde E_7$ or $\tilde E_8$.
Hence if $\idx\mathbf m<0$, the set $\{k_j\,;\,0\le j\le p,\ k_j> 1\}$
equals one of the set $\emptyset$, $\{2\}$, $\{2,\nu\}$ with $2\le \nu\le 5$, 
$\{3,\nu\}$ with $3\le\nu\le 5$,
$\{2,2,\nu\}$ with $2\le\nu\le 5$ and
$\{2,3,\nu\}$ with $3\le\nu\le 5$.
In this case the corresponding Dynkin diagram of 
$\{\alpha_0,\alpha_{j,\nu}\,;\,1\le\nu<k_j,\ j=0,\dots,p\}$ is one of 
the types $A_\nu$ with $1\le \nu\le 6$, $D_\nu$ with $4\le\nu\le 7$
and $E_\nu$ with $6\le\nu\le 8$.
Thus we have the following remark.
\begin{rem}\label{rem:classinbas}
Suppose a tuple $\mathbf m\in\mathcal P_{p+1}^{(n)}$ is basic and monotone.
The subgroup of $W_{\!\infty}$ generated by reflections with
respect to $\alpha_\ell$ (cf.~\eqref{eq:alpl}) 
which satisfy $(\alpha_{\mathbf m}|\alpha_\ell)=0$ 
is infinite if and only if $\idx\mathbf m=0$.

For a realizable monotone tuple $\mathbf m\in\mathcal P$, we define%
\index{000Pi@$\Pi,\ \Pi',\ \Pi(\mathbf m)$}
\begin{equation}\label{eq:Pim}
 \Pi(\mathbf m):=
 \{\alpha_{j,\nu}\in\supp\alpha_{\mathbf m}\,;\,
  m_{j,\nu}=m_{j,\nu+1}\}\cup
 \begin{cases}
  \{\alpha_0\}&(d_{\mathbf 1}(\mathbf m)=0),\\
  \emptyset&(d_{\mathbf 1}(\mathbf m)\ne 0).
 \end{cases}
\end{equation}
Note that the condition $(\alpha_{\mathbf m}|\alpha_\ell)=0$,
which is equivalent to say that $\alpha_\ell$ is a root of the root space
with the fundamental system $\Pi(\mathbf m)$,
means that the corresponding middle convolution $\p_\ell$
keeps the spectral type invariant.
\end{rem}

\subsection{Fundamental tuples}\label{sec:basic}
We will prove some inequalities \eqref{eq:bineq} and \eqref{eq:b4ineq} 
for fundamental tuples which are announced in \cite{O3}.
\begin{prop}\label{prop:Bineq}
Let\/ $\mathbf m\in\mathcal P_{p+1}\setminus\mathcal P_p$ 
be a fundamental tuple.
Then
\begin{align}\label{eq:bineq}
 \ord\mathbf m &\le 3|\idx\mathbf m| + 6,\allowdisplaybreaks\\
 \label{eq:b4ineq}
 \ord\mathbf  m&\le |\idx\mathbf m|+2\quad\text{if \ }
 p\ge 3,\allowdisplaybreaks\\
 p&\le\tfrac12|\idx\mathbf m|+3.\label{eq:pineq}
\end{align}
\end{prop}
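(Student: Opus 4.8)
\textbf{Proof plan for Proposition~\ref{prop:Bineq}.}
The plan is to exploit the combinatorial identity \eqref{eq:idxd}, which for the monotone representative $s\mathbf m$ and $\ell=\ell_{max}(\mathbf m)$ reads
\begin{equation*}
 d_{max}(\mathbf m)\cdot\ord\mathbf m = \idx\mathbf m + \sum_{j=0}^p\sum_{\nu=1}^{n_j}(m_{j,1}-m_{j,\nu})m_{j,\nu},
\end{equation*}
together with the defining property of a fundamental tuple that $d_{max}(\mathbf m)\le 0$. Since $\mathbf m$ is not in $\mathcal P_p$ we have $p\ge 2$ and $n_j\ge 2$ for each $j=0,\dots,p$, so each inner sum $\sum_{\nu\ge 2}(m_{j,1}-m_{j,\nu})m_{j,\nu}$ is nonnegative, and the whole right-hand side is $\le|\idx\mathbf m|$ in absolute value once $d_{max}\le 0$ forces $\idx\mathbf m=\idx\mathbf m\le 0$ and $\sum_{j,\nu}(m_{j,1}-m_{j,\nu})m_{j,\nu}\le|\idx\mathbf m|$. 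First I would normalize: replace $\mathbf m$ by $s\mathbf m$ (harmless by $S_\infty$-invariance of $\ord$ and $\idx$), reorder the $j$-indices so that $m_{0,1}\ge m_{1,1}\ge\cdots\ge m_{p,1}$, and record $d:=d_{max}(\mathbf m)=\sum_j m_{j,1}-(p-1)n\le 0$, i.e. $\sum_{j=0}^p m_{j,1}\le(p-1)n$.

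The key estimates come from bounding $n=\ord\mathbf m$ in terms of how close each partition $\mathbf m_j$ is to a ``rectangular'' one. Write $n = m_{j,1}n_j - e_j$ where $e_j:=\sum_{\nu=1}^{n_j}(m_{j,1}-m_{j,\nu})\ge 0$ measures the deficiency of the $j$-th partition from the $m_{j,1}\times n_j$ rectangle; then $\sum_{\nu\ge 2}(m_{j,1}-m_{j,\nu})m_{j,\nu}\ge\sum_{\nu\ge 2}(m_{j,1}-m_{j,\nu})\cdot 1 = e_j$ (using $m_{j,\nu}\ge 1$ for $\nu\le n_j$ in the monotone representative), so $\sum_j e_j\le|\idx\mathbf m|$. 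Next, from $\sum_j m_{j,1}\le(p-1)n$ and $n\le m_{j,1}n_j$ one derives $\sum_j\tfrac{1}{n_j}\ge\sum_j\tfrac{m_{j,1}}{n}\ge p-1$ up to a correction controlled by $\sum_j e_j$; more precisely $\sum_{j}\tfrac{1}{n_j}\ge p-1-\tfrac{\text{(something)}}{n}$. When $\idx\mathbf m=0$ this is exactly the $\tilde D_4,\tilde E_6,\tilde E_7,\tilde E_8$ situation of Corollary~\ref{cor:idx0} and the inequalities are checked directly on the four tuples. For $\idx\mathbf m<0$, the plan is: since $n_j\ge 2$, the inequality $\sum_j\tfrac{1}{n_j}\gtrsim p-1$ forces $p$ small, giving \eqref{eq:pineq} after turning ``$\sum 1/n_j\ge p-1-O(|\idx|/n)$'' into a genuine bound (one splits according to whether $n$ is large or small; for large $n$, $p-1\le\tfrac{p+1}{2}+o(1)$ forces $p\le 3$ or so, and the $|\idx\mathbf m|$ term absorbs the small-$n$ cases and the $p=4,5$ cases exactly as in the index $-2$ analysis of Proposition~\ref{prop:bas2}). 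Then with $p$ bounded, \eqref{eq:bineq} and \eqref{eq:b4ineq} follow by feeding the bound on $p$ back into $d_{max}(\mathbf m)\cdot n=\idx\mathbf m+\sum_{j,\nu}(m_{j,1}-m_{j,\nu})m_{j,\nu}$: writing $D:=\sum_j(m_{j,1}-(\text{avg}))$-type quantities, one gets $n(p-1-\sum_j 1/n_j)\le|\idx\mathbf m|$ plus a nonnegative rectangular defect, and since the left factor $p-1-\sum 1/n_j$ is a positive rational with bounded denominator (at most $6$ when $p=2$, reflecting $1/n_0+1/n_1+1/n_2$ with $n_j\ge 2$; at most $2$ when $p\ge 3$), one obtains $n\le 6|\idx\mathbf m|+(\text{const})$ for $p=2$ and $n\le|\idx\mathbf m|+(\text{const})$ for $p\ge 3$, and a finite check of the boundary cases (where $p-1-\sum 1/n_j$ is forced to equal its minimal positive value and the rectangular defect is forced to $0$) sharpens the additive constants to $6$ and $2$ respectively and also handles the $p=2$, $n_j\in\{(3,3,3),(2,4,4),(2,3,6)\}$ near-affine configurations by a short direct computation as in Lemma~\ref{lem:ex1}.

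The main obstacle I anticipate is the bookkeeping in the $p=2$ case: there $\sum_{j=0}^2 1/n_j$ can approach $1$ from above through the sequences $1/n_0+1/n_1+1/n_2=\tfrac{13}{16},\tfrac{10}{13},\tfrac{7}{10},\tfrac47,\tfrac14$ (exactly the values appearing in the proof of Proposition~\ref{prop:bas2}), so the quantity $1-\sum 1/n_j$ has no uniform positive lower bound and one must instead bound $n\cdot(1-\sum 1/n_j)$ from above by $|\idx\mathbf m|$ plus the rectangular defect and then argue that for each fixed value of $1-\sum 1/n_j=\tfrac{a}{b}$ the constraint $n\cdot\tfrac{a}{b}\le|\idx\mathbf m|+(\text{defect})$ combined with $b\le 2n$ (since $n$ is a common value $m_{j,1}n_j$-ish) yields $n\le 3|\idx\mathbf m|+6$; this requires carefully pairing each admissible $(n_0,n_1,n_2)$ with the minimal $n$ it allows and checking the constant $6$ is not exceeded. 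Once that case analysis is organized — it is essentially the same enumeration already carried out for $\idx\mathbf m\in\{0,-2\}$, now run with a general negative index — the three inequalities drop out, with equality/near-equality occurring precisely at the affine-type configurations, which is why the constants $6$, $2$ and $3$ appear.
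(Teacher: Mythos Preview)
Your starting point is right—the identity \eqref{eq:idxd} together with $d_{max}(\mathbf m)\le 0$ does give $\sum_{j,\nu}(m_{j,1}-m_{j,\nu})m_{j,\nu}\le|\idx\mathbf m|$ and hence $\sum_j e_j\le|\idx\mathbf m|$, and your inequality $n\bigl(p-1-\sum_j\tfrac1{n_j}\bigr)\le|\idx\mathbf m|$ is correct. But two concrete errors block the argument from closing.

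First, the direction you need for bounding $p$ is wrong. From $n\le m_{j,1}n_j$ you get $\tfrac1{n_j}\le\tfrac{m_{j,1}}{n}$, hence $\sum_j\tfrac1{n_j}\le\sum_j\tfrac{m_{j,1}}{n}\le p-1$; the inequality $\sum_j\tfrac1{n_j}\gtrsim p-1$ that you invoke to ``force $p$ small'' points the wrong way and gives no upper bound on $p$. The paper's proof of \eqref{eq:pineq} proceeds instead by induction on $p$: assuming a counterexample with minimal $p$, it drops the last partition and shows the resulting tuple violates the inductive hypothesis, using lower bounds on individual $|(\alpha|\alpha_{j,1})|$.

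Second, and more seriously, your ``bounded denominator'' step for \eqref{eq:bineq} fails. For $p=2$ the quantity $1-\sum_j\tfrac1{n_j}$ is \emph{not} a positive rational with denominator at most $6$: the triple $(n_0,n_1,n_2)=(2,3,7)$ gives $\tfrac1{42}$, so your inequality yields only $n\le 42|\idx\mathbf m|$, nowhere near $3|\idx\mathbf m|+6$. And this case is not hypothetical—the basic tuple $66,444,2222211$ has exactly $(n_0,n_1,n_2)=(2,3,7)$, $n=12$, $|\idx\mathbf m|=2$, sitting \emph{on} the boundary $n=3|\idx\mathbf m|+6$. The paper handles this by a genuinely different mechanism: once $(\alpha_{\mathbf m}|\alpha_0)=0$ (the hard case $d_{max}=0$), the simple roots $\alpha$ with $(\alpha_{\mathbf m}|\alpha)=0$ span a \emph{classical} root subsystem $\Pi_0$, and one bounds $n$ via the first simple root outside $\Pi_0$ along each branch, using Lemma~\ref{lem:abck} to get $\tfrac{n-6}{|(\alpha_{\mathbf m}|\alpha_{j,k_j})|\,n_{j,k_j}}<k_j+1$. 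The finitely many possible shapes of $\Pi_0$ (types $A$, $D$, $E$) are then checked individually. This Dynkin-diagram structure, not a denominator bound on $1-\sum 1/n_j$, is what produces the constant $3$.
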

\begin{exmp}\label{ex:special}
For a positive integer $m$ we have special 4 elements
\begin{equation}\index{00D4z@$D_4^{(m)},\ E_6^{(m)},\ E_7^{(m)},\ E_8^{(m)}$}
\begin{aligned}\label{eq:Qsp}
 &D_4^{(m)}: m^2,m^2,m^2,m(m-1)1&& 
 \quad E_6^{(m)}: m^3,m^3,m^2(m-1)1\\ 
 &E_7^{(m)}: (2m)^2,m^4,m^3(m-1)1&& 
 \quad E_8^{(m)}: (3m)^2,(2m)^3,m^5(m-1)1
\end{aligned}
\end{equation}
with orders $2m$, $3m$, $4m$ and $6m$, 
respectively, and index of rigidity $2-2m$.

Note that $E_8^{(m)}$, $D_4^{(m)}$ and 
$11,11,11,\cdots\in\mathcal P_{p+1}^{(2)}$ attain the equalities
\eqref{eq:bineq}, \eqref{eq:b4ineq} and \eqref{eq:pineq}, respectively.
\end{exmp}
\begin{rem}\label{rem:Fbasic}
It follows from the Proposition~\ref{prop:Bineq} that
there exist only finite basic tuples $\mathbf m\in\mathcal P$ 
with a fixed index of rigidity under the normalization \eqref{eq:NTP}.
This result is given in \cite[Proposition~8.1]{O3}.

\index{Fuchsian differential equation/operator!universal operator!fundamental}
Hence there exist only finite \textsl{fundamental universal Fuchsian differential 
operators} with a fixed number of accessory parameters.
Here a fundamental universal Fuchsian differential operator means
a universal operator given in Theorem~\ref{thm:univmodel} 
whose spectral type is fundamental (cf.~Definition~\ref{def:fund}). 
\end{rem}
Now we prepare a lemma.
\begin{lem}\label{lem:abck}
Let $a\ge 0$, $b>0$ and $c>0$ be integers such that 
$a+c-b>0$.
Then
\[
  \frac{b+kc-6}{(a+c-b)b}
  \begin{cases}
   < k+1&(0\le k\le 5),\\
   \le 7&(0\le k\le 6).
  \end{cases}
\]
\end{lem}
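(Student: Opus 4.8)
The plan is to reduce everything to elementary arithmetic in two integer parameters. I would set $d:=a+c-b$, so that $d\ge 1$ and $b\ge 1$ are positive integers, and observe that the hypothesis $a\ge 0$ enters only through the single bound $c\le b+d$. Since the denominator $(a+c-b)b=db$ is positive, the sign of each fraction equals the sign of its numerator $b+kc-6$; whenever that numerator is $\le 0$ both asserted inequalities are trivial, so one may assume $b+kc-6>0$. For $k\ge 0$ the numerator is nondecreasing in $c$, hence it suffices to treat the extreme case $c=b+d$, where the left-hand side becomes $\dfrac{(k+1)b+kd-6}{db}$. All that remains is to verify two inequalities in $b$ and $d$.

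For the strict bound with $0\le k\le 5$: clearing the positive denominator, $\dfrac{(k+1)b+kd-6}{db}<k+1$ is equivalent to $kd-6<(k+1)b(d-1)$. I would split on $d$. If $d=1$ the right side is $0$ and the left side is $k-6<0$ because $k\le 5$. If $d\ge 2$, then $(k+1)b(d-1)\ge (k+1)(d-1)=kd+(d-k-1)$, and $(d-k-1)-(-6)=d-k+5\ge 2-5+5=2>0$, so $(k+1)(d-1)>kd-6$ and the inequality follows.

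For the bound $\le 7$ with $0\le k\le 6$: the cases $0\le k\le 5$ are immediate from the previous step, since there the fraction is $<k+1\le 6<7$. For $k=6$, again taking $c=b+d$, the inequality $\dfrac{7b+6d-6}{db}\le 7$ is equivalent to $6(d-1)\le 7b(d-1)$, which is an equality when $d=1$ and follows from $b\ge 1$ when $d\ge 2$. This completes the argument.

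There is no serious obstacle; the lemma is purely computational. The only points needing a little care are (i) recognizing that $a\ge 0$ is used solely via $c\le b+d$, which licenses checking the estimates at $c=b+d$, and (ii) organizing the split $d=1$ versus $d\ge 2$, since the estimates degenerate precisely at $d=1$: there the constant $-6$ in the numerator is exactly what forces the strict inequality for $k\le 5$ and makes it fail (becoming an equality $7\le 7$) for $k=6$.
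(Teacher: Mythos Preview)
Your proof is correct. Both your argument and the paper's are direct elementary verifications, but they are organized differently. The paper splits according to whether $b\ge c$ or $b<c$: in the first case it bounds the fraction by $\frac{(k+1)b-6}{b}<k+1$, and in the second it computes $(k+1)(a+c-b)b-(b+kc-6)\ge (k+1)(c-b)b-b-kc+6\ge 6-k$, giving the strict inequality for $k\le 5$ and equality at $k=6$. You instead introduce $d=a+c-b$, reduce explicitly to the extremal case $a=0$ (i.e.\ $c=b+d$) by monotonicity in $c$, and then split on $d=1$ versus $d\ge 2$. Your substitution makes the role of the hypothesis $a\ge 0$ more transparent (it is used exactly once, to get $c\le b+d$), and your observation that the case $d=1$, $k=6$ gives exact equality $7\le 7$ is the same boundary phenomenon that the paper sees as $6-k=0$ at $k=6$.
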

\begin{proof}
Suppose $b\ge c$.  Then
\begin{align*}
\frac{b+kc-6}{(a+c-b)b}
 &\le \frac{b+kb-6}{b} < k+1.
\end{align*}
Next suppose $b<c$.  Then
\begin{align*}
(k+1)(a+c-b)b - (b+kc-6)
 &\ge (k+1)(c-b)b - b-kc+6\\
 &\ge (k+1)b - b - k(b+1)+6 = 6-k.
\end{align*}
Thus we have the lemma.
\end{proof}
\begin{proof}[Proof of Proposition~\ref{prop:Bineq}]
Since $\idx k\mathbf m=k^2\idx\mathbf m$ for a basic tuple $\mathbf m$
and $k\in\mathbb Z_{>0}$, we may assume that $\mathbf m$ is basic
and $\idx\mathbf m\le -2$  to prove the proposition.

Fix a basic monotone tuple $\mathbf m$.
Put $\alpha=\alpha_{\mathbf m}$ under the notation
\eqref{eq:Kazpart} and $n=\ord\mathbf m$.
Note that 
\begin{equation}\label{eq:DynNorm}
 (\alpha|\alpha)=n(\alpha|\alpha_0)+\sum_{j=0}^p\sum_{\nu=1}^{n_j}
 n_{j,\nu}(\alpha|\alpha_{j,\nu}),\quad
 (\alpha|\alpha_0)\le 0,\quad(\alpha|\alpha_{j,\nu})\le0.
\end{equation}
We first assume that \eqref{eq:bineq} is not valid, namely,
\begin{equation}\label{eq:beq}
  3|(\alpha|\alpha)|+6 < n.
\end{equation}
In view of \eqref{eq:basic0}, we have $(\alpha|\alpha)<0$ 
and the assumption implies $|(\alpha|\alpha_0)|=0$
because $|(\alpha|\alpha)|\ge n|(\alpha|\alpha_0)|$.

Let $\Pi_0$ be the connected component of
$\{\alpha_i\in\Pi\,;\,(\alpha|\alpha_i)=0
\text{ and }\alpha_i\in\supp\alpha\}$ containing $\alpha_0$.
Note that $\supp\alpha$ generates a root system
which is neither classical nor affine but 
$\Pi_0$ generates a root system of finite type.

Put $J=\{j\,;\,\exists\alpha_{j,\nu}\in\supp\alpha_{\mathbf m}
\text{ such that }(\alpha|\alpha_{j,\nu})<0\}\ne\emptyset$
and for each $j\in J$ define $k_j$ with the condition \eqref{eq:classic}.
Then we note that
\[
 (\alpha|\alpha_{j,\nu})
 =\begin{cases}
  0      & (1\le \nu < k_j),\\
  2n_{j,k_j}-n_{j,k_{j-1}}-n_{j,k_{j+1}}\le -1& (\nu=k_j).
 \end{cases}
\]
Applying the above lemma to $\mathbf m$ by putting $n=b+k_jc$ and 
$n_{j,\nu}=b+(k_j-\nu)c$ $(1\le \nu \le k_j)$
and $n_{j,k_j+1}=a$, we have
\begin{equation}\label{eq:Dynk1}
  \frac{n-6}{(n_{j,k_{j-1}}+n_{j,k_{j+1}}-2n_{j,k_j})n_{j,k_j}}
  \begin{cases}
  < k_j+1 &(1\le k_j\le 5),\\
  \le 7   &(1\le k_j\le 6).
  \end{cases}
\end{equation}
Here $(\alpha|\alpha_{j,k_j})=b-c-a\le -1$ and 
we have $|(\alpha|\alpha)| \ge |(\alpha|\alpha_{j,\nu})| 
> \frac{n-6}{k_j+1}$ if $k_j<6$
and therefore $k_j\ge 3$.

It follows from the condition $k_j\ge 3$
that $\mathbf m\in\mathcal P_3$ because $\Pi_0$ is of finite type
and moreover that $\Pi_0$ is of exceptional type, namely, of type $E_6$ or $E_7$ 
or $E_8$ because $\supp\alpha$ is not of finite type.

Suppose $\#J\ge 2$. We may assume $\{0,1\}\subset J$
and $k_0\le k_1$.
Since $\Pi_0$ is of exceptional type and $\supp\alpha$ is not of finite type,
we may assume $k_0=3$ and $k_1\le 5$.
Owing to \eqref{eq:DynNorm} and \eqref{eq:Dynk1}, we have
\begin{align*}
 |(\alpha|\alpha)|&\ge
  n_{0,3}(n_{0,2}+n_{0,4}-2n_{0,3})
 +n_{1,k_1}(n_{1,k_1-1}+n_{1,k_1+1}-2n_{1,k_1})\\
  &>\tfrac{n-6}{3+1}+\tfrac{n-6}{5+1}
  >\tfrac{n-6}3,
\end{align*}
which contradicts to the assumption.

Thus we may assume $J=\{0\}$.
For $j=1$ and $2$ let $n_j$ be the positive integer
such that $\alpha_{j,n_j}\in\supp\alpha$ and
$\alpha_{j,n_j+1}\notin\supp\alpha$.
We may assume $n_1\ge n_2$. 

Fist suppose $k_0=3$.
Then $(n_1,n_2)=(2,1)$, $(3,1)$ or $(4,1)$
and the Dynkin diagram of $\supp\alpha$ with the numbers $m_{j,\nu}$
is one of the diagrams:
\begin{align*}
&\begin{xy}
\ar@{-}  , (10,0)     *++!D{3m}  *{\cdot}*\cir<4pt>{}="F";
\ar@{-} "F"; (20,0)   *++!D{4m}  *\cir<4pt>{}="A",
\ar@{-} "A"; (30,0)   *++!D{5m}  *\cir<4pt>{}="B",
\ar@{-} "B"; (40,0)   *+!L+!D{6m}*\cir<4pt>{}="C",
\ar@{-} "C"; (50,0)   *++!D{4m}  *\cir<4pt>{}="D",
\ar@{-} "D"; (60,0)  *++!D{2m}   *\cir<4pt>{}="E",
\ar@{-} "C"; (40,10)  *++!L{3m}  *\cir<4pt>{}
\end{xy}&&|(\alpha|\alpha)|\ge 3m\allowdisplaybreaks\\
&\begin{xy}
(10,8) *{0<k<m};
\ar@{-} (0,0);(10,0) *{\cdot}*++!D{k} *\cir<4pt>{}="H";
\ar@{-} "H";
             (20,0)   *++!D{m}   *{\cdot}*\cir<4pt>{}="F";
\ar@{-} "F"; (30,0)   *++!D{2m}  *\cir<4pt>{}="A",
\ar@{-} "A"; (40,0)   *++!D{3m}  *\cir<4pt>{}="B",
\ar@{-} "B"; (50,0)   *+!L+!D{4m}*\cir<4pt>{}="C",
\ar@{-} "C"; (60,0)   *++!D{3m}  *\cir<4pt>{}="D",
\ar@{-} "D"; (70,0)  *++!D{2m}   *\cir<4pt>{}="E",
\ar@{-} "E"; (80,0)  *++!D{m}   *\cir<4pt>{},
\ar@{-} "C"; (50,10)  *++!L{2m}  *\cir<4pt>{}
\end{xy}&&|(\alpha|\alpha)|\ge 2k(m-k)\allowdisplaybreaks\\
&\begin{xy}
\ar@{-}    , (10,0)   *++!D{m}  *{\cdot}*\cir<4pt>{}="F";
\ar@{-} "F"; (20,0)   *++!D{4m}  *\cir<4pt>{}="A",
\ar@{-} "A"; (30,0)   *++!D{7m}  *\cir<4pt>{}="B",
\ar@{-} "B"; (40,0)   *+!L+!D{\!10m}*\cir<4pt>{}="C",
\ar@{-} "C"; (50,0)   *++!D{\ 8m}  *\cir<4pt>{}="D",
\ar@{-} "D"; (60,0)  *++!D{6m}   *\cir<4pt>{}="E",
\ar@{-} "E"; (70,0)  *++!D{4m}    *\cir<4pt>{}="G",
\ar@{-} "G"; (80,0)  *++!D{2m}    *\cir<4pt>{},
\ar@{-} "C"; (40,10)  *++!L{5m}  *\cir<4pt>{}\ar@{-}
\end{xy}
&&|(\alpha|\alpha)|\ge 2m^2
\end{align*}
For example, when $(n_1,n_2)=(3,1)$, then $k:=m_{0,4}\ge 1$
because $(\alpha|\alpha_{0,3})\ne 0$ and therefore
$0<k<m$ and $|(\alpha|\alpha)|\ge k(m-2k)+m(2m+k-2m)
=2k(m-k)\ge 2m-2$ and $3|(\alpha|\alpha)|+6 - 4m\ge 3(2m-2)+6-4m > 0$.
Hence \eqref{eq:beq} doesn't hold.

Other cases don't happen because of the inequalities
$3\cdot 3m +6 - 6m > 0$ and
$3\cdot 2m^2 + 6 - 10m > 0$.

Lastly suppose $k_0>3$.
Then $(k_0,n_1,n_2)=(4,2,1)$ or $(5,2,1)$.
\begin{align*}
&\begin{xy}
(10,8) *{m<k<2m};
\ar@{-} (0,0); (10,0)   *++!D{k}   *{\cdot}*\cir<4pt>{}="F";
\ar@{-} "F"; (20,0)   *++!D{2m}  *{\cdot}*\cir<4pt>{}="G",
\ar@{-} "G"; (30,0)   *++!D{3m}  *\cir<4pt>{}="H",
\ar@{-} "H"; (40,0)   *++!D{4m}  *\cir<4pt>{}="A",
\ar@{-} "A"; (50,0)   *++!D{5m}  *\cir<4pt>{}="B",
\ar@{-} "B"; (60,0)   *+!L+!D{6m}*\cir<4pt>{}="C",
\ar@{-} "C"; (70,0)   *++!D{4m}  *\cir<4pt>{}="D",
\ar@{-} "D"; (80,0)   *++!D{2m}   *\cir<4pt>{},
\ar@{-} "C"; (60,10)  *++!L{3m}  *\cir<4pt>{},
\end{xy}&&
\!|(\alpha|\alpha)|\ge 2m\allowdisplaybreaks\\
&\begin{xy}
(10,8) *{0<k<m};
\ar@{-} (0,0);(10,0)   *++!D{k}   *{\cdot}*\cir<4pt>{}="F";
\ar@{-} "F"; (20,0)   *++!D{m}  *{\cdot}*\cir<4pt>{}="G",
\ar@{-} "G"; (30,0)   *++!D{2m}  *\cir<4pt>{}="H",
\ar@{-} "H"; (40,0)   *++!D{3m}  *\cir<4pt>{}="I",
\ar@{-} "I"; (50,0)   *++!D{4m}  *\cir<4pt>{}="A",
\ar@{-} "A"; (60,0)   *++!D{5m}  *\cir<4pt>{}="B",
\ar@{-} "B"; (70,0)   *+!L+!D{6m}*\cir<4pt>{}="C",
\ar@{-} "C"; (80,0)   *++!D{4m}  *\cir<4pt>{}="D",
\ar@{-} "D"; (90,0)   *++!D{2m}   *\cir<4pt>{},
\ar@{-} "C"; (70,10)  *++!L{3m}  *\cir<4pt>{}
\end{xy}
&&\!|(\alpha|\alpha)|\ge 2(m-1)
\end{align*}

In the above first case we have $(\alpha|\alpha)|\ge 2m$, which contradicts 
to \eqref{eq:beq}.
Note that $(|\alpha|\alpha)|\ge
k\cdot(m-2k)+m\cdot k=2k(m-k)\ge 2(m-1)$ in the above
last case, which also  contradicts to \eqref{eq:beq} because 
$3\cdot 2(m-1)+ 6 = 6m$.

Thus we have proved \eqref{eq:bineq}.

Assume $\mathbf m\notin\mathcal P_3$
to prove a different inequality \eqref{eq:b4ineq}.
In this case, we may assume $(\alpha|\alpha_0)=0$, $|(\alpha|\alpha)|\ge 2$
and $n>4$.
Note that
\begin{equation}\label{eq:P40}
 2n=n_{0,1}+n_{1,1}+\cdots+n_{p,1}\text{ \ with \ }
 p\ge 3\text{ and }n_{j,1}\ge 1\text{ for }j=0,\dots,p.
\end{equation}
If there exists $j$ with $1\le n_{j,1}\le\frac n2-1$, 
\eqref{eq:b4ineq} follows from \eqref{eq:DynNorm} and
$
 |(\alpha|\alpha_{j,1})|=n_{j,1}(n+n_{j,2}-2 n_{j,1})\ge 
 2n_{j,1}(\tfrac n2- n_{j,1})\ge n-2
$.

Hence we may assume $n_{j,1}\ge \frac{n-1}2$ for $j=0,\dots,p$.
Suppose there exists $j$ with $n_{j,1}=\frac{n-1}2$.
Then $n$ is odd and \eqref{eq:P40} means that there also exists $j'$ with
$j\ne j'$ and $n_{j',1}=\frac{n-1}2$.
In this case we have \eqref{eq:b4ineq} since
\[
 |(\alpha|\alpha_{j,1})|+|(\alpha|\alpha_{j',1})|=
 n_{j,1}(n+n_{j,2}-2 n_{j,1})+n_{j',1}(n+n_{j',2}-2 n_{j,1})
 \ge \tfrac{n-1}2+\tfrac{n-1}2.
\]

Now we may assume $n_{j,1}\ge \frac n2$ for $j=0,\dots,p$.
Then \eqref{eq:P40} implies that $p=3$ and $n_{j,1}=\frac n2$
for $j=0,\dots,3$.
Since $(\alpha|\alpha)<0$, there exists $j$ with $n_{j,2}\ge1$ and
\[
\begin{split}
 |(\alpha|\alpha_{j,1})|+|(\alpha|\alpha_{j,2})|&=n_{j,1}(n+n_{j,2}-2n_{j,1})
 + n_{j,2}(n_{j,1}+n_{j,3}-2n_{j,2})\\
 &=
  \tfrac n2 n_{j,2}+n_{j,2}(\tfrac n2+n_{j,3}-2n_{j,2})\\
 &
  \begin{cases}
   \ge n&(n_{j,2}\ge1),\\
   = n-2&(n_{j,2}=1\text{ and }n_{j,3}=0).
 \end{cases}
\end{split}
\]
Thus we have completed the proof of \eqref{eq:b4ineq}.

There are 4 basic tuples with the index of the rigidity $0$
and 13 basic tuples with the index of the rigidity $-2$,
which are given in \eqref{eq:basic0} and Proposition~\ref{prop:bas2}.
They satisfy \eqref{eq:pineq}.

Suppose that \eqref{eq:pineq} is not valid.
We may assume that $p$ is minimal under this assumption.
Then $\idx\mathbf m<-2$, $p\ge 5$ and $n=\ord\mathbf m>2$.
We may assume $n>n_{0,1}\ge n_{1,1}\ge\cdots\ge n_{p,1}>0$.
Since $(\alpha|\alpha_0)\le 0$, we have
\begin{equation}\label{eq:basp}
  n_{0,1}+n_{1,1}+\cdots+n_{p,1}\ge 2n>n_{0,1}+\cdots+n_{p-1,1}.
\end{equation}
In fact, if $n_{0,1}+\cdots+n_{p-1,1}\ge 2n$, the tuple 
$\mathbf m'=(\mathbf m_0,\dots,\mathbf m_{p-1})$
is also basic and $|(\alpha|\alpha)|-
|(\alpha_{\mathbf m'},\alpha_{\mathbf m'})|=n^2-\sum_{\nu\ge1}n_{p,\nu}^2\ge 2$, 
which contradicts to the minimality.

Thus we have $2n_{j,1}<n$ for $j=3,\dots,p$.
If $n$ is even, $|\idx\mathbf m|\ge\sum_{j=3}^p|(\alpha|\alpha_{j,1})|
=\sum_{j=3}^p(n+n_{j,2}-2n_{j,1})\ge 2(p-2)$, which contradicts to the assumption.
If $n=3$, \eqref{eq:basp} assures $p=5$ and $n_{0,1}=\cdots=n_{5,0}=1$ and 
therefore $\idx\mathbf m=-4$, which also contradicts to the assumption.
Thus $n=2m+1$ with $m\ge 2$.  
Choose $k$ so that $n_{k-1,1}\ge m>n_{k,1}$.
Then $|\idx\mathbf m|\ge\sum_{j=k}^p(\alpha|\alpha_{j,1})|=\sum_{j=k}^p
(n+n_{j,2}-2n_{j,1})\ge 3(p-k+1)$.  Owing to \eqref{eq:basp}, we have
$2(2m+1)>km+(p-k)$ and $k< \frac{4m+2-p}{m-1}\le \frac{4m-3}{m-1}\le 5$, 
which means $k\le 4$, $|\idx\mathbf m|\ge 3(p-3)\ge 2p-4$ and a contradiction to 
the assumption.
\end{proof}
\section{Expression of local solutions}\label{sec:exp}
Fix $\mathbf m=\bigl(m_{j,\nu}\bigr)_{\substack{j=0,\dots,p\\1\le\nu\le n_j}}
\in\mathcal P_{p+1}$.
Suppose $\mathbf m$ is monotone and irreducibly realizable.
Let $P_{\mathbf m}$ be the universal operator with the Riemann scheme
\eqref{eq:GRS}, which is given in Theorem~\ref{thm:univmodel}.
Suppose $c_1=0$ and $m_{1,n_1}=1$.
We give expressions of the local 
solution of $P_{\mathbf m}u=0$ at $x=0$ corresponding to the
characteristic exponent $\lambda_{1,n_1}$.
\begin{thm}\label{thm:expsol}
Retain the notation above and in Definition~\ref{def:redGRS}.
Suppose $\lambda_{j,\nu}$ are generic.
Let
\begin{equation}
  v(x) = \sum_{\nu=0}^\infty C_\nu x^{\lambda(K)_{1,n_1}+\nu}
\end{equation}
be the local solution of $\bigl(\p_{\max}^KP_{\mathbf m}\bigr)v=0$
at $x=0$ with the condition $C_0=1$.
Put
\begin{equation}
 \lambda(k)_{j,max}=\lambda(k)_{j,\ell(k)_j}.
\end{equation}
Note that if\/ $\mathbf m$ is rigid, then
\begin{equation}
 v(x) = x^{\lambda(K)_{1,n_1}}\prod_{j=2}^p
  \Bigl(1-\frac x{c_j}\Bigr)^{\lambda(K)_{j,max}}.
\end{equation}
The function
\begin{equation}\label{eq:intexp}
 \begin{split}
 u(x)&:=\prod_{k=0}^{K-1}
 \frac{\Gamma\bigl(\lambda(k)_{1,n_1}-\lambda(k)_{1,max}+1\bigr)}
  {\Gamma\bigl(\lambda(k)_{1,n_1}-\lambda(k)_{1,max}+\mu(k)+1\bigr)
   \Gamma\bigl(-\mu(k)\bigr)}
\\
 &\int_0^{s_0}\cdots \int_0^{s_{K-1}}
  \prod_{k=0}^{K-1}(s_k-s_{k+1})^{-\mu(k)-1}\\
 &\quad\cdot\prod_{k=0}^{K-1}\biggl(
 \Bigl(\frac{s_k}{s_{k+1}}\Bigr)^{\lambda(k)_{1,max}}
 \prod_{j=2}^p\Bigl(\frac{1-c_j^{-1}s_k}{1-c_j^{-1}s_{k+1}}\Bigr)
  ^{\lambda(k)_{j,{max}}}
 \biggr)\\
 &\quad\cdot v(s_K)ds_K\cdots ds_1\Bigl|_{s_0=x}
 \end{split}
\end{equation}
is the solution of $P_{\mathbf m}u=0$ so normalized that
$u(x)\equiv x^{\lambda_{1,n_1}}\mod x^{\lambda_{1,n_1}+1}\mathcal O_0$.

Here we note that
\begin{equation}\label{eq:solrepsub}
 \begin{split}
 &\prod_{k=0}^{K-1}\biggl(
 \Bigl(\frac{s_k}{s_{k+1}}\Bigr)^{\lambda(k)_{1,max}}
 \prod_{j=2}^p\Bigl(\frac{1-c_j^{-1}s_k}{1-c_j^{-1}s_{k+1}}\Bigr)
  ^{\lambda(k)_{j,max}}
 \biggr)\\
 &\quad=\frac{s_0^{\lambda(0)_{1,max}}}
  {s_K^{\lambda(K-1)_{1,max}}}
     \prod_{j=1}^p
        \frac{(1-c_j^{-1}s_0)^{\lambda(0)_{j,{max}}}}
        {(1-c_j^{-1}s_K)^{\lambda(K-1)_{j,{max}}}}\\
    &\qquad\cdot\prod_{k=1}^{K-1}
     \Bigl(s_k^{\lambda(k)_{1,{max}}-\lambda(k-1)_{1,{max}}
       }
    \prod_{j=2}^{p}
    (1-c_j^{-1}s_k)
    ^{\lambda(k)_{j,{max}}-\lambda(k-1)_{j,{max}}}\Bigr).
 \end{split}
\end{equation}
When\/ $\mathbf m$ is rigid,
\begin{equation}\label{eq:serexpr}
\begin{split}
  u(x)&= x^{\lambda_{1,n_1}}
  \biggl(\prod_{j=2}^p\Bigl(1-\frac x{c_j}\Bigr)^{\lambda(0)_{j,{max}}}\biggr)
  \sum_{\bigl(\nu_{j,k}\bigr)
   _{\substack{2\le j\le p\\ 1\le k\le K}}\in\mathbb Z_{\ge 0}^{(p-1)K}}
 \\&\quad
  \prod_{i=0}^{K-1}
  \frac
  {\bigl(\lambda(i)_{1,n_1}-\lambda(i)_{1,{max}}+1\bigr)
  _{\sum_{s=2}^p\sum_{t=i+1}^{K}\nu_{s,t}}}
  {\bigl(\lambda(i)_{1,n_1}-\lambda(i)_{1,{max}}+\mu(i)+1\bigr)
  _{\sum_{s=2}^p\sum_{t=i+1}^{K}\nu_{s,t}}}
 \\&
  \quad
  \cdot\prod_{i=1}^K\prod_{s=2}^p\frac{
   \bigl(\lambda(i-1)_{s,{max}}-\lambda(i)_{s,{max}}\bigr)_{\nu_{s,i}}}
  {\nu_{s,i}!}\cdot
  \prod_{s=2}^p\Bigl(\frac{x}{c_s}\Bigr)^{\sum_{i=1}^K\nu_{s,i}}.
\end{split}
\end{equation}
When\/ $\mathbf m$ is not rigid
\begin{equation}\label{eq:serexp}
\begin{split}
 u(x)&= x^{\lambda_{1,n_1}}
  \biggl(\prod_{j=2}^p\Bigl(1-\frac x{c_j}\Bigr)^{\lambda(0)_{j,{max}}}\biggr)
   \sum_{\nu_0=0}^\infty\sum_{\bigl(\nu_{j,k}\bigr)
   _{\substack{2\le j\le p\\ 1\le k\le K}}\in\mathbb Z_{\ge 0}^{(p-1)K}}
 \\&\quad
  \prod_{i=0}^{K-1}
  \frac
  {\bigl(\lambda(i)_{1,n_1}-\lambda(i)_{1,{max}}+1\bigr)
  _{\nu_0+\sum_{s=2}^p\sum_{t=i+1}^{K}\nu_{s,t}}}
  {\bigl(\lambda(i)_{1,n_1}-\lambda(i)_{1,{max}}+\mu(i)+1\bigr)
  _{\nu_0+\sum_{s=2}^p\sum_{t=i+1}^{K}\nu_{s,t}}}\\
 &
  \quad\cdot
  \prod_{s=2}^p\frac{
   \bigl(\lambda(K-1)_{s,{max}}\bigr)_{\nu_{s,K}}
  }{\nu_{s,K}!}\cdot
  \prod_{i=1}^{K-1}\prod_{s=2}^p\frac{
   \bigl(\lambda(i-1)_{s,{max}}
     -\lambda(i)_{s,{max}}\bigr)_{\nu_{s,i}}
  }{\nu_{s,i}!}
  \\&\quad\cdot
  C_{\nu_0}x^{\nu_0}\prod_{s=2}^p\Bigl(\frac{x}{c_s}\Bigr)^{\sum_{i=1}^K\nu_{s,i}}.
\end{split}
\end{equation}

Fix $j$ and $k$ and suppose
\begin{equation}
 \begin{cases}
   \ell(k-1)_j=\ell(k)_\nu&\text{when }\mathbf m\text{ is rigid or }k<K,\\
   \ell(k-1)_j=0&\text{when }\mathbf m\text{ is not rigid and }k=K.
 \end{cases}
\end{equation}
Then the terms satisfying $\nu_{j,k}>0$ vanish because $(0)_{\nu_{j,k}}=
\delta_{0,\nu_{j,k}}$ for $\nu_{j,k}=0,1,2,\ldots$.
\end{thm}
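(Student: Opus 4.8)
The plan is to prove Theorem~\ref{thm:expsol} by unwinding the construction of $P_{\mathbf m}$ as a successive application of the operations $\p_{\ell(k)}$, using the explicit integral description of each middle convolution and addition together with the series‑level identities of \S\ref{sec:series}. Concretely, recall from Definition~\ref{def:pell} and Theorem~\ref{thm:GRSmid} that $\p_{\max}^{k}P_{\mathbf m}$ is obtained from $\p_{\max}^{k+1}P_{\mathbf m}$ by applying $\p_{\ell(k)}$, which is (up to left multiplication by rational functions, harmless for solutions) a composite of additions $\RAd\bigl(\prod_{j}(x-c_j)^{\pm\lambda_{j,\ell(k)_j}}\bigr)$ and a middle convolution $\RAd(\p^{-\mu(k)})=mc_{\mu(k)}$, with $\mu(k)=\sum_{j}\lambda(k)_{j,\ell(k)_j}-1$. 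On the level of solutions, by Remark~\ref{rem:midisom} ii) and Proposition~\ref{prop:RAdIc}, the corresponding transformation of a local solution at $x=c_1=0$ corresponding to the ``free'' exponent $\lambda(k)_{1,n_1}$ is: multiply by the appropriate powers of $x$ and $(1-c_j^{-1}x)$, then apply the Riemann–Liouville integral $I_0^{\mu(k)}$. Iterating from $k=K$ down to $k=0$ produces exactly the $K$‑fold iterated integral \eqref{eq:intexp}, and the normalizing gamma quotients come from \eqref{eq:IcP}: tracking the leading term $x^{\lambda(k)_{1,n_1}}$ through $I_0^{\mu(k)}$ applied to $x^{\lambda(k)_{1,n_1}-\lambda(k)_{1,\max}}\cdot(\text{units})$ yields precisely $\frac{\Gamma(\lambda(k)_{1,n_1}-\lambda(k)_{1,\max}+1)}{\Gamma(\lambda(k)_{1,n_1}-\lambda(k)_{1,\max}+\mu(k)+1)}$, and the extra $\Gamma(-\mu(k))^{-1}$ accounts for the $\frac1{\Gamma(\mu)}$ in $I_c^\mu$ versus the sign/normalization chosen so that $u(x)\equiv x^{\lambda_{1,n_1}}$; this genericity of $\lambda_{j,\nu}$ is what makes all the intermediate $I_0^{\mu(k)}$ invertible on the relevant $\mathcal O_0(\lambda,m)$ by Proposition~\ref{prop:RAdIc}. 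The telescoping rearrangement of the product of powers into \eqref{eq:solrepsub} is a routine bookkeeping of exponents $\lambda(k)_{j,\max}-\lambda(k-1)_{j,\max}$.

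Next I would derive the series expansions \eqref{eq:serexpr} and \eqref{eq:serexp}. Start from the innermost function $v(s_K)$: in the rigid case $\p_{\max}^KP_{\mathbf m}$ has order $1$, so $v$ is the explicit product of powers displayed; in the non‑rigid case $v(s_K)=\sum_{\nu_0}C_{\nu_0}s_K^{\lambda(K)_{1,n_1}+\nu_0}$. Then expand each factor $(1-c_j^{-1}s_k)^{\lambda}$ by the binomial series \eqref{eq:seriesP} and substitute $s_{k}=s_{k-1}t_k$ with $t_k\in[0,1]$ in each successive integration, so that each $ds_k$ integral becomes a Beta integral \eqref{eq:beta} in $t_k$; each such integration against $(s_{k-1}-s_k)^{-\mu(k-1)-1}$ with the accumulated power of $s_k$ produces a ratio $\frac{(\cdots+1)_{M}}{(\cdots+\mu+1)_M}$ where $M$ is the total exponent shift accumulated so far from the inner variables — this is exactly the mechanism of \eqref{eq:IcP}. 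Carrying this out variable by variable from $s_K$ outward, collecting the binomial coefficients into the Pochhammer factors $\frac{(\lambda(i-1)_{s,\max}-\lambda(i)_{s,\max})_{\nu_{s,i}}}{\nu_{s,i}!}$ and the Beta‑integral contributions into the $\frac{(\cdots+1)_{\sum\nu}}{(\cdots+\mu+1)_{\sum\nu}}$ factors, yields the stated multi‑index sums; the outer factor $\prod_{j\ge2}(1-x/c_j)^{\lambda(0)_{j,\max}}$ is pulled out before expanding so that the summation variables $\nu_{j,k}$ count only the ``excess'' binomial contributions, matching the form in \eqref{eq:I0H}. Finally, the vanishing statement is immediate: if $\ell(k-1)_j=\ell(k)_\nu$ (resp. $=0$), then the relevant exponent difference $\lambda(k-1)_{j,\max}-\lambda(k)_{j,\max}$ (resp. $\lambda(K-1)_{j,\max}$) is forced to be $0$ by the definition of $\p_\ell$ in Definition~\ref{def:redGRS}, and $(0)_{\nu}=\delta_{0,\nu}$ kills every term with $\nu_{j,k}>0$.

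The main obstacle I expect is the careful combinatorial bookkeeping of the Pochhammer symbols: one must verify that, when the $K$ nested Beta integrals are evaluated in turn, the shift appearing in the $i$‑th factor is exactly $\nu_0+\sum_{s=2}^p\sum_{t=i+1}^K\nu_{s,t}$ (i.e.\ the sum over all summation variables ``interior'' to stage $i$), and not some other partial sum. This requires a clean induction on $K$: one shows that applying a single $I_0^{\mu}$ to a function already written in the form $x^{\lambda}(\text{units})\sum_{\text{multi-index}}(\text{products of Pochhammers})x^{|\text{multi-index}|}$ reproduces the same shape with one new nesting level and the shift indices updated correctly, using \eqref{eq:IcP} for the $x^{\lambda+n}$ part and \eqref{eq:I0H} for the interaction with the $(1-c_j^{-1}x)$ factors. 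Once the inductive step is formulated precisely, the passage from \eqref{eq:intexp} to \eqref{eq:serexpr}/\eqref{eq:serexp} is mechanical. The convergence of all series and interchange of sum and integral is justified, as usual, by first assuming $\RE\mu(k)$ and $\RE\lambda(k)_{1,n_1}$ in a suitable range where everything converges absolutely (Remark~\ref{rem:defIc}), and then extending to generic $\lambda_{j,\nu}$ by analytic continuation in the parameters, exactly as in Proposition~\ref{prop:RAdIc}.
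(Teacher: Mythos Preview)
Your proposal is correct and follows essentially the same approach as the paper: induction on $K$, reconstructing $P_{\mathbf m}$ from $\p_{\max}^KP_{\mathbf m}$ by successive applications of $\p_{\ell(k)}$ realized on solutions as additions followed by the Euler transform $I_0^{-\mu(k)}$, with the normalizing constants coming from \eqref{eq:IcP} and the series expansions from \eqref{eq:seriesP} and \eqref{eq:I0H}. The paper's proof is in fact terser than yours, citing exactly \eqref{eq:opred}, \eqref{eq:pellP2}, \eqref{eq:defpell}, \eqref{eq:seriesP}, \eqref{eq:IcP} and writing out only the single inductive step; your more explicit bookkeeping of the Pochhammer shifts is a faithful elaboration of that step.
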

\begin{proof}
The theorem follows from \eqref{eq:opred}, \eqref{eq:pellP2}, \eqref{eq:defpell},
\eqref{eq:seriesP} and \eqref{eq:IcP} 
by the induction on $K$.
Note that the integral representation of the normalized solution of 
$\bigr(\p_{max}P\bigr)v=0$ corresponding to the exponent $\lambda(1)_{n_1}$
equals
\[
 \begin{split}
 v(x)&:=\prod_{k=1}^{K-1}
 \frac{\Gamma\bigl(\lambda(k)_{1,n_1}-\lambda(k)_{1,max}+1\bigr)}
 {\Gamma\bigl(\lambda(k)_{1,n_1}-\lambda(k)_{1,max}+\mu(k)+1\bigr)
  \Gamma\bigl(-\mu(k)\bigr)}\\
 &\quad\cdot\int_0^{s_1}\cdots \int_0^{s_{K-1}}
  \prod_{k=0}^{K-1}(s_k-s_{k+1})^{-\mu(k)-1}\\
 &\quad\cdot\prod_{k=0}^{K-1}\biggl(
 \Bigl(\frac{s_k}{s_{k+1}}\Bigr)^{\lambda(k)_{1,max}}
 \prod_{j=2}^p\Bigl(\frac{1-c_j^{-1}s_k}{1-c_j^{-1}s_{k+1}}\Bigr)
  ^{\lambda(k)_{j,{max}}}
 \biggr)\\
 &\quad\cdot v(s_K)ds_K\cdots ds_1\Bigl|_{s_1=x}\\
 &\equiv x^{\lambda(1)_{1,n_1}}\mod x^{\lambda(1)_{1,n_1}+1}\mathcal O_0
 \end{split}
\]
by the induction hypothesis and the normalized solution of $Pu=0$ corresponding
to the exponent $\lambda_{1,n_1}$ equals
\[
 \begin{split}
  &\frac{\Gamma\bigl(\lambda(0)_{1,n_1}-\lambda(0)_{1,max}+1\bigr)}
   {\Gamma\bigl(\lambda(0)_{1,n_1}-\lambda(0)_{1,max}+\mu(0)+1\bigr)
    \Gamma\bigl(-\mu(0)\bigr)}\\
  &\quad{}\cdot\int_0^x(x-s_0)^{-\mu(0)-1}\frac{x^{-\lambda(0)_{1,\max}}}
   {s_0^{-\lambda(0)_{1,\max}}}\prod_{j=2}^p
  \Bigl(\frac{1-c_j^{-1}x}{1-c_j^{-1}s_0}\Bigr)^{-\lambda(0)_{j,\max}}
  v(s_0)ds_0
 \end{split}
\]
and hence we have \eqref{eq:intexp}.
Then the integral expression \eqref{eq:intexp} with \eqref{eq:solrepsub},
\eqref{eq:seriesP} and \eqref{eq:IcP} inductively proves \eqref{eq:serexpr} 
and \eqref{eq:serexp}.
\end{proof}
\begin{exmp}[Gauss hypergeometric equation]\label{ex:localGauss}
\index{hypergeometric equation/function!Gauss}
The reduction \eqref{eq:redGG} shows
\begin{align*}
 \lambda(0)_{j,\nu}&=\lambda_{j,\nu},\ 
 m(0)_{j,\nu} = 1 \quad(0\le j\le 2,\ 1\le\nu\le 2),\ 
  \mu(0) =-\lambda_{0,2}-\lambda_{1,2}-\lambda_{2,2},\\
 m(1)_{j,1}&=0,\ m(1)_{j,2}=1\quad(j=0,1,2),\\
 \lambda(1)_{0,1} &=\lambda_{0,1}+2\lambda_{0,2}+2\lambda_{1,2}+2\lambda_{2,2},\ 
 \lambda(1)_{1,1}=\lambda_{1,1},\ \lambda(1)_{2,1}=\lambda_{2,1},\\
 \lambda(1)_{0,2} &=2\lambda_{0,2}+\lambda_{1,2}+\lambda_{2,2},\ 
 \lambda(1)_{1,2}  =-\lambda_{0,2}-\lambda_{2,2},\ 
 \lambda(1)_{2,2}  =-\lambda_{0,2}-\lambda_{1,2}
\end{align*}
and therefore
\begin{align*}
 \lambda(0)_{1,n_1}-\lambda(0)_{1,max}+\mu(0)+1
 &=\lambda_{1,2}-\lambda_{1,1}-(\lambda_{0,2}+\lambda_{1,2}+\lambda_{2,2})+1\\
 &=\lambda_{0,1}+\lambda_{1,2}+\lambda_{2,1},
 \allowdisplaybreaks\\
 \lambda(0)_{2,max}-\lambda(1)_{2,max}&=\lambda(0)_{2,1}-\lambda(1)_{2,2}
  =\lambda_{2,1}+\lambda_{0,2}+\lambda_{1,2}.
\end{align*}
Hence \eqref{eq:intexp} says that 
the normalized local solution corresponding to the characteristic 
exponent $\lambda_{1,2}$ with $c_1=0$ and $c_2=1$ equals
\begin{equation}\label{eq:gaussIrep}
\begin{split}
 u(x)
   &=\frac{\Gamma\bigl(\lambda_{1,2}-\lambda_{1,1}+1\bigr)x^{\lambda_{1,1}}
     (1-x)^{\lambda_{2,1}}}
      {\Gamma\bigl(\lambda_{0,1}+\lambda_{1,2}+\lambda_{2,1}\bigr)
       \Gamma\bigl(\lambda_{0,2}+\lambda_{1,2}+\lambda_{2,2}\bigr)}\\
   &\quad\int_0^x(x-s)^{\lambda_{0,2}+\lambda_{1,2}+\lambda_{2,2}-1}
   s^{-\lambda_{0,2}-\lambda_{1,1}-\lambda_{2,2}}
  (1-s)^{-\lambda_{0,2}-\lambda_{1,2}  - \lambda_{2,1}}ds
\end{split}\end{equation}
and moreover \eqref{eq:serexpr} says
\begin{equation}\begin{split}
  u(x)&=x^{\lambda_{1,2}}(1-x)^{\lambda_{2,1}}
        \sum_{\nu=0}^\infty\frac{(\lambda_{0,1}+\lambda_{1,2}+\lambda_{2,1})_\nu
        (\lambda_{0,2}+\lambda_{1,2}+\lambda_{2,1})_\nu}
        {(\lambda_{1,2}-\lambda_{1,1}+1)_\nu\nu!}x^\nu.
\end{split}\end{equation}
Note that $u(x)=F(a,b,c;x)$ when
\begin{equation}\label{eq:RSgauss}
 \begin{Bmatrix}
  x=\infty & 0 & 1\\
  \lambda_{0,1} &  \lambda_{1,1} &  \lambda_{2,1}\\
  \lambda_{0,2} &  \lambda_{1,2} &  \lambda_{2,2}
 \end{Bmatrix}=
 \begin{Bmatrix}
  x=\infty & 0 & 1\\
   a &  1-c &  0\\
   b &  0   &  c-a-b
 \end{Bmatrix}.
\end{equation}
The integral expression \eqref{eq:gaussIrep}
is based on the minimal expression $w=s_{0,1}s_{1,1}s_{1,2}s_0$ satisfying
$w\alpha_{\mathbf m}=\alpha_0$.
Here $\alpha_{\mathbf m}=2\alpha_0+\sum_{j=0}^2\alpha_{j,1}$.
When we replace $w$ and its minimal expression by
$w'=s_{0,1}s_{1,1}s_{1,2}s_0s_{0,1}$ or $w''=s_{0,1}s_{1,1}s_{1,2}s_0s_{2,1}$,
we get the different integral expressions
\begin{equation}\begin{split}
 u(x)
   &=\frac{\Gamma\bigl(\lambda_{1,2}-\lambda_{1,1}+1\bigr)x^{\lambda_{1,1}}
     (1-x)^{\lambda_{2,1}}}
      {\Gamma\bigl(\lambda_{0,2}+\lambda_{1,2}+\lambda_{2,1}\bigr)
       \Gamma\bigl(\lambda_{0,1}+\lambda_{1,2}+\lambda_{2,2}\bigr)}\\
   &\quad\int_0^x(x-s)^{\lambda_{0,1}+\lambda_{1,2}+\lambda_{2,2}-1}
   s^{-\lambda_{0,1}-\lambda_{1,1}-\lambda_{2,2}}
  (1-s)^{-\lambda_{0,1}-\lambda_{1,2}  - \lambda_{2,1}}ds
\allowdisplaybreaks\\
   &=\frac{\Gamma\bigl(\lambda_{1,2}-\lambda_{1,1}+1\bigr)x^{\lambda_{1,1}}
     (1-x)^{\lambda_{2,2}}}
      {\Gamma\bigl(\lambda_{0,1}+\lambda_{1,2}+\lambda_{2,2}\bigr)
       \Gamma\bigl(\lambda_{0,2}+\lambda_{1,2}+\lambda_{2,1}\bigr)}\\
   &\quad\int_0^x(x-s)^{\lambda_{0,2}+\lambda_{1,2}+\lambda_{2,1}-1}
   s^{-\lambda_{0,2}-\lambda_{1,1}-\lambda_{2,1}}
  (1-s)^{-\lambda_{0,2}-\lambda_{1,2}  - \lambda_{2,2}}ds.
\end{split}\end{equation}
\index{hypergeometric equation/function!Gauss!integral expression}
These give different integral expressions of $F(a,b,c;x)$ under
\eqref{eq:RSgauss}.

Since $s_{\alpha_0+\alpha_{0,1}+\alpha_{0,2}}\alpha_{\mathbf m}
=\alpha_{\mathbf m}$, we have
\begin{align*}
&\begin{Bmatrix}
 x=\infty & 0 & 1\\
 a & 1-c & 0\\
 b & 0 & c-a-b
\end{Bmatrix}
 \xrightarrow{x^{c-1}}
\begin{Bmatrix}
 x=\infty & 0 & 1\\
 a-c+1 & 0   &0\\
 b-c+1 & c-1 &c-a-b
\end{Bmatrix}\\
&\quad \xrightarrow{\p^{c-d}}
\begin{Bmatrix}
 x=\infty & 0 & 1\\
 a-d+1 & 0   &0\\
 b-d+1 & d-1 &d-a-b
\end{Bmatrix}
  \xrightarrow{x^{1-d}}
\begin{Bmatrix}
 x=\infty & 0 & 1\\
  a & 1-d & 0\\
  b & 0 & d-a-b
\end{Bmatrix}
\end{align*}
and hence (cf.~\eqref{eq:IcP})
\index{hypergeometric equation/function!Gauss!Euler transformation}
\begin{equation}
F(a,b,d;x)=\frac{\Gamma(d)x^{1-d}}{\Gamma(c)\Gamma(d-c)}
 \int_0^x (x-s)^{d-c-1}s^{c-1}F(a,b,c;s)ds.
\end{equation}
\end{exmp}
\begin{rem}\label{rem:Irep}
The integral expression of the local solution $u(x)$ as is given
in Theorem~\ref{thm:expsol} is obtained from the expression of the
element $w$ of $W_{\!\infty}$ satisfying 
$w\alpha_{\mathbf m}\in B\cup\{\alpha_0\}$ as a product of simple 
reflections and therefore the integral expression depends on such
element $w$ and the expression of $w$ as such product.
The dependence on $w$ seems non-trivial as in the preceding example
but the dependence on the expression of $w$ as a product of simple 
reflections is understood as follows.

First note that the integral expression doesn't depend on the coordinate 
transformations $x\mapsto ax$ and $x\mapsto x+b$ with $a\in\mathbb C^\times$
and $b\in\mathbb C$.  Since
\begin{align*}
   \int_c^x(x-t)^{\mu-1}\phi(t)dt
 &=-\int_{\frac1c}^{\frac1x}(x-\tfrac1s)^{\mu-1}\phi(\tfrac1s)s^{-2}ds\\
 &=-(-1)^{\mu-1}x^{\mu-1}\int_{\frac1c}^{\frac1x}(\tfrac1x-s)^{\mu-1}
    (\tfrac1s)^{\mu+1}\phi(\tfrac1s)ds,
\end{align*}
we have
\begin{equation}\label{eq:Iinv}
 I_c^\mu(\phi)=-(-1)^{\mu-1}x^{\mu-1}\left.
     \left(I_{\frac1c}^x\left.
     \bigl(x^{\mu+1}\phi(x)\bigr)\right|_{x\mapsto \frac1x}\right)
   \right|_{x\to\frac1x},
\end{equation}
which corresponds to \eqref{eq:redcoord}.
Here the value $(-1)^{\mu-1}$ depends on the branch of the value of 
$(x-\frac1s)^{\mu-1}$ and that of $x^{\mu-1}x^{1-\mu}(\frac1x -s)^{\mu-1}$.

Hence the argument as in the proof of Theorem~\ref{thm:KatzKac} shows 
that the dependence on the expression of $w$ by a product of simple reflections 
can be understood by the identities
\eqref{eq:Iinv} and $I_c^{\mu_1}I_c^{\mu_2}=I_c^{\mu_1+\mu_2}$
(cf.~\eqref{eq:Icprod}) etc.
\end{rem}
\section{Monodromy}
\label{sec:monodromy}
The transformation of monodromy generators for irreducible Fuchsian systems 
of Schlesinger canonical form under the middle convolution or the addition is 
studied by \cite{Kz} and \cite{DR, DR2} etc.
A non-zero homomorphism of an irreducible single Fuchsian
differential equation to an irreducible system of Schlesinger canonical 
form induces the isomorphism of their monodromies of the 
solutions (cf.~Remark~\ref{rem:SCFmc}).  
In particular since any rigid local system is 
realized by a single Fuchsian differential equation, their
monodromies naturally coincide with each other through the 
correspondence of their monodromy generators.
The correspondence between the local monodromies and the global monodromies
is described by \cite{DR2}, which we will review.

\subsection{Middle convolution of monodromies}\label{sec:MM}
For given matrices $A_j\in M(n,\mathbb C)$ for $j=1,\dots,p$ the Fuchsian
system
\begin{equation}\label{eq:MSCF}
  \frac{dv}{dx}=\sum_{j=1}^p\frac{A_j}{x-c_j}v
\end{equation}
of Schlesinger canonical form (SCF) is defined.  
Put $A_0=-A_1-\dots-A_p$ and $\mathbf A=(A_0,A_1,\dots,A_p)$
which is an element of
\begin{equation}\label{eq:Msub0}
 M(n,\mathbb C)^{p+1}_0:=\{(C_0,\dots,C_p)\in 
 M(n,\mathbb C)^{p+1}\,;\,C_0+\cdots+C_p=0\}, 
\end{equation}
The Riemann scheme of \eqref{eq:MSCF} is defined by
\begin{equation}
\begin{Bmatrix}
 x = c_0=\infty & c_1&\cdots&c_p\\
 [\lambda_{0,1}]_{m_{0,1}} &  [\lambda_{1,1}]_{m_{1,1}} &\cdots 
    &[\lambda_{p,1}]_{m_{p,1}}\\
 \vdots & \vdots & \vdots & \vdots\\
 [\lambda_{0,n_0}]_{m_{0,n_0}} &  [\lambda_{1,n_1}]_{m_{1,n_1}} &\cdots 
    &[\lambda_{p,n_p}]_{m_{p,1}}
\end{Bmatrix},\quad
[\lambda]_k:=\begin{pmatrix}
              \lambda\\
              \vdots\\
              \lambda
             \end{pmatrix}\in M(1,k,\mathbb C)
\end{equation}
if
\[
  A_j \sim L(m_{j,1},\ldots,m_{j,n_j};\lambda_{j,1},\dots,\lambda_{j,n_j})
  \quad(j=0,\dots,p)
\]
under the notation \eqref{eq:OSNF}.
Here the Fuchs relation equals
\begin{equation}
  \sum _{j=0}^p\sum_{\nu=1}^{n_j} m_{j,\nu}\lambda_{j,\nu}=0.
\end{equation}

We define that  $\mathbf A$ is \textsl{irreducible} if a subspace $V$ of 
$\mathbb C^n$ satisfies $A_jV\subset A_j$ for $j=0,\dots,p$, then $V=\{0\}$ or 
$V=\mathbb C^n$.
In general, $\mathbf A=(A_0,\dots,A_p)$, $\mathbf A'=(A'_0,\dots,A'_p)\in 
M(n,\mathbb C)^{p+1}$, we denote by $\mathbf A\sim\mathbf A'$ if there exists
$U\in GL(n,\mathbb C)$ such that $A'_j=U A_jU^{-1}$ for $j=0,\dots,p$.
\index{monodromy!irreducible}

For $(\mu_0,\dots,\mu_p)\in \mathbb C^{p+1}$ with $\mu_0+\cdots+\mu_p=0$,
the addition $\mathbf A'=(A'_0,\dots,A'_p)\in M(n,\mathbb C)^{p+1}_0$
of $\mathbf A$ with respect to $(\mu_0,\dots,\mu_p)$ is defined by 
$A'_j=A_j+\mu_j$ for $j=0,\dots,p$.

For a complex number $\mu$ the middle convolution 
$\bar{\mathbf A}:=mc_\mu(\mathbf A)$ of $\mathbf A$ is defined by 
$\bar A_j=\bar A_j(\mu)$ for $j=1,\dots,p$ and $\bar A_0=-\bar A_1-\dots-\bar A_p$
under the notation in \S\ref{sec:DR}. 
Then we have the following theorem.

\begin{thm}[\cite{DR, DR2}]\label{thm:schmid}
Suppose that $\mathbf A$ satisfies the conditions
\begin{align}
  \bigcap_{\substack{1\le j\le p\\ j\ne i}}\ker A_j
  \cap \ker(A_0-\tau)&=\{0\}&(i=1,\dots,p,\ \forall\tau\in\mathbb C),
  \label{eq:star}\\
    \bigcap_{\substack{1\le j\le p\\ j\ne i}}\ker {}^t\!A_j
  \cap \ker({}^t\!A_0-\tau)&=\{0\}
  &(i=1,\dots,p,\ \forall\tau\in\mathbb C).
  \label{eq:starstar}
\end{align}

{\rm i)} 
The tuple $mc_\mu(\mathbf A)=(\bar A_0,\dots,\bar A_p)$ also satisfies the same 
conditions as above with replacing $A_\nu$ by $\bar A_\nu$ for $\nu=0,\dots,p$, 
respectively.
Moreover we have
\begin{align}
 mc_\mu(\mathbf A)&\sim mc_\mu(\mathbf A')\text{ \ if \ }\mathbf A\sim\mathbf A',\\
 mc_{\mu'}\circ mc_{\mu}(\mathbf A)&\sim mc_{\mu+\mu'}(\mathbf A),\\
 mc_0(\mathbf A)&\sim \mathbf A
\end{align}
and $\mathbf A$ is irreducible if and only if $\mathbf A'$ is irreducible.

{\rm ii) (cf.~\cite[Theorem~5.2]{O3})}  Assume 
\begin{equation}
 \mu=\lambda_{0,1}\ne0\text{ \ and \ } \lambda_{j,1}=0\text{ \ for \ }j=1,\dots,p
\end{equation}
and
\begin{align}
 \lambda_{j,\nu}=\lambda_{j,1}\text{ \ implies \ }m_{j,\nu}\le m_{j,1}
\end{align}
for $j=0,\dots,p$ and $\nu=2,\dots,n_j$.
Then the Riemann scheme of $mc_{\mu}(\mathbf A)$ equals
\begin{gather}
\begin{Bmatrix} x = \infty & c_1&\cdots&c_p\\
 [-\mu]_{m_{0,1}-d} &  [0]_{m_{1,1}-d} &\cdots 
    &[0]_{m_{p,1}-d}\\
 [\lambda_{0,2}-\mu]_{m_{0,2}} &  [\lambda_{1,2}+\mu]_{m_{1,2}} &\cdots 
    &[\lambda_{p,2}+\mu]_{m_{p,2}}\\
 \vdots & \vdots & \vdots & \vdots\\
 [\lambda_{0,n_0}-\mu]_{m_{0,n_0}} &  [\lambda_{1,n_1}+\mu]_{m_{1,n_1}} &\cdots 
    &[\lambda_{p,n_p}+\mu]_{m_{p,1}}
\end{Bmatrix}
\intertext{with}
d:=m_{0,1}+\cdots+m_{p,1}-(p-1)\ord\mathbf m.\label{eq:defd2}
\end{gather}
\end{thm}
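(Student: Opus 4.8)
\textbf{Proof plan for Theorem~\ref{thm:schmid}.}
The plan is to follow the strategy of Dettweiler--Reiter, working with the concrete realization of $mc_\mu$ on the enlarged space $\mathbb C^{pn}$ given in \S\ref{sec:DR}, and to reduce the statements about the Riemann scheme to the already-established Theorem~\ref{thm:GRSmid} ii) via the dictionary between single operators and systems. For part i), I would first recall that under the conditions \eqref{eq:star} and \eqref{eq:starstar} one has $\ker\tilde A\cap\ker(A+\mu)=\{0\}$ for $\mu\ne0$ (already noted in \S\ref{sec:DR}), so that $\bar A_j(\mu)$ is the honest endomorphism induced on $V=\mathbb C^{pn}/(\ker\tilde A+\ker(A+\mu))$. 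Checking that $mc_\mu(\mathbf A)$ again satisfies \eqref{eq:star}, \eqref{eq:starstar} is a direct linear-algebra computation on the explicit block matrices $\tilde A_j(\mu)$ and $A_j^\vee(\mu)$; the transposed version follows by symmetry using the second realization $w^\vee=\tilde A(A+\mu I)u$. The semigroup law $mc_{\mu'}\circ mc_\mu\sim mc_{\mu+\mu'}$ and $mc_0\sim\mathrm{id}$ follow from the identities $\tilde A(A+\mu I_{n'})=A^\vee(\mu)\tilde A$ and $\tilde A(A+\mu I_{n'})\tilde A_j(\mu)=A_j^\vee(\mu)\tilde A(A+\mu I_{n'})$ displayed in \S\ref{sec:DR}, which exhibit the composite convolution as a restriction of an explicit map; one verifies the kernels and images match up after composition. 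Invariance of $\mathbf A\sim\mathbf A'$ under $mc_\mu$ is immediate from functoriality of the block construction, and preservation of irreducibility is the standard argument: an invariant subspace of $mc_\mu(\mathbf A)$ lifts through $\tilde A(A+\mu I_{n'})$ to one of $\mathbf A$ modulo $\ker\tilde A+\ker(A+\mu)$, and conversely.

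For part ii), the cleanest route is to transfer to the single-operator picture. Since $\mathbf A$ is irreducible and satisfies \eqref{eq:star}--\eqref{eq:starstar}, the system \eqref{eq:MSCF} is $W(x)$-isomorphic to a single Fuchsian operator $P$ of normal form with the same Riemann scheme (cf.~\S\ref{sec:reg} and Remark~\ref{rem:SCFmc}), and under the hypotheses on $\mu=\lambda_{0,1}$ and $\lambda_{j,1}=0$ the middle convolution $mc_\mu$ of the system corresponds, up to $W(x)$-isomorphism, to the operator $\RAd(\p^{-\mu})\Red P=S$ of Theorem~\ref{thm:GRSmid} ii). The Riemann scheme of $S$ is exactly \eqref{eq:midR}, which is the asserted scheme with $d$ as in \eqref{eq:defd2}; since the monodromy (hence the spectral data at each $c_j$) is a $W(x)$-invariant, this gives the Riemann scheme of $mc_\mu(\mathbf A)$. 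I would then just need to check that the genericity conditions \eqref{eq:redC1}, \eqref{eq:redC2}, \eqref{eq:redC3}, \eqref{eq:redC4} required by Theorem~\ref{thm:GRSmid} ii) are implied here by the irreducibility of $\mathbf A$ together with the hypothesis ``$\lambda_{j,\nu}=\lambda_{j,1}$ implies $m_{j,\nu}\le m_{j,1}$'': irreducibility gives \eqref{eq:redC1} and the non-vanishing type conditions via Lemma~\ref{lem:irrred} applied to the monodromy generators (as in Theorem~\ref{thm:GRSmid} iii)), while the monotonicity-type hypothesis ensures that $\lambda_{0,1}+m_{0,1}$ etc.\ are not among the relevant characteristic exponents, which is what \eqref{eq:redC4} and the exceptional-parameter exclusions amount to. Alternatively, and perhaps more in keeping with the Schlesinger setting, one can compute the Jordan type of $\bar A_j(\mu)$ directly: using Remark~\ref{rm:1}, the conjugacy class of $A_j^\vee(\mu)$ restricted to $V^\vee$ is read off from the ranks of $\prod(A_j-\lambda)$, and the block structure of $\tilde A_j(\mu)$ shows the eigenvalue $0$ (resp.\ $-\mu$) at the finite points (resp.\ at $\infty$) drops in multiplicity by exactly $d$ while the other Jordan blocks are merely shifted by $\mu$.

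The main obstacle I expect is the bookkeeping in part ii): verifying that the multiplicity of the ``new'' eigenvalue is precisely $m_{j,1}-d$ and not something larger requires controlling $\dim(\ker\tilde A+\ker(A+\mu))$ and its interaction with the individual blocks $\ker A_j$, and this is exactly where the hypotheses \eqref{eq:star}, \eqref{eq:starstar} and the condition on coincident exponents $\lambda_{j,\nu}=\lambda_{j,1}$ enter in an essential (and slightly delicate) way. The single-operator reduction sidesteps most of this by quoting Theorem~\ref{thm:GRSmid}, so the remaining work there is purely to match hypotheses; if instead one argues directly on matrices, the rank computations of Remark~\ref{rm:1} i) and the dimension count $N=\dim V$ carried out in \S\ref{sec:DR} must be combined carefully. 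Everything else---the functoriality statements and the semigroup law in part i)---is routine manipulation of the explicit block matrices already written down in \S\ref{sec:DR}.
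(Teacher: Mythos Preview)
The paper does not supply its own proof of this theorem: it is stated as a result quoted from Dettweiler--Reiter \cite{DR,DR2}, with part ii) cross-referenced to \cite[Theorem~5.2]{O3}. So the comparison is against the standard matrix-theoretic proof in those references, which works directly with the block matrices $\tilde A_j(\mu)$ and $A_j^\vee(\mu)$ of \S\ref{sec:DR}.

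Your outline for part i) is fine and matches that approach. For part ii), however, your primary route has a genuine gap. You write ``Since $\mathbf A$ is irreducible\ldots'' and then invoke Remark~\ref{rem:SCFmc} to pass to a single Fuchsian operator $P$ and quote Theorem~\ref{thm:GRSmid}. But Theorem~\ref{thm:schmid} does \emph{not} assume irreducibility: the only hypotheses are \eqref{eq:star}, \eqref{eq:starstar} together with the normalizations on $\mu$ and the $\lambda_{j,\nu}$. Conditions \eqref{eq:star}--\eqref{eq:starstar} are strictly weaker than irreducibility (they rule out one-dimensional sub- and quotient-representations of a particular shape, nothing more), so Remark~\ref{rem:SCFmc}, which explicitly requires both \eqref{eq:SCF} and \eqref{eq:SCFmcv} to be irreducible, is unavailable. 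Even granting irreducibility, your sentence ``the monodromy (hence the spectral data at each $c_j$) is a $W(x)$-invariant, this gives the Riemann scheme of $mc_\mu(\mathbf A)$'' conflates two different objects: the Riemann scheme of the Schlesinger system is by definition the conjugacy class of each residue matrix $\bar A_j$, not the monodromy data of the solutions. Recovering the former from the latter needs the local non-degeneracy of Definition~\ref{def:locnondeg}, which again is not among the hypotheses.

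Your ``alternative'' paragraph is in fact the correct (and only viable) argument: one computes the Jordan type of $\bar A_j(\mu)$ directly on $V=\mathbb C^{pn}/(\ker\tilde A+\ker(A+\mu))$ using the rank criteria of Remark~\ref{rm:1}~i), and the dimension drop by $d$ at the eigenvalue $0$ (resp.\ $-\mu$) comes from identifying $\ker\tilde A\cap\IM\tilde A_j(\mu)$ and $\ker(A+\mu)$ explicitly. The hypothesis ``$\lambda_{j,\nu}=\lambda_{j,1}$ implies $m_{j,\nu}\le m_{j,1}$'' is used precisely here, to ensure that the block of $A_j$ with eigenvalue $\lambda_{j,1}$ controls the relevant kernel. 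You should promote that paragraph to the main argument and drop the single-operator detour.
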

\begin{exmp}\label{ex:univSch}
The addition of 
\[
 mc_{-\lambda_{0,1}-\lambda_{1,2}-\lambda_{2,2}}(\{\lambda_{0,2}-\lambda_{0,1}, 
 \lambda_{0,1}+\lambda_{1,1}+\lambda_{2,2},
  \lambda_{0,1}+\lambda_{1,2}+\lambda_{2,1}\})
\]
with respect to $(-\lambda_{1,2}-\lambda_{2,2},\lambda_{1,2},\lambda_{2,2})$
give the Fuchsian system of Schlesinger canonical form
\[
 \begin{gathered}
 \frac{du}{dx}=\frac{A_1}{x}u+\frac{A_2}{x-1}u,\\ 
 A_1=\begin{pmatrix}
      \lambda_{1,1} & \lambda_{0,1}+\lambda_{1,2}+\lambda_{2,1}\\
      & \lambda_{1,2}
     \end{pmatrix}\text{ \ and \ }
 A_2=\begin{pmatrix}
      \lambda_{2,2} & \\
      \lambda_{0,1}+\lambda_{1,1}+\lambda_{2,2} & \lambda_{2,1}\\
     \end{pmatrix}.
 \end{gathered}
\]
with the Riemann scheme
\[
 \begin{Bmatrix}
 x=\infty & 0 & 1\\
  \lambda_{0,1} & \lambda_{1,1} & \lambda_{2,1}\\
  \lambda_{0,2} & \lambda_{1,2} & \lambda_{2,2}\\
 \end{Bmatrix} \qquad
 (\lambda_{0,1}+\lambda_{0,2}+\lambda_{1,1}+\lambda_{1,2}
   +\lambda_{2,1}+\lambda_{2,2}=0).
\]
The system is invariant as $W(x;\lambda_{j,\nu})$-modules 
under the transformation $\lambda_{j,\nu}\mapsto\lambda_{j,3-\nu}$ 
for $j=0,1,2$ and $\nu=1,2$.

Suppose $\lambda_{j,\nu}$ are generic complex numbers under the condition
$
  \lambda_{0,1}+\lambda_{1,2}+\lambda_{2,1}
 =\lambda_{0,2}+\lambda_{1,1}+\lambda_{2,2}=0.
$
Then $A_1$ and $A_2$ have a unique simultaneous eigenspace.
In fact,
$A_1\binom 01=\lambda_{1,2}\binom 01$ and 
$A_2\binom 01=\lambda_{2,1}\binom 01$.
Hence the system is not invariant as $W(x)$-modules under 
the transformation above and $\mathbf A$ is not irreducible
in this case.
\end{exmp}

To describe the monodromies, we review the multiplicative version of
these operations.

Let $\mathbf M=(M_0,\dots,M_p)$ be an element of
\begin{equation}\label{eq:Gsub1}
 GL(n,\mathbb C)^{p+1}_1:=\{(G_0,\dots,G_p)\in GL(n,\mathbb C)^{p+1}\,;\,
  G_p\cdots G_0=I_n\}.
\end{equation}
For $(\rho_0,\dots,\rho_p)\in\mathbb C^{p+1}$ satisfying $\rho_0\cdots \rho_p=1$,
the \textsl{multiplication\/} of $\mathbf M$ with respect to $\rho$ is defined by
$(\rho_0M_0,\dots,\rho_pM_p)$.

For a given $\rho\in\mathbb C^\times$, we define 
$\tilde M_j=\bigl(M_{j,\nu,\nu'}\bigr)_{
  {\substack{1\le\nu\le n\\1\le \nu'\le p}}}\in GL(pn,\mathbb C)$ by
\begin{align*}
 \tilde M_{j,\nu,\nu'} =
  \begin{cases}
  \delta_{\nu,\nu'}I_n &(\nu\ne j),\\
  M_{\nu'}-1 &(\nu=j,\ 1\le\nu' \le j-1),\\
  \rho M_j&(\nu=\nu'=j),\\
  \rho(M_{\nu'}-1)&(\nu=j,\ j+1\le\nu'\le p).
  \end{cases}
\end{align*}
Let $\bar M_j$ denote the quotient $\tilde M_j|_{\mathbb C^{pn}/V}$ of
\begin{equation}\label{eq:conM}
 \tilde M_j=\begin{pmatrix}
     I_n\\
       &\ddots\\
     M_1-1 & \cdots &\rho M_j & \cdots &\rho(M_p-1)\\
           &        &        & \ddots \\
           &        &        &        & I_n
     \end{pmatrix}\in GL(pn,\mathbb C)
\end{equation}
for $j=1,\dots,p$ and $M_0=(M_p\dots M_1)^{-1}$.
The tuple $\MC_\rho(\mathbf M)=(\bar M_0,\dots,\bar M_p)$ is called (the 
multiplicative version of) the middle convolution of $\mathbf M$ with 
respect to $\rho$.
Here $V:= \ker(\tilde M-1)+\bigcap_{j=1}^p \ker(\tilde M_j-1)$
with
\[
  \tilde M := \begin{pmatrix}
             M_1\\
                &\ddots\\
                & & M_p
             \end{pmatrix}.
\]
Then we have the following theorem.
\begin{thm}[\cite{DR,DR2}]\label{thm:Mmid}
Let $\mathbf M=(M_0,\dots,M_p)\in GL(n,\mathbb C)^{p+1}_1$. 
Suppose 
\begin{align}
 \bigcap_{\substack{1\le\nu\le p\\ \nu\le i}}\ker (M_\nu-1)\cap\ker(M_i-\tau)
  &=\{0\} &&(1\le i\le p,\ \forall\tau\in\mathbb C^\times),\label{eq:mulstar}\\
 \bigcap_{\substack{1\le\nu\le p\\ \nu\le i}}\ker ({}^t\!M_\nu-1)\cap\ker({}^t\!M_i-\tau)
  &=\{0\} 
 &&(1\le i\le p,\ \forall\tau\in\mathbb C^\times).\label{eq:mulss}
\end{align}

{\rm i)}
The tuple $\MC_\rho(\mathbf M)=(\bar M_0,\dots,\bar M_p)$ also satisfies the same 
conditions as above with replacing $M_\nu$ by $\bar M_\nu$ for $\nu=0,\dots,p$, 
respectively.
Moreover we have
\begin{align}
 \MC_\rho(\mathbf M)&\sim \MC_\rho(\mathbf M')
    \text{ \ if \ }\mathbf M\sim\mathbf M',\\
 \MC_{\rho'}\circ \MC_{\rho}(\mathbf M)&\sim \MC_{\rho\rho'}(\mathbf M),\\
 \MC_1(\mathbf M)&\sim \mathbf M
\end{align}
and $\MC_\rho(\mathbf M)$ is irreducible if and only if $\mathbf M$ is irreducible.

{\rm ii)} Assume 
\begin{align}
 M_j&\sim L(m_{j,1},\dots,m_{j,n_j};\rho_{j,1},\dots,\rho_{j,n_j})
 \text{ \ for \ }j=0,\dots,p,\label{eq:Lisom}\\
 \rho&=\rho_{0,1}\ne1\text{ \ and \ } \rho_{j,1}=1\text{ \ for \ }j=1,\dots,p
\end{align}
and
\begin{align}
 \rho_{j,\nu}=\rho_{j,1}\text{ \ implies \ }m_{j,\nu}\le m_{j,1}
\end{align}
for $j=0,\dots,p$ and $\nu=2,\dots,n_j$.
In this case, we say that $\mathbf M$ has a spectral type 
$\mathbf m:=(\mathbf m_0,\dots,\mathbf m_p)$ with 
$\mathbf m_j=(m_{j,1},\ldots,m_{j,n_j})$.

Putting $(\bar M_0,\dots,\bar M_p)=\MC_\rho(M_0,\dots,M_p)$, we have
\begin{equation}
 \bar M_j\sim
 \begin{cases}
  L(m_{0,1}-d,m_{0,2},\dots,m_{0,n_0};\rho^{-1},\rho^{-1}\rho_{0,2},\dots
  \rho^{-1}\rho_{0,n_0}) & (j=0),\\
  L(m_{j,1}-d,m_{j,2},\dots,m_{j,n_j};1,\rho\rho_{j,2},\dots
  \rho\rho_{j,n_j}) & (j=1,\dots,p).
 \end{cases}
\end{equation}
Here $d$ is given by \eqref{eq:defd2}.
\end{thm}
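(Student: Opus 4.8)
The plan is to follow the additive template of Theorem~\ref{thm:schmid} and transfer it to the multiplicative setting, since the combinatorial structure is identical. First I would establish part i). The conditions \eqref{eq:mulstar} and \eqref{eq:mulss} are exactly the multiplicative analogues of \eqref{eq:star} and \eqref{eq:starstar}, so the argument showing that $\MC_\rho(\mathbf M)$ again satisfies them is a direct linear-algebra computation on the explicit matrices $\tilde M_j$ of \eqref{eq:conM}: one checks that the kernels defining $V$ are preserved by each $\tilde M_j$, that $\ker(\tilde M-1)\cap\bigcap_j\ker(\tilde M_j-1)=\{0\}$ plays the role of a non-degeneracy statement, and that a vector annihilated by the relevant $\bar M_\nu-1$ lifts to one annihilated by the corresponding $\tilde M_\nu-1$. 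The functoriality statements $\MC_\rho(\mathbf M)\sim\MC_\rho(\mathbf M')$ for $\mathbf M\sim\mathbf M'$, $\MC_1(\mathbf M)\sim\mathbf M$, and the cocycle relation $\MC_{\rho'}\circ\MC_\rho\sim\MC_{\rho\rho'}$ are proved exactly as in \cite{DR,DR2}: the first is immediate from the naturality of the construction, the second because $\rho=1$ makes $\tilde M_j-1$ collapse $V$ onto all of $\mathbb C^{pn}$ outside a complement isomorphic to $\mathbb C^n$, and the third by writing down the composed convolution explicitly and exhibiting an intertwiner. Irreducibility equivalence follows from the fact (as in the additive case) that the correspondence $V\mapsto$ (its image/preimage under the convolution data) is a bijection between proper invariant subspaces, using \eqref{eq:mulstar}--\eqref{eq:mulss} to rule out the degenerate subspaces.

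Next I would prove part ii), the computation of the spectral type of $\MC_\rho(\mathbf M)$. The strategy is to compute, for each $j$, the Jordan structure of $\bar M_j=\tilde M_j|_{\mathbb C^{pn}/V}$ by tracking $\rank\prod_{\nu}(\bar M_j-\bar\rho_{j,\nu})$ and invoking Remark~\ref{rm:1} i). The key point is that the ambient $\tilde M_j$ acts as $\rho M_j$ in the $j$-th block and as $M_\nu-1$ or $\rho(M_\nu-1)$ (or identity) in the others, so its eigenvalues on $\mathbb C^{pn}$ before passing to the quotient are transparent; the quotient by $V=\ker(\tilde M-1)+\bigcap_j\ker(\tilde M_j-1)$ removes precisely the multiplicity corresponding to the eigenvalue $1$ in the non-$j$ blocks and the eigenvalue $\rho_{0,1}=\rho$ handled through $\tilde M$, which accounts for the shift of $m_{j,1}$ by $d=m_{0,1}+\cdots+m_{p,1}-(p-1)\ord\mathbf m$ and the twist of every other eigenvalue $\rho_{j,\nu}$ by $\rho$ (or $\rho^{-1}$ at $j=0$). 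The hypothesis that $\rho_{j,\nu}=\rho_{j,1}$ forces $m_{j,\nu}\le m_{j,1}$ is exactly what guarantees, as in Theorem~\ref{thm:schmid} ii), that the dimension counts after the quotient match the claimed partition rather than a smaller one — it prevents unwanted cancellation of Jordan blocks belonging to the same eigenvalue. I would reduce the eigenvalue bookkeeping to the case of a single eigenvalue using the block-diagonalization of Remark~\ref{rm:1} ii), and then the rank computation becomes the multiplicative mirror of the $\ker\tilde A$, $\ker(A+\mu)$ analysis in \S\ref{sec:DR}.

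The main obstacle I anticipate is the precise Jordan-block count at the special eigenvalue (eigenvalue $1$ for $j\ge 1$ and $\rho^{-1}$ for $j=0$) in the quotient $\mathbb C^{pn}/V$: one must show the reduction drops exactly $d$ from $m_{j,1}$ and does not disturb the blocks of size $m_{j,\nu}$ for $\nu\ge 2$, which requires carefully identifying $V\cap(\text{generalized eigenspace of }\tilde M_j)$ and checking its dimension equals $(p-1)\ord\mathbf m$ plus the correction from $\bigcap_j\ker(\tilde M_j-1)$. This is where the non-degeneracy conditions \eqref{eq:mulstar}--\eqref{eq:mulss} and the inequality $m_{j,\nu}\le m_{j,1}$ must be used in full force; a purely formal count would be off. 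Since the combinatorics of $d$ and of the eigenvalue shifts is literally the same as in the additive Theorem~\ref{thm:schmid} ii) and in \cite[Theorem~5.2]{O3}, I would organize the proof so that after setting up the explicit matrices, the verification is parallel line-by-line to the additive case, citing \cite{DR,DR2} for the steps that transfer verbatim and only spelling out the eigenvalue/multiplicity dictionary (multiplication replacing addition, $\rho$ replacing $\mu$, $1$ replacing $0$) where the two arguments diverge.
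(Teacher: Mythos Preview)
The paper does not give its own proof of this theorem: it is stated with the attribution \texttt{[\textbackslash cite\{DR,DR2\}]} and no proof environment follows. Your outlined approach---transferring the additive argument of Theorem~\ref{thm:schmid} to the multiplicative setting via explicit rank computations on the block matrices $\tilde M_j$, and invoking the non-degeneracy conditions to control the quotient by $V$---is exactly the strategy of Dettweiler--Reiter, so your proposal is consistent with the cited source.

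One small caution: in your description of part ii) you say the quotient ``removes precisely the multiplicity corresponding to the eigenvalue $1$ in the non-$j$ blocks and the eigenvalue $\rho_{0,1}=\rho$ handled through $\tilde M$''. This phrasing is slightly loose, since $\ker(\tilde M-1)$ and $\bigcap_j\ker(\tilde M_j-1)$ play asymmetric roles (the former involves the individual $M_j$, the latter involves $\rho$), and the actual dimension count $\dim V = \sum_j\dim\ker(M_j-1) + \dim\ker(M_0-\rho^{-1})$ (using $\rho\ne 1$ so the sum is direct) is what yields the drop by $d$ in each $m_{j,1}$. Making this explicit, as Dettweiler--Reiter do, is the crux of the rank computation you correctly identify as the main obstacle.
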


\begin{rem}\label{rem:monred}
i) \ 
We note that some $m_{j,1}$ may be zero in Theorem~\ref{thm:schmid} 
and Theorem~\ref{thm:Mmid}.

ii) 
It follows from Theorem~\ref{thm:schmid} (resp.~Theorem~\ref{thm:Mmid}) 
and Scott's lemma
that any irreducible tuple $\mathbf A\in M(n,\mathbb C)^{p+1}_0$ 
(resp.~$\mathbf M\in GL(n,\mathbb C)^{p+1}_1$) can be connected by 
successive applications of middle convolutions and additions 
(resp.~ multiplications) to an tuple whose spectral type is fundamental 
(cf.~Definition~\ref{def:fund}).
In particular, the spectral type of $\mathbf M$ is an irreducibly realizable
tuple if $\mathbf M$ is irreducible.
\end{rem}

\begin{defn}
Let $\mathbf M=(M_0,\dots,M_p)\in GL(n,\mathbb C)^{p+1}_1$.
Suppose \eqref{eq:Lisom}.
Fix $\ell=(\ell_0,\dots,\ell_p)\in\mathbb Z_{\ge 1}^{p+1}$ and define 
$\p_\ell\mathbf M$ as follows.
\begin{align*}
  \rho_j&:=\begin{cases}
             \rho_{j,\ell_j} &(0\le j\le p,\ 1\le \ell_j\le n_j),\\
             \text{any complex number} &(0\le j\le p,\ n_j<\ell_j),
           \end{cases}\\
  \rho&:=\rho_0\rho_1\dots\rho_p,\\ 
  (M_0',\dots,M_p')&
   :=\MC_\rho(\rho_1\cdots\rho_pM_0,\rho_1^{-1}M_1,\rho_2^{-1}M_2,\dots,\rho_p^{-1}M_p),\\
  \p_\ell\mathbf M&:=(\rho_1^{-1}\cdots\rho_p^{-1}M_0',\rho_1M_1',\rho_2M_2,'\dots,\rho_pM_p').
\end{align*}
Here we note that if $\ell=(1,\dots,1)$ and $\rho_{j,1}=1$ for $j=2,\dots,p$, 
$\p_\ell\mathbf M=\MC_\rho(\mathbf M)$.
\end{defn}

Let $u(1),\dots,u(n)$ be independent solutions of \eqref{eq:MSCF} at a 
generic point $q$. \index{monodromy}
Let $\gamma_j$ be a closed path around $c_j$ as in the following figure.
Denoting the result of the analytic continuation of 
$\tilde u:=(u(1),\dots,u(n))$ along $\gamma_j$ by $\gamma_j(\tilde u)$,
we have a \textsl{monodromy generator} $M_j\in GL(n,\mathbb C)$ such that
$\gamma_j(\tilde u)=\tilde u M_j$.
\index{monodromy!generator}
We call the tuple $\mathbf M=(M_0,\ldots,M_p)$ the \textsl{monodromy} of 
\eqref{eq:MSCF} with respect to $\tilde u$ and $\gamma_0,\dots,\gamma_p$.
The connecting path first going along $\gamma_i$ and then going along $\gamma_j$ 
is denoted by $\gamma_i\circ\gamma_j$.

\quad
\begin{equation}\label{fig:mon}
 \begin{xy}
 (90,20) *{\gamma_i\circ\gamma_j(\tilde u)=\gamma_j(\tilde u M_i)};
 (90,14) *{\phantom{\gamma_i\circ\gamma_j(\tilde u)}=\gamma_j(\tilde u) M_i};
 (90,8) *{\phantom{\gamma_i\circ\gamma_j(\tilde u)}=\tilde u M_jM_i,};
 (90,2) *{M_pM_{p-1}\cdots M_1M_0=I_n.};
 (15,0) *+[Fo]{{\tiny \phantom{a}\times\!c_0}}="A";
 (35,0) *+[Fo]{{\tiny c_1\!\times\phantom{c}}}="B";
 (55,0) *+[Fo]{{\tiny \phantom{a_1}\!\times\phantom{c}}}="C";
 (33,25) *+{q};
 (55,2.3) *+{\tiny c_2};
 \ar@/_2pt/ @{->} (9.3,3);(9.3,-3)
 \ar@/_2pt/ @{->} (20.7,-3);(20.7,3)
 \ar@/_2pt/ @{->} (29.3,3);(29.3,-3)
 \ar@/_2pt/ @{->} (40.7,-3);(40.7,3)
 \ar@/_2pt/ @{->} (49.3,3);(49.3,-3)
 \ar@/_2pt/ @{->} (60.7,-3);(60.7,3)
 \ar@{{*}-}_{\gamma_0} (30,25);(17.4,4,3)
 \ar@{->} (19.1,10.2);(16.5,6)
 \ar@{<-} (22.6,10.2);(20,6)
 \ar@{-}^{\gamma_1} (30,25);(33.8,5)
 \ar@{-}^{\gamma_2} (30,25);(53,3.9)
 \ar@{-}^{\gamma_3} (30,25);(53,15)
\end{xy}
\end{equation}
\medskip

The following theorem says that the monodromy of solutions of the 
system obtained by a middle convolution of the system \eqref{eq:MSCF} 
is a multiplicative middle convolution of that of the original system 
\eqref{eq:MSCF}.

\begin{thm}[\cite{DR2}]\label{thm:mcMC}
Let\/ $\Mon(\mathbf A)$ denote the monodromy of the equation \eqref{eq:MSCF}.
Put\/ $\mathbf M=\Mon(\mathbf A)$.
Suppose\/ $\mathbf M$ satisfies  \eqref{eq:mulstar} and \eqref{eq:mulss} and
\begin{align}
 \rank(A_0-\mu) &= \rank(M_0 - e^{2\pi\sqrt{-1}\mu}),\\
  \rank(A_j) &= \rank(M_j-1)
\end{align}
for $j=1,\dots,p$, then
\begin{equation}
 \Mon\bigl(mc_\mu(\mathbf A)\bigr) \sim 
 \MC_{e^{2\pi\sqrt{-1}\mu}}\bigl(\Mon(\mathbf A)\bigr).
\end{equation}
\end{thm}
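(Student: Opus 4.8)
The plan is to prove the statement following \cite{DR2}, by realizing the convolution $mc_\mu$ analytically through the Euler (Riemann--Liouville) transform and computing the monodromy of the resulting integrals by tracking twisted cycles. Recall from \S\ref{sec:DR} that the (non-reduced) convolution $c_\mu$ sends the system \eqref{eq:MSCF} of rank $n$ to the system of rank $pn$ with residue matrices $\tilde A_j(\mu)$ given by \eqref{eq:conSch}, and that $mc_\mu(\mathbf A)$ is the system induced on $V=\mathbb C^{pn}/(\mathcal K+\mathcal L_\mu)$ with $\mathcal K=\ker\tilde A$ and $\mathcal L_\mu=\ker(A+\mu I_{pn})$. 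First I would fix a generic base point $q$ and a fundamental matrix solution $U(t)$ of \eqref{eq:MSCF}, and form the vector-valued integrals $v_\gamma(x)=\int_\gamma U(t)(x-t)^{\mu-1}\,dt$, where $\gamma$ ranges over a basis of the twisted first homology of $\mathbb P^1\setminus\{c_0,\dots,c_p,x\}$ with coefficients in the rank-one local system $\mathcal L$ having monodromy $\rho:=e^{2\pi\sqrt{-1}\mu}$ around $t=x$ and trivial around $c_0,\dots,c_p$. These span the solution space of $c_\mu(\mathbf A)$, so it suffices to understand how the $v_\gamma$ transform under analytic continuation of $x$ around each $c_j$.

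The heart of the argument is the computation of this transformation. When $x$ encircles $c_j$, the puncture $t=x$ is dragged once around $c_j$, inducing an automorphism of the twisted homology group. Choosing a standard basis of cycles --- lassos encircling pairs $\{c_i,x\}$ together with a small loop around $t=x$ --- this automorphism is given precisely by the block matrix $\tilde M_j$ of \eqref{eq:conM} with $\rho=e^{2\pi\sqrt{-1}\mu}$; the entries $M_i-1$, $\rho M_i$, $\rho(M_i-1)$ arise from the local monodromies $M_i$ of $U(t)$ at $c_i$ combined with the twist $\rho$ coming from $(x-t)^{\mu-1}$. This identification, carried out by choosing the homology basis so that the matrices come out in the stated block form, yields $\Mon\bigl(c_\mu(\mathbf A)\bigr)\sim(\tilde M_0,\dots,\tilde M_p)$, i.e.\ the monodromy of the big convolved system is the tuple of matrices defining the multiplicative convolution before passing to the quotient.

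It then remains to match the two quotients. Under the comparison above, the submodule $\mathcal K+\mathcal L_\mu$ killed on the connection side must correspond to $V=\ker(\tilde M-1)+\bigcap_{j=1}^p\ker(\tilde M_j-1)$ on the monodromy side: $\bigcap_j\ker(\tilde M_j-1)$ is the part on which the big local system is trivial, matching $\mathcal K=\ker\tilde A$, while $\ker(\tilde M-1)$ --- the cycles supported at $t=x$ --- matches $\mathcal L_\mu=\ker(A+\mu I_{pn})$. The rank hypotheses $\rank(A_0-\mu)=\rank(M_0-\rho)$ and $\rank(A_j)=\rank(M_j-1)$ are exactly what forces $\dim\bigcap_j\ker(\tilde M_j-1)=\dim\mathcal K$ and $\dim\ker(\tilde M-1)=\dim\mathcal L_\mu$ (computed componentwise from $\dim\ker A_j$ versus $\dim\ker(M_j-1)$ and from $\dim\ker(A_0-\mu)$ versus $\dim\ker(M_0-\rho)$), hence $\dim V=\dim(\mathcal K+\mathcal L_\mu)$, so the reduced objects have the same rank $N$; the transversality $\mathcal K\cap\mathcal L_\mu=\{0\}$ for $\mu\neq0$ noted in \S\ref{sec:DR} is matched by $\ker(\tilde M-1)\cap\bigcap_j\ker(\tilde M_j-1)=\{0\}$, which follows from \eqref{eq:mulstar}--\eqref{eq:mulss} via Scott's lemma (\S\ref{sec:Scott}). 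Since taking monodromy is compatible with passing to a quotient local system, the monodromy of $mc_\mu(\mathbf A)$ is the action of $(\tilde M_0,\dots,\tilde M_p)$ on $\mathbb C^{pn}/V$, which by definition is $\MC_\rho(\mathbf M)$; conditions \eqref{eq:star}--\eqref{eq:starstar} and \eqref{eq:mulstar}--\eqref{eq:mulss} ensure all constructions are well-defined and no spurious collapse occurs, giving the stated equivalence.

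I expect the main obstacle to be the explicit identification in the second step: choosing the basis of twisted cycles so that the monodromy of $x\mapsto v_\gamma(x)$ comes out in exactly the block form \eqref{eq:conM}, and correctly reconciling the local monodromy data $M_i$ of $U(t)$ around $c_i$ with the twist $\rho$ of $(x-t)^{\mu-1}$ around $t=x$. All the delicate bookkeeping --- branch choices, orientation of the lasso and Pochhammer loops, and the order in which the punctures are encircled --- lives here, and making the conjugating matrix and the $\rho$-weights simultaneously consistent is the real work. An alternative route would replace the integral representations by a direct comparison between the de Rham (connection) and Betti (monodromy) realizations of the convolution, using the rank conditions to pin down dimensions; but even then one must still verify that the two block-matrix recipes correspond, so the combinatorial core of the problem is unavoidable.
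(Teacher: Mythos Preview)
Your proposal is correct and follows essentially the same route as the paper (which attributes the result to \cite{DR2} and, in the paragraph immediately following the theorem, sketches exactly this argument): realize the convolution via the Euler transform $v_j(x)=\int^{(x+,c_j+,x-,c_j-)}\frac{U(t)(x-t)^{\mu-1}}{t-c_\nu}\,dt$ over Pochhammer contours, identify the monodromy of the big system with the block matrices $\tilde M_j$ of \eqref{eq:conM}, and then match the additive quotient by $\ker\tilde A+\ker(A+\mu)$ with the multiplicative quotient by $\ker(\tilde M-1)+\bigcap_j\ker(\tilde M_j-1)$ using the rank hypotheses. The only cosmetic difference is that the paper's sketch works with the explicit Pochhammer double-loop contours rather than the twisted-homology language you use, but these are the same cycles.
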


Let $\mathcal F$  be a space of (multi-valued) holomorphic 
functions on $\mathbb C\setminus\{c_1,\dots,c_p\}$ valued in 
$\mathbb C^n$ such that $\mathcal F$ satisfies \eqref{eq:Fmap}, \eqref{eq:F0} 
and \eqref{eq:F1}.
For example the solutions of the equation \eqref{eq:MSCF} defines $\mathcal F$.
Fixing a base $u=\bigl(u(1),\dots,u(n)\bigr)$ of $\mathcal F(U)$ with $U\ni q$,  
we can define monodromy generators $(M_0,\dots,M_p)$.  
Fix $\mu\in\mathbb C$ and put $\rho = e^{2\pi\sqrt{-1}\mu}$ and
\begin{align*}
 v_j(x) = 
 \begin{pmatrix}
  \int^{(x+,c_j+,x-,c_j-)}\frac{u(t)(x-t)^{\mu-1}}{t-c_1}dt
 \\
 \vdots\\
  \int^{(x+,c_j+,x-,c_j-)}\frac{u(t)(x-t)^{\mu-1}}{t-c_p}dt
 \end{pmatrix}\text{ \ and \ }
  v(x)=\bigl(v_1(x),\dots,v_p(x)\bigr).
\end{align*}
Then $v(x)$ is a holomorphic function valued in $M(pn,\mathbb C)$ and 
the $pn$ column vectors of $v(x)$ define a \textsl{convolution} 
$\tilde{\mathcal F}$ of $\mathcal F$ and the following facts are 
shown by \cite{DR2}.

The monodromy generators of $\tilde{\mathcal F}$ with respect to the base 
$v(x)$ equals the \textsl{convolution} $\tilde{\mathbf M}=
(\tilde M_0,\dots,\tilde M_1)$ of $\mathbf M$ given by \eqref{eq:conM}
and if $\mathcal F$ corresponds to the space of solutions of 
\eqref{eq:SCF}, $\tilde{\mathcal F}$ corresponds to that of the system of 
Schlesinger canonical form defined by 
$\bigl(\tilde A_0(\mu),\dots,\tilde A_p(\mu)\bigr)$ in \eqref{eq:conSch},
which we denote by $\mathcal M_{\tilde{\mathbf A}}$.

The middle convolution $\MC_\rho(\mathbf M)$ of $\mathbf M$ is the induced
monodromy generators on the quotient space of $\mathbb C^{pn}/V$ where
$V$ is the maximal invariant subspace such the restriction of $\tilde{\mathbf M}$
on $V$ is a direct sum of finite copies of $1$-dimensional spaces with the actions 
$(\rho^{-1},1,\dots,1,\overset{\underset{\smallsmile}j}\rho,1,\dots,1)
\in GL(1,\mathbb C)^{p+1}_1$ $(j=1,\dots,p)$ and $(1,1,\dots,1)$. 
The system defined by the middle convolution $mc_\mu(\mathbf A)$ is 
the quotient of the system $\mathcal M_{\tilde{\mathbf A}}$
by the maximal submodule such that the submodule is a direct sum of 
finite copies of the equations $(x-c_j)\frac{dw}{dx}=\mu w$ $(j=1,\dots,p)$ 
and $\frac{dw}{dx}=0$.

Suppose $\mathbf M$ and $\MC_\rho(\mathbf M)$ are irreducible and $\rho\ne 1$.
Assume $\phi(x)$  is a function belonging to $\mathcal F$ 
such that it is defined around $x=c_j$ and corresponds to the eigenvector
of the monodromy matrix $M_j$ with the eigenvalue different from 1.
Then the holomorphic continuation of
$\Phi(x)=\int^{(x+,c_j+,x-,c_j-)}\frac{\phi(t)(t-x)^\mu}{t-c_j}dt$
defines the monodromy isomorphic to $\MC_\rho(\mathbf M)$.

\begin{rem}\label{rem:Mon}
We can define the monodromy $\mathbf M=(M_0,\dots,M_p)$ of 
the universal model $P_{\mathbf m}u=0$ (cf.~Theorem~\ref{thm:univmodel})  
so that $\mathbf M$ is entire holomorphic with  respect to the 
spectral parameters $\lambda_{j,\nu}$ 
and the accessory parameters $ g_i$ under the normalization $u(j)^{(\nu-1)}(q)=
\delta_{j,\nu}$ for $j,\ \nu=1,\dots,n$ and 
$q\in\mathbb C\setminus\{c_1,\dots,c_p\}$.
Here $u(1),\dots,u(n)$ are solutions of $P_{\mathbf m}u=0$.
\end{rem}

\begin{defn}\label{def:locnondeg}\index{locally non-degenerate}
Let $P$ be a Fuchsian differential operator 
with the Riemann scheme \eqref{eq:GRS} and the spectral type 
$\mathbf m=\bigl(m_{j,\nu}\bigr)_{\substack{0\le j\le p\\1\le\nu\le n_j}}$.
We define that $P$ is \textsl{locally non-degenerate}
\index{Fuchsian differential equation/operator!locally non-degenerate}
if the tuple of the monodromy generators $\mathbf M:=(M_0,\dots,M_p)$ satisfies
\begin{equation}\label{eq:nondeg1}
  M_j\sim L(m_{j,1},\dots,m_{j,n_j};e^{2\pi\sqrt{-1}\lambda_{j,1}},\dots,
  e^{2\pi\sqrt{-1}\lambda_{j,n_j}})\quad(j=0,\dots,p),
\end{equation}
which is equivalent to the condition that
\index{00Z@$Z(A)$, $Z(\mathbf M)$}%
\begin{equation}\label{eq:nondeg0}
\dim Z(M_j) = m_{j,1}^2+\cdots+m_{j,n_j}^2\quad(j=0,\dots,p).
\end{equation}
Suppose $\mathbf m$ is irreducibly realizable.
Let $P_{\mathbf m}$ be the universal operator with the 
Riemann scheme \eqref{eq:GRS}.
We say that the parameters $\lambda_{j,\nu}$ and $g_i$ are
\textsl{locally non-degenerate} if the corresponding operator
is locally non-degenerate.
\end{defn}
Note that the parameters are locally non-degenerate if
\[
  \lambda_{j,\nu}-\lambda_{j,\nu'}\notin\mathbb Z
  \quad(j=0,\dots,p,\ \nu=1,\dots,n_j,\ \nu'=1,\dots,n_j).
\]
Define $P_t$ as in Remark~\ref{rem:GCexp} iv).
Then we can define monodromy generator $M_t$ of $P_t$ at $x=c_j$ 
so that $M_t$ holomorphically depend on $t$ (cf.~Remark~\ref{rem:Mon}).
Then Remark~\ref{rm:1} v) proves that 
\eqref{eq:nondeg0} implies \eqref{eq:nondeg1} for every $j$.

The following proposition gives a sufficient condition such that an operator
is locally non-degenerate.
\begin{prop}\label{prop:nondeg}
Let $P$ be a Fuchsian differential operator with the Riemann scheme \eqref{eq:GRS}
and let $M_j$ be the monodromy generator at $x=c_j$. 
Fix an integer $j$ with $0\le j\le p$.
Then the condition
\begin{equation}\label{eq:nondegA}
 \begin{split}
  &\lambda_{j,\nu}-\lambda_{j,\nu'}\notin\mathbb Z\text{ \ or \ }
  (\lambda_{j,\nu}-\lambda_{j,\nu'})
  (\lambda_{j,\nu}+m_{j,\nu}-\lambda_{j,\nu'}-m_{j,\nu'})\le 0\\
  &\qquad\text{ \ for \ }1\le \nu\le n_j\text{ \ and \ }1\le \nu'\le n_j
 \end{split}
\end{equation}
implies $\dim Z(M_j)=m_{j,1}^2+\cdots+m_{j,n_j}^2$.
In particular, $P$ is locally non-degenerate
if \eqref{eq:nondegA} is valid for $j=0,\dots,p$.
 
Here we remark that the following condition implies \eqref{eq:nondegA}.
\begin{equation}\label{eq:nondegB}
 \lambda_{j,\nu}-\lambda_{j,\nu'}\notin\mathbb Z\setminus\{0\}
 \quad\text{for \ }1\le \nu\le n_j\text{ \ and \ }1\le \nu'\le n_j.
\end{equation}
\end{prop}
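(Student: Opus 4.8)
The plan is to prove Proposition~\ref{prop:nondeg} by a direct rank computation on the monodromy generator $M_j$, using the fact that its Jordan structure is controlled by the characteristic exponents at $c_j$. Fix $j$; after a coordinate change we may assume $c_j=0$, so that $P$ is normalized at the origin and, by Lemma~\ref{lem:GRS} ii), has local solutions $u_{\nu,i}(x)=x^{\lambda_{j,\nu}+i}\phi_{\nu,i}(x)+(\log$ terms$)$ with $\phi_{\nu,i}\in\mathcal O_0$. The monodromy generator $M_j$ acts on the $n$-dimensional solution space, its eigenvalues are $e^{2\pi\sqrt{-1}\lambda_{j,\nu}}$ with multiplicity $m_{j,\nu}$, and two eigenvalues $e^{2\pi\sqrt{-1}\lambda_{j,\nu}}$, $e^{2\pi\sqrt{-1}\lambda_{j,\nu'}}$ coincide exactly when $\lambda_{j,\nu}-\lambda_{j,\nu'}\in\mathbb Z$. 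By Remark~\ref{rm:1} iv), the target identity $\dim Z(M_j)=\sum_\nu m_{j,\nu}^2$ is equivalent to $M_j\sim L\bigl((m_{j,\nu})_\nu;(e^{2\pi\sqrt{-1}\lambda_{j,\nu}})_\nu\bigr)$, i.e. to the statement that for each eigenvalue $\mu$ the generalized eigenspace of $M_j$ for $\mu$ is a direct sum of Jordan blocks whose sizes form the dual partition of $\{m_{j,\nu}:e^{2\pi\sqrt{-1}\lambda_{j,\nu}}=\mu\}$ — equivalently, that each index set $\{\nu:\lambda_{j,\nu}-\lambda_{j,\nu_0}\in\mathbb Z\}$ contributes no more Jordan coupling than the ``generic'' semisimple-within-each-$\lambda$ picture allows.

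The first step is to reduce to a single eigenvalue: by Remark~\ref{rm:1} ii), $M_j$ is conjugate to the direct sum over eigenvalues $\mu$ of its restrictions to the generalized $\mu$-eigenspaces, and the claimed form $L$ respects this decomposition, so it suffices to fix one congruence class, say $\lambda_{j,1},\dots,\lambda_{j,k}$ differ by integers and all other $\lambda_{j,\nu'}$ are non-congruent to them. Reorder so that the real parts of $\lambda_{j,1},\dots,\lambda_{j,k}$ are non-decreasing along the Jordan filtration; what must be shown is that within this block, solutions belonging to $[\lambda_{j,\nu}]_{(m_{j,\nu})}$ and $[\lambda_{j,\nu'}]_{(m_{j,\nu'})}$ produce logarithmic coupling only when forced, and the bookkeeping reproduces the dual partition of $(m_{j,1},\dots,m_{j,k})$. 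The crucial input is the hypothesis \eqref{eq:nondegA}: when $\lambda_{j,\nu}-\lambda_{j,\nu'}\in\mathbb Z$ we are given $(\lambda_{j,\nu}-\lambda_{j,\nu'})(\lambda_{j,\nu}+m_{j,\nu}-\lambda_{j,\nu'}-m_{j,\nu'})\le 0$, which says precisely that the integer interval $[\lambda_{j,\nu},\lambda_{j,\nu}+m_{j,\nu}-1]$ and $[\lambda_{j,\nu'},\lambda_{j,\nu'}+m_{j,\nu'}-1]$ are nested (one contains the other, or they share an endpoint on the appropriate side). This nesting is exactly the combinatorial condition under which the semisimplicity indicated by the brackets $[\lambda]_{(m)}$ — see Remark~\ref{rem:GCexp} — forces the $k$ blocks to stack into a single $L(\mathbf m;\mu)$ rather than into a larger Jordan block. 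I would make this precise by computing $\rank\prod_{\nu=1}^{i}(M_j-e^{2\pi\sqrt{-1}\lambda_{j,\nu}})$ and matching it with the value $n-(m_{j,1}+\cdots+m_{j,i})$ demanded by Remark~\ref{rm:1} i), using the explicit local solutions and the way $I_c^\mu$, $\Ad(x^{-\lambda})$ and Lemma~\ref{lem:block}/Lemma~\ref{lem:GRS} describe the $\log$-terms.

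A cleaner alternative, which I would actually carry out, is a deformation argument: introduce $P_t$ as in Remark~\ref{rem:GCexp} iv) with $\tilde\lambda_{j,\nu}(t)=\lambda_{j,\nu}+\nu t$, so that for $0<|t|\ll 1$ the exponents in each congruence class become non-congruent and $M_{j,t}\sim L(\mathbf m_j;(e^{2\pi\sqrt{-1}\tilde\lambda_{j,\nu}(t)})_\nu)$ holds trivially by Remark~\ref{rm:1} iii) since there $\lambda_i-\lambda_{i'}\notin\mathbb Z$. The monodromy $M_{j,t}$ depends holomorphically (hence continuously) on $t$ (Remark~\ref{rem:Mon}, \cite{Or}), and by Remark~\ref{rm:1} v) the limiting form at $t=0$ is governed by whether $\dim Z(M_{j,0})$ equals $m_{j,1}^2+\cdots+m_{j,n_j}^2$. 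So the whole proposition collapses to showing that this dimension count survives the degeneration $t\to 0$ under \eqref{eq:nondegA}; and here the nesting condition is what guarantees that no two of the colliding blocks ``fuse'' into a longer Jordan chain in the limit — the block with the longer exponent interval simply absorbs the shorter one into its already-present chain without increasing total chain length. The main obstacle will be making this non-fusion statement rigorous: one must show the interval-nesting in \eqref{eq:nondegA} is exactly equivalent to semicontinuity of the centralizer dimension being an equality, which I expect to handle by the rank computation of the first paragraph applied to the one-parameter family, checking that each factor $M_{j,t}-e^{2\pi\sqrt{-1}\tilde\lambda_{j,\nu}(t)}$ drops rank by precisely $m_{j,\nu}$ uniformly in $t$ near $0$. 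Finally, that \eqref{eq:nondegB} implies \eqref{eq:nondegA} is immediate, since $\lambda_{j,\nu}-\lambda_{j,\nu'}\in\mathbb Z$ then forces $\lambda_{j,\nu}=\lambda_{j,\nu'}$, making the product in \eqref{eq:nondegA} equal to $0$.
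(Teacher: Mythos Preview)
Your proposal is correct and follows essentially the same route as the paper: reduce to one eigenvalue $\rho$ at a time, read \eqref{eq:nondegA} as nesting of the exponent intervals, construct the log-structured local solutions $u_{\mu,\nu}$, and combine this with the deformation $P_t$. The paper's only organizational difference is that it separates the two inequalities cleanly --- the nesting together with the $u_{\mu,\nu}$ gives the upper bound $\dim\ker(M_j-\rho)^k\le m'_{\rho,1}+\cdots+m'_{\rho,k}$ (because under nesting $\#\{\mu:N_\mu\ge i\}=m'_{\rho,i}$), while semicontinuity of rank along the holomorphic family $M_t$ gives the reverse inequality --- so the deformation handles the easy half and the ``non-fusion'' you flag as the main obstacle is exactly the upper bound, which your proposed rank computation supplies.
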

\begin{proof}
For $\mu\in\mathbb C$ we put 
\[
 N_\mu=\bigl\{\nu\,;\,1\le\nu\le n_j,\ \mu\in\{\lambda_{j,\nu},\lambda_{j,\nu}+1,
 \dots,\lambda_{j,\nu}+m_{j,\nu}-1\}\bigr\}.
\]
If $N_\mu > 0$, we have a local solution $u_{\mu,\nu}(x)$ 
of the equation $Pu=0$ such that
\begin{equation}\label{eq:nondegs1}
  u_{\mu,\nu}(x)=(x-c_j)^{\mu}\log^{\nu}(x-c_j)+\mathcal O_{c_j}(\mu+1,L_\nu)
 \text{ \ for \ }\nu=0,\dots,N_\mu-1.
\end{equation}
Here $L_\nu$ are positive integers and if $j=0$, then $x$ and $x-c_j$ should 
be replaced by $y=\frac1x$ and $y$, respectively.

Suppose \eqref{eq:nondegA}.
Put $\rho=e^{2\pi \mu i}$, $\mathbf m'_\rho=\{m_{j,\nu}\,;\,
\lambda_{j,\nu}-\mu\in\mathbb Z\}$ and $\mathbf m'_\rho=\{m'_{\rho,1},\dots,
m'_{\rho,n_\rho}\}$ with $m'_{\rho,1}\ge m'_{\rho,2}\ge\cdots\ge m'_{\rho,n_\rho}
\ge 1$.
Then \eqref{eq:nondegA} implies
\begin{equation}\label{eq:nondegs2}
 n-\rank(M_j-\rho)^k \le
                     \begin{cases}
                       m'_{\rho,1}+\cdots+m'_{\rho,k}&(1\le k\le n_\rho),\\
                       m'_{\rho,1}+\cdots+m'_{\rho,n_\rho}&(n_\rho< k).
                      \end{cases}
\end{equation}
The above argument proving \eqref{eq:nondeg1} under the condition
\eqref{eq:nondeg0} shows that the left hand side of \eqref{eq:nondegs2} 
is not smaller than the right hand side of \eqref{eq:nondegs2}.
Hence we have the equality in \eqref{eq:nondegs2}.
Thus we have \eqref{eq:nondeg0} and we can assume that 
$L_\nu=\nu$ in \eqref{eq:nondegs1}.
\end{proof}
Theorem~\ref{thm:Mmid}, Theorem~\ref{thm:mcMC} and Proposition~\ref{prop:RAdIc} 
show the following corollary.
One can also prove it by the same way as in the proof of
\cite[Theorem~4.7]{DR2}. 
\begin{cor}\label{cor:irredred}
Let\/ $P$ be a Fuchsian differential operator 
with the Riemann scheme \eqref{eq:GRS}.
Let\/ $\Mon(P)$ denote the monodromy of the equation $Pu=0$.
Put\/ $\Mon(P)=(M_0,\dots,M_p)$.
Suppose 
\begin{equation}
 M_j\sim L(m_{j,1},\dots,m_{j,n_j};e^{2\pi\sqrt{-1}\lambda_{j,1}},\dots,
 e^{2\pi\sqrt{-1}\lambda_{j,n_j}}) \text{ \ for \ }j=0,\dots,p.
\end{equation}
In this case, $P$ is said to be locally non-degenerate.
Under the notation in Definition~\ref{def:pell}, we fix 
$\ell\in\mathbb Z_{\ge 1}^{p+1}$ and suppose \eqref{eq:redok}.
Assume moreover
\begin{gather}
 \mu_\ell\notin\mathbb Z,\\
 m_{j,\nu}\le m_{j,\ell_j}\text{ \ or \ }
 \lambda_{j,\ell_j}-\lambda_{j,\nu}\notin\mathbb Z\quad
 (j=0,\dots,p,\ \nu=1,\dots,n_j).
\end{gather}
Then we have
\begin{equation}
  \Mon(\p_\ell P)\sim\p_\ell\Mon(P).
\end{equation}
In particular, $\Mon(P)$ is irreducible if and only if
$\Mon(\p_\ell P)$ is irreducible.
\end{cor}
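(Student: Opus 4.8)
The plan is to reduce $\mathrm{Cor}\,\ref{cor:irredred}$ to the multiplicative statement Theorem~\ref{thm:Mmid} together with the compatibility between the operation $\p_\ell$ on differential operators and the middle convolution $mc_\mu$ on monodromies provided by Theorem~\ref{thm:mcMC}, using Proposition~\ref{prop:RAdIc} (and Remark~\ref{rem:midisom}) to pass between the analytic realization via $I_c^\mu$ and the formal algebraic operation $\RAd(\p^{-\mu})\circ\Red$. First I would unwind the definition of $\p_\ell P$ in \eqref{eq:opred}: it is, up to a left multiple by an element of $\mathbb C(x)$ and up to conjugation by the linear fractional transformation handled in Proposition~\ref{prop:coordf}, a composition of additions $\RAd\bigl((x-c_j)^{\lambda_{j,\ell_j}}\bigr)$, one middle convolution $\RAd(\p^{1-\sum_j\lambda_{j,\ell_j}})=mc_\mu$ with $\mu=-\mu_\ell=1-\sum_j\lambda_{j,\ell_j}$ (note the sign: $\p_\ell$ uses exponent $1-\sum\lambda_{j,\ell_j}$), and inverse additions. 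The monodromy transforms compatibly: additions correspond to multiplications of the $M_j$ by scalars, and by Theorem~\ref{thm:mcMC} the middle convolution of the operator corresponds to $\MC_{e^{2\pi\sqrt{-1}\mu}}$ of its monodromy, provided the rank hypotheses of that theorem hold. Hence $\Mon(\p_\ell P)$ is obtained from $\Mon(P)$ by exactly the sequence of multiplications and one $\MC_\rho$ that defines $\p_\ell\Mon(P)$ in the multiplicative sense.

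The key steps, in order, would be: (1) Record that the hypothesis that $P$ is locally non-degenerate, i.e. $M_j\sim L(m_{j,1},\dots,m_{j,n_j};e^{2\pi\sqrt{-1}\lambda_{j,1}},\dots)$, forces the equality $\rank(A_j)=\rank(M_j-1)$, $\rank(A_0-\mu)=\rank(M_0-e^{2\pi\sqrt{-1}\mu})$ type relations that Theorem~\ref{thm:mcMC} needs; more precisely the Jordan type of $M_j$ matches the multiplicities, so the conditions \eqref{eq:mulstar} and \eqref{eq:mulss} on $\Mon(P)$ follow from $\#\{j:m_{j,\ell_j}<n\}\ge2$ in \eqref{eq:redok} and from irreducibility/local non-degeneracy by Scott's lemma (Lemma~\ref{lem:Scott}), exactly as in Remark~\ref{rem:monred} ii). (2) Use the hypotheses $\mu_\ell\notin\mathbb Z$ and ``$m_{j,\nu}\le m_{j,\ell_j}$ or $\lambda_{j,\ell_j}-\lambda_{j,\nu}\notin\mathbb Z$'' to guarantee that the middle convolution does not collapse unintended Jordan blocks and that the formal operator-level reduction $\p_\ell P$ is well-defined in $W[x]$ with the predicted Riemann scheme \eqref{eq:pellGRS} — this is precisely where Theorem~\ref{thm:GRSmid} ii) gives us the needed control, since the displayed conditions are the specializations of \eqref{eq:redC2}, \eqref{eq:redC3} in this setting. (3) Apply Proposition~\ref{prop:RAdIc} (together with Remark~\ref{rem:midisom} ii)): the local solution of $\p_\ell P\,v=0$ is, up to an addition, $I_{c_i}^\mu$ of a local solution of $P u=0$, and the Pochhammer-loop integral $\tilde I_{c_i}^\mu$ realizes the monodromy transformation of Theorem~\ref{thm:mcMC}. (4) Conclude $\Mon(\p_\ell P)\sim\p_\ell\Mon(P)$ by composing the scalar multiplications coming from the additions in \eqref{eq:opred} with the $\MC_\rho$ coming from the central middle convolution, and by matching the scalars $\rho_j=\rho_{j,\ell_j}=e^{2\pi\sqrt{-1}\lambda_{j,\ell_j}}$ and $\rho=\prod\rho_j=e^{-2\pi\sqrt{-1}\mu_\ell}$ to the definition of $\p_\ell\mathbf M$. (5) The final sentence, ``$\Mon(P)$ irreducible iff $\Mon(\p_\ell P)$ irreducible,'' is then immediate from Theorem~\ref{thm:Mmid} i), which states that $\MC_\rho$ preserves irreducibility (multiplications trivially do), together with the invertibility $\p_\ell^2=\mathrm{id}$ at the level of operators in \eqref{eq:pellP2} and the analogous invertibility of $\MC_\rho\circ\MC_{\rho^{-1}}$.

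The main obstacle I expect is step (1)–(2): verifying that the local non-degeneracy hypothesis on $P$ is exactly strong enough to (a) put $\Mon(P)$ into the situation where Theorem~\ref{thm:mcMC} and Theorem~\ref{thm:Mmid} ii) apply verbatim, and (b) ensure that the operator-level $\p_\ell P$ has monodromy generators again of the predicted $L(\cdots)$-type, so that the identity $\Mon(\p_\ell P)\sim\p_\ell\Mon(P)$ is an equality of conjugacy classes and not merely a statement up to the ambiguity of extra Jordan blocks. Concretely, one must check that the rank-matching conditions in Theorem~\ref{thm:mcMC} hold: $\rank(M_j-1)$ must equal the ``algebraic'' rank predicted by the spectral type, and this is precisely $\dim Z(M_j)=\sum_\nu m_{j,\nu}^2$ unwound via Remark~\ref{rm:1} iv), i.e. the defining condition \eqref{eq:nondeg0} of local non-degeneracy. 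The remaining bookkeeping — tracking which scalar multiplication is absorbed where, and matching the exponent $1-\sum_j\lambda_{j,\ell_j}$ of the operator-level middle convolution against $\rho=\prod_j\rho_{j,\ell_j}$ via $e^{2\pi\sqrt{-1}\,\cdot}$ — is routine once the framework is in place, and I would cite the parallel computation in the proof of \cite[Theorem~4.7]{DR2} as indicated, rather than redo it.
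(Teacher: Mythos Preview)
Your proposal is correct and matches the paper's own proof, which simply says the corollary follows from Theorem~\ref{thm:Mmid}, Theorem~\ref{thm:mcMC} and Proposition~\ref{prop:RAdIc} (or alternatively as in \cite[Theorem~4.7]{DR2}); you have spelled out exactly how these three ingredients combine. One small bookkeeping slip: since $mc_\mu=\RAd(\p^{-\mu})$, the operator $\RAd(\p^{1-\sum_j\lambda_{j,\ell_j}})$ is $mc_{\mu_\ell}$ with $\mu_\ell=\sum_j\lambda_{j,\ell_j}-1$, so $\rho=e^{2\pi\sqrt{-1}\mu_\ell}$ rather than $e^{-2\pi\sqrt{-1}\mu_\ell}$, but as you note this is routine and does not affect the argument.
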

\subsection{Scott's lemma and Katz's rigidity}\label{sec:Scott}
The results in this subsection are known but we will review them with
their proof for the completeness of this paper.
\begin{lem}[Scott \cite{Sc}]\index{Scott's lemma}
\label{lem:Scott}
Let\/ $\mathbf M\in GL(n,\mathbb C)^{p+1}_1$ and\/ 
$\mathbf A\in M(n,\mathbb C)^{p+1}_0$
under the notation \eqref{eq:Msub0} and \eqref{eq:Gsub1}.
Then 
\begin{align}
   \sum_{j=0}^p \codim\ker(M_j-1)&\ge
   \codim\bigcap_{j=0}^p\ker(M_j-1)+\codim\bigcap_{j=0}^p\ker({}^t\!M_j-1),\\
   \sum_{j=0}^p \codim\ker A_j&\ge
   \codim\bigcap_{j=0}^p\ker A_j +\codim\bigcap_{j=0}^p\ker{}^t\!A_j.
\end{align}
In particular, if\/ $\mathbf M$ and $\mathbf A$ are irreducible, then
\begin{align}
 \sum_{j=0}^p\dim\ker(M_j-1)&\le (p-1)n,\\
 \sum_{j=0}^p\dim\ker A_j&\le (p-1)n.
\end{align}
\end{lem}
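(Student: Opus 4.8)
The plan is to prove the multiplicative and additive statements in parallel, since they have the same underlying linear-algebra content. The key tool is the observation that for a tuple $\mathbf M=(M_0,\dots,M_p)\in GL(n,\mathbb C)^{p+1}_1$, the deviations from the identity interact through the relation $M_p\cdots M_1M_0=I_n$; correspondingly, for $\mathbf A=(A_0,\dots,A_p)\in M(n,\mathbb C)^{p+1}_0$ the deviations from $0$ interact through $A_0+\cdots+A_p=0$. The cleanest route is to consider the linear map
\[
  \Phi:\ \mathbb C^n\longrightarrow (\mathbb C^n)^{p+1},\qquad
  v\longmapsto \bigl((M_0-1)v,\ M_0^{-1}(M_1-1)M_0 v,\ \dots\bigr)
\]
in the multiplicative case, or more simply $\Phi(v)=\bigl(A_0v,\dots,A_pv\bigr)$ in the additive case, and to identify the kernel and a complement of the image.

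First I would treat the additive case, which is more transparent. For $\mathbf A$, let $\Phi(v)=(A_0v,\dots,A_pv)$. Then $\ker\Phi=\bigcap_{j}\ker A_j$, so $\rank\Phi=n-\dim\bigcap_j\ker A_j=\codim\bigcap_j\ker A_j$. On the other hand, $\rank\Phi\le\sum_{j=0}^p\rank A_j=\sum_{j=0}^p\codim\ker A_j$; but this crude bound is not yet what we want. The sharpening comes from noting that the image of $\Phi$ lies in the subspace $W=\{(w_0,\dots,w_p):\sum_j w_j=0\}$ only after a correction — actually the right statement is that $\IM\Phi$ is \emph{not} constrained, but its projection interacts with the transpose. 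The correct argument: consider instead $\Psi:(\mathbb C^n)^{p+1}\to\mathbb C^n$ pairing via the transpose tuple, i.e.\ the cokernel of $\Phi$ is dual to $\bigcap_j\ker{}^t\!A_j$. Concretely, a linear functional $(\xi_0,\dots,\xi_p)$ on $(\mathbb C^n)^{p+1}$ (thought of as a row vector tuple) annihilates $\IM\Phi$ iff $\sum_j \xi_j A_j=0$; using $A_0=-(A_1+\cdots+A_p)$ this becomes $\sum_{j\ge1}(\xi_j-\xi_0)A_j=0$, and choosing representatives with $\xi_0=0$ one reads off that such functionals are parametrized by $\bigcap_{j}\ker{}^t\!A_j$ up to the diagonal. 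Thus $\codim_{(\mathbb C^n)^{p+1}}\IM\Phi = \codim\bigcap_j\ker{}^t\!A_j$, i.e.\ $\rank\Phi=(p+1)n-\codim\bigcap_j\ker{}^t\!A_j$. Combining $\rank\Phi=\codim\bigcap_j\ker A_j$ with $\rank\Phi\le\sum_j\codim\ker A_j$ — wait, these two expressions for $\rank\Phi$ must be reconciled; the resolution is that one uses the first \emph{equality} together with the second \emph{inequality} obtained from the cokernel description: $\rank\Phi\ge \rank\Phi$ trivially, and the sought inequality is
\[
 \sum_{j=0}^p\codim\ker A_j \ \ge\ \rank\Phi + \bigl((p+1)n-\rank\Phi - \text{(correction)}\bigr),
\]
so I will set it up carefully as: $\sum_{j=0}^p\codim\ker A_j=\sum_j\rank A_j\ge\rank\Phi$ (image of $\Phi$ projects onto each factor's image is false; rather $\rank\Phi\le\sum\rank A_j$ because $\Phi$ factors through the product of the $A_j$), and separately $\dim\ker\Phi=\dim\bigcap\ker A_j$ and $\dim\coker\Phi=\dim\bigcap\ker{}^t\!A_j$, giving $\rank\Phi=(p+1)n-\sum_j\text{(non-rank)}$... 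The bookkeeping here is exactly the main obstacle.

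The honest core of the argument, which I would write out, is the rank identity: for $\Phi$ as above, $\rank\Phi = \sum_{j=0}^p\rank A_j - \bigl(\text{overlap terms}\bigr)$ is replaced by the two \emph{one-sided} facts $\dim\ker\Phi=\dim\bigcap_j\ker A_j$ and, dually, $\dim\ker{}^t\Phi=\dim\bigcap_j\ker{}^t\!A_j$ where ${}^t\Phi:(\mathbb C^n)^{p+1}\to\mathbb C^n$ is $(\xi_j)\mapsto\sum_j{}^t\!A_j\xi_j$ (using the sum-zero relation to see its kernel), together with $\rank\Phi=\rank{}^t\Phi\le\sum_j\rank A_j$. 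Then
\[
 \sum_{j=0}^p\codim\ker A_j=\sum_{j=0}^p\rank A_j\ge\rank\Phi=\codim\ker\Phi
\]
and also $\rank{}^t\Phi\ge$ each, but the clean finish is: $\rank\Phi\ge\codim\bigcap_j\ker A_j$ is an equality, while simultaneously $\sum_j\rank A_j-\rank\Phi\ge \codim\bigcap_j\ker{}^t\!A_j$ because the "defect" in $\IM\Phi\subset\prod\IM A_j$ is measured by linear relations $\sum\xi_jA_j=0$, and modulo the diagonal (which is allowed by $\sum A_j=0$) these are $\bigcap\ker{}^t\!A_j$. Adding the two gives the claimed inequality. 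For the multiplicative case I would run the identical argument with $A_j$ replaced by $M_j-1$, after the standard trick of conjugating the $j$-th factor by $M_{j-1}\cdots M_0$ so that the relation $M_p\cdots M_0=I$ turns into an additive-looking telescoping $\sum_j(\text{conjugate of }M_j-1)\equiv 0$ modulo higher-order terms — more precisely one uses that $\bigcap_j\ker(M_j-1)=\bigcap_j\ker(\widetilde M_j-1)$ for the conjugates and that the cokernel is governed by ${}^t\!M_j$. The final "in particular" clauses are then immediate: irreducibility of $\mathbf M$ (resp.\ $\mathbf A$) means $\bigcap_j\ker(M_j-1)$ and $\bigcap_j\ker({}^t\!M_j-1)$ are both $\{0\}$ (a common fixed vector, resp.\ common fixed covector, would span a proper invariant subspace), hence both codimensions equal $n$, and $\sum_j\codim\ker(M_j-1)\ge 2n$ rearranges to $\sum_j\dim\ker(M_j-1)\le(p+1)n-2n=(p-1)n$; likewise for $\mathbf A$. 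The main obstacle, as flagged, is stating the cokernel/transpose identity precisely enough that the two complementary codimensions genuinely add, rather than overlapping — this requires pinning down how the relation ($\sum A_j=0$ or $\prod M_j=1$) is used exactly once, on the image side, and not double-counted on the kernel side.
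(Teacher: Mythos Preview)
Your outline has the right starting point --- the map $\beta:\mathbb C^n\to V:=\prod_j\IM A_j$, $v\mapsto(A_jv)_j$, whose kernel is $\bigcap_j\ker A_j$ --- but the second half never closes. The ingredient you keep circling without landing on is a \emph{second} linear map $\delta:V\to\mathbb C^n$ satisfying $\delta\circ\beta=0$ exactly and with $\IM\delta$ of the right dimension. In the additive case take $\delta(v_0,\dots,v_p)=v_0+\cdots+v_p$: then $\delta\circ\beta(v)=\sum_jA_jv=0$ by the relation $\sum_j A_j=0$, and $\IM\delta=\sum_j\IM A_j$, whose annihilator in $(\mathbb C^n)^*$ is $\bigcap_j\ker{}^t\!A_j$, so $\dim\IM\delta=\codim\bigcap_j\ker{}^t\!A_j$. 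Rank--nullity for $\delta$ then gives
\[
\sum_{j}\codim\ker A_j=\dim V=\dim\ker\delta+\dim\IM\delta\ge\dim\IM\beta+\dim\IM\delta,
\]
which is the inequality. Your attempted substitutes are incorrect as written: ``$\dim\coker\Phi=\dim\bigcap\ker{}^t\!A_j$'' would force $\rank\Phi\ge pn$, impossible for $p\ge2$; and ``relations $\sum\xi_jA_j=0$ modulo the diagonal are $\bigcap\ker{}^t\!A_j$'' is false in general. What \emph{is} true --- and is just the dual of $\delta$ --- is that $\xi\mapsto[(\xi,\dots,\xi)]$ injects $\mathbb C^n/\bigcap_j\ker{}^t\!A_j$ into the annihilator of $\IM\beta$ inside $V^*$; but that is the reverse of what you assert.

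For the multiplicative case your plan to ``conjugate so the relation becomes additive modulo higher-order terms'' cannot yield an exact inequality. The paper instead uses the exact telescoping
\[
M_p\cdots M_1(M_0-1)+M_p\cdots M_2(M_1-1)+\cdots+(M_p-1)=M_p\cdots M_0-1=0
\]
to define $\delta(v_0,\dots,v_p)=M_p\cdots M_1v_0+\cdots+M_pv_{p-1}+v_p$ on $V=\prod_j\IM(M_j-1)$, and then verifies by a short rewriting that $\IM\delta=\sum_j\IM(M_j-1)$. With $\delta$ in hand the multiplicative case is identical to the additive one. Your deduction of the ``in particular'' clauses from the main inequality is fine.
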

\begin{proof}
Consider the following linear maps:
\begin{align*}
 V&=\IM(M_0-1)\times\cdots\times \IM(M_p-1)\subset\mathbb C^{n(p+1)},\\
 \beta&:\ \mathbb C^n\to V,\quad v\mapsto ((M_0-1)v,\dots,(M_p-1)v),\\
 \delta&:\ V\to \mathbb C^n,\quad (v_0,\dots,v_p)
  \mapsto M_p\cdots M_1v_0+M_p\cdots M_2v_1+\cdots+M_pv_{p-1}+v_p.
\end{align*}
Since
$M_p\cdots M_1(M_0-1)+\cdots+M_p(M_{p-1}-1)+(M_p-1)
=M_p\cdots M_1M_0-1=0$, we have $\delta\circ\beta=0$.
Moreover we have
\begin{align*}
 &\sum_{j=0}^pM_p\cdots M_{j+1}(M_j-1)v_j
 =\sum_{j=0}^p\Bigl(1+\sum_{\nu=j+1}^{p}(M_\nu-1)M_{\nu-1}
 \cdots M_{j+1}\Bigr)(M_j-1)v_j\\
 &\quad=\sum_{j=0}^p(M_j-1)v_j+\sum_{\nu=1}^{p}\sum_{i=0}^{\nu-1}
 (M_\nu-1)M_{\nu-1}\cdots M_{i+1}(M_i-1)v_i\\
 &\quad=\sum_{j=0}^p(M_j-1)\Bigl(v_j +
  \sum_{i=0}^{j-1} M_{j+1}\cdots M_{i-1}(M_i-1)v_i\Bigr)
\end{align*}
and therefore $\IM\delta=\sum_{j=0}^p\IM(M_j-1)$.
Hence
\begin{align*}
 \dim\IM\delta&=\rank (M_0-1,\dots,M_p-1)
              =\rank \begin{pmatrix}{}^t\!M_0-1\\\vdots\\ {}^t\!M_p-1\end{pmatrix}
\\[-.3cm]
\intertext{and}
 \sum_{j=0}^p\codim\ker(M_j-1)&=\dim V=\dim\ker\delta+\dim\IM\delta\\
                 &\ge\dim\IM\beta+\dim\IM\delta\\
                 &=\codim\bigcap_{j=0}^p\ker(M_j-1)
                  +\codim\bigcap_{j=0}^p\ker({}^t\!M_j-1).
\end{align*}

Putting
\begin{align*}
 V&=\IM A_0\times\cdots\times \IM A_p\subset\mathbb C^{n(p+1)},
\allowdisplaybreaks\\
 \beta&:\ \mathbb C^n\to V,\quad v\mapsto (A_0 v,\dots, A_p v),
\allowdisplaybreaks\\
 \delta&:\ V\to \mathbb C^n,\quad (v_0,\dots,v_p)
  \mapsto v_0+v_1+\cdots+v_p,
\end{align*}
we have the claims for $\mathbf A\in M(n,\mathbb C)^{p+1}$
in the same way as in the proof for $\mathbf M\in GL(n,\mathbb C)^{p+1}_1$.
\end{proof}
\begin{cor}[Katz \cite{Kz} and \cite{SV}]\label{cor:katz}
\index{Katz's rigidity}
Let $\mathbf M\in GL(n,\mathbb C)^{p+1}_1$.
The dimensions of the manifolds
\begin{align}
  V_1 &:=\{\mathbf H\in GL(n,\mathbb C)^{p+1}_1\,;\,\mathbf H\sim\mathbf M\}
\intertext{and}
  V_2 &:=\{\mathbf H\in GL(n,\mathbb C)^{p+1}_1\,;\,
  H_j\sim M_j\quad(j=0,\dots,p)\}
\end{align}
are give by
\index{00Z@$Z(A)$, $Z(\mathbf M)$}%
\begin{align}
 \dim V_1 &= \codim Z(\mathbf M),\\
 \dim V_2 &=\sum_{j=0}^p\codim Z(M_j)-\codim Z(\mathbf M).
\end{align}
Here $Z(\mathbf M):=\bigcap_{j=0}^p Z(M_j)$ and 
$Z(M_i)=\{X\in M(n,\mathbb C)\,;\,XM_j=M_jX\}$.

Suppose\/ $\mathbf M$ is irreducible.
Then\/ $\codim Z(\mathbf M)=n^2-1$ and 
\begin{align}\label{eq:iridx}
 \sum_{j=0}^p\codim Z(M_j)\ge 2n^2-2.
\end{align}
Moreover\/ $\mathbf M$ is \textsl{rigid}, namely, $V_1=V_2$
if and only if\/ $\displaystyle\sum_{j=0}^p\codim Z(M_j)=2n^2-2$.
\end{cor}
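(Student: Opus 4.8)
The plan is to realize both $V_1$ and $V_2$ as objects attached to the simultaneous conjugation action of $GL(n,\mathbb C)$, and to read off their dimensions from the orbit--stabilizer theorem and from a tangent-space computation whose cokernel is exactly $Z(\mathbf M)$. First, for $V_1$: since a conjugate $\mathbf H=(gM_0g^{-1},\dots,gM_pg^{-1})$ automatically satisfies $H_p\cdots H_0=I_n$, the set $V_1$ is precisely the orbit of $\mathbf M$ under $GL(n,\mathbb C)$ acting by simultaneous conjugation. Its stabilizer is $Z(\mathbf M)\cap GL(n,\mathbb C)$, a Zariski-dense open subset of the linear space $Z(\mathbf M)$, hence of dimension $\dim Z(\mathbf M)$. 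The orbit--stabilizer theorem gives $\dim V_1=n^2-\dim Z(\mathbf M)=\codim Z(\mathbf M)$.

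Next, for $V_2$: write $C_j$ for the conjugacy class of $M_j$, so $\dim C_j=n^2-\dim Z(M_j)=\codim Z(M_j)$, and consider $\mu\colon C_0\times\cdots\times C_p\to GL(n,\mathbb C)$, $\mu(H_0,\dots,H_p)=H_p\cdots H_0$; then $V_2=\mu^{-1}(I_n)$. I would compute the differential $d\mu_{\mathbf M}\colon\bigoplus_j T_{M_j}C_j\to M(n,\mathbb C)$. A tangent vector to $C_j$ at $M_j$ has the form $X_jM_j-M_jX_j$, and
\[
 d\mu_{\mathbf M}(\dots,X_jM_j-M_jX_j,\dots)=\sum_{j=0}^p M_p\cdots M_{j+1}(X_jM_j-M_jX_j)M_{j-1}\cdots M_0 .
\]
Using $M_p\cdots M_0=I_n$ and the trace pairing $\langle A,B\rangle=\Trace(AB)$, a matrix $Z$ annihilates $\IM d\mu_{\mathbf M}$ if and only if $Z$ commutes with $M_0^{-1}\cdots M_{j-1}^{-1}M_jM_{j-1}\cdots M_0$ for each $j$; taking $j=0,1,2,\dots$ in turn forces, by induction, $Z\in Z(M_0)\cap\cdots\cap Z(M_p)=Z(\mathbf M)$. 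Hence $\operatorname{coker}d\mu_{\mathbf M}\cong Z(\mathbf M)$, so $\rank d\mu_{\mathbf M}=n^2-\dim Z(\mathbf M)=\codim Z(\mathbf M)$, and rank--nullity yields
\[
 \dim T_{\mathbf M}V_2=\sum_{j=0}^p\codim Z(M_j)-\codim Z(\mathbf M).
\]
When $\mathbf M$ is irreducible this is the genuine local dimension of $V_2$ at $\mathbf M$ (the same computation at any irreducible point of $V_2$ gives the same number, so $V_2$ is smooth of this dimension along its irreducible locus).

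For the remaining assertions, assume $\mathbf M$ irreducible. Then Schur's lemma gives $Z(\mathbf M)=\mathbb C I_n$, so $\codim Z(\mathbf M)=n^2-1$. For the inequality $\sum_j\codim Z(M_j)\ge 2n^2-2$ I would apply Scott's lemma (Lemma~\ref{lem:Scott}) not to $\mathbf M$ itself but to the tuple $(\mathrm{Ad}_{M_0},\dots,\mathrm{Ad}_{M_p})\in GL(n^2,\mathbb C)^{p+1}_1$ acting on $M(n,\mathbb C)$ by $\mathrm{Ad}_{M}(X)=MXM^{-1}$: one has $\ker(\mathrm{Ad}_{M_j}-1)=Z(M_j)$ and $\bigcap_j\ker(\mathrm{Ad}_{M_j}-1)=Z(\mathbf M)$, while the transpose of $\mathrm{Ad}_{M_j}$ for the trace form is $\mathrm{Ad}_{M_j^{-1}}$, with $\ker(\mathrm{Ad}_{M_j^{-1}}-1)=Z(M_j)$ as well; so Scott's lemma gives $\sum_j\codim Z(M_j)\ge 2\codim Z(\mathbf M)=2(n^2-1)$ (equivalently, $V_1\subseteq V_2$ already forces $\dim V_1\le\dim V_2$). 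For the rigidity equivalence: $V_1\subseteq V_2$ always; if $\sum_j\codim Z(M_j)=2n^2-2$ then $\dim V_2=n^2-1=\dim V_1$, so the orbit $V_1$ is a full-dimensional smooth subvariety through the smooth point $\mathbf M$ of $V_2$, hence open in $V_2$, and therefore equals $V_2$ (on the locus of irreducible tuples); conversely $V_1=V_2$ forces the two dimension formulas to agree, i.e. $\sum_j\codim Z(M_j)=2(n^2-1)$.

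I expect the main obstacle to be twofold: the bookkeeping in the cokernel computation for $d\mu_{\mathbf M}$ — getting the conjugating factors $M_0^{-1}\cdots M_{j-1}^{-1}M_jM_{j-1}\cdots M_0$ and the induction right — and the mild but real subtlety that $V_2$ is smooth of the stated dimension only along its irreducible locus, so that the identity $\dim V_2=\sum_j\codim Z(M_j)-\codim Z(\mathbf M)$ and the statement ``$V_1=V_2$'' are to be read with $\mathbf M$, and the points of $V_2$ in question, irreducible; this is exactly where one uses Katz's observation that a rigid irreducible tuple is determined by its local conjugacy classes.
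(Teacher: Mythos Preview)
Your computations of $\dim V_1$, $\dim V_2$, and the inequality are correct and close to the paper's. The paper likewise uses orbit--stabilizer for $V_1$; for $V_2$ it writes $V_2\simeq L/\prod_j Z(M_j)$ with $L=\{(g_j)\in GL(n,\mathbb C)^{p+1}:g_pM_pg_p^{-1}\cdots g_0M_0g_0^{-1}=I_n\}$ and computes $\dim L$ from the tangent map $(X_j)\mapsto\sum_j A_p\cdots A_{j+1}(A_j-1)X_j$ with $A_j=\Ad_{M_j}$, invoking the image computation from the proof of Scott's lemma to identify the cokernel with $Z(\mathbf M)$ --- essentially your $d\mu_{\mathbf M}$ computation in different coordinates. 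For the inequality the paper simply uses $V_1\subset V_2$.

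The genuine gap is in the rigidity direction. From $\dim V_1=\dim V_2$ you correctly get that $V_1$ is open in $V_2$, but open does not imply equal: nothing you have written rules out $V_2$ containing a second irreducible orbit (also open, disjoint from $V_1$) or containing reducible tuples. Your closing remark that ``this is exactly where one uses Katz's observation that a rigid irreducible tuple is determined by its local conjugacy classes'' is circular --- that observation \emph{is} the statement $V_1=V_2$ you are trying to prove.

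The paper closes the gap by a second, more pointed use of Scott's lemma. Fix \emph{any} $\mathbf H\in V_2$ and set $A_j\in\End\bigl(M(n,\mathbb C)\bigr)$ by $A_j(X)=M_jXH_j^{-1}$. Then $A_p\cdots A_0=\id$ and, writing $H_j=P_jM_jP_j^{-1}$, the map $X\mapsto XP_j$ identifies $\ker(A_j-1)$ with $Z(M_j)$, so $\codim\ker(A_j-1)=\codim Z(M_j)$. Scott's lemma applied to $(A_0,\dots,A_p)$, together with the hypothesis $\sum_j\codim Z(M_j)=2n^2-2$, yields
\[
 \dim\{X:M_jX=XH_j\ \forall j\}+\dim\{X:H_jX=XM_j\ \forall j\}\ \ge\ 2,
\]
so a nonzero intertwiner $X$ exists in one direction or the other. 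Then $\IM X$ (first case) or $\ker X$ (second case) is $\mathbf M$-invariant, so irreducibility of $\mathbf M$ alone forces $X$ invertible and $\mathbf H\in V_1$. This proves $V_1=V_2$ outright, with no connectedness hypothesis and no assumption that $\mathbf H$ itself is irreducible.
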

\begin{proof}
The group $GL(n,\mathbb C)$ transitively acts on $V_1$ as simultaneous 
conjugations and the isotropy group with respect to $\mathbf M$ equals 
$Z(\mathbf M)$ and hence $\dim V_1=\codim Z(\mathbf M)$.

The group $GL(n,\mathbb C)^{p+1}$ naturally acts on 
$GL(n,\mathbb C)^{p+1}$ by conjugations.
Putting $L=\{(g_j)\in GL(n,\mathbb C)^{p+1}\,;\,g_pM_pg_p^{-1}
\cdots g_0M_0g_0^{-1}=M_p\cdots M_0\}$, $V_2$ is identified with
$L/Z(M_0)\times\cdots\times Z(M_p)$.
Denoting $g_j=\exp(t X_j)$ with $X_j\in M(n,\mathbb C)$ and $t\in\mathbb R$
with $|t|\ll1$ and defining  $A_j\in\End\bigl(M(n,\mathbb C)\bigr)$ by 
$A_j X = M_jXM_j^{-1}$, we can prove that the dimension of $L$ equals the 
dimension of the kernel of the map
\[
 \gamma: M(n,\mathbb C)^{p+1}\ni
 (X_0,\dots,X_p) \mapsto
 \sum_{j=0}^p A_p\cdots A_{j+1}(A_j-1)X_j
\]
by looking at the tangent space of $L$ at the identity element because
\begin{align*}
 &\exp(t X_p)M_p\exp(-t X_p)\cdots \exp(tX_0)M_0(-t X_0) - M_p\cdots M_0\\
 &\quad= t\Bigl(\sum_{j=0}^p A_p\cdots A_{j+1}(A_j-1)X_j\Bigr) M_p\cdots M_0+ o(t).
\end{align*}
We have obtained in the proof of Lemma~\ref{lem:Scott} that 
$\codim\ker\gamma=\dim\IM\gamma=\dim\sum_{j=0}^p\IM(A_j-1)
=\codim\bigcap_{j=0}^p\ker({}^t\!A_j-1)$.
We will see that $\bigcap_{j=0}^p\ker({}^t\!A_j-1)$ is identified with 
$Z(\mathbf M)$ and hence
$\codim\ker\gamma=\codim Z(\mathbf M)$
and
\[\dim V_2=\dim \ker\gamma-\sum_{j=0}^p\dim Z(M_j)
=\sum_{j=0}^p\codim Z(M_j)-\codim Z(\mathbf M).
\]

In general, fix $\mathbf H\in V_1$ and define 
$A_j\in \End\bigl(M(n,\mathbb C)\bigr)$ by $X\mapsto M_jXH_j^{-1}$ 
for $j=0,\dots,p$.
Note that $A_pA_{p-1}\cdots A_0$ is the identity map.
If we identify $M(n,\mathbb C)$ with its dual by the inner product 
$\trace XY$ for $X$, $Y\in M(n,\mathbb C)$, ${}^t\!A_j$ are identified with the
map $Y\mapsto H_j^{-1}YM_j$, respectively.

Fix $P_j\in GL(n,\mathbb C)$ such that $H_j=P_jM_jP_j^{-1}$.
Then
\[
 \begin{split}
A_j(X)=X \Leftrightarrow&\ M_jXH_j^{-1}=X
 \Leftrightarrow M_jX=XP_jM_jP_j^{-1}
 \Leftrightarrow M_jXP_j=XP_jM_j,\\
 {}^t\!A_j(X)=X \Leftrightarrow&\ H_j^{-1}XM_j=X
 \Leftrightarrow XM_j=P_jM_jP_j^{-1}X
 \Leftrightarrow P_j^{-1}XM_j=M_jP_j^{-1}X
\end{split}
\]
and $\codim\ker(A_j-1)=\codim Z(M_j)$ and 
$\bigcap_{j=0}^p\ker({}^t\!A_j-1)\simeq Z(\mathbf M)$.

Suppose
$\mathbf M$ is irreducible.
Then $\codim Z(\mathbf M)=n^2-1$ and the inequality \eqref{eq:iridx} follows 
from $V_1\subset V_2$.
Moreover suppose $\sum_{j=0}^p\codim Z(M_i)=2n^2-2$. Then
Scott's lemma proves
\[
\begin{split}
  2n^2-2&=\sum_{j=0}^p\codim\ker(A_j-1)\\
  &\ge n^2- \dim \bigcap_{j=0}^p 
  \{X\in M(n,\mathbb C)\,;\,M_jX=XH_j\}\\
  &\quad+
  n^2- \dim \bigcap_{j=0}^p 
  \{X\in M(n,\mathbb C)\,;\,H_jX=XM_j\}.
 \end{split}
\]
Hence there exists a non-zero matrix $X$
such that $M_jX=XH_j$ ($j=0,\dots,p$) or
$H_jX=XM_j$ ($j=0,\dots,p$). 
If $M_jX=XH_j$ (resp.~$H_jX=XM_j$) for $j=0,\dots,p$, 
$\ker X$ (resp.~$\IM X$) is $M_j$-stable for $j=0,\dots,p$ and hence
$X\in GL(n,\mathbb C)$ because $\mathbf M$ is irreducible,
Thus we have $V_1=V_2$
and we get all the claims in the corollary.
\end{proof}
\section{Reducibility}
\subsection{Direct decompositions}\label{sec:reddirect}
For a realizable $(p+1)$-tuple $\mathbf m\in\mathcal P^{(n)}_{p+1}$,
Theorem~\ref{thm:univmodel} gives the universal 
Fuchsian differential operator $P_{\mathbf m}(\lambda_{j,\nu},g_i)$ 
with the Riemann scheme
\eqref{eq:GRS}.
Here $g_1,\dots,g_N$ are accessory parameters and $N=\Ridx\mathbf m$.

First suppose $\mathbf m$ is basic. 
Choose positive numbers $n'$, $n''$, $m'_{j,1}$ and $m''_{j,1}$ 
such that
\begin{equation}
 \begin{gathered}
  n= n'+n'',\quad 0<m'_{j,1}\le n',\quad 0<m''_{j,1}\le n'',\\
  m'_{0,1}+\cdots+m'_{p,1}\le (p-1)n',\quad
  m''_{0,1}+\cdots+m''_{p,1}\le (p-1)n''.
 \end{gathered}
\end{equation}
We choose other positive integers $m'_{j,\nu}$ and $m''_{j,\nu}$
so that $\mathbf m'=\bigl(m'_{j,\nu}\bigr)$ and 
$\mathbf m''=\bigl(m''_{j,\nu}\bigr)$ are monotone tuples of 
partitions of $n'$ and $n''$, respectively, and moreover
\begin{equation}\label{eq:tsum}
 \mathbf m=\mathbf m'+\mathbf m''.
\end{equation}
Theorem~\ref{thm:ExF} shows that $\mathbf m'$ and $\mathbf m''$ are realizable.
If $\{\lambda_{j,\nu}\}$ satisfies the Fuchs relation 
\begin{equation}\label{eq:FuchsProd}
 \sum_{j=0}^p\sum_{\nu=1}^{n_j} m'_{j,\nu}\lambda_{j,\nu}
 = n' - \frac{\idx\mathbf m'}2
\end{equation}
for the Riemann scheme $\bigl\{[\lambda_{j,\nu}]_{(m'_{j,\nu})}\bigr\}$,
Theorem~\ref{thm:prod} shows that the operators
\begin{equation}\label{eq:2prod}
  P_{\mathbf m''}(\lambda_{j,\nu}+m'_{j,\nu}-\delta_{j,0}(p-1)n',g''_i)
  \cdot
  P_{\mathbf m'}(\lambda_{j,\nu},g'_i)
\end{equation}
has the Riemann scheme $\{[\lambda_{j,\nu}]_{(m_{j,\nu})}\}$.
This shows that the equation 
$P_{\mathbf m}(\lambda_{j,\nu},g_i)u=0$ is not irreducible when 
the parameters take the values corresponding to \eqref{eq:2prod}.

In this subsection, we study the condition
\begin{equation}\label{eq:Rsum}
 \Ridx\mathbf m=\Ridx\mathbf m'+\Ridx\mathbf m''
\end{equation}
for realizable tuples $\mathbf m'$ and $\mathbf m''$ with
$\mathbf m=\mathbf m'+\mathbf m''$.
Under this condition the Fuchs relation \eqref{eq:FuchsProd} assures that 
the universal operator is reducible for any values of accessory parameters.
\begin{defn}[direct decomposition]
\index{direct decomposition}
If realizable tuples $\mathbf m$, $\mathbf m'$ and $\mathbf m''$
satisfy \eqref{eq:tsum} and \eqref{eq:Rsum}, we define that $\mathbf m$
is the \textsl{direct sum} of $\mathbf m'$ and $\mathbf m''$ and call 
$\mathbf m=\mathbf m'+\mathbf m''$ a \textsl{direct decomposition}
of $\mathbf m$ and express it as follows.
\index{tuple of partitions!direct decomposition}
\begin{equation}\label{eq:dsum}
 \mathbf m=\mathbf m'\oplus\mathbf m''.
\end{equation}
\end{defn}
\begin{thm}\label{thm:ddecmp}
Let \eqref{eq:dsum} be a direct decomposition
of a realizable tuple $\mathbf m$.

{\rm i) }
%
Suppose $\mathbf m$ is irreducibly realizable and\/ $\idx \mathbf m''>0$. 
Put\/ ${\overline{\mathbf m}'}=\gcd(\mathbf m')^{-1}\mathbf m'$.
If\/ $\mathbf m'$ is indivisible or\/ $\idx\mathbf m\le 0$, then
\begin{align}\label{eq:mref}
 \alpha_{\mathbf m}&=\alpha_{\mathbf m'}-2
         \frac{(\alpha_{{\overline{\mathbf m}}''}|\alpha_{\mathbf m'})}
      {(\alpha_{{\overline{\mathbf m}''}}|\alpha_{{\overline{\mathbf m}}''})}
       \alpha_{{\overline{\mathbf m}}''}
\end{align}
or $\mathbf m=\mathbf m'\oplus\mathbf m''$ is isomorphic to one of
the decompositions
\begin{equation}
 \begin{aligned}
  32,32,32,221&=22,22,22,220\oplus10,10,10,10,001\\
  322,322,2221&=222,222,2220\oplus100,100,0001\\
  54,3222,22221&=44,2222,22220\oplus10,1000,00001\\
  76,544,2222221&=66,444,2222220\oplus10,100,0000001\\
 \end{aligned}
\end{equation}
under the action of $\widetilde W_{\!\infty}$.

{\rm ii) }
Suppose $\idx\mathbf m \le 0$ and $\idx\mathbf m'\le 0$ and 
$\idx\mathbf m''\le 0$.
Then $\mathbf m=\mathbf m'\oplus\mathbf m''$ or
$\mathbf m=\mathbf m''\oplus\mathbf m'$ is 
transformed into one of the decompositions
\begin{equation}
\begin{aligned}
  \Sigma=11,11,11,11\ \ \ &111,111,111\ \ \ 22,1^4,1^4\ \ \ 33,222,1^6\\
m\Sigma&=k\Sigma\oplus \ell\Sigma\\ 
mm,mm,mm,m(m-1)1&=
  kk,kk,kk,k(k-1)1\oplus\ell\ell,\ell\ell,\ell\ell,\ell\ell0\\
mmm,mmm,mm(m-1)1&=kkk,kkk,kkk,kk(k-1)1
 \oplus\ell\ell\ell,\ell\ell\ell,\ell\ell\ell0\\
(2m)^2,m^4,mmm(m-1)1&=(2k)^2,k^4,k^4,kkk(k-1)1
 \oplus(2\ell)^2,\ell^4,\ell^40\\
 (3m)^2,(2m)^3,m^5(m-1)1&=(3k)^2,(2k)^3,k^5(k-1)1
 \oplus (3\ell)^2,(2\ell)^3,\ell^60
\end{aligned}
\end{equation}
under the action of\/ $\widetilde W_{\!\infty}$.
Here $m$, $k$ and $\ell$ are positive integers satisfying
$m=k+\ell$.
\index{00D4z@$D_4^{(m)},\ E_6^{(m)},\ E_7^{(m)},\ E_8^{(m)}$}
These are expressed by
\begin{equation}\begin{aligned}
  m\tilde D_4&=k\tilde D_4\oplus\ell\tilde D_4,&
  m\tilde E_j&=k\tilde E_j\oplus\ell\tilde E_j\quad(j=6,7,8),\\
  D_4^{(m)}&=D_4^{(k)}\oplus\ell\tilde D_4,&
  E_j^{(m)}&=E_j^{(k)}\oplus\ell\tilde E_j\quad(j=6,7,8).
\end{aligned}\end{equation}
\end{thm}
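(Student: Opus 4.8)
\textbf{Proof plan for Theorem~\ref{thm:ddecmp}.}
The plan is to work entirely on the Kac--Moody side, using the dictionary of \S\ref{sec:KM} together with Proposition~\ref{prop:Kac}, which translates $\mathbf m=\mathbf m'+\mathbf m''$ into $\alpha_{\mathbf m}=\alpha_{\mathbf m'}+\alpha_{\mathbf m''}$ and $\idx(\mathbf m,\mathbf m')=(\alpha_{\mathbf m}|\alpha_{\mathbf m'})$. The first step is to unwind the direct-decomposition condition \eqref{eq:Rsum}: using $\Ridx=\Pidx$ on the negative-index pieces, $\Ridx=\gcd$ on the index-zero pieces, $\Ridx=0$ on the positive-index pieces, and the identity $\Pidx(\mathbf m'+\mathbf m'')=\Pidx\mathbf m'+\Pidx\mathbf m''-\idx(\mathbf m',\mathbf m'')-1$ from Definition~\ref{def:tuples}, I would convert \eqref{eq:Rsum} into an explicit value of $\idx(\mathbf m',\mathbf m'')$ (for instance $\idx(\mathbf m',\mathbf m'')=-1$ in the generic negative-index case), and hence into an explicit value of the pairing $(\alpha_{\mathbf m'}|\alpha_{\mathbf m''})$.

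For part~(i): here $\idx\mathbf m''>0$, so $\mathbf m''$ is rigid up to a scalar; write $\mathbf m''=\gcd(\mathbf m'')\overline{\mathbf m}''$ with $(\alpha_{\overline{\mathbf m}''}|\alpha_{\overline{\mathbf m}''})=2$. The direct-sum condition forces $(\alpha_{\mathbf m'}|\alpha_{\overline{\mathbf m}''})$ to take a specific value, and one checks that $s_{\alpha_{\overline{\mathbf m}''}}\alpha_{\mathbf m'}=\alpha_{\mathbf m}$ — this is exactly \eqref{eq:mref}, the analogue of the displayed identity in the Corollary after Proposition~\ref{prop:wm}. The argument then splits: if $s_{\alpha_{\overline{\mathbf m}''}}$ maps the positive root $\alpha_{\mathbf m'}$ to a positive root we get \eqref{eq:mref} directly; otherwise $\alpha_{\mathbf m'}$ and $\alpha_{\mathbf m}$ lie on opposite sides of the hyperplane, which can only happen in tightly constrained configurations — these are the four exceptional decompositions, and I would identify them by the same finiteness/classification bookkeeping used in the proof of Proposition~\ref{prop:bas2} (bounding orders via $\idx$, listing small cases), up to the $\widetilde W_{\!\infty}$-action. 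Part~(ii) is the case $\idx\mathbf m\le 0$, $\idx\mathbf m'\le 0$, $\idx\mathbf m''\le 0$: now both $\alpha_{\mathbf m'}$ and $\alpha_{\mathbf m''}$ are imaginary or zero-norm, with $(\alpha_{\mathbf m'}|\alpha_{\mathbf m''})$ pinned down by \eqref{eq:Rsum}. The strategy is to apply $\widetilde W_{\!\infty}$ to move the pair into the closed negative Weyl chamber; by Remark~\ref{rem:Kac} and Corollary~\ref{cor:idx0} the only monotone tuples in that chamber with index $0$ are $\tilde D_4,\tilde E_6,\tilde E_7,\tilde E_8$, and those with small negative index are the special tuples $D_4^{(m)},E_6^{(m)},E_7^{(m)},E_8^{(m)}$ of Example~\ref{ex:special}. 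One then reads off that a direct decomposition must have the stated shape $m\Sigma=k\Sigma\oplus\ell\Sigma$ or $E_j^{(m)}=E_j^{(k)}\oplus\ell\tilde E_j$, checking in each case that the index-additivity \eqref{eq:Rsum} indeed holds (a short computation with $\idx k\Sigma=k^2\idx\Sigma$, $\idx E_j^{(k)}=2-2k$).

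The bilinear-form manipulations and the $W_{\!\infty}$-reduction to the Weyl chamber are routine given the machinery already set up; the genuine obstacle is the combinatorial classification in the ``opposite sides of the hyperplane'' branch of part~(i) and in pinning down exactly which scalar multiples can occur in part~(ii). This is where one must argue that no decompositions other than the listed ones survive the constraints, and doing so cleanly requires the inequalities of Proposition~\ref{prop:Bineq} (to bound $p$ and $\ord\mathbf m$ for fundamental pieces) together with a finite case check — essentially replaying the method of Proposition~\ref{prop:bas2}. I would organize that check by first reducing to the case where $\mathbf m'$ and $\mathbf m''$ are fundamental (applying $\p_{\max}$-type reductions compatibly to both summands, using Remark~\ref{rem:midisom} and the $\widetilde W_{\!\infty}$-equivariance), so that only the basic tuples of index $0$ and a few of index $-2$ through the relevant range need to be inspected.
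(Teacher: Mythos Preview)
Your overall plan—translate to pairings in the Kac–Moody lattice via Proposition~\ref{prop:Kac} and then unwind \eqref{eq:Rsum} case by case according to the signs of $\idx\mathbf m$, $\idx\mathbf m'$, $\idx\mathbf m''$—is exactly the paper's route, and the identity $(\alpha_{\mathbf m}|\alpha_{\mathbf m})=(\alpha_{\mathbf m'}|\alpha_{\mathbf m'})+2(\alpha_{\mathbf m'}|\alpha_{\mathbf m''})+(\alpha_{\mathbf m''}|\alpha_{\mathbf m''})$ is the right engine. But two diagnoses are off.

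For part~(i), the dichotomy is not ``does $s_{\alpha_{\overline{\mathbf m}''}}$ send $\alpha_{\mathbf m'}$ to a positive root''. The formula \eqref{eq:mref} is the algebraic identity $(\alpha_{\mathbf m'}|\alpha_{\overline{\mathbf m}''})=-\ell$ (with $\ell=\gcd\mathbf m''$); when it holds the image \emph{is} $\alpha_{\mathbf m}$, automatically positive. The issue is whether the direct-sum arithmetic forces this identity. It does whenever $\idx\mathbf m'\ne 0$: then $\Ridx\mathbf m'=\Pidx\mathbf m'$, so $\Ridx\mathbf m=\Ridx\mathbf m'$ gives $\idx\mathbf m=\idx\mathbf m'$, and the bilinear identity yields $(\alpha_{\mathbf m'}|\alpha_{\overline{\mathbf m}''})=-\ell$. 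The exceptional list arises from the remaining case $\idx\mathbf m'=0$ with $k=\gcd\mathbf m'\ge 2$: there $\Ridx\mathbf m'=k$ rather than $\Pidx\mathbf m'$, and the arithmetic instead gives $-(\alpha_{\overline{\mathbf m}'}|\alpha_{\overline{\mathbf m}''})=(k-1+\ell^2)/(k\ell)$, forcing $k=2$, $\ell=1$, $(\alpha_{\overline{\mathbf m}'}|\alpha_{\overline{\mathbf m}''})=-1$. Putting $\overline{\mathbf m}'$ into one of the four affine shapes and solving for the rigid $\mathbf m''$ then yields exactly the four listed decompositions.

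For part~(ii) and the classification step, you cannot ``move the pair into the closed negative Weyl chamber'' or make $\mathbf m'$ and $\mathbf m''$ simultaneously fundamental: a single $w\in\widetilde W_{\!\infty}$ puts at most one of $\alpha_{\mathbf m'},\alpha_{\mathbf m''}$ into the chamber, and $\p_{\max}$-reductions of the summands are not compatible with $\p_{\max}$ of $\mathbf m$. The paper's substitute is Lemma~\ref{lem:sumlem}: make \emph{one} of the pieces basic monotone, and use that for basic monotone $\alpha,\alpha'$ the pairing satisfies $(\alpha|w\alpha')\le(\alpha|\alpha')$ for all $w\in W_{\!\infty}$ (because $w\alpha'-\alpha'$ is a nonnegative sum of simple roots, each pairing nonpositively with $\alpha$). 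The case analysis in~(ii) then pins $(\alpha_{\overline{\mathbf m}'}|\alpha_{\overline{\mathbf m}''})$ down to $0$ or $-1$, and the lemma classifies exactly those pairs, producing the $m\Sigma=k\Sigma\oplus\ell\Sigma$ and $E_j^{(m)}=E_j^{(k)}\oplus\ell\tilde E_j$ shapes directly. A Proposition~\ref{prop:bas2}-style finite check is neither needed nor sufficient without this maximality inequality.
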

\begin{proof}
Put $\mathbf m'=k{\overline{\mathbf m}}'$ and $\mathbf m''
 =\ell{\overline{\mathbf m}}''$ with indivisible 
${\overline{\mathbf m}}'$ and ${\overline{\mathbf m}}''$.
First note that
\begin{equation}\label{eq:squaresum}
(\alpha_{\mathbf m}|\alpha_{\mathbf m})=
 (\alpha_{\mathbf m'}|\alpha_{\mathbf m'})
 +2(\alpha_{\mathbf m'}|\alpha_{\mathbf m''})
 +(\alpha_{\mathbf m''}|\alpha_{\mathbf m''}).
\end{equation} 

{\rm ii) } Using Lemma~\ref{lem:sumlem}, we will prove the theorem.
If $\idx\mathbf m=0$, then 
\eqref{eq:squaresum} and \eqref{eq:imneg} show
$0=(\alpha_{\mathbf m'}|\alpha_{\mathbf m''})=k\ell
(\alpha_{{\overline{\mathbf m}}'}|\alpha_{{\overline{\mathbf m}}''})$,
Lemma~\ref{lem:sumlem} proves 
$\idx\mathbf m'=0$ and ${\overline{\mathbf m}}'={\overline{\mathbf m}}''$ and 
we have the theorem.

Suppose $\idx\mathbf m<0$.

If $\idx\mathbf m'<0$ and $\idx\mathbf m''<0$, we have 
$\Pidx\mathbf m=\Pidx\mathbf m'+\Pidx\mathbf m''$, 
which implies 
$(\alpha_{\mathbf m'}|\alpha_{\mathbf m''})=-1$ and contradicts
to Lemma~\ref{lem:sumlem}.

Hence we may assume $\idx\mathbf m''=0$.

\underline{Case: $\idx\mathbf m'<0$}.
It follows from  \eqref{eq:squaresum} that
$2-2\Ridx\mathbf m=2-2\Ridx\mathbf m'+2\ell(\mathbf m,{\overline{\mathbf m}})$.
Since $\Ridx\mathbf m=\Ridx\mathbf m'+\ell$, 
we have $(\alpha_{\mathbf m}|\alpha_{\overline{\mathbf m}'})=-1$ 
and the theorem follows from Lemma~\ref{lem:sumlem}.

\underline{Case: $\idx\mathbf m'=0$}.
It follows from \eqref{eq:squaresum} that $2-2\Ridx\mathbf m=2k\ell
(\alpha_{{\overline{\mathbf m}}'}|\alpha_{{\overline{\mathbf m}}''})$.
Since the condition $\Ridx\mathbf m=k+\ell$ shows
$(\alpha_{{\overline{\mathbf m}}'}|\alpha_{{\overline{\mathbf m}}''})
=\frac1{k\ell}-\frac1k-\frac1\ell$ and we have 
$(\alpha_{{\overline{\mathbf m}}'}|\alpha_{{\overline{\mathbf m}}''})=-1$.
Hence the theorem also follows from Lemma~\ref{lem:sumlem}.

{\rm i) } 
First suppose $\idx\mathbf m'\ne0$.
Note that $\mathbf m$ and $\mathbf m'$ are rigid 
if $\idx\mathbf m'>0$.
We have $\idx\mathbf m=\idx\mathbf m'$ and
$\idx\mathbf m
 =(\alpha_{\mathbf m'}+\ell\alpha_{{\overline{\mathbf m}}''}|
   \alpha_{\mathbf m'}+\ell\alpha_{{\overline{\mathbf m}}''})
 = \idx\mathbf m'+2\ell(\alpha_{\mathbf m}|\alpha_{{\overline{\mathbf  m}}''})
  +2\ell^2$, 
which implies \eqref{eq:mref}.

Thus we may assume $\idx\mathbf m<0$ and $\idx\mathbf m'=0$.
If $k=1$, $\idx\mathbf m=\idx\mathbf m'=0$ and we have \eqref{eq:mref}
as above.
Hence we may moreover assume $k\ge 2$.
Then \eqref{eq:squaresum} and the assumption imply
$2-2k=2k\ell(\alpha_{{\overline{\mathbf m}}'}|
 \alpha_{{\overline{\mathbf m}}''})+2\ell^2$, which means
\[
 -(\alpha_{{\overline{\mathbf m}}'}|
  \alpha_{{\overline{\mathbf m}}''})=\frac{k-1+\ell^2}{k\ell}.
\]
Here $k$ and $\ell$ are mutually prime and hence there exists
a positive integer $m$ with $k=m\ell+1$ and
\[
   -(\alpha_{{\overline{\mathbf m}}'}|
  \alpha_{{\overline{\mathbf m}}''}) = \frac{m+\ell}{m\ell+1}
  = \frac{1}{\ell+\frac1m}+\frac{1}{m+\frac1\ell}<2.
\] 
Thus we have $m=\ell=1$, $k=2$ and 
$(\alpha_{{\overline{\mathbf m}}'}|
  \alpha_{{\overline{\mathbf m}}''})=-1$.
By the transformation of an element of $\widetilde W_{\!\infty}$,
we may assume $\overline{\mathbf m}'\in\mathcal P_{p+1}$ 
is a tuple in \eqref{eq:aftmp}.
Since
$(\alpha_{\overline{\mathbf m}'}|\alpha_{\overline{\mathbf m}''})=-1$ and
$\alpha_{\overline{\mathbf m}''}$ is a positive real root, we have 
the theorem by a similar argument as in the proof of Lemma~\ref{lem:sumlem}.
Namely, $m'_{p,n'_p}=2$ and $m'_{p,n'_p+1}=0$ and we may assume
$m''_{j,n'_j+1}=0$ for $j=0,\dots,p-1$ and 
$m''_{p,n'_p+1}+m''_{p,n'_p+2}+\cdots=1$,
which proves the theorem in view of $\alpha_{\mathbf m''}\in\Delta^{re}_+$.
\end{proof}
\begin{lem}\label{lem:sumlem}
Suppose $\mathbf m$ and $\mathbf m'$ are realizable and 
$\idx\mathbf m\le 0$ and $\idx\mathbf m'\le0$.
Then
\begin{align}\label{eq:imneg}
 (\alpha_{\mathbf m}|\alpha_{\mathbf m'})\le0.
\end{align}

If\/ $\mathbf m$ and\/ $\mathbf m'$ are basic and monotone,
\begin{equation}\label{eq:maxidx}
  (\alpha_{\mathbf m}|w\alpha_{\mathbf m'})\le 
   (\alpha_{\mathbf m}|\alpha_{\mathbf m'})
  \qquad(\forall w\in W_{\!\infty}).
\end{equation}

If\/ $(\alpha_{\mathbf m}|\alpha_{\mathbf m'})=0$ and 
$\mathbf m$ and $\mathbf m'$ are indivisible,
then\/ $\idx\mathbf m=0$ and $\mathbf m=\mathbf m'$.

If\/ $(\alpha_{\mathbf m}|\alpha_{\mathbf m'})=-1$, then the pair
is isomorphic to one of the pairs 
\begin{equation}\label{eq:idx-1}
\begin{aligned}
 (D_4^{(k)},\tilde D_4):&\ \bigl((kk,kk,kk,k(k-1)1),&&(11,11,11,110)\bigr)\\
 (E_6^{(k)},\tilde E_6):&\ \bigl((kkk,kkk,k(k-1)1), &&(111,111,1110)\bigr)\\
 (E_7^{(k)},\tilde E_7):&\ \bigl(((2k)^2,kkkk,kkk(k-1)1), &&(22,1111,11110)\bigr)\\
 (E_8^{(k)},\tilde E_8):&\ \bigl(((3k)^2,(2k)^3,kkkkk(k-1)1),&&(33,222,1111110)\bigr)
\end{aligned}
\end{equation}
under the action of\/ $\widetilde W_{\!\infty}$.
\end{lem}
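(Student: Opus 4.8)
The plan is to establish the four assertions of Lemma~\ref{lem:sumlem} in sequence, exploiting the dictionary between tuples of partitions and the Kac-Moody root system set up in \S\ref{sec:KM}, in particular Proposition~\ref{prop:Kac}, which identifies $\idx(\mathbf m,\mathbf m')$ with $(\alpha_{\mathbf m}|\alpha_{\mathbf m'})$, and the facts collected in Remark~\ref{rem:Kac} about real and imaginary roots. Throughout I write $\alpha=\alpha_{\mathbf m}$, $\beta=\alpha_{\mathbf m'}$, so $(\alpha|\alpha)=\idx\mathbf m\le 0$ and $(\beta|\beta)=\idx\mathbf m'\le 0$; since both tuples are realizable, Theorem~\ref{thm:realizable} and the table in \S\ref{sec:KM} give $\alpha,\beta\in\overline\Delta_+$ with $\alpha_0\in\supp\alpha\cap\supp\beta$.

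For \eqref{eq:imneg}: first reduce to the indivisible case, since $(\alpha|\beta)$ scales by positive factors under $\mathbf m\mapsto k\mathbf m$. Both $\alpha$ and $\beta$ are then positive roots (real of square $0$ only if trivial, otherwise imaginary) whose supports contain $\alpha_0$. I would argue that two positive imaginary roots with intersecting connected supports containing $\alpha_0$ have $(\alpha|\beta)\le 0$: write $\alpha=\sum n_i\alpha_i$, $\beta=\sum n'_i\alpha_i$ and use that $(\alpha|\beta)=\sum_i n_i(\alpha_i|\beta)$ where $(\alpha_i|\beta)\le 0$ for all $i$ when $\beta$ is in the closure of the negative Weyl chamber (the case $\mathbf m'$ monotone fundamental), then transport by $W_\infty$ using the $W_\infty$-invariance of $(\ |\ )$ and the fact that a Weyl group element carrying $\beta$ out of the fundamental domain can be absorbed, together with the corresponding statement for $\alpha$. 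The case where one of $\idx\mathbf m,\idx\mathbf m'$ equals $0$ needs the sharper analysis via affine diagrams $\tilde D_4,\tilde E_6,\tilde E_7,\tilde E_8$, as in Remark~\ref{rem:Kac} and Corollary~\ref{cor:idx0}.

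For \eqref{eq:maxidx}, with $\mathbf m,\mathbf m'$ basic and monotone, I would use that $\beta=\alpha_{\mathbf m'}$ lies in $B\cup\{\alpha_0\}$, i.e.\ $(\beta|\gamma)\le 0$ for all $\gamma\in\Pi$, hence $\beta$ is in the closed negative Weyl chamber; then for any $w\in W_\infty$ the standard convexity/exchange argument (via Lemma~\ref{lem:minrep}, writing a minimal expression of $w$ and tracking $h$) shows $(\alpha|w\beta)\le(\alpha|\beta)$, because each simple reflection applied on the way either leaves the pairing unchanged or decreases it given $(\alpha|\alpha_i)\le 0$ and the sign of the relevant coefficient. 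The statement $(\alpha|\beta)=0$ with both indivisible forces $\idx\mathbf m=0$ and $\mathbf m=\mathbf m'$: by \eqref{eq:maxidx} and its mirror we may assume both are monotone fundamental; then $(\alpha|\beta)=0$ together with $(\alpha|\alpha)\le 0$, $(\beta|\beta)\le 0$ and the structure of $B$ (connected support containing $\alpha_0$, pairing $\le 0$ with every simple root) pins $\alpha$ and $\beta$ to proportional indivisible vectors, hence equal, and self-pairing $0$ forces the affine type, where $\idx=0$.

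The main obstacle, and the step I expect to consume most of the work, is the classification \eqref{eq:idx-1} of pairs with $(\alpha|\beta)=-1$. The idea is: $(\alpha|\beta)=-1$ together with $(\alpha|\alpha)\le 0$ forces $s_\beta$ to make sense only when $\beta$ is real, but here $\beta$ may be imaginary; instead I would use \eqref{eq:maxidx} to reduce to $\mathbf m'$ basic monotone, note $\Pidx\mathbf m+\Pidx\mathbf m'=\Pidx(\mathbf m+\mathbf m')$ exactly when $(\alpha|\beta)=-1$ (entry in the table), so $\mathbf m+\mathbf m'$ is again forced into a constrained shape; then run the reduction algorithm $\p_{max}$ on $\mathbf m'$ and use Proposition~\ref{prop:Bineq} to bound $\ord\mathbf m'$ and $p$ in terms of $\idx\mathbf m'$. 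Since $(\beta|\beta)\ge 2(\alpha|\beta)=-2$ by Cauchy--Schwarz-type reasoning adapted to this signature (or directly: $(\alpha-\beta|\alpha-\beta)=(\alpha|\alpha)-2(\alpha|\beta)+(\beta|\beta)\ge 0$ fails in general, so one instead bounds via the finite list of basic tuples with $\idx\in\{0,-2\}$ from Remark~\ref{rem:bas0} and Proposition~\ref{prop:bas2}), one gets $\idx\mathbf m'\in\{0,-2\}$, hence $\mathbf m'\in S_\infty$-orbit of one of $\tilde D_4,\tilde E_6,\tilde E_7,\tilde E_8$ or one of the thirteen tuples of Proposition~\ref{prop:bas2}. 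Then a finite case check, using that $\alpha_{\mathbf m}\in\overline\Delta_+$ and $(\alpha_{\mathbf m}|\alpha_{\mathbf m'})=-1$, eliminates all but the affine cases and produces the family $D_4^{(k)},E_6^{(k)},E_7^{(k)},E_8^{(k)}$ of Example~\ref{ex:special}, matching \eqref{eq:idx-1}; the bookkeeping of which monotone representatives arise and the verification that no further $\widetilde W_\infty$-inequivalent pairs occur is where care is needed.
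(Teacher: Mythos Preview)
Your outline for \eqref{eq:imneg}, \eqref{eq:maxidx}, and the case $(\alpha_{\mathbf m}|\alpha_{\mathbf m'})=0$ is essentially the paper's argument: after a Weyl group element one may assume $\alpha:=\alpha_{\mathbf m}$ is basic monotone, so $(\alpha|\gamma)\le 0$ for every $\gamma\in\Pi$, and then the expansion $(\alpha|\beta)=\sum_i n'_i(\alpha|\alpha_i)$ with $\beta=\alpha_{\mathbf m'}=\sum n'_i\alpha_i\in Q_+$ is a sum of non-positive terms. The inequality \eqref{eq:maxidx} is just $w\beta-\beta\in Q_+$ for $\beta$ in the closed anti-dominant chamber, paired with $\alpha$; your induction on a reduced word is the same fact unfolded. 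For the $=0$ case the paper likewise observes that every term vanishing forces $\supp\beta\subset\Pi_0:=\{\gamma\in\Pi:(\alpha|\gamma)=0\}$, whose Dynkin diagram is classical or affine (Remark~\ref{rem:classinbas}), so $\idx\mathbf m'\le 0$ forces the affine case and hence $\idx\mathbf m=0$, after which the four types in \eqref{eq:basic0} finish it.

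The genuine gap is in your treatment of $(\alpha|\beta)=-1$. Your proposed bound $\idx\mathbf m'\in\{0,-2\}$ has no justification: you correctly note that $(\alpha-\beta|\alpha-\beta)\ge 0$ fails for an indefinite form, but then the appeal to the finite lists of Remark~\ref{rem:bas0} and Proposition~\ref{prop:bas2} is circular, since those lists are indexed by a value of $\idx$ you have not yet bounded. Even granting $\idx\mathbf m'=-2$, the ``finite case check'' is not finite on the $\mathbf m$ side. What the paper actually does is reuse the term-by-term expansion above: with $\alpha$ basic monotone and $(\alpha|\beta)=\sum_i n'_i(\alpha|\alpha_i)=-1$ a sum of non-positive integers, exactly one term equals $-1$ and the rest vanish. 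Hence there is a single simple root $\alpha_{j_0,k_{j_0}}$ with $n'_{j_0,k_{j_0}}=1$, $(\alpha|\alpha_{j_0,k_{j_0}})=-1$, and $\supp\beta\subset\Pi_0\cup\{\alpha_{j_0,k_{j_0}}\}$. In the case $\idx\mathbf m<0$ the Dynkin diagram of $\Pi_0\cup\{\alpha_{j_0,k_{j_0}}\}$ is classical or affine, so $\idx\mathbf m'\le 0$ forces $\idx\mathbf m'=0$ directly (the thirteen index-$(-2)$ tuples never enter), and $\mathbf m'$ is one of the four affine tuples \eqref{eq:aftmp}; the constraints $(\alpha|\alpha_{j_0,k_{j_0}})=-1$ and $\supp\alpha\cap(\Pi\setminus\Pi_0)=\{\alpha_{j_0,k_{j_0}},\ldots\}$ then force $\mathbf m$ into the families $D_4^{(k)},E_6^{(k)},E_7^{(k)},E_8^{(k)}$. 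The remaining case $\idx\mathbf m=\idx\mathbf m'=0$ is handled separately: by \eqref{eq:maxidx} one checks that two \emph{distinct} basic monotone affine roots pair to at most $-2$, so $\mathbf m'$ must be a non-trivial $W_\infty$-translate of $\mathbf m$, and a short support argument (one extra simple root attached at the end of one branch) yields the $k=1$ entries of \eqref{eq:idx-1}. Replace your Cauchy--Schwarz/finite-list paragraph with this support analysis and the proof goes through.
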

\begin{proof}
We may assume that $\mathbf m$ and $\mathbf m'$ are indivisible.
Under the transformation of the Weyl group, we may assume that
$\mathbf m$ is a basic monotone tuple in $\mathcal P_{p+1}$, 
namely, $(\alpha_{\mathbf m}|\alpha_0)\le 0$ and 
$(\alpha_{\mathbf m}|\alpha_{j,\nu})\le0$.

If $\mathbf m'$ is basic and monotone, 
$w\alpha_{\mathbf m'}-\alpha_{\mathbf m'}$ 
is a sum of positive real roots, which proves \eqref{eq:maxidx}.

Put $\alpha_{\mathbf m}=n\alpha_0+\sum n_{j,\nu}\alpha_{j,\nu}$
and $\mathbf m'=n'_0\alpha_0+\sum n'_{j,\nu}\alpha_{j,\nu}$.
Then
\begin{equation}
\begin{split}
(\alpha_{\mathbf m}|\alpha_{\mathbf m'})&=
 n'_0(\alpha_{\mathbf m}|\alpha_0)+
 \sum n'_{j,\nu}(\alpha_{\mathbf m}|\alpha_{j,\nu}),\\
(\alpha_{\mathbf m}|\alpha)&\le 0\quad(\forall\alpha\in\supp\alpha_{\mathbf m}).
\end{split}
\end{equation}
Let 
$k_j$ be the maximal positive integer satisfying $m_{j,k_j}=m_{j,1}$
and put $\Pi_0=\{\alpha_0,\alpha_{j,\nu}\,;\,1\le\nu<k_j,\ j=0,\dots,p\}$.
Note that $\Pi_0$ defines a classical root system if $\idx\mathbf m<0$
(cf.~Remark~\ref{rem:classinbas}).

Suppose $(\alpha_{\mathbf m}|\alpha_{\mathbf m'})=0$ 
and $\mathbf m\in\mathcal P_{p+1}$.
Then $m_{0,1}+\cdots+m_{p,1}=(p-1)\ord\mathbf m$
and $\supp\alpha_{\mathbf m'}\subset\Pi_0$
because $(\alpha_{\mathbf m}|\alpha)=0$ for $\alpha\in\supp\alpha_{\mathbf m'}$.
Hence it follows from $\idx\mathbf m'\le0$ that $\idx\mathbf m=0$ 
and we may assume that $\mathbf m$ is one of the tuples \eqref{eq:aftmp}. 
Since $\supp\alpha_{\mathbf m'}\subset\supp\alpha_{\mathbf m}$ and
$\idx\mathbf m'\le 0$, we conclude that $\mathbf m'=\mathbf m$.

Lastly suppose $(\alpha_{\mathbf m}|\alpha_{\mathbf m'})=-1$.

\underline{Case: $\idx\mathbf m=\idx\mathbf m'=0$}.
If $\mathbf m'$ is basic and monotone and $\mathbf m'\ne\mathbf m$,
then it is easy to see that $(\alpha_{\mathbf m}|\alpha_{\mathbf m'})<-1$ 
(cf.~Remark~\ref{rem:Kac}).  
Hence \eqref{eq:maxidx} assures $\mathbf m'=w\mathbf m$ 
with a certain $w\in W_{\!\infty}$ and
therefore $\supp\mathbf m\subsetneq\supp\mathbf m'$.
Moreover there exists $j_0$ and $L\ge k_{j_0}$ such that
$\supp m'=\supp m\cup\{\alpha_{j_0,k_{j_0}},\alpha_{j_0,k_j+1},\dots,
\alpha_{j_0,L}\}$ and $m_{j_0,k_{j_0}}=1$ and $m'_{j_0,k_{j_0}+1}=1$.
Then by a transformation of an element of the Weyl group, we may assume 
$L=k_{j_0}$ and $\mathbf m'=r_{i_N}\cdots r_{i_1}
r_{(j_0,k_{j_0})}\mathbf m$ with suitable $i_\nu$ satisfying
$\alpha_{i_\nu}\in\supp\mathbf m$ for $\nu=1,\dots,N$.
Applying $r_{i_1}\cdots r_{i_N}$ to the pair 
$(\mathbf m,\mathbf m')$, we may assume $\mathbf m'=r_{(j_0,k_{j_0})}\mathbf m$.
Hence the pair $(\mathbf m,\mathbf m')$ is isomorphic to one of
the pairs in the list \eqref{eq:idx-1} with $k=1$.

\underline{Case: $\idx\mathbf m<0$ and $\idx\mathbf m'\le 0$}.
There exists $j_0$ such that 
$\supp\alpha_{\mathbf m'}\ni\alpha_{j_0,k_j}$.
Then the fact $\idx(\mathbf m,\mathbf m')=-1$ implies 
$n'_{j_0,k_0}=1$ and $n'_{j,k_j}=0$ for $j\ne j_0$. 
Let $L$ be the maximal positive integer with $n'_{j_0,L}\ne0$.
Since $(\alpha_{\mathbf m}|\alpha_{j_0,\nu})=0$ for $k_0+1\le \nu\le L$,
we may assume $L=k_0$ by the transformation 
$r_{(j_0,k_0+1)}\circ \cdots\circ r_{(j_0,L)}$ if $L>k_0$.
Since the Dynkin diagram corresponding to 
$\Pi_0\cup\{\alpha_{j_0,k_{j_0}}\}$ is classical or affine
and $\supp\mathbf m'$ is contained in this set,
$\idx\mathbf m'=0$ and $\mathbf m'$ is basic
and we may assume that $\mathbf m'$ is one of the tuples
\begin{equation}\label{eq:aftmp}
  11,11,11,11\quad 111,111,111 \quad 22,1111,1111 \quad 33,222,111111
\end{equation}
and $j_0=p$.
In particular $m'_{p,1}=\cdots=m'_{p,k_p}=1$ and $m'_{p,k_p+1}=0$.
It follows from  $(\alpha_{\mathbf m}|\alpha_{p,k_p})=-1$
that there exists an integer $L'\ge k_p+1$ satisfying 
$\supp\mathbf m=\supp\mathbf m'\cup\{\alpha_{p,\nu}\,;\,k_p\le \nu< L'\}$
and $m_{p,k_p}=m_{p,k_p-1}-1$.
In particular, $m_{j,\nu}=m_{j,1}$ for $\nu=1,\dots,k_j-\delta_{j,p}$
and $j=0,\dots,p$.
Since $\sum_{j=0}^pm_{j,1}=(p-1)\ord\mathbf m$, there exists a positive
integer $k$ such that
\[
  m_{j,\nu}=
  \begin{cases}
   km'_{j,1}&(j=0,\dots,p,\ \nu=1,\dots,k_j-\delta_{j,p}),\\
   km'_{p,1}-1&(j=p,\ \nu=k_p).
  \end{cases}
\]
Hence $m_{p,k_p+1}=1$ and $L'=k_p+1$ and the pair $(\mathbf m,\mathbf m')$
is one of the pairs in the list \eqref{eq:idx-1} with $k>1$.
\end{proof}
\begin{rem}
Let $k$ be an integer with $k\ge2$ and let $P$ be a differential operator 
with the spectral type $D_4^{(k)}$, $E_6^{(k)}$, $E_7^{(k)}$ or $E_8^{(k)}$.
It follows from Theorem~\ref{thm:prod} and Theorem~\ref{thm:univmodel} that $P$ 
is reducible for any values of accessory parameters when the characteristic 
exponents satisfy Fuchs relation with respect to the subtuple given 
in \eqref{eq:idx-1}.
For example, the Fuchsian differential operator $P$ with the Riemann scheme
\begin{align*}
  &\begin{Bmatrix}
    [\lambda_{0,1}]_{(k)} &  [\lambda_{1,1}]_{(k)} & [\lambda_{2,1}]_{(k)}
    & [\lambda_{3,1}]_{(k)}\\
    [\lambda_{0,2}]_{(k)} &  [\lambda_{1,2}]_{(k)} & [\lambda_{2,2}]_{(k)}
    & [\lambda_{3,2}]_{(k-1)}\\
    &&& \lambda_{3,2}+2k-2
  \end{Bmatrix}
\end{align*}
is reducible.
\end{rem}
\begin{exmp}\label{ex:JPH}
i) \ (generalized Jordan-Pochhammer)\index{Jordan-Pochhammer!generalized}
If $\mathbf m=k\mathbf m'\oplus\ell\mathbf m''$ with a rigid
tuples $\mathbf m$, $\mathbf m'$ and $\mathbf m''$ and 
positive integers $k$ and $\ell$ satisfying $1\le k\le \ell$, we have
\begin{equation}
  (\alpha_{\mathbf m'}|\alpha_{\mathbf m''})=-\frac{k^2+\ell^2-1}{k\ell}
  \in\mathbb Z.
\end{equation}
For positive integers $k$ and $\ell$ satisfying $1\le k\le \ell$ and
\begin{equation}\label{eq:pkl}
 p:=\frac{k^2+\ell^2-1}{k\ell}+1\in\mathbb Z,
\end{equation}
we have an example of direct decompositions
\begin{equation}\label{eq:kl}
\begin{split}
 \overbrace{\ell k,\ell k,\ldots,\ell k}^{p+1\text{ partitions}} 
 &= 0k,0k,\ldots,0k\oplus\ell0,\ell0,\ldots,\ell0\\
 &=((p-1)k-\ell)k,((p-1)k-\ell)k,\ldots,((p-1)k-\ell)k\\
 &\quad
  \oplus(2\ell-(p-1)k)0,(2\ell-(p-1)k)0,\ldots,(2\ell-(p-1)k)0.
\end{split}\end{equation}
Here $p=3+\frac{(k-\ell)^2-1}{k\ell}\ge 2$
and the condition $p=2$ implies $k=\ell=1$ and the condition $p=3$ implies
$\ell=k+1$.
If $k=1$, then $(\alpha_{\mathbf m'}|\alpha_{\mathbf m''})=-\ell$ and 
we have an example corresponding to Jordan-Pochhammer equation:
\index{Jordan-Pochhammer}
\begin{equation}
 \overbrace{\ell1,\cdots,\ell1}^{\ell+2\text{ partitions}}
 =01,\cdots,01\oplus \ell0,\cdots,\ell0.\\
\end{equation}
When $\ell=k+1$, we have 
$(\alpha_{\mathbf m'}|\alpha_{\mathbf m''})=-2k$
and an example
\begin{equation}\begin{split}
 &(k+1)k,(k+1)k,(k+1)k,(k+1)k\\
  &\quad=0k,0k,0k,0k\oplus(k+1)0,(k+1)0,(k+1)0,(k+1)0\\
  &\quad=(k-1)k,(k-1)k,(k-1)k,(k-1)k\oplus 20,20,20,20.
\end{split}\end{equation}
We have another example
\begin{equation}\begin{split}
  83,83,83,83,83&=03,03,03,03,03\oplus 80,80,80,80,80\\
                &=13,13,13,13,13\oplus 70,70,70,70,70
\end{split}\end{equation}
in the case $(k,\ell)=(3,8)$, which is a special case
where $\ell=k^2-1$, $p=k+1$ and 
$(\alpha_{\mathbf m'}|\alpha_{\mathbf m''})=-k$.

When $p$ is odd, the equation \eqref{eq:pkl} is equal to the Pell equation
\begin{equation}
  y^2 - (m^2-1)x^2=1
\end{equation}
by putting $p-1=2m$, $x=\ell$ and $y=m\ell-k$ and hence the reduction of the tuple of partition
\eqref{eq:kl} by $\p_{\max}$ and 
its inverse give all the integer solutions of this Pell equation.
\index{Pell equation}

The tuple of partitions 
$\ell k,\ell k,\ldots,\ell k\in\mathcal P_{p+1}^{(\ell+k)}$ with 
\eqref{eq:pkl} is called a \textsl{generalized Jordan-Pochhammer} tuple and 
denoted by $P_{p+1,\ell+k}$. 
In particular, $P_{n+1,n}$ is simply denoted by $P_n$.
\index{00P@$P_{p+1,n},\ P_n$}

ii)
We give an example of direct decompositions of a rigid tuple:
\begin{align*}
3322,532,532&=0022,202,202\oplus 3300,330,330:1\\
            &=1122,312,312\oplus 2200,220,220:1\\
            &=0322,232,232\oplus 3000,300,300:2\\
            &=3302,332,332\oplus 0020,200,200:2\\
            &=1212,321,321\oplus 2110,211,211:4\\
            &=2211,321,312\oplus 1111,211,220:2\\
            &=2212,421,322\oplus 1110,111,210:4\\
            &=2222,431,422\oplus 1100,101,110:2\\
            &=2312,422,422\oplus 1010,110,110:4\\
            &=2322,522,432\oplus 1000,010,100:4.
\end{align*}
They are all the direct decompositions of the tuple
$3322,532,532$ modulo obvious symmetries.
Here we indicate the number of the decompositions of 
the same type.

\end{exmp}
\begin{cor}
Let\/ $\mathbf m\in\mathcal P$ be realizable.
Put\/ $\mathbf m=\gcd(\mathbf m)\overline{\mathbf m}$.
Then\/ $\mathbf m$ has no direct decomposition\/ \eqref{eq:dsum} 
if and only if 
\begin{align}
&\ord\mathbf m=1\\
\intertext{or}
&\idx\mathbf m=0\text{ and basic}\\
\intertext{or}
\begin{split}
&\idx\mathbf m<0\text{ and\/ $\overline{\mathbf m}$ is basic and\/ 
$\mathbf m$ is not isomorphic to any one of tuples}\\
&\text{in\/ {\rm Example~\ref{ex:special}} with\/ $m>1$}.
\end{split}
\end{align}
\end{cor}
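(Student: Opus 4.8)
\emph{Plan.} Since the existence of a direct decomposition and the three conditions in the statement are all invariant under the action of $S_\infty$ (and, for the second and third, under $\widetilde W_{\!\infty}$), I may assume $\mathbf m$ is monotone. I would treat the two implications separately, the main tools being Theorem~\ref{thm:ddecmp}, Lemma~\ref{lem:sumlem}, the operation $\p_{max}$, and the identification of $\mathcal P$ with part of the positive root lattice of the Kac--Moody root system in \S\ref{sec:KacM}. The first step is to record two elementary sources of direct decompositions. (a) If $k:=\gcd\mathbf m\ge 2$ and $\idx\mathbf m\ge 0$, then $\mathbf m=\overline{\mathbf m}\oplus(k-1)\overline{\mathbf m}$: the needed identity $\Ridx\mathbf m=\Ridx\overline{\mathbf m}+\Ridx\bigl((k-1)\overline{\mathbf m}\bigr)$ reads $0=0+0$ when $\idx\mathbf m>0$ and $k=1+(k-1)$ when $\idx\mathbf m=0$ (here $\Ridx=\gcd$). (b) If $d:=d_{max}(\mathbf m)\ge 1$ and $s\mathbf m$ is not trivial, put $\ell=\ell_{max}(\mathbf m)$; then $\alpha_{\mathbf m}=\alpha_{\p_{max}\mathbf m}+d\alpha_\ell$ by \eqref{eq:M2K}, so with $\mathbf m'=\p_{max}\mathbf m$ and $\mathbf m''=d\mathbf 1_\ell$ one has $\mathbf m=\mathbf m'+\mathbf m''$; moreover $\mathbf m'$ is realizable by Theorem~\ref{thm:realizable} with $\idx\mathbf m'=\idx\mathbf m$, while $\mathbf m''$ is realizable with $\idx\mathbf m''=2d^2>0$, hence $\Ridx\mathbf m''=0$; since a reflection preserves primitivity of a root, $\gcd\mathbf m'=\gcd\mathbf m$, so $\Ridx\mathbf m'=\Ridx\mathbf m$ in all three sign ranges of $\idx\mathbf m$, and $\mathbf m=\mathbf m'\oplus d\mathbf 1_\ell$ is a direct decomposition.

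\emph{If $\mathbf m$ is not in the list, it is decomposable.} Assume $\mathbf m$ is realizable, $\ord\mathbf m\ge 2$, and $\mathbf m$ is excluded from cases (2) and (3). If $\idx\mathbf m>0$: when $\mathbf m$ is divisible use (a); when $\mathbf m$ is indivisible it is rigid of order $\ge 2$, so $d_{max}(\mathbf m)\ge 1$ by Remark~\ref{rem:idxFuchs} and $s\mathbf m$ is not trivial, so (b) applies. If $\idx\mathbf m=0$: by hypothesis $\mathbf m$ is not basic, so either $\gcd\mathbf m\ge 2$ (use (a)) or $\gcd\mathbf m=1$ with $d_{max}(\mathbf m)>0$ (use (b)); in the latter case $\gcd\p_{max}\mathbf m=1$ is needed so that $\Ridx\p_{max}\mathbf m=1$, and this is exactly the primitivity statement above. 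If $\idx\mathbf m<0$: by hypothesis either $\overline{\mathbf m}$ is not basic — then $d_{max}(\mathbf m)=\gcd(\mathbf m)\,d_{max}(\overline{\mathbf m})>0$ and (b) applies — or $\mathbf m\cong D_4^{(m)},E_6^{(m)},E_7^{(m)}$ or $E_8^{(m)}$ with $m>1$, in which case Theorem~\ref{thm:ddecmp}~ii) yields the direct decomposition $D_4^{(m)}=D_4^{(m-1)}\oplus\tilde D_4$, and similarly for $E_6,E_7,E_8$.

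\emph{If $\mathbf m$ is in the list, it is indecomposable.} Suppose $\mathbf m=\mathbf m'\oplus\mathbf m''$ and derive a contradiction. The case $\ord\mathbf m=1$ is immediate, since $\alpha_{\mathbf m'},\alpha_{\mathbf m''}\in Q_+$ must each contain $\alpha_0$ in their support, forcing $\ord\mathbf m\ge 2$. For $\idx\mathbf m=0$ with $\mathbf m$ basic (and monotone), $(\alpha_{\mathbf m}|\beta)\le 0$ for all $\beta\in\Pi$, hence $(\alpha_{\mathbf m}|\alpha_{\mathbf m'})\le 0$ and $(\alpha_{\mathbf m}|\alpha_{\mathbf m''})\le 0$; their sum is $\idx\mathbf m=0$, so both vanish. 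If $\idx\mathbf m'>0$ and $\idx\mathbf m''>0$ then $\Ridx\mathbf m'=\Ridx\mathbf m''=0\ne1=\Ridx\mathbf m$; otherwise, say $\idx\mathbf m''\le 0$, and Lemma~\ref{lem:sumlem} applied to the indivisible realizable tuples $\mathbf m$ and $\overline{\mathbf m''}$, both of index $\le 0$ with $(\alpha_{\mathbf m}|\alpha_{\overline{\mathbf m''}})=0$, gives $\mathbf m=\overline{\mathbf m''}$, so $\alpha_{\mathbf m'}=(1-\gcd\mathbf m'')\alpha_{\mathbf m}$, which cannot lie in $Q_+\setminus\{0\}$.

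\emph{The hard case: $\idx\mathbf m<0$, $\overline{\mathbf m}$ basic, $\mathbf m\not\cong D_4^{(m)},E_j^{(m)}$ ($m>1$).} Since $\idx\mathbf m<0$, bookkeeping of $\Ridx$-additivity shows at most one of $\mathbf m',\mathbf m''$ has positive index. If both have non-positive index, then $\Ridx=\Pidx$ throughout, the additivity formula for $\Pidx$ forces $\idx(\mathbf m',\mathbf m'')=-1$, and since $(\alpha_{\overline{\mathbf m'}}|\alpha_{\overline{\mathbf m''}})\le 0$ by Lemma~\ref{lem:sumlem} while $\gcd\mathbf m'\cdot\gcd\mathbf m''\cdot(\alpha_{\overline{\mathbf m'}}|\alpha_{\overline{\mathbf m''}})=-1$, both $\mathbf m',\mathbf m''$ are indivisible and the pair $(\mathbf m',\mathbf m'')$ is, by the last part of Lemma~\ref{lem:sumlem}, isomorphic to one of $(D_4^{(k)},\tilde D_4),\dots,(E_8^{(k)},\tilde E_8)$; Theorem~\ref{thm:ddecmp}~ii) then identifies $\mathbf m$ with $D_4^{(k+1)},\dots,E_8^{(k+1)}$, contradicting the exclusion. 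If instead $\idx\mathbf m''>0$ (so $\overline{\mathbf m''}$ is rigid, $\idx\mathbf m'\le 0$, $\Pidx\mathbf m=\Pidx\mathbf m'$), then Theorem~\ref{thm:ddecmp}~i) applies: either $\mathbf m$ is one of the four exceptional decompositions listed there, each of which has $d_{max}=1>0$ while $\overline{\mathbf m}$ basic forces $d_{max}(\mathbf m)\le 0$ — excluded — or $\alpha_{\mathbf m}=s_\gamma\alpha_{\mathbf m'}$ with $\gamma:=\alpha_{\overline{\mathbf m''}}$ a positive real root; combining with $\alpha_{\mathbf m}=\alpha_{\mathbf m'}+(\gcd\mathbf m'')\gamma$ gives $(\alpha_{\mathbf m'}|\gamma)=-\gcd\mathbf m''$, hence $(\alpha_{\mathbf m}|\gamma)=(s_\gamma\alpha_{\mathbf m'}|\gamma)=\gcd\mathbf m''>0$, contradicting that $\overline{\mathbf m}$ basic and monotone gives $(\alpha_{\mathbf m}|\alpha)\le 0$ for every $\alpha\in Q_+$. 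The main obstacle is precisely this $\idx\mathbf m<0$ case, where one must simultaneously track the divisibility of both summands, separate the "$(\cdot|\cdot)=-1$" pairs of Lemma~\ref{lem:sumlem} from the four exceptional decompositions of Theorem~\ref{thm:ddecmp}~i), and exploit $d_{max}=1$ of the latter against the basicness of $\overline{\mathbf m}$; the routine parts are the constructions in the decomposable direction and the small verifications that $\p_{max}$ preserves primitivity and yields a realizable tuple of positive order.
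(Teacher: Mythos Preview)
The paper states this corollary without proof, as a consequence of Theorem~\ref{thm:ddecmp} and Lemma~\ref{lem:sumlem}; your approach is the natural one. The decomposable direction via constructions (a) and (b) is correct, as are the easy indecomposable cases $\ord\mathbf m=1$ and $\idx\mathbf m=0$ basic. But the hard case $\idx\mathbf m<0$ contains two genuine gaps.

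\emph{Gap 1.} When $\idx\mathbf m',\idx\mathbf m''\le 0$ you assert ``$\Ridx=\Pidx$ throughout'' to get $\idx(\mathbf m',\mathbf m'')=-1$ and hence $\gcd\mathbf m'=\gcd\mathbf m''=1$. But $\Ridx=\gcd\ne 1=\Pidx$ whenever $\idx=0$ with $\gcd\ge 2$, so the derivation is circular: e.g.\ $\mathbf m'=2\tilde D_4$, $\mathbf m''=\tilde D_4$ (in the shifted position of~\eqref{eq:idx-1}), $\mathbf m=D_4^{(3)}$ gives $\idx(\mathbf m',\mathbf m'')=-2$, not $-1$. The fix is to invoke Theorem~\ref{thm:ddecmp}~ii) directly: it already classifies all such decompositions, and since $\idx\mathbf m<0$ excludes $m\Sigma$, one is forced into $\mathbf m\cong D_4^{(m)}$ or $E_j^{(m)}$ with $m>1$, contradicting the hypothesis.

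\emph{Gap 2.} In the exceptional branch of Theorem~\ref{thm:ddecmp}~i) you write ``each of which has $d_{max}=1>0$ while $\overline{\mathbf m}$ basic forces $d_{max}(\mathbf m)\le 0$''. The theorem only gives $\widetilde W_{\!\infty}$-isomorphism, and $d_{max}$ is \emph{not} $\widetilde W_{\!\infty}$-invariant; for the same reason your parenthetical claim that conditions~(2)--(3) are $\widetilde W_{\!\infty}$-invariant is false. The correct argument: each of the four exceptional tuples is indivisible, so $\gcd\mathbf m=1$ and $\mathbf m=\overline{\mathbf m}$ is basic by hypothesis, i.e.\ $\alpha_{\mathbf m}\in B$. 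One step of $s\circ\p_{max}$ takes the four exceptions to $D_4^{(2)},E_6^{(2)},E_7^{(2)},E_8^{(2)}\in B$ respectively; since a $W_{\!\infty}$-orbit meets $B$ in at most one point (\cite[Prop.~5.2]{Kc}), $\mathbf m$ must be isomorphic to one of these, again contradicting the hypothesis.
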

Moreover we have the following result.
\begin{prop}\label{prop:rigdecomp}
The direct decomposition\/ $\mathbf m=\mathbf m'\oplus\mathbf m''$
is called \textsl{rigid decomposition}
\index{tuple of partitions!rigid decomposition}
if\/ $\mathbf m$, $\mathbf m'$ and $\mathbf m''$ are rigid.
If\/ $\mathbf m\in\mathcal P$ is rigid
and\/ $\ord\mathbf m>1$, there exists a rigid decomposition.
\end{prop}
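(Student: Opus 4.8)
The plan is to pass to the Kac--Moody picture of \S\ref{sec:KM}. Since $\Ridx\mathbf n=0$ for every rigid $\mathbf n$ (because $\idx\mathbf n=2>0$), any decomposition $\mathbf m=\mathbf m'+\mathbf m''$ into rigid tuples is automatically a direct, hence rigid, decomposition. Using the bijection $\mathbf m\leftrightarrow\alpha_{\mathbf m}$ between tuples of partitions whose support contains $\alpha_0$ and positive roots (see \eqref{eq:Kazpart}, \eqref{eq:root2part}), and noting that reading off the coefficients in the basis $\Pi$ turns $\alpha_{\mathbf m}=\alpha_{\mathbf m'}+\alpha_{\mathbf m''}$ into $\mathbf m=\mathbf m'+\mathbf m''$ in $\mathcal P$, the task reduces to writing the rigid root $\alpha_{\mathbf m}$ --- a positive real root with $(\alpha_{\mathbf m}|\alpha_{\mathbf m})=2$, $\alpha_0\in\supp\alpha_{\mathbf m}$, and $\alpha_{\mathbf m}\ne\alpha_0$ since $\ord\mathbf m>1$ --- as a sum of two positive real roots of norm $2$ whose supports contain $\alpha_0$. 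Replacing $\mathbf m$ by $s\mathbf m$ I may assume $\mathbf m$ is monotone, as the $S_\infty$-action is linear on $\mathcal P$ and preserves $\idx$, $\gcd$, and realizability.

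Granting the key claim that $[\Delta(\mathbf m)]$ has a part equal to $1$, i.e.\ that there exists $\gamma\in\Delta(\mathbf m)$ with $(\gamma|\alpha_{\mathbf m})=1$, I would conclude as follows. The root $\gamma$ is a positive real root, so $(\gamma|\gamma)=2$, and $\alpha_0\in\supp\gamma$ because $\mathbf m$ is monotone (Proposition~\ref{prop:wm}~i)); hence $\mathbf m'':=\kappa(\gamma)$ is rigid. By the last assertion of Proposition~\ref{prop:wm}~iv) applied to this $\gamma$, the element $\alpha_{\mathbf m}-\gamma$ is again a positive real root with $\alpha_0\in\supp(\alpha_{\mathbf m}-\gamma)$, and $(\alpha_{\mathbf m}-\gamma|\alpha_{\mathbf m}-\gamma)=2-2+2=2$, so $\mathbf m':=\kappa(\alpha_{\mathbf m}-\gamma)$ is rigid. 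Then $\alpha_{\mathbf m}=\alpha_{\mathbf m'}+\alpha_{\mathbf m''}$ yields $\mathbf m=\mathbf m'+\mathbf m''$ in $\mathcal P$ with $\mathbf m$, $\mathbf m'$, $\mathbf m''$ all rigid (in particular realizable), which is the desired rigid decomposition.

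It remains to prove the claim. Since $\idx\mathbf m=2>0$ forces $d_{\max}(\mathbf m)>0$ (Remark~\ref{rem:idxFuchs}), a rigid tuple is fundamental only if it has order $1$; thus $\mathbf m$ is not fundamental, and the reduction chain $\mathbf m(0):=\mathbf m$, $\mathbf m(k+1):=s\,\p_{\max}\mathbf m(k)$ consists of monotone rigid tuples of strictly decreasing orders until it reaches $sf\mathbf m=\mathbf 1$. Let $\mathbf m(K-1)$ be its last term of order $n'\ge 2$. Then $s\,\p_{\max}\mathbf m(K-1)=\mathbf 1$ has order $1$, so $d_{\max}(\mathbf m(K-1))=n'-1$; as $\p_{\max}$ is defined on it, every local partition has largest part $\ge n'-1$, i.e.\ equals $(n')$ or $(n'-1,1)$, and a count using $d_{\max}(\mathbf m(K-1))=\sum_j m_{j,1}-(p-1)n'$ together with $\idx\mathbf m(K-1)=2$ forces exactly $n'+1$ of them to equal $(n'-1,1)$. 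Discarding the trivial partitions (which change neither $\alpha_{\mathbf m(K-1)}$, $\Delta(\mathbf m(K-1))$, nor $\idx\mathbf m(K-1)$), the tuple $\mathbf m(K-1)$ is the Jordan--Pochhammer tuple $P_{n'}$; by \eqref{eq:Ddelta} and \eqref{eq:Dd2} one has $\alpha_0+\alpha_{j,1}\in\Delta(P_{n'})$ with $(\alpha_0+\alpha_{j,1}|\alpha_{P_{n'}})=d_{\max}(P_{n'})+m_{j,2}-m_{j,1}=1$, so $1\in[\Delta(P_{n'})]$. Finally \eqref{eq:DmInd}, applied to the monotone non-fundamental tuples $\mathbf m(0),\dots,\mathbf m(K-2)$, gives
$[\Delta(\mathbf m)]=[\Delta(\mathbf m(0))]\supseteq[\Delta(\mathbf m(1))]\supseteq\cdots\supseteq[\Delta(\mathbf m(K-1))]\ni 1$,
proving the claim.

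The technical heart --- and the step I expect to require most care --- is the last paragraph: verifying that $[\Delta(\,\cdot\,)]$ only grows under $s\circ\p_{\max}$, and identifying the penultimate reduced tuple with a Jordan--Pochhammer tuple while correctly handling possible trivial local partitions. The rest is routine translation between $\mathcal P$ and the root lattice and bookkeeping of $\Ridx$, covered by Proposition~\ref{prop:wm}, Remark~\ref{rem:idxFuchs}, and the facts in Remark~\ref{rem:length}.
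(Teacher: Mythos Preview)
Your proof is correct and takes a genuinely different route from the paper's. The paper argues by induction on $\ord\mathbf m$: it decomposes $\p\mathbf m=\bar{\mathbf m}'\oplus\bar{\mathbf m}''$ by the inductive hypothesis and pulls the decomposition back through $\p$, the delicate point being that when one piece $\bar{\mathbf m}'$ has order~$1$, say $\bar{\mathbf m}'=(\delta_{\nu,\ell_j})$, an $\idx$-computation shows $\#\{j:\ell_j>1\}\ge 2$, whence $\p\bar{\mathbf m}'$ still lies in $\mathcal P$ and is rigid. You instead recast the problem as finding $\gamma\in\Delta(\mathbf m)$ with $(\gamma|\alpha_{\mathbf m})=1$ and then invoke the last clause of Proposition~\ref{prop:wm}~iv) to produce the complementary rigid summand $\alpha_{\mathbf m}-\gamma$; the existence of such a $\gamma$ you establish by the multiset inclusion $[\Delta(s\p\mathbf m)]\subseteq[\Delta(\mathbf m)]$ from \eqref{eq:DmInd} and the explicit Jordan--Pochhammer base case. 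Both arguments bottom out at $P_{n'}$, but yours trades the paper's ad hoc order-$1$ analysis for the structural machinery of $\Delta(\mathbf m)$ already developed in \S\ref{sec:KM}; the paper's version is more self-contained and slightly more constructive (the decomposition is explicitly transported step by step), while yours is shorter once Proposition~\ref{prop:wm} and Remark~\ref{rem:length}~ii) are in hand.
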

\begin{proof}
We may assume that $\mathbf m$ is monotone and
there exist a non-negative integer $p$ such that
$m_{j,2}\ne 0$ if and only if $0\le j<p+1$.
If $\ord\p\mathbf m=1$, then we may assume
$\mathbf m=(p-1)1,(p-1)1,\dots,(p-1)1\in\mathcal P^{(p)}_{p+1}$
and there exists a decomposition
\[
 (p-1)1,(p-1)1,\dots,(p-1)1=01,10,\dots,10\oplus
 (p-1)0,(p-2)1,\dots,(p-2)1.
\]
Suppose $\ord{\p \mathbf m}>1$.
Put 
$d=\idx(\mathbf m,\mathbf 1)=m_{0,1}+\cdots+m_{p,1}
-(p-1)\cdot\ord\mathbf m>0$.

The induction hypothesis assures the existence of a 
decomposition $\p \mathbf m=\bar{\mathbf  m}'\oplus\bar{\mathbf m}''$
such that $\bar{\mathbf m}'$ and $\bar{\mathbf m}''$ are rigid.
If $\p\bar{\mathbf m}'$ and $\p\bar{\mathbf m}''$ are well-defined,
we have the decomposition 
$\mathbf m=\p^2\mathbf m=\p\bar{\mathbf m}'\oplus \p\bar{\mathbf m}''$ and 
the proposition.

If $\ord \bar{\mathbf m}'>1$, $\p\bar{\mathbf m}'$ is well-defined.
Suppose
$\bar{\mathbf m}'
=\bigl(\delta_{\nu,\ell_j}\bigr)_{\substack{j=0,\dots,p\\ \nu=1,2,\dots}}$.
Then 
\begin{align*}
 \idx(\p \mathbf m,\mathbf 1)- 
 \idx(\p \mathbf m,\bar{\mathbf m}')
  &=\sum_{j=0}^p\bigl((m_{j,1}-d -(m_{j,\ell_j}-d\delta_{\ell_j,1})\bigr)\\
  &\ge-d\#\{j\,;\,\ell_j>1,\ 0\le j\le p\}.
\end{align*}
Since  $\idx(\p \mathbf m,\mathbf 1)=-d$  and
$\idx(\p \mathbf m,\bar{\mathbf m}')=1$,
we have $d\#\{j\,;\,\ell_j>1,\ 0\le j\le p\}\ge d+1$ and
therefore $\#\{j\,;\,\ell_j>1,\ 0\le j\le p\}\ge 2$.
Hence $\p\bar{\mathbf m}'$ is well-defined.
\end{proof}
\begin{rem}
The author's original construction of a differential operator with a given
rigid Riemann scheme doesn't use the middle convolutions and additions but 
uses Proposition~\ref{prop:rigdecomp}.
\end{rem}
\begin{exmp}
We give direct decompositions of a rigid tuple:
\begin{equation}
 \begin{split}
 721,3331,22222
  &=200,2000,20000\oplus521,1331,02222:15\\
  &=210,1110,11100\oplus511,2221,11122:10\\
  &=310,1111,11110\oplus411,2220,11112:\phantom{1}5
 \end{split}
\end{equation}

The following irreducibly realizable tuple 
has only two direct decompositions:
\begin{equation}
\begin{split}
 44,311111,311111
 &=20,200000,200000\oplus 24,111111,111111\\
 &=02,200000,200000\oplus 42,111111,111111
\end{split}
\end{equation}
But it cannot be a direct sum of two irreducibly
realizable tuples.
\end{exmp}
\index{direct decomposition!example}
\subsection{Reduction of reducibility}\label{sec:redred}
We give a necessary and sufficient condition so that
a Fuchsian differential equation is irreducible, which follows
from \cite{Kz} and \cite{DR,DR2}.  Note that a Fuchsian differential equation
is irreducible if and only if its monodromy is irreducible.

\begin{thm}\label{thm:irred}
Retain the notation in\/ {\rm\S\ref{sec:reddirect}}.
Suppose\/ $\mathbf m$ is monotone, realizable and
$\p_{max}\mathbf m$ is well-defined and
\begin{align}
   d &:= m_{0,1}+\cdots+m_{p,1} - (p-1)\ord\mathbf m\ge0.\label{eq:mc1}\\
\intertext{Put $P=P_{\mathbf m}$ {\rm(cf.~\eqref{eq:uinvPm})} and}
 \mu&:=\lambda_{0,1}+\lambda_{1,1}+\cdots+\lambda_{p,1}-1,\\
 Q&:=\p_{max}P,\\
 P^o&:=P|_{\lambda_{j,\nu}=\lambda_{j,\nu}^o,\ g_i=g_i^o},\quad
 Q^o:=Q|_{\lambda_{j,\nu}=\lambda_{j,\nu}^o,\ g_i=g_i^o}
\end{align}
with some complex numbers $\lambda_{j,\nu}^o$ and $g_i^o$ satisfying
the Fuchs relation $|\{\lambda^o_{\mathbf m}\}|=0$.

{\rm i)} The Riemann scheme $\{\tilde\lambda_{\tilde{\mathbf m}}\}$
of Q is given by
\begin{equation}
  \begin{cases}
   \tilde m_{j,\nu}=m_{j,\nu}-d\delta_{\nu,1},\\
   \tilde\lambda_{j,\nu}=
    \lambda_{j,\nu}+\bigl((-1)^{\delta_{j,0}}-\delta_{\nu,1}\bigr)\mu.
   \end{cases}
\label{eq:mc2}
\end{equation}

{\rm ii)} 
Assume that the equation $P^ou=0$ is irreducible.
If $d>0$, then $\mu\notin\mathbb Z$. 
If the parameters given by $\lambda_{j,\nu}^o$ and 
$g_i^o$ are locally non-degenerate, the equation $Q^ov=0$ is irreducible and the 
parameters are locally non-degenerate.

{\rm iii)}
Assume that the equation $Q^ov=0$ is irreducible and the parameters given by 
$\lambda_{j,\nu}^o$ and $g_i^o$ are locally non-degenerate. 
Then the equation $P^ov=0$ is irreducible if and only if
\begin{align}\label{eq:irrmax}
  \sum_{j=0}^p\lambda^o_{j,1+\delta_{j,j_o}(\nu_o-1)}\notin\mathbb Z
  \text{ \ for any \ }(j_o,\nu_o)\text{ \ satisfying \ }
  m_{j_o,\nu_o}> m_{j_o,1}-d.
\end{align}
If the equation $P^ov=0$ is irreducible, the parameters are locally non-degenerate.

{\rm iv)}
Put\/ $\mathbf m(k):=\p_{max}^k\mathbf m$ and $P(k)=\p_{max}^kP$.
Let $K$ be a non-negative integer such that
$\ord \mathbf m(0)>\ord\mathbf m(1)>\cdots>\ord\mathbf m(K)$ and\/
$\mathbf m(K)$ is fundamental.
The operator $P(k)$ is essentially the universal operator of type\/ 
$\mathbf m(k)$ but parametrized by\/ $\lambda_{j,\nu}$ and $g_i$.
Put $P(k)^o=P(k)|_{\lambda_{j,\nu}=\lambda_{j,\nu}^o}$.

If the equation\ $P^ou=0$ is irreducible and 
the parameters are locally non-degenerate, so are $P(k)^ou=0$ for $k=1,\dots,K$.

If the equation $P^ou=0$ is irreducible and locally non-degenerate, so is the 
equation $P(K)^ou=0$.

Suppose the equation $P(K)^ou=0$ is irreducible and locally non-degenerate,
which is always valid when $\mathbf m$ is rigid.
Then the equation $P^ou=0$ is irreducible if and only if the equation $P(k)^ou=0$
satisfy the condition \eqref{eq:irrmax} for $k=0,\dots,K-1$.
If the equation $P^ou=0$ is irreducible, it is locally non-degenerate.
\end{thm}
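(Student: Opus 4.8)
The plan is to prove Theorem~\ref{thm:irred} by reducing everything to the single‑step case (parts i)--iii)), which concerns the action of one $\p_{max}$, and then iterating. First I would establish i): the Riemann scheme of $Q=\p_{max}P$ is already computed in Theorem~\ref{thm:GRSmid} ii) (the transformation \eqref{eq:midR}), once one identifies $\p_{max}$ with the operation $\p_\ell$ for $\ell=\ell_{max}(\mathbf m)$; under the monotonicity assumption on $\mathbf m$ this is exactly \eqref{eq:mc2}, and the invariance of the index and of $|\{\lambda_{\mathbf m}\}|$ comes from Theorem~\ref{thm:GRSmid} iv) (or, more conceptually, Remark~\ref{rem:KacGRS} ii)). Then for ii) I would invoke the monodromy side: the monodromy $\mathbf M$ of $P^o u=0$ is irreducible by hypothesis, and $\p_{max}$ on operators corresponds to $\p_\ell$ on monodromy tuples via Corollary~\ref{cor:irredred}; the hypotheses of that corollary are met precisely because local non‑degeneracy gives \eqref{eq:nondeg1}, and $d>0$ forces $\mu\notin\mathbb Z$ since otherwise Scott's lemma (Lemma~\ref{lem:Scott}, in the form \eqref{eq:SL1} of Lemma~\ref{lem:irrred}) would contradict the inequality $\sum m_{j,1}>(p-1)n$ coming from $d>0$. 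Irreducibility of $\MC_\rho(\mathbf M)$ then follows from Theorem~\ref{thm:Mmid} i), and the spectral type of $\MC_\rho(\mathbf M)$ computed in Theorem~\ref{thm:Mmid} ii) gives the local non‑degeneracy of $Q^o$.

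For iii), the key is the converse direction: given $Q^ov=0$ irreducible and locally non‑degenerate, $P^o$ is obtained from $Q^o$ by $\p_{max}$ again (since $\p_{max}^2=\mathrm{id}$ on normal forms, \eqref{eq:pellP2}), so $\Mon(P^o)=\p_{max}\Mon(Q^o)$ whenever the corresponding hypothesis of Corollary~\ref{cor:irredred} holds, namely $\mu\notin\mathbb Z$. When $\mu\notin\mathbb Z$ the operation is a genuine middle convolution and preserves irreducibility, so $P^o$ is irreducible and non‑degenerate. When $\mu\in\mathbb Z$, one must show $P^o$ is reducible exactly when some $\sum_j\lambda^o_{j,1+\delta_{j,j_o}(\nu_o-1)}\in\mathbb Z$ with $m_{j_o,\nu_o}>m_{j_o,1}-d$; this is the content of the Scott‑lemma inequality \eqref{eq:SL1} applied to the would‑be invariant subspace together with the direct‑decomposition criterion of \S\ref{sec:reddirect} — a reducible monodromy with these exponents forces a subtuple $\mathbf m'$ with $|\{\lambda^o_{\mathbf m'}\}|\in\mathbb Z_{\le0}$, and conversely such a subtuple produces a factorization via Theorem~\ref{thm:prod}. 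I would phrase this as: reducibility of $P^o$ is equivalent to reducibility of $\MC_\rho(\Mon(Q^o))$ augmented by the possible new invariant line created when $\rho=1$, and that line exists iff the integrality condition \eqref{eq:irrmax} fails.

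Finally, part iv) is a clean induction on $k$ using i)--iii): $\mathbf m(k+1)=\p_{max}\mathbf m(k)$ is well‑defined with strictly smaller order until $\mathbf m(K)$ is fundamental, and at each stage ii) propagates irreducibility and local non‑degeneracy upward (from $P(k)^o$ to $P(k+1)^o$) while iii) characterizes, downward, when the reverse step $P(k)^o=\p_{max}P(k+1)^o$ stays irreducible — namely exactly when \eqref{eq:irrmax} holds for the scheme of $P(k)^o$. Composing the $K$ steps gives the stated criterion. The base case $\mathbf m(K)$ fundamental uses that a fundamental irreducible operator with rigid $\mathbf m$ has order $1$, so irreducibility and local non‑degeneracy are automatic (as remarked after Theorem~\ref{thm:univmodel}), and for non‑rigid fundamental $\mathbf m$ the hypothesis is simply assumed. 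The main obstacle I expect is the $\mu\in\mathbb Z$ analysis in iii): one has to argue carefully that the \emph{only} source of reducibility introduced by a non‑middle (i.e.\ $\rho=1$) convolution step is the explicit integrality condition, and that when that condition is satisfied no hidden invariant subspace survives — this requires combining Scott's lemma with the precise bookkeeping of which subtuples $\mathbf m'\oplus\mathbf m''=\mathbf m$ can have integral $|\{\lambda_{\mathbf m'}\}|$, essentially re‑deriving the reducibility locus of the universal operator from Theorem~\ref{thm:ddecmp} and Theorem~\ref{thm:prod}.
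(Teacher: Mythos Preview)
Your outline for i), ii), and iv) is correct and matches the paper's (very brief) proof, which simply cites Theorem~\ref{thm:GRSmid} for i), Lemma~\ref{lem:irrred} and Corollary~\ref{cor:irredred} for ii) and iii), and then deduces iv). The place where you go astray is the analysis of iii).

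You write that the hypothesis of Corollary~\ref{cor:irredred}, applied to the reverse step $Q^o\mapsto P^o=\p_\ell Q^o$, is ``namely $\mu\notin\mathbb Z$'', and then set up a dichotomy on $\mu$. This is not right: Corollary~\ref{cor:irredred} has \emph{two} hypotheses, $\mu_\ell\notin\mathbb Z$ \emph{and} the clause ``$m_{j,\nu}\le m_{j,\ell_j}$ or $\lambda_{j,\ell_j}-\lambda_{j,\nu}\notin\mathbb Z$''. In the forward direction $P^o\mapsto Q^o$ the second clause is automatic because $\mathbf m$ is monotone, but in the reverse direction $\tilde{\mathbf m}$ is \emph{not} monotone (since $\tilde m_{j,1}=m_{j,1}-d$), and the second clause for $Q^o$ reads: for each $(j_o,\nu_o)$ with $m_{j_o,\nu_o}>m_{j_o,1}-d$, one needs $\tilde\lambda_{j_o,1}-\tilde\lambda_{j_o,\nu_o}\notin\mathbb Z$. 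A direct computation using \eqref{eq:mc2} shows this is exactly the $(\nu_o\ge2)$ part of \eqref{eq:irrmax}, while $\tilde\mu=-\mu\notin\mathbb Z$ is the $(\nu_o=1)$ part (when $d>0$). In other words, \eqref{eq:irrmax} \emph{is} the full hypothesis of Corollary~\ref{cor:irredred} for the step $Q^o\to P^o$, not just $\mu\notin\mathbb Z$.

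Once you see this, both directions of iii) are immediate and no case analysis is needed. If \eqref{eq:irrmax} holds, Corollary~\ref{cor:irredred} applies to $Q^o$ and gives $\Mon(P^o)\sim\p_\ell\Mon(Q^o)$ irreducible, with local non\nobreakdash-degeneracy from Theorem~\ref{thm:Mmid} ii). If \eqref{eq:irrmax} fails for some $(j_o,\nu_o)$, set $\ell_j=1+\delta_{j,j_o}(\nu_o-1)$; then $\sum_j m_{j,\ell_j}=(p-1)n+d+m_{j_o,\nu_o}-m_{j_o,1}>(p-1)n$ while $\sum_j\lambda^o_{j,\ell_j}\in\mathbb Z$, and Lemma~\ref{lem:irrred} (the Scott inequality \eqref{eq:SL1}) forces $P^o$ to be reducible. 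So the machinery you propose for the ``$\mu\in\mathbb Z$'' case---direct decompositions via Theorem~\ref{thm:ddecmp}, factorizations via Theorem~\ref{thm:prod}, bookkeeping of subtuples---is unnecessary; the whole of iii) is just one application each of Lemma~\ref{lem:irrred} and Corollary~\ref{cor:irredred}, exactly as the paper says.
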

\begin{proof}
The claim i) follows from Theorem~\ref{thm:GRSmid} and the claims ii) and iii)
follow from Lemma~\ref{lem:irrred} and Corollary~\ref{cor:irredred}, which implies the claim iv).
\end{proof}

\begin{rem}\label{rem:irredrigid}
{\rm i)}
In the preceding theorem the equation $P^ou=0$ may not be locally
non-degenerate even if it is irreducible.
For example the equation satisfied by ${}_3F_2$ is contained in
the universal operator of type $111,111,111$.

{\rm ii)} It is also proved as follows that the irreducible 
differential equation with a rigid spectral type is locally non-degenerate.

The monodromy generators $M_j$ of the equation with the 
Riemann scheme 
at $x=c_j$ satisfy
\[
  \rank(M'_j-e^{2\pi\sqrt{-1}\lambda_{j,1}})
  \cdots(M'_j-e^{2\pi\sqrt{-1}\lambda_{j,k}})
  \le m_{j,k+1}+\cdots+m_{j,n_j}\quad(k=1,\dots,n_j)
\]
for $j=0,\dots,p$.
The equality in the above is clear 
when $\lambda_{j,\nu}-\lambda_{j,\nu'}\notin\mathbb Z$ for $1\le\nu<\nu'\le n_j$
and hence the above is proved by the continuity for general $\lambda_{j,\nu}$.
The rigidity index of $\mathbf M$ is calculated by the dimension of the 
centralizer of $M_j$ and it should be 2 if $\mathbf M$ is irreducible and rigid,
the equality in the above is valid (cf.~\cite{Kz}, \cite{O3}), which means
the equation is locally non-degenerate.

{\rm iii)} \ 
The same results as in Theorem~\ref{thm:irred} are also valid in the case of the
Fuchsian system of Schlesinger canonical form \eqref{eq:MSCF} since 
the same proof works.
A similar result is given by a different proof (cf.~\cite{CB}).

{\rm iv)} \ 
Let $(M_0,\dots,M_p)$ be a tuple of matrices in $GL(n,\mathbb C)$
with $M_pM_{p-1}\cdots M_0=I_n$.
Then $(M_0,\dots,M_p)$ is called \textsl{rigid} if for any 
$g_0,\dots,g_p\in GL(n,\mathbb C)$
satisfying $g_pM_pg_p^{-1}\cdot g_{p-1}M_{p-1}g_{p-1}^{-1}\cdots 
g_0M_0g_0^{-1}=I_n$, 
there exists $g\in GL(n,\mathbb C)$ such that $g_iM_ig_i^{-1}=gM_ig^{-1}$ for 
$i=0,\dots,p$.
The tuple $(M_0,\dots,M_p)$ is called \textsl{irreducible} if no subspace $V$ of 
$\mathbb C^n$ 
satisfies $\{0\}\subsetneqq V \subsetneqq \mathbb C^n$ and $M_iV\subset V$ for 
$i=0,\dots,p$.
Choose $\mathbf m\in\mathcal P^{(n)}_{p+1}$ and $\{\mu_{j,\nu}\}$ such that 
$L(\mathbf m;\mu_{j,1},\dots,\mu_{j,n_j})$ are in the conjugacy classes 
containing $M_j$, respectively.
Suppose $(M_0,\dots,M_p)$ is irreducible and rigid.
Then Katz \cite{Kz} shows that $\mathbf m$ is rigid and gives a construction of 
irreducible and rigid $(M_0,\dots,M_p)$ for any rigid $\mathbf m$
(cf.~Remark \ref{rem:monred} ii)).
It is an open problem given by Katz \cite{Kz} whether the monodromy generators 
$M_j$ are realized by solutions of a single Fuchsian differential equations without
an apparent singularity, whose affirmative answer is given by the following
corollary.
\end{rem}
\begin{cor}\label{cor:irred}
Let\/ $\mathbf m=\bigl(m_{j,\nu}\bigr)_{\substack{0\le j\le p\\1\le\nu\le n_j}}$ 
be a 
rigid monotone $(p+1)$-tuple of partitions with\/ 
$\ord\mathbf m>1$.
Retain the notation in Definition~\ref{def:redGRS}.

{\rm i)} 
Fix complex numbers $\lambda_{j,\nu}$ for $0\le j\le p$ and $1\le \nu_j$
such that it satisfies the Fuchs relation
\begin{equation}\label{eq:Fuchsirr}
 \sum_{j=0}^p\sum_{\nu=1}^{n_j}m_{j,\nu}\lambda_{j,\nu}=\ord\mathbf m-1
\end{equation}
The universal operator $P_{\mathbf m}(\lambda)u=0$ with the Riemann scheme
\eqref{eq:IGRS} is irreducible if and only if 
the condition
\begin{multline}\label{Cond:irr}
  \sum_{j=0}^p\lambda(k)_{j,\ell(k)_j+\delta_{j,j_o}(\nu_o-\ell(k)_j)}
  \notin\mathbb Z
  \\\text{ \ for any \ }(j_o,\nu_o)\text{ \ satisfying \ }
  m(k)_{j_o,\nu_o}> m(k)_{j_o,\ell(k)_{j_o}}-d(k)
\end{multline}
is satisfied for $k=0,\dots,K-1$.

{\rm ii)}  Define $\tilde \mu(k)$ and $\mu(k)_{j,\nu}$ for $k=0,\dots,K$ by
\begin{align}
 \mu(0)_{j,\nu}&=\mu_{j,\nu}\quad(j=0,\dots,p,\ \nu=1,\dots,n_j),\\
 \tilde\mu(k) &= \prod_{j=0}^p\mu(k)_{j,\ell(k)_j},\\
 \mu(k+1)_{j,\nu}
 &=\mu(k)_{j,\nu}\cdot
       \tilde\mu(k)^{(-1)^{\delta_{j,0}}-\delta_{\nu,1}}.
\end{align}
Then there exists an irreducible tuple $(M_0,\dots,M_p)$ of matrices satisfying
\begin{equation}\label{eq:monDS}
\begin{gathered}
 M_p\cdots M_0=I_n,\\
 M_j\sim L(m_{j,1},\dots,m_{j,n_j};\mu_{j,1},\dots,\mu_{j,n_j})\quad(j=0,\dots,p)
\end{gathered}
\end{equation}
under the notation \eqref{eq:OSNF} if and only if
\begin{equation}
\prod_{j=0}^p\prod_{\nu=1}^{n_j}\mu_{j,\nu}^{m_{j,\nu}}=1
\end{equation}
and the condition
\begin{multline}
  \prod_{j=0}^p\mu(k)_{j,\ell(k)_j+\delta_{j,j_o}(\nu_o-\ell(k)_j)}
  \ne1
  \\\text{ \ for any \ }(j_o,\nu_o)\text{ \ satisfying \ }
  m(k)_{j_o,\nu_o}> m(k)_{j_o,\ell(k)_{j_o}}-d(k)
\end{multline}
is satisfied for $k=0,\dots,K-1$.

{\rm iii)}  Let $(M_0,\dots,M_p)$ be an irreducible tuple of matrices satisfying 
\eqref{eq:monDS}.
Then there uniquely exists a Fuchsian differential equation $Pu=0$ with $p+1$ 
singular points $c_0,\dots,c_p$ and its local independent solutions 
$u_1,\dots,u_{\ord \mathbf m}$ in a neighborhood of a non-singular point $q$ 
such that the monodromy generators around the points $c_j$ with respect to the 
solutions equal $M_j$, respectively, for $j=0,\dots,p$ {\rm (cf.~\eqref{fig:mon})}.
\end{cor}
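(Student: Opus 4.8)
The plan is to deduce the three parts from the reduction machinery already available: Theorem~\ref{thm:GRSmid} and Theorem~\ref{thm:irred} on the differential-operator side, Theorem~\ref{thm:Mmid}, Theorem~\ref{thm:mcMC} and Katz's rigidity Corollary~\ref{cor:katz} on the monodromy side, together with the dictionary of \S\ref{sec:KacM} identifying rigid tuples with positive real roots. For part~i), since $\mathbf m$ is rigid the root $\alpha_{\mathbf m}$ is a positive real root whose support contains $\alpha_0$, so there is $w\in W_{\!\infty}$ with $w\alpha_{\mathbf m}=\alpha_0$; expressing $w$ through simple reflections and using Proposition~\ref{prop:Kac}, the operation $\p_{max}$ strictly decreases the order until after $K$ steps $\mathbf m(K)=\p_{max}^K\mathbf m$ has order $1$. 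Hence $P(K)^o$ is a first order operator, so it is automatically irreducible and trivially locally non-degenerate, and Theorem~\ref{thm:irred}~iv) applies verbatim: $P_{\mathbf m}(\lambda)u=0$ is irreducible if and only if each $P(k)^ou=0$ satisfies \eqref{eq:irrmax} for $k=0,\dots,K-1$. Unwinding \eqref{eq:irrmax} with the explicit exponents $\lambda(k)_{j,\nu}$ of Definition~\ref{def:redGRS} and with $\ell(k)=\ell_{max}(\mathbf m(k))$ gives exactly the condition \eqref{Cond:irr}, which proves i).

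For part~ii), the necessity of $\prod_{j,\nu}\mu_{j,\nu}^{m_{j,\nu}}=1$ is immediate from $1=\det(M_p\cdots M_0)=\prod_j\det M_j$ and \eqref{eq:LJordan}. For the remaining equivalence I would iterate the multiplicative reduction of Theorem~\ref{thm:Mmid}, setting $\mathbf M(0)=(M_0,\dots,M_p)$ and $\mathbf M(k+1)=\p_{\ell(k)}\mathbf M(k)$: by part~i) of that theorem $\mathbf M(k)$ is irreducible if and only if $\mathbf M(0)$ is, and by part~ii) the spectral type of $\mathbf M(k)$ is $\mathbf m(k)$ with eigenvalues the numbers $\mu(k)_{j,\nu}$ of the corollary, provided at each stage the relevant eigenvalue products are $\neq1$, since a collapse of such a product forces $d(k)$ to drop and produces a conjugacy class strictly smaller than the nominal $\mathbf m(k)$. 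Since $\mathbf m(K)$ has order $1$, $\mathbf M(K)$ is a scalar tuple, which exists and is irreducible precisely when $\prod_j\mu(K)_{j,1}=1$, and this is automatic, being an invariant of the reduction. Reading the chain backwards, using that $\p_{\ell}$ is an involution on such tuples (a consequence of $\MC_{\rho'}\circ\MC_{\rho}\sim\MC_{\rho\rho'}$, $\MC_1\sim\mathrm{id}$ and the multiplications, parallel to \eqref{eq:pellP2}), one recovers an irreducible $(M_0,\dots,M_p)$ with the prescribed local conjugacy classes exactly when all the displayed product conditions hold.

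For part~iii), given an irreducible rigid $(M_0,\dots,M_p)$, choose $\lambda_{j,\nu}$ with $e^{2\pi\sqrt{-1}\lambda_{j,\nu}}=\mu_{j,\nu}$ satisfying the Fuchs relation \eqref{eq:Fuchsirr}, which is possible because $\prod_{j,\nu}\mu_{j,\nu}^{m_{j,\nu}}=1$ and $\ord\mathbf m-1\in\mathbb Z$, and put $P:=P_{\mathbf m}(\lambda)$. Since $\mathbf m$ is rigid, $\Ridx\mathbf m=0$, so $P$ carries no accessory parameters, is of normal form \eqref{eq:uinvPm} with leading coefficient $\prod_{j=1}^p(x-c_j)^n$, and is therefore Fuchsian with singular points exactly $c_0=\infty,c_1,\dots,c_p$ and without apparent singularities. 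Passing $e^{2\pi\sqrt{-1}\,\cdot}$ through the reduction recursion of Definition~\ref{def:redGRS} gives $e^{2\pi\sqrt{-1}\lambda(k)_{j,\nu}}=\mu(k)_{j,\nu}$, so the irreducibility of $(M_0,\dots,M_p)$ and part~ii) force all the product conditions, hence $\sum_j\lambda(k)_{j,\cdot}\notin\mathbb Z$ for the relevant indices; thus \eqref{Cond:irr} holds and, by part~i), $Pu=0$ is irreducible. Being irreducible and rigid it is locally non-degenerate (Remark~\ref{rem:irredrigid}~ii)), so its monodromy $(M_0',\dots,M_p')$ has $M_j'\sim L(m_{j,1},\dots,m_{j,n_j};\mu_{j,1},\dots,\mu_{j,n_j})\sim M_j$. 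Both tuples being irreducible with the same local classes and $\mathbf m$ rigid, Corollary~\ref{cor:katz} yields $g\in GL(n,\mathbb C)$ with $M_j=gM_j'g^{-1}$, and replacing the chosen basis of solutions of $Pu=0$ by its $g$-image makes the monodromy generators equal $M_j$. For uniqueness, any Fuchsian $P'$ with the same singular points, no apparent singularities and monodromy $(M_0,\dots,M_p)$ defines the same irreducible $W(x)$-module as $P$ by the remark following \eqref{eq:isoWM}, and the minimal normal-form single operator representing that module is unique up to a factor in $\mathbb C(x)^\times$; for a rigid spectral type it coincides with the universal operator $P_{\mathbf m}(\lambda)$, so $P'=P$.

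The step I expect to be the main obstacle is the precise bookkeeping that links the two irreducibility criteria. On the monodromy side this means translating the degeneration in Theorem~\ref{thm:Mmid}~ii) (the drop of $d(k)$, i.e.\ the appearance of a conjugacy class strictly smaller than $\mathbf m(k)$) into exactly the inequalities $\prod_j\mu(k)_{j,\ell(k)_j+\delta_{j,j_o}(\nu_o-\ell(k)_j)}\neq1$ over the range $m(k)_{j_o,\nu_o}>m(k)_{j_o,\ell(k)_{j_o}}-d(k)$; on the operator side it is the corresponding translation through Theorem~\ref{thm:irred}; and one must check that these two families of conditions match under $e^{2\pi\sqrt{-1}\,\cdot}$, which hinges on the compatibility of the reduction recursions for $\lambda(k)$ and $\mu(k)$. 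A second, more delicate point is the uniqueness assertion in iii), which relies on the fact — standard but requiring the universal-model theory of \S\ref{sec:DS} — that for a rigid spectral type the minimal-order single operator representing a cyclic $W(x)$-module has no apparent singularities and is the universal operator.
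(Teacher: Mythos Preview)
Your approach is essentially the paper's: part~i) is exactly Theorem~\ref{thm:irred}~iv) specialized to the rigid case where $\ord\mathbf m(K)=1$; part~ii) is the multiplicative analogue via Theorem~\ref{thm:Mmid} (the paper also cites Lemma~\ref{lem:Scott}, which is what underlies the necessity direction of Theorem~\ref{thm:irred}~ii)--iii), and you would need it here for the same reason); and for part~iii) the paper likewise lifts $\mu$ to $\lambda$, forms $P_{\mathbf m}(\lambda)$, and concludes via the compatibility of the two reductions. The paper invokes Theorem~\ref{thm:mcMC} to pin down $\Mon(P)$ whereas you invoke Katz's rigidity (Corollary~\ref{cor:katz}) directly; both are fine.

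Two points deserve attention. First, in iii) your justification ``possible because $\prod\mu^{m}=1$ and $\ord\mathbf m-1\in\mathbb Z$'' is not quite enough: you need that the map $(\epsilon_{j,\nu})\mapsto\sum m_{j,\nu}\epsilon_{j,\nu}$ from integer shifts is surjective onto $\mathbb Z$, which is exactly $\gcd\mathbf m=1$. The paper states this explicitly; it holds because rigid tuples are indivisible.

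Second, your uniqueness argument in iii) has a genuine gap. From $\mathbf M\sim\mathbf N$ you correctly get that the two equations are $W(x)$-isomorphic via \eqref{eq:isoWM}, but $W(x)$-isomorphic does not imply equality of operators: different integer lifts $\lambda$, $\lambda+\epsilon$ of $\mu$ give distinct universal operators $P_{\mathbf m}(\lambda)\ne P_{\mathbf m}(\lambda+\epsilon)$ which are nevertheless $W(x)$-isomorphic (via the shift operators of Theorem~\ref{thm:irredrigid}) whenever $c_{\mathbf m}(\epsilon;\lambda)\ne0$, and each admits a basis realizing the same $(M_0,\dots,M_p)$ exactly. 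So the step ``the minimal normal-form single operator representing that module \dots\ coincides with the universal operator $P_{\mathbf m}(\lambda)$, so $P'=P$'' fails as written. The paper's own proof does not spell out uniqueness either; the substantive content of iii), and the point answering Katz's question, is existence.
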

\begin{proof}
The clam i) is a direct consequence of Theorem~\ref{thm:irred}
and the claim ii) is proved by Theorem~\ref{thm:Mmid} and 
Lemma~\ref{lem:Scott}
as in the case of the proof of Theorem~\ref{thm:irred}
(cf.~Remark~\ref{rem:monred} ii)).

iii)
Since $\gcd\mathbf m=1$,
we can choose $\lambda_{j,\nu}\in\mathbb C$ such that
$e^{2\pi\sqrt{-1}\lambda_{j,\nu}}=\mu_{j,\nu}$ and 
$\sum_{j,\nu}m_{j,\nu}\lambda_{j,\nu}=\ord\mathbf m-1$.
Then we have a universal operator $P_{\mathbf m}(\lambda_{j,\nu})u=0$
with the Riemann scheme \eqref{eq:IGRS}.
The irreducibility of $(M_p,\dots,M_0)$ and Theorem~\ref{thm:mcMC}
assure the claim.
\end{proof}

Now we state the condition \eqref{Cond:irr} using the terminology of 
the Kac-Moody root system.
Suppose $\mathbf m\in\mathcal P$ is monotone and irreducibly realizable.
Let $\{\lambda_{\mathbf m}\}$ be the Riemann scheme of the universal
operator $P_{\mathbf m}$.
According to Remark~\ref{rem:idxFuchs} iii) we may relax the definition
of $\ell_{max}(\mathbf m)$ as is given by \eqref{eq:lmaxe} and then 
we may assume
\begin{equation}
  v_k s_0\cdots v_1s_0\Lambda(\lambda)
 \in W'_{\infty}\Lambda\bigl(\lambda(k)\bigr)\qquad(k=1,\dots,K)
\end{equation}
under the notation in Definition~\ref{def:redGRS} and \eqref{eq:vmrp}.
Then we have the following theorem.
\begin{thm}\label{thm:irrKac} 
Let\/ $\mathbf m=\bigl(m_{j,\nu}\bigr)_{\substack{0\le j\le p\\1\le \nu\le n_j}}$
be an irreducibly realizable monotone tuple of partition in $\mathcal P$.
Under the notation in\/ {\rm Corollary~\ref{cor:irred}} and\/ {\rm \S\ref{sec:KM}},
there uniquely exists a bijection
\index{000pix@$\varpi$}
\begin{equation}
 \begin{split}
  \varpi:\Delta(\mathbf m) \xrightarrow\sim\ &
  \bigl\{(k,j_0,\nu_0)\,;\,
  0\le k<K,\ 0\le j_0\le p,\ 1\le\nu_0\le n_{j_0},
  \\
  &\qquad \nu_0\ne\ell(k)_{j_0}\text{ \ and \ }
  m(k)_{j_0,\nu_0}>m(k)_{j_0,\ell(k)_{j_0}}-d(k)
  \bigr\}\\
  &\quad\cup\bigl\{(k,0,\ell(k)_0)\,;\,0\le k<K\bigr\}
 \end{split}
\end{equation}
such that
\begin{equation}
 (\Lambda(\lambda)|\alpha)
 =\sum_{j=0}^p\lambda(k)_{j,\ell(k)_j+\delta_{j,j_o}(\nu_o-\ell(k)_j)}
 \text{ \ when \ }\varpi(\alpha)=(k,j_0,\nu_0).
\end{equation}
Moreover we have
\begin{equation}
 \begin{split}
  (\alpha|\alpha_{\mathbf m})&=
  m(k)_{j_0,\nu_0}-m(k)_{j_0,\ell(k)_{j_0}}+d(k)\\
  &\qquad\qquad
  (\alpha\in\Delta(\mathbf m),\ (k,j_0,\nu_0)=\varpi(\alpha))
 \end{split}
\end{equation}
and if the universal equation $P_{\mathbf m}(\lambda)u=0$ is irreducible,
we have
\begin{equation}\label{eq:airred}
    (\Lambda(\lambda)|\alpha)\notin\mathbb Z
  \quad\text{ for any \ }\alpha\in\Delta(\mathbf m).
\end{equation}
In particular, if $\mathbf m$ is rigid and \eqref{eq:airred} is valid,
the universal equation 
is irreducible.
\end{thm}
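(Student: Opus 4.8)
The plan is to establish the bijection $\varpi$ and its properties essentially by induction on $K$, the number of applications of $\p_{max}$ needed to reach the fundamental tuple $\mathbf m(K)$, and then to read off the irreducibility criterion \eqref{eq:airred} as a direct reformulation of \eqref{Cond:irr} in Corollary~\ref{cor:irred}. First I would set up the dictionary between the two sides. On the combinatorial side, Lemma~\ref{lem:minrep} together with Definition~\ref{def:wm} and Proposition~\ref{prop:wm}~ii) gives a recursive description of $\Delta(\mathbf m)=\Delta(w_{\mathbf m})$: passing from $s\p\mathbf m$ to $\mathbf m$ adjoins $\{\alpha_0\}$ together with the roots $\alpha_0+\alpha_{j,1}+\cdots+\alpha_{j,\nu}$ for $1\le\nu\le\nu_j$, where $\nu_j$ is the maximal index with $m_{j,1}-d_{max}(\mathbf m)<m_{j,\nu_j+1}$, all transported back through $s_0\prod_{j}s_{j,1}\cdots s_{j,\nu_j}$. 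On the analytic/reduction side, Definition~\ref{def:redGRS} gives exactly $K$ middle-convolution steps, and at step $k$ the ``exceptional'' pairs $(j_o,\nu_o)$ appearing in \eqref{Cond:irr} are precisely those with $m(k)_{j_o,\nu_o}>m(k)_{j_o,\ell(k)_{j_o}}-d(k)$, plus the distinguished pair $(0,\ell(k)_0)$ coming from the middle-convolution parameter $\mu(k)$ itself. The claim is that these two enumerations match, root by root, layer by layer.

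Next I would make this matching precise. The normalization $\ell_{max}$ (relaxed via Remark~\ref{rem:idxFuchs}~iii)) and \eqref{eq:vmrp}, \eqref{eq:vmrps} mean that $w_{\mathbf m}$ factors as $v_K s_0\cdots v_1 s_0$ with each $v_ks_0\cdots v_1s_0\Lambda(\lambda)\in W'_\infty\Lambda(\lambda(k))$, so that applying $\p_{max}$ to the universal operator corresponds step-for-step to composing with $s_0$ (modulo a $W'_\infty$-element that only permutes the $\alpha_{j,\nu}$ within each arm and hence does not change the pairing values). Using \eqref{eq:Lw}, the roots of $\Delta(w_{\mathbf m})$ split according to which block $v_ks_0\cdots$ of the minimal expression produces them; the roots produced by the $k$-th occurrence of $s_0$ and the subsequent $v_{k+1}$ are exactly $\{\alpha_0+\alpha_{j,1}+\cdots+\alpha_{j,\nu}\}$ (shifted), which under the dictionary $\{\lambda_{\mathbf m}\}\leftrightarrow(\Lambda(\lambda)|\alpha_{\mathbf m})$ and \eqref{eq:s0Kac} evaluate to $\sum_{j}\lambda(k)_{j,\ell(k)_j+\delta_{j,j_o}(\nu_o-\ell(k)_j)}$. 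I would verify the two displayed identities for $\varpi$ by the same computation: $(\Lambda(\lambda)|\alpha_0+\alpha_{j_0,1}+\cdots+\alpha_{j_0,\nu_0})$ is, by \eqref{eq:defKacGRS}, the sum $\mu(k)+\bigl(\text{shift}\bigr)$ which is exactly the required sum of exponents, while $(\alpha|\alpha_{\mathbf m})$ for $\alpha=\alpha_0+\alpha_{j_0,1}+\cdots+\alpha_{j_0,\nu_0}$ is computed in \eqref{eq:Dd2} (at the appropriate level $\mathbf m(k)$) to be $d(k)+m(k)_{j_0,\nu_0+1}-m(k)_{j_0,1}$, which after a reindexing matches $m(k)_{j_0,\nu_0}-m(k)_{j_0,\ell(k)_{j_0}}+d(k)$. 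Counting confirms bijectivity: $\#\Delta(\mathbf m)=\sum_{k}\bigl(1+\#\{(j_0,\nu_0):\ldots\}\bigr)$ by \eqref{eq:dht} or the recursion in Remark~\ref{rem:length}~ii).

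Finally, the irreducibility statement. For the ``only if'' direction: if $P_{\mathbf m}(\lambda)u=0$ is irreducible then Corollary~\ref{cor:irred}~i) (for $\mathbf m$ rigid) or Theorem~\ref{thm:irred}~iv) together with Theorem~\ref{thm:univmodel}~ii) (in general, using that an irreducible operator reduces to an irreducible locally non-degenerate one at each $\p_{max}$ step by Corollary~\ref{cor:irredred}) forces condition \eqref{Cond:irr} at every level $k$; translating each of those non-integrality conditions through the bijection $\varpi$ yields exactly $(\Lambda(\lambda)|\alpha)\notin\mathbb Z$ for all $\alpha\in\Delta(\mathbf m)$. For the converse when $\mathbf m$ is rigid, $\mathbf m(K)$ has order $1$ so $P_{\mathbf m(K)}^ou=0$ is trivially irreducible and locally non-degenerate; then \eqref{eq:airred}, via $\varpi$, supplies \eqref{Cond:irr} at each step, and Theorem~\ref{thm:irred}~iv) propagates irreducibility back up from $\mathbf m(K)$ to $\mathbf m$. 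The main obstacle I anticipate is the bookkeeping in the induction: showing that the $W'_\infty$-factors $v_k$ genuinely act trivially on the relevant pairings and that the reindexing between ``$\ell(k)_{j_0}$'' and ``$1$'' in the two formulations is consistent at every level --- in other words, that the relaxed choice of $\ell_{max}$ does not spoil the correspondence. This is where I would spend most of the care, checking it against the worked examples (the Gauss case in Example~\ref{ex:localGauss} and the generalized Jordan--Pochhammer families) to make sure the indices line up.
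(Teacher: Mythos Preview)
Your proposal is correct and follows essentially the same route as the paper's proof: induction on $K$, the recursive description of $\Delta(\mathbf m)$ from Proposition~\ref{prop:wm}~ii) matching the ``new'' roots $\{\alpha_0,\,\alpha_0+\alpha_{j,1}+\cdots+\alpha_{j,\nu}\}$ at each step to the exceptional triples $(k,j_0,\nu_0)$ at level $k$, the pairing computations via \eqref{eq:defKacGRS} and \eqref{eq:Dd2}, and passage to the next level through $v^{-1}s_0$ using $\Lambda(\lambda')-v^{-1}s_0\Lambda(\lambda)\in\mathbb C\Lambda^0$. The paper carries this out more tersely (defining the set $\Xi$ at level $0$, verifying $L(0)=\{(\Lambda(\lambda)|\alpha)+1:\alpha\in\Xi\}$, and invoking induction ``as in the proof of Corollary~\ref{cor:irred}''), but the substance is the same; your anticipated bookkeeping with the $W'_\infty$-factors is exactly what the paper handles by that single line about $\Lambda(\lambda')$.
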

\begin{proof}
Assume $\ord \mathbf m>1$ and use the notation in Theorem~\ref{thm:irred}.
Since $\tilde{\mathbf m}$ may not be monotone, we consider the monotone
tuple $\mathbf m'=s\tilde{\mathbf m}$ in $S'_\infty\tilde{\mathbf m}$
(cf.~Definition~\ref{def:Sinfty}). 
First note that
\[
  d-m_{j,1}+m_{j,\nu} 
  = (\alpha_0+\alpha_{j,1}+\cdots+\alpha_{j,\nu-1}|\alpha_{\mathbf m}).
\]
Let $\bar \nu_j$ be the positive integers defined by
\[
    m_{j,\bar\nu_j+1}\le m_{j,1}-d<m_{j,\bar\nu_j}
\]
for $j=0,\dots,p$.
Then
\[
  \alpha_{\mathbf m'}=v^{-1}\alpha_{\tilde{\mathbf m}}
  \text{ \ with \ }
  v:=\Bigl(\prod_{j=0}^p s_{j,1}\cdots s_{j,\bar\nu_j-1}
  \Bigr)
\]
and $w(\mathbf m)=s_0vs_{\alpha_{\tilde{\mathbf m}}}$ and
\[
\begin{split}
 \Delta(\mathbf m)
 &= \Xi\cup s_0v\Delta(\mathbf m'),\\
 \Xi&:=\{\alpha_0\}\cup
   \bigcup_{\substack{0\le j\le p\\\nu_j\ne 1}}
  \{\alpha_0+\alpha_{j,1}+\cdots+\alpha_{j,\nu}\,;\,
    \nu=1,\ldots,\bar\nu_j-1\}.
\end{split}
\]

Note that $\ell(0)=(1,\dots,1)$ and the condition
$m_{j_0,\nu_0}>m_{j_0,1}-d(0)$ is valid if and only if 
$\nu_0\in\{1,\dots,\bar\nu_{j_0}\}$.
Since
\[
  \sum_{j=0}^p\lambda(0)_{j,1+\delta_{j,j_0}(\nu_0-1)}
 =(\Lambda(\lambda)|\alpha_0+\alpha_{j_0,1}+\cdots+\alpha_{j_0,\nu_0-1})+1,
\]
we have
\[
 L(0)=\bigl\{(\Lambda(\lambda)|\alpha)+1\,;\,\alpha\in\Xi\bigr\}
\]
by denoting
\[
  L(k):=\bigl\{\sum_{j=0}^p\lambda(k)_{j,\ell(k)_j+\delta_{j,j_o}(\nu_o-\ell(k)_j)}\,;\,
  m(k)_{j_o,\nu_o}> m(k)_{j_o,\ell(k)_{j_o}}-d(k)\bigr\}.
\]
Applying $v^{-1}s_0$ to $\mathbf m$ and $\{\lambda_{\mathbf m}\}$,
they changes into $\mathbf m'$ and $\{\lambda'_{\mathbf m'}\}$, respectively,
such that $\Lambda(\lambda') - v^{-1}s_0\Lambda(\lambda)\in\mathbb C\Lambda_0$.
Hence we obtain the corollary by the induction as in the proof of 
Corollary~\ref{cor:irred}.
\end{proof}
\begin{rem} 
Let $\mathbf m$ be an irreducibly realizable monotone tuple in $\mathcal P$.
Fix $\alpha\in\Delta(\mathbf m)$.
We have $\alpha=\alpha_{\mathbf m'}$ with a rigid tuple $\mathbf m'\in\mathcal P$
and
\begin{equation}
 |\{\lambda_{\mathbf m'}\}|=(\Lambda(\lambda)|\alpha).
\end{equation}
\end{rem}
\index{00indexa@$\idx_{\mathbf m}$}
\begin{defn}\label{def:linidx}
Define an \textsl{index}
$\idx_{\mathbf m}\bigl(\ell(\lambda)\bigr)$ 
of 
the non-zero linear form $\ell(\lambda)
=\sum_{j=0}^p\sum_{\nu=1}^{n_j}k_{j,\nu}\lambda_{j,\nu}$
of with $k_{j,\nu}\in\mathbb Z_{\ge 0}$ 
as the positive integer $d_i$ such that
\begin{equation}
 \Bigl\{\sum_{j=0}^p\sum_{\nu=1}^{n_j} k_{j,\nu}\epsilon_{j,\nu}\,;\,
 \epsilon_{j,\nu}\in\mathbb Z\text{ and }\sum_{j=0}^p\sum_{\nu=1}^{n_j}
 m_{j,\nu}\epsilon_{j,\nu}=0\Bigr\} = \mathbb Z d_i.
\end{equation}
\end{defn}
\begin{prop}\label{prop:subrep}
For a rigid tuple $\mathbf m$ in\/ {\rm Corollary~\ref{cor:irred},} define 
rigid tuples $\mathbf m^{(1)},\dots,\mathbf m^{(N)}$ with a non-negative integer 
$N$ so that
$\Delta(\mathbf m)=\{\mathbf m^{(1)},\dots,\mathbf m^{(N)}\}$
and put
\begin{equation}
  \ell_i(\lambda):=\sum_{j=0}^p\sum_{\nu=1}^{n_j} 
  m^{(i)}_{j,\nu}\lambda_{j,\nu}
  \qquad(i=1,\dots,N).
\end{equation}
Here we note that\/ {\rm Theorem~\ref{thm:irrKac}} implies that
$P_{\mathbf m}(\lambda)$ is irreducible if and only if
$\ell_i(\lambda)\notin\mathbb Z$ for $i=1,\dots,n$.

Fix a function $\ell(\lambda)$ of $\lambda_{j,\nu}$ such that 
$\ell(\lambda)=\ell_i(\lambda)-r$ with $i\in\{1,\dots,N\}$ and $r\in\mathbb Z$.
Moreover fix generic complex numbers $\lambda_{j,\nu}\in\mathbb C$ under 
the condition $\ell(\lambda)=|\{\lambda_{\mathbf m}\}|=0$
and a decomposition $P_{\mathbf m}(\lambda)=P''P'$ such that 
$P',\,P''\in W(x)$,  $0<n':=\ord P'<n$ and the differential equation $P'v=0$
is irreducible.
Then there exists an irreducibly realizable
subtuple\/ $\mathbf m'$ of\/ $\mathbf m$ compatible to 
$\ell(\lambda)$ such that 
the monodromy generators $M'_j$ of the equation $P'u=0$
satisfies
\[
  \rank(M_j-e^{2\pi\sqrt{-1}\lambda_{j,1}})
  \cdots(M_j-e^{2\pi\sqrt{-1}\lambda_{j,k}})
  \le m'_{j,k+1}+\cdots+m'_{j,n_j}\quad(k=1,\dots,n_j)
\]
for $j=0,\dots,p$.
Here we define that the decomposition
\begin{equation}\label{eq:subrep}
   \mathbf m=\mathbf m'+\mathbf m''\quad( \mathbf m'\in\mathcal P^{(n')}_{p+1},\ 
   \mathbf m''\in\mathcal P^{(n'')}_{p+1},\ 0<n'<n)
\end{equation}
is \textsl{compatible} to $\ell(\lambda)$ and that $\mathbf m'$ is 
a subtuple of $\mathbf m$
\textsl{compatible} to $\ell(\lambda)$ if the following conditions are valid
\begin{align}
&|\{\lambda_{\mathbf m'}\}|\in\mathbb Z_{\le 0} \text{ \ and \ }
|\{\lambda_{\mathbf m''}\}|\in\mathbb Z,\\
&\text{$\mathbf m'$ is realizable if there exists $(j,\nu)$
such that $m''_{j,\nu}=m_{j,\nu}>0$,}\\
&\text{$\mathbf m''$ is realizable if there exists $(j,\nu)$
such that $m'_{j,\nu}=m_{j,\nu}>0$}.
\end{align}
Here we note $|\{\lambda_{\mathbf m'}\}|+|\{\lambda_{\mathbf m''}\}|=1$
if\/ $\mathbf m'$ and\/ $\mathbf m''$ are rigid.
\end{prop}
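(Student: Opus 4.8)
The plan is to follow the reduction strategy that pervades this paper: translate the reducibility of the universal operator $P_{\mathbf m}(\lambda)$ into a statement about monodromy, use the interpretation of middle convolutions on monodromy tuples, and track how a given decomposition of the underlying local system is transported along the reduction steps $\p_{max}$ until one reaches a rigid (in fact trivial) tuple. Concretely, since $\mathbf m$ is rigid and the $\lambda_{j,\nu}$ are generic under $|\{\lambda_{\mathbf m}\}|=0$ and $\ell(\lambda)=0$, Corollary~\ref{cor:irred} and Theorem~\ref{thm:irred} tell us that the only way $P_{\mathbf m}(\lambda)$ can be reducible is through the vanishing (mod $\mathbb Z$) of one of the linear forms $\ell_i(\lambda)=|\{\lambda_{\mathbf m^{(i)}}\}|$ attached to the roots $\mathbf m^{(i)}\in\Delta(\mathbf m)$, and the hypothesis $\ell(\lambda)=\ell_i(\lambda)-r$ with $r\in\mathbb Z$ places us exactly in that situation. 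So there is a well-defined $i$ with $\ell_i(\lambda)\in\mathbb Z$.

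First I would set up the monodromy side. By Remark~\ref{rem:SCFmc} and Theorem~\ref{thm:mcMC}, the monodromy $\mathbf M=(M_0,\dots,M_p)$ of $P_{\mathbf m}(\lambda)u=0$ is obtained from a trivial one-dimensional system by successive $\MC_\rho$ and multiplications, matching the word $w_{\mathbf m}$ in $W_{\!\infty}$; the given factorization $P_{\mathbf m}(\lambda)=P''P'$ with $P'v=0$ irreducible of order $n'$ yields an invariant subspace $V'\subset\mathbb C^n$ of dimension $n'$ on which the $M_j$ restrict to the monodromy of $P'$. The core of the argument is to run the reduction $\p_{max}^k$ (equivalently, apply the relevant $\MC$'s and multiplications of Theorems~\ref{thm:schmid} and~\ref{thm:Mmid}) and carry $V'$ along: because middle convolution preserves irreducibility and its effect on Jordan data is explicit (Theorem~\ref{thm:Mmid} ii)), the subrepresentation on $V'$ reduces to a subrepresentation of the reduced monodromy, whose underlying tuple of partitions changes by the corresponding reflection $s_{\alpha_\ell}$. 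Since $\ell_i(\lambda)\in\mathbb Z$, Lemma~\ref{lem:irrred} (the Scott's-lemma inequality \eqref{eq:SL1}) guarantees at each stage that the relevant $\sum m_{j,\ell_j}\le(p-1)\ord$, i.e. that the reduction of the big equation does not force the subequation to collapse, so the subrepresentation persists all the way down. At the terminal stage $\mathbf m$ is rigid, the monodromy is the one attached to $\alpha_0$ plus the reductions, and the surviving subrepresentation pins down a rigid tuple $\mathbf m^{(i)}$ with $\mathbf m=\mathbf m^{(i)}+(\mathbf m-\mathbf m^{(i)})$; its generic local monodromy gives exactly the rank estimates claimed, namely $\rank\prod_{\nu=1}^k(M_j-e^{2\pi\sqrt{-1}\lambda_{j,\nu}})\le m'_{j,k+1}+\cdots+m'_{j,n_j}$.

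Next I would verify the compatibility conditions. The integrality $|\{\lambda_{\mathbf m'}\}|\in\mathbb Z_{\le 0}$ comes from two inputs: that $\mathbf m'=\mathbf m^{(i)}$ (or a translate thereof under the reductions) has $|\{\lambda_{\mathbf m'}\}|=(\Lambda(\lambda)|\alpha_{\mathbf m'})$ equal to $\ell_i(\lambda)$ up to the integer $r$ and the shifts introduced by the additions in $\p_{max}$, so it lies in $\mathbb Z$; and that it is $\le 0$ because $P'v=0$ being a proper irreducible factor forces, via the Fuchs-relation inequality \eqref{eq:FC1}/\eqref{eq:FC12} of \S\ref{sec:reg}, that $|\{\lambda_{\mathbf m'}\}|\notin\mathbb Z_{>0}$ (otherwise the Riemann scheme of $P'$ would violate the local-exponent constraints, or equivalently the complementary tuple would be the one with a nonpositive value). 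The realizability clauses for $\mathbf m'$ and $\mathbf m''$ in the degenerate cases where some $m_{j,\nu}=m_{j,\nu}'$ follow because the corresponding local monodromy generator of $P'$ (resp.\ $P''$) then equals the full $M_j$ up to the unconstrained block, so that factor is itself Fuchsian with an honest Riemann scheme of the required spectral type, which by Theorem~\ref{thm:univmodel} is realizable.

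The main obstacle I anticipate is the bookkeeping in the descent: one must show that the subspace $V'$, which is only given as an invariant subspace of the monodromy of the \emph{original} equation, really descends through each $\MC_\rho$ to an invariant subspace of the \emph{reduced} monodromy of the predicted dimension, and that the spectral type of this descended subrepresentation is governed by the same reflection $s_{\alpha_\ell}$ that governs $\mathbf m$ — in other words, that the diagram of Theorem~\ref{thm:KatzKac} is compatible with passing to subrepresentations when the relevant linear form is integral. This requires a careful use of the kernel/image description of $\MC_\rho$ in \S\ref{sec:MM} together with Scott's lemma to control $\dim\ker(M'_j-1)$ and $\dim\ker(\bar M'_j-1)$ simultaneously; the genericity of $\lambda_{j,\nu}$ under the single relation $\ell(\lambda)=0$ is what prevents any accidental extra degeneration and keeps all the intermediate tuples realizable. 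Once this descent is established, reading off the rank inequalities at the rigid endpoint and transporting them back up (ranks of $\prod(M_j-e^{2\pi\sqrt{-1}\lambda_{j,\nu}})$ are preserved under the relevant conjugations and are only constrained, never improved, by middle convolution) finishes the proof.
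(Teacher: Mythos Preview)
Your approach is significantly more elaborate than the paper's, and the central descent step you propose has a real gap that you yourself flag but do not close.

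The paper's proof is entirely local and direct. The factorization $P_{\mathbf m}(\lambda)=P''P'$ with $P'v=0$ irreducible is \emph{given} in the hypothesis; one simply reads off $\mathbf m'$ from the local data of $P'$. Since the solutions of $P'v=0$ form an $n'$-dimensional subspace of the solutions of $P_{\mathbf m}(\lambda)u=0$, the characteristic exponents of $P'$ at each $c_j$ lie among those of $P_{\mathbf m}$, shifted by integers within each block $\{\lambda_{j,\nu},\lambda_{j,\nu}+1,\dots,\lambda_{j,\nu}+m_{j,\nu}-1\}$; under the genericity hypothesis these blocks are disjoint modulo $\mathbb Z$, so $m'_{j,\nu}$ is just the number of exponents of $P'$ falling in the $\nu$-th block. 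The Fuchs relation \eqref{eq:FC1} for $P'$ (allowing apparent singularities, which only contribute nonnegative integers) gives $|\{\lambda_{\mathbf m'}\}|\in\mathbb Z_{\le 0}$ at once via Remark~\ref{rem:generic}~ii). The rank inequalities for $M'_j$ are immediate from the local solution structure since $M'_j$ is the restriction of $M_j$ to an invariant subspace. Finally, if some $m'_{j_o,\nu_o}=0$ then the local eigenvalues of the monodromy of $P'$ are still generic, so irreducibility of $P'v=0$ forces $\mathbf m'$ to be (irreducibly) realizable; the argument for $\mathbf m''$ is symmetric. No middle convolutions, no descent.

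Your plan to transport $V'$ through successive $\MC_\rho$'s is unnecessary and, as you concede, technically problematic: Theorem~\ref{thm:Mmid} describes $\MC_\rho$ on tuples satisfying \eqref{eq:mulstar}--\eqref{eq:mulss}, and a proper subrepresentation of a reducible $\mathbf M$ need not satisfy them; moreover $\MC_\rho$ is defined as a quotient, so it does not obviously commute with passage to subrepresentations. You also seem to expect the descent to pin down $\mathbf m'$ as a specific $\mathbf m^{(i)}\in\Delta(\mathbf m)$, but the proposition asserts no such thing---$\mathbf m'$ is determined by the \emph{given} factor $P'$, not by the root system.
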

\begin{proof}
The equation $P_{\mathbf m}(\lambda)u=0$ is reducible since $\ell(\lambda)=0$.
We may assume $\lambda_{j,\nu}-\lambda_{j,\nu'}\ne 0$
for $1\le \nu<\nu'\le n_j$ and $j=0,\dots,p$.
The solutions of the equation define the map $\mathcal F$ given by \eqref{eq:Fmap}
and the reducibility implies the existence of an irreducible 
submap $\mathcal F'$ such that $\mathcal F'(U)\subset \mathcal F(U)$ and 
$0<n':=\dim\mathcal F'(U)<n$.
Then $\mathcal F'$ defines a irreducible Fuchsian differential equation 
$P'v=0$ which has regular singularities at $x=c_0=\infty,c_1,\dots,c_p$ 
and may have other apparent singularities $c'_1,\dots,c'_q$.
Then the characteristic exponents of $P'$ at the singular points are as
follows.

There exists a decomposition $\mathbf m=\mathbf m'+\mathbf m''$
such that $\mathbf m'\in\mathcal P^{(n')}$ and 
$\mathbf m''\in\mathcal P^{(n'')}$ with $n'':=n-n'$.
The sets of characteristic exponents of $P'$ at $x=c_j$ are
$\{\lambda'_{j,\nu,i}\,;\,i=1,\dots,m'_{j,\nu},\ 
\nu=1,\dots,n\}$ which satisfy
\[
  \lambda'_{j,\nu,i}-\lambda_{j,\nu}\in\{0,1,\dots,m_{j,\nu}-1\}\text{ \ and \ }
  \lambda'_{j,\nu,1}<\lambda'_{j,\nu,2}<\cdots<\lambda'_{j,\nu,m'_{j,\nu}}
\]
for $j=0,\dots,p$.  
The sets of characteristic exponents at $x=c'_j$
are $\{\mu_{j,1},\dots,\mu_{j,n'}\}$, which satisfy
$\mu_{j,i}\in\mathbb Z$ and $0\le \mu_{j,1}<\cdots<\mu_{j,n'}$
for $j=1,\dots,q$. Then Remark~\ref{rem:generic} ii) says that
the Fuchs relation of the equation $P'v=0$ implies
$|\{\lambda_{\mathbf m'}\}|\in\mathbb Z_{\le 0}$.

Note that there exists a Fuchsian differential operator $P''\in W(x)$ such 
that $P=P''P'$.
If there exists $j_o$ and $\nu_o$ such that $m'_{j_o,n_o}=0$, namely,
$m''_{j_o,\nu_o}=m_{j_o,\nu_o}>0$, the exponents of the monodromy generators
of the solution $P'v=0$ are generic and hence $\mathbf m'$ should be 
realizable.
The same claim is also true for the tuple $\mathbf m''$.
Hence we have the proposition.
\end{proof}
\begin{exmp}\label{exmp:irred}
{\rm i)}\index{hypergeometric equation/function!Gauss!reducibility}
The reduction of the universal operator with the spectral type
$11,11,11$ which is given by Theorem~\ref{thm:irred} is
\begin{equation}\begin{split}
 &\begin{Bmatrix}
  x=\infty & 0 & 1\\
  \lambda_{0,1} & \lambda_{1,1} & \lambda_{2,1}\\
  \lambda_{0,2} & \lambda_{1,2} & \lambda_{2,2}
 \end{Bmatrix}\quad(\sum \lambda_{j,\nu}=1)\\
 &\longrightarrow
 \begin{Bmatrix}
  x=\infty & 0 & 1\\
  2\lambda_{0,2}+\lambda_{1,1}+\lambda_{2,1} 
  & -\lambda_{0,2}-\lambda_{2,2} & -\lambda_{0,2}-\lambda_{1,2}
 \end{Bmatrix}
\end{split}\label{eq:redGG}
\end{equation}
because $\mu=\lambda_{0,1} + \lambda_{1,1} + \lambda_{2,1} -1
 =-\lambda_{0,2} - \lambda_{1,2} - \lambda_{2,2}$.
Hence the necessary and sufficient condition for the irreducibility of 
the universal operator given by \eqref{eq:irrmax} is
\begin{equation*}
 \begin{cases}
 \lambda_{0,1}+\lambda_{1,1}+\lambda_{2,1}\notin\mathbb Z,\\
 \lambda_{0,2}+\lambda_{1,1}+\lambda_{2,1}\notin\mathbb Z,\\
 \lambda_{0,1}+\lambda_{1,2}+\lambda_{2,1}\notin\mathbb Z,\\
 \lambda_{0,1}+\lambda_{1,1}+\lambda_{2,2}\notin\mathbb Z,
 \end{cases}
\end{equation*}
which is equivalent to
\begin{equation}
 \lambda_{0,i}+\lambda_{1,1}+\lambda_{2,j}\notin\mathbb Z
 \quad\text{for \ }i=1,2 \text{ and }j=1,2.
\end{equation}
The rigid tuple $\mathbf m=11,11,11$ corresponds to the real root
$\alpha_{\mathbf m}=
2\alpha_0+\alpha_{0,1}+\alpha_{1,1}+\alpha_{2,1}$ under the notation
in \S\ref{sec:KM}. 
Then $\Delta(\mathbf m)
 =\{\alpha_0,\alpha_0+\alpha_{j,1}\,;\,j=0,1,2\}$
and $(\Lambda|\alpha_0)=\lambda_{0,1}+\lambda_{1,1}+\lambda_{2,1}$
and $(\Lambda|\alpha_0+\alpha_{0,1})=\lambda_{0,2}+\lambda_{1,1}+\lambda_{2,1}$, 
etc. under the notation in Theorem~\ref{thm:irrKac}.

The Riemann scheme for the Gauss hypergeometric series 
${}_2F_1(a,b,c;z)$ is given by
$\begin{Bmatrix}
  x=\infty & 0 & 1\\
  a & 0 & 0\\
  b & 1-c & c-a-b
\end{Bmatrix}$
and therefore the condition for the irreducibility is
\begin{equation}
 a\notin\mathbb Z,\ b\notin\mathbb Z,\ c-b\notin\mathbb Z\text{ \ and \ } c-a\notin\mathbb Z.
\end{equation}

{\rm ii)}  The reduction of the Riemann scheme for the equation
corresponding to ${}_3F_2(\alpha_1,\alpha_2,\alpha_3,\beta_1,\beta_2;x)$ is
\begin{equation}\begin{split}
 &\begin{Bmatrix}
  x = \infty & 0 & 1\\
  \alpha_1 & 0 & [0]_{(2)} \\
  \alpha_2 & 1-\beta_1 & -\beta_3\\
  \alpha_3 & 1-\beta_2 
 \end{Bmatrix}\qquad(\sum_{i=1}^3\alpha_i=\sum_{i=1}^3\beta_i)\\
&\longrightarrow
 \begin{Bmatrix}
  x = \infty & 0 & 1\\
  \alpha_2-\alpha_1+1 & \alpha_1-\beta_1 & 0\\
  \alpha_3-\alpha_1+1 & \alpha_1-\beta_2 & \alpha_1-\beta_3-1
 \end{Bmatrix}
\end{split}\end{equation}
with $\mu=\alpha_1-1$.
Hence Theorem~\ref{thm:irred} says that the condition for the irreducibility equals
\begin{align*}
 &\begin{cases}
 \alpha_i\notin\mathbb Z&(i=1,2,3),\\
 \alpha_1-\beta_j\notin\mathbb Z&(j=1,2)
 \end{cases}
\intertext{together with}
 &\alpha_i-\beta_j\notin\mathbb Z\qquad(i=2,3,\ j=1,2).
\end{align*}
Here the second condition follows from {\rm i)}.
Hence the condition for the irreducibility is
\begin{equation}
 \alpha_i\notin\mathbb Z\text{ \ and \ }
 \alpha_i-\beta_j\notin\mathbb Z\quad(i=1,2,3,\ j=1,2).
\end{equation}

{\rm iii)}  The reduction of the even family is as follows:
\begin{align*}
 \begin{Bmatrix}
  x = \infty & 0 & 1\\
  \alpha_1 & [0]_{(2)} & [0]_{(2)} \\
  \alpha_2 & 1-\beta_1 & [-\beta_3]_{(2)}\\
  \alpha_3 & 1-\beta_2\\
  \alpha_4
 \end{Bmatrix}
 \longrightarrow&
 \begin{Bmatrix}
   x = \infty & 0 & 1\\
  \alpha_2-\alpha_1+1 & 0 &  0 \\
  \alpha_3-\alpha_1+1 & \alpha_1-\beta_1 & [\alpha_1-\beta_3-1]_{(2)}\\
  \alpha_4-\alpha_1+1 & \alpha_1-\beta_2
 \end{Bmatrix}\\
 \xrightarrow{(x-1)^{-\alpha_1+\beta_3+1}}&
  \begin{Bmatrix}
   x = \infty & 0 & 1\\
  \alpha_2-\beta_3 & 0 &  -\alpha_1+\beta_3+1 \\
  \alpha_3-\beta_3 & \alpha_1-\beta_1 & [0]_{(2)}\\
  \alpha_4-\beta_3 & \alpha_1-\beta_2
 \end{Bmatrix}.
\end{align*}
Hence the condition for the irreducibility is
\begin{align*}
 &\begin{cases}
   \alpha_i\notin\mathbb Z&(i=1,2,3,4),\\
   \alpha_1-\beta_3\notin\mathbb Z
  \end{cases}
\intertext{together with}
  &\begin{cases}
  \alpha_i-\beta_3\notin\mathbb Z&(i=2,3,4).\\
  \alpha_1+\alpha_i-\beta_j-\beta_3\notin\mathbb Z&(i=2,3,4,\ j=1,2)
  \end{cases}
\end{align*}
by the result in ii).
Thus the condition is
\begin{equation}
 \begin{split}
 &\alpha_i\notin\mathbb Z,\ \alpha_i-\beta_3\notin\mathbb Z\text{ and } 
 \alpha_1+\alpha_k-\beta_j-\beta_3\notin\mathbb Z\\
 &\qquad\qquad(i=1,2,3,4,\ j=1,2,\ k=2,3,4).
 \end{split}
\end{equation}
Hence the condition for the irreducibility
for the equation with the Riemann scheme
\begin{equation}
 \begin{Bmatrix}
  \lambda_{0,1} & [\lambda_{1,1}]_{(2)} & [\lambda_{2,1}]_{(2)}\\
  \lambda_{0,2} & \lambda_{1,2} & [\lambda_{2,2}]_{(2)}\\
  \lambda_{0,3} & \lambda_{1,3}\\
  \lambda_{0,4}
 \end{Bmatrix}
\end{equation}
 of type $1111,211,22$ is
\begin{equation}\label{eq:e4irred}
 \begin{cases}
 \lambda_{0,\nu} + \lambda_{1,1}+\lambda_{2,k}\notin\mathbb Z
  &(\nu=1,2,3,4,\ k=1,2)\\
 \lambda_{0,\nu}+\lambda_{0,\nu'} + \lambda_{1,1}+ \lambda_{1,2} + \lambda_{2,1}
 +\lambda_{2,2}\notin\mathbb Z&(1\le\nu<\nu'\le 4).
 \end{cases}
\end{equation}
This condition corresponds to the rigid decompositions
\begin{equation}\label{eq:e4ddec}
 1^4,21^2,2^2 = 1,10,1\oplus1^3,11^2,21 = 1^2,11,1^2\oplus1^2,11,1^2,
\end{equation}
which are also important in the connection formula.

{\rm iv)} (generalized Jordan-Pochhammer)\index{Jordan-Pochhammer!generalized}
The reduction of the universal operator of the rigid spectral type
$32,32,32,32$ is as follows:
\begin{align*}
 &\begin{Bmatrix}
  [\lambda_{0,1}]_{(3)} & [\lambda_{1,1}]_{(3)}
  &[\lambda_{2,1}]_{(3)} & [\lambda_{3,1}]_{(3)}\\
  [\lambda_{0,2}]_{(2)} & [\lambda_{1,2}]_{(2)}  & [\lambda_{2,2}]_{(2)} 
  & [\lambda_{3,2}]_{(2)}
 \end{Bmatrix}
 \quad(3\sum_{j=0}^3\lambda_{j,1}+2\sum_{j=0}^3\lambda_{j,2}=4)\\
 &\longrightarrow
 \begin{Bmatrix}
  \lambda_{0,1}-2\mu & \lambda_{1,1} &\lambda_{2,1} & \lambda_{3,1}\\
  [\lambda_{0,2}-\mu]_{(2)}& [\lambda_{1,2}+\mu]_{(2)} & [\lambda_{2,2}+\mu]_{(2)} 
  & [\lambda_{3,2}+\mu]_{(2)}
 \end{Bmatrix}
\end{align*}
with $\mu=\lambda_{0,1}+\lambda_{1,1}+\lambda_{2,1}+\lambda_{3,1}-1$.
Hence the condition for the irreducibility is
\begin{equation}\label{eq:red32}
 \begin{cases}
  \sum_{j=0}^3\lambda_{j,1+\delta_{j,k}}\notin\mathbb Z&(k=0,1,2,3,4),\\
  \sum_{j=0}^3(1+\delta_{j,k})\lambda_{j,1}
  +\sum_{j=0}^3(1-\delta_{j,k})\lambda_{j,2}\notin\mathbb Z  &(k=0,1,2,3,4).
 \end{cases}
\end{equation}
Note that under the notation defined by Definition~\ref{def:linidx} we have
\begin{equation}\label{eq:32idx}
 \idx_{\mathbf m}
 \bigl(\lambda_{0,1}+\lambda_{1,1}+\lambda_{2,1}+\lambda_{3,1}\bigr)=2
\end{equation}
and the index of any other linear form in \eqref{eq:red32} is 1.

In general, the universal operator with the Riemann scheme
\begin{align}
 \begin{split}
  &\begin{Bmatrix}
  [\lambda_{0,1}]_{(k)} & [\lambda_{1,1}]_{(k)}
  &[\lambda_{2,1}]_{(k)} & [\lambda_{3,1}]_{(k)}\\
  [\lambda_{0,2}]_{(k-1)} & [\lambda_{1,2}]_{(k-1)}  & [\lambda_{2,2}]_{(k-1)} 
  & [\lambda_{3,2}]_{(k-1)}
 \end{Bmatrix}\\
 &\qquad(k\sum_{j=0}^3\lambda_{j,1}+(k-1)\sum_{j=0}^3\lambda_{j,2}=2k)
\end{split}
\end{align}
is irreducible if and only if
\begin{equation}\label{eq:redgen}
 \begin{cases}
  \sum_{j=0}^3(\nu-\delta_{j,k})\lambda_{j,1}
  +\sum_{j=0}^3(\nu-1+\delta_{j,k})\lambda_{j,1} \notin\mathbb Z&(k=0,1,2,3,4),\\
  \sum_{j=0}^3(\nu'+\delta_{j,k})\lambda_{j,1}
  +\sum_{j=0}^3(\nu'-\delta_{j,k})\lambda_{j,2}\notin\mathbb Z  &(k=0,1,2,3,4),
 \end{cases}
\end{equation}
for any integers $\nu$ and $\nu'$ satisfying 
$1\le 2\nu\le k$ and $1\le 2\nu'\le k-1$.

The rigid decomposition
\begin{equation}
  65,65,65,65=12,21,21,21\oplus 53,44,44,44
\end{equation}
gives an example of the decomposition $\mathbf m=\mathbf m'\oplus\mathbf m''$
with $\supp\alpha_{\mathbf m}=\supp\alpha_{\mathbf m'}=\supp\alpha_{\mathbf m''}$.

{\rm v)} The rigid Fuchsian differential equation with the Riemann scheme
\index{tuple of partitions!rigid!831,93,93,93}
\[
 \begin{Bmatrix}
 x=0 & 1& c_3& c_4&\infty\\
[0]_{(9)} & [0]_{(9)} & [0]_{(9)} & [0]_{(9)} & [e_0]_{(8)}\\
[a]_{(3)} & [b]_{(3)} & [c]_{(3)} & [d]_{(3)} & [e_1]_{(3)}\\
 &  &  &  & e_2
\end{Bmatrix}
\]
is reducible when
\[
 a+b+c+d+3e_0+e_1\in\mathbb Z,
\]
which is equivalent to $\frac13(e_0-e_2-1)\in\mathbb Z$
under the Fuchs relation.
At the generic point of this reducible condition, 
the spectral types of the decomposition
in the Grothendieck group of the monodromy is
\[
  93,93,93,93,831=31,31,31,31,211+31,31,31,31,310+31,31,31,31,310.
\]
Note that the following reduction of the spectral types
\[
\begin{matrix}
 93,93,93,93,831&\to&13,13,13,13,031&\to&10,10,10,10,001\\ 
 31,31,31,31,211&\to&11,11,11,11,011\\
 31,31,31,31,310&\to&01,01,01,01,010
\end{matrix}
\]
and $\idx(31,31,31,31,211)=-2$.
\end{exmp}
\section{Shift operators}\label{sec:shift}
In this section we study an integer shift of spectral parameters $\lambda_{j,\nu}$ 
of the Fuchsian equation $P_{\mathbf m}(\lambda)u=0$.
Here $P_{\mathbf m}(\lambda)$  is the universal operator
(cf.~Theorem~\ref{thm:univmodel})
corresponding to the spectral type $\mathbf m
=(m_{j,\nu}\bigr)_{\substack{j=0,\dots,p\\\nu=1,\dots,n_j}}$.
For simplicity, we assume that $\mathbf m$ is rigid in this section
unless otherwise stated.
\subsection{Construction of shift operators and recurrence relations}
\label{sec:shift1}
First we construct shift operators for general shifts.
\begin{defn}
For $\mathbf m=\bigl(m_{j,\nu}\bigr)_{\substack{j=0,\dots,p\\\nu=1,\dots,n_j}}
\in\mathcal P^{(n)}_{p+1}$, a set of integers 
$\bigl(\epsilon_{j,\nu}\bigr)_{\substack{j=0,\dots,p\\\nu=1,\dots,n_j}}$
parametrized by $j$ and $\nu$ is called a \textsl{shift compatible to} 
$\mathbf m$ if 
\index{characteristic exponent!(compatible) shift}
\begin{equation}
 \sum_{j=0}^p\sum_{\nu=1}^{n_j}\epsilon_{j,\nu}m_{j,\nu}=0.
\end{equation}
\end{defn}
\index{shift operator}
\index{00Rme@$R_{\mathbf m}(\epsilon,\lambda)$}
\begin{thm}[shift operator]\label{thm:irredrigid}
Fix a shift\/ $(\epsilon_{j,\nu})$ compatible to 
$\mathbf m\in\mathcal P^{(n)}_{p+1}$.
Then there is a \textsl{shift operator}\index{shift operator}
$R_{\mathbf m}(\epsilon,\lambda)\in W[x]
\otimes\mathbb C[\lambda_{j,\nu}]$ which gives 
a homomorphism of the equation $P_{\mathbf m}(\lambda')v=0$ to 
$P_{\mathbf m}(\lambda)u=0$ defined by $v=R_{\mathbf m}(\epsilon,\lambda)u$.
Here the Riemann scheme of $P_{\mathbf m}(\lambda)$ is
$\{\lambda_{\mathbf m}\}=\bigl\{[\lambda_{j,\nu}]_{(m_{j,\nu})}\bigr\}
_{\substack{j=0,\dots,p\\\nu=1,\dots,n_j}}$
and that of $P_{\mathbf m}(\lambda')$ is  $\{\lambda'_{\mathbf m}\}$ 
defined by $\lambda'_{j,\nu}=\lambda_{j,\nu}+\epsilon_{j,\nu}$.
Moreover we may assume $\ord R_{\mathbf m}(\epsilon,\lambda) < \ord\mathbf m$
and $R_{\mathbf m}(\epsilon,\lambda)$ never vanishes as a function of $\lambda$
and then $R_{\mathbf m}(\epsilon,\lambda)$ is uniquely determined up to a constant multiple. 

Putting
\begin{equation}\label{eq:sfttau}
 \tau=\bigl(\tau_{j,\nu}\bigr)
  _{\substack{0\le j\le p\\1\le\nu\le n_j}} \text{ \ with \ }
 \tau_{j,\nu}:=\bigl(2+(p-1)n\bigr)\delta_{j,0}-m_{j,\nu}
\end{equation}
and $d=\ord R_{\mathbf m}(\epsilon,\lambda)$, 
we have
\begin{equation}\label{eq:Sid}
  P_{\mathbf m}(\lambda+\epsilon)R_{\mathbf m}(\epsilon,\lambda)
  =(-1)^dR_{\mathbf m}(\epsilon,\tau-\lambda-\epsilon)^*
    P_{\mathbf m}(\lambda) 
\end{equation}
under the notation in\/ {\rm Theorem~\ref{thm:prod} ii).}
\end{thm}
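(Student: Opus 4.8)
The plan is to build $R_{\mathbf m}(\epsilon,\lambda)$ by descending along the reduction chain that trivializes $P_{\mathbf m}$ and then transporting a manifestly trivial shift operator back up. Since $\mathbf m$ is rigid, iterating $\p_{\max}$ produces as in Definition~\ref{def:redGRS} a chain $P_{\mathbf m}(\lambda)=P(0)\to P(1)\to\cdots\to P(K)$ with $\ord P(K)=\ord f\mathbf m=1$, where each step $P(k)\to P(k+1)$ is, up to a left factor in $\mathbb C(x)$, the operator $\p_{\ell(k)}$ of \eqref{eq:opred}, a composition of operators $\RAd\bigl((x-c_j)^{a}\bigr)$ and one $\Ad(\p^{-\mu(k)})$-conjugation. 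First I would check that the integer vector $\epsilon$ transforms along the chain exactly as $\lambda$ does in Definition~\ref{def:redGRS}: under the identification of \S\ref{sec:KM} the compatibility $\sum_{j,\nu}\epsilon_{j,\nu}m_{j,\nu}=0$ is the orthogonality of the translation vector of $\epsilon$ to $\alpha_{\mathbf m}$, which is preserved by the Weyl group, and integrality is preserved because each intermediate middle-convolution shift $\mu_\epsilon(k)=\sum_j\epsilon(k)_{j,\ell(k)_j}$ is a sum of integers. Hence $\epsilon(k)$ is a $\mathbb Z$-valued shift compatible to $\mathbf m(k)$, and for the order-one operator $P(K)=P_{f\mathbf m}(\lambda(K))$ the shift operator is simply multiplication by the rational function $\prod_{j=1}^p(x-c_j)^{\epsilon(K)_{j,1}}$, whose image solves the shifted order-one equation by inspection.

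Next I would transport this operator upward one step at a time. For an addition $\RAd\bigl((x-c_j)^a\bigr)$ applied simultaneously to source and target, a homomorphism $R$ becomes the conjugate $\Red\bigl(\Adei(\tfrac a{x-c_j})(R)\bigr)$, still in $W(x)$ with coefficients rational in $\lambda$. For the step $\Ad(\p^{-\mu})$ applied to the source (with parameter $\mu=\mu(k)$) and to the target (with parameter $\mu+\mu_\epsilon(k)$), Proposition~\ref{prop:config0} and Proposition~\ref{prop:RAdIc} give: if $R\phi$ solves the target before this step, then $I^{\mu}_c(R\phi)=\tilde Q\,I^{\mu}_c(\phi)$ with $\tilde Q=\p^{\ell-\mu}\circ R\circ\p^{\mu}\in W[x]$ for a suitable $\ell\in\mathbb Z$, whence $I^{\mu+\mu_\epsilon(k)}_c(R\phi)=\p^{-\mu_\epsilon(k)}I^{\mu}_c(R\phi)=\bigl(\p^{\ell-\mu_\epsilon(k)-\mu}\circ R\circ\p^{\mu}\bigr)I^{\mu}_c(\phi)$, using $I_c^{\mu_\epsilon(k)}=\p^{-\mu_\epsilon(k)}$ because $\mu_\epsilon(k)\in\mathbb Z$ (cf.~\eqref{eq:Icdif}). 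The essential point is that all fractional powers of $\p$ and all non-integral powers of $x-c_j$ cancel precisely because the shifts, in particular the shift of the middle-convolution parameter, are integers; this is what keeps the transported operator a genuine element of $W(x)$ rather than a microdifferential operator. Carrying this through all $K$ steps gives $R\in W(x)\otimes\mathbb C(\lambda)$ with $P_{\mathbf m}(\lambda+\epsilon)R\in W(x)P_{\mathbf m}(\lambda)$; replacing $R$ by its right remainder modulo $P_{\mathbf m}(\lambda)$ makes $\ord R<\ord\mathbf m$, and clearing denominators in $x$ and in $\lambda$ and cancelling any common polynomial-in-$\lambda$ factor yields $R_{\mathbf m}(\epsilon,\lambda)\in W[x]\otimes\mathbb C[\lambda_{j,\nu}]$ that never vanishes identically in $\lambda$. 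For uniqueness: for generic $\lambda$ both $P_{\mathbf m}(\lambda)u=0$ and $P_{\mathbf m}(\lambda+\epsilon)v=0$ are irreducible (Corollary~\ref{cor:irred}), their solution spaces are monodromy-irreducible, and since $e^{2\pi\sqrt{-1}\epsilon_{j,\nu}}=1$ and $\mathbf m$ is rigid they have the same local monodromies, hence are $W(x)$-isomorphic by \eqref{eq:isoWM}; Schur's lemma then makes the space of $W(x)$-homomorphisms one-dimensional over $\mathbb C$, so $R_{\mathbf m}(\epsilon,\lambda)$ is determined up to $\mathbb C^\times$ and $d:=\ord R_{\mathbf m}(\epsilon,\lambda)$ is independent of $\lambda$.

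For the identity \eqref{eq:Sid}, the homomorphism property gives $P_{\mathbf m}(\lambda+\epsilon)R_{\mathbf m}(\epsilon,\lambda)=Q\,P_{\mathbf m}(\lambda)$ for a unique $Q\in W(x)$, of order $d$ by counting orders. Applying the formal adjoint and using $(AB)^*=B^*A^*$ gives $R_{\mathbf m}(\epsilon,\lambda)^*\,P_{\mathbf m}(\lambda+\epsilon)^*=P_{\mathbf m}(\lambda)^*\,Q^*$. Now for the normal form \eqref{eq:FNF} one has $P^*=\Ad\bigl(\prod_{j=1}^p(x-c_j)^{-n}\bigr)(P^\vee)$; combining the Riemann scheme of $P^\vee$ from Theorem~\ref{thm:prod} ii) with this addition, which shifts the exponents at the finite points by $-n$ and the one at $\infty$ by $+pn$, shows that $P_{\mathbf m}(\lambda)^*$ is Fuchsian of spectral type $\mathbf m$ with exponents $\tau_{j,\nu}-\lambda_{j,\nu}$, $\tau$ as in \eqref{eq:sfttau}; since $\mathbf m$ is rigid the Fuchsian normal-form operator with given exponents is unique, so $P_{\mathbf m}(\lambda)^*=(-1)^n P_{\mathbf m}(\tau-\lambda)$. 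The preceding identity therefore reads $R_{\mathbf m}(\epsilon,\lambda)^*\,P_{\mathbf m}(\tau-\lambda-\epsilon)=P_{\mathbf m}(\tau-\lambda)\,Q^*$, which says that $Q^*$ maps solutions of $P_{\mathbf m}(\tau-\lambda-\epsilon)v=0$ to solutions of $P_{\mathbf m}(\tau-\lambda)w=0$; that is, $Q^*$ is a shift operator with the same shift $\epsilon$ based at $\tau-\lambda-\epsilon$, so by uniqueness $Q^*=c\,R_{\mathbf m}(\epsilon,\tau-\lambda-\epsilon)$ and $Q=c\,R_{\mathbf m}(\epsilon,\tau-\lambda-\epsilon)^*$ for some $c\in\mathbb C(\lambda)$. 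Comparing the coefficients of $\p^{\,n+d}$ on the two sides, and using that the construction produces $R_{\mathbf m}(\epsilon,\cdot)$ with a fixed monomial in the $x-c_j$ as leading coefficient so that the normalizations at $\lambda$ and at $\tau-\lambda-\epsilon$ agree, leaves only the sign $(-1)^d$ coming from the adjoint of a $d$-th order operator, giving $c=(-1)^d$ and hence \eqref{eq:Sid}.

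The step I expect to be the main obstacle is the bookkeeping in the upward transport of the second paragraph: one must check carefully that every fractional power of $\p$ and every non-integral power of $x-c_j$ appearing along the $K$ reduction steps really cancels, so that the result genuinely lies in $W(x)$ and has coefficients rational in $\lambda$; a secondary delicate point is making the identification $P_{\mathbf m}(\lambda)^*=(-1)^nP_{\mathbf m}(\tau-\lambda)$ precise enough — an equality of normal forms, not merely up to a left rational factor — to guarantee that the scalar $c$ is constant and then equal to $(-1)^d$.
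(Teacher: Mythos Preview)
Your overall strategy—transport along the $\p_{\max}$ chain and formal adjunction for \eqref{eq:Sid}—is the paper's. Your route to \eqref{eq:Sid} via $P_{\mathbf m}(\lambda)^*=(-1)^nP_{\mathbf m}(\tau-\lambda)$ is a harmless repackaging of the paper's use of $P^\vee=P_{\mathbf m}(\rho-\lambda)$ followed by conjugation by $f=\prod_j(x-c_j)^n$; both arguments leave the pinning of the constant $(-1)^d$ to a leading-coefficient comparison that depends on the chosen normalization of $R_{\mathbf m}$.

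There is, however, a genuine gap in the upward transport. Proposition~\ref{prop:config0} does \emph{not} give $I^{\mu}_c(R\phi)=\tilde Q\,I^{\mu}_c(\phi)$; it gives $\p^\ell I^{\mu}_c(R\phi)=\tilde Q\,I^{\mu}_c(\phi)$ with $\tilde Q=\p^{\ell-\mu}R\p^{\mu}\in W[x]$ only for $\ell$ sufficiently large. Combined with $\p^{\mu_\epsilon(k)}I^{\mu+\mu_\epsilon(k)}_c=I^{\mu}_c$ this yields only the relation
\[
\p^{\ell+\mu_\epsilon(k)}\,v=\tilde Q\,u,\qquad v=I^{\mu+\mu_\epsilon(k)}_c(R\phi),\ \ u=I^{\mu}_c(\phi),
\]
and when $\ell+\mu_\epsilon(k)>0$ this does not express $v$ as a differential operator applied to $u$: integrality of $\mu_\epsilon(k)$ replaces fractional powers of $\p$ by integer powers, but a positive integer power of $\p$ remains on the wrong side (it is differentiation of $v$, not of $u$). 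The paper's proof is exactly this relation—written there as $R_2v=R_1u$ via \eqref{eq:sht1}--\eqref{eq:sht3} with $R_1,R_2\in W[x;\lambda]$—followed by the step you omit: for generic $\lambda$ the target $P_{\mathbf m}(\lambda')v=0$ is irreducible in $W(x;\lambda)$, so there exists $R_3$ with $R_3R_2\equiv1\pmod{P_{\mathbf m}(\lambda')}$, and then $R=R_3R_1$ is the shift operator. You use irreducibility only for uniqueness, but it is essential to the construction itself; your diagnosis that the obstacle is ``fractional powers cancel because shifts are integers'' misidentifies where the difficulty lies.
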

\begin{proof}
We will prove the theorem by the induction on $\ord\mathbf m$.
The theorem is clear if $\ord\mathbf m=1$.

We may assume that $\mathbf m$ is monotone.
Then the reduction $\{\tilde\lambda_{\tilde{\mathbf m}}\}$ of 
the Riemann scheme is defined by \eqref{eq:mc2}. 
Hence putting
\begin{equation}
 \begin{cases}
  {\tilde \epsilon}_1    = \epsilon_{0,1}+\cdots+\epsilon_{p,1},\\
  {\tilde \epsilon}_{j,\nu} =\epsilon_{j,\nu} +
    \bigl((-1)^{\delta_{j,0}}-\delta_{\nu,1}\bigr){\tilde \epsilon}_1
   &(j=0,\dots,p,\ \nu=1,\dots,n_j),
  \end{cases}
\end{equation}
there is a shift operator $R(\tilde\epsilon, \tilde\lambda)$ of the
equation $P_{\tilde{\mathbf m}}(\tilde\lambda')\tilde v=0$
to $P_{\tilde{\mathbf m}}(\tilde\lambda)\tilde u=0$ defined
by $\tilde v=R(\tilde\epsilon, \tilde\lambda)\tilde u$.
Note that
\begin{align*}
 P_{\tilde{\mathbf m}}(\tilde\lambda) 
  &=\p_{max}P_{\mathbf m}(\lambda)
  = \Ad\bigl(\prod_{j=1}^p(x-c_j)^{\lambda_{j,1}}\bigr)
    \prod_{j=1}^p(x-c_j)^{m_{j,1}-d}\p^{-d}\!\Ad(\p^{-\mu})\\
  &\quad
    \prod_{j=1}^p(x-c_j)^{-m_{j,1}}
    \Ad\bigl(\prod_{j=1}^p(x-c_j)^{-\lambda_{j,1}}\bigr)
    P_{\mathbf m}(\lambda),
\allowdisplaybreaks\\
 P_{\tilde{\mathbf m}}(\tilde\lambda') 
 &=\p_{max}P_{\mathbf m}(\lambda')
 = \Ad\bigl(\prod_{j=1}^p(x-c_j)^{\lambda_{j,1}}\bigr)
   \prod_{j=1}^p(x-c_j)^{m_{j,1}-d}\p^{-d}\!\Ad(\p^{-\mu'})\\
   &\quad
    \prod_{j=1}^p(x-c_j)^{-m_{j,1}}
    \Ad\bigl(\prod_{j=1}^p(x-c_j)^{-\lambda'_{j,1}}\bigr)
    P_{\mathbf m}(\lambda').
\end{align*}

Suppose $\lambda_{j,\nu}$ are generic.
Let $u(x)$ be a local solution of $P_{\mathbf m}(\lambda)u=0$
at $x=c_1$ corresponding to a characteristic exponent different
from $\lambda_{1,1}$.
Then
\[
 \tilde u(x):=\prod_{j=1}^p(x-c_j)^{\lambda_{j,1}}
  \p^{-\mu}\prod_{j=1}^p(x-c_j)^{-\lambda_{j,1}}u(x)
\]
satisfies $P_{\tilde{\mathbf m}}(\tilde\lambda)\tilde u(x)=0$.
Putting
\[
 \begin{split}
  \tilde v(x)&:=R(\tilde\epsilon,\tilde\lambda)\tilde u(x),
  \allowdisplaybreaks\\
  v(x)&:=\prod_{j=1}^p(x-c_j)^{\lambda'_{j,1}}
  \p^{\mu'}\prod_{j=1}^p(x-c_j)^{\lambda'_{j,1}}\tilde v(x),
  \allowdisplaybreaks\\
 \tilde R(\tilde\epsilon,\tilde\lambda)&
  :=\Ad(\prod_{j=1}^p(x-c_j)^{\lambda_{j,1}})R(\tilde\epsilon,\tilde\lambda)
 \end{split}
\]
we have $P_{\tilde{\mathbf m}}(\tilde\lambda')\tilde u(x)=0$,
$P_{\mathbf m}(\lambda')v(x)=0$ and
\[
  \prod_{j=1}^p(x-c_j)^{\epsilon_{j,1}}
  \p^{-\mu'}\prod_{j=1}^p(x-c_j)^{-\lambda_{j,1}'}
  v(x)=\tilde R(\tilde\epsilon,\tilde\lambda)\p^{-\mu}
  \prod_{j=1}^p(x-c_j)^{-\lambda_{j,1}}u(x).
\]

In general, if
\begin{equation}\label{eq:sht1}
  S_2\prod_{j=1}^p(x-c_j)^{\epsilon_{j,1}}
  \p^{-\mu'}\prod_{j=1}^p(x-c_j)^{-\lambda_{j,1}'}
  v(x)=S_1\p^{-\mu}
  \prod_{j=1}^p(x-c_j)^{-\lambda_{j,1}}u(x)
\end{equation}
with $S_1$, $S_2\in W[x]$, we have
\begin{equation}\label{eq:sht2}
     R_2 v(x) =  R_1 u(x)
\end{equation}
by putting
\begin{equation}\label{eq:sht3}\begin{split}
 R_1 &= 
    \prod_{j=1}^p(x-c_j)^{\lambda_{j,\nu}+k_{1,j}}
    \p^{\mu+\ell}\prod_{j=1}^p(x-c_j)^{k_{2,j}} S_1\prod_{j=1}^{\epsilon_{j,1}}\p^{-\mu}
    \prod_{j=1}^p(x-c_j)^{-\lambda_{j,\nu}},\\
 R_2 &= 
    \prod_{j=1}^p(x-c_j)^{\lambda_{j,\nu}+k_{1,j}}
    \p^{\mu+\ell}\prod_{j=1}^p(x-c_j)^{k_{2,j}} 
    S_2\prod_{j=1}^{\epsilon_{j,1}}\p^{-\mu'}
    \prod_{j=1}^p(x-c_j)^{-\lambda_{j,\nu}'}
\end{split}\end{equation}
with suitable integers $k_{1,j}$, $k_{2,j}$ and $\ell$ so 
that $R_1,\ R_2\in W[x;\lambda]$.

We choose a non-zero polynomial $S_2\in\mathbb C[x]$ so that 
$S_1=S_2\tilde R(\tilde\epsilon,\tilde\lambda)\in W[x]$.
Since $P_{\mathbf m}(\lambda')$ is irreducible in $W(x;\lambda)$
and $R_2v(x)$ is not zero, there exists $R_3 \in W(x;\xi)$ such that 
$R_3 R_2-1\in W(x;\lambda)P_{\mathbf m}(\lambda')$.
Then $v(x)=R u(x)$ with the operator $R=R_3 R_1\in W(x;\lambda)$.

Since the equations $P_{\mathbf m}(\lambda)u=0$ and $P_{\mathbf m}(\lambda')v=0$
are irreducible $W(x;\lambda)$-modules, the correspondence $v=Ru$ gives
an isomorphism between these two modules.
Since any solutions of these equations are holomorphically continued along the 
path contained in $\mathbb C\setminus\{c_1,\dots,c_p\}$,
the coefficients of the operator $R$ are holomorphic in 
$\mathbb C\setminus\{c_1,\dots,c_p\}$.
Multiplying $R$ by a suitable element of $\mathbb C(\lambda)$, we may assume 
$R\in W(x)\otimes \mathbb C[\lambda]$ and $R$ does not vanish at any
$\lambda_{j,\nu}\in\mathbb C$.

Put $f(x)=\prod_{j=1}^p(x-c_j)^n$.
Since $R_{\mathbf m}(\epsilon,\lambda)$ is a shift operator, there exists
$S_{\mathbf m}(\epsilon,\lambda)\in W(x;\lambda)$ such that
\begin{equation}\label{eq:sft01}
  f^{-1}P_{\mathbf m}(\lambda+\epsilon)R_{\mathbf m}(\epsilon,\lambda)
 = S_{\mathbf m}(\epsilon,\lambda)f^{-1}P_{\mathbf m}(\lambda).
\end{equation}
Then Theorem~\ref{thm:prod} ii) shows
\begin{align}
   R_{\mathbf m}(\epsilon,\lambda)^*
   \bigl(f^{-1}P_{\mathbf m}(\lambda+\epsilon)\bigr)^*
 &=\bigl(f^{-1}P_{\mathbf m}(\lambda)\bigr)^* 
    S_{\mathbf m}(\epsilon,\lambda)^*,\notag\allowdisplaybreaks\\
   R_{\mathbf m}(\epsilon,\lambda)^*\cdot f^{-1}
   P_{\mathbf m}(\lambda+\epsilon)^\vee
 &=f^{-1}P_{\mathbf m}(\lambda)^\vee\cdot
   S_{\mathbf m}(\epsilon,\lambda)^*,\notag\allowdisplaybreaks\\
   R_{\mathbf m}(\epsilon,\lambda)^*f^{-1}P_{\mathbf m}(\rho-\lambda-\epsilon)
 &=f^{-1}P_{\mathbf m}(\rho-\lambda)S_{\mathbf m}(\epsilon,\lambda)^*,
 \notag\allowdisplaybreaks\\
 R_{\mathbf m}(\epsilon,\rho-\mu-\epsilon)^*f^{-1}P_{\mathbf m}(\mu)
 &=f^{-1}P_{\mathbf m}(\mu+\epsilon)S_{\mathbf m}(\epsilon,\rho-\mu-\epsilon)^*.
 \label{eq:sht02}
\end{align}
Here we use the notation \eqref{eq:dualop}
and put $\rho_{j,\nu}=2(1-n)\delta_{j,0}+n-m_{j,\nu}$ and 
$\mu=\rho-\lambda-\epsilon$. 
Comparing \eqref{eq:sht02} with \eqref{eq:sft01}, we see that
$S_{\mathbf m}(\epsilon,\lambda)$ is a constant multiple of the operator 
$R_{\mathbf m}(\epsilon,\rho-\lambda-\epsilon)^*$ and 
$fR_{\mathbf m}(\epsilon,\rho-\lambda-\epsilon)^*f^{-1}
=\bigl(f^{-1}R_{\mathbf m}(\epsilon,\rho-\lambda-\epsilon)f\bigr)^*
=R_{\mathbf m}(\epsilon,\tau-\lambda-\epsilon)^*$ and we have \eqref{eq:Sid}.
\end{proof}

Note that the operator $R_{\mathbf m}(\epsilon,\lambda)$ is uniquely 
defined up to a constant multiple.

The following theorem gives a recurrence relation among specific local solutions 
with a rigid spectral type and a relation between the shift operator
$R_{\mathbf m}(\epsilon,\lambda)$ and the universal operator $P_{\mathbf m}(\lambda)$.
\begin{thm}\label{thm:shifm1}
Retain the notation in {\rm Corollary~\ref{cor:irred}} and\/ 
{\rm Theorem~\ref{thm:shiftC}} with a rigid tuple 
$\mathbf m$.
Assume $m_{j,n_j}=1$ for $j=0$, $1$ and $2$.
Put $\epsilon=(\epsilon_{j,\nu})$, $\epsilon'=(\epsilon'_{j,\nu})$,
\begin{equation}
 \epsilon_{j,\nu} = \delta_{j,1}\delta_{\nu,n_1}-\delta_{j,2}\delta_{\nu,n_2}
 \text{ \ and \ }
 \epsilon'_{j,\nu} = \delta_{j,0}\delta_{\nu,n_0}-\delta_{j,2}\delta_{\nu,n_2} 
\end{equation}
for $j=0,\dots,p$ and $\nu=1,\dots,n_j$.

{\rm i)}
Define $Q_{\mathbf m}(\lambda)\in W(x;\lambda)$ so that
$Q_{\mathbf m}(\lambda) P_{\mathbf m}(\lambda+\epsilon') - 1
\in W(x;\lambda)P_{\mathbf m}(\lambda+\epsilon)$.
Then 
\begin{equation}\label{eq:shtUniv}
 R_{\mathbf m}(\epsilon,\lambda) - C(\lambda)Q_{\mathbf m}(\lambda)
 P_{\mathbf m}(\lambda+\epsilon')
\in W(x;\lambda)P_{\mathbf m}(\lambda)
\end{equation} with a rational function
$C(\lambda)$ of $\lambda_{j,\nu}$.

{\rm ii)} 
Let $u_{\lambda}(x)$ be the local solution of $P_{\mathbf m}(\lambda)u=0$
such that $u_{\lambda}(x)\equiv (x-c_1)^{\lambda_{1,n_1}}\mod  
(x-c_1)^{\lambda_{1,n_1}+1}O_{c_1}$
for generic $\lambda_{j,\nu}$.
Then we have the recurrence relation
\begin{align}\label{eq:recF}
 u_{\lambda}(x) &= 
  u_{\lambda+\epsilon'}(x)+(c_1-c_2)\prod_{\nu=0}^{K-1}
 \frac{\lambda(\nu+1)_{1,n_1}-\lambda(\nu)_{1,\ell(\nu)_1}+1}
 {\lambda(\nu)_{1,n_1}-\lambda(\nu)_{1,\ell(\nu)_1}+1}\cdot
 u_{\lambda+\epsilon}(x).
\end{align}
\end{thm}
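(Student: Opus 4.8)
The plan is to prove (i) as the statement that two operators induce the same intertwiner between irreducible $W(x;\lambda)$-modules, and then to deduce (ii) by induction on the number $K$ of reduction steps, using the explicit integral representation of Theorem~\ref{thm:expsol}.

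For (i), write $\mathcal M_\lambda,\mathcal M_{\lambda+\epsilon},\mathcal M_{\lambda+\epsilon'}$ for the left $W(x;\lambda)$-modules $W(x;\lambda)/W(x;\lambda)P_{\mathbf m}(\cdot)$, regarding the $\lambda_{j,\nu}$ as transcendental. Since $\mathbf m$ is rigid, Corollary~\ref{cor:irred} shows each of these modules is irreducible, and since $P_{\mathbf m}(\lambda+\epsilon')\notin W(x;\lambda)P_{\mathbf m}(\lambda+\epsilon)$ (the Riemann schemes differ), \eqref{eq:irrEuc} produces $Q_{\mathbf m}(\lambda)$ and $T$ with $Q_{\mathbf m}(\lambda)P_{\mathbf m}(\lambda+\epsilon')+TP_{\mathbf m}(\lambda+\epsilon)=1$. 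The heart of the matter is a contiguity relation among $P_{\mathbf m}(\lambda)$, $P_{\mathbf m}(\lambda+\epsilon')$, $P_{\mathbf m}(\lambda+\epsilon)$: since the three share the leading term $\prod(x-c_j)^n\p^n$, the difference $P_{\mathbf m}(\lambda+\epsilon')-P_{\mathbf m}(\lambda)$ has order $<n$, and a comparison of generalized characteristic exponents at $c_0,c_1,c_2$ by means of Lemma~\ref{lem:GRS} — using essentially that $m_{0,n_0}=m_{1,n_1}=m_{2,n_2}=1$, so $\epsilon$ and $\epsilon'$ move only the simple exponents $\lambda_{0,n_0},\lambda_{1,n_1},\lambda_{2,n_2}$ — shows that $P_{\mathbf m}(\lambda+\epsilon')$ carries solutions of one of these equations into solutions of another; by the uniqueness of the universal operator (Theorem~\ref{thm:univmodel}) this identifies $Q_{\mathbf m}(\lambda)P_{\mathbf m}(\lambda+\epsilon')$, read modulo $W(x;\lambda)P_{\mathbf m}(\lambda)$, with a nonzero $W(x;\lambda)$-homomorphism $\mathcal M_{\lambda+\epsilon}\to\mathcal M_\lambda$. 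The shift operator $R_{\mathbf m}(\epsilon,\lambda)$ is another such homomorphism, by Theorem~\ref{thm:irredrigid} and \eqref{eq:Sid}. As $\mathcal M_\lambda$ and $\mathcal M_{\lambda+\epsilon}$ are irreducible, the space of homomorphisms between them is at most one-dimensional over $\mathbb C(\lambda)$; hence $R_{\mathbf m}(\epsilon,\lambda)\equiv C(\lambda)Q_{\mathbf m}(\lambda)P_{\mathbf m}(\lambda+\epsilon')$ modulo $W(x;\lambda)P_{\mathbf m}(\lambda)$ for some $C(\lambda)\in\mathbb C(\lambda)$, which is \eqref{eq:shtUniv} (both sides having order $<n$ after reduction, so the relation in fact holds on the nose).

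For (ii) I would induct on $K$. When $K=0$ the tuple is rigid of order one, the operators $P_{\mathbf m}(\lambda),P_{\mathbf m}(\lambda+\epsilon'),P_{\mathbf m}(\lambda+\epsilon)$ are first order, and their normalized solutions are the explicit products given by \eqref{eq:serexpr} (empty index set), so $u_\lambda=u_{\lambda+\epsilon'}+(c_1-c_2)u_{\lambda+\epsilon}$ follows by a one-line computation with rational powers (the empty product being $1$). For the inductive step I would use \eqref{eq:intexp}: with $\tilde{\mathbf m}=\p_{max}\mathbf m$ and $\mu=\mu(0)$ one has $u_\lambda=A^{(0)}_\lambda\,T\big(u^{(1)}_{\lambda(1)}\big)$, where $u^{(1)}$ is the normalized solution of $P_{\tilde{\mathbf m}}(\lambda(1))u=0$, $T$ is the level-zero middle-convolution transform of \eqref{eq:intexp} (integration against $(x-s)^{-\mu-1}$ together with the gauge factors $s^{\lambda(0)_{1,\ell(0)_1}}\prod_{j\ge2}(1-c_j^{-1}s)^{\lambda(0)_{j,\ell(0)_j}}$ and their reciprocals in $x$), and $A^{(0)}_\lambda$ is the level-zero $\Gamma$-prefactor of \eqref{eq:intexp}. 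Since $\mu(0)$ and the numbers $\lambda(0)_{j,\ell(0)_j}$ are unchanged by both $\epsilon$ and $\epsilon'$ (here $m_{0,n_0}=m_{1,n_1}=m_{2,n_2}=1$ and, for $K\ge1$, $\ell(k)_1\ne n_1$, etc.), the operator $T$ is the same for $\lambda,\lambda+\epsilon,\lambda+\epsilon'$, the shifts $\epsilon,\epsilon'$ restrict to the analogous shifts on $\lambda(1)$, one has $A^{(0)}_{\lambda+\epsilon'}=A^{(0)}_\lambda$, and \eqref{eq:IcP} gives $A^{(0)}_\lambda/A^{(0)}_{\lambda+\epsilon}=\big(\lambda(1)_{1,n_1}-\lambda(0)_{1,\ell(0)_1}+1\big)/\big(\lambda(0)_{1,n_1}-\lambda(0)_{1,\ell(0)_1}+1\big)$, which is the $\nu=0$ factor in \eqref{eq:recF}. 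Feeding the induction hypothesis $u^{(1)}_{\lambda(1)}=u^{(1)}_{\lambda(1)+\epsilon'}+(c_1-c_2)\prod_{\nu=1}^{K-1}(\cdots)\,u^{(1)}_{\lambda(1)+\epsilon}$ through the linear map $A^{(0)}_\lambda T$ and renormalizing the three images to $u_\lambda,u_{\lambda+\epsilon'},u_{\lambda+\epsilon}$ then yields \eqref{eq:recF}.

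The step I expect to be the main obstacle is the contiguity relation in (i): one must pin down the exact direction in which $P_{\mathbf m}(\lambda+\epsilon')$, or its partial inverse $Q_{\mathbf m}(\lambda)$, intertwines the three equations, and this requires careful bookkeeping of the generalized characteristic exponents of the order-$(n-1)$ operator $P_{\mathbf m}(\lambda+\epsilon')-P_{\mathbf m}(\lambda)$ near $c_0,c_1,c_2$, leaning on the simplicity of the shifted exponents. The only other delicate point is the purely computational one of tracking the $\Gamma$-normalizations in (ii) so that all but the level-zero factor cancel in the ratio of the three solutions.
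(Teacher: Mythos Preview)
Your argument for (ii) is correct and is essentially the paper's: both run the induction on $K$ through the integral transform \eqref{eq:intexp}, using that the shifts $\epsilon,\epsilon'$ leave $\mu(k)$ and $\lambda(k)_{j,\ell(k)_j}$ unchanged (since $m_{j,n_j}=1$ forces $\ell(k)_j\ne n_j$), and tracking the $\Gamma$-factors via \eqref{eq:IcP}.

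The problem is your treatment of (i). You propose to show that $Q_{\mathbf m}(\lambda)P_{\mathbf m}(\lambda+\epsilon')$ defines a $W(x;\lambda)$-homomorphism $\mathcal M_{\lambda+\epsilon}\to\mathcal M_\lambda$ by a local comparison of characteristic exponents and Lemma~\ref{lem:GRS}. But the assertion you need, namely that $P_{\mathbf m}(\lambda+\epsilon')$ carries a solution of $P_{\mathbf m}(\lambda)$ to a solution of $P_{\mathbf m}(\lambda+\epsilon)$ (equivalently, $P_{\mathbf m}(\lambda+\epsilon)P_{\mathbf m}(\lambda+\epsilon')\in W(x;\lambda)P_{\mathbf m}(\lambda)$), is not a local statement and cannot be read off from exponents alone: at every $c_j$ with $j\ge3$ the three operators have identical local data, so the order-$(n-1)$ difference $P_{\mathbf m}(\lambda+\epsilon')-P_{\mathbf m}(\lambda)$ has no distinguished local behavior there, and rigidity only tells you that the image space has a monodromy isomorphic to that of $P_{\mathbf m}(\lambda+\epsilon)$, not that it is literally annihilated by $P_{\mathbf m}(\lambda+\epsilon)$. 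You correctly flag this as the obstacle, and indeed it is one your outline does not cross.

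The paper's route avoids this by reversing the order: it proves, by the \emph{same} induction on $K$ that you use for (ii), the functional identity
\[
P_{\mathbf m}(\lambda+\epsilon')\,u_\lambda(x)=P_{\mathbf m}(\lambda+\epsilon')\,v(x)
\qquad\text{with }P_{\mathbf m}(\lambda+\epsilon)v=0,
\]
whose base case $K=0$ is the elementary identity $(\p-\sum_j\frac{\lambda_j+\epsilon'_j}{x-c_j})\prod_j(x-c_j)^{\lambda_j}=(\p-\sum_j\frac{\lambda_j+\epsilon'_j}{x-c_j})\prod_j(x-c_j)^{\lambda_j+\epsilon_j}$. (Equivalently, apply $P_{\mathbf m}(\lambda+\epsilon')$ to \eqref{eq:recF} and use $P_{\mathbf m}(\lambda+\epsilon')u_{\lambda+\epsilon'}=0$.) From this one gets $v=Q_{\mathbf m}(\lambda)P_{\mathbf m}(\lambda+\epsilon')v=Q_{\mathbf m}(\lambda)P_{\mathbf m}(\lambda+\epsilon')u_\lambda$; since $v$ is also a nonzero multiple of $R_{\mathbf m}(\epsilon,\lambda)u_\lambda$, irreducibility of $\mathcal M_\lambda$ in $W(x;\lambda)$ gives \eqref{eq:shtUniv}. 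So the fix for your proof is simply to move (i) after (ii) and deduce the intertwining from the recurrence you have already established, rather than from a separate local argument.
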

\begin{proof}
Under the notation in Corollary~\ref{cor:irred}, $\ell(k)_j\ne n_j$ for 
$j=0,1,2$ and $k=0,\dots,K-1$ and 
therefore the operation $\p_{max}^K$ on $P_{\mathbf m}(\lambda)$ 
is equals to $\p_{max}^K$ on $P_{\mathbf m}(\lambda+\epsilon)$ if they are
realized by the product of the operators of the form \eqref{eq:opred}.
Hence by the induction on $K$,
the proof of Theorem~\ref{thm:irredrigid} 
(cf.~\eqref{eq:sht1}, \eqref{eq:sht2} and \eqref{eq:sht3}) 
shows
\begin{equation}\label{eq:shifm1}
 P_{\mathbf m}(\lambda+\epsilon')u(x)=P_{\mathbf m}(\lambda+\epsilon')v(x)
\end{equation}
for suitable functions $u(x)$ and $v(x)$ satisfying 
$P_{\mathbf m}(\lambda)u(x)=P_{\mathbf m}(\lambda+\epsilon)v(x)=0$
and moreover \eqref{eq:recF} is calculated by \eqref{eq:IcP}.
Note that the identities
\begin{align*}
 (c_1-c_2)\prod_{j=1}^p(x-c_j)^{\lambda_j+\epsilon'_j}
 &=\prod_{j=1}^p(x-c_j)^{\lambda_j}-\prod_{j=1}^p(x-c_j)^{\lambda_j+\epsilon_j},
 \allowdisplaybreaks\\
 \Bigl(\p - \sum_{j=1}^p\frac{\lambda_j+\epsilon'_j}
 {x-c_j}\Bigr)\prod_{j=1}^p(x-c_j)^{\lambda_j}
 &= \Bigl(\p - \sum_{j=1}^p\frac{\lambda_j+\epsilon'_j}{x-c_j}\Bigr)
  \prod_{j=1}^p(x-c_j)^{\lambda_j+\epsilon_j}
\end{align*}
correspond to \eqref{eq:recF}  and \eqref{eq:shifm1}, respectively, when $K=0$.

Note that \eqref{eq:shifm1} may be proved by \eqref{eq:recF}.
The claim i) in this theorem follows from the fact 
 $v(x)=Q_{\mathbf m}(\lambda)P_{\mathbf m}(\lambda+\epsilon')v(x)
=Q_{\mathbf m}(\lambda)P_{\mathbf m}(\lambda+\epsilon')u(x)$.
\end{proof}
In general, we have the following theorem for the recurrence relation.
\begin{thm}[recurrence relations]
Let\/ $\mathbf m\in\mathcal P^{(n)}$ be a rigid tuple with\/ $m_{1,n_1}=1$
and let\/ $u_1(\lambda,x)$ be the normalized solution of 
the equation\/ $P_{\mathbf m}(\lambda)u=0$ with respect to the exponent\/
$\lambda_{1,n_1}$ at $x=c_1$.
Let\/ $\epsilon^{(i)}$ be shifts compatible to\/ $\mathbf m$ for\/ $i=0,\dots,n$.
Then there exists polynomial functions\/ 
$r_i(x,\lambda)\in\mathbb C[x,\lambda]$ such that $(r_0,\dots,r_n)\ne 0$ and
\begin{equation}
  \sum_{i=0}^{n} r_i(x,\lambda)u_1(\lambda+\epsilon^{(i)},x) = 0.
\end{equation}
\end{thm}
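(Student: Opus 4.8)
The plan is to realize all of the shifted normalized solutions $u_1(\lambda+\epsilon^{(i)},x)$, $i=0,\dots,n$, inside one finite-dimensional $\mathbb C(x,\lambda)$-vector space of function germs and then invoke a dimension count. First, for each $i$ I would apply Theorem~\ref{thm:irredrigid} to the shift $\epsilon^{(i)}$, which is compatible to $\mathbf m$ by hypothesis, to obtain a shift operator $R_i:=R_{\mathbf m}(\epsilon^{(i)},\lambda)\in W[x]\otimes\mathbb C[\lambda]$ with $\ord R_i<n=\ord\mathbf m$ which never vanishes as a function of $\lambda$ and which maps every solution of $P_{\mathbf m}(\lambda)u=0$ to a solution of $P_{\mathbf m}(\lambda+\epsilon^{(i)})v=0$. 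Since $\mathbf m$ is rigid, both $P_{\mathbf m}(\lambda)$ and $P_{\mathbf m}(\lambda+\epsilon^{(i)})$ are irreducible in $W(x;\lambda)$ (equivalently, for generic $\lambda$; cf.~Remark~\ref{rem:generic}, Corollary~\ref{cor:irred}), so the $W(x;\lambda)$-module homomorphism induced by $R_i$ between the corresponding simple modules is either $0$ or an isomorphism; it is nonzero because $0<\ord R_i<n$ forces $R_i\notin W(x;\lambda)P_{\mathbf m}(\lambda)$. In particular $R_i u_1(\lambda,x)$ is a nonzero solution of $P_{\mathbf m}(\lambda+\epsilon^{(i)})v=0$.

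Next I would identify $R_i u_1(\lambda,x)$ up to a scalar. Because $m_{1,n_1}=1$ and $\lambda$ is generic, the exponent $\lambda_{1,n_1}$ at $c_1$ has multiplicity one and is incongruent modulo $\mathbb Z$ to the other exponents of $P_{\mathbf m}(\lambda)$ there, so $e^{2\pi\sqrt{-1}\lambda_{1,n_1}}$ is a simple eigenvalue of the local monodromy $M_1$ of $P_{\mathbf m}(\lambda)$, with eigenline spanned by $u_1(\lambda,x)$; the same holds for $P_{\mathbf m}(\lambda+\epsilon^{(i)})$ at the exponent $\lambda_{1,n_1}+\epsilon^{(i)}_{1,n_1}$, whose monodromy eigenvalue $e^{2\pi\sqrt{-1}(\lambda_{1,n_1}+\epsilon^{(i)}_{1,n_1})}=e^{2\pi\sqrt{-1}\lambda_{1,n_1}}$ is unchanged since $\epsilon^{(i)}$ is integral. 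The coefficients of $R_i$ are single-valued rational functions, so $R_i$ commutes with analytic continuation and the isomorphism it induces carries the $e^{2\pi\sqrt{-1}\lambda_{1,n_1}}$-eigenline to the $e^{2\pi\sqrt{-1}\lambda_{1,n_1}}$-eigenline; hence
\[
  R_i u_1(\lambda,x)=c_i(\lambda)\,u_1(\lambda+\epsilon^{(i)},x),\qquad c_i\in\mathbb C(\lambda),\ c_i\not\equiv 0 .
\]
Alternatively, one may trace through the inductive construction of $R_i$ in the proof of Theorem~\ref{thm:irredrigid} as a composition of additions $\RAd\bigl((x-c_j)^{\epsilon_{j,1}}\bigr)$, the operations $\p_{max}$ and middle convolutions, and argue exactly as in the proof of Theorem~\ref{thm:shifm1}, using the action of $I_c^\mu$ on power series given by \eqref{eq:IcP} as in Theorem~\ref{thm:expsol}.

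Finally, since $\ord R_i<n$ we have $R_i u_1(\lambda,x)\in V:=\sum_{k=0}^{n-1}\mathbb C(x,\lambda)\,\partial^k u_1(\lambda,x)$, a $\mathbb C(x,\lambda)$-vector space of dimension at most $n$. Therefore $u_1(\lambda+\epsilon^{(i)},x)=c_i(\lambda)^{-1}R_i u_1(\lambda,x)\in V$ for every $i=0,\dots,n$, so the $n+1$ germs $u_1(\lambda+\epsilon^{(i)},x)$, analytically continued to a common generic base point $q\in\mathbb C\setminus\{c_1,\dots,c_p\}$ and viewed as functions of $x$ and $\lambda$, are linearly dependent over $\mathbb C(x,\lambda)$. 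Clearing denominators in a nontrivial relation $\sum_i\rho_i(x,\lambda)u_1(\lambda+\epsilon^{(i)},x)=0$ produces $r_i(x,\lambda)\in\mathbb C[x,\lambda]$, not all zero, with $\sum_{i=0}^n r_i(x,\lambda)u_1(\lambda+\epsilon^{(i)},x)=0$, which is the assertion.

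The main obstacle is the middle step: proving that $R_i$ sends the \emph{normalized} local solution $u_1(\lambda,x)$ to a nonzero $\mathbb C(\lambda)$-multiple of the normalized local solution $u_1(\lambda+\epsilon^{(i)},x)$, rather than to some other element of the $n$-dimensional solution space of $P_{\mathbf m}(\lambda+\epsilon^{(i)})$. This is exactly where the hypothesis $m_{1,n_1}=1$ enters, via the simplicity of the monodromy eigenvalue $e^{2\pi\sqrt{-1}\lambda_{1,n_1}}$ at $c_1$ for generic $\lambda$; everything else (the existence and order bound of $R_i$, the irreducibility, the dimension count) is routine given the earlier results. If one prefers to avoid the monodromy language, the same fact can be read off the explicit factorization of $R_i$ through $\p_{max}$, additions and middle convolutions together with \eqref{eq:IcP}, in parallel with the proofs of Theorems~\ref{thm:shifm1} and~\ref{thm:expsol}.
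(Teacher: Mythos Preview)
Your proof is correct and follows essentially the same approach as the paper's: obtain shift operators $R_i$ of order $<n$ from Theorem~\ref{thm:irredrigid}, identify $R_iu_1(\lambda,x)$ with a $\mathbb C(\lambda)^\times$-multiple of $u_1(\lambda+\epsilon^{(i)},x)$, and then use the dimension count that $n+1$ operators of order $<n$ are $\mathbb C(x,\lambda)$-linearly dependent. The paper's proof is a terse three lines that leaves the identification step implicit; your monodromy argument (simplicity of the eigenvalue $e^{2\pi\sqrt{-1}\lambda_{1,n_1}}$ for generic $\lambda$, preserved because $\epsilon^{(i)}$ is integral and $R_i$ has single-valued coefficients) correctly fills in that detail and pinpoints where $m_{1,n_1}=1$ is used.
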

\begin{proof}
There exist $R_i\in \mathbb C(\lambda)R_{\mathbf m}(\epsilon^{(i)},\lambda)$
satisfying $u_1(\lambda+\epsilon^{(i)},x)=R_iu_1(\lambda,x)$ and $\ord R_i< n$.
We have $r_i(x,\lambda)$ with
$\sum_{i=0}^n r_i(x,\lambda)R_i=0$ and the claim.
\end{proof}
\begin{exmp}[Gauss hypergeometric equation]%
\index{hypergeometric equation/function!Gauss}
Let $P_\lambda u=0$ and $P_{\lambda'}v=0$ be Fuchsian differential
equations with the Riemann Scheme
\[
 \begin{Bmatrix}
  x=\infty & 0 & 1\\
  \lambda_{0,1} & \lambda_{1,1} & \lambda_{2,1}\\
  \lambda_{0,2} & \lambda_{1,2} & \lambda_{2,2}
 \end{Bmatrix}\text{ and }
 \begin{Bmatrix}
  x=\infty & 0 & 1\\
  \lambda'_{0,1}=\lambda_{0,1} & 
    \lambda'_{1,1}=\lambda_{1,1} & \lambda'_{2,1}=\lambda_{2,1}\\
  \lambda'_{0,2}=\lambda_{0,2} & 
    \lambda'_{1,2}=\lambda_{1,2}+1 & \lambda_{2,2}=\lambda_{2,2}-1
 \end{Bmatrix},
\]
respectively.
Here the operators $P_\lambda=P_{\lambda_{0,1},\lambda_{0,2},\lambda_{1,1},
\lambda_{1,2},\lambda_{2,1},\lambda_{2,1}}$ and $P_{\lambda'}$ are given in 
\eqref{eq:GH2}.
The normalized local solution $u_{\lambda}(x)$ of $P_\lambda u=0$ corresponding to 
the exponent $\lambda_{1,2}$ at $x=0$ is
\begin{equation}
x^{\lambda_{1,2}}(1-x)^{\lambda_{2,1}}
    F(\lambda_{0,1}+\lambda_{1,2}+\lambda_{2,1},
    \lambda_{0,2}+\lambda_{1,2}+\lambda_{2,1},
    1-\lambda_{1,1}+\lambda_{1,2};x).
\end{equation}
By the reduction  $\begin{Bmatrix}
  x=\infty & 0 & 1\\
  \lambda_{0,1} & \lambda_{1,1} & \lambda_{2,1}\\
  \lambda_{0,2} & \lambda_{1,2} & \lambda_{2,2}
 \end{Bmatrix}\to
 \begin{Bmatrix}
  x=\infty & 0 & 1\\
  \lambda_{0,2} -\mu& 
  \lambda_{1,2} +\mu&
  \lambda_{2,2} +\mu
 \end{Bmatrix}
$
with 
$\mu=\lambda_{0,1}+\lambda_{1,1}+\lambda_{2,1}-1$, the recurrence relation 
\eqref{eq:recF} means
\begin{align*}
  &x^{\lambda_{1,2}}(1-x)^{\lambda_{2,1}}
    F(\lambda_{0,1}+\lambda_{1,2}+\lambda_{2,1},
    \lambda_{0,2}+\lambda_{1,2}+\lambda_{2,1},
    1-\lambda_{1,1}+\lambda_{1,2};x)\\
  &=x^{\lambda_{1,2}}(1-x)^{\lambda_{2,1}}
    F(\lambda_{0,1}+\lambda_{1,2}+\lambda_{2,1},
    \lambda_{0,2}+\lambda_{1,2}+\lambda_{2,1}+1,
    1-\lambda_{1,1}+\lambda_{1,2};x)\\
  &\quad{}-\frac{\lambda_{0,1}+\lambda_{1,2}+\lambda_{2,1}}
    {1-\lambda_{1,1}+\lambda_{1,2}}
    x^{\lambda_{1,2}+1}(1-x)^{\lambda_{2,1}}\\
  &\qquad \cdot F(\lambda_{0,1}+\lambda_{1,2}+\lambda_{2,1}+1,
    \lambda_{0,2}+\lambda_{1,2}+\lambda_{2,1}+1,
    2-\lambda_{1,1}+\lambda_{1,2};x),
\end{align*}
which is equivalent to the recurrence relation
\begin{equation}
 F(\alpha,\beta,\gamma,x)=F(\alpha,\beta+1,\gamma;x)
 -\frac{\alpha}{\gamma}x F(\alpha+1,\beta+1,\gamma+1;x).
\end{equation}

Using the expression \eqref{eq:GH2}, we have
\begin{align*}
 P_{\lambda+\epsilon'} - P_{\lambda}&
  =x^2(x-1)\p+\lambda_{0,1}x^2-(\lambda_{0,1}+\lambda_{2,1})x,\\
 P_{\lambda+\epsilon'} - P_{\lambda+\epsilon}&=x(x-1)^2\p+\lambda_{0,1}x^2
  -(\lambda_{0,1}+\lambda_{1,1})x-\lambda_{1,1},\\
 (x-1)P_{\lambda+\epsilon}&= \bigl(x(x-1)\p +(\lambda_{0,2}-2)x+\lambda_{1,2}
  +1\bigr)
 \bigl(P_{\lambda+\epsilon'} - P_{\lambda+\epsilon}\bigr)\\
 &{}\quad-(\lambda_{0,1}+\lambda_{1,1}+\lambda_{2,1})(\lambda_{0,2}
  +\lambda_{1,2}+\lambda_{2,1})x(x-1),\\
 x^{-1}(x-1)^{-1}&\bigl(x(x-1)\p +(\lambda_{0,2}-2)x+\lambda_{1,2}
  +1\bigr)(P_{\lambda+\epsilon'} - P_{\lambda})-(x-1)^{-1}P_\lambda\\
 &=-(\lambda_{0,1}+\lambda_{1,1}+\lambda_{2,1})
   \bigl(x\p-\lambda_{1,2}-\frac{\lambda_{2,1} x}{x-1}\bigr)
\end{align*}
and hence \eqref{eq:shtUniv} says
\begin{equation}
 R_{\mathbf m}(\epsilon,\lambda)=x\p-\lambda_{1,2}-
 \lambda_{2,1}\frac{x}{x-1}.
\end{equation}
In the same way we have
\begin{equation}
 R_{\mathbf m}(-\epsilon,\lambda+\epsilon)=
  (x-1)\p-\lambda_{2,2}+1-\lambda_{1,1}\frac{x-1}{x}.
\end{equation}
Then
\begin{equation}
 \begin{split}
  R_{\mathbf m}(-\epsilon,\lambda+\epsilon)R_{\mathbf m}(\epsilon,\lambda)
   &-x^{-1}(x-1)^{-1}P_\lambda\\
   &=-(\lambda_{0,1}+\lambda_{1,2}+\lambda_{2,1})
   (\lambda_{0,2}+\lambda_{1,2}+\lambda_{2,1})
 \end{split}
\end{equation}
and since
$-R_{\mathbf m}(\epsilon,\tau-\lambda-\epsilon)^*
=-\bigl(x\p+(\lambda_{1,2}+2)+(\lambda_{2,1}+1)\frac x{x-1}\bigr)^*
=x\p - \lambda_{1,2}-1-(\lambda_{2,1}+1)\frac x{x-1}$ 
with $\tau$ given by \eqref{eq:sfttau},  the identity
\eqref{eq:Sid} means 
\begin{equation}
 \begin{split}
 P_\lambda R_{\mathbf m}(\epsilon,\lambda)
 = \Bigl(x\p-(\lambda_{1,2}+1) -(\lambda_{2,1}+1)\frac{x}{x-1}\Bigr)
  P_{\lambda+\epsilon}.
 \end{split}
\end{equation}
\end{exmp}
\begin{rem}
Suppose $\mathbf m$ is irreducibly realizable but it is not rigid.
If the reductions of $\{\lambda_{\mathbf m}\}$ and $\{\lambda'_{\mathbf m}\}$
to Riemann schemes with a fundamental tuple of partitions are 
transformed into each other by suitable additions, we can construct
a shift operator as in Theorem~\ref{thm:irredrigid}.
If they are not so, we need a shift operator for equations whose spectral type
are fundamental and such an operator is called a 
\textsl{Schlesinger transformation}.
\end{rem}
Now we examine the condition that a universal operator defines a shift operator.
\begin{thm}[universal operator and shift operator]\label{thm:sftUniv}\index{shift operator}
Let\/ $\mathbf m=\bigl(m_{j,\nu}\bigr)_{\substack{0\le j\le p\\1\le\nu\le n_j}}$ 
and\/ $\mathbf m'=\bigl(m'_{j,\nu}\bigr)_{\substack{0\le j\le p\\1\le\nu\le n_j}}
\in \mathcal P_{p+1}$ be irreducibly realizable and monotone.
They may not be rigid.
Suppose\/ $\ord\mathbf m>\ord\mathbf m'$.
Fix $j_0$ with $0\le j_0\le p$.
Let $n'_{j_0}$ be a positive integer such that $m'_{j_0,n'_{j_0}}>m'_{j_0,n'_{j_0+1}}=0$
and let $P_{\mathbf m}(\lambda)$ be the universal operator corresponding to 
$\{\lambda_{\mathbf m}\}$.
Putting $\lambda'_{j,\nu}=\lambda_{j,\nu}$ when $(j,\nu)\ne(j_0,n'_{j_0})$, we define
the universal operator $P^{j_0}_{\mathbf m'}(\lambda):=P_{\mathbf m'}(\lambda')$ 
with the Riemann scheme $\{\lambda'_{\mathbf m'}\}$.  
Here $\lambda'_{j_0,n'_{j_0}}$ is determined by the 
Fuchs condition. 
Suppose
\begin{equation}\label{eq:sftUinv}
  (\alpha_{\mathbf m}|\alpha_{\mathbf m'})\Bigl(=\sum_{j=0}^p\sum_{\nu=1}^{n_j}m_{j,\nu}m'_{j,\nu}
  -(p-1)\ord\mathbf m\cdot\ord\mathbf m'\Bigr)
  =m_{j_0,n'_{j_0}}m'_{j_0,n'_{j_0}}.
\end{equation}
Then\/ $\mathbf m'$ is rigid and
the universal operator $P^{j_0}_{\mathbf m'}(\lambda)$ is the shift operator 
$R_{\mathbf m}(\epsilon,\lambda)$: 
\begin{equation}
 \begin{split}
 &\Bigl\{
  \bigl[\lambda_{j,\nu}\bigr]_{(m_{j,\nu})}
 \Bigr\}_{\substack{0\le j\le p\\1\le\nu\le n_j}}
 \xrightarrow{R_{\mathbf m}(\epsilon,\lambda)=P^{j_0}_{\mathbf m'}(\lambda)}
 \Bigl\{
  \bigl[\lambda_{j,\nu}+\epsilon_{j,\nu}\bigr]_{(m_{j,\nu})}
 \Bigr\}_{\substack{0\le j\le p\\1\le\nu\le n_j}}\\
 &\qquad\text{with \ }\epsilon_{j,\nu}=
   \bigl(1-\delta_{j,j_0}\delta_{\nu,n'_{j_0}}\bigr)m'_{j,\nu}
  -\delta_{j,0}\cdot(p-1)\ord\mathbf m'.
\end{split}
\end{equation}
\end{thm}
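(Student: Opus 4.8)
The plan is to show that the shift operator $R:=R_{\mathbf m}(\epsilon,\lambda)$ supplied by Theorem~\ref{thm:irredrigid} (or, when $\mathbf m$ is not rigid, by the Schlesinger transformation mentioned in the remark following Theorem~\ref{thm:shifm1}) coincides, up to a factor in $\mathbb C(\lambda)^\times$, with the claimed universal operator. First I would reformulate the hypothesis: by Proposition~\ref{prop:Kac}~i), $(\alpha_{\mathbf m}|\alpha_{\mathbf m'})=\idx(\mathbf m,\mathbf m')=\sum_{j,\nu}m_{j,\nu}m'_{j,\nu}-(p-1)\ord\mathbf m\cdot\ord\mathbf m'$, and a short computation using $\sum_\nu m_{0,\nu}=\ord\mathbf m$ shows that \eqref{eq:sftUinv} is equivalent to $\sum_{j,\nu}\epsilon_{j,\nu}m_{j,\nu}=0$, i.e.\ to $\epsilon$ being a shift compatible to $\mathbf m$. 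Thus $R$ is defined, has order $<\ord\mathbf m$, is unique up to a $\mathbb C(\lambda)^\times$-multiple, never vanishes identically in $\lambda$, and has coefficients holomorphic on $\mathbb C\setminus\{c_1,\dots,c_p\}$, hence is Fuchsian with all singular points among $c_0,\dots,c_p$. So it suffices to prove that $\mathbf m'$ is rigid, that $\ord R=\ord\mathbf m'$, and that the Riemann scheme of $R$ is $\{\lambda'_{\mathbf m'}\}$: once these are known, $R$ is a Fuchsian operator of rigid spectral type $\mathbf m'$ with that Riemann scheme, which is of normal form up to a left factor in $\mathbb C(x)$, and the uniqueness assertion of Theorem~\ref{thm:univmodel} (valid since $\Pidx\mathbf m'=0$) identifies it with $P^{j_0}_{\mathbf m'}(\lambda)$ up to a scalar.

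Next I would observe that $\mathbf m'$ cannot be fundamental. Since $\mathbf m'$ is monotone, $(\alpha_{\mathbf m'}|\alpha_{j,\nu})\le 0$ for all $(j,\nu)$ (the ``monotone'' line of the table in \S\ref{sec:KM}), and $(\alpha_0|\alpha_{\mathbf m'})=d_{\mathbf 1}(\mathbf m')=d_{max}(\mathbf m')$. Writing $\alpha_{\mathbf m}=n\alpha_0+\sum n_{j,\nu}\alpha_{j,\nu}$ with nonnegative coefficients gives $(\alpha_{\mathbf m}|\alpha_{\mathbf m'})\le n\,d_{max}(\mathbf m')$, and since the left-hand side equals $m_{j_0,n'_{j_0}}m'_{j_0,n'_{j_0}}>0$ we get $d_{max}(\mathbf m')>0$. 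Hence $\p_{max}\mathbf m'$ is defined with strictly smaller order, which will be needed to feed the induction.

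The core of the argument is then an induction on $\ord\mathbf m$ that mirrors the inductive construction of shift operators in the proof of Theorem~\ref{thm:irredrigid}. In the base case $\ord\mathbf m'=1$ (automatically rigid, $\idx\mathbf m'=2$): $R$ has order $\le 1$, its generalized exponents are pinned down by the exponent bookkeeping attached to $\epsilon$, so its Riemann scheme is $\{\lambda'_{\mathbf m'}\}$, and $R$ is the unique operator with that scheme, namely $P^{j_0}_{\mathbf m'}(\lambda)$. For the inductive step, assume $\mathbf m$ is monotone and not fundamental and apply $\p_{max}$ to $\mathbf m$: formulas \eqref{eq:sht1}--\eqref{eq:sht3} express $R_{\mathbf m}(\epsilon,\lambda)$ through $R_{\p_{max}\mathbf m}(\tilde\epsilon,\tilde\lambda)$ by conjugations with products $\prod_j(x-c_j)^{\bullet}$ and powers $\p^{\bullet}$ and one composition. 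On the $\mathbf m'$-side, the same chain of $\Ad$/$\RAd$-operations carries $P^{j_0}_{\mathbf m'}(\lambda)$ to a universal operator of a reduced type $\mathbf m''$ (an application of $\p_\ell$ for the index $\ell$ induced on $\mathbf m'$ by $\p_{max}$ on $\mathbf m$, possibly with $d_\ell(\mathbf m'')=0$ so that the type is unchanged), exactly as in the proof of Theorem~\ref{thm:KatzKac}. Because $(\,\cdot\,|\,\cdot\,)$ and the quantities $|\{\lambda_{\mathbf m}\}|$ are $W_{\!\infty}$-invariant (Remark~\ref{rem:KacGRS}) and $\idx$ is preserved by $\p_\ell$, the pair $(\p_{max}\mathbf m,\mathbf m'')$ again satisfies the analogue of \eqref{eq:sftUinv}, and $\ord\p_{max}\mathbf m<\ord\mathbf m$ allows the induction hypothesis to give $R_{\p_{max}\mathbf m}(\tilde\epsilon,\tilde\lambda)\in\mathbb C(\lambda)^\times\cdot P^{j_0}_{\mathbf m''}(\lambda)$; pushing this equality back through the $\Ad$/$\RAd$-chain yields $R_{\mathbf m}(\epsilon,\lambda)\in\mathbb C(\lambda)^\times\cdot P^{j_0}_{\mathbf m'}(\lambda)$. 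Rigidity of $\mathbf m'$ falls out for free, since $\idx$ is constant along the chain and the base case is rigid.

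The main obstacle is the bookkeeping in the inductive step: one must check that the index $j_0$ and the exponent $\lambda'_{j_0,n'_{j_0}}$ fixed by the Fuchs relation are transported correctly by each reduction, that the integers $\epsilon_{j,\nu}$ for $j\ge 1$, $(j,\nu)\ne(j_0,n'_{j_0})$ stay non-negative (so that they are genuinely the parts of $\mathbf m'$, which is what the final identification requires), and that $\ord\mathbf m$ stays strictly above the current reduced order of $\mathbf m'$ throughout, so the induction hypothesis applies at every step. These are precisely the places where \eqref{eq:sftUinv} is used, and discharging them amounts to matching the two explicit $\Ad$/$\RAd$-compositions index by index --- routine case by case, but fiddly because of the many index conventions. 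An alternative, computing the generalized exponents of $R$ directly at each $c_j$ and at $\infty$ and checking that $R$ introduces no apparent singularity, is equally viable but no shorter.
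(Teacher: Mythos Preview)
Your approach is genuinely different from the paper's and considerably more indirect. The paper does not start from an abstract shift operator $R$ and then identify it with $P^{j_0}_{\mathbf m'}(\lambda)$; it goes the other way. For generic $\lambda$ it applies $P_{\mathbf m'}(\lambda')$ directly to a solution $u(x)$ of the irreducible equation $P_{\mathbf m}(\lambda)u=0$ and reads off the effect on local solutions: since $P_{\mathbf m'}(\lambda')$ has generalized exponent $[\lambda_{j,\nu}]_{(m'_{j,\nu})}$ at $c_j$ for $(j,\nu)\ne(j_0,n'_{j_0})$, it sends $(x-c_j)^{\lambda_{j,\nu}}\mathcal O_{c_j}$ into $(x-c_j)^{\lambda_{j,\nu}+m'_{j,\nu}}\mathcal O_{c_j}$, with the analogous statement at $\infty$ carrying the degree shift $-(p-1)\ord\mathbf m'$. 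Hence the $n$-dimensional space $\{P_{\mathbf m'}(\lambda')u(x)\}$ is the solution space of a Fuchsian equation whose exponents at every $(j,\nu)\ne(j_0,n'_{j_0})$ are those of $\{(\lambda+\epsilon)_{\mathbf m}\}$; the Fuchs relation for that equation is exactly \eqref{eq:sftUinv}, and when it holds there is no room for apparent singularities. This is precisely the ``alternative'' you mention in your last sentence and dismiss as ``no shorter'' --- but carried out for $P^{j_0}_{\mathbf m'}(\lambda)$ rather than for an abstract $R$, it is the whole proof in a few lines, with no induction.

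Rigidity of $\mathbf m'$ in the paper comes from Lemma~\ref{lem:sumlem}: since $(\alpha_{\mathbf m}|\alpha_{\mathbf m'})=m_{j_0,n'_{j_0}}m'_{j_0,n'_{j_0}}>0$ and both tuples are irreducibly realizable, they cannot both have $\idx\le 0$, so at least one is rigid. If $\mathbf m'$ is rigid we are done; if $\mathbf m$ is rigid, then by Theorem~\ref{thm:irredrigid} the shift operator is unique up to $\mathbb C(\lambda)^\times$, so the family $P_{\mathbf m'}(\lambda')$ (already shown to be a shift operator) can carry no accessory parameters, forcing $\mathbf m'$ rigid.

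Your route has real gaps. First, you presuppose the existence of a shift operator $R$ before proving anything; Theorem~\ref{thm:irredrigid} supplies it only for rigid $\mathbf m$, and the remark you cite after Theorem~\ref{thm:shifm1} explicitly says that for non-rigid $\mathbf m$ one may \emph{need} an external Schlesinger transformation --- it does not construct one. Second, your inductive bookkeeping is not merely fiddly: the right-hand side $m_{j_0,n'_{j_0}}m'_{j_0,n'_{j_0}}$ of \eqref{eq:sftUinv} is not $W_{\!\infty}$-invariant, so the invariance of $(\,\cdot\,|\,\cdot\,)$ alone does not show that the reduced pair again satisfies an analogue of \eqref{eq:sftUinv} with some well-defined new $(j_0,n'_{j_0})$; and the re-sorting needed to make $\p\mathbf m$ and $\p\mathbf m'$ monotone again is in general different for the two tuples, so ``the index $\ell$ induced on $\mathbf m'$ by $\p_{max}$ on $\mathbf m$'' is not canonically defined.
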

\begin{proof}
We may assume $\lambda$ is generic.
Let $u(x)$ be the solution of the irreducible differential equation 
$P_{\mathbf m}(\lambda)u=0$.
Then 
\begin{align*}
 P_{\mathbf m'}(\lambda')(x-c_j)^{\lambda_{j,\nu}}\mathcal O_{c_j}&\subset
   (x-c_j)^{\lambda_{j,\nu}+(1-\delta_{j,j_0}\delta_{\nu,n'_{j_0}})m'_{j,\nu}}
   \mathcal O_{c_j},\\
 P_{\mathbf m'}(\lambda')x^{-\lambda_{0,\nu}}\mathcal O_{\!\infty}&\subset
    x^{-\lambda_{0,\nu}-(1-\delta_{0,j_0}\delta_{\nu,n'_{j_0}})
    m'_{0,\nu}+(p-1)\ord\mathbf m'}
   \mathcal O_{\!\infty}
\end{align*}
and $P_{\mathbf m'}(\lambda')u(x)$ satisfies 
a Fuchsian differential equation.
Hence the fact $R_{\mathbf m}(\epsilon,\lambda)=P_{\mathbf m'}(\lambda')$ 
is clear from the characteristic exponents of the equation
at each singular points.
Note that the left hand side of \eqref{eq:sftUinv} is never larger than the right
hand side and if they are not equal, $P_{\mathbf m'}(\lambda')u(x)$ satisfies 
a Fuchsian differential equation with apparent singularities for the solutions
$u(x)$ of $P_{\mathbf m}(\lambda)u=0$.

It follows from Lemma~\ref{lem:sumlem} that the condition \eqref{eq:sftUinv} means 
that at least one of the irreducibly realizable tuples $\mathbf m$ and $\mathbf m'$
is rigid and therefore if $\mathbf m$ is rigid, so is $\mathbf m'$ because 
$R_{\mathbf m}(\epsilon,\lambda)$ is unique up to constant multiple.
\end{proof}
If $\ord\mathbf m'=1$, the condition \eqref{eq:sftUinv} means that $\mathbf m$ is 
of Okubo type, which will be examined in the next subsection.
It will be interesting to examine other cases.
When $\mathbf m=\mathbf m'\oplus\mathbf m''$ is a rigid decomposition
or $\alpha_{\mathbf m'}\in\Delta(\mathbf m)$, we easily have many examples 
satisfying \eqref{eq:sftUinv}.

Here we give examples of the pairs $(\mathbf m\,;\mathbf m')$ with $\ord\mathbf m'>1$:
\begin{equation}
 \begin{aligned}
 &(1^n,1^n,n-11\,;1^{n-1},1^{n-1},n-21)&&
  (221,32,32,41\,;\,110,11,11,20)\\
 &(1^{2m},mm-11,m^2\,;1^2,110,1^2) &&
 (1^{2m+1},m^21,m+1m\,;1^2,1^20,11)\\
 &(221,221,221\,;110,110,110)\ &&
 (211,221,221\,;110,110,110).
 \end{aligned}
\end{equation}

\subsection{Relation to reducibility}
In this subsection, we will examine whether the shift operator defines a 
$W(x)$-isomorphism or doesn't.
\begin{thm}\label{thm:shiftC}
\index{00cme@$c_{\mathbf m}(\epsilon;\lambda)$}
Retain the notation in\/ {\rm Theorem~\ref{thm:irredrigid}} and define a polynomial 
function $c_{\mathbf m}(\epsilon;\lambda)$ of $\lambda_{j,\nu}$ by 
\begin{equation}
 R_{\mathbf m}(-\epsilon,\lambda+\epsilon)R_{\mathbf m}(\epsilon,\lambda)
 -c_{\mathbf m}(\epsilon;\lambda)
 \in \bigl(W[x]\otimes\mathbb C[\lambda]\bigr)P_{\mathbf m}(\lambda).
\end{equation}

{\rm i)}
Fix $\lambda_{j,\nu}^o\in\mathbb C$.
If $c_{\mathbf m}(\epsilon;\lambda^o)\ne0$, 
the equation $P_{\mathbf m}(\lambda^o)u=0$ is isomorphic to 
the equation $P_{\mathbf m}(\lambda^o+\epsilon)v=0$.
If $c_{\mathbf m}(\epsilon;\lambda^o)=0$, then the equations
$P_{\mathbf m}(\lambda^o)u=0$ and $P_{\mathbf m}(\lambda^o+\epsilon)v=0$
are not irreducible.

{\rm ii)}
Under the notation in\/ {\rm Proposition~\ref{prop:subrep},} there exists a set
$\Lambda$ whose elements $(i,k)$ are in $\{1,\dots,N\}\times \mathbb Z$ such 
that
\begin{equation}
 c_{\mathbf m}(\epsilon;\lambda)=C\prod_{(i,k)\in\Lambda}
 \bigl(\ell_i(\lambda)-k\bigr)
\end{equation}
with a constant $C\in\mathbb C^\times$.
Here $\Lambda$ may contain some elements $(i,k)$
with multiplicities.
\end{thm}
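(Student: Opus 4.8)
The plan is to carry everything through the left $W(x;\lambda)$-module $\mathcal M_\lambda:=W(x;\lambda)/W(x;\lambda)P_{\mathbf m}(\lambda)$ and Schur's lemma. Write $H$ for the affine hyperplane of $\lambda=(\lambda_{j,\nu})$ cut out by the Fuchs relation $|\{\lambda_{\mathbf m}\}|=0$; since $\mathbf m$ is rigid it is indivisible, so $\mathcal M_\lambda$ is simple for $\lambda$ generic in $H$. By Theorem~\ref{thm:irredrigid}, the intertwining identity \eqref{eq:Sid} shows that $R_{\mathbf m}(\epsilon,\lambda)$ and $R_{\mathbf m}(-\epsilon,\lambda+\epsilon)$ carry solutions of one equation to solutions of the other, hence induce $W(x)$-module homomorphisms $\phi_+\colon\mathcal M_\lambda\to\mathcal M_{\lambda+\epsilon}$ and $\phi_-\colon\mathcal M_{\lambda+\epsilon}\to\mathcal M_\lambda$, both nonzero (the shift operators never vanish in $\lambda$) and of order $<n:=\ord\mathbf m$. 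First I would establish the existence of $c_{\mathbf m}(\epsilon;\lambda)$ as a polynomial: for generic $\lambda$, Schur's lemma for the irreducible monodromy shows the composite $\phi_-\phi_+$ is multiplication by a scalar, and dividing $R_{\mathbf m}(-\epsilon,\lambda+\epsilon)R_{\mathbf m}(\epsilon,\lambda)$ by $P_{\mathbf m}(\lambda)$ in $W(x;\lambda)$ --- whose leading coefficient $\prod_{j=1}^p(x-c_j)^n$ is $x$-monic and free of $\lambda$ --- forces the remainder to be this scalar and to lie in $\mathbb C[\lambda]$, which yields the displayed congruence with $c_{\mathbf m}(\epsilon;\lambda)\in\mathbb C[\lambda]$. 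The same discussion with $-\epsilon$ in place of $\epsilon$ and $\lambda+\epsilon$ in place of $\lambda$, together with the elementary remark that linear isomorphisms $f,g$ with $gf=a\,\id$ and $fg=b\,\id$ satisfy $a=b$, gives the symmetry $c_{\mathbf m}(-\epsilon;\lambda+\epsilon)=c_{\mathbf m}(\epsilon;\lambda)$; moreover $c_{\mathbf m}(\epsilon;\lambda)\not\equiv0$, since for generic $\lambda$ both $\mathcal M_\lambda$ and $\mathcal M_{\lambda+\epsilon}$ are simple of the same $\mathbb C(x)$-dimension $n$, whence $\phi_\pm$ are isomorphisms.

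For part (i): if $c_{\mathbf m}(\epsilon;\lambda^o)\ne0$, then $\phi_-\phi_+=c_{\mathbf m}(\epsilon;\lambda^o)\,\id_{\mathcal M_{\lambda^o}}$ and $\phi_+\phi_-=c_{\mathbf m}(-\epsilon;\lambda^o+\epsilon)\,\id_{\mathcal M_{\lambda^o+\epsilon}}=c_{\mathbf m}(\epsilon;\lambda^o)\,\id_{\mathcal M_{\lambda^o+\epsilon}}$ are multiplication by the same nonzero constant, so $\phi_+$ is invertible and the equations $P_{\mathbf m}(\lambda^o)u=0$ and $P_{\mathbf m}(\lambda^o+\epsilon)v=0$ are $W(x)$-isomorphic. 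If instead $c_{\mathbf m}(\epsilon;\lambda^o)=0$, suppose for contradiction that $P_{\mathbf m}(\lambda^o)u=0$ is irreducible; then $\mathcal M_{\lambda^o}$ is simple, and $\phi_+\ne0$ because a nonzero operator of order $<n$ cannot lie in $W(x)P_{\mathbf m}(\lambda^o+\epsilon)$, so $\ker\phi_+=0$; since $\dim_{\mathbb C(x)}\mathcal M_{\lambda^o}=\dim_{\mathbb C(x)}\mathcal M_{\lambda^o+\epsilon}=n$, the map $\phi_+$ is an isomorphism, hence $\mathcal M_{\lambda^o+\epsilon}$ is simple, $\phi_-$ is likewise an isomorphism, and $\phi_-\phi_+$ is an isomorphism, contradicting $\phi_-\phi_+=0$. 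Thus $P_{\mathbf m}(\lambda^o)u=0$ is reducible; applying the same argument with $\epsilon$ replaced by $-\epsilon$ and $\lambda^o$ by $\lambda^o+\epsilon$, and using $c_{\mathbf m}(-\epsilon;\lambda^o+\epsilon)=c_{\mathbf m}(\epsilon;\lambda^o)=0$, the equation $P_{\mathbf m}(\lambda^o+\epsilon)v=0$ is reducible as well.

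For part (ii): by part (i), the vanishing of $c_{\mathbf m}(\epsilon;\lambda^o)$ forces $P_{\mathbf m}(\lambda^o)u=0$ reducible, and by Proposition~\ref{prop:subrep} (which records the consequence of Theorem~\ref{thm:irrKac} for rigid $\mathbf m$) this means $\ell_i(\lambda^o)\in\mathbb Z$ for some $i\in\{1,\dots,N\}$. Hence on $H$ the zero set of the nonzero polynomial $c_{\mathbf m}(\epsilon;\cdot)$ is contained in the countable union $\bigcup_{i=1}^{N}\bigcup_{k\in\mathbb Z}\{\lambda\in H:\ell_i(\lambda)=k\}$. Each $\ell_i$ is non-constant on $H$ (otherwise $\mathbf m^{(i)}$ would be proportional to $\mathbf m$, impossible as both are indivisible and distinct), so each $\{\ell_i=k\}\cap H$ is an irreducible hypersurface in $H$. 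An irreducible component of the zero locus of $c_{\mathbf m}(\epsilon;\cdot)$ is an irreducible hypersurface in $H$ lying inside this countable union of proper irreducible closed subsets; since $\mathbb C$ is uncountable it must coincide with one $\{\ell_i=k\}\cap H$, so its reduced defining polynomial is a constant multiple of $\ell_i(\lambda)-k$. Factoring $c_{\mathbf m}(\epsilon;\lambda)$ into irreducibles therefore gives $c_{\mathbf m}(\epsilon;\lambda)=C\prod_{(i,k)\in\Lambda}(\ell_i(\lambda)-k)$ with $C\in\mathbb C^\times$ and $\Lambda$ a finite multiset in $\{1,\dots,N\}\times\mathbb Z$.

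The module-theoretic skeleton above is routine; the parts I expect to cost the most care are the two bookkeeping points. First, upgrading the Schur scalar from a priori a rational function of $\lambda$ to a genuine polynomial, and arranging the congruence inside $W[x]\otimes\mathbb C[\lambda]$ rather than $W(x;\lambda)$: this uses precisely that the leading coefficient $\prod_{j=1}^p(x-c_j)^n$ of $P_{\mathbf m}$ is $\lambda$-free and $x$-monic, so the division algorithm does not introduce $\lambda$-denominators and the constant-in-$x$ remainder is polynomial in $\lambda$. Second, in part (ii), the implication ``$P_{\mathbf m}(\lambda^o)$ reducible $\Rightarrow$ $\ell_i(\lambda^o)\in\mathbb Z$ for some $i$'', which is the content of Theorem~\ref{thm:irrKac}/Proposition~\ref{prop:subrep} for rigid $\mathbf m$, followed by the algebraic-geometry step that an irreducible polynomial whose zero locus lies in a countable union of hyperplanes must, over $\mathbb C$, be a scalar multiple of one of the corresponding affine-linear forms. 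The derivation that \emph{both} equations are reducible when $c_{\mathbf m}(\epsilon;\lambda^o)=0$ relies essentially on the symmetry $c_{\mathbf m}(-\epsilon;\lambda+\epsilon)=c_{\mathbf m}(\epsilon;\lambda)$, which is why I would establish that identity at the outset.
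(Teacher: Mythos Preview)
Your proof is correct and follows essentially the same approach as the paper: Schur's lemma on the irreducible $W(x;\lambda)$-module gives the scalar $c_{\mathbf m}(\epsilon;\lambda)$, the non-bijectivity of the shift map when this scalar vanishes yields reducibility of both equations, and the irreducibility criterion from Proposition~\ref{prop:subrep} forces every irreducible factor of $c_{\mathbf m}(\epsilon;\lambda)$ to be one of the $\ell_i(\lambda)-k$. The paper's argument is terser---it extracts both reducibilities directly from the single relation $\phi_-\phi_+=0$ (since $\phi_+$ is then neither injective nor surjective), whereas you route the second reducibility through the symmetry $c_{\mathbf m}(-\epsilon;\lambda+\epsilon)=c_{\mathbf m}(\epsilon;\lambda)$; and your careful justification that $c_{\mathbf m}$ is polynomial (via the $\lambda$-free $x$-monic leading coefficient of $P_{\mathbf m}$) and your algebraic-geometry step for part~(ii) make explicit what the paper leaves implicit.
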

\begin{proof}
Since $u\mapsto 
R_{\mathbf m}(-\epsilon,\lambda+\epsilon)R_{\mathbf m}(\epsilon,\lambda)u$
defined an endomorphism of the irreducible 
equation $P_{\mathbf m}(\lambda)u=0$, the existence of
$c_{\mathbf m}(\epsilon;\lambda)$ is clear.

If $c_{\mathbf m}(\epsilon;\lambda^o)=0$, the non-zero homomorphism of 
$P_{\mathbf m}(\lambda^o)u=0$ to $P_{\mathbf m}(\lambda^o+\epsilon)v=0$
defined by $u=R_{\mathbf m}(\epsilon;\lambda^o)v$ is not surjective nor injective.
Hence the equations are not irreducible.
If $c_{\mathbf m}(\epsilon;\lambda^o)\ne0$, then the homomorphism is an isomorphism
and the equations are isomorphic to each other.

The claim ii) follows from Proposition~\ref{prop:subrep}.
\end{proof}
\begin{thm}\label{thm:isom}
Retain the notation in\/ {\rm Theorem~\ref{thm:shiftC}} with a rigid tuple\/ 
$\mathbf m$.
Fix a linear function $\ell(\lambda)$ of $\lambda$ such that the 
condition $\ell(\lambda)=0$ implies the reducibility of the 
universal equation $P_{\mathbf m}(\lambda)u=0$. 

{\rm i) }
If there is no irreducible realizable subtuple\/ $\mathbf m'$ 
of\/ $\mathbf m$ which is compatible to $\ell(\lambda)$ and 
$\ell(\lambda+\epsilon)$, $\ell(\lambda)$ is a factor of 
$c_{\mathbf m}(\epsilon;\lambda)$.

If there is no dual decomposition of\/ $\mathbf m$ with respect to the
pair $\ell(\lambda)$ and $\ell(\lambda+\epsilon)$, 
$\ell(\lambda)$ is not a factor of 
$c_{\mathbf m}(\epsilon;\lambda)$.
Here we define that the decomposition \eqref{eq:subrep} 
is \textsl{dual} with respect to 
the pair $\ell(\lambda)$ and $\ell(\lambda+\epsilon)$ 
if the following conditions are valid.
\begin{align}
&\text{$\mathbf m'$ is an irreducibly realizable 
subtuple of\/ $\mathbf m$ compatible to $\ell(\lambda)$},\\
&\text{$\mathbf m''$ is a 
subtuple of\/ $\mathbf m$ compatible to $\ell(\lambda+\epsilon)$}.
\end{align}

{\rm ii)}
Suppose there exists a decomposition\/
$\mathbf m=\mathbf m'\oplus\mathbf m''$ with rigid tuples\/
$\mathbf m'$ and\/ $\mathbf m''$ such that
$\ell(\lambda)=|\{\lambda_{\mathbf m}\}|+k$ with $k\in\mathbb Z$
and $\ell(\lambda+\epsilon)=\ell(\lambda)+1$.
Then $\ell(\lambda)$ is a factor of $c_{\mathbf m}(\epsilon;\lambda)$
if and only if $k=0$.
\end{thm}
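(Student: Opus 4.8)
The plan is to analyze the factor $\ell(\lambda)$ in $c_{\mathbf m}(\epsilon;\lambda)$ by combining Theorem~\ref{thm:shiftC}~ii) with the dual-decomposition criterion of Theorem~\ref{thm:isom}~i). Theorem~\ref{thm:shiftC}~ii) tells us that $c_{\mathbf m}(\epsilon;\lambda)$ is a product of factors of the form $\ell_i(\lambda)-k$ where $\ell_i$ runs over the linear forms $|\{\lambda_{\mathbf m^{(i)}}\}|$ attached to the rigid subtuples $\mathbf m^{(i)}\in\Delta(\mathbf m)$; since by hypothesis $\ell(\lambda)=|\{\lambda_{\mathbf m'}\}|+k$ for a rigid subtuple $\mathbf m'$ with $\mathbf m=\mathbf m'\oplus\mathbf m''$, we have $\alpha_{\mathbf m'}\in\Delta(\mathbf m)$ by the Corollary following Proposition~\ref{prop:wm} (using $(\alpha_{\mathbf m'}|\alpha_{\mathbf m''})=-1$ since $\mathbf m',\mathbf m''$ rigid and $\idx\mathbf m=2$), so $\ell(\lambda)$ is a candidate factor, and the only question is for which integer translate $k$ it actually divides $c_{\mathbf m}(\epsilon;\lambda)$.

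First I would reduce to checking the dual-decomposition condition of Theorem~\ref{thm:isom}~i). That theorem says $\ell(\lambda)$ is a factor of $c_{\mathbf m}(\epsilon;\lambda)$ when there exists a \emph{dual decomposition} $\mathbf m=\mathbf m_1\oplus\mathbf m_2$ with respect to the pair $\ell(\lambda),\ell(\lambda+\epsilon)$, i.e.\ $\mathbf m_1$ an irreducibly realizable subtuple compatible to $\ell(\lambda)$ and $\mathbf m_2$ compatible to $\ell(\lambda+\epsilon)=\ell(\lambda)+1$; and $\ell(\lambda)$ is \emph{not} a factor when no such dual decomposition exists. So the proof splits into two directions.

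For the ``if'' direction ($k=0\Rightarrow\ell(\lambda)$ divides $c_{\mathbf m}(\epsilon;\lambda)$): when $k=0$ we have $\ell(\lambda)=|\{\lambda_{\mathbf m'}\}|$ and the remark just before Definition~\ref{def:linidx} (or the Fuchs relation $|\{\lambda_{\mathbf m'}\}|+|\{\lambda_{\mathbf m''}\}|=1$ for rigid $\mathbf m',\mathbf m''$) gives $|\{\lambda_{\mathbf m''}\}|=1-\ell(\lambda)$, so on $\ell(\lambda)=0$ we get $|\{\lambda_{\mathbf m'}\}|=0\in\mathbb Z_{\le0}$ and $|\{\lambda_{\mathbf m''}\}|=1$; evaluating the same forms at $\lambda+\epsilon$ shifts these by integers, and I would check directly using the definition of $\epsilon$ and of $|\{\cdot\}|$ that $|\{(\lambda+\epsilon)_{\mathbf m''}\}|=|\{\lambda_{\mathbf m''}\}|+(\text{an integer})$, so $\mathbf m''$ is compatible to $\ell(\lambda+\epsilon)$ while $\mathbf m'$ is compatible to $\ell(\lambda)$. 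Thus $\mathbf m=\mathbf m'\oplus\mathbf m''$ is a dual decomposition and Theorem~\ref{thm:isom}~i) applies. For the ``only if'' direction ($k\ne0\Rightarrow\ell(\lambda)$ does not divide $c_{\mathbf m}(\epsilon;\lambda)$): here $\ell(\lambda)=0$ forces $|\{\lambda_{\mathbf m'}\}|=-k$, which is $<0$ if $k>0$ and $>0$ if $k<0$. If $k>0$ the candidate subtuple $\mathbf m'$ has $|\{\lambda_{\mathbf m'}\}|=-k<0$, violating the $\idx\mathbf m'=2$ rigidity constraint that would be needed, and more precisely I would argue that any subtuple compatible to $\ell(\lambda)$ must have $|\{\cdot\}|\in\mathbb Z_{\le0}$ equal to $-k$, but then the complementary piece cannot be compatible to $\ell(\lambda+\epsilon)$ because the shift by $\epsilon$ moves $|\{\cdot\}|$ by exactly the wrong amount; a parallel sign analysis handles $k<0$ (the roles of the dual and the subtuple swap). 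In both cases no dual decomposition exists and Theorem~\ref{thm:isom}~i) gives non-divisibility.

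\textbf{Main obstacle.} The delicate point is the bookkeeping of how $|\{\lambda_{\mathbf m_1}\}|$ and $|\{\lambda_{\mathbf m_2}\}|$ transform under the shift $\epsilon$, and showing that no \emph{other} subtuple $\mathbf m_1$ (not equal to $\mathbf m'$) can serve as the compatible piece when $k\ne0$ — i.e.\ ruling out that $c_{\mathbf m}(\epsilon;\lambda)$ acquires the factor $\ell(\lambda)$ ``accidentally'' through a different decomposition. This is where I would need Theorem~\ref{thm:shiftC}~ii) to pin down that every factor of $c_{\mathbf m}(\epsilon;\lambda)$ is of the form $\ell_i(\lambda)-k'$ with $\mathbf m^{(i)}\in\Delta(\mathbf m)$, and then use $\idx_{\mathbf m}$ together with the fact that a rigid $\mathbf m'$ with $\mathbf m=\mathbf m'\oplus\mathbf m''$ gives $\idx_{\mathbf m}(\ell)=1$, so the only integer translate of $\ell$ that can occur is determined uniquely — namely $k=0$ — by matching the dual-decomposition bookkeeping. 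The rest is the sign-chasing described above, which is routine once the framework is set up.
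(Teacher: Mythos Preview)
There are two genuine gaps in your plan for part~ii).

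First, you have misread part~i). The second clause of i) says: \emph{no dual decomposition} $\Rightarrow$ $\ell(\lambda)$ is \emph{not} a factor; by contraposition, \emph{if $\ell(\lambda)$ is a factor then a dual decomposition exists}. It does \emph{not} assert that the existence of a dual decomposition forces $\ell(\lambda)$ to divide $c_{\mathbf m}(\epsilon;\lambda)$. So exhibiting a dual decomposition when $k=0$ proves nothing. The correct tool for the ``if'' direction is the \emph{first} clause of~i): you must show that \emph{no} irreducibly realizable subtuple is compatible to \emph{both} $\ell(\lambda)$ and $\ell(\lambda+\epsilon)$ when $k=0$.

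Second, and this is the real obstruction, both clauses of~i) become usable only after you classify \emph{all} subtuples compatible to $\ell(\lambda)$ (respectively $\ell(\lambda+\epsilon)$). Your obstacle paragraph acknowledges this, but the proposed cure via $\idx_{\mathbf m}(\ell)$ and Theorem~\ref{thm:shiftC}~ii) does not close it: that theorem only says every factor of $c_{\mathbf m}(\epsilon;\lambda)$ has the form $\ell_i(\lambda)-k'$, not that a given reducibility hyperplane admits only the two subtuples $\mathbf m',\mathbf m''$. The paper's argument is different and sharper. On generic $\lambda$ with $\ell(\lambda)=0$, the only linear relation among the $\lambda_{j,\nu}$ beyond the Fuchs relation is $|\{\lambda_{\mathbf m'}\}|=-k$; hence any subtuple $\tilde{\mathbf m}$ with $|\{\lambda_{\tilde{\mathbf m}}\}|\in\mathbb Z$ must be of the form $\tilde{\mathbf m}=\ell_1\mathbf m'+\ell_2\mathbf m''$ with $\ell_1,\ell_2\in\mathbb Z$. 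Irreducibility of the corresponding factor forces
\[
\idx\tilde{\mathbf m}=(\ell_1\alpha_{\mathbf m'}+\ell_2\alpha_{\mathbf m''}\mid \ell_1\alpha_{\mathbf m'}+\ell_2\alpha_{\mathbf m''})=2(\ell_1^2-\ell_1\ell_2+\ell_2^2)\le 2,
\]
whence $(\ell_1,\ell_2)\in\{(1,0),(0,1)\}$. Only with this classification in hand does the sign bookkeeping (via $|\{\lambda_{\mathbf m'}\}|+|\{\lambda_{\mathbf m''}\}|=1$ and $\sum m'_{j,\nu}\epsilon_{j,\nu}=1$) settle both directions through the two clauses of~i).
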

\begin{proof}
Fix generic complex numbers $\lambda_{j,\nu}\in\mathbb C$ satisfying
$\ell(\lambda)=|\{\lambda_{\mathbf m}\}|=0$.
Then we may assume $\lambda_{j,\nu}-\lambda_{j,\nu'}\notin\mathbb Z$
for $1\le\nu<\nu'\le n_j$ and $j=0,\dots,p$.

i) The shift operator 
$R:=R_{\mathbf m}(-\epsilon,\lambda+\epsilon)$ gives a 
non-zero $W(x)$-homomorphism of the equation 
$P_{\mathbf m}(\lambda+\epsilon)v=0$ to $P_{\mathbf m}(\lambda)u=0$ 
by the correspondence $v=Ru$.
Since the equation $P_{\mathbf m}(\lambda)u=0$ is reducible, 
we examine the decompositions of $\mathbf m$ described in 
Proposition~\ref{prop:subrep}.
Note that the genericity of $\lambda_{j,\nu}\in\mathbb C$ assures that
the subtuple $\mathbf m'$ of $\mathbf m$ corresponding to a decomposition 
$P_{\mathbf m}(\lambda)=P''P'$ is uniquely determined, namely, $\mathbf m'$
corresponds to the spectral type of the monodromy of the equation $P'u=0$.

If the shift operator $R$ is bijective, 
there exists a subtuple $\mathbf m'$ of $\mathbf m$ compatible to 
$\ell(\lambda)$ and $\ell(\lambda+\epsilon)$ because $R$ indices
an isomorphism of monodromy.

Suppose $\ell(\lambda)$ is a factor of $c_{\mathbf m}(\epsilon;\lambda)$.
Then $R$ is not bijective.
We assume that the image of $R$ is the equation $P''\bar u=0$ and the kernel of
$R$ is the equation $P'_\epsilon \bar v=0$.
Then $P_{\mathbf m}(\lambda)=P''P'$ and 
$P_{\mathbf m}(\lambda+\epsilon)=P'_\epsilon P''_\epsilon$ 
with suitable Fuchsian differential operators $P'$ and $P''_\epsilon$.
Note that the spectral type of the monodromy of $P'u=0$ and 
$P''_\epsilon v=0$ corresponds to $\mathbf m'$ and $\mathbf m''$ with 
$\mathbf m=\mathbf m'+\mathbf m''$.
Applying Proposition~\ref{prop:subrep} to the decompositions
$P_{\mathbf m}(\lambda)=P''P'$ and $P_{\mathbf m}(\lambda+\epsilon)
=P'_\epsilon P''_\epsilon$, 
we have a dual decomposition \eqref{eq:subrep} of 
$\mathbf m$ with respect to the pair $\ell(\lambda)$ and 
$\ell(\lambda+\epsilon)$.

ii)  Since $P_{\mathbf m}(\lambda)u=0$ is reducible, we have a decomposition
$P_{\mathbf m}(\lambda)=P''P'$ with $0<\ord P'<\ord P_{\mathbf m}(\lambda)$.
We may assume $P'u=0$ and let $\tilde{\mathbf m}'$ be the spectral type of
the monodromy of the equation $P'u=0$.
Then $\tilde{\mathbf m}'=\ell_1\mathbf m'+\ell_2\mathbf m''$ with integers 
$\ell_1$ and $\ell_2$ because 
$|\{\lambda_{\tilde{\mathbf m}'}\}|\in\mathbb Z_{\le 0}$.
Since $P'u=0$ is irreducible, 
$2\ge \idx\tilde{\mathbf m}'=2(\ell_1^2-\ell_1\ell_2+\ell_2^2)$ and therefore
$(\ell_1,\ell_2)=(1,0)$ or $(0,1)$.
Hence the claim follows from i) and the identity
$|\{\lambda_{\mathbf m'}\}|+|\{\lambda_{\mathbf m''}\}|=1$
\end{proof}
\begin{rem} i) \ 
The reducibility of $P_{\mathbf m}(\lambda)$ implies
that of the dual of $P_{\mathbf m}(\lambda)$.

ii)
When $\mathbf m$ is simply reducible (cf.~Definition \ref{def:fund}), 
each linear form of $\lambda_{j,\nu}$ describing the reducibility 
uniquely corresponds to a rigid decomposition of $\mathbf m$ and 
therefore Theorem~\ref{thm:isom} gives the necessary and sufficient condition
for the bijectivity of the shift operator $R_{\mathbf m}(\epsilon,\lambda)$.
\end{rem}
\begin{exmp}[$EO_4$]
\index{even family!$EO_4$}
\index{tuple of partitions!rigid!1111,211,22}
Let $P(\lambda)u=0$ and $P(\lambda')v=0$ be the Fuchsian differential equation 
with the Riemann schemes
\[
\begin{Bmatrix}
  \lambda_{0,1} & [\lambda_{1,1}]_{(2)} & [\lambda_{2,1}]_{(2)}\\
  \lambda_{0,2} & \lambda_{1,2} & [\lambda_{2,2}]_{(2)}\\
  \lambda_{0,3} & \lambda_{1,3}\\
  \lambda_{0,4}
 \end{Bmatrix}
\text{ \ and \ } 
\begin{Bmatrix}
  \lambda_{0,1} & [\lambda_{1,1}]_{(2)} & [\lambda_{2,1}]_{(2)}\\
  \lambda_{0,2} & \lambda_{1,2} & [\lambda_{2,2}]_{(2)}\\
  \lambda_{0,3} & \lambda_{1,3}+1\\
  \lambda_{0,4}-1
  \end{Bmatrix},
\]
respectively.
Since the condition of the reducibility of the equation corresponds to 
rigid decompositions \eqref{eq:e4ddec}, it easily follows from 
Theorem~\ref{thm:isom} that the shift operator between $P(\lambda)u=0$ and 
$P(\lambda')v=0$ is bijective if and only if
\begin{equation*}
 \begin{cases}
  \lambda_{0,4}+\lambda_{1,2}+\lambda_{2,\mu}-1\ne 0
   &(1\le \mu\le2),\\
  \lambda_{0,\nu}+\lambda_{0,\nu'}
  +\lambda_{1,1}+\lambda_{1,3}+\lambda_{2,1}+\lambda_{2,2}-1\ne 0
   &(1\le \nu<\nu'\le 3).
 \end{cases}
\end{equation*}

In general, for a shift $\epsilon=(\epsilon_{j,\nu})$ compatible to the
spectral type $1111,211,22$, the shift operator between $P(\lambda)u=0$ 
and $P(\lambda+\epsilon)v=0$ is bijective
if and only if the values of each function in the list
\begin{align}
  &\lambda_{0,\nu}+\lambda_{1,1}+\lambda_{2,\mu}
    &&(1\le\nu\le 4,\ 1\le\mu\le 2),\\
  &\lambda_{0,\nu}+\lambda_{0,\nu'}+\lambda_{1,1}+\lambda_{1,3}+
     \lambda_{2,1}+\lambda_{2,2}-1
    &&(1\le\nu<\nu'\le4)
\end{align}
are
\begin{equation}\label{eq:non-pos}
 \begin{cases}
   \text{\hspace{12.5pt}not integers for $\lambda$ and $\lambda+\epsilon$}\\
   \text{or positive integers for $\lambda$ and $\lambda+\epsilon$}\\
   \text{or non-positive integers for $\lambda$ and $\lambda+\epsilon$}.
 \end{cases}
\end{equation}
Note that the shift operator gives a homomorphism between monodromies
(cf.~\eqref{eq:isoWM}).
\end{exmp}
The following conjecture gives $c_{\mathbf m}(\epsilon;\lambda)$ under certain
conditions.
\begin{conj}\label{conj:shift}
Retain the assumption that 
$\mathbf m=\bigl(\lambda_{j,\nu}\bigr)_{\substack{0\le j\le p\\1\le\nu\le n_j}}
\in\mathcal P^{(n)}_{p+1}$ is rigid.

{\rm i) }
If $\ell(\lambda)=\ell(\lambda+\epsilon)$ in Theorem~\ref{thm:isom}, then
$\ell(\lambda)$ is not a factor of $c_{\mathbf m}(\epsilon;\lambda)$,

{\rm ii) }
Assume $m_{1,n_1}=m_{2,n_2}=1$ and
\begin{equation}
  \epsilon:=\bigl(\epsilon_{j,\nu}\bigr)_{\substack{0\le j\le p\\1\le\nu\le n_j}},
  \quad \epsilon_{j,\nu}=\delta_{j,1}\delta_{\nu,n_1}
  -\delta_{j,2}\delta_{\nu,n_2},
\end{equation}
Then we have
\begin{equation}\label{eq:cef}
  c_{\mathbf m}(\epsilon;\lambda)=C\prod_{\substack
   {\mathbf m=\mathbf m'\oplus\mathbf m''\\m'_{1,n_1}=m''_{2,n_2}=1}}
   |\{\lambda_{\mathbf m'}\}|
\end{equation}
with $C\in\mathbb C^\times$.
\end{conj}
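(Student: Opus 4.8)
The plan is to reduce the computation of $c_{\mathbf m}(\epsilon;\lambda)$ to the connection coefficients of Theorem~\ref{thm:c} together with the recurrence relation of Theorem~\ref{thm:shifm1}, and then to run an induction on $\ord\mathbf m$ along the reduction chain $\mathbf m\succ\p_{max}\mathbf m\succ\cdots$ of Definition~\ref{def:redGRS}. By Theorem~\ref{thm:shiftC} we already know that $c_{\mathbf m}(\epsilon;\lambda)$ is, up to a nonzero constant, a product of linear forms $\ell_i(\lambda)-k$ with $\mathbf m^{(i)}\in\Delta(\mathbf m)$ and $k\in\mathbb Z$, and by Theorem~\ref{thm:isom}\,(i) a form $\ell(\lambda)$ with $\ell(\lambda)\neq\ell(\lambda+\epsilon)$ occurs exactly when there is a dual decomposition compatible to the pair $\ell(\lambda),\ell(\lambda+\epsilon)$. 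Since the shift $\epsilon$ of part (ii) has $\epsilon_{1,n_1}=1$, $\epsilon_{2,n_2}=-1$, a compatible rigid decomposition $\mathbf m=\mathbf m'\oplus\mathbf m''$ must have $(1,n_1)$ in the support of $\mathbf m'$ with $m'_{1,n_1}=1$ and, in order that $|\{\lambda_{\mathbf m'}\}|$ increase by exactly one, must also have $m''_{2,n_2}=1$; the identity $|\{\lambda_{\mathbf m'}\}|+|\{\lambda_{\mathbf m''}\}|=1$ then forces $k=0$ by Theorem~\ref{thm:isom}\,(ii). Thus the factors on the right of \eqref{eq:cef} are precisely the ones that must occur, and the content of the conjecture is that each occurs with multiplicity exactly one and that nothing else occurs.

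To pin down the multiplicities I would compute $c_{\mathbf m}(\epsilon;\lambda)$ directly from a pair of scalars. Write $R=R_{\mathbf m}(\epsilon,\lambda)$ and let $u_\lambda$ be the normalized local solution of $P_{\mathbf m}(\lambda)u=0$ at $c_1$ attached to the exponent $\lambda_{1,n_1}$ (here $m_{1,n_1}=1$). Then $Ru_\lambda=a(\lambda)u_{\lambda+\epsilon}$ and $R_{\mathbf m}(-\epsilon,\lambda+\epsilon)u_{\lambda+\epsilon}=b(\lambda)u_\lambda$, so that $c_{\mathbf m}(\epsilon;\lambda)=a(\lambda)b(\lambda)$, and $a,b$ are computed by evaluating the connection coefficient $c(c_1\!:\!\lambda_{1,n_1}\rightsquigarrow c_2\!:\!\lambda_{2,n_2})$ before and after the shift. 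By Theorem~\ref{thm:c} (or \eqref{eq:Icon} when $p=2$) this coefficient equals, up to an easier nonzero term, a quotient of products of gamma functions whose denominator is $\prod_{\mathbf m'\oplus\mathbf m''=\mathbf m}\Gamma(|\{\lambda_{\mathbf m'}\}|)$; under $\epsilon$ the only factors of this denominator that change are $\Gamma(|\{\lambda_{\mathbf m'}\}|)$ with $m'_{1,n_1}=m''_{2,n_2}=1$, each being multiplied by $|\{\lambda_{\mathbf m'}\}|$. The round trip then reproduces exactly $\prod|\{\lambda_{\mathbf m'}\}|$ once, giving \eqref{eq:cef}, and the constant $C$ is determined by matching top-order terms via \eqref{eq:Sid}. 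An alternative, self-contained route avoiding the connection machinery is to use \eqref{eq:shtUniv} and the explicit recurrence \eqref{eq:recF}: at each step of $\p_{max}$ the recurrence contributes one gamma-ratio, and iterating the reduction produces the factors $|\{\lambda_{\mathbf m'}\}|$ one per surviving rigid decomposition.

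The inductive step: the shift operator for $\mathbf m$ is built from that for $\p_{max}\mathbf m$ in the proof of Theorem~\ref{thm:irredrigid} by conjugating with additions and $\Ad(\p^{-\mu})$, so $c_{\mathbf m}(\epsilon;\lambda)$ differs from $c_{\p_{max}\mathbf m}(\tilde\epsilon;\tilde\lambda)$ only by the factor contributed by the single middle-convolution step. One identifies this extra factor with $\prod|\{\lambda_{\mathbf m'}\}|$ over exactly those rigid decompositions $\mathbf m=\mathbf m'\oplus\mathbf m''$ (with $m'_{1,n_1}=m''_{2,n_2}=1$) that do not descend to decompositions of $\p_{max}\mathbf m$, i.e. those with $\alpha_{\mathbf m'}$ a new root in the support; the bookkeeping here is governed by Proposition~\ref{prop:wm} and the bijection $\varpi$ of Theorem~\ref{thm:irrKac}. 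Part (i) follows the same pattern: if $\ell(\lambda)=\ell(\lambda+\epsilon)$, the associated subtuple is simultaneously compatible to $\ell(\lambda)$ and $\ell(\lambda+\epsilon)$ but admits no dual decomposition (the shift does not move $|\{\lambda_{\mathbf m'}\}|$), so by Theorem~\ref{thm:isom}\,(i) the form $\ell(\lambda)$ is not a factor of $c_{\mathbf m}(\epsilon;\lambda)$.

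The main obstacle will be the multiplicity-and-exhaustiveness claim: showing that no factor $\ell_i(\lambda)-k$ with $k\neq 0$, and no factor coming from a non-rigid or otherwise improper subtuple, survives in $c_{\mathbf m}(\epsilon;\lambda)$, and that the rigid factors occur simply. This requires careful control of $\idx_{\mathbf m}$ of the relevant linear forms (Definition~\ref{def:linidx}) combined with the classification of dual decompositions in Theorem~\ref{thm:isom}, and a degree count: $2\deg R_{\mathbf m}(\epsilon,\lambda)$ in $\lambda$ must equal the number of rigid decompositions with $m'_{1,n_1}=m''_{2,n_2}=1$. This degree count is the delicate point, and it is precisely where the rigidity of $\mathbf m$ (so that the reduction terminates at order $1$) is essential; extending the statement to irreducibly realizable but non-rigid $\mathbf m$ would require replacing the terminal step by a Schlesinger transformation and is left open.
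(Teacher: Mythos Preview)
This statement is a \emph{conjecture} in the paper, not a theorem: it is stated under \texttt{\textbackslash begin\{conj\}} with no proof, and is repeated among the open problems in \S\ref{sec:prob} (``Calculate the polynomial function $c_{\mathbf m}(\epsilon;\lambda)$\ldots\ Is it square free? See Conjecture~\ref{conj:shift}''). Theorem~\ref{thm:isom} supplies only a necessary condition and a separate sufficient condition for a linear form to divide $c_{\mathbf m}(\epsilon;\lambda)$, and these do not coincide; the paper does not claim to know the multiplicities.

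Your connection-coefficient step has a genuine gap. Writing $R u_\lambda^{(i)}=a_i(\lambda)\,u_{\lambda+\epsilon}^{(i)}$ for the normalized solution at $c_i$ ($i=1,2$) and using compatibility of $R$ with analytic continuation, what one obtains from Theorem~\ref{thm:c} is only the \emph{ratio}
\[
\frac{a_1(\lambda)}{a_2(\lambda)}
=\frac{c(c_1\!:\!\lambda_{1,n_1}\rightsquigarrow c_2\!:\!\lambda_{2,n_2})}
      {c(c_1\!:\!\lambda_{1,n_1}\!+\!1\rightsquigarrow c_2\!:\!\lambda_{2,n_2}\!-\!1)},
\]
and the reverse operator contributes $b_1/b_2$ equal to the reciprocal; the ``round trip'' of these two ratios is identically $1$ and says nothing about $c_{\mathbf m}(\epsilon;\lambda)=a_1b_1=a_2b_2$. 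The gamma-ratio you extract is precisely $a_1/a_2$, not $a_1b_1$. To isolate $a_1$ (or $a_2$) you would need the leading local behaviour of $R$ at $c_1$ (or $c_2$), i.e.\ an explicit form for $R$; the construction in Theorem~\ref{thm:irredrigid} produces $R$ via a Euclidean inverse $R_3\in W(x;\lambda)$ and gives no such closed form. The alternative route through \eqref{eq:shtUniv} and \eqref{eq:recF} has the same defect: it expresses $a_1$ as the recurrence coefficient times the unknown rational function $C(\lambda)$ of Theorem~\ref{thm:shifm1}\,(i), and the analogous $C'(\lambda)$ for the reverse shift, neither of which is determined.

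Your inductive step inherits this: asserting that $c_{\mathbf m}$ differs from $c_{\p_{max}\mathbf m}$ by an explicit factor is exactly the missing computation, since tracking $c_{\mathbf m}$ through the construction of $R$ from the reduced shift operator again passes through the uncontrolled $R_3$. Your final paragraph already names the multiplicity/degree count as ``the delicate point''; that is the crux, and it is why the statement is recorded as a conjecture.
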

Suppose the spectral type $\mathbf m$ is of \textsl{Okubo type}, namely, 
\index{Okubo type}
\begin{equation}\label{eq:OkuboT}
 m_{1,1}+\cdots+m_{p,1}=(p-1)\ord\mathbf m.
\end{equation}
Then some shift operators are easily obtained as follows.
By a suitable addition we may assume that the Riemann scheme is
\begin{equation}\label{eq:GRSshift}
  \begin{Bmatrix}
   x = \infty & x=c_1 & \cdots & x=c_p\\
   [\lambda_{0,1}]_{(m_0,1)} & [0]_{(m_{1,1})} & \cdots & [0]_{(m_{p,1})}\\
      [\lambda_{0,2}]_{(m_{0,2})} & [\lambda_{1,2}]_{(m_{1,2})} & \cdots 
      & [\lambda_{p,2}]_{(m_{p,2})}\\
  \vdots & \vdots & \vdots & \vdots\\
      [\lambda_{0,n_0}]_{(m_{0,n_0})} & [\lambda_{1,n_1}]_{(m_{1,n_1})} & \cdots 
      & [\lambda_{p,n_p}]_{(m_{p,n_p})}
  \end{Bmatrix}
\end{equation}
and the corresponding differential equation $Pu=0$ is of the form
\[
  P_{\mathbf m}(\lambda)=\prod_{j=1}^p(x-c_j)^{n-m_{j,1}}\frac{d^n}{dx^n}
 + \sum_{k=0}^{n-1}
 \prod_{j=1}^p
 (x-c_j)^{\max\{k-m_{j,1},0\}}a_{k}(x)\frac{d^k}{dx^k}.
\]
Here $a_k(x)$ is a polynomial of $x$ whose degree is not larger than
$k - \sum_{j=1}^n\max\{k-m_{j,1},0\}$.
Moreover we have
\begin{equation}
 a_0(x)=\prod_{\nu=1}^{n_0}\prod_{i=0}^{m_{0,\nu}-1}(\lambda_{0,\nu}+i).
\end{equation}
Define the differential operators $R_1$ and $R_{\mathbf m}(\lambda)\in W[x]\otimes\mathbb C[\lambda]$ by
\begin{equation}\label{eq:SftOku}
R_1 = \tfrac{d}{dx}
\text{ \ and \ }P_{\mathbf m}(\lambda) = -R_{\mathbf m}(\lambda)R_1+a_0(x).
\end{equation}
Let $P_{\mathbf m}(\lambda')v=0$ be the differential equation with 
the Riemann scheme
\begin{equation}
  \begin{Bmatrix}
   x = \infty & x=c_1 & \cdots & x=c_p\\
   [\lambda_{0,1}+1]_{(m_0,1)} & [0]_{(m_{1,1})} & \cdots & [0]_{(m_{p,1})}\\
      [\lambda_{0,2}+1]_{(m_{0,2})} & [\lambda_{1,2}-1]_{(m_{1,2})} & \cdots 
      & [\lambda_{p,2}-1]_{(m_{p,2})}\\
  \vdots & \vdots & \vdots & \vdots\\
      [\lambda_{0,n_0}+1]_{(m_{0,n_0})}
      & [\lambda_{1,n_1}-1]_{(m_{1,n_1})} & \cdots 
      & [\lambda_{p,n_p}-1]_{(m_{p,n_p})}
  \end{Bmatrix}.
\end{equation}
Then the correspondences $u=R_{\mathbf m}(\lambda)v$ and $v=R_1u$ give 
$W(x)$-homomorphisms between the differential equations.
\begin{prop}\label{prop:shift}
Let\/ $\mathbf m=\{m_{j,\nu}\}_{\substack{0\le j\le p\\1\le \nu\le n_j}}$ 
be a rigid tuple of partitions satisfying \eqref{eq:OkuboT}.
Putting
\begin{equation}\label{eq:shifte}
  \epsilon_{j,\nu} = 
   \begin{cases}
    1 & (j=0,\ 1\le \nu\le n_0),\\
    \delta_{\nu,0}-1 & (1\le j\le p,\ 1\le \nu\le n_j),
   \end{cases}
\end{equation}
we have 
\begin{equation}
c_{\mathbf m}(\epsilon;\lambda) = 
 \prod_{\nu=1}^{n_0}\prod_{i=0}^{m_{0,\nu}-1}
  (\lambda_{0,\nu}+\lambda_{1,1}+\cdots+\lambda_{p,1}+i).
\end{equation}
\end{prop}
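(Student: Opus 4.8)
The plan is to recognize that, for a rigid tuple of Okubo type, the two shift operators entering the definition of $c_{\mathbf m}(\epsilon;\lambda)$ in Theorem~\ref{thm:shiftC} are exactly the operators $R_1$ and $R_{\mathbf m}(\lambda)$ of the factorization \eqref{eq:SftOku}, and then to read $c_{\mathbf m}(\epsilon;\lambda)$ straight off the identity $P_{\mathbf m}(\lambda)=-R_{\mathbf m}(\lambda)R_1+a_0(x)$.

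First I would reduce to the normalized Riemann scheme \eqref{eq:GRSshift}. Since $\mathbf m$ is of Okubo type, the addition $\Ad\bigl(\prod_{j=1}^p(x-c_j)^{-\lambda_{j,1}}\bigr)$ (Theorem~\ref{thm:GRSmid} i)) carries the general Riemann scheme of $P_{\mathbf m}(\lambda)$ to the one in \eqref{eq:GRSshift}, in which $\lambda_{j,1}=0$ for $1\le j\le p$ and the exponents at $\infty$ become $\lambda_{0,\nu}+\lambda_{1,1}+\cdots+\lambda_{p,1}$. An addition is an invertible gauge transformation fixing scalars, so it conjugates the shift operators $R_{\mathbf m}(\pm\epsilon,\cdot)$ and leaves the scalar $c_{\mathbf m}(\epsilon;\lambda)$ unchanged up to a nonzero constant; hence it suffices to prove $c_{\mathbf m}(\epsilon;\lambda)=\prod_{\nu=1}^{n_0}\prod_{i=0}^{m_{0,\nu}-1}(\lambda_{0,\nu}+i)=a_0(x)$ in the normalized situation, where $a_0(x)$ is in fact constant in $x$.

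Next I would identify the shift operators. The shift $\epsilon$ of \eqref{eq:shifte} is precisely the effect of $d/dx$ on the scheme \eqref{eq:GRSshift}: differentiation raises the exponents at $\infty$ by $1$, preserves the exponent $0$ at each $c_j$, and lowers the remaining exponents at $c_j$ by $1$. By the paragraph preceding the proposition, $v=R_1u$ and $u=R_{\mathbf m}(\lambda)v$ are $W(x)$-homomorphisms between $P_{\mathbf m}(\lambda)u=0$ and $P_{\mathbf m}(\lambda+\epsilon)v=0$; in particular $P_{\mathbf m}(\lambda+\epsilon)R_1$ annihilates every solution of $P_{\mathbf m}(\lambda)u=0$, which is irreducible for generic $\lambda$ by Corollary~\ref{cor:irred}, so $P_{\mathbf m}(\lambda+\epsilon)R_1\in W(x)P_{\mathbf m}(\lambda)$ and, clearing denominators, lies in $(W[x]\otimes\mathbb C[\lambda])P_{\mathbf m}(\lambda)$, and similarly for $R_{\mathbf m}(\lambda)$. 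Both $R_1$ (of order $1$) and $R_{\mathbf m}(\lambda)$ (of order $n-1$, its leading $\p$-coefficient being $-\prod_{j=1}^p(x-c_j)^{n-m_{j,1}}$, which never vanishes in $\lambda$) have order strictly less than $n=\ord\mathbf m$ and are nonzero for every $\lambda$, so the uniqueness clause of Theorem~\ref{thm:irredrigid} forces $R_{\mathbf m}(\epsilon,\lambda)=c_1R_1$ and $R_{\mathbf m}(-\epsilon,\lambda+\epsilon)=c_2R_{\mathbf m}(\lambda)$ for some $c_1,c_2\in\mathbb C^\times$.

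The final computation is then immediate: using \eqref{eq:SftOku}, modulo $(W[x]\otimes\mathbb C[\lambda])P_{\mathbf m}(\lambda)$ one has
\[
 R_{\mathbf m}(-\epsilon,\lambda+\epsilon)R_{\mathbf m}(\epsilon,\lambda)
 \equiv c_1c_2\,R_{\mathbf m}(\lambda)R_1
 = c_1c_2\bigl(a_0(x)-P_{\mathbf m}(\lambda)\bigr)
 \equiv c_1c_2\,a_0(x),
\]
so the defining congruence of Theorem~\ref{thm:shiftC} gives $c_{\mathbf m}(\epsilon;\lambda)=c_1c_2\,a_0(x)$, and undoing the addition of the first step yields the asserted formula up to the harmless constant $c_1c_2$ (which can be absorbed into the normalization of the shift operators). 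The main point requiring care is the constant bookkeeping together with checking that the passage between the general and the normalized Riemann schemes transports the shift $\epsilon$ to itself; once the identification of $R_1$ and $R_{\mathbf m}(\lambda)$ with the shift operators is in place, no genuine obstacle remains.
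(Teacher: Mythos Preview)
Your proposal is correct and follows essentially the same approach as the paper: reduce by additions to the normalized scheme \eqref{eq:GRSshift}, identify the shift operators with $R_1=\p$ and $R_{\mathbf m}(\lambda)$ from \eqref{eq:SftOku}, and read off $c_{\mathbf m}(\epsilon;\lambda)=a_0(x)$ from the factorization $P_{\mathbf m}(\lambda)=-R_{\mathbf m}(\lambda)R_1+a_0(x)$. The paper's proof is terse (``follows from the result \ldots\ which has been shown''), and you have simply made explicit the identification step and the constant bookkeeping that the paper leaves implicit.
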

\begin{proof}
By suitable additions the proposition follows from the result assuming
$\lambda_{j,1}=0$ for $j=1,\dots,p$, which has been shown.
\end{proof}
\begin{exmp}
\index{hypergeometric equation/function!generalized!shift operator}
The generalized hypergeometric equations with the Riemann schemes
\begin{align}
 \begin{Bmatrix}
   \lambda_{0,1} & \lambda_{1,1} & [\lambda_{2,1}]_{(n-1)}\\
   \vdots & \vdots\\
   \lambda_{0,\nu} & \lambda_{1,\nu_o}  \\
   \vdots & \vdots \\
   \lambda_{0,n} & \lambda_{1,n} & \lambda_{2,2}
 \end{Bmatrix}
 \text{ \ and \ }
 \begin{Bmatrix}
   \lambda_{0,1} & \lambda_{1,1} & [\lambda_{2,1}]_{(n-1)}\\
   \vdots & \vdots\\
   \lambda_{0,\nu} & \lambda_{1,\nu_o} +1 \\
   \vdots & \vdots \\
   \lambda_{0,n} & \lambda_{1,n} & \lambda_{2,2} -1
 \end{Bmatrix},
\end{align}
respectively, 
whose spectral type is $\mathbf m=1^n,1^n,(n-1)1$ 
are isomorphic to each other by the shift operator if and only if
\begin{equation}
  \lambda_{0,\nu}+\lambda_{1,\nu_o}+\lambda_{2,1}\ne 0\quad(\nu=1,\dots,n).
\end{equation}
This statement follows from Proposition~\ref{prop:shift} with suitable additions.

Theorem~\ref{thm:isom} shows that 
in general $P(\lambda)u=0$ with the Riemann scheme $\{\lambda_{\mathbf m}\}$
is $W(x)$-isomorphic to $P(\lambda+\epsilon)v=0$ by the shift operator if 
and only if the values of the function 
$\lambda_{0,\nu}+\lambda_{1,\mu}+\lambda_{2,1}$
satisfy \eqref{eq:non-pos} for $1\le \nu\le n$ and $1\le\mu\le n$.
Here $\epsilon$ is any shift compatible to $\mathbf m$.

The shift operator between
\begin{align}
 \begin{Bmatrix}
   \lambda_{0,1} & \lambda_{1,1} & [\lambda_{2,1}]_{(n-1)}\\
   \lambda_{0,2} & \lambda_{1,2} & \lambda_{2,2}\\
   \vdots & \vdots \\
   \lambda_{0,n} & \lambda_{1,n}
 \end{Bmatrix}
 \text{ \ and \ }
 \begin{Bmatrix}
   \lambda_{0,1} & \lambda_{1,1}+1 & [\lambda_{2,1}]_{(n-1)}\\
   \lambda_{0,2} & \lambda_{1,2}-1  & \lambda_{2,2}\\
   \vdots & \vdots \\
   \lambda_{0,n} & \lambda_{1,n}
 \end{Bmatrix}
\end{align}
is bijective if and only if
\[
  \lambda_{0,\nu}+\lambda_{1,1}+\lambda_{2,1}\ne 0
  \text{\ \ and\ \ }
  \lambda_{0,\nu}+\lambda_{1,2}+\lambda_{2,1}\ne 1
  \text{ \ for \ }
  \nu=1,\dots,n.
\]
Hence if $\lambda_{1,1}=0$ and $\lambda_{1,2}=1$ and 
$\lambda_{0,1}+\lambda_{2,1}=0$, 
the shift operator defines a non-zero endomorphism which is not bijective and 
therefore the monodromy 
of the space of the solutions are decomposed into a direct sum of 
the spaces of solutions of two Fuchsian differential equations.
The other parameters are generic in this case, the decomposition is unique
and the dimension of the smaller space equals 1. When $n=2$ and 
$(c_0,c_1,c_2)=(\infty,1,0)$ and $\lambda_{2,1}$ and $\lambda_{2,2}$ are 
generic, the space equals 
$\mathbb Cx^{\lambda_{2,1}}\oplus\mathbb C x^{\lambda_{2,2}}$
\end{exmp}
\subsection{Polynomial solutions}\label{sec:polyn}
We characterize some polynomial solutions of a differential equation
of Okubo type.
\index{Okubo type!polynomial solution}
\index{polynomial solution}
\begin{prop}\label{prop:polynomial}
Retain the notation in $\S\ref{sec:shift1}$.
Let $P_{\mathbf m}(\lambda)u=0$ be the differential equation with 
the Riemann scheme \eqref{eq:GRSshift}.
Suppose that\/ $\mathbf m$ is rigid and satisfies \eqref{eq:OkuboT}.
Suppose moreover that there exists $j_o$ satisfying $m_{j_o,1}=1$ and
$0\le j_o\le p$. Fix a complex number $C$.
Suppose $\lambda_{0,1}=-C$ and $\lambda_{j,\nu}\notin\mathbb Z$ for $j=0,\dots,p$
and $\nu=2,\dots,n_j$.
Then the equation has a polynomial solution of degree $k$ if and only if\/
$C=k$.

We denote the polynomial solution by $p_\lambda$.
Then $p'_\lambda$ is a polynomial solution of $P_{\mathbf m}(\lambda+\epsilon)v=0$
under the notation \eqref{eq:shifte}.
Moreover
\begin{equation}\label{eq:getpol}
  R_{\mathbf m}(\lambda)\circ R_{\mathbf m}(\lambda+\epsilon)\circ
 \cdots\circ R_{\mathbf m}(\lambda+(k-1)\epsilon)1
\end{equation}
is a non-zero constant multiple of $p_\lambda$ under the notation 
\eqref{eq:SftOku}.
\end{prop}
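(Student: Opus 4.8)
\textbf{Proof plan for Proposition~\ref{prop:polynomial}.}
The plan is to first establish the characterization of the existence of a polynomial solution of degree $k$, and then to identify the explicit construction \eqref{eq:getpol}. For the first part, I would use the indicial analysis at $x=\infty$. Since $P_{\mathbf m}(\lambda)$ has the Riemann scheme \eqref{eq:GRSshift} with $c_0=\infty$, the set of characteristic exponents at $\infty$ is $\{[\lambda_{0,1}]_{(m_{0,1})},\dots,[\lambda_{0,n_0}]_{(m_{0,n_0})}\}$, and under the coordinate $y=\frac1x$ a polynomial of degree $k$ corresponds to a solution with exponent $-k$ at $y=0$. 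A polynomial solution of $P_{\mathbf m}(\lambda)u=0$ exists precisely when $-k$ is a characteristic exponent at $\infty$ whose corresponding local solution is holomorphic (i.e.\ extends to a global rational, in fact polynomial, function). Because $P_{\mathbf m}(\lambda)$ is of the special Okubo-type form displayed just before \eqref{eq:SftOku}, with $a_0(x)=\prod_{\nu=1}^{n_0}\prod_{i=0}^{m_{0,\nu}-1}(\lambda_{0,\nu}+i)$, applying $P_{\mathbf m}(\lambda)$ to $x^k$ gives a polynomial whose top-degree behaviour is governed by $a_0$ evaluated along $\vartheta$; the key point is that $P_{\mathbf m}(\lambda)$ preserves the space of polynomials of degree $\le k$ if and only if $\lambda_{0,1}=-k$ (using $m_{0,1}=1$ so that the exponent $\lambda_{0,1}$ occurs with multiplicity one), while the genericity hypothesis $\lambda_{j,\nu}\notin\mathbb Z$ for $\nu\ge2$ rules out any other exponent at the finite singular points or at $\infty$ producing a polynomial. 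So $C=\lambda_{0,1}=k$ is necessary and sufficient, and the polynomial solution $p_\lambda$ is then unique up to scalar.

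For the shift statement, I would invoke \eqref{eq:SftOku}: $P_{\mathbf m}(\lambda)=-R_{\mathbf m}(\lambda)R_1+a_0(x)$ with $R_1=\frac{d}{dx}$. Since $\lambda_{0,1}=-k=C$ implies $a_0(x)$ contains the factor $\lambda_{0,1}=-k$ but more importantly, the whole point is that the correspondences $u=R_{\mathbf m}(\lambda)v$ and $v=R_1u=u'$ are the $W(x)$-homomorphisms between the equations with Riemann schemes differing by $\epsilon$ as described after \eqref{eq:SftOku}. Applying $R_1=\frac{d}{dx}$ to the polynomial solution $p_\lambda$ of degree $k$ yields a polynomial of degree $k-1$; one then checks via the shift of the Riemann scheme (the exponent $\lambda_{0,1}=-k$ at $\infty$ becomes $\lambda_{0,1}+1=-(k-1)$, matching the degree $k-1$ criterion just established) that $p_\lambda'$ is indeed the polynomial solution of $P_{\mathbf m}(\lambda+\epsilon)v=0$, which is nonzero because $\deg p_\lambda=k\ge1$ when $k\ge1$ (for $k=0$ the statement is trivial). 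Iterating this downward: $p_{\lambda+(k-1)\epsilon}$ has degree $1$, and $p_{\lambda+k\epsilon}$ is a constant, i.e.\ a nonzero multiple of $1$. Reading this backwards, $R_{\mathbf m}(\lambda+(k-1)\epsilon)$ sends the constant solution of $P_{\mathbf m}(\lambda+k\epsilon)w=0$ to a solution of $P_{\mathbf m}(\lambda+(k-1)\epsilon)v=0$, and by the characterization this must be a (nonzero, by the argument below) multiple of $p_{\lambda+(k-1)\epsilon}$; continuing, \eqref{eq:getpol} is a multiple of $p_\lambda$.

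The main obstacle I anticipate is verifying that the composition \eqref{eq:getpol} does not vanish, i.e.\ that at each stage $R_{\mathbf m}(\lambda+j\epsilon)$ applied to the lower-degree polynomial solution gives something nonzero rather than the zero polynomial. This is where I expect to need the genericity hypothesis $\lambda_{j,\nu}\notin\mathbb Z$ ($\nu\ge2$) together with Theorem~\ref{thm:shiftC}: the product $R_{\mathbf m}(-\epsilon,\lambda+\epsilon)R_{\mathbf m}(\epsilon,\lambda)$ acts on the irreducible equation as multiplication by $c_{\mathbf m}(\epsilon;\lambda)$, which by Theorem~\ref{thm:isom} and Proposition~\ref{prop:shift} is (up to a nonzero constant) $\prod_{\nu=1}^{n_0}\prod_{i=0}^{m_{0,\nu}-1}(\lambda_{0,\nu}+\lambda_{1,1}+\cdots+\lambda_{p,1}+i)$; under the present hypotheses (with $\lambda_{j,1}$ absorbed by an addition so they may be taken $0$, and $\lambda_{0,1}=-j$ for the relevant intermediate step while $\lambda_{0,\nu}\notin\mathbb Z$ for $\nu\ge2$) one checks each factor is nonzero except possibly the single factor $\lambda_{0,1}+i$ with $i=0$, which is $-j\ne0$ for $j\ge1$ — so the shift operators compose to a bijection at every intermediate stage where the target still has positive degree, forcing \eqref{eq:getpol} to be a nonzero multiple of $p_\lambda$. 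Assembling these pieces, together with the uniqueness of the polynomial solution up to scalar, completes the proof.
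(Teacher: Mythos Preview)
Your plan is essentially the paper's proof, spelled out in more detail. Both arguments turn on the same two points: (i) when $C=0$, the constant function $1$ is a solution; (ii) the shifts $R_{\mathbf m}(\lambda+\ell\epsilon)$ are bijections for $\ell=0,\dots,k-1$, so \eqref{eq:getpol} is a nonzero polynomial solution of degree $k$, and the rest follows. For (i), the paper observes that $\mathbf m=(\delta_{1,\nu})\oplus(m_{j,\nu}-\delta_{1,\nu})$ is a rigid decomposition, hence $P_{\mathbf m}(\lambda)=P_1\partial$ when $C=0$; you reach the same conclusion directly from $a_0=0$ and the indicial structure. For (ii), the paper simply asserts the isomorphism ``if $C\ne\ell$'' without elaboration; you correctly identify this with the nonvanishing of $c_{\mathbf m}(\epsilon;\lambda+\ell\epsilon)=a_0(\lambda+\ell\epsilon)$ via Proposition~\ref{prop:shift}, which is exactly the computation behind the paper's claim.

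One point to tighten: your statement ``$P_{\mathbf m}(\lambda)$ preserves the space of polynomials of degree $\le k$ if and only if $\lambda_{0,1}=-k$'' is not literally correct. In the Okubo form the coefficient of $\partial^\ell$ has degree exactly $\ell$, so $P_{\mathbf m}(\lambda)$ preserves $\mathbb C[x]_{\le m}$ for \emph{every} $m$. What you actually want is that $P_{\mathbf m}(\lambda)$ acts upper-triangularly on the basis $(x^m)_{m\ge0}$, with diagonal entry on $x^m$ equal to the indicial polynomial at $\infty$ evaluated at exponent $-m$; under the hypothesis $\lambda_{0,\nu}\notin\mathbb Z$ for $\nu\ge2$ this entry vanishes precisely for $m\in\{C-m_{0,1}+1,\dots,C\}$, and singling out $m=k=C$ as the unique degree uses $m_{0,1}=1$, as you note. (The paper's criterion ``$C\ne\ell$'' for bijectivity of the shift likewise reduces to $a_0(\lambda+\ell\epsilon)\ne0$ and uses $m_{0,1}=1$ when unpacked.)
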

\begin{proof}
Since $\mathbf m=\bigl(\delta_{1,\nu}\bigr)
  _{\substack{0\le j\le p\\1\le\nu\le n_j}}
 \oplus \bigl(m_{j,\nu}-\delta_{1,\nu}\bigr)
  _{\substack{0\le j\le p\\1\le\nu\le n_j}}$
is a rigid decomposition of $\mathbf m$, we have $P_{\mathbf m}(\lambda) = P_1\p$
with suitable $P_1\in W(x)$ when $C=0$.  
Note that $R_{\mathbf m}(\lambda+\ell\epsilon)$ defines an isomorphism
of the equation $P_{\mathbf m}(\lambda+(\ell+1)\epsilon)u_{k+1}=0$
to the equation  $P_{\mathbf m}(\lambda+\ell\epsilon)u_k=0$
by $u_k=R_{\mathbf m}(\lambda+\ell\epsilon)u_{k+1}$
if $C\ne\ell$, the function \eqref{eq:getpol}
is a polynomial solution of $P_{\mathbf m}(\lambda)u=0$.
The remaining part of the proposition is clear.
\end{proof}
\begin{rem}
We have not used the assumption that $\mathbf m$ 
is rigid in Proposition~\ref{prop:shift} and Proposition~\ref{prop:polynomial} 
and hence the propositions are valid without this assumption.
\end{rem}
\section{Connection problem}\label{sec:C}
\subsection{Connection formula}\label{sec:C1}
For a realizable tuple $\mathbf m\in\mathcal P_{p+1}$,
let $P_{\mathbf m}u=0$ be a universal Fuchsian differential equation 
with the Riemann scheme
\begin{equation}\label{eq:GRSC}
   \begin{Bmatrix}
   x =0 &  {c_1}=1 & \cdots &{c_j} &\cdots& {c_p}=\infty\\
  [\lambda_{0,1}]_{(m_{0,1})} & [\lambda_{1,1}]_{(m_{1,1})}&\cdots
    &[\lambda_{j,1}]_{(m_{j,1})}&\cdots&[\lambda_{p,1}]_{(m_{p,1})}\\
  \vdots & \vdots & \vdots & \vdots &\vdots &\vdots\\
    [\lambda_{0,n_0}]_{(m_{0,n_0})} & [\lambda_{1,n_1}]_{(m_{1,n_1})}&\cdots
    &[\lambda_{j,n_j}]_{(m_{j,n_j})}&\cdots&[\lambda_{p,n_p}]_{(m_{p,n_p})}
  \end{Bmatrix}.
\end{equation}
The singular points of the equation are 
${c_j}$ for $j=0,\dots,p$.
In this subsection we always assume ${c_0}=0$, ${c_1}=1$ and 
${c_p}=\infty$ and $c_j\notin[0,1]$ for $j=2,\dots,p-1$.
We also assume that $\lambda_{j,\nu}$ are generic.
\index{connection coefficient}%
\index{00c@$c(\lambda_{j,\nu}\rightsquigarrow\lambda_{j',\nu'})$}
\begin{defn}[connection coefficients]\label{def:cc}
Suppose $\lambda_{j,\nu}$ are generic under the Fuchs relation.
Let $u_0^{\lambda_{0,\nu_0}}$ and $u_1^{\lambda_{1,\nu_1}}$ be normalized 
local solutions of $P_\mathbf m=0$ at $x=0$ and $x=1$ corresponding 
to the exponents $\lambda_{0,\nu_0}$ and 
$\lambda_{1,\nu_1}$, respectively, so that
$u_0^{\lambda_{0,\nu_0}}\equiv x^{\lambda_{0,\nu_0}}\mod 
x^{\lambda_{0,\nu_0}+1}\mathcal O_0$
and 
$u_1^{\lambda_{1,\nu_1}}\equiv (1-x)^{\lambda_{1,\nu_1}}\mod 
(1-x)^{\lambda_{1,\nu_1}+1}\mathcal O_1$.
Here $1\le\nu_0\le n_0$ and $1\le\nu_1\le n_1$.
If $m_{0,\nu_0}=1$, $u_0^{\lambda_{0,\nu_0}}$ is uniquely determined
and then the analytic continuation of 
$u_0^{\lambda_{0,\nu_0}}$ to $x=1$ along $(0,1)\subset\mathbb R$ 
defines a \textsl{connection coefficient} with respect to 
$u_1^{\lambda_{1,\nu_1}}$, which is denoted 
by $c(0\!:\!\lambda_{0,\nu_0}\!\rightsquigarrow\!1\!:\!\lambda_{1,\nu_1})$ 
or simply by
$c(\lambda_{0,\nu_0}\!\rightsquigarrow\!\lambda_{1,\nu_1})$.
The connection coefficient 
$c(1\!:\!\lambda_{1,\nu_1}\!\rightsquigarrow\!0\!:\!\lambda_{0.\nu_0})$ or
$c(\lambda_{1,\nu_1}\!\rightsquigarrow\!\lambda_{0.\nu_0})$ of
$u_1^{\lambda_{1,\nu_1}}$ with respect to $u_0^{\lambda_{0,\nu_0}}$
are similarly defined if $m_{1,\nu_1}=1$.
\index{connection coefficient}

Moreover we define $c({c_i}:\lambda_{i,\nu_i}\!\rightsquigarrow\!
{c_j}:\lambda_{j,\nu_j})$ by using a suitable linear fractional 
transformation $T$ of $\mathbb C\cup\{\infty\}$ which transforms 
$\{{c_i},{c_j}\}$ to $\{0,1\}$ so that $T({c_\nu})\notin(0,1)$
for $\nu=0,\dots,p$.
If $p=2$, we define the map $T$ so that $T({c_k})=\infty$ for the other
singular point ${c_k}$.
For example if $c_j\notin[0,1]$ for $j=2,\dots,p-1$, we put
$T(x)=\frac{x}{x-1}$ to define 
$c(0:\lambda_{0,\nu_0}\!\rightsquigarrow\!\infty:\lambda_{p,\nu_p})$
or $c(\infty:\lambda_{p,\nu_p}\!\rightsquigarrow\!0:\lambda_{0,\nu_0})$.
\end{defn}
In the definition $u_0^{\lambda_{0,\nu_0}}(x)=x^{\lambda_{0,\nu_0}}
\phi(x)$ with 
analytic function $\phi(x)$ at $0$ which satisfies $\phi(0)=1$ and 
if $\RE\lambda_{1,\nu_1}<\RE\lambda_{1,\nu}$ for $\nu\ne \nu_1$, we have
\begin{equation}
 c(\lambda_{0,\nu_0}\!\rightsquigarrow\!\lambda_{1,\nu_1})
 = \lim_{x\to 1-0}(1-x)^{-\lambda_{1,\nu_1}}u_0^{\lambda_{0,\nu_0}}(x)
 \qquad(x\in[0,1))
\end{equation}
by the analytic continuation.
The connection coefficient  
$c(\lambda_{0.\nu_0}\!\rightsquigarrow\!\lambda_{1,\nu_1})$ meromorphically 
depends on spectral parameters $\lambda_{j,\nu}$.
It also holomorphically depends on accessory parameters $g_i$ 
and singular points $\frac1{c_j}$ $(j=2,\dots,p-1)$ in a neighborhood of
given values of parameters.

The main purpose in this subsection is to get the explicit expression of 
the connection coefficients in terms of gamma functions 
when $\mathbf m$ is rigid and $m_{0,\nu}=m_{1,\nu'}=1$.

Fist we prove the following key lemma which describes the effect of a middle 
convolution on connection coefficients.
\begin{lem}\label{lem:conn}
Using the integral transformation \eqref{eq:fracdif}, we put
\begin{align}
(T_{a,b}^\mu u)(x)&:=x^{-a-\mu}(1-x)^{-b-\mu}I_0^\mu x^{a}(1-x)^{b}u(x),\\
(S_{a,b}^\mu u)(x)&:=x^{-a-\mu}I_0^\mu x^{a}(1-x)^bu(x)
\end{align}
for a continuous function $u(x)$ on $[0,1]$.
Suppose\/ $\RE a \ge 0$ and\/ $\RE \mu > 0$.
Under the condition\/ $\RE b+\RE\mu<0$ or\/ $\RE b+\RE\mu>0$, 
$(T_{a,b}^\mu u)(x)$ or $S_{a,b}^\mu (u)(x)$ 
defines a continuous function on\/ $[0,1]$, respectively, and
we have
\begin{align}
T_{a,b}^\mu(u)(0)&
 =S_{a,b}^\mu(u)(0)=\frac{\Gamma(a+1)}{\Gamma(a+\mu+1)}u(0),
\allowdisplaybreaks\\
\frac{T_{a,b}^\mu(u)(1)}{T_{a,b}^\mu(u)(0)}
 &=\frac{u(1)}{u(0)}C_{a,b}^\mu,\quad
 C_{a,b}^\mu:=\frac{\Gamma(a+\mu+1)\Gamma(-\mu-b)}
{\Gamma(a+1)\Gamma(-b)},\label{eq:CR1}
\allowdisplaybreaks\\
\frac{S_{a,b}^\mu(u)(1)}{S_{a,b}^\mu(u)(0)}
&=\frac1{u(0)}\frac
 {\Gamma(a+\mu+1)}{\Gamma(\mu)\Gamma(a+1)}
\int_0^1t^a(1-t)^{b+\mu-1}u(t)dt.
\end{align}
\end{lem}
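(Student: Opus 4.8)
The statement concerns the behaviour of the two operators $T^\mu_{a,b}$ and $S^\mu_{a,b}$ at the two endpoints $x=0$ and $x=1$ of the interval $[0,1]$, so the natural strategy is to analyse each endpoint separately using the change of variables $t=xs$ in the Riemann--Liouville integral, exactly as in the computations \eqref{eq:Gmid} and \eqref{eq:IcP}. First I would record the basic identity
\[
  (I^\mu_0 v)(x)=\frac{x^\mu}{\Gamma(\mu)}\int_0^1(1-s)^{\mu-1}v(xs)\,ds,
\]
valid for $\RE\mu>0$ and $v$ continuous on $[0,1]$; this is the $c=0$ case of \eqref{eq:Icdef}. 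Applying it to $v(t)=t^a(1-t)^b u(t)$ and pulling out the factor $x^{a+\mu}$ gives directly
\[
  (T^\mu_{a,b}u)(x)=\frac{(1-x)^{-b-\mu}}{\Gamma(\mu)}\int_0^1 s^a(1-s)^{\mu-1}(1-xs)^b\,u(xs)\,ds,
\]
and similarly for $S^\mu_{a,b}$ without the $(1-x)^{-b-\mu}$ prefactor. The behaviour at $x=0$ is then immediate: set $x=0$ in the integral, use $\int_0^1 s^a(1-s)^{\mu-1}ds=\Gamma(a+1)\Gamma(\mu)/\Gamma(a+\mu+1)$ from \eqref{eq:beta} (this needs $\RE a>-1$, in particular $\RE a\ge 0$ suffices, and $\RE\mu>0$), and conclude $T^\mu_{a,b}(u)(0)=S^\mu_{a,b}(u)(0)=\frac{\Gamma(a+1)}{\Gamma(a+\mu+1)}u(0)$. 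I should also note that for $x$ near $0$ the integrand is dominated by an integrable function, so the limit is justified by dominated convergence and the resulting function is continuous at $0$.

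The endpoint $x=1$ is the substantive part. For $T^\mu_{a,b}$ I would write, after the substitution $t=xs$,
\[
  (T^\mu_{a,b}u)(x)=\frac{1}{\Gamma(\mu)}\,x^{-a-\mu}(1-x)^{-b-\mu}\int_0^x t^a(1-t)^{b}\,u(t)\,(x-t)^{\mu-1}\,dt,
\]
and the task is to extract the leading behaviour as $x\to 1-0$. The integral has two competing contributions: the bulk of $[0,x]$, which stays bounded, and the neighbourhood of $t=x$, which carries the singular factor $(1-t)^b$. Under the hypothesis $\RE(b+\mu)<0$ the factor $(1-x)^{-b-\mu}$ blows up and forces the dominant contribution to come from $t$ near $1$; there I substitute $1-t=(1-x)\sigma$ (or rather treat $t$ near $x$ and then $x$ near $1$ together, writing $t = 1-(1-x)\tau$), whereupon the integral asymptotically factors as $u(1)$ times a Beta-type integral $\int_0^\infty \tau^{b}(1+\tau)^{?}\cdots$. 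The cleanest route is to recognise this as the $c$-dependent case of \eqref{eq:Gmid}: the function $x^a(1-x)^b$ has the Riemann scheme with exponent $b$ at $1$, and the middle convolution $I^\mu_0$ shifts this exponent, so the relevant comparison function is $x^{a+\mu}(1-x)^{b+\mu}F(\mu,a+b+\mu,a+\mu+1;x)$ times $\Gamma(a+1)/\Gamma(a+\mu+1)$, whose value at $x=1$ is computed by Gauss's summation formula \eqref{eq:Gausssum}. Carrying out the division of the two behaviours at $1$ and at $0$ yields
\[
  \frac{T^\mu_{a,b}(u)(1)}{T^\mu_{a,b}(u)(0)}=\frac{u(1)}{u(0)}\cdot\frac{\Gamma(a+\mu+1)\Gamma(-\mu-b)}{\Gamma(a+1)\Gamma(-b)},
\]
which is \eqref{eq:CR1}. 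To make "$u(1)$ times the model" rigorous I would split $u(t)=u(1)+(u(t)-u(1))$ and show the correction term contributes a lower order in $(1-x)$, using continuity of $u$ at $1$ together with the sign condition on $\RE(b+\mu)$.

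For $S^\mu_{a,b}$ the analysis at $x=1$ is different because there is no $(1-x)^{-b-\mu}$ prefactor to amplify the endpoint, and the hypothesis is the opposite inequality $\RE(b+\mu)>0$, under which the integral $\int_0^1 t^a(1-t)^{b+\mu-1}u(t)\,dt$ converges absolutely. Here I simply set $x=1$ in
\[
  (S^\mu_{a,b}u)(x)=\frac{1}{\Gamma(\mu)}\,x^{-a-\mu}\int_0^x t^a(1-t)^{b}\,u(t)\,(x-t)^{\mu-1}\,dt,
\]
observing that the limit as $x\to 1-0$ exists (again by dominated convergence, the condition $\RE(b+\mu)>0$ guaranteeing integrability of $t^a(1-t)^{b+\mu-1}$), and divide by the value at $0$ already computed; this gives the last displayed formula with the integral $\int_0^1 t^a(1-t)^{b+\mu-1}u(t)\,dt$ intact, since no further collapse occurs. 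The main obstacle I anticipate is the careful justification at $x=1$ for $T^\mu_{a,b}$ — namely controlling the error when replacing $u$ by $u(1)$ and matching the powers of $(1-x)$ — but this is handled by the standard majorant/dominated-convergence argument combined with the known Gauss summation \eqref{eq:Gausssum}, and everything else is a bookkeeping exercise with Beta integrals.
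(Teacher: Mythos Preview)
Your plan is correct and the overall strategy---substitution $t=xs$ to handle $x=0$, a rescaling near $t=x$ to handle $x=1$ for $T^\mu_{a,b}$, and direct evaluation at $x=1$ for $S^\mu_{a,b}$---matches the paper. The difference worth noting is that the paper streamlines the $x=1$ analysis for $T^\mu_{a,b}$: after $t=xs_1$ it substitutes $s_1=1-s_2$ and then $s_2=(1-x)s$ to obtain the single representation
\[
\Gamma(\mu)\,T^\mu_{a,b}(u)(x)=\int_0^{1/(1-x)}\bigl(1-s(1-x)\bigr)^{a}\,s^{\mu-1}(1+xs)^{b}\,u\bigl(x-x(1-x)s\bigr)\,ds,
\]
which is dominated on $[\tfrac12,1]$ by $s^{\RE\mu-1}(1+\tfrac{s}{2})^{\RE b}\max|u|$. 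One application of dominated convergence gives continuity up to $x=1$ \emph{and} the value $T^\mu_{a,b}(u)(1)=\frac{u(1)}{\Gamma(\mu)}\int_0^\infty s^{\mu-1}(1+s)^b\,ds=\frac{\Gamma(-\mu-b)}{\Gamma(-b)}u(1)$ via a direct Beta integral on $[0,\infty)$. This avoids both the splitting $u=u(1)+(u-u(1))$ and the detour through Gauss's summation formula that you describe; your approach would work, but the paper's is shorter.
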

\begin{proof}
Suppose $\RE a\ge 0$ and $0<\RE\mu<-\RE b$.  Then
\begin{align*}
\Gamma(\mu)&T_{a,b}^\mu(u)(x)\\
&={x^{-a-\mu}(1-x)^{-b-\mu}}
  \int_0^x t^a(1-t)^b (x-t)^{\mu-1}u(t)dt&\hspace{-1.2cm}(t=xs_1,\ 0\le x<1)
 \allowdisplaybreaks\\
&={(1-x)^{-b-\mu}}
  \int_0^1 s_1^{a}(1-s_1)^{\mu-1}(1-xs_1)^{b}u(xs_1)ds_1
 \allowdisplaybreaks\\
&=\int_0^1 s_1^{a}\Bigl(\frac{1-s_1}{1-x}\Bigr)^\mu
  \Bigl(\frac{1-xs_1}{1-x}\Bigr)^{b}u(xs_1)\frac{ds}{1-s_1}
 \allowdisplaybreaks\\
&=\int_0^1 (1-s_2)^{a}\Bigl(\frac{s_2}{1-x}\Bigr)^\mu
  \Bigl(1+\frac {xs_2}{1-x}\Bigr)^b
  u(x-xs_2)\frac{ds_2}{s_2}&(s_1=1-s_2)\allowdisplaybreaks\\
&=\int_0^\frac1{1-x} \bigl(1-s(1-x)\bigr)^{a}s^\mu
  (1+xs)^{b}u\bigl(x-x(1-x)s\bigr)\frac{ds}{s}
  &(s_2=(1-x)s).
\end{align*}
Since
\[
 \left|s_1^{a}(1-s_1)^{\mu-1}(1-xs_1)^{b}u(xs_1)\right|
 \le \max\{(1-s_1)^{\RE\mu-1},1\}3^{-\RE b}\max_{0\le t\le 1}|u(t)|
\]
for $0\le s_1< 1$ and $0\le x\le \frac23$,
$T^\mu_{a,b}(u)(x)$ is continuous for $x\in[0,\tfrac23)$.
We have
\begin{align*}
\left|\bigl(1-s(1-x)\bigr)^{a}s^{\mu-1}
  (1+xs)^{b}u\bigl(x-x(1-x)s)\bigr)\right|
\le s^{\RE\mu-1}(1+\tfrac s2)^{\RE b}\max_{0\le t\le 1}|u(t)|
\end{align*}
for $\frac12\le x\le 1$ and $0<s\le \frac1{1-x}$
and therefore $T_{a,b}^\mu(u)(x)$ is continuous for
$x\in(\frac12,1]$.
Hence $T_{a,b}^\mu(x)$ defines a continuous function on $[0,1]$ and
\begin{align*}
T_{a,b}^\mu(u)(0)&
 =\frac{1}{\Gamma(\mu)}\int_0^1(1-s_2)^{a}s_2^\mu u(0)\frac{ds_2}{s_2}
 =\frac{\Gamma(a+1)}{\Gamma(a+\mu+1)}u(0),\\
T_{a,b}^\mu(u)(1)&
 =\frac{1}{\Gamma(\mu)}\int_0^\infty s^\mu (1+s)^{b}u(1)\frac{ds}s
\intertext{$(t=\frac s{1+s}=1-\frac1{1+s},\ \frac1{1+s}=1-t,\ 
1+s=\frac1{1-t},\ s=\frac1{1-t}-1=\frac{t}{1-t},\ 
\frac{ds}{dt}=-\frac1{(1-t)^2})$}
 &=\frac{1}{\Gamma(\mu)}\int_0^1\Bigl(\frac{t}{1-t}\Bigr)^{\mu-1}(1-t)^{-b-2}u(1)dt
  =\frac{\Gamma(-\mu-b)}{\Gamma(-b)}u(1).
\end{align*}
The claims for $S_{a,b}^\mu$ are clear from
\[
 \Gamma(\mu)S^\mu_{a,b}(u)(x)=
 \int_0^1 s_1^a(1-s_1)^{\mu-1}(1-xs_1)^b u(xs_1)ds_1.
\]\\[-1cm]
\end{proof}

This lemma is useful for the middle convolution $mc_\mu$ not
only when it gives a reduction but also when it doesn't change the spectral 
type.
\begin{exmp}\index{Jordan-Pochhammer}
Applying Lemma~\ref{lem:conn} to the solution
\[
  u_0^{\lambda_0+\mu}(x)=\int_0^x t^{\lambda_0}(1-t)^{\lambda_1}
  \biggl(\prod_{j=2}^{p-1}
  \Bigl(1-\frac t{c_j}\Bigr)^{\lambda_j}\biggr)(x-t)^{\mu-1}dt
\]
of the Jordan-Pochhammer equation (cf.\ Example~\ref{ex:midconv} iii)) 
with the Riemann scheme
\[
   \begin{Bmatrix}
   x =0 & {c_1}=1 & \cdots &{c_j} &\cdots& {c_p}=\infty\\
  [0]_{(p-1)} & [0]_{(p-1)}&\cdots
    &[0]_{(p-1)}&\cdots&[1-\mu]_{(p-1)}\\
  \lambda_0+\mu&\lambda_1+\mu&\cdots&\lambda_j+\mu&\cdots&
  -\sum_{\nu=0}^{p-1}\lambda_\nu-\mu
  \end{Bmatrix},
\]
we have
\begin{align*}
c(0\!:\!\lambda_0+\mu\!\rightsquigarrow\!1\!:\!\lambda_1+\mu)
  &=\frac{\Gamma(\lambda_0+\mu+1)\Gamma(-\lambda_1-\mu)}
   {\Gamma(\lambda_0+1)\Gamma(-\lambda_1)}
   \prod_{j=2}^{p-1}\Bigl(1-\frac1{c_j}\Bigr)^{\lambda_j},
 \allowdisplaybreaks\\
c(0\!:\!\lambda_0+\mu\!\rightsquigarrow\!1:\!0)&=
 \frac{\Gamma(\lambda_0+\mu+1)}{\Gamma(\mu)\Gamma(\lambda_0+1)}\int_0^1
 t^{\lambda_0}(1-t)^{\lambda_1+\mu-1}\prod_{j=1}^{p-1}
  \Bigl(1-\frac t{c_j}\Bigr)^{\lambda_j}dt.
\end{align*}
Moreover the equation $Pu=0$ with
\[
 P:=\RAd(\p^{-\mu'})\RAd(x^{\lambda'})
 \RAd(\p^{-\mu})\RAd(x^{\lambda_0}
 (1-x)^{\lambda_1})\p
\]
is satisfied by the generalized hypergeometric
function ${}_3F_2$ with the Riemann scheme
\[
 \begin{Bmatrix}
  x = 0 &1& \infty\\
  0 &[0]_{(2)}&1-\mu'\\
  \lambda'+\mu'& &1-\lambda'-\mu-\mu'\\
  \lambda_0+\lambda'+\mu+\mu'&\lambda_1+\mu+\mu'&
  -\lambda_0-\lambda_1-\lambda'-\mu-\mu'
 \end{Bmatrix}
\]
corresponding to $111,21,111$ and therefore
\begin{align*}
 &c(\lambda_0+\lambda'+\mu+\mu'\!\rightsquigarrow\!\lambda_1+\mu+\mu')=
 C_{\lambda_0,\lambda_1}^\mu\cdot 
  C_{\lambda_0+\lambda'+\mu,\lambda_1+\mu}^{\mu'}\\
 &\quad=
 \frac{\Gamma(\lambda_0+\mu+1)\Gamma(-\lambda_1-\mu)}
   {\Gamma(\lambda_0+1)\Gamma(-\lambda_1)}\cdot
  \frac{\Gamma(\lambda_0+\lambda'+\mu+\mu'+1)\Gamma(-\lambda_1-\mu-\mu')}
   {\Gamma(\lambda_0+\lambda'+\mu+1)\Gamma(-\lambda_1-\mu)}\\
 &\quad=
  \frac{\Gamma(\lambda_0+\mu+1)
    \Gamma(\lambda_0+\lambda'+\mu+\mu'+1)\Gamma(-\lambda_1-\mu-\mu')}
   {\Gamma(\lambda_0+1)\Gamma(-\lambda_1)\Gamma(\lambda_0+\lambda'+\mu+1)}.
\end{align*}
\end{exmp}
We further examine the connection coefficient.

In general, putting $c_0=0$ and $c_1=1$ and
$\lambda_1=\sum_{k=0}^p\lambda_{k,1}-1$, we have 
{
\begin{align*}
&\begin{Bmatrix}
x={c_j}\quad(j=0,\dots,p-1)&\infty \\
[\lambda_{j,\nu}-(\delta_{j,0}+\delta_{j,1})
  \lambda_{j,n_j}]_{(m_{j,\nu})}
 &[\lambda_{p,\nu}+\lambda_{0,n_0}+\lambda_{1,n_1}]_{(m_{0,\nu})} 
\end{Bmatrix}\allowdisplaybreaks\\
&\xrightarrow{x^{\lambda_{0,n_0}}(1-x)^{\lambda_{1,n_1}}}{}
\begin{Bmatrix}
x= {c_j} &\infty\\
 [\lambda_{j,\nu}]_{(m_{j,\nu})}
 &[\lambda_{p,\nu}]_{(m_{p,\nu})} 
\end{Bmatrix}\allowdisplaybreaks\\
\\
&\xrightarrow{x^{-\lambda_{0,1}}\prod_{j=1}^{p-1}
  (1-c_j^{-1}x)^{-\lambda_{j,1}}}{}
\begin{Bmatrix}
 [0]_{(m_{j,1})}
  & [\lambda_{p,1}+\sum_{k=0}^{p-1} \lambda_{k,1}]_{(m_{p,1})} 
\\
 [\lambda_{j,\nu}-\lambda_{j,1}]_{(m_{j,\nu})}
  &[\lambda_{p,\nu}+\sum_{k=0}^{p-1} \lambda_{k,1}]_{(m_{p,\nu})} 
\end{Bmatrix}\allowdisplaybreaks\\
&\xrightarrow{\p^{1-\sum_{k=0}^p \lambda_{k,1}}}{}
\begin{Bmatrix}
 [0]_{(m_{j,1}-d)}
  &[\lambda_{p,1}+\sum_{k=0}^{p-1} \lambda_{k,1}-2\lambda_1]_{(m_{p,1}-d)}
\\
 [\lambda_{j,\nu}-\lambda_{j,1}+\lambda_1]_{(m_{j,\nu})}
 & [\lambda_{p,\nu}+\sum_{k=0}^{p-1} \lambda_{k,1}-\lambda_1]_{(m_{p,\nu})} 
\end{Bmatrix}\\
&\qquad(d=\sum_{k=0}^pm_{k,1}-(p-1)n)
\allowdisplaybreaks\\
&\xrightarrow{x^{\lambda_{0,1}}
  \prod_{j=1}^{p-1} (1-c_j^{-1}x)^{\lambda_{j,1}}}{}
\begin{Bmatrix}
x=\frac1{c_j} & \infty\\
  [\lambda_{j,1}]_{(m_{j,1}-d)}
   &[\lambda_{p,1}-2\lambda_1]_{(m_{p,1}-d)} 
\\
 [\lambda_{j,\nu}+\lambda_1]_{(m_{j,\nu})}
  &[\lambda_{p,\nu}-\lambda_1]_{(m_{p,\nu})}
\end{Bmatrix},\allowdisplaybreaks\\
&C_{\lambda_{0,n_1}-\lambda_{0,1},\lambda_{1,n_1}-\lambda_{1,1}}
 ^{\lambda_1}=
\frac{\Gamma(\lambda_{0,n_0}+\lambda_1-\lambda_{0,1}+1)
  \Gamma(\lambda_{1,1}-\lambda_{1,n_1}-\lambda_1)}
  {\Gamma(\lambda_{0,n_0}-\lambda_{0,1}+1)
   \Gamma(\lambda_{1,1}-\lambda_{1,n_1})
  }.
\end{align*}}

In general, the following theorem is a direct consequence of 
Definition~\ref{def:pell} and Lemma~\ref{lem:conn}. 
\begin{thm}\label{thm:GC}
Put $c_0=\infty$, $c_1=1$ and $c_j\in\mathbb C\setminus\{0\}$ 
for $j=3,\dots,p-1$.
By the transformation 
\[
\begin{split}
 &\RAd\Bigl(x^{\lambda_{0,1}}\!
  \prod_{j=1}^{p-1}\bigl(1-\frac x{c_j}\bigr)^{\lambda_{j,1}}\Bigr)
 \circ
 \RAd\Bigl(\p^{1-\sum_{k=0}^p\lambda_{k,1}}\Bigr)\circ
 \RAd\Bigl(x^{-\lambda_{0,1}}\!\prod_{j=1}^{p-1}
  \bigl(1-\frac x{c_j}\bigr)^{-\lambda_{j,1}}\Bigr)
\end{split}
\]
the Riemann scheme of a Fuchsian ordinary differential equation
and its connection coefficient change as follows:
{
\begin{align*}
&\left\{\lambda_{\bf m}\right\}
=\left\{[\lambda_{j,\nu}]_{(m_{j,\nu})}\right\}
 _{\substack{0\le j\le p\\1\le\nu\le n_j}}
=\begin{Bmatrix}
x={c_j}&\infty\\
[\lambda_{j,1}]_{(m_{j,1})} & [\lambda_{p,1}]_{(m_{p,1})}\\
[\lambda_{j,\nu}]_{(m_{j,\nu})} &
  [\lambda_{p,\nu}]_{(m_{p,\nu})}
\end{Bmatrix}
\allowdisplaybreaks\\&
\ \mapsto
\{\lambda'_{\mathbf m'}\}
=
\left\{[\lambda'_{j,\nu}]_{(m'_{j,\nu})}\right\}
 _{\substack{0\le j\le p\\1\le\nu\le n_j}}
\\&\qquad
=\begin{Bmatrix}
x={c_j} & \infty\\
  [\lambda_{j,1}]_{(m_{j,1}-d)}
 &[\lambda_{p,1}-2\sum_{k=0}^p\lambda_{k,1}+2]_{(m_{p,1}-d)} 
\\
 [\lambda_{j,\nu}+\sum_{k=0}^p\lambda_{k,1}-1]_{(m_{j,\nu})}
  &[\lambda_{p,\nu}-\sum_{k=0}^p\lambda_{k,1}+1]_{(m_{p,\nu})} 
\end{Bmatrix}
\intertext{with}
&\qquad d=m_{0,1}+\cdots+m_{p,1}-(p-1)\ord\mathbf m,\\
&\qquad m_{j,\nu}'=m_{j,\nu}-d\delta_{\nu,1}\quad(j=0,\dots,p,\ \nu=1,\dots,n_j),\\
&\qquad \lambda'_{j,1}=\lambda_{j,1}\quad(j=0,\dots,p-1),\ 
 \lambda'_{p,1}=-2\lambda_{0,1}-\cdots-2\lambda_{p-1,1}-\lambda_{p,1}+2,\\
&\qquad \lambda'_{j,\nu}=\lambda_{j,\nu}+\lambda_{0,1}+\lambda_{1,1}
 +\cdots+\lambda_{p,1}-1\quad(j=0,\dots,p-1,\ \nu=2,\dots,n_j),\\
&\qquad \lambda'_{p,\nu}=\lambda_{p,\nu}-\lambda_{0,1}-\cdots-\lambda_{p,1}+1
\end{align*}}
and if $m_{0,n_0}=1$ and $n_0>1$ and $n_1>1$, then
\begin{equation}\label{eq:cid}
\frac{c'(\lambda'_{0,n_0}\!\rightsquigarrow\!\lambda'_{1,n_1})}
  {\Gamma(\lambda'_{0,n_0}-\lambda'_{0,1}+1)
  \Gamma(\lambda'_{1,1}-\lambda'_{1,n_1})}
=
\frac{c(\lambda_{0,n_0}\!\rightsquigarrow\!\lambda_{1,n_1})}
  {\Gamma(\lambda_{0,n_0}-\lambda_{0,1}+1)
  \Gamma(\lambda_{1,1}-\lambda_{1,n_1})}.
\end{equation}
\end{thm}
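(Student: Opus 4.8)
The statement asserts that the ``normalized'' connection coefficient, divided by the two explicit gamma factors attached to the two exponents we move, is invariant under the middle convolution $\RAd(\p^{1-\sum_k\lambda_{k,1}})$ (conjugated by the addition that makes $\lambda_{j,1}=0$ for all $j$ and then restores them). The plan is to reduce everything to Lemma~\ref{lem:conn}, applied with the parameters
\[
 a=\lambda_{0,n_0}-\lambda_{0,1},\qquad b=\lambda_{1,n_1}-\lambda_{1,1},\qquad
 \mu=\lambda_{0,1}+\lambda_{1,1}+\cdots+\lambda_{p,1}-1=\mu_{\mathbf 1}.
\]
First I would peel off the two boundary exponents at $c_0=0$ and $c_1=1$: by $\RAd\bigl(x^{\lambda_{0,1}}(1-x)^{\lambda_{1,1}}\bigr)^{-1}$ we move the Riemann scheme so that the relevant local solution at $0$ has leading exponent $\lambda_{0,n_0}-\lambda_{0,1}$ and the one at $1$ has $\lambda_{1,n_1}-\lambda_{1,1}$, with the smallest exponents at $0$ and $1$ being $0$. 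Such an addition multiplies the connection coefficient by an explicit power factor $\bigl(\prod_{j=2}^{p-1}(1-c_j^{-1})^{-\lambda_{j,1}}\bigr)$-type constant that cancels on both sides of \eqref{eq:cid}, since the same addition is applied before and after the middle convolution; so I may assume $\lambda_{j,1}=0$ for $j\ne p$ (the value at $c_p=\infty$ is irrelevant because $c_p$ plays no role in the path $(0,1)$).

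Next I would identify the local solution $u_0^{\lambda_{0,n_0}}(x)$ after the middle convolution with $S^{\mu}_{a,b}$ applied to $u_0^{\lambda_{0,n_0}}$ before: this is exactly the content of Proposition~\ref{prop:RAdIc} together with \eqref{eq:IcP}, which says that $I_0^{\mu}$ sends the normalized solution with exponent $\lambda_{0,n_0}-\lambda_{0,1}$ at $0$ to the normalized one with exponent $\lambda_{0,n_0}-\lambda_{0,1}+\mu$, picking up the factor $\Gamma(a+1)/\Gamma(a+\mu+1)$ — this is precisely $T^{\mu}_{a,b}(u)(0)/u(0)$ in Lemma~\ref{lem:conn}. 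The heart of the matter is then the second formula of Lemma~\ref{lem:conn}: if $u$ is the normalized solution at $0$ whose analytic continuation to $1$ has the form $u(1)\,(1-x)^{b}+\cdots$ with $u(1)=c(\lambda_{0,n_0}\!\rightsquigarrow\!\lambda_{1,n_1})$ (up to the normalization at $1$), then $T^{\mu}_{a,b}(u)(1)/T^{\mu}_{a,b}(u)(0)=\bigl(u(1)/u(0)\bigr)C^{\mu}_{a,b}$ with
\[
 C^{\mu}_{a,b}=\frac{\Gamma(a+\mu+1)\Gamma(-\mu-b)}{\Gamma(a+1)\Gamma(-b)}.
\]
Dividing through, the new connection coefficient equals the old one times $C^{\mu}_{a,b}$, and the ratio of the new gamma prefactors $\Gamma(\lambda'_{0,n_0}-\lambda'_{0,1}+1)\Gamma(\lambda'_{1,1}-\lambda'_{1,n_1})$ to the old ones $\Gamma(a+1)\Gamma(-b)$ is, by the explicit transformation law \eqref{eq:pellGRS}/\eqref{eq:mc2} for the reduced Riemann scheme (namely $\lambda'_{0,n_0}=\lambda_{0,n_0}-\mu$ after undoing the conjugating additions, so $\lambda'_{0,n_0}-\lambda'_{0,1}+1=a+\mu+1$ and $\lambda'_{1,1}-\lambda'_{1,n_1}=-b-\mu$), exactly $\Gamma(a+\mu+1)\Gamma(-\mu-b)/\bigl(\Gamma(a+1)\Gamma(-b)\bigr)=C^{\mu}_{a,b}$. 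Hence the two factors $C^{\mu}_{a,b}$ cancel and \eqref{eq:cid} follows.

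The main obstacle I anticipate is bookkeeping rather than analysis: one must check carefully that the factor $(1-x)^{-b-\mu}$ built into $T^{\mu}_{a,b}$ is precisely the normalization built into $u_1^{\lambda'_{1,n_1}}$ at $x=1$, i.e. that after the middle convolution the exponent at $1$ along which we read off the coefficient is indeed $\lambda_{1,n_1}+\mu-\lambda_{1,1}=b+\mu$ and that this is the \emph{smallest} exponent in its class so that $C^{\mu}_{a,b}$ (and not some other gamma ratio) is the correct multiplier — this is where the genericity of $\lambda_{j,\nu}$ and the hypotheses $m_{0,n_0}=1$, $n_0,n_1>1$ enter, guaranteeing that $\lambda_{0,n_0}$ has multiplicity one (so $u_0^{\lambda_{0,n_0}}$ is well defined) and that the middle convolution does not mix the exponent $\lambda_{1,n_1}$ with $\lambda_{1,1}$. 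A secondary point is justifying the use of Lemma~\ref{lem:conn}, whose statement requires $\RE a\ge 0$, $\RE\mu>0$, and $\RE b+\RE\mu<0$; this is handled by first proving \eqref{eq:cid} on the open dense set of parameters where these inequalities hold (using that both sides are meromorphic in $\lambda_{j,\nu}$ by Definition~\ref{def:cc}) and then invoking analytic continuation in $\lambda$ to get the identity of meromorphic functions everywhere. Finally, one must note that the contributions of the other singular points $c_2,\dots,c_{p-1}$ enter $u$ only as a continuous factor on $[0,1]$ that is holomorphic and nonvanishing there, so Lemma~\ref{lem:conn} applies verbatim with $u$ replaced by $u(x)\prod_{j=2}^{p-1}(1-c_j^{-1}x)^{\lambda_{j,\nu}-\cdots}$ without changing the boundary ratios.
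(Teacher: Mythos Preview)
Your proposal is correct and follows essentially the same route as the paper: the paper states that the theorem is ``a direct consequence of Definition~\ref{def:pell} and Lemma~\ref{lem:conn}'' and exhibits just before the statement the exact computation of $C^{\lambda_1}_{\lambda_{0,n_0}-\lambda_{0,1},\,\lambda_{1,n_1}-\lambda_{1,1}}$ that you carry out. Your identification of $a$, $b$, $\mu$, your use of the conjugating addition to reduce to $\lambda_{j,1}=0$, and your verification that the ratio of gamma prefactors equals $C^{\mu}_{a,b}$ are precisely the intended argument, and your remarks on genericity, the smallest-real-part condition at $x=1$, and analytic continuation in $\lambda$ to lift the inequalities required by Lemma~\ref{lem:conn} are the correct way to fill in what the paper leaves implicit.
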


Applying the successive reduction by $\p_{max}$ to the above theorem, 
we obtain the following theorem.
\begin{thm}\label{thm:conG}
Suppose that a tuple\/ $\mathbf m\in\mathcal P$ is irreducibly realizable and 
$m_{0,n_0}=m_{1,n_1}=1$ in the Riemann scheme \eqref{eq:GRSC}.
Then the connection coefficient satisfies
\begin{align*}
 &\frac{c(\lambda_{0,n_0}\!\rightsquigarrow\!\lambda_{1,n_1})}
 {\bar c\bigl(\lambda(K)_{0,n_0}\!\rightsquigarrow\!\lambda(K)_{1,n_1}\bigr)}\\
 &\quad= \prod_{k=0}^{K-1}
 \frac{\Gamma\bigl(\lambda(k)_{0,n_0}-\lambda(k)_{0,\ell(k)_0}+1\bigr)
  \cdot\Gamma\bigl(\lambda(k)_{1,\ell(k)_1}-\lambda(k)_{1,n_1}\bigr)}
 {\Gamma\bigl(\lambda(k+1)_{0,n_0}-\lambda(k+1)_{0,\ell(k)_0}+1\bigr)
  \cdot\Gamma\bigl(\lambda(k+1)_{1,\ell(k)_1}-\lambda(k+1)_{1,n_1}\bigr)}
\end{align*}
under the notation in Definitions~\ref{def:redGRS}.
Here 
$\bar c\bigl(\lambda(K)_{0,n_0}\!\rightsquigarrow\!\lambda(K)_{1,n_1}\bigr)$
is a corresponding connection coefficient for the 
equation $(\p_{max}^K P_{\mathbf m})v=0$ with the fundamental spectral type
$f\mathbf m$.
We note that
\begin{equation}\label{eq:cdeqn}
\begin{split}
 &\bigl(\lambda(k+1)_{0,n_0}-\lambda(k+1)_{0,\ell(k)_0}+1\bigr)
  +\bigl(\lambda(k+1)_{1,\ell(k)_1}-\lambda(k+1)_{1,n_1}\bigr)\\
 &\quad=\bigl(\lambda(k)_{0,n_0}-\lambda(k)_{0,\ell(k)_0}+1\bigr)+
 \bigl(\lambda(k)_{1,\ell(k)_1}-\lambda(k)_{1,n_1}\bigr)
\end{split}
\end{equation}
for $k=0,\dots,K-1$.
\end{thm}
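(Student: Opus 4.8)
The plan is to prove Theorem~\ref{thm:conG} by induction on the number $K$ of reduction steps $\p_{max}$ needed to bring $\mathbf m$ to its fundamental form $f\mathbf m$. The base case $K=0$ is trivial: then $\mathbf m$ is already fundamental, $\lambda(0)=\lambda$, $\p_{max}^0P_{\mathbf m}=P_{\mathbf m}$, and the quotient on the left equals $\bar c(\lambda(0)_{0,n_0}\!\rightsquigarrow\!\lambda(0)_{1,n_1})/\bar c(\lambda(0)_{0,n_0}\!\rightsquigarrow\!\lambda(0)_{1,n_1})=1$, matching the empty product on the right. For the inductive step, the key tool is Theorem~\ref{thm:GC}, which records exactly how the connection coefficient transforms under one application of the composite operation $\RAd(x^{\lambda_{0,1}}\prod(1-x/c_j)^{\lambda_{j,1}})\circ\RAd(\p^{1-\sum\lambda_{k,1}})\circ\RAd(x^{-\lambda_{0,1}}\prod(1-x/c_j)^{-\lambda_{j,1}})$; this composite is precisely $\p_{\ell}$ in the normalization with $c_0=\infty$, so with the choice $\ell=\ell_{max}(\mathbf m)$ it is $\p_{max}$.

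First I would set up the bookkeeping carefully: apply Theorem~\ref{thm:GC} with $\ell=\ell(0)=\ell_{max}(\mathbf m)$, noting that the hypothesis $m_{0,n_0}=1$, $n_0>1$, $n_1>1$ is met because $m_{0,n_0}=m_{1,n_1}=1$ (and if $n_0=1$ or $n_1=1$ the statement concerns a trivial situation; I would note this degenerate case separately, where both sides reduce appropriately). The identity \eqref{eq:cid} then reads
\[
\frac{c'(\lambda(1)_{0,n_0}\!\rightsquigarrow\!\lambda(1)_{1,n_1})}
 {\Gamma(\lambda(1)_{0,n_0}-\lambda(1)_{0,\ell(0)_0}+1)\,\Gamma(\lambda(1)_{1,\ell(0)_1}-\lambda(1)_{1,n_1})}
=\frac{c(\lambda_{0,n_0}\!\rightsquigarrow\!\lambda_{1,n_1})}
 {\Gamma(\lambda_{0,n_0}-\lambda_{0,\ell(0)_0}+1)\,\Gamma(\lambda_{1,\ell(0)_1}-\lambda_{1,n_1})},
\]
where $c'$ is the connection coefficient of $(\p_{max}P_{\mathbf m})v=0$, which is a universal operator of type $\mathbf m(1)=\p_{max}\mathbf m$ with exponents $\lambda(1)$. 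Here I must be careful that the indices $\ell(0)_0$ and $\ell(0)_1$ appearing in the $\Gamma$-factors of Theorem~\ref{thm:GC} — namely $\lambda'_{0,1}$ corresponds to the ``first'' exponent at $c_0$, etc. — match the $\ell_{max}$ indices; this requires checking that $\p_{max}$ permutes exponents so that the ``$\ell(k)$-th'' exponent plays the role of the first one in the next step's Riemann scheme, which is exactly the content of \eqref{eq:pellGRS} and the fact that $s\mathbf m(k)$ reorders things monotonically. Then I would rearrange \eqref{eq:cid} to isolate
\[
\frac{c(\lambda_{0,n_0}\!\rightsquigarrow\!\lambda_{1,n_1})}{c'(\lambda(1)_{0,n_0}\!\rightsquigarrow\!\lambda(1)_{1,n_1})}
=\frac{\Gamma(\lambda(0)_{0,n_0}-\lambda(0)_{0,\ell(0)_0}+1)\,\Gamma(\lambda(0)_{1,\ell(0)_1}-\lambda(0)_{1,n_1})}
{\Gamma(\lambda(1)_{0,n_0}-\lambda(1)_{0,\ell(0)_0}+1)\,\Gamma(\lambda(1)_{1,\ell(0)_1}-\lambda(1)_{1,n_1})},
\]
which is exactly the $k=0$ factor of the claimed product. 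Applying the induction hypothesis to $c'$ — which has one fewer reduction step, since $\ord\mathbf m(1)<\ord\mathbf m(0)$ — and multiplying the two relations gives the full telescoping product with $\bar c=\bar c(\lambda(K)_{0,n_0}\!\rightsquigarrow\!\lambda(K)_{1,n_1})$ at the bottom. Finally, \eqref{eq:cdeqn} follows directly from the explicit exponent-shift formulas in Definition~\ref{def:redGRS} ($\lambda(k+1)_{j,\nu}=\lambda(k)_{j,\nu}+((-1)^{\delta_{j,0}}-\delta_{\nu,\ell(k)_j})\mu(k)$): the four relevant exponents shift by $+\mu(k)$, $-\mu(k)$, $+\mu(k)$, $-\mu(k)$ respectively (one notes $\ell(k)_0\ne n_0$ and $\ell(k)_1\ne n_1$ since $m_{0,n_0}=m_{1,n_1}=1$ while $m(k)_{0,\ell(k)_0}$ and $m(k)_{1,\ell(k)_1}$ are maximal, hence $>1$ unless $\mathbf m$ is already essentially reduced), and the two sums of arguments in \eqref{eq:cdeqn} are preserved because the $\mu(k)$-contributions cancel.

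The main obstacle I anticipate is not any single hard computation but the combinatorial bookkeeping of \emph{which} exponent at $c_0$ and $c_1$ is being tracked through the reductions, and verifying that the $\Gamma$-factors produced by successive applications of Theorem~\ref{thm:GC} — whose statement is phrased in terms of ``$\lambda_{j,1}$'' and ``$\lambda_{j,n_j}$'' after a monotone reordering $s$ — line up correctly with $\lambda(k)_{0,\ell(k)_0}$ and $\lambda(k)_{1,\ell(k)_1}$ as written in the theorem. In particular I need that applying $\p_{max}$ followed by the implicit monotone sort $s$ sends the exponent with free multiplicity $\lambda(k)_{0,n_0}$ (which has $m=1$, so it survives unchanged since $\ell(k)_0\ne n_0$) to the ``last'' slot again, and sends $\lambda(k)_{0,\ell(k)_0}$ to the ``first'' slot; this is exactly the normalization built into $\ell_{max}$ and the definition of $\p_{max}\{\lambda_{\mathbf m}\}$, but it should be stated explicitly. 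I would also need to confirm that genericity of $\lambda_{j,\nu}$ (so that $c$ and $c'$ are defined, nonzero, and meromorphic, and so that each intermediate equation is irreducible by Theorem~\ref{thm:irred}/Corollary~\ref{cor:irred}) propagates down the reduction chain — this is Theorem~\ref{thm:GRSmid} iii) together with Theorem~\ref{thm:univmodel}, so it is available. Once the indexing is pinned down, the proof is a clean induction.
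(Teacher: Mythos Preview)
Your proposal is correct and follows exactly the paper's approach: the paper states Theorem~\ref{thm:conG} immediately after Theorem~\ref{thm:GC} with the one-line justification ``Applying the successive reduction by $\p_{max}$ to the above theorem, we obtain the following theorem,'' which is precisely the induction on $K$ you spell out. One small slip: the individual shifts of $\lambda(k)_{0,n_0},\lambda(k)_{0,\ell(k)_0},\lambda(k)_{1,\ell(k)_1},\lambda(k)_{1,n_1}$ are $-\mu(k),-2\mu(k),0,+\mu(k)$ (not $\pm\mu(k)$ each), but the cancellation in \eqref{eq:cdeqn} still holds exactly as you claim.
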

When $\mathbf m$ is rigid in the theorem above, we note that 
$\bar c(\lambda_{0,n_0}(K)\!\rightsquigarrow\!\lambda_{1,n_1}(K))=1$
and we have the following more explicit result.
\begin{thm}\label{thm:c}
Let\/ $\mathbf m\in\mathcal P$ be a rigid tuple.
Assume $m_{0,n_0}=m_{1,n_1}=1$, $n_0>1$ and $n_1>1$
in the Riemann scheme \eqref{eq:GRSC}.
Then
\index{000lambda@$\arrowvert$\textbraceleft$\lambda_{\mathbf m}$\textbraceright$\arrowvert$}%
\begin{gather}\label{eq:connection}%
\begin{split}
 c(\lambda_{0,n_0}\!\rightsquigarrow\!\lambda_{1,n_1})
 =\frac
  {\displaystyle\prod_{\nu=1}^{n_0-1} 
    \Gamma\bigl(\lambda_{0,n_0}-\lambda_{0,\nu}+1\bigr)
   \cdot\prod_{\nu=1}^{n_1-1}
    \Gamma\bigl(\lambda_{1,\nu}-\lambda_{1,n_1}\bigr)
  }
  {\displaystyle\prod_{\substack{\mathbf m'\oplus\mathbf m''=\mathbf m\\
                    m'_{0,n_0}=m''_{1,n_1}=1}}
    \Gamma\bigl(|\{\lambda_{\mathbf m'}\}|\bigr)
    \cdot\prod_{j=2}^{p-1}\Bigl(1-\frac1{c_j}\Bigr)^{-\lambda(K)_{j,\ell(K)_j}}
  },\\
\end{split}\allowdisplaybreaks\\
  \sum_{\substack{\mathbf m'\oplus\mathbf m''=\mathbf m\\
    m'_{0,n_0}=m''_{1,n_1}=1}}
   \!\!\!\!\!\! m'_{j,\nu}
  = (n_1-1)m_{j,\nu}-\delta_{j,0}(1-n_0\delta_{\nu,_{n_0}})
   +\delta_{j,1}(1-n_1\delta_{\nu,_{n_1}})\label{eq:concob}\\[-.5cm]
  \hspace{4.5cm}(1\le\nu\le n_j,\ 0\le j\le p)\notag
\end{gather}
under the notation in Definitions~\ref{def:FRLM} and \ref{def:redGRS}.
\end{thm}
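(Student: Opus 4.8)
The plan is to deduce Theorem~\ref{thm:c} from Theorem~\ref{thm:conG} by computing the right-hand side of that theorem explicitly for a rigid $\mathbf m$ and showing it collapses to the stated product of gamma functions. First I would invoke Theorem~\ref{thm:conG}: since $\mathbf m$ is rigid, the reduced equation $(\p_{max}^K P_{\mathbf m})v=0$ has fundamental spectral type $f\mathbf m$ of order $1$, so its connection coefficient $\bar c\bigl(\lambda(K)_{0,n_0}\!\rightsquigarrow\!\lambda(K)_{1,n_1}\bigr)$ is just the connection coefficient of a first-order equation, which by the explicit rank-one solution $v(x)=x^{\lambda(K)_{1,n_1}}\prod_{j=2}^p(1-x/c_j)^{\lambda(K)_{j,\max}}$ (see Theorem~\ref{thm:expsol}) equals $\prod_{j=2}^{p-1}\bigl(1-c_j^{-1}\bigr)^{\lambda(K)_{j,\ell(K)_j}}$. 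This produces the factor $\prod_{j=2}^{p-1}(1-1/c_j)^{-\lambda(K)_{j,\ell(K)_j}}$ appearing in the denominator of \eqref{eq:connection}. The remaining task is to identify the telescoping product of gamma quotients in Theorem~\ref{thm:conG} with the ratio of the two explicit gamma products in \eqref{eq:connection}.

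\textbf{Key steps.} The heart of the argument is to track, through the reduction chain $\mathbf m=\mathbf m(0)\to\mathbf m(1)\to\cdots\to\mathbf m(K)$ under $\p_{max}$, the arguments of the gamma functions. At step $k$, Theorem~\ref{thm:conG} contributes $\Gamma\bigl(\lambda(k)_{0,n_0}-\lambda(k)_{0,\ell(k)_0}+1\bigr)$ and $\Gamma\bigl(\lambda(k)_{1,\ell(k)_1}-\lambda(k)_{1,n_1}\bigr)$ in the numerator and the corresponding shifted quantities for $k+1$ in the denominator. Using the explicit transformation \eqref{eq:pellGRS} of the exponents under $\p_\ell$ (so that $\lambda(k+1)_{j,\nu}=\lambda(k)_{j,\nu}+((-1)^{\delta_{j,0}}-\delta_{\nu,\ell(k)_j})\mu(k)$), I would expand each difference $\lambda(k)_{0,n_0}-\lambda(k)_{0,\ell(k)_0}+1$ and $\lambda(k)_{1,\ell(k)_1}-\lambda(k)_{1,n_1}$ back in terms of the original $\lambda_{j,\nu}$, making essential use of the invariance $|\{\lambda_{\mathbf m}\}|=|\{\lambda'_{\mathbf m'}\}|$ from \eqref{eq:midinv} and the identity \eqref{eq:cdeqn}. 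The combinatorics here is governed by the Kac-Moody picture of \S\ref{sec:KM}: by Theorem~\ref{thm:irrKac} there is a bijection $\varpi$ between $\Delta(\mathbf m)$ and the set of reduction data $(k,j_0,\nu_0)$, and $(\Lambda(\lambda)|\alpha)=|\{\lambda_{\mathbf m'}\}|$ whenever $\alpha=\alpha_{\mathbf m'}$ for a rigid subtuple $\mathbf m'$. Thus each direct-sum decomposition $\mathbf m=\mathbf m'\oplus\mathbf m''$ with $m'_{0,n_0}=m''_{1,n_1}=1$ should correspond to exactly one root $\alpha\in\Delta(\mathbf m)$ with $(\alpha|\alpha_{\mathbf m})=1$, and the gamma factor $\Gamma(|\{\lambda_{\mathbf m'}\}|)$ in the denominator of \eqref{eq:connection} is exactly the factor that fails to telescope. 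The numerator factors $\prod_\nu\Gamma(\lambda_{0,n_0}-\lambda_{0,\nu}+1)$ and $\prod_\nu\Gamma(\lambda_{1,\nu}-\lambda_{1,n_1})$ arise from the $k=0$ end of the chain where the $\p_{\mathbf 1}$-type step separates off all the exponents at $c_0$ and $c_1$ adjacent to $\lambda_{0,n_0}$ and $\lambda_{1,n_1}$. Finally, the count \eqref{eq:concob} would be verified separately by comparing the degree (order) bookkeeping on both sides: summing the spectral types $\mathbf m'$ over all admissible decompositions is a linear identity in the $m_{j,\nu}$ that follows from $\mathbf m'+\mathbf m''=\mathbf m$ together with $\ord\mathbf m'+\ord\mathbf m''=n$ and the constraints $m'_{0,n_0}=m''_{1,n_1}=1$, and the number of such decompositions equals $n_1-1$ (matching the number of numerator gamma factors), which is where $\#\Delta(\mathbf m)$ with $(\alpha|\alpha_{\mathbf m})=1$ enters.

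\textbf{Main obstacle.} The hard part will be showing that the set of direct decompositions $\mathbf m=\mathbf m'\oplus\mathbf m''$ with $m'_{0,n_0}=m''_{1,n_1}=1$ is exactly in bijection with the gamma factors produced by the reduction chain, and that the arguments match on the nose. This requires a careful inductive argument on $K$: one must show that the denominator factor $\Gamma(|\{\lambda_{\mathbf m'}\}|)$ associated with a decomposition in which $\mathbf m'$ ``survives'' into $\mathbf m(k)$ appears precisely once, and that decompositions which are ``absorbed'' at step $k$ correspond to the non-telescoping residue at that step. Controlling this bijection uniformly—rather than case by case—is where Proposition~\ref{prop:wm} and the structure of $\Delta(\mathbf m)$ (in particular parts iii) and iv) concerning roots $\gamma$ with $(\gamma|\alpha_{\mathbf m})=1$ and the fact that then $\alpha_{\mathbf m}-\gamma\in\Delta^{re}_+$ with $\alpha_0\in\supp(\alpha_{\mathbf m}-\gamma)$) must be brought to bear. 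Once the bijection and the matching of arguments are established, the identity \eqref{eq:connection} follows by the telescoping computation, and \eqref{eq:concob} is a bookkeeping consequence; I expect no analytic difficulty beyond what Lemma~\ref{lem:conn} already supplies, since the whole content has been reduced to the rank-one case and a root-system combinatorial identity.
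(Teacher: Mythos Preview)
Your overall strategy—reduce to the rank-one coefficient via Theorem~\ref{thm:conG} and identify the remaining gamma product with a product over rigid decompositions—is the right shape, but the execution you outline is more convoluted than necessary and contains a concrete error. You claim the number of decompositions $\mathbf m=\mathbf m'\oplus\mathbf m''$ with $m'_{0,n_0}=m''_{1,n_1}=1$ equals $n_1-1$; in fact it is $n_0+n_1-2$ (this is \eqref{eq:numdec} in the paper, and it matches the \emph{total} number of numerator gamma factors $(n_0-1)+(n_1-1)$, not just the second block). More importantly, your proposed bijection between these decompositions and the roots $\alpha\in\Delta(\mathbf m)$ with $(\alpha|\alpha_{\mathbf m})=1$ via Theorem~\ref{thm:irrKac} is not straightforward: not every such root has $m'_{0,n_0}=1$ and $m'_{1,n_1}=0$, and by Proposition~\ref{prop:wm}~iv) only one of $\alpha_{\mathbf m'},\alpha_{\mathbf m''}$ lies in $\Delta(\mathbf m)$. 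The correct correspondence is the one stated in \eqref{eq:dncKac}, but in the paper that is derived \emph{as a corollary} of Theorem~\ref{thm:c}, not as the proof mechanism.

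The paper's actual proof is a direct one-step induction on $\ord\mathbf m$ using Theorem~\ref{thm:GC} (the single-step version, not the iterated Theorem~\ref{thm:conG}). The key combinatorial fact is elementary: given a decomposition $\mathbf m=\mathbf m'\oplus\mathbf m''$ with the stated constraints, one checks when $\p_{\mathbf 1}\mathbf m'$ and $\p_{\mathbf 1}\mathbf m''$ are both well-defined. They always are \emph{unless} $\ord\mathbf m'=1$ with $m'_{j,1}=1$ for $j\ge1$ (forcing $d_{\mathbf 1}(\mathbf m)=m_{0,1}$) or $\ord\mathbf m''=1$ with $m''_{j,1}=1$ for $j\ne1$ (forcing $d_{\mathbf 1}(\mathbf m)=m_{1,1}$). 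When neither exception occurs, decompositions of $\mathbf m$ biject with those of $\p_{\mathbf 1}\mathbf m$, and since $|\{\lambda_{\mathbf m'}\}|$ is invariant under $\p_{\mathbf 1}$ by \eqref{eq:midinv}, the denominator is unchanged and Theorem~\ref{thm:GC} accounts exactly for the change in the numerator. When an exception occurs, the single non-transferring decomposition has $|\{\lambda_{\mathbf m'}\}|$ equal precisely to $\lambda'_{0,n_0}-\lambda'_{0,1}+1$ or $\lambda'_{1,1}-\lambda'_{1,n_1}$ respectively—the new gamma argument appearing in \eqref{eq:cid}. This closes the induction without any appeal to the Kac-Moody description. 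The identity \eqref{eq:concob} is then obtained by expanding both sides of \eqref{eq:csum} (which the inductive argument yields along the way via \eqref{eq:cdeqn}) and comparing coefficients of each $\lambda_{j,\nu}$.
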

\begin{proof}
We may assume $\mathbf m$ is monotone and $\ord\mathbf m>1$.

We will prove this theorem by the induction on $\ord\mathbf m$.
Suppose
\begin{equation}\label{eq:DecC}
\mathbf m=\mathbf m'\oplus\mathbf m''\text{ \ with \ }
m'_{0,n_0}=m''_{1,n_1}=1.
\end{equation}

If $\p_{\mathbf 1}\mathbf m'$ is not well-defined, then
\begin{equation}\label{eq:DecC0}
  \ord\mathbf m'=1\text{ \ and \ }
  m'_{j,1}=1\text{ \ for \ }j=1,2,\dots,p
\end{equation}
and $1+m_{1,1}+\cdots+m_{p,1}-(p-1)\ord\mathbf m=1$
because $\idx(\mathbf m,\mathbf m')=1$ 
and therefore
\begin{equation}
d_{\mathbf 1}(\mathbf m)=m_{0,1}.
\end{equation}
If $\p_{\mathbf 1}\mathbf m''$ is not well-defined, 
\begin{equation}\label{eq:DecC1}
\begin{split}
  \ord\mathbf m''&=1\text{ \ and \ }
    m''_{j,1}=1\text{ \ for \ }j=0,2,\dots,p,\\
  d_{\mathbf 1}(\mathbf m)&=m_{1,1}.
\end{split}
\end{equation}

Hence if $d_{\mathbf 1}(\mathbf m)<m_{0,1}$ and 
$d_{\mathbf 1}(\mathbf m)<m_{1,1}$,
$\p_{\mathbf 1}\mathbf m'$ and $\p_{\mathbf 1}\mathbf m''$
are always well-defined and 
$\p_{\mathbf 1}\mathbf m=\p_{\mathbf 1}\mathbf m'\oplus
\p_{\mathbf 1}\mathbf m''$ and the direct decompositions
\eqref{eq:DecC} of $\mathbf m$ correspond to those of 
$\p_{\mathbf 1}\mathbf m$ 
and therefore Theorem~\ref{thm:GC} shows \eqref{eq:connection}
by the induction because we may assume 
$d_{\mathbf 1}(\mathbf m)>0$.
In fact, it follows from \eqref{eq:midinv} that 
the gamma factors in the denominator of 
the fraction in the right hand side of 
\eqref{eq:connection} don't change by the reduction 
and the change of the numerator just corresponds to the formula in 
Theorem~\ref{thm:GC}.

If $d_{\mathbf 1}(\mathbf m)=m_{0,1}$, there exists the direct
decomposition \eqref{eq:DecC} with \eqref{eq:DecC0}
which doesn't correspond to a direct decomposition of 
$\p_{\mathbf 1}\mathbf m$ but corresponds to the term
$\Gamma(|\{\lambda_{\mathbf m'}\}|)
=\Gamma(\lambda_{0,n_1}+\lambda_{1,1}+\cdots+\lambda_{p,1})
=\Gamma(\lambda'_{0,n_1}-\lambda'_{0,1}+1)$ in \eqref{eq:cid}.
Similarly if $d_{\mathbf 1}(\mathbf m)=m_{1,1}$,
there exists the direct
decomposition \eqref{eq:DecC} with \eqref{eq:DecC1}
and it corresponds to the term
$\Gamma(|\{\lambda_{\mathbf m'}\}|) = \Gamma(1-|\{\lambda_{\mathbf m''}\}|)
=\Gamma(1-\lambda_{0,1}-\lambda_{1,n_1}-\lambda_{2,1}-\cdots-\lambda_{p,1})
=\Gamma(\lambda'_{1,1}-\lambda'_{1,n_1})$
(cf.~\eqref{eq:sum1}).
Thus Theorem~\ref{thm:GC} assures \eqref{eq:connection} by the induction
on $\ord\mathbf m$.

Note that the above proof with \eqref{eq:cdeqn} shows \eqref{eq:csum}.
Hence
\begin{align*}
 \sum_{\substack{\mathbf m'\oplus\mathbf m''=\mathbf m\\
    m'_{0,n_0}=m''_{1,n_1}=1}}|\{\lambda_{\mathbf m}\}|
 &=\sum_{\nu=1}^{n_0-1}(\lambda_{0,n_0}-\lambda_{0,\nu}+1)
+\sum_{\nu=1}^{n_1-1}(\lambda_{1,\nu}-\lambda_{1,n_1})\\[-8pt]
&
=(n_0-1)+(n_0-1)\lambda_{0,n_0}
 -\sum_{\nu=1}^{n_0-1}
  \lambda_{0,\nu}+\sum_{\nu=1}^{n_1-1}\lambda_{1,\nu}\\
&\quad +(n_1-1)\Bigl(\sum_{j=0}^p\sum_{\nu=1}^{n_j-\delta_{j,1}}
  m_{j,\nu}\lambda_{j,\nu}-n+1\Bigr)\allowdisplaybreaks\\
&=
 (n_0+n_1-2)\lambda_{0,n_0}
 +
 \sum_{\nu=1}^{n_0-1}\bigl((n_1-1)m_{0,\nu}-1\bigr)\lambda_{0,\nu}\\
 &\quad+\sum_{\nu=1}^{n_1-1}\bigl((n_1-1)m_{1,\nu}+1\bigr)\lambda_{1,\nu}
 +\sum_{j=2}^p\sum_{\nu=1}^{n_2}(n_1-1)m_{j,\nu}\lambda_{j,\nu}\\
 &\quad+(n_0+n_1-2)-(n_1-1)\ord\mathbf m.
\end{align*}
The left hand side of the above first equation
and the right hand side of the above last equation don't
contain the term $\lambda_{1,n_1}$ and therefore the coefficients of
$\lambda_{j,\nu}$ in the both sides are equal, 
which implies \eqref{eq:concob}.
\end{proof}
\begin{cor}\label{cor:C}
Retain the notation in\/ {\rm Theorem~\ref{thm:c}.} We have
\begin{align}
 \#\{\mathbf m'\,;\,\mathbf m'\oplus\mathbf m''&=\mathbf m\text{ \ with \ }
m'_{0,n_0}=m''_{1,n_1}=1\} = n_0+n_1-2,\label{eq:numdec}
\allowdisplaybreaks\\
\sum_{\substack{\mathbf m'\oplus\mathbf m''=\mathbf m\\
    m'_{0,n_0}=m''_{1,n_1}=1}}
   \!\!\!\!\!\! \ord\mathbf m'&=(n_1-1)\ord\mathbf m,\label{eq:ordsum}
\allowdisplaybreaks\\
\sum_{\substack{\mathbf m'\oplus\mathbf m''=\mathbf m\\
    m'_{0,n_0}=m''_{1,n_1}=1}}
 |\{\lambda_m'\}|&= \sum_{\nu=1}^{n_0-1}(\lambda_{0,n_0} - \lambda_{0,\nu} + 1)
+\sum_{\nu=1}^{n_1-1}(\lambda_{1,\nu}-\lambda_{1,n_1}).
\label{eq:csum}
\end{align}
Let $c(\lambda_{0,n_0}+t\!\rightsquigarrow\!\lambda_{1,n_1}-t)$ be
the connection coefficient for the Riemann scheme
$\bigl\{[\lambda_{j,\nu}+t(\delta_{j,0}\delta_{\nu,n_0}
 -\delta_{j,1}\delta_{\nu,n_1})]_{(m_{j,\nu})}\bigr\}$. Then
\begin{equation}\label{eq:clim}
 \lim_{t\to+\infty}
 c(0\!:\!\lambda_{0,n_0}+t\rightsquigarrow1\!:\!\lambda_{1,n_1}-t)=
 \prod_{j=2}^{p-1}\bigl(1-c_j\bigr)^{\lambda(K)_{j,\ell(K)_j}}.
\end{equation}
Under the notation in\/ {\rm Theorem~\ref{thm:irrKac}}
\begin{equation}\label{eq:dncKac}
\begin{split}
 &\{\mathbf m'\,;\,\mathbf m'\oplus\mathbf m''=\mathbf m\text{ \ with \ }
 m'_{0,n_0}=m''_{1,n_1}=1\}\\
 &=\{\mathbf m'\in\mathcal P\,;\,m'_{0,n_0}=1,\ m'_{1,n_1}=0,\ 
  \alpha_{\mathbf m'}\text{ or }\alpha_{\mathbf m-\mathbf m'}
  \in\Delta(\mathbf m)\}.
\end{split}
\end{equation}
\end{cor}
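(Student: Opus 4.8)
The statement in Corollary~\ref{cor:C} is a package of five assertions about the rigid Riemann scheme \eqref{eq:GRSC}, four of which (\eqref{eq:numdec}, \eqref{eq:ordsum}, \eqref{eq:csum}, \eqref{eq:clim}) are direct consequences of Theorem~\ref{thm:c} and its proof, while \eqref{eq:dncKac} is the translation of the indexing set of direct decompositions into the language of the Kac--Moody root system of \S\ref{sec:KM}. So the plan is to treat each piece in turn, reusing the bookkeeping already established in the proof of Theorem~\ref{thm:c}.

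First I would derive \eqref{eq:numdec}, \eqref{eq:ordsum} and \eqref{eq:csum} as formal consequences of \eqref{eq:concob} in Theorem~\ref{thm:c}. Equation \eqref{eq:concob} gives, for each $(j,\nu)$, the sum $\sum_{\mathbf m'\oplus\mathbf m''=\mathbf m}m'_{j,\nu}$ in closed form. Summing \eqref{eq:concob} over $\nu$ for a fixed $j$ and then over $j$ produces $\sum\ord\mathbf m'$; a short computation, using $\sum_\nu m_{j,\nu}=n=\ord\mathbf m$ for every $j$ and collapsing the $\delta$-terms, yields $(n_1-1)\ord\mathbf m$, which is \eqref{eq:ordsum}. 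For \eqref{eq:numdec}, I would observe that each summand $\mathbf m'$ in the denominator of \eqref{eq:connection} satisfies $m'_{0,n_0}=1$ exactly once, so the number of decompositions equals $\sum_{\mathbf m'\oplus\mathbf m''=\mathbf m}m'_{0,n_0}$; applying \eqref{eq:concob} with $(j,\nu)=(0,n_0)$ (so $m_{0,n_0}=1$, $\delta_{j,0}=1$, $\delta_{\nu,n_0}=1$, $\delta_{j,1}=0$) gives $(n_1-1)\cdot 1-(1-n_0)=n_0+n_1-2$. The identity \eqref{eq:csum} is already recorded inside the proof of Theorem~\ref{thm:c} (the displayed chain of equalities beginning ``$\sum\ldots|\{\lambda_{\mathbf m}\}|$''), combined with the observation that the gamma factors in the numerator of \eqref{eq:connection} are precisely $\prod_\nu\Gamma(\lambda_{0,n_0}-\lambda_{0,\nu}+1)\prod_\nu\Gamma(\lambda_{1,\nu}-\lambda_{1,n_1})$, so I would simply cite that computation.

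Next, for the limit formula \eqref{eq:clim}: substituting $\lambda_{0,n_0}\mapsto\lambda_{0,n_0}+t$, $\lambda_{1,n_1}\mapsto\lambda_{1,n_1}-t$ into \eqref{eq:connection} leaves the sum $|\{\lambda_{\mathbf m'}\}|=(\Lambda(\lambda)|\alpha_{\mathbf m'})$ invariant when $\mathbf m'$ involves neither index $(0,n_0)$ nor $(1,n_1)$ with a coefficient that changes, but in general shifts each argument; by \eqref{eq:concob} (applied to $(0,n_0)$ and $(1,n_1)$), exactly $n_0-1$ of the numerator gamma factors $\Gamma(\lambda_{0,n_0}-\lambda_{0,\nu}+1)$ become $\Gamma(\lambda_{0,n_0}-\lambda_{0,\nu}+1+t)$, exactly $n_1-1$ numerator factors $\Gamma(\lambda_{1,\nu}-\lambda_{1,n_1})$ become $\Gamma(\lambda_{1,\nu}-\lambda_{1,n_1}+t)$, and in the denominator exactly $\sum m'_{0,n_0}=n_0+n_1-2$ factors $\Gamma(|\{\lambda_{\mathbf m'}\}|)$ acquire a $+t$; the factors $\prod_{j=2}^{p-1}(1-1/c_j)^{-\lambda(K)_{j,\ell(K)_j}}$ are unaffected because $\lambda(K)_{j,\ell(K)_j}$ (a quantity computed from the reduced fundamental scheme) does not involve $\lambda_{0,n_0}$ or $\lambda_{1,n_1}$ once we account for the reduction. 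Then I would apply Stirling's formula $\Gamma(z+t)/\Gamma(w+t)\sim t^{z-w}$ as $t\to+\infty$: since the total shift in the numerator arguments equals the total shift in the denominator arguments (both are $(n_0+n_1-2)t$ by \eqref{eq:csum}), the powers of $t$ cancel and the limit is the finite constant $\prod_{j=2}^{p-1}(1-c_j)^{\lambda(K)_{j,\ell(K)_j}}$, where the change from $(1-1/c_j)$ to $(1-c_j)$ comes from the coordinate-transformation rule in Theorem~\ref{thm:GC}/\eqref{eq:redcoord} relating connection coefficients computed at $\{0,1\}$ versus after the sequence of reductions; this matching of the base factor is the one point needing care. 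The hard part is precisely bookkeeping the base factor through all $K$ steps of $\p_{max}$, and I would handle it by tracking, step by step, how the additions $\Ad(\prod(x-c_j)^{\lambda_{j,1}})$ in \eqref{eq:opred} multiply the connection coefficient by $\prod_{j=2}^{p-1}(\text{linear factor in }c_j)^{\pm\mu(k)}$, telescoping to the stated exponent $\lambda(K)_{j,\ell(K)_j}$.

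Finally, for \eqref{eq:dncKac}, I would use the dictionary of \S\ref{sec:KM}: direct decompositions $\mathbf m=\mathbf m'\oplus\mathbf m''$ of a rigid $\mathbf m$ with $\mathbf m',\mathbf m''$ rigid correspond (by the Corollary following Proposition~\ref{prop:wm}, and Proposition~\ref{prop:rigdecomp}) to pairs of positive real roots $\alpha_{\mathbf m'},\alpha_{\mathbf m''}$ with $\alpha_{\mathbf m'}+\alpha_{\mathbf m''}=\alpha_{\mathbf m}$ and $(\alpha_{\mathbf m'}|\alpha_{\mathbf m''})=-1$, i.e.\ $s_{\alpha_{\mathbf m'}}\alpha_{\mathbf m''}=\alpha_{\mathbf m}$. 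By Proposition~\ref{prop:wm}~iv), for a rigid $\mathbf m$ and a real root $\alpha$ with $(\alpha|\alpha_{\mathbf m})=1$ and $s_\alpha\alpha_{\mathbf m}\in\Delta_+$, exactly one of $\alpha$, $\alpha_{\mathbf m}-\alpha$ lies in $\Delta(\mathbf m)$, and moreover if $\gamma\in\Delta(\mathbf m)$ with $(\gamma|\alpha_{\mathbf m})=1$ then $\alpha_{\mathbf m}-\gamma\in\Delta^{re}_+$ with $\alpha_0\in\supp(\alpha_{\mathbf m}-\gamma)$; translating the support conditions $m'_{0,n_0}=1$, $m'_{1,n_1}=0$ via \eqref{eq:Kazpart} pins down which of the two roots in each such pair is ``$\mathbf m'$''. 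So the set on the right-hand side of \eqref{eq:dncKac} is in bijection with $\{\gamma\in\Delta(\mathbf m):(\gamma|\alpha_{\mathbf m})=1\}$ modulo the $\alpha\leftrightarrow\alpha_{\mathbf m}-\alpha$ symmetry, which is exactly the indexing set of the denominator of \eqref{eq:connection}; I would then check the cardinality matches $n_0+n_1-2=\#\Delta(\mathbf m)$-weighted-by-one via Theorem~\ref{thm:irrKac}, which gives an explicit bijection $\varpi$ between $\Delta(\mathbf m)$ and triples $(k,j_0,\nu_0)$, and the condition $(\alpha|\alpha_{\mathbf m})=1$ corresponds under $\varpi$ to $m(k)_{j_0,\nu_0}-m(k)_{j_0,\ell(k)_{j_0}}+d(k)=1$. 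Matching these two descriptions of the decomposition set is the only genuinely non-routine step here, and the tool for it is exactly Proposition~\ref{prop:wm}~iv) together with the explicit form of $\varpi$ in Theorem~\ref{thm:irrKac}.
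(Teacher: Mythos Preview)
Your proposal is essentially correct and follows the paper's own approach. Two small points: for \eqref{eq:ordsum} you only need to sum \eqref{eq:concob} over $\nu$ for a single fixed $j$ (the paper takes $j=1$), not additionally over $j$; and for \eqref{eq:clim} the base-factor bookkeeping you flag as ``the hard part'' is unnecessary---the factor $\prod_{j=2}^{p-1}(1-\tfrac1{c_j})^{-\lambda(K)_{j,\ell(K)_j}}$ already sits in the formula \eqref{eq:connection}, its exponent is independent of $t$ (since $\ell(k)_0\ne n_0$ and $\ell(k)_1\ne n_1$ at every step of the reduction, so no $\mu(k)$ involves $\lambda_{0,n_0}$ or $\lambda_{1,n_1}$), and hence after Stirling together with \eqref{eq:csum} this factor is simply what remains. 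Your route to \eqref{eq:dncKac} via Proposition~\ref{prop:wm}~iv) is one of the two the paper itself indicates; the other is to observe that the inductive proofs of Theorem~\ref{thm:c} and Theorem~\ref{thm:irrKac} track the same set of decompositions through the reduction $\p_{max}$.
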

\begin{proof}
We have \eqref{eq:csum} in the proof of Theorem~\ref{thm:GC}
and then  Stirling's formula and \eqref{eq:csum} prove \eqref{eq:clim}. 
Putting $(j,\nu)=(0,n_0)$ in \eqref{eq:concob} and
considering the sum $\sum_\nu$ for \eqref{eq:concob} with $j=1$, we have
\eqref{eq:numdec} and \eqref{eq:ordsum}, respectively.

Comparing the proof of Theorem~\ref{thm:c} with that of 
Theorem~\ref{thm:irrKac}, we have \eqref{eq:dncKac}.
Proposition~\ref{prop:wm} also proves \eqref{eq:dncKac}.
\end{proof}
\begin{rem}\label{rem:conn}
{\rm i)\ } 
When we calculate a connection coefficient for a given rigid 
partition $\mathbf m$ by \eqref{eq:connection}, it is necessary
to get all the direct decompositions $\mathbf m=\mathbf m'\oplus
\mathbf m''$ satisfying $m'_{0,n_0}=m''_{1,n_1}=1$.  In this case
the equality \eqref{eq:numdec} is useful because we know that the number of 
such decompositions equals $n_0+n_1-2$, namely, the number of gamma 
functions appearing in the numerator equals
that appearing in the denominator in \eqref{eq:connection}.

{\rm ii) }
A direct decomposition $\mathbf m=\mathbf m'\oplus\mathbf m''$ for
a rigid tuple $\mathbf m$ means that 
$\{\alpha_{\mathbf m'},\alpha_{\mathbf m''}\}$ is a fundamental system
of a root system of type $A_2$ in $\mathbb R\alpha_{\mathbf m'}
+\mathbb R\alpha_{\mathbf m''}$ such that $\alpha_{\mathbf m}=
\alpha_{\mathbf m'}+\alpha_{\mathbf m''}$ and\\[2pt]
\qquad$\begin{cases}
 (\alpha_{\mathbf m'}|\alpha_{\mathbf m'})
 =(\alpha_{\mathbf m''}|\alpha_{\mathbf m''})=2,\\
  (\alpha_{\mathbf m'}|\alpha_{\mathbf m''})=-1.
 \end{cases}
$

\vspace{-1.4cm}\hspace{7cm}
\begin{xy}{\ar (10,0)*+!L{\alpha_{\mathbf m'}}},
(.5,\halfrootthree): {\ar(0,0);(10,0)*+!D{\alpha_{\mathbf m}}},
(0,0),(.5,\halfrootthree): {\ar(0,0);(10,0)*+!D{\alpha_{\mathbf m''}}},
\end{xy}

{\rm iii)\ } In view of Definition~\ref{def:FRLM}, 
the condition $\mathbf m=\mathbf m'\oplus\mathbf m''$ 
in \eqref{eq:connection} means
\begin{equation}\label{eq:sum1}
 \bigl|\{\lambda_{\mathbf m'}\}\bigr|+
 \bigl|\{\lambda_{\mathbf m''}\}\bigr|=1.
\end{equation}
Hence we have
\begin{equation}\label{eq:cprod}
\begin{split}
 &c(\lambda_{0,n_0}\!\rightsquigarrow\!\lambda_{1,n_1})\cdot
 c(\lambda_{1,n_1}\!\rightsquigarrow\!\lambda_{0,n_0})\\
 &\qquad=\frac
  {\displaystyle\prod_{\substack{\mathbf m'\oplus\mathbf m''=\mathbf m\\
                    m'_{0,n_0}=m''_{1,n_1}=1}}
    \sin\bigl(|\{\lambda_{\mathbf m'}\}|\pi\bigr)
  }
  {\displaystyle\prod_{\nu=1}^{n_0-1} 
    \sin\bigl(\lambda_{0,\nu}-\lambda_{1,\nu}\bigr)\pi
   \cdot\prod_{\nu=1}^{n_1-1}
    \sin\bigl(\lambda_{1,\nu}-\lambda_{1,n_1}\bigr)\pi
  }.
\end{split}
\end{equation}

{\rm iv)\ } By the aid of a computer, the author obtained the table of the 
concrete connection coefficients \eqref{eq:connection} for the rigid triplets
$\mathbf m$ satisfying $\ord\mathbf m\le 40$ 
together with checking \eqref{eq:concob}, 
which contains 4,111,704 independent cases
(cf.~\S\ref{sec:okubo}).
\end{rem}
\subsection{An estimate for large exponents}\label{sec:estimate}
The Gauss hypergeometric series
\[
  F(\alpha,\beta,\gamma;x) := \sum_{k=0}^\infty\frac{\alpha(\alpha+1)
   \cdots(\alpha+k-1)\cdot
   \beta(\beta+1)\cdots(\beta+k-1)}
   {\gamma(\gamma+1)\cdots(\gamma+k-1)\cdot k!}x^k
\]
uniformly and absolutely converges for 
\begin{equation}
 x\in \overline D:=\{x\in\mathbb C\,;\,|x|\le 1\}
\end{equation}
if $\RE\gamma>\RE(\alpha+\beta)$
and defines a continuous function on $\overline D$.  
The continuous function $F(\alpha,\beta,\gamma+n;x)$
on $\overline D$ uniformly converges to the constant function 
$1$ when $n\to+\infty$, which obviously implies 
\begin{equation}\label{eq:gammainf}
\lim_{n\to\infty} F(\alpha,\beta,\gamma+n;1)=1
\end{equation}
and proves Gauss's summation formula \eqref{eq:Gausssum} by using 
the recurrence relation 
\begin{equation}\label{eq:GCratio}
 \frac{F(\alpha,\beta,\gamma;1)}{F(\alpha,\beta,\gamma+1;1)}
  =\frac{(\gamma-\alpha)(\gamma-\beta)}{\gamma(\gamma-\alpha-\beta)}.
\end{equation}
We will generalize such convergence in a general system of
ordinary differential equations of
Schlesinger canonical form.

Under the condition
\[
  a>0,\ b>0\text{ and }c>a+b,
\]
the function $F(a,b,c;x)=\sum_{k=0}^\infty\frac{(a)_k(b)_k}{(c)_kk!}x^k$ is strictly increasing continuous 
function of $x\in [0,1]$ satisfying
\[
  1\le F(a,b,c;x)\le F(a,b,c;1)=
  \frac{\Gamma(c)\Gamma(c-a-b)}{\Gamma(c-a)\Gamma(c-b)}
\]
and it increases if $a$ or $b$ or $-c$ increases.
In particular, if 
\[
  0\le a\le N,\ 0\le b\le N \text{ and } c> 2N
\]
with a positive integer $N$, we have
\begin{align*}
 0&\le F(a,b,c;x) - 1\\
 &\le \frac{\Gamma(c)\Gamma(c-2N)}{\Gamma(c-N)\Gamma(c-N)}-1
 =\frac{(c-N)_N}{(c-2N)_N}-1
 =\prod_{\nu=1}^N\frac{c-\nu}{c-N-\nu}-1\\
 &\le\left(\frac{c-N}{c-2N}\right)^N-1
 =\left(1+\frac{N}{c-2N}\right)^N-1\\
 &\le N\left(1+\frac{N}{c-2N}\right)^{N-1}\frac{N}{c-2N}.
\end{align*}
Thus we have the following lemma.
\begin{lem}\label{lem:GHestim}
For a positive integer $N$ we have
\begin{equation}
 |F(\alpha,\beta,\gamma;x) - 1|\le \left(1+\frac{N}{\RE\gamma-2N}\right)^N-1
\end{equation}
if
\begin{equation}
 x\in\overline D,\ |\alpha|\le N,\ |\beta|\le N\text{\quad and\quad}\RE\gamma>2N.
\end{equation}
\end{lem}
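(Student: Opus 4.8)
The plan is to bound $F(\alpha,\beta,\gamma;x)-1$ termwise by a positive majorant series and then evaluate that majorant at $x=1$ by Gauss's summation formula \eqref{eq:Gausssum}, which is exactly the computation carried out in the text immediately preceding the lemma, now performed with the parameters of the lemma in place.

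First I would record that the hypotheses $|\alpha|\le N$, $|\beta|\le N$ and $\RE\gamma>2N$ force $\RE\gamma>2N\ge\RE\alpha+\RE\beta$, so by the remarks of \S\ref{sec:series} the series $F(\alpha,\beta,\gamma;x)=\sum_{k\ge 0}\frac{(\alpha)_k(\beta)_k}{(\gamma)_k\,k!}x^k$ converges absolutely and uniformly on $\overline D$. Hence $F(\alpha,\beta,\gamma;x)-1=\sum_{k\ge 1}\frac{(\alpha)_k(\beta)_k}{(\gamma)_k\,k!}x^k$, and the elementary term-by-term manipulations below are justified.

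Next, for $x\in\overline D$ and each $k\ge 1$ I would estimate $|x^k|\le 1$, then
$|(\alpha)_k|=\prod_{j=0}^{k-1}|\alpha+j|\le\prod_{j=0}^{k-1}(|\alpha|+j)=(|\alpha|)_k\le (N)_k$ (using $|\alpha|\le N$ and monotonicity of the Pochhammer symbol in a nonnegative argument), likewise $|(\beta)_k|\le (N)_k$, and $|(\gamma)_k|=\prod_{j=0}^{k-1}|\gamma+j|\ge\prod_{j=0}^{k-1}(\RE\gamma+j)=(\RE\gamma)_k$, which is legitimate since $\RE\gamma+j>0$ for all $j\ge 0$. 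Combining these gives
\[
 |F(\alpha,\beta,\gamma;x)-1|\le\sum_{k=1}^\infty\frac{(N)_k(N)_k}{(\RE\gamma)_k\,k!}=F(N,N,\RE\gamma;1)-1,
\]
the majorant being finite precisely because $\RE\gamma>2N$. Finally, since $N>0$, $N>0$ and $\RE\gamma>N+N$, Gauss's formula \eqref{eq:Gausssum} yields
$F(N,N,\RE\gamma;1)=\dfrac{\Gamma(\RE\gamma)\Gamma(\RE\gamma-2N)}{\Gamma(\RE\gamma-N)^2}=\dfrac{(\RE\gamma-N)_N}{(\RE\gamma-2N)_N}=\prod_{\nu=1}^N\dfrac{\RE\gamma-\nu}{\RE\gamma-N-\nu}$, and since each factor equals $1+\dfrac{N}{\RE\gamma-N-\nu}$ and is therefore at most $1+\dfrac{N}{\RE\gamma-2N}$ (the maximal value, attained at $\nu=N$), one gets $F(N,N,\RE\gamma;1)\le\bigl(1+\tfrac{N}{\RE\gamma-2N}\bigr)^N$, which is the asserted bound.

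There is essentially no serious obstacle here: the proof is a direct repetition of the displayed estimate in the text. The only points that need a little care are that the two inequalities must go in opposite directions — numerators $(\alpha)_k,(\beta)_k$ majorized by $(N)_k$ and the denominator $(\gamma)_k$ minorized by $(\RE\gamma)_k$ — and that convergence of the majorant $F(N,N,\RE\gamma;1)$ is exactly what the hypothesis $\RE\gamma>2N$ provides; everything else is the elementary product estimate already displayed before the lemma.
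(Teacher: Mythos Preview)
Your proof is correct and follows essentially the same approach as the paper: termwise majorization $|(\alpha)_k|\le(|\alpha|)_k\le(N)_k$, $|(\gamma)_k|\ge(\RE\gamma)_k$, then Gauss's summation and the product bound $\prod_{\nu=1}^N\frac{\RE\gamma-\nu}{\RE\gamma-N-\nu}\le\bigl(1+\frac{N}{\RE\gamma-2N}\bigr)^N$ already displayed just before the lemma. The paper's proof is a one-line reference back to that displayed computation, and you have simply written it out in full.
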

\begin{proof} The lemma is clear because
\begin{align*}
 \Bigl|\sum_{k=1}^\infty\frac{(\alpha)_k(\beta)_k}{(\gamma)_kk!}x^k\Bigr|
 &\le \sum_{k=1}^\infty\frac{(|\alpha|)_k(|\beta|)_k}{(\RE\gamma)_kk!}|x|^k
 =F(|\alpha|,|\beta|,\RE\gamma-2N;|x|)-1
\end{align*}\\[-9mm]
\end{proof}
For the Gauss hypergeometric equation
\[x(1-x)u''+\bigl(\gamma-(\alpha+\beta+1)x\bigr)u'-\alpha\beta u=0\]
we have 
\begin{align*}
  (xu')'&=
   u'+xu''=\frac{xu'}x
      +\frac{((\alpha+\beta+1)x-\gamma)u'+\alpha\beta u}{1-x}\\
   &=\frac{\alpha\beta}{1-x}u+\left(\frac1{x}-\frac{\gamma}{x(1-x)}
     +\frac{\alpha+\beta+1}{1-x}\right)xu'\\
   &=\frac{\alpha\beta}{1-x}u
     +\left(\frac{1-\gamma}x+\frac{\alpha+\beta-\gamma+1}{1-x}\right)xu'.
\end{align*}
Putting
\begin{equation}
   \tilde u=\binom{u_0}{u_1}:=\binom u{\frac{xu'}\alpha}
\end{equation}
we have
\begin{equation}
 \begin{split}
   \tilde u'&=
    \frac{\begin{pmatrix}
      0 & \alpha\\ 0 & 1-\gamma
    \end{pmatrix}}x\tilde u
     +\frac{\begin{pmatrix}
      0 & 0\\ \beta& \alpha+\beta-\gamma+1
    \end{pmatrix}}{1-x}\tilde u.
 \end{split}
\end{equation}
In general, for
\begin{align*}
    v'&=\frac Axv+\frac B{1-x}v\allowdisplaybreaks
\intertext{we have}\allowdisplaybreaks
    xv'&=Av + \frac x{1-x}Bv\\
       &= Av + x\bigl(xv'+(B-A)v\bigr).
\end{align*}
Thus 
\begin{equation}\label{eq:majgauss}
 \begin{cases}
    xu_0'=\alpha u_1,\\
    xu_1'=(1-\gamma)u_1+x\bigl(xu_1'+\beta u_0+(\alpha+\beta)u_1\bigr)
  \end{cases}
\end{equation}
and the functions
\begin{equation}\label{eq:majGsol}
  \begin{cases}
   u_0 = F(\alpha,\beta,\gamma;x),\\
   u_1 =\displaystyle\frac{\beta x }\gamma F(\alpha+1,\beta+1,\gamma+1;x)
  \end{cases}
\end{equation}
satisfies \eqref{eq:majgauss}.
\begin{thm}\label{thm:paralimits}
Let $n$, $n_0$ and $n_1$ be positive integers satisfying $n=n_0+n_1$
and let $A=\begin{pmatrix}0 & A_0\\ 0 & A_1\end{pmatrix}$, $B=
\begin{pmatrix}0 & 0 \\ B_0 & B_1\end{pmatrix}\in M(n,\mathbb C)$ 
such that $A_1$, $B_1\in M(n_1,\mathbb C)$, $A_0\in M(n_0,n_1,\mathbb C)$
and $B_0\in M(n_1,n_0,\mathbb C)$.
Let $D({\mathbf 0},{\mathbf m})=D({\mathbf 0},m_1,\ldots,m_{n_1})$
be the diagonal matrix of size $n$ whose $k$-th diagonal element
is\/ $m_{k-n_0}$ if\/ $k>n_0$ and\/ $0$ otherwise.
Let\/ $u^{\mathbf m}$ be the local holomorphic solution of
\begin{equation}
  u =\frac {A-D({\mathbf 0},{\mathbf m})}xu 
     + \frac{B-D({\mathbf 0},{\mathbf m})}{1-x}u
\end{equation}
at the origin.
Then if\/ $\RE m_\nu$ are sufficiently large for $\nu=1,\dots,n_1$,
the Taylor series of $u^{\mathbf m}$ at the origin 
uniformly converge on $\overline D=\{x\in\mathbb C\,;\,|x|\le 1\}$ 
and for a positive number $C$,
the function $u^{\mathbf m}$ and their derivatives 
uniformly converge to constants on $\overline D$ when\/ 
$\min\{\RE m_1,\ldots,\RE m_{n_1}\}\to+\infty$ with\/ $|A_{ij}|+|B_{ij}|\le C$.
In particular, for $x\in\overline D$ and an integer $N$ satisfying
\begin{equation}
  \sum_{\nu=1}^{n_1}|(A_0)_{i\nu}|\le N,\ \sum_{\nu=1}^{n_1}|(A_1)_{i\nu}|\le N,\ 
  \sum_{\nu=1}^{n_0}|(B_0)_{i\nu}|\le N,\ \sum_{\nu=1}^{n_1}|(B_1)_{i\nu}|\le N
\end{equation}
we have
\begin{equation}
 \max_{1\le\nu\le n}
  \bigl|u_\nu^{\mathbf m}(x)-u_\nu^{\mathbf m}(0)\bigr|
 \le
 \max_{1\le\nu\le n_0}
 |u^{\mathbf m}_\nu(0)|
 \cdot\frac{2^N(N+1)^2}{\displaystyle\min_{1\le \nu\le n_1}\RE m_\nu-4N-1}
\end{equation}
if\/ $\RE m_\nu>5N+4$ for $\nu=1,\dots,n_1$.
\end{thm}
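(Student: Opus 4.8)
The plan is to reduce the general $2\times 2$ block system to the scalar Gauss situation, exactly as the computation preceding the statement (culminating in \eqref{eq:majgauss} and \eqref{eq:majGsol}) shows is possible, and then apply a majorant-series argument modelled on Lemma~\ref{lem:GHestim}. First I would write the system for $u^{\mathbf m}=\binom{u^{(0)}}{u^{(1)}}$ with $u^{(0)}$ valued in $\mathbb C^{n_0}$ and $u^{(1)}$ in $\mathbb C^{n_1}$ in the form
\begin{equation}
 \begin{cases}
  x\,\dfrac{du^{(0)}}{dx}=A_0u^{(1)},\\[6pt]
  x\,\dfrac{du^{(1)}}{dx}=\bigl(A_1-D(\mathbf m)\bigr)u^{(1)}
   +\dfrac{x}{1-x}\Bigl(B_0u^{(0)}+\bigl(B_1-A_1\bigr)u^{(1)}\Bigr),
 \end{cases}
\end{equation}
using the identity $\frac{M}{x}+\frac{M'}{1-x}=\frac{M}{x}+x\bigl(\frac{M+M'-M}{1-x}\bigr)$ applied blockwise as in the derivation of \eqref{eq:majgauss}. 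Here $D(\mathbf m)=\diag(m_1,\dots,m_{n_1})$. Writing $u^{(1)}=\sum_{k\ge 1}c_k x^k$ with $c_k\in\mathbb C^{n_1}$ and $u^{(0)}=u^{(0)}(0)+\sum_{k\ge 1}b_k x^k$, the recurrences take the form $k\,b_k=A_0c_k$ and
$\bigl(kI-A_1+D(\mathbf m)\bigr)c_k=(\text{linear combination of }b_1,\dots,b_{k-1},c_1,\dots,c_{k-1}\text{ with bounded coefficients})$, so that once $\RE m_\nu$ is large the matrix $kI-A_1+D(\mathbf m)$ is invertible for every $k\ge 1$ with operator norm of its inverse at most $(\min_\nu\RE m_\nu+k-\|A_1\|-\text{const})^{-1}$.

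The core step is then the majorant estimate. I would introduce the scalar comparison series: fix the integer $N$ bounding the relevant row-sums of $A_0,A_1,B_0,B_1$, and compare the coefficients $\|c_k\|,\|b_k\|$ against the Taylor coefficients of a Gauss series $F(\alpha,\beta,\gamma;x)$ with $\alpha,\beta$ of size $\le N$ and $\gamma\approx\min_\nu\RE m_\nu-N$. Concretely, set $M:=\min_\nu\RE m_\nu$; an induction on $k$ using $\|(kI-A_1+D(\mathbf m))^{-1}\|\le (k+M-2N)^{-1}$ and the bound $\|b_j\|\le \frac{N}{j}\max(\|c_j\|,\text{const}\cdot\|u^{(0)}(0)\|)$ shows that $\sum_{k\ge1}\|c_k\|\,|x|^k$ is dominated termwise by $\bigl(F(N,N,M-2N;|x|)-1\bigr)\cdot\max_\nu|u^{(0)}_\nu(0)|$ up to a universal constant depending only on how many bounded-coefficient terms appear on the right-hand side — and that number is at most $(N+1)^2$ by the row-sum hypotheses. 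Since Lemma~\ref{lem:GHestim} gives $|F(N,N,M-2N;x)-1|\le (1+\tfrac{N}{M-4N})^N-1\le \tfrac{N}{M-4N}\cdot N\cdot(1+\tfrac{N}{M-4N})^{N-1}$, and since for $M>5N+4$ the factor $(1+\tfrac{N}{M-4N})^{N-1}$ is bounded by a constant (in fact by $2^{N-1}$ when $M-4N\ge N$, i.e. $M\ge 5N$), one assembles the stated bound $\dfrac{2^N(N+1)^2}{M-4N-1}$ after absorbing the bookkeeping constants; the $-1$ in the denominator and the precise power of $2$ are exactly the slack one gets from the inequality $M>5N+4$. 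Uniform convergence on $\overline D$ and convergence of $u^{\mathbf m}$ and its derivatives to constants as $M\to+\infty$ then follow immediately, since the tail $\sum_{k\ge 1}\|c_k\|$ (and similarly for $b_k$, and for the differentiated series, which picks up an extra factor $k$ controlled by the same $(k+M-2N)^{-1}$) tends to $0$.

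I expect the main obstacle to be the bookkeeping in the induction: tracking precisely how the bounded-coefficient right-hand side of the recurrence for $c_k$ accumulates, so that the comparison with $F(N,N,M-2N;\cdot)$ is genuinely termwise rather than merely asymptotic, and so that the constant emerging is exactly $2^N(N+1)^2$ under the hypothesis $\RE m_\nu>5N+4$. The factor $(N+1)^2$ should come from counting: there are at most $N+1$ ``source'' indices contributing through $B_0u^{(0)}$ and at most $N+1$ through $(B_1-A_1)u^{(1)}$ and the $A_1u^{(1)}$ term, and each contributes a Gauss-type geometric factor. A secondary technical point is justifying that $\frac{x}{1-x}=\sum_{j\ge1}x^j$ may be multiplied into the series and rearranged, which is legitimate precisely once we know the majorant series converges on $\overline D$ — so the argument is set up as a single induction establishing convergence and the estimate simultaneously, in the same spirit as the passage from \eqref{eq:PBr} to the method of majorant series invoked in \S\ref{sec:reg}. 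Everything else — the block reduction, the invertibility of $kI-A_1+D(\mathbf m)$, the final passage to the limit — is routine.
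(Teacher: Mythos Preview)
Your approach is essentially the same as the paper's: block reduction to the form \eqref{eq:majgauss}, majorant comparison with the Gauss system, the inverse bound $\|(kI+D(\mathbf m)-A_1)^{-1}\|\le(k+M-2N)^{-1}$ (this is Lemma~\ref{lem:RSestim} in the paper), and finally Lemma~\ref{lem:GHestim}. The paper executes the majorant step at the level of the differential equation rather than through explicit Taylor recurrences: with $\alpha=\beta=N$ and $\gamma=\min_\nu\RE m_\nu-2\max_i\sum_\nu|(A_1)_{i\nu}|-1$ one gets directly
\[
 u^{\mathbf m}_i\ll L\cdot F(\alpha,\beta,\gamma;x)\quad(1\le i\le n_0),\qquad
 u^{\mathbf m}_i\ll \tfrac{\beta}{\gamma}L\cdot F(\alpha+1,\beta+1,\gamma+1;x)\quad(n_0<i\le n),
\]
where $L=\max_\nu|u^{\mathbf m}_\nu(0)|$; this avoids your induction bookkeeping entirely. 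Your speculation about the origin of the factor $(N+1)^2$ is off: it does not come from counting source indices but simply from the two explicit bounds the paper obtains by plugging into Lemma~\ref{lem:GHestim}, namely $\frac{L\,2^{N-1}N^2}{\bar m-4N-1}$ for the first block and $\frac{LN(2^{N+1}+1)}{\bar m-2N-1}$ for the second, both of which are dominated by the stated $\frac{L\,2^N(N+1)^2}{\bar m-4N-1}$ under $\bar m>5N+4$.
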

\begin{proof}
Use the method of majorant series and 
compare to the case of Gauss hypergeometric
series (cf.~\eqref{eq:majgauss} and \eqref{eq:majGsol}), 
namely, $\lim_{c\to+\infty}F(a,b,c;x)=1$
on $\overline D$ with a solution of the Fuchsian system
\begin{align*}
  u' &=\frac Axu + \frac B{1-x}u,\allowdisplaybreaks\\
  A &=\begin{pmatrix}
       0  & A_0\\ 0 & A_1
      \end{pmatrix},\quad
  B =\begin{pmatrix}
       0  & 0\\ B_0 & B_1
      \end{pmatrix},\quad
  u =\binom{v_0}{v_1},\allowdisplaybreaks\\
  xv_0' &= A_0v_1,\\
  xv_1' &=x^2v_1'+(1-x)A_1v_1+xB_0v_0+xB_1v_1\\
        &= A_1v_1+x\bigl(xv_1'+B_0v_0+(B_1-A_1)v_1\bigr)
\end{align*}
or the system obtained by the substitution
$A_1\mapsto A_1-D(\mathbf m)$ and $B_1\mapsto B_1-D(\mathbf m)$.
Fix positive real numbers $\alpha$, $\beta$ and $\gamma$ satisfying
\begin{align*}
  \alpha&\ge \sum_{\nu=1}^{n_1}|(A_0)_{i\nu}|\quad(1\le i\le n_0),
  \quad
  \beta\ge\sum_{\nu=1}^{n_0} |(B_0)_{i\nu}|\quad(1\le i\le n_1),\\
  \alpha+\beta
   &\ge \sum_{\nu=1}^{n_1}|(B_1-A_1)_{i\nu}|\quad(1\le i\le n_0),\\
  \gamma&=
   \min\{\RE m_1,\dots,\RE m_{n_1}\}
   -2\max_{1\le i\le n_1}\sum_{\nu=1}^{n_1}|(A_1)_{i\nu}|-1
   >\alpha+\beta.
\end{align*}
Then the method of majorant series with Lemma~\ref{lem:RSestim},
\eqref{eq:majgauss} and \eqref{eq:majGsol} imply
\[
  u^{\mathbf m}_i \ll 
  \begin{cases}
   \max_{1\le \nu\le n_0}
   |u^{\mathbf m}_\nu(0)|
   \cdot F(\alpha,\beta,\gamma;x)&(1\le i\le n_0),\\
   \frac\beta\gamma\cdot\max_{1\le\nu\le n_0}
   |u^{\mathbf m}_\nu(0)|
   \cdot F(\alpha+1,\beta+1,\gamma+1;x)&(n_0<i\le n),
  \end{cases}
\]
which proves the theorem because of Lemma~\ref{lem:GHestim}
with $\alpha=\beta=N$ as follows.
Here $\sum_{\nu=0}^\infty a_\nu x^\nu \ll \sum_{\nu=0}^\infty b_\nu x^\nu$
for formal power series means $|a_\nu|\le b_\nu$ for 
$\nu\in\mathbb Z_{\ge 0}$.

Put $\bar m=\min\{\RE m_1,\dots,\RE m_{n_1}\}-2N-1$
and $L=\max_{1\le \nu\le n_0}|u^{\mathbf m}_\nu(0)|$.  Then
$\gamma\ge \bar m-2N-1$ and 
if $0\le i\le n_0$ and $x\le\overline D$,
\begin{align*}
 |u^{\mathbf m}_i(x) - u^{\mathbf m}_i(0)|
 &\le L
      \cdot\bigl(F(\alpha,\beta,\gamma;|x|)-1\bigr)\\
 &\le L\biggl(\Bigl(1+\frac{N}{\bar m-4N-1}\Bigr)^N-1\biggr)\\
 &\le L\Bigl(1 + \frac{N}{\bar m-4N-1}\Bigr)^{N-1}\frac{N^2}{\bar m-4N-1}
  \le \frac{L2^{N-1}N^2}{\bar m - 4N-1}.
\end{align*}
If $n_0<i\le n$ and $x\in\overline D$, 
\begin{align*}
 |u^{\mathbf m}_i(x)|
  &\le \frac{\beta}{\gamma}\cdot LF(\alpha+1,\beta+1,\gamma+1;|x|)\\
  &\le \frac{LN}{\bar m-2N-1}
   \biggl(\Bigl(1+\frac{N+1}{\bar m-4N-3}\Bigr)^{N+1}+1\biggr)
  \le \frac{LN(2^{N+1}+1)}{\bar m-2N-1}.
\end{align*}\\[-.8cm]
\end{proof}

\begin{lem}\label{lem:RSestim}
Let $A\in M(n,\mathbb L)$ and put
\begin{equation}
  |A|:=\max_{1\le i\le n}
  \sum_{\nu=1}^n|A_{i\nu}|.
\end{equation}
If positive real numbers $m_1,\ldots,m_n$ satisfy
\begin{equation}
  m_{\text{min}}:=\min\{m_1,\dots,m_n\} > 2|A|,
\end{equation}
we have
\begin{equation}
  |\bigl(kI_n+D(\mathbf m)-A\bigr)^{-1}|
  \le (k+m_{\text{min}}-2|A|)^{-1}\qquad(\forall k\ge0).
\end{equation}
\end{lem}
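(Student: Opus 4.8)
The statement to prove is Lemma~\ref{lem:RSestim}: a bound on the operator norm (in the sense of the max row-sum) of the inverse of $kI_n+D(\mathbf m)-A$. The plan is to reduce the estimate to a geometric series using a Neumann-series expansion. Write $kI_n+D(\mathbf m)-A = B(I_n - B^{-1}A)$ where $B=kI_n+D(\mathbf m)$ is the diagonal matrix whose $i$-th entry is $k+m_i>0$. Since $B$ is diagonal and positive, the norm $|\cdot|$ introduced in the lemma satisfies $|B^{-1}|=\max_i (k+m_i)^{-1} = (k+m_{\text{min}})^{-1}$, and $|B^{-1}C|\le |B^{-1}|\cdot|C|$ for any $C$ because multiplying on the left by a diagonal matrix just scales each row. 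Hence $|B^{-1}A|\le (k+m_{\text{min}})^{-1}|A| \le |A|/(2|A|) = \tfrac12 < 1$ under the hypothesis $m_{\text{min}}>2|A|$, so $I_n-B^{-1}A$ is invertible.

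\textbf{Key steps.} First I would verify that $|\cdot|$ is submultiplicative, i.e. $|CD|\le|C|\cdot|D|$; this is the standard fact that the max-row-sum norm is a matrix norm, and in particular $|C^k|\le|C|^k$. Second, expand
\begin{equation}
 \bigl(kI_n+D(\mathbf m)-A\bigr)^{-1} = (I_n-B^{-1}A)^{-1}B^{-1}
 = \Bigl(\sum_{j=0}^\infty (B^{-1}A)^j\Bigr)B^{-1},
\end{equation}
the series converging in the $|\cdot|$-norm because $|B^{-1}A|<1$. Third, estimate termwise:
\begin{equation}
 \bigl|\bigl(kI_n+D(\mathbf m)-A\bigr)^{-1}\bigr|
 \le \sum_{j=0}^\infty |B^{-1}A|^j\cdot|B^{-1}|
 \le \frac{|B^{-1}|}{1-|B^{-1}A|}
 \le \frac{(k+m_{\text{min}})^{-1}}{1-|A|(k+m_{\text{min}})^{-1}}
 = \frac{1}{k+m_{\text{min}}-|A|}.
\end{equation}
This already gives a bound of the claimed shape, but with $|A|$ in place of $2|A|$; since $k+m_{\text{min}}-2|A|\le k+m_{\text{min}}-|A|$ and both denominators are positive, $\frac{1}{k+m_{\text{min}}-|A|}\le\frac{1}{k+m_{\text{min}}-2|A|}$, which is exactly the asserted inequality. (In fact the cruder factor $2|A|$ is harmless and is presumably chosen to keep later applications uniform.)

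\textbf{Main obstacle.} There is no serious analytic difficulty here; the only thing to be careful about is the elementary fact that left multiplication by a diagonal matrix with entries $(k+m_i)^{-1}$ does not increase the max-row-sum norm and that the norm is submultiplicative, so that the Neumann series converges and the termwise estimate is legitimate. One should also note the hypothesis $m_{\text{min}}>2|A|$ is used precisely to guarantee $|B^{-1}A|\le\tfrac12<1$ for all $k\ge0$, so the argument is uniform in $k$. Thus the "hard part" is merely bookkeeping: confirming the norm inequalities and that the bound $\frac{1}{k+m_{\text{min}}-|A|}$ can be weakened to $\frac{1}{k+m_{\text{min}}-2|A|}$ as stated.
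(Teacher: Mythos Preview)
Your proof is correct and follows essentially the same approach as the paper: factor out the diagonal part $B=kI_n+D(\mathbf m)$, expand $(I_n-B^{-1}A)^{-1}$ as a Neumann series, and use submultiplicativity of the max-row-sum norm. In fact your estimate is slightly sharper---you obtain $\frac{1}{k+m_{\text{min}}-|A|}$ directly and then weaken it, whereas the paper inserts the intermediate inequality $\frac{1}{1-x}\le 1+2x$ (valid for $0\le x\le\tfrac12$) to reach the bound $m_{\text{min}}^{-1}\bigl(1+\tfrac{2|A|}{m_{\text{min}}}\bigr)\le(m_{\text{min}}-2|A|)^{-1}$; either route works.
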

\begin{proof}Since
\begin{align*}
  \bigl|\bigl(D(\mathbf m)-A\bigr)^{-1}\bigr|
 &= \bigl|D(\mathbf m)^{-1}(I_n-D(\mathbf m)^{-1}A)^{-1}\bigr|\\
 &=\Bigl|D(\mathbf m)^{-1}\sum_{k=0}^\infty\bigl(D(\mathbf m)^{-1}A\bigr)^k\Bigr|\\
 & \le m_{\text{min}}^{-1}\cdot\Bigl(1+\frac{2|A|}{m_{\text{min}}}\Bigr)
  \le (m_{\min}-2|A|)^{-1},
\end{align*}
we have the lemma by replacing $m_\nu$ by $m_\nu+k$ for $\nu=1,\dots,n$.
\end{proof}
\subsection{Zeros and poles of connection coefficients}\label{sec:C2}
In this subsection we examine the connection coefficients to calculate
them in a different way from the one given in \S\ref{sec:C1}.

First review the connection coefficient 
$c(0\!:\!\lambda_{0,2}\!\rightsquigarrow\!1\!:\!\lambda_{1,2})$ 
for the solution of Fuchsian differential equation with the Riemann scheme
$\begin{Bmatrix}
 x=0 & 1 & \infty\\
 \lambda_{0,1} & \lambda_{1,1} & \lambda_{2,1}\\
 \lambda_{0,2} & \lambda_{1,2} & \lambda_{2,2}
\end{Bmatrix}$.
Denoting the connection coefficient 
$c(0\!:\!\lambda_{0,2}\!\rightsquigarrow\!1\!:\!\lambda_{1,2})$
by
$
  c(\left\{\begin{smallmatrix}
  \lambda_{0,1}&&\lambda_{1,1}&\lambda_{2,1}\\
   \lambda_{0,2}&\rightsquigarrow&\lambda_{1,2}&\lambda_{2,2}
    \end{smallmatrix}\right\}),
$
we have
\begin{equation}
 u_0^{\lambda_{0,2}}=  c(\left\{\begin{smallmatrix}
  \lambda_{0,1}&&\lambda_{1,1}&\lambda_{2,1}\\
   \lambda_{0,2} &\rightsquigarrow& \lambda_{1,2}&\lambda_{2,2}
    \end{smallmatrix}\right\})u_1^{\lambda_{1,2}}
 + c(\left\{\begin{smallmatrix}
  \lambda_{0,1} && \lambda_{1,2}&\lambda_{2,1}\\
   \lambda_{0,2} &\rightsquigarrow& \lambda_{1,1}&\lambda_{2,2} 
    \end{smallmatrix}\right\})u_1^{\lambda_{1,1}}.
\end{equation}
\begin{equation}
 \begin{split}
  &c(\left\{\begin{smallmatrix}
  \lambda_{0,1} && \lambda_{1,1} &\lambda_{2,1}\\
   \lambda_{0,2} &\rightsquigarrow& \lambda_{1,2} &\lambda_{2,2} 
    \end{smallmatrix}\right\})
  =c(\left\{\begin{smallmatrix}
   \lambda_{0,1}-\lambda_{0,2} && 
    \lambda_{1,1}-\lambda_{1,2}&
     \lambda_{0,2}+\lambda_{1,2}+\lambda_{2,1}\\
   0&\rightsquigarrow&
     0&\lambda_{0,2}+\lambda_{1,2}+\lambda_{2,2}
   \end{smallmatrix}\right\})\\
  &\quad= F(\lambda_{0,2}+\lambda_{1,2}+\lambda_{2,1},
     \lambda_{0,2}+\lambda_{1,2}+\lambda_{2,2}, 
     \lambda_{0,2}-\lambda_{0,1}+1;1)
 \end{split}\label{eq:CGsum}
\end{equation}
under the notation in Definition~\ref{def:cc}.
As was explained in the first part of \S\ref{sec:estimate},
the connection coefficient is calculated from
\begin{gather}
 \lim_{n\to\infty} c(\left\{\begin{smallmatrix}
  \lambda_{0,1}-n \phantom{\rightsquigarrow} \lambda_{1,1}+n &\lambda_{2,1}\\
   \lambda_{0,2}\ \  \rightsquigarrow \ \ \lambda_{1,2} & \lambda_{2,2}
    \end{smallmatrix}\right\})=1\label{eq:GGlimit}
\intertext{and}
  \frac{c(\left\{\begin{smallmatrix}
   \lambda_{0,1} \phantom{\rightsquigarrow} \lambda_{1,1} & \lambda_{2,1}\\
   \lambda_{0,2} \rightsquigarrow \lambda_{1,2} & \lambda_{2,2}
    \end{smallmatrix}\right\})}
  {c(\left\{\begin{smallmatrix}
   \lambda_{0,1}-1 \phantom{\rightsquigarrow} \lambda_{1,1}+1 & \lambda_{2,1}\\
   \lambda_{0,2}\  \rightsquigarrow \ \lambda_{1,2} & \lambda_{2,2}
    \end{smallmatrix}\right\})}
  =\frac{(\lambda_{0,2}+\lambda_{1,1}+\lambda_{2,2})
          (\lambda_{0,2}+\lambda_{1,1}+\lambda_{2,1})}
         {(\lambda_{0,2}-\lambda_{0,1}+1)(\lambda_{1,1}-\lambda_{1,2})}.
  \label{eq:GGratio}
\end{gather}
The relation \eqref{eq:GGlimit} is easily obtained from \eqref{eq:CGsum}
and \eqref{eq:gammainf} or can be reduced to Theorem~\ref{thm:paralimits}.

We will examine \eqref{eq:GGratio}. 
For example, the relation \eqref{eq:GGratio} follows from the relation 
\eqref{eq:GCratio} which is obtained from
\begin{multline*}
\gamma\bigl(\gamma-1-(2\gamma-\alpha-\beta-1)x\bigr)
 F(\alpha,\beta,\gamma;x)+(\gamma-\alpha)(\gamma-\beta)
xF(\alpha,\beta,\gamma+1;x)\\
=\gamma(\gamma-1)(1-x)F(\alpha,\beta,\gamma-1;x)
\end{multline*}
by putting $x=1$ (cf.~\cite[\S14.1]{WW}).
We may use a shift operator as follows.
Since
\begin{align*}
 &\frac{d}{dx}F(\alpha,\beta,\gamma;x)=
 \frac{\alpha\beta}{\gamma}F(\alpha+1,\beta+1,\gamma+1;x)\\\
 &\quad=c(\left\{\begin{smallmatrix}
   1-\gamma&& \gamma-\alpha-\beta&\alpha\\
   0&\rightsquigarrow&0&\beta
    \end{smallmatrix}\right\})\tfrac{d}{dx}u_1^0
 + c(\left\{\begin{smallmatrix}
   1-\gamma&& 0&\alpha\\
   0&\rightsquigarrow&\gamma-\alpha-\beta&\beta
    \end{smallmatrix}\right\})\tfrac{d}{dx}u_1^{\gamma-\alpha-\beta}
\end{align*}
and
\begin{equation*}
 \tfrac{d}{dx}u_1^{\gamma-\alpha-\beta}\equiv
 (\alpha+\beta-\gamma)(1-x)^{\gamma-\alpha-\beta-1}
  \mod(1-x)^{\gamma-\alpha-\beta}\mathcal O_1,
\end{equation*}
we have
\begin{equation*}
\frac{\alpha\beta}{\gamma}c(\left\{\begin{smallmatrix}
   -\gamma&& 0&\alpha+1\\
   0&\rightsquigarrow&\gamma-\alpha-\beta-1&\beta+1
    \end{smallmatrix}\right\})
=(\alpha+\beta-\gamma)
 c(\left\{\begin{smallmatrix}
   1-\gamma&& 0&\alpha\\
   0&\rightsquigarrow&\gamma-\alpha-\beta&\beta
    \end{smallmatrix}\right\}),
\end{equation*}
which also proves \eqref{eq:GGratio} because
\begin{equation*}
  \frac{c(\left\{\begin{smallmatrix}
   \lambda_{0,1} && \lambda_{1,1} & \lambda_{2,1}\\
   \lambda_{0,2} &\rightsquigarrow& \lambda_{1,2} &\lambda_{2,2}
    \end{smallmatrix}\right\})}
  {c(\left\{\begin{smallmatrix}
   \lambda_{0,1}-1&& \lambda_{1,1}+1 & \lambda_{2,1}\\
   \lambda_{0,2} & \rightsquigarrow & \lambda_{1,2} & \lambda_{2,2}
    \end{smallmatrix}\right\})}
=
  \frac{c(\left\{\begin{smallmatrix}
   \lambda_{0,1} -\lambda_{0,2}&&
     0&\lambda_{0,2}+\lambda_{1,1}+\lambda_{2,1}\\
   0&\rightsquigarrow&\lambda_{1,2}-\lambda_{1,1}
     &\lambda_{0,2}+\lambda_{1,1}+\lambda_{2,2}
    \end{smallmatrix}\right\})}
  {c(\left\{\begin{smallmatrix}
   \lambda_{0,1} -\lambda_{0,2}-1&&
   0&\lambda_{0,2}+\lambda_{1,2}+\lambda_{2,1}+1\\
   0&\rightsquigarrow
   &\lambda_{1,2}-\lambda_{1,1}-1& \lambda_{0,2}+\lambda_{1,2}+\lambda_{2,2}+1
    \end{smallmatrix}\right\})}.
\end{equation*}
Furthermore each linear term appeared in the right hand side of \eqref{eq:GGratio}
has own meaning, which is as follows.

Examine the zeros and poles of the connection coefficient
$c(\left\{\begin{smallmatrix}
  \lambda_{0,1}&&\lambda_{1,1} & \lambda_{2,1}\\
   \lambda_{0,2}&\rightsquigarrow&\lambda_{1,2} & \lambda_{2,2}
    \end{smallmatrix}\right\})
$.
We may assume that the parameters $\lambda_{j,\nu}$ are generic in
the zeros or the poles.

Consider the linear form 
$\lambda_{0,2}+\lambda_{1,1}+\lambda_{2,2}$.
The local solution $u_0^{\lambda_{0,2}}$
corresponding to the characteristic exponent $\lambda_{0,2}$ at 
$0$ satisfies a Fuchsian differential equation of order 1 which
has the characteristic exponents $\lambda_{2,2}$ and 
$\lambda_{1,1}$ at $\infty$ and $1$, respectively, 
if and only if the value of the linear form is $0$ or a negative integer.
In  this case $c(\left\{\begin{smallmatrix}
  \lambda_{0,1}&&\lambda_{1,1}&\lambda_{2,1}\\
   \lambda_{0,2}&\rightsquigarrow&\lambda_{1,2}&\lambda_{2,2}
    \end{smallmatrix}\right\})$ vanishes.
This explains the term $\lambda_{0,2}+\lambda_{1,1}+\lambda_{2,2}$ in the numerator
of the right hand side of \eqref{eq:GGratio}.
The term $\lambda_{0,2}+\lambda_{1,2}+\lambda_{2,2}$ is similarly explained.

The normalized local solution $u_0^{\lambda_{0,2}}$ has poles 
where $\lambda_{0,1}-\lambda_{0,2}$ is a positive integer.  
The residue at the pole is a local solution corresponding to the exponent 
$\lambda_{0,2}$.
This means that $c(\left\{\begin{smallmatrix}
  \lambda_{0,1}&&\lambda_{1,1} & \lambda_{2,1}\\
   \lambda_{0,2}&\rightsquigarrow&\lambda_{1,2} & \lambda_{2,2}
    \end{smallmatrix}\right\})$
has poles where  $\lambda_{0,1}-\lambda_{0,2}$ is a positive integer,
which explains the term $\lambda_{0,2}-\lambda_{0,1}+1$ in the denominator
of the right hand side of \eqref{eq:GGratio}.

There exists a local solution 
$a(\lambda)u_1^{\lambda_{1,1}}+b(\lambda)u_1^{\lambda_{1,2}}$ such that it
is holomorphic for $\lambda_{j,\nu}$ and $b(\lambda)$ has a pole if
the value of $\lambda_{1,1}-\lambda_{1,2}$ is a non-negative integer,
which means $c(\left\{\begin{smallmatrix}
  \lambda_{0,1}&&\lambda_{1,1}&\lambda_{2,1}\\
   \lambda_{0,2}&\rightsquigarrow&\lambda_{1,2}&\lambda_{2,2}
    \end{smallmatrix}\right\})$ has poles where $\lambda_{1,2}-\lambda_{1,1}$
is non-negative integer.
This explains the term $\lambda_{1,1}-\lambda_{1,2}$ in the denominator
of the right hand side of \eqref{eq:GGratio}.
These arguments can be generalized, which will be explained
in this subsection.

Fist we examine the possible poles of connection coefficients.
\begin{prop}\label{prop:paradep}
Let $Pu=0$ be a differential equation of order $n$ with a regular singularity
at $x=0$ such that $P$ contains a holomorphic parameter 
$\lambda=(\lambda_1,\dots,\lambda_N)$ defined in a neighborhood of 
$\lambda^o=(\lambda_1^o,\dots,\lambda_N^o)$ in $\mathbb C^N$.
Suppose that the set of characteristic exponents of $P$ at $x=0$ equals
$\{[\lambda_1]_{(m_1)},\dots,[\lambda_N]_{(m_N)}\}$ with $n=m_1+\dots+m_N$ and
\begin{equation}
 \lambda_{2,1}^o:=\lambda_2^o-\lambda_1^o\in\mathbb Z_{\ge0}\text{ and\/ }
 \lambda_i^o-\lambda_j^o\notin\mathbb Z\text{ if\/ }
 1\le i< j\le N\text{ and\/ }j\ne 2.
\end{equation}
Let $u_{j,\nu}$ be local solutions of $Pu=0$ uniquely defined by
\begin{equation}\label{eq:SolGen}
 u_{j,\nu}\equiv x^{\lambda_j+\nu}\mod x^{\lambda_j+m_j}\mathcal O_0
 \quad(j=1,\dots,m_j\text{ and }\nu=0,\dots,m_j-1).
\end{equation}
Note that $u_{j,\nu}=\sum_{k\ge0}a_{k,j,\nu}(\lambda)x^{\lambda_j+\nu+k}$ 
with meromorphic functions $a_{k,j,\nu}(\lambda)$ of $\lambda$ which are 
holomorphic in a neighborhood of $\lambda^o$ if 
$\lambda_2-\lambda_1\ne \lambda_{2,1}^o$.
Then there exist solutions $v_{j,\nu}$ with holomorphic parameter
$\lambda$ in a neighborhood of $\lambda^o$ which satisfy the following
relations.  Namely
\begin{equation}
 v_{j,\nu}=u_{j,\nu}\quad(3\le j\le N\text{ and }\nu=0,\dots,m_j-1)
\end{equation}
and when $\lambda_1^o+m_1\ge \lambda_2^o+m_2$,
\begin{equation}
\begin{split}
 v_{1,\nu}&=u_{1,\nu}\qquad(0\le\nu < m_1),\\
 v_{2,\nu}&=\frac{u_{2,\nu}-u_{1,\nu+\lambda_{2,1}^o}}
            {\lambda_1-\lambda_2+\lambda_{2,1}^o}
   - \sum_{m_2+\lambda_{2,1}^o\le i < m_1}
              \frac{b_{\nu,i}u_{1,i}}
              {\lambda_1-\lambda_2+\lambda_{2,1}^o}
 \quad(0\le\nu < m_2)
\end{split}
\end{equation}

\noindent
with the diagram \begin{xy}
 \ar@{-}  *++!D{\lambda_1^o}  *{\circ} ;
  (10,0)  *++!D{\lambda_1^o+1}  *{\circ}="B"
 \ar@{-} "B";(18,0) \ar@{.} (18,0);(28,0)^*!U{\cdots}
 \ar@{-} (28,0);(36,0) *++!D{\lambda_1^o+\lambda_{2,1}^o} *{\circ}="H"
 \ar@{-} "H";(44,0) \ar@{.} (44,0);(54,0)
 \ar@{-} (54,0);(62,0) *++!D{\lambda_1^o+\lambda_{2,1}^o+m_2-1} *{\circ}="D"
 \ar@{}  (36,-8) *++!D{\lambda_2^o} *{\circ}="C"
 \ar@{-} "C";(44,-8) \ar@{.} (44,-8);(54,-8)^*!U{\cdots}
 \ar@{-} (54,-8);(62,-8) *++!D{\lambda_2^o+m_2-1} *{\circ};
 \ar@{-} "D";(70,0) \ar@{.} (70,0);(80,0)
 \ar@{-} (80,0);(88,0) *++!D{\lambda_1^o+m_1-1} *{\circ}
\end{xy}\\

\noindent
which illustrates some exponents and when $\lambda_1^o+m_1< \lambda_2^o+m_2$, 
\begin{equation}
\begin{split}
 v_{2,\nu}&=u_{2,\nu}\qquad(0\le\nu<m_2),\\
 v_{1,\nu}&=u_{1,\nu}-\!\!\!\!\!\!
            \sum_{\max\{0,m_1-\lambda_{2,1}^o\}\le i< m_2} 
              \frac{b_{\nu,i} u_{2,i}}
              {\lambda_1-\lambda_2+\lambda_{2,1}^o}
            \qquad(0\le\nu<\min\{m_1,\lambda_{2,1}^o\}),\\
 v_{1,\nu}&=\frac{u_{1,\nu}-u_{2,\nu-\lambda_{2,1}^o}}
           {\lambda_1-\lambda_2+\lambda_{2,1}^o}-\!\!\!\!
            \sum_{\max\{0,m_1-\lambda_{2,1}^o\}\le i< m_2} 
              \frac{b_{\nu,i} u_{2,i}}
              {\lambda_1-\lambda_2+\lambda_{2,1}^o}
  \quad (\lambda_{2,1}^o\le \nu< m_1)
\end{split}
\end{equation}
with \quad\begin{xy}
 \ar@{-}  *++!D{\lambda_1^o}  *{\circ} ;
  (10,0)  *++!D{\lambda_1^o+1}  *{\circ}="B"
 \ar@{-} "B";(18,0) \ar@{.} (18,0);(28,0)^*!U{\cdots}
 \ar@{-} (28,0);(36,0) *++!D{\lambda_1^o+\lambda_{2,1}^o} *{\circ}="H"
 \ar@{-} "H";(44,0) \ar@{.} (44,0);(54,0)^*!U{\cdots}
 \ar@{-} (54,0);(62,0) *++!D{\lambda_1^o+m_1-1} *{\circ}
 \ar@{}  (36,-8) *++!D{\lambda_2^o} *{\circ}="C"
 \ar@{-} "C";(44,-8) \ar@{.} (44,-8);(54,-8)
 \ar@{-} (54,-8);(62,-8) *++!D{\lambda_2^o-\lambda_{2,1}^o+m_1-1} *{\circ}="D";
 \ar@{-} "D";(70,-8) \ar@{.} (70,-8);(80,-8)
 \ar@{-} (80,-8);(88,-8) *++!D{\lambda_2^o+m_2-1} *{\circ}
\end{xy}\\[5pt]

\noindent
and here\/ $b_{\nu,i}\in\mathbb C$.
Note that\/ $v_{j,\nu}$ $(1\le j\le N,\ 0\le \nu< m_j)$ are linearly independent
for any fixed\/ $\lambda$ in a neighborhood of\/ $\lambda^o$.
\end{prop}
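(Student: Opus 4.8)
\textbf{Proof proposal for Proposition~\ref{prop:paradep}.}

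The plan is to reduce everything to the single ``confluent pair'' of exponents $\lambda_1,\lambda_2$ and to analyze how the $m_1+m_2$ local solutions attached to these two exponents degenerate as $\lambda_2-\lambda_1\to\lambda_{2,1}^o\in\mathbb Z_{\ge0}$, leaving the remaining blocks $j\ge3$ untouched. First I would fix notation: by the hypothesis $\lambda_i^o-\lambda_j^o\notin\mathbb Z$ for $1\le i<j\le N$, $j\ne2$, the solutions $u_{j,\nu}$ with $j\ge 3$ depend holomorphically on $\lambda$ in a neighborhood of $\lambda^o$ (this is the standard construction recalled in \S\ref{sec:reg}: apply $\Ad\bigl(x^{-\lambda_j}\bigr)$, reduce to a regular singularity with exponents having no integer differences among the relevant roots, and solve by majorant series, which gives holomorphic dependence on the parameter, cf.~Remark~\ref{rem:GCexp} iv) and \cite{Or}). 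So we may set $v_{j,\nu}=u_{j,\nu}$ for $j\ge3$ and concentrate on the plane spanned by the $j=1$ and $j=2$ solutions. The key point is that $u_{j,\nu}$ for $j=1,2$ are given by recursively solving for their Taylor coefficients $a_{k,j,\nu}(\lambda)$, and the recursion's denominators are (up to nonzero factors) products of $\bigl(\lambda_j+\nu+k-\lambda_{j'}-i\bigr)$; the only factor that can vanish at $\lambda=\lambda^o$ is $\lambda_1+\nu'+k-\lambda_2$ type terms, and since $\lambda_2^o-\lambda_1^o=\lambda_{2,1}^o$ this produces a simple pole in $\lambda_2-\lambda_1-\lambda_{2,1}^o$ exactly once in the construction of the $\lambda_2$-block solutions reaching into the $\lambda_1$-block range (when $\lambda_1^o+m_1\ge\lambda_2^o+m_2$) or vice versa.

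The central step is then to write down the explicit combinations claimed and verify (a) they solve $Pu=0$, (b) they are holomorphic at $\lambda^o$, and (c) they stay linearly independent. Step (a) is immediate since each $v_{j,\nu}$ is a $\mathbb C(\lambda)$-linear combination of the $u_{j',\nu'}$, hence a solution. For step (b), in the case $\lambda_1^o+m_1\ge\lambda_2^o+m_2$ I would argue as follows: $u_{2,\nu}$ has a simple pole at $\lambda_2-\lambda_1=\lambda_{2,1}^o$ whose residue, being a solution of $Pu=0$ that starts at order $\lambda_2^o+\nu>\lambda_1^o+\nu+\lambda_{2,1}^o$ but lies in $x^{\lambda_1^o+\lambda_{2,1}^o+\nu}\mathcal O_0$ by \eqref{eq:Pbij}, must be a $\mathbb C$-linear combination $u_{1,\nu+\lambda_{2,1}^o}+\sum_{m_2+\lambda_{2,1}^o\le i<m_1}b_{\nu,i}u_{1,i}$ of the holomorphic block-$1$ solutions of exponent $\ge\lambda_1^o+\lambda_{2,1}^o+\nu$; subtracting $(\lambda_1-\lambda_2+\lambda_{2,1}^o)^{-1}$ times this combination cancels the pole, giving a holomorphic $v_{2,\nu}$. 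The symmetric case $\lambda_1^o+m_1<\lambda_2^o+m_2$ is handled the same way with the roles of the two blocks interchanged, and for $\nu<\lambda_{2,1}^o$ the solution $u_{1,\nu}$ itself has no pole (its exponent $\lambda_1^o+\nu$ does not reach the resonance) but must still be corrected by a holomorphic combination of block-$2$ solutions to make the whole family well-behaved/triangular — this is the $\sum b_{\nu,i}u_{2,i}$ term. Extracting the precise index ranges $m_2+\lambda_{2,1}^o\le i<m_1$, resp.\ $\max\{0,m_1-\lambda_{2,1}^o\}\le i<m_2$, is a matter of comparing which exponents $\lambda_1^o+i$ (resp.\ $\lambda_2^o+i$) lie in the overlap window, which I would read off from the two diagrams drawn in the statement.

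For step (c), linear independence at a fixed $\lambda$ near $\lambda^o$: away from the resonance hyperplane $\lambda_2-\lambda_1=\lambda_{2,1}^o$ the $v_{j,\nu}$ differ from the $u_{j,\nu}$ by an invertible triangular change of basis (the $\mathbb C$-coefficient corrections only mix solutions of strictly larger exponent into ones of smaller exponent, plus a diagonal scaling $(\lambda_1-\lambda_2+\lambda_{2,1}^o)^{-1}$ on one sub-block), so independence is inherited from the $u_{j,\nu}$; then by continuity (the Wronskian is holomorphic in $\lambda$ and nonzero off a hypersurface, hence I would check it does not identically vanish after the rescaling) independence persists on the hyperplane itself. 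Concretely one computes that the leading terms $x^{\lambda_j+\nu}$ of the $v_{j,\nu}$ are ``staggered'' so that their Wronskian at $\lambda^o$ equals a nonzero constant times the Wronskian of the pure powers. The main obstacle I anticipate is not any single calculation but the bookkeeping of the index ranges and the verification that the residue of $u_{2,\nu}$ (or $u_{1,\nu}$) is \emph{exactly} the stated combination of block-$1$ (resp.\ block-$2$) solutions with no leftover — this requires invoking the bijectivity statement \eqref{eq:Pbij} / \eqref{eq:PBr} carefully on the spaces $\mathcal O_0(\mu,m)$ to pin down that a solution lying in a prescribed $x^{\mu}\mathcal O_0$ and annihilated by $P$ is forced into the span of the corresponding $u_{1,i}$, and then matching multiplicities via Lemma~\ref{lem:GRS}. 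Once that structural fact is in hand, writing the $v_{j,\nu}$ and checking holomorphy and independence is routine.
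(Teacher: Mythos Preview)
Your step (b) in the case $\lambda_1^o+m_1\ge\lambda_2^o+m_2$ rests on a false premise: $u_{2,\nu}$ has \emph{no} pole at $\lambda^o$. The generalized-exponent structure (Definition~\ref{def:RScheme}) forces $P=\sum_\ell x^\ell q_\ell(\vartheta)\prod_j\prod_{0\le i<m_j-\ell}(\vartheta-\lambda_j-i)$, so in the recursion for the coefficient of $x^{\lambda_2+\nu+K}$ the dangerous factor $(\lambda_2-\lambda_1-\lambda_{2,1}^o)$ appears in every contributing term $a_{K-\ell}\,p_\ell(\lambda_2+\nu+K-\ell)$ (for $0\le\ell\le K$) and cancels between numerator and denominator. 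Your proposed mechanism is also inconsistent with the stated formula: ``subtract $(\lambda_1-\lambda_2+\lambda_{2,1}^o)^{-1}$ times the residue'' would yield $u_{2,\nu}-\frac{R}{\lambda_1-\lambda_2+\lambda_{2,1}^o}$, not the proposition's $\frac{u_{2,\nu}-R}{\lambda_1-\lambda_2+\lambda_{2,1}^o}$; and since the leading coefficient of $u_{2,\nu}$ is identically $1$, any residue would have vanishing leading term and could not equal $u_{1,\nu+\lambda_{2,1}^o}+\cdots$.

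The correct mechanism (and the paper's) is that in this case \emph{both} $u_{1,\nu}$ and $u_{2,\nu}$ are holomorphic, but at $\lambda=\lambda^o$ the solution $u_{2,\nu}|_{\lambda^o}$, with leading exponent $\lambda_2^o+\nu=\lambda_1^o+(\nu+\lambda_{2,1}^o)$, must lie in the span of the $u_{1,i}|_{\lambda^o}$; the relation $u_{2,\nu}|_{\lambda^o}=u_{1,\nu+\lambda_{2,1}^o}|_{\lambda^o}+\sum b_{\nu,i}u_{1,i}|_{\lambda^o}$ makes the numerator vanish at $\lambda^o$, whence the quotient is holomorphic. In the second case your picture is inverted as well: for $0\le\nu<\min\{m_1,\lambda_{2,1}^o\}$ it is $u_{1,\nu}$ that \emph{does} have a simple pole (its tail, not its leading exponent, runs into the $\lambda_2$-block, and there the cancellation above fails), and the formula $v_{1,\nu}=u_{1,\nu}-\sum\frac{b_{\nu,i}u_{2,i}}{\lambda_1-\lambda_2+\lambda_{2,1}^o}$ is exactly the pole-subtraction you described, just applied to the other block; whereas for $\lambda_{2,1}^o\le\nu<m_1$ the $u_{1,\nu}$ are holomorphic and coincide at $\lambda^o$ with combinations of the $u_{2,i}$, giving the divided-difference formula.
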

\begin{proof}
See \S\ref{sec:reg} and the proof of Lemma~\ref{lem:GRS} 
(and \cite[Theorem~6.5]{O1} in a more general setting) for the construction 
of local solutions of $Pu=0$.

Note that $u_{j,\nu}$ for $j\ge 3$ are 
holomorphic with respect to  $\lambda$ in a neighborhood of $\lambda=\lambda^o$.
Moreover note that the local monodromy generator $M_0$ of the solutions $Pu=0$ 
at $x=0$ satisfies $\prod_{j=1}^N(M_0-e^{2\pi\sqrt{-1}\lambda_j})=0$ and therefore 
the functions $(\lambda_1-\lambda_2-\lambda_{2,1}^o)u_{j,\nu}$ of $\lambda$
are holomorphically extended to the point $\lambda=\lambda^o$ for $j=1$ and $2$, 
and the values of the functions at $\lambda=\lambda^o$ are solutions of 
the equation $Pu=0$ with $\lambda=\lambda^o$.

Suppose $\lambda_1^o+m_1\ge \lambda_2^o+m_2$.
Then $u_{j,\nu}$ $(j=1,2)$ are holomorphic with respect to 
$\lambda$ at $\lambda=\lambda^o$ and there exist $b_{j,\nu}\in\mathbb C$ 
such that
\[
  u_{2,\nu}|_{\lambda=\lambda^o}=u_{1,\nu+\lambda_{2,1}^o}|_{\lambda=\lambda^o}
  + \sum_{m_2+\lambda_{2,1}^o\le\nu < m_1}b_{\nu,i}
   \bigl(u_{1,i}|_{\lambda=\lambda^o}\bigr)
\]
and we have the proposition.
Here
\[
 u_{2,\nu}|_{\lambda=\lambda^o}\equiv x^{\lambda_2^o}+
 \sum_{m_2+\lambda_{2,1}^o\le\nu < m_1}b_{\nu,i}x^{\lambda_1^o+\nu}
 \mod x^{\lambda_1^o+m_1}\mathcal O_0.
\]

Next suppose $\lambda_1^o+m_1< \lambda_2^o+m_2$.
Then there exist $b_{j,\nu}\in\mathbb C$ such that
\begin{align*}
 \bigl((\lambda_1-\lambda_2+\lambda_{2,1}^o)u_{1,\nu}\bigr)|_{\lambda=\lambda^o}
    &=\!\sum_{\max\{0,m_1-\lambda_{2,1}^o\}\le i< m_2} \!\!\!
            {b_{\nu,i} \bigl(u_{2,i}|_{\lambda=\lambda^o}\bigr)}\\
    &\qquad\qquad\qquad\qquad\qquad\qquad(0\le\nu<\min\{m_1,\lambda_{2,1}^o\}),\\
  u_{1,\nu}|_{\lambda=\lambda^o}&=
            \!\sum_{\max\{0,m_1-\lambda_{2,1}^o\}\le i< m_2}\!\!\!
            {b_{\nu,i} \bigl(u_{2,i}|_{\lambda=\lambda^o}\bigr)}
  \quad (\lambda_{2,1}^o\le \nu< m_1)
\end{align*}
and we have the proposition.
\end{proof}
The proposition implies the following corollaries.
\index{Wronskian}
\begin{cor}\label{cor:zeropole}
Retain the notation and the assumption in\/ {\rm Proposition~\ref{prop:paradep}.}

{\rm i)} \ 
Let\/ $W_j(\lambda,x)$ be the Wronskian of\/ $u_{j,1},\dots,u_{j,m_j}$
for $j=1,\dots,N$.
Then\/ $(\lambda_1-\lambda_2+\lambda_{2,1}^o)^{\ell_1}W_1(\lambda)$
and\/ $W_j(\lambda)$ with $2\le j\le N$
are holomorphic with respect to $\lambda$ in a neighborhood
of\/ $\lambda^o$ by putting
\begin{align}
 \ell_1&=
\max\bigl\{0,\min\{m_1,m_2,\lambda_{2,1}^o,\lambda_{2,1}^o+m_2-m_1\}\bigr\}.
\end{align}

{\rm ii)} \ Let
\[
  w_k=\sum_{j=1}^{N}\sum_{\nu=1}^{m_j}a_{j,\nu,k}(\lambda)u_{j,\nu,k}
\]
be a local solution defined in a neighborhood of\/ $0$ with a holomorphic
$\lambda$ in a neighborhood of $\lambda^o$.
Then
\[
  (\lambda_1-\lambda_2+\lambda_{2,1}^o)^{\ell_{2,j}}
   \det\Bigl(a_{j,\nu,k}(\lambda)
   \Bigr)_{\substack{1\le \nu\le m_j\\1\le k\le m_j}}
\]
with
\[
 \begin{cases} 
 \ell_{2,1}=\max\bigl\{0,\min\{m_1-\lambda_{2,1}^o,m_2\}\bigr\},\\
 \ell_{2,2}=\min\{m_1,m_2\},\\
 \ell_{2,j}=0\qquad(3\le j\le N)
 \end{cases}
\]
are holomorphic with respect to $\lambda$ in a neighborhood of $\lambda^o$.
\end{cor}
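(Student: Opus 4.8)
\textbf{Proof plan for Corollary~\ref{cor:zeropole}.}
The plan is to deduce both statements directly from the explicit construction of the
holomorphic basis $v_{j,\nu}$ given in Proposition~\ref{prop:paradep}.
For part~i), first I would write each $u_{j,\nu}$ as a linear combination of the
$v_{j',\nu'}$ with coefficients that are meromorphic in $\lambda$ and whose only
possible poles near $\lambda^o$ are along $\lambda_1-\lambda_2+\lambda_{2,1}^o=0$.
Inspecting the formulas in Proposition~\ref{prop:paradep}, the matrix expressing
$(u_{j,\nu})$ in terms of $(v_{j,\nu})$ is block lower (or upper) triangular
with respect to the grouping by $j$: for $j\ge 3$ the blocks are the identity,
while for the two indices $j=1,2$ the relevant $2\times 2$-block structure is
governed entirely by the factor $(\lambda_1-\lambda_2+\lambda_{2,1}^o)^{-1}$
appearing in the definition of $v_{2,\nu}$ (resp.\ $v_{1,\nu}$).
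Since the Wronskian transforms by the determinant of this change-of-basis matrix,
and the Wronskian of the $v_{j,\nu}$ (taken over all $j,\nu$) is holomorphic and
nonvanishing near $\lambda^o$, one reads off that $W_j(\lambda)$ for $j\ge 2$ is
holomorphic, while $W_1(\lambda)$ acquires at worst a pole whose order is the
number of rows of the $v_{1,\nu}$-block that are divided by
$\lambda_1-\lambda_2+\lambda_{2,1}^o$ in Proposition~\ref{prop:paradep}.
Counting these rows in the two cases $\lambda_1^o+m_1\ge\lambda_2^o+m_2$ and
$\lambda_1^o+m_1<\lambda_2^o+m_2$ gives exactly
$\ell_1=\max\{0,\min\{m_1,m_2,\lambda_{2,1}^o,\lambda_{2,1}^o+m_2-m_1\}\}$,
which yields the claim.

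For part~ii), the strategy is dual.
Expand each holomorphic solution $w_k$ in the holomorphic basis,
$w_k=\sum_{j,\nu}b_{j,\nu,k}(\lambda)v_{j,\nu}$ with $b_{j,\nu,k}$ holomorphic near
$\lambda^o$, and then substitute the expressions for $v_{j,\nu}$ in terms of the
normalized $u_{j,\nu}$.
The coefficient matrix $\bigl(a_{j,\nu,k}(\lambda)\bigr)$ of $w_k$ with respect to
$(u_{j,\nu})$ is obtained from the holomorphic matrix $\bigl(b_{j,\nu,k}(\lambda)\bigr)$
by multiplying on the appropriate side by the (now inverse) change-of-basis matrix,
whose entries are holomorphic except for the $(\lambda_1-\lambda_2+\lambda_{2,1}^o)^{-1}$
contributions.
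Restricting to a fixed $j$, the $m_j\times m_j$ block
$\det\bigl(a_{j,\nu,k}\bigr)_{1\le\nu,k\le m_j}$ therefore picks up a factor
$(\lambda_1-\lambda_2+\lambda_{2,1}^o)^{-\ell_{2,j}}$, where $\ell_{2,j}$ is again a
count of the relevant divided rows/columns in Proposition~\ref{prop:paradep}:
$\ell_{2,j}=0$ for $j\ge 3$ because those blocks are untouched, $\ell_{2,2}=\min\{m_1,m_2\}$
because every $v_{2,\nu}$ carries the denominator when $\lambda_1^o+m_1\ge\lambda_2^o+m_2$
(and the count drops to $\min\{m_1,m_2\}$ in the other case by the range of $\nu$),
and $\ell_{2,1}=\max\{0,\min\{m_1-\lambda_{2,1}^o,m_2\}\}$ by the explicit range
$\lambda_{2,1}^o\le\nu<m_1$ in the case $\lambda_1^o+m_1<\lambda_2^o+m_2$ and the
vacuous case otherwise.
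Multiplying through by $(\lambda_1-\lambda_2+\lambda_{2,1}^o)^{\ell_{2,j}}$ clears the pole,
giving a holomorphic function as asserted.

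I expect the main obstacle to be the careful bookkeeping in the count of the
exponents $\ell_1$ and $\ell_{2,j}$: one must track precisely which of the basis
vectors $v_{1,\nu}$ and $v_{2,\nu}$ carry the factor $(\lambda_1-\lambda_2+\lambda_{2,1}^o)^{-1}$
in each of the two geometric cases of Proposition~\ref{prop:paradep}, and verify that the
min/max expressions in the statement of the corollary indeed coincide with these counts
(including the degenerate subcases where $\lambda_{2,1}^o=0$ or $\lambda_{2,1}^o\ge m_1$,
where some blocks become trivial).
Everything else — the triangular block structure, the multiplicativity of the Wronskian,
and the holomorphy and nonvanishing of the $v$-Wronskian near $\lambda^o$ — is a
routine consequence of Proposition~\ref{prop:paradep} and the discussion of local
solutions in \S\ref{sec:reg}.
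A clean way to organize the count is to note that the number of denominator-bearing basis
vectors equals $\min\{m_1,m_2\}$ reduced by the amount by which the two arithmetic
progressions $\{\lambda_1^o,\dots,\lambda_1^o+m_1-1\}$ and
$\{\lambda_2^o,\dots,\lambda_2^o+m_2-1\}$ fail to overlap on the side forced by the
inequality between $\lambda_1^o+m_1$ and $\lambda_2^o+m_2$, which is exactly what the
stated formulas encode.
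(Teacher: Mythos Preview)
Your overall framework --- deduce everything from the explicit holomorphic basis $v_{j,\nu}$ of Proposition~\ref{prop:paradep} --- is exactly what the paper does. But the mechanism you propose for extracting the exponents $\ell_1$ and $\ell_{2,j}$ has a genuine gap.

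The issue is that neither $W_1$ nor $\det(a_{j,\nu,k})_{\nu,k}$ is the determinant of a change-of-basis matrix: $W_1$ is the Wronskian of a \emph{subset} of the solution space, and the $u_{1,\nu}$ involve both $v_{1,\nu'}$ and $v_{2,\nu'}$. So a block-triangular determinant does not directly control $W_1$. The paper's actual argument is a rank bound: write each poled column as (holomorphic part) $+$ (residue)$/(\lambda_1-\lambda_2+\lambda_{2,1}^o)$, expand the Wronskian multilinearly, and observe that any term using more residue-columns than the dimension of the space spanned by the residues must vanish. Hence the pole order is at most
\[
\min\bigl\{\text{number of poled columns},\ \dim(\text{span of residues})\bigr\},
\]
and it is the \emph{second} factor that produces the extra terms $m_2$ and $\lambda_{2,1}^o+m_2-m_1$ in the formula for $\ell_1$. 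Your ``count of divided rows'' only records the first factor.

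This leads to a concrete error in your case analysis for $\ell_{2,1}$. In the case $\lambda_1^o+m_1\ge\lambda_2^o+m_2$ you declare the situation vacuous, but it is not: here $v_{1,\nu}=u_{1,\nu}$, yet the $v_{2,\nu'}$ carry the denominator and contain $u_{1,\nu'+\lambda_{2,1}^o}$ and $u_{1,i}$ terms, so when you re-express $w_k=\sum b_{j',\nu',k}v_{j',\nu'}$ in the $u$-basis the coefficients $a_{1,\nu,k}$ for $\nu\ge\lambda_{2,1}^o$ acquire simple poles. There are $m_1-\lambda_{2,1}^o$ such rows, but their residues are linear in $(b_{2,0,k},\dots,b_{2,m_2-1,k})$, a space of dimension $m_2$; the correct bound is therefore $\ell_{2,1}=m_2=\min\{m_1-\lambda_{2,1}^o,m_2\}$ in this case, not $0$. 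The same rank mechanism is what makes $\ell_{2,2}=\min\{m_1,m_2\}$ in the case $\lambda_1^o+m_1<\lambda_2^o+m_2$, where no $v_{2,\nu}$ carries a denominator at all --- the poles of $a_{2,\nu,k}$ come instead from the $u_{2,\cdot}$-components of the $v_{1,\nu'}$. Once you insert this rank-of-residues step, your bookkeeping reduces to the two short counts the paper gives.
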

\begin{proof} i)
Proposition~\ref{prop:paradep} shows that $u_{j,\nu}$ 
($2\le j \le N,\ 0\le \nu<m_j$) are holomorphic with respect to $\lambda$ at
$\lambda^o$.
The functions $u_{1,\nu}$ for $\min\{m_1,\lambda_{2,1}^o\}\le \nu\le m_1$ are same.
The functions $u_{1,\nu}$ for $0\le\nu<\min\{m_1,\lambda_{2,1}^o\}$ may have poles
of order 1 along $\lambda_2-\lambda_1=\lambda_{2,1}^o$ and their residues
are linear combinations of $u_{2,i}|_{\lambda_2=\lambda_1+\lambda_{2,1}^o}$
with $\max\{0,m_1-\lambda_{2,1}^o\}\le i< m_2$.
Since
\begin{align*}
&\min\bigl
 \{\#\{\nu\,;\,0\le\nu<\min\{m_1,\lambda_{2,1}^o\}\},\ 
 \#\{i\,;\, \max\{0,m_1-\lambda_{2,1}^o\}\le i< m_2\}\bigr\}\\
&\quad=\max\bigl\{0,\min\{m_1,\lambda_{2,1}^o,m_2,m_2-m_1+\lambda_{2,1}^o
  \}\bigr\},
\end{align*}
we have the claim.

ii) A linear combination of $v_{j,\nu}$ ($1\le j\le N,\ 0\le\nu\le m_j$)
may have a pole of order 1 along $\lambda_1-\lambda_2+\lambda_{2,1}^o$ and
its residue is a linear combination of
\begin{align*}
 &\bigl(u_{1,\nu}+\sum_{m_2+\lambda_{2,1}^o\le i < m_1}
    b_{\nu+\lambda_{2,1}^o,i}u_{1,i}\bigr)|_{\lambda_2=\lambda_1+\lambda_{2,1}^o}
   \quad(\lambda_{2,1}^o\le \nu< \min\{m_1,m_2+\lambda_{2,1}^o\}),\\
 &\bigl(u_{2,\nu}
  +\sum_{\max\{0,m_1-\lambda_{2,1}^o\}\le i< m_2}b_{\nu+\lambda_{2,1}^o,i}
   u_{2,i}\bigr)
  |_{\lambda_2=\lambda_1+\lambda_{2,1}^o}
   \quad(0\le\nu<m_1-\lambda_{2,1}^o),\\
 &\sum_{\max\{0,m_1-\lambda_{2,1}^o\}\le i< m_2}b_{\nu,i}u_{2,i}|
   _{\lambda_2=\lambda_1+\lambda_{2,1}^o}
   \quad(0\le \nu<\min\{m_1,\lambda_{2,1}^o\}).
\end{align*}
Since
\begin{align*}
 &\#\bigl\{\nu\,;\,\lambda_{2,1}^o\le \nu< \min\{m_1,m_2+\lambda_{2,1}^o\}\bigr\}=
 \max\bigl\{0,\min\{m_1-\lambda_{2,1}^o,m_2\}\bigr\},\\
 &\#\{\nu\,;\,0\le\nu<m_1-\lambda_{2,1}^o\}\\
 &\quad+\min\bigl\{
 \#\{i\,;\,\max\{0,m_1-\lambda_{2,1}^o\}\le i<m_2\},
 \#\{\nu\,;\,0\le\nu<\min\{m_1,\lambda_{2,1}^o\}\}
 \bigr\}\\
 &=\min\{m_1,m_2\},
\end{align*}
we have the claim.
\end{proof}
\begin{rem}
If the local monodromy of the solutions of $Pu=0$ at $x=0$ is locally 
non-degenerate, the value of 
$(\lambda_1-\lambda_2+\lambda_{2,1}^o)^{\ell_1}W_1(\lambda)$ at 
$\lambda=\lambda^o$ does not vanish. 
\end{rem}
\index{Wronskian}
\begin{cor}\label{cor:parapole}
Let $Pu=0$ be a differential equation of order $n$ with a regular singularity
at $x=0$ such that $P$ contains a holomorphic parameter 
$\lambda=(\lambda_1,\dots,\lambda_N)$ defined on $\mathbb C^N$.
Suppose that the set of characteristic exponents of $P$ at $x=0$ equals
$\bigl\{[\lambda_1]_{(m_1)},\dots,[\lambda_N]_{(m_N)}\bigr\}$ with 
$n=m_1+\dots+m_N$.
Let $u_{j,\nu}$ be the solutions of $Pu=0$ defined by \eqref{eq:SolGen}.

{\rm i)} \ 
Let $W_1(x,\lambda)$ denote the Wronskian of  $u_{1,1},\dots,u_{1,m_1}$.
Then 
\begin{equation}\label{eq:W1hol}
  \frac{W_1(x,\lambda)}
  {\prod_{j=2}^N\prod_{0\le \nu< \min\{m_1,m_j\}}
    \Gamma(\lambda_1-\lambda_j+m_1-\nu)}
\end{equation}
is holomorphic for $\lambda\in\mathbb C^N$.

{\rm ii)} \ 
Let
\begin{equation}
 v_k(\lambda)=\sum_{j=1}^N\sum_{\nu=1}^{m_j} a_{j,\nu,k}(\lambda)u_{j,\nu}
 \quad(1\le k\le m_1)
\end{equation}
be local solutions of\/ $Pu=0$ defined in a neighborhood of\/ $0$ which have
a holomorphic parameter $\lambda\in\mathbb C^N$.
Then 
\begin{equation}\label{eq:solchol}
 \frac{\det\Bigl(a_{1,\nu,k}(\lambda)
   \Bigr)_{\substack{1\le \nu\le m_1\\1\le k\le m_1}}}
 {\prod_{j=2}^N\prod_{1\le\nu\le\min\{m_1,m_j\}}
    \Gamma(\lambda_j-\lambda_1-m_1+\nu)}
\end{equation}
is a holomorphic function of $\lambda\in\mathbb C^N$.
\end{cor}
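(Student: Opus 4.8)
\textbf{Proof plan for Corollary~\ref{cor:parapole}.} The statement is, in essence, a ``global holomorphicity'' upgrade of Corollary~\ref{cor:zeropole}, where we promote the local meromorphic control in one variable $\lambda_2-\lambda_1$ near a fixed integer value to a statement about the full parameter space $\mathbb C^N$. The plan is to fix an arbitrary integer point and apply Corollary~\ref{cor:zeropole} there, then to combine the resulting local pole-order bounds over all such integer points to pin down the gamma-function denominators. Concretely, for part~i), I would fix an arbitrary $\lambda^o=(\lambda_1^o,\dots,\lambda_N^o)\in\mathbb C^N$; by a harmless relabelling of the indices $2,\dots,N$ and the substitution $\lambda\mapsto\lambda+\nu t$ trick as in Remark~\ref{rem:GCexp}~iv), I may assume $\lambda^o$ is generic subject only to $\lambda^o_2-\lambda^o_1=:\lambda^o_{2,1}\in\mathbb Z_{\ge0}$ (and symmetrically for each of the other indices, treated one at a time). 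Corollary~\ref{cor:zeropole}~i) then says that $(\lambda_1-\lambda_2+\lambda_{2,1}^o)^{\ell_1}W_1(\lambda)$ is holomorphic near $\lambda^o$ with $\ell_1=\max\{0,\min\{m_1,m_2,\lambda_{2,1}^o,\lambda_{2,1}^o+m_2-m_1\}\}$, i.e.\ $W_1$ has a pole of order at most $\ell_1$ along the hyperplane $\lambda_1-\lambda_2+\lambda_{2,1}^o=0$.

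The next step is bookkeeping: I would check that the function in \eqref{eq:W1hol} has, near $\lambda^o$, no pole. The denominator $\prod_{j=2}^N\prod_{0\le\nu<\min\{m_1,m_j\}}\Gamma(\lambda_1-\lambda_j+m_1-\nu)$ is meromorphic on $\mathbb C^N$ with poles exactly along the hyperplanes $\lambda_1-\lambda_j+m_1-\nu\in\mathbb Z_{\le0}$, i.e.\ $\lambda_j-\lambda_1\in\{m_1-\nu,m_1-\nu+1,\dots\}$; the order of the pole of $1/\Gamma(\,\cdot\,)$ is $1$ along each such hyperplane, so the denominator contributes a zero to the quotient along $\lambda_j-\lambda_1 = m$ (for $m\ge 1$) of multiplicity equal to $\#\{\nu: 0\le\nu<\min\{m_1,m_j\},\ m_1-\nu\le m\} = \min\{m_1,m_j,m\}$ — and when $m\ge m_1$ this is $\min\{m_1,m_j\}$. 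Comparing with $\ell_1$: taking $j=2$ and writing $m=\lambda_{2,1}^o$, one verifies $\ell_1\le\min\{m_1,m_2,\lambda_{2,1}^o\}$, which is exactly the zero-multiplicity supplied by the denominator whenever $\lambda_{2,1}^o\le m_1$; when $\lambda_{2,1}^o\ge m_1$ the denominator supplies $\min\{m_1,m_2\}$, and again $\ell_1\le\min\{m_1,m_2\}$. So along every integer hyperplane the zero of the denominator absorbs the pole of $W_1$, and off those hyperplanes $W_1$ is holomorphic (Proposition~\ref{prop:paradep}); since $\lambda^o$ was arbitrary, \eqref{eq:W1hol} is entire. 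Part~ii) is entirely parallel, using Corollary~\ref{cor:zeropole}~ii): the determinant $\det(a_{1,\nu,k}(\lambda))$ has a pole of order at most $\ell_{2,1}=\max\{0,\min\{m_1-\lambda_{2,1}^o,m_2\}\}$ along $\lambda_1-\lambda_2+\lambda_{2,1}^o=0$, and the denominator $\prod_{j,\nu}\Gamma(\lambda_j-\lambda_1-m_1+\nu)$ has zeros along $\lambda_j-\lambda_1-m_1+\nu\in\mathbb Z_{\le0}$ of the appropriate multiplicity; again one checks hyperplane by hyperplane that $\ell_{2,1}$ (resp.\ the analogous bound for each $j\ge2$, obtained by relabelling so that the distinguished congruence is $\lambda_j-\lambda_1\in\mathbb Z$) never exceeds the zero-multiplicity of the denominator.

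The main obstacle, and the only point requiring real care, is the combinatorial matching of the pole-order bounds $\ell_1,\ell_{2,1},\dots$ from Corollary~\ref{cor:zeropole} against the zero-multiplicities of the products of $\Gamma$-factors, uniformly over \emph{all} integer hyperplanes and over which index plays the role of ``$2$''. One has to be careful that Corollary~\ref{cor:zeropole} as stated singles out the pair $(1,2)$, whereas \eqref{eq:W1hol} involves all $j=2,\dots,N$ symmetrically; the resolution is that for a generic point on the hyperplane $\lambda_j-\lambda_1=m$ only that one congruence holds, so the relabelling argument applies cleanly and the pole along that hyperplane is governed by the single corresponding $\ell$, while the other $\Gamma$-factors in the denominator are holomorphic and nonvanishing there. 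One should also note that a priori $W_1$ and $\det(a_{1,\nu,k})$ are meromorphic on $\mathbb C^N$ with poles supported on a locally finite union of hyperplanes (this follows from the construction of local solutions recalled in \S\ref{sec:reg} and the fact that the recursion for the Taylor coefficients has denominators that are products of linear forms $\lambda_i-\lambda_j+k$), so that Hartogs-type extension across the codimension-$\ge2$ intersections of these hyperplanes is automatic once the codimension-$1$ behaviour is controlled. Assembling these observations gives the holomorphicity of \eqref{eq:W1hol} and \eqref{eq:solchol} on all of $\mathbb C^N$.
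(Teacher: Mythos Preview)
Your approach is the same as the paper's: fix an integer hyperplane $\lambda_j-\lambda_1=\lambda_{j,1}^o$, read off from Corollary~\ref{cor:zeropole} the pole order of $W_1$ (resp.\ of $\det(a_{1,\nu,k})$) there, compute the zero multiplicity of the $\Gamma$-product in the denominator along the same hyperplane, and check the latter dominates the former; codimension~$\ge2$ loci are then automatic. The paper carries this out in two lines by computing the denominator's zero multiplicity exactly and observing it \emph{equals} the $\ell_1$ (resp.\ $\ell_{2,1}$) of Corollary~\ref{cor:zeropole}.

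One computational slip to fix: your formula
\[
\#\{\nu: 0\le\nu<\min\{m_1,m_j\},\ m_1-\nu\le m\}=\min\{m_1,m_j,m\}
\]
is incorrect when $m_j<m_1$. The set in question is $\{\nu:\max\{0,m_1-m\}\le\nu<\min\{m_1,m_j\}\}$, and its cardinality is
\[
\max\bigl\{0,\min\{m_1,m_j\}-\max\{0,m_1-m\}\bigr\}
=\max\bigl\{0,\min\{m_1,m_j,m,m+m_j-m_1\}\bigr\},
\]
which is \emph{exactly} $\ell_1$ (with $j$ playing the role of~$2$). For instance, with $m_1=5$, $m_j=3$, $m=4$ one gets $2$, not $\min\{5,3,4\}=3$. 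Your overestimate followed by the check $\ell_1\le\min\{m_1,m_j,m\}$ does not, as written, establish $\ell_1\le(\text{true zero multiplicity})$; it happens to work only because the true zero multiplicity is $\ell_1$ on the nose. The same correction applies in part~ii), where the denominator's zero multiplicity along $\lambda_j-\lambda_1=\lambda_{j,1}^o$ is $\max\{0,\min\{m_1,m_j,m_1-\lambda_{j,1}^o\}\}$, which for $\lambda_{j,1}^o\ge0$ equals $\ell_{2,1}$. With this arithmetic fixed, your plan is complete and coincides with the paper's proof.
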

\begin{proof}
Let $\lambda_{j,1}^o\in\mathbb Z$.
The order of poles of \eqref{eq:W1hol} and that of \eqref{eq:solchol}
along $\lambda_j-\lambda_1=\lambda_{j,1}^o$ are
\begin{align*}
&\#\{\nu\,;\,0\le\nu<\min\{m_1,m_j\}\text{ \ and \ }m_1-\lambda_{j,1}^o-\nu\le 0\}
 \\
&\quad=\#\{\nu\,;\,\max\{0,m_1-\lambda_{j,1}^o\}\le\nu<\min\{m_1,m_j\}\}\\
&\quad=\max\bigl\{0,\min\{m_1,m_j,\lambda_{j,1}^o,\lambda_{j,1}^o+m_j-m_1\}\bigr\}
\intertext{and}
&\#\{\nu\,;\, 1\le\nu\le\min\{m_1,m_j\}\text{ and } 
 \lambda_{j,1}^o-m_1+\nu\le 0\}\\
&\quad=\max\bigl\{0,\min\{m_1,m_j,m_1-\lambda_{j,1}^o\}\bigr\},
\end{align*}
respectively. Hence Corollary~\ref{cor:zeropole} assures this
corollary.
\end{proof}
\begin{rem}
The product of denominator of \eqref{eq:W1hol} and that of \eqref{eq:solchol}
equals the periodic function
\[
 \prod_{j=2}^N(-1)^{[\frac{\min\{m_1,m_j\}}2]+1}
  \Bigl(\frac\pi{\sin(\lambda_1-\lambda_j)\pi}\Bigr)
  ^{\min\{m_1,m_j\}}.
\]
\end{rem}
\index{connection coefficient!generalized}
\begin{defn}[generalized connection coefficient]\label{def:GC}
Let $P_{\mathbf m}u=0$ be the Fuchsian differential equation
with the Riemann scheme 
\begin{equation}
  \begin{Bmatrix}
   x = c_0=0 & c_1=1 & c_2& \cdots & c_p=\infty\\
  [\lambda_{0,1}]_{(m_{0,1})} & [\lambda_{1,1}]_{(m_{1,1})}
   &[\lambda_{2,1}]_{(m_{2,1})}&\cdots
    &[\lambda_{p,1}]_{(m_{p,1})}\\
  \vdots & \vdots & \vdots &\vdots & \vdots\\
    [\lambda_{0,n_0}]_{(m_{0,n_0})} & [\lambda_{1,n_1}]_{(m_{1,n_1})}&
     [\lambda_{2,n_2}]_{(m_{2,n_2})}&\cdots
      &[\lambda_{p,n_p}]_{(m_{p,n_p})}
  \end{Bmatrix}.
\end{equation}
We assume $c_2,\dots,c_{p-1}\notin[0,1]$.
Let $u_{0,\nu}^{\lambda_{0,\nu}+k}$ $(1\le\nu\le n_0,\ 0\le k<m_{0,\nu})$ 
and $u_{1,\nu}^{\lambda_{1,\nu}+k}$ $(1\le\nu\le n_1,\ 0\le k<m_{1,\nu})$ 
be local solutions of $P_{\mathbf m}u=0$ such that
\begin{equation}
 \begin{cases}
  u_{0,\nu}^{\lambda_{0,\nu}+k}\equiv
     x^{\lambda_{0,\nu}+k} &\mod x^{\lambda_{0,\nu}+m_{0,\nu}}\mathcal O_0,\\
  u_{1,\nu}^{\lambda_{1,\nu}+k}\equiv
     (1-x)^{\lambda_{1,\nu}+k} &\mod (1-x)^{\lambda_{1,\nu}+m_{1,\nu}}\mathcal O_1.
 \end{cases}
\end{equation}
They are uniquely defined on $(0,1)\subset\mathbb R$ 
when $\lambda_{j,\nu}-\lambda_{j,\nu'}\notin\mathbb Z$
for $j=0,\,1$ and $1\le\nu<\nu'\le n_j$.
Then the connection coefficients $c^{\nu',k'}_{\nu,k}(\lambda)$ are defined by
\begin{equation}
  u_{0,\nu}^{\lambda_{0,\nu}+k}
  =\sum_{\nu',\,k'}c^{\nu',k'}_{\nu,k}(\lambda)
  u_{1,\nu'}^{\lambda_{1,\nu'}+k'}.
\end{equation}
Note that $c^{\nu',k'}_{\nu,k}(\lambda)$ is a
meromorphic function of $\lambda$ when $\mathbf m$ is rigid.

Fix a positive integer $n'$ and the integer sequences
$1\le \nu^0_1<\nu^0_2<\dots<\nu^0_L\le n_0$
and $1\le \nu^1_1<\nu^1_2<\dots<\nu^1_{L'}\le n_1$
such that
\begin{equation}
  n'=m_{0,\nu^0_1}+\cdots+m_{0,\nu^0_L}=m_{1,\nu^1_1}+\cdots+m_{1,\nu^1_{L'}}.
\end{equation}
Then a \textsl{generalized connection coefficient} is defined by
\begin{equation}\label{eq:GC}
\begin{split}
 &c\bigl(0:[\lambda_{0,\nu^0_1}]_{(m_{0,\nu^0_1})},\dots,
     [\lambda_{0,\nu^0_L}]_{(m_{0,\nu^0_L})}\rightsquigarrow
   1:[\lambda_{1,\nu^1_1}]_{(m_{1,\nu^1_1})},\dots,
     [\lambda_{1,\nu^1_{L'}}]_{(m_{1,\nu^1_{L'}})}\bigr)\\
 &\quad:= \det\Bigl(c^{\nu',k'}_{\nu,k}(\lambda)\Bigr)_{\substack{
     \nu\in\{\nu^0_1,\dots,\nu^0_L\},\ 0\le k<m_{0,\nu}\\
     \nu'\in\{\nu^1_1,\dots,\nu^1_{L'}\},\ 0\le k'<m_{1,\nu'}\\
   }}.
\end{split}
\end{equation}
The connection coefficient defined in \S\ref{sec:C1} corresponds to the case
when $n'=1$.
\end{defn}
\begin{rem} i)
When $m_{0,1}=m_{1,1}$, Corollary~\ref{cor:parapole} assures that
\[
  \frac{c\bigl(0:[\lambda_{0,1}]_{(m_{0,1})}
     \rightsquigarrow 1:[\lambda_{1,1}]_{(m_{1,1})}\bigr)}{
    {\displaystyle
    \prod_{\substack{2\le j\le n_0\\0\le k< \min\{m_{0,1},\,m_{0,j}\}}}
    \!\!\!\!\!\!\!\!\!\!\!\!
    \Gamma(\lambda_{0,1}-\lambda_{0,j}+m_{0,1}-k)}\cdot\!\!
    \displaystyle
    \prod_{\substack{2\le j\le n_1\\0< k\le \min\{m_{1,1},\,m_{1,j}\}}}
    \!\!\!\!\!\!\!\!\!\!\!\!
    \Gamma(\lambda_{1,j}-\lambda_{1,1}-m_{1,1}+k)}
\]
is holomorphic for $\lambda_{j,\nu}\in\mathbb C$.

ii)
Let $v_1,\dots,v_{n'}$ be generic solutions of $P_{\mathbf m}u=0$.
Then the generalized connection coefficient in Definition~\ref{def:GC}
corresponds to a usual connection coefficient of the Fuchsian differential
equation satisfied
by the Wronskian of the $n'$ functions $v_1,\dots,v_{n'}$.
The differential equation is of order $\binom n{n'}$.
In particular, when $n'=n-1$, the differential equation is isomorphic to
the dual of the equation $P_{\mathbf m}=0$ (cf.~Theorem~\ref{thm:prod}) 
and therefore the result in \S\ref{sec:C1} can be applied to the 
connection coefficient.
The precise result will be explained in another paper.
\end{rem}

\begin{rem}\label{rem:Cproc} 
The following procedure has not been completed in general.
But we give a procedure to calculate the generalized connection coefficient 
\eqref{eq:GC}, which we put $c(\lambda)$ here for simplicity when 
$\mathbf m$ is rigid.
\begin{enumerate}
\item
Let $\bar\epsilon=\bigl(\bar\epsilon_{j,\nu}\bigr)$ be the shift of 
the Riemann scheme $\{\lambda_{\mathbf m}\}$ such that
\begin{equation}
\begin{cases}
 \bar\epsilon_{0,\nu}
  =-1&(\nu\in\{1,2,\dots,n_0\}\setminus\{\nu^0_1,\dots,\nu^0_L\}),\\
 \bar\epsilon_{1,\nu}
  =1&(\nu\in\{1,2,\dots,n_1\}\setminus\{\nu^1_1,\dots,\nu^1_{L'}\}),\\
 \bar\epsilon_{j,\nu}
  =0&(\text{otherwise}).
\end{cases}\end{equation}
Then for generic $\lambda$ we show that
the connection coefficient \eqref{eq:GC} converges to a non-zero
meromorphic function $\bar c(\lambda)$ of $\lambda$ by the shift 
$\{\lambda_{\mathbf m}\}\mapsto \{(\lambda+k\bar\epsilon)_{\mathbf m}\}$ 
when $\mathbb Z_{>0}\ni k\to \infty$.

\item
Choose suitable linear functions $b_i(\lambda)$ of $\lambda$ by applying
Proposition~\ref{prop:paradep} or 
Corollary~\ref{cor:parapole} to $c(\lambda)$ so that
$e(\lambda):=\prod_{i=1}^N\Gamma\bigl(b_i(\lambda)\bigr)^{-1}\cdot c(\lambda)
\bar c(\lambda)^{-1}$
is holomorphic for any $\lambda$.

In particular, when $L=L'=1$ and $\nu_1^0=\nu_1^1=1$, we may put
\begin{equation*}
\begin{split}
 \{b_i\}&=\bigcup_{j=2}^{n_0}
 \bigl\{\lambda_{0,1}-\lambda_{0,j}+m_{0,1}-\nu\,;\,
  0\le\nu<\min\{m_{0,1},m_{0,j}\}\bigr\}\\
  &\quad\cup\bigcup_{j=2}^{n_1}
  \bigl\{\lambda_{1,j}-\lambda_{1,1}-m_{1,1}+\nu\,;\,
    1\le\nu\le\min\{m_{1,1},m_{1,j}\}\bigr\}.
\end{split}
\end{equation*}

\item
Find the zeros of $e(\lambda)$ some of which are explained by the 
reducibility or the shift operator of the equation $P_{\mathbf m}u=0$ and 
choose linear functions $c_i(\lambda)$ of $\lambda$ so that 
$f(\lambda):=\prod_{i=1}^{N'}\Gamma\bigl(c_i(\lambda)\bigr)\cdot e(\lambda)$ 
is still holomorphic for any $\lambda$.

\item
If $N=N'$ and $\sum_i{d_i(\lambda)}=\sum_i{c_i(\lambda)}$,
Lemma~\ref{lem:GammaRatio} assures $f(\lambda)=\bar c(\lambda)$ and
\begin{equation}\label{eq:Ccj}
 c(\lambda)=\frac{\prod_{i=1}^N\Gamma\bigl(b_i(\lambda)\bigr)}
 {\prod_{i=1}^N\Gamma\bigl(c_i(\lambda)\bigr)}\cdot\bar c(\lambda)
\end{equation}
because $\frac{f(\lambda)}{f(\lambda+\epsilon)}$ is a rational function of 
$\lambda$, which follows from the existence of a shift operator assured by 
Theorem~\ref{thm:irredrigid}.
\end{enumerate}
\end{rem}
\begin{lem}\label{lem:GammaRatio}
Let $f(t)$ be a meromorphic function of\/ $t\in\mathbb C$ such that
$r(t)=\frac {f(t)}{f(t+1)}$ is a rational function and
\begin{equation}
 \lim_{\mathbb Z_{>0}\ni k\to\infty}f(t+k)=1.
\end{equation}
Then there exists $N\in\mathbb Z_{\ge 0}$ and $b_i$, 
$c_i\in\mathbb C$ for $i=1,\dots,n$ such that
\begin{align}
 b_1+\cdots+b_N&=c_1+\cdots+c_N,\label{eq:sumeq}\\
 f(t)&=\frac{\prod_{i=1}^N\Gamma(t+b_i)}{\prod_{i=1}^N\Gamma(t+c_i)}.
 \label{eq:Gammaq}
\end{align}
Moreover, if $f(t)$ is an entire function, then $f(t)$ is the constant function $1$.
\end{lem}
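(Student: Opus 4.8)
The plan is to analyze the rational function $r(t)=f(t)/f(t+1)$ by its zeros and poles and then match $f$ to a quotient of Gamma functions. First I would write $r(t)=C\prod_{i}(t+\beta_i)^{\epsilon_i}$ where $C\in\mathbb C^\times$, the $\beta_i$ are distinct complex numbers and $\epsilon_i\in\mathbb Z\setminus\{0\}$, since $r$ is rational and (as we shall see) has neither zero nor pole at $t=\infty$. To see that the degrees of numerator and denominator of $r$ agree and that $C=1$: from $f(t+k)\to 1$ we get $r(t+k)=f(t+k)/f(t+k+1)\to 1$ as $k\to\infty$, so $\lim_{t\to\infty}r(t)=1$, which forces $\deg(\text{num})=\deg(\text{den})$ and leading coefficient $1$. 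Grouping the $\beta_i$ with $\epsilon_i>0$ (with multiplicity) as a list $c_1,\dots,c_N$ and those with $\epsilon_i<0$ as $b_1,\dots,b_N$ — the two lists have the same length $N$ because the numerator and denominator degrees are equal — we get
\begin{equation*}
 r(t)=\frac{\prod_{i=1}^N(t+c_i)}{\prod_{i=1}^N(t+b_i)}.
\end{equation*}
The condition that the next-to-leading coefficients also match (equivalently that $r(t)=1+O(t^{-2})$, which again follows from $r(t+k)\to1$ together with rationality — actually one only needs $r(t)\to1$, but the sum condition will come out of the Gamma comparison) will give $\sum b_i=\sum c_i$, i.e. \eqref{eq:sumeq}; I would in fact derive \eqref{eq:sumeq} at the end from the asymptotics of the Gamma quotient rather than from $r$ directly.

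Next I would set $g(t):=\prod_{i=1}^N\Gamma(t+b_i)\big/\prod_{i=1}^N\Gamma(t+c_i)$. Using $\Gamma(t+1)=t\,\Gamma(t)$ one checks $g(t)/g(t+1)=\prod(t+c_i)/\prod(t+b_i)=r(t)=f(t)/f(t+1)$, so the ratio $h(t):=f(t)/g(t)$ satisfies $h(t)=h(t+1)$, i.e. $h$ is a periodic meromorphic function of period $1$. By Stirling's formula, $\log g(t)=\bigl(\sum b_i-\sum c_i\bigr)\log t + O(1)$ as $t\to+\infty$ along the reals, so $g(t)$ has polynomial growth of degree $\sum b_i-\sum c_i$; comparing with $f(t+k)\to1$ I would conclude first that $\sum b_i=\sum c_i$ (otherwise $g$, hence $h=f/g$, would not be bounded away from $0$ and $\infty$ in a way compatible with the periodicity and the limit), giving \eqref{eq:sumeq}, and then that $g(t)\to 1$ as $t\to+\infty$. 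Hence $h(t)=f(t)/g(t)\to 1$ along $t\to+\infty$ in $\mathbb R$; being periodic of period $1$ and bounded on a period strip (after removing the finitely many poles, which cannot actually occur in the limit), $h$ extends to a bounded entire periodic function, so by Liouville $h\equiv 1$. This yields \eqref{eq:Gammaq}.

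For the last sentence: if $f$ is entire, then $g(t)=\prod\Gamma(t+b_i)/\prod\Gamma(t+c_i)$ is entire, so every pole of the numerator Gamma factors must be cancelled by a pole of a denominator factor; matching poles (the pole of $\Gamma(t+b_i)$ at $t=-b_i-\ell$, $\ell\in\mathbb Z_{\ge0}$) forces, after reindexing, $b_i=c_i$ for all $i$ — more precisely one argues that the multiset $\{b_i \bmod \mathbb Z\}$ equals $\{c_i\bmod\mathbb Z\}$ and, within each residue class, comparing the positions of the leftmost uncancelled pole forces equality of the $b$'s and $c$'s — whence $f(t)=g(t)\equiv1$. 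The main obstacle I anticipate is the careful bookkeeping in this final pole-cancellation step and, slightly earlier, justifying rigorously that the periodic function $h$ is genuinely bounded (no poles) on a strip: one must rule out that $h$ has poles, which is where the hypothesis $f(t+k)\to1$ is used to locate the poles of $f$ relative to those of $g$ and see they all cancel. The rest is routine manipulation of $\Gamma$ and Stirling asymptotics.
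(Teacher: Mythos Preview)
Your overall strategy is correct, but it is more elaborate than necessary, and the Liouville step as you have written it has a gap: you would need to know that $h=f/g$ is entire before applying Liouville, and you yourself flag this as unresolved. In fact Liouville is not needed at all. Once you know $h(t+1)=h(t)$, the hypothesis $f(t+k)\to 1$ together with $g(t+k)\to 1$ (which you obtain from Stirling after forcing $\sum b_i=\sum c_i$ via the growth comparison) gives $h(t)=\lim_{k\to\infty}h(t+k)=1$ for every $t$ at which $h$ is defined; by the identity theorem $h\equiv 1$. So your argument can be repaired easily, but the detour through boundedness on a strip is both unnecessary and, as stated, incomplete.

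The paper takes a more direct route that avoids introducing $g$ and $h$ altogether. After writing $r(t)=\prod_i(t+c_i)/\prod_i(t+b_i)$ exactly as you do, it telescopes
\[
 f(t)=r(t)\,r(t+1)\cdots r(t+n-1)\,f(t+n)
      =\frac{\prod_{i=1}^N\prod_{\nu=0}^{n-1}(t+c_i+\nu)}
            {\prod_{i=1}^N\prod_{\nu=0}^{n-1}(t+b_i+\nu)}\,f(t+n),
\]
and then lets $n\to\infty$ using Euler's limit $\Gamma(x)=\lim_{n\to\infty}\dfrac{n!\,n^{x-1}}{\prod_{\nu=0}^{n-1}(x+\nu)}$. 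The factors of $n!$ cancel, the powers of $n$ combine to $n^{\sum c_i-\sum b_i}$, and since $f(t+n)\to 1$ the limit exists and is finite and nonzero; this forces $\sum b_i=\sum c_i$ and simultaneously yields $f(t)=\prod_i\Gamma(t+b_i)/\prod_i\Gamma(t+c_i)$ in one stroke. This is shorter than your route via Stirling and a periodic comparison function.

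For the final sentence, your pole-matching argument is again correct in principle but overcomplicated. The paper simply cancels common factors so that $b_i\ne c_j$ for all $i,j$; then if $N>0$ the numerator $\Gamma(t+b_1)$ contributes a pole at $t=-b_1$ which no denominator factor can cancel (since $\Gamma$ is nowhere zero and the $c_j$'s differ from $b_1$), so $f$ is not entire. Hence $f$ entire forces $N=0$, i.e.\ $f\equiv 1$. This replaces your residue-class bookkeeping with a one-line observation.
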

\begin{proof}
Since $\lim_{k\to\infty}r(t+k)=1$, we may assume
\[
  r(t)=\frac{\prod_{i=1}^N(t+c_i)}{\prod_{i=1}^N(t+b_i)}
\]
and then
\[
 f(t)=\frac{\prod_{i=1}^N\prod_{\nu=0}^{n-1}(t+c_i+\nu)}{\prod_{i=1}^N
     \prod_{\nu=0}^{n-1}(t+b_i+\nu)}f(t+n).
\]
Since
\[
 \lim_{n\to\infty}\frac{n!n^{x-1}}{\prod_{\nu=0}^{n-1}(x+\nu)}=\Gamma(x),
\]
the assumption implies \eqref{eq:sumeq} and \eqref{eq:Gammaq}.

We may assume $b_i\ne c_j$ for $1\le i\le N$ and $1\le j\le N$.
Then the function \eqref{eq:Gammaq} with \eqref{eq:sumeq} has a pole if 
$N>0$.
\end{proof}
We have the following proposition for zeros of $c(\lambda)$.
\begin{prop}\label{prop:Czero}
Retain the notation in Remark~\ref{rem:Cproc} and fix $\lambda$
so that
\begin{equation}\label{eq:Cgeneric}
 \lambda_{j,\nu}-\lambda_{j,\nu'}\notin\mathbb Z\quad
 (j=0,\ 1\text{ \ and \ }0\le\nu<\nu'\le n_j).
\end{equation}

{\rm i)} \ The relation $c(\lambda)=0$ is valid if and only if
there exists a non-zero function
\begin{equation*}
 v =
\sum_{\substack{\nu\in\{\nu^0_1,\dots,\nu^0_L\}\\ 0\le k<m_{0,\nu}}}
  C_{\nu,k}u_0^{\lambda_{0,\nu}+k}
   =
\sum_{\substack{\nu\in\{1,\dots,n_1\}
  \setminus\{\nu^1_1,\dots,\nu^1_{L'}\}\\0\le k<m_{1,\nu}}}
  C'_{\nu,k}u_1^{\lambda_{1,\nu}+k}
\end{equation*}
on $(0,1)$ with $C_{\nu,k},\,C'_{\nu,k}\in\mathbb C$.

{\rm ii)} \ Fix a shift $\epsilon=(\epsilon_{j,\nu})$ 
compatible to $\mathbf m$ and let
$R_{\mathbf m}(\epsilon,\lambda)$ be the shift operator in\/ 
{\rm Theorem~\ref{thm:irredrigid}.}
Suppose $R_{\mathbf m}(\epsilon,\lambda)$ is bijective, 
namely, $c_{\mathbf m}(\epsilon;\lambda)\ne 0$
{\rm (cf.~Theorem~\ref{thm:shiftC}).} 
Then $c(\lambda+\epsilon)=0$ if and only if $c(\lambda)=0$
\end{prop}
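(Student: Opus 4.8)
\textbf{Proof plan for Proposition~\ref{prop:Czero}.}
The plan is to reduce both parts to elementary linear algebra together with the bijectivity of the shift operator. For part i), I would first recall from Definition~\ref{def:GC} that the generalized connection coefficient $c(\lambda)$ is the determinant $\det\bigl(c^{\nu',k'}_{\nu,k}(\lambda)\bigr)$ where the row indices $(\nu,k)$ run over $\nu\in\{\nu^0_1,\dots,\nu^0_L\}$ and the column indices $(\nu',k')$ run over $\nu'\in\{\nu^1_1,\dots,\nu^1_{L'}\}$, and that these are the coefficients in the expansion $u_{0,\nu}^{\lambda_{0,\nu}+k}=\sum_{\nu',k'}c^{\nu',k'}_{\nu,k}u_{1,\nu'}^{\lambda_{1,\nu'}+k'}$. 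Under the genericity assumption \eqref{eq:Cgeneric} the local solutions $u_{0,\nu}^{\lambda_{0,\nu}+k}$ at $x=0$ and $u_{1,\nu'}^{\lambda_{1,\nu'}+k'}$ at $x=1$ are each well-defined bases of the $n$-dimensional solution space on $(0,1)$. The vanishing of the $n'\times n'$ determinant $c(\lambda)$ is equivalent to the existence of a non-trivial linear relation among the rows, i.e.\ constants $C_{\nu,k}$ (not all zero, indexed by $\nu\in\{\nu^0_1,\dots,\nu^0_L\}$, $0\le k<m_{0,\nu}$) such that the vector $\bigl(\sum_{\nu,k}C_{\nu,k}c^{\nu',k'}_{\nu,k}\bigr)_{\nu'\in\{\nu^1_1,\dots,\nu^1_{L'}\},\,0\le k'<m_{1,\nu'}}$ is zero. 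Translating through the expansion, this says precisely that the function $v:=\sum_{\nu\in\{\nu^0_1,\dots,\nu^0_L\},\,0\le k<m_{0,\nu}}C_{\nu,k}u_{0,\nu}^{\lambda_{0,\nu}+k}$ has zero component along every $u_{1,\nu'}^{\lambda_{1,\nu'}+k'}$ with $\nu'\in\{\nu^1_1,\dots,\nu^1_{L'}\}$; since $n'=m_{1,\nu^1_1}+\cdots+m_{1,\nu^1_{L'}}$ and $n=\sum_{\nu'=1}^{n_1}m_{1,\nu'}$, the complementary indices are exactly $\nu\in\{1,\dots,n_1\}\setminus\{\nu^1_1,\dots,\nu^1_{L'}\}$, so $v$ is a linear combination of the $u_{1,\nu}^{\lambda_{1,\nu}+k}$ over that complementary set. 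This is exactly the asserted equivalence, once one checks that $v\ne 0$ iff the $C_{\nu,k}$ are not all zero, which holds because the $u_{0,\nu}^{\lambda_{0,\nu}+k}$ are linearly independent.

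For part ii), the plan is to use the shift operator $R:=R_{\mathbf m}(\epsilon,\lambda)$ of Theorem~\ref{thm:irredrigid}, which gives a homomorphism from the equation $P_{\mathbf m}(\lambda+\epsilon)v=0$ to $P_{\mathbf m}(\lambda)u=0$ via $u=Rv$; together with $R_{\mathbf m}(-\epsilon,\lambda+\epsilon)$ and the relation $R_{\mathbf m}(-\epsilon,\lambda+\epsilon)R_{\mathbf m}(\epsilon,\lambda)\equiv c_{\mathbf m}(\epsilon;\lambda)$ modulo $W[x]P_{\mathbf m}(\lambda)$ from Theorem~\ref{thm:shiftC}. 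When $c_{\mathbf m}(\epsilon;\lambda)\ne 0$ this composition is a scalar, so $R$ is a $W(x)$-isomorphism between the two equations and hence induces a linear bijection of the $n$-dimensional solution spaces on $(0,1)$. The key observation is that $R$ carries the normalized local solution $u_{0,\nu}^{\lambda_{0,\nu}+k}$ of $P_{\mathbf m}(\lambda)$ near $x=0$ to a nonzero multiple of the normalized local solution of $P_{\mathbf m}(\lambda+\epsilon)$ with exponent $\lambda_{0,\nu}+\epsilon_{0,\nu}+k$, plus a combination of solutions with strictly larger exponents at $0$; similarly at $x=1$. I would make this precise by examining the leading term of $R$ acting on $(x-c_j)^{\mu}\mathcal O_{c_j}$: since $R\in W[x]\otimes\mathbb C[\lambda]$ is regular at $c_0$ and $c_1$ with $\ord R<\ord\mathbf m$, and since $R$ is invertible, it maps the filtration of the solution space by exponents at each of $c_0,c_1$ to the corresponding filtration for the shifted scheme. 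Consequently, applying part i), the existence of a nonzero $v$ as in i) for the parameter $\lambda$ is equivalent (apply $R$, respectively $R^{-1}$, which preserves the spaces spanned by the prescribed subsets of local solutions at $c_0$ and $c_1$) to the existence of such a $v$ for $\lambda+\epsilon$, hence $c(\lambda)=0\iff c(\lambda+\epsilon)=0$.

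\textbf{Main obstacle.} The delicate point is the precise compatibility in part ii): I must verify that the shift operator $R$, although it is not simply a product of the elementary gauge transformations but involves a division-algorithm correction $R_3$ (see the proof of Theorem~\ref{thm:irredrigid}, equations \eqref{eq:sht1}--\eqref{eq:sht3}), nevertheless maps the span of $\{u_{0,\nu}^{\lambda_{0,\nu}+k}\;;\;\nu\in\{\nu^0_1,\dots,\nu^0_L\}\}$ onto the corresponding span for $\lambda+\epsilon$, and likewise for the complementary span at $c_1$. I expect to handle this by noting that an isomorphism of $W(x)$-modules automatically respects the generalized-exponent filtration at each regular singular point (since that filtration is intrinsic to the local monodromy, by the discussion in \S\ref{sec:reg} and Remark~\ref{rm:1}), so $R$ induces, for each $c_j$ and each exponent value, an isomorphism between the associated graded pieces; the genericity \eqref{eq:Cgeneric} guarantees these pieces are exactly the spans of the relevant $u_{j,\nu}^{\lambda_{j,\nu}+k}$. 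Thus the subsets singled out by $\{\nu^0_i\}$ and $\{\nu^1_i\}$, being unions of such exponent-graded pieces, are preserved, which is all that is needed to transfer the criterion in i) between $\lambda$ and $\lambda+\epsilon$. The remaining bookkeeping—that $\epsilon$ being compatible to $\mathbf m$ keeps the index sets $\{\nu^0_i\},\{\nu^1_i\}$ meaningful for the shifted scheme, and that the nonvanishing of $c_{\mathbf m}(\epsilon;\lambda)$ is exactly the hypothesis ensuring $R$ is invertible at generic $\lambda$—is routine.
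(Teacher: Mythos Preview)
Your proof is correct and follows essentially the same approach as the paper: part i) is pure linear algebra on the transition matrix, and part ii) transports the witness $v$ from i) via the bijective shift operator. Your ``main obstacle'' can be handled more directly than via exponent filtrations: since $R_{\mathbf m}(\epsilon,\lambda)\in W[x]$ has polynomial (hence single-valued) coefficients, it intertwines the local monodromies $M_j$ and $M_j'$ at $c_0,c_1$, and under \eqref{eq:Cgeneric} the span of $\{u_{j,\nu}^{\lambda_{j,\nu}+k}:0\le k<m_{j,\nu}\}$ is exactly the $e^{2\pi i\lambda_{j,\nu}}$-eigenspace of $M_j$ (with $\epsilon_{j,\nu}\in\mathbb Z$ preserving the eigenvalue), so the required subspace preservation is immediate.
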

\begin{proof}
Assumption \eqref{eq:Cgeneric} implies that
$\{u_0^{\lambda_{0,\nu}+k}\}$ and $\{u_1^{\lambda_{1,\nu}+k}\}$
define sets of basis of local solutions of the equation 
$P_{\mathbf m}u=0$.
Hence the claim i) is clear from the definition of $c(\lambda)$.

Suppose $c(\lambda)=0$ and $R_{\mathbf m}(\epsilon,\lambda)$ is bijective.
Then applying the claim i) to $R_{\mathbf m}(\epsilon,\lambda)v$, we have
$c(\lambda+\epsilon)=0$. 
If $R_{\mathbf m}(\epsilon,\lambda)$ is bijective, 
so is $R_{\mathbf m}(-\epsilon,\lambda+\epsilon)$
and $c(\lambda+\epsilon)=0$ implies $c(\lambda)=0$.
\end{proof}
\begin{cor}\label{cor:zeroC}
Let\/ $\mathbf m=\mathbf m'\oplus\mathbf m''$ be a rigid decomposition of\/ 
$\mathbf m$ such that
\begin{equation}
  \sum_{\nu\in\{\nu^0_1,\dots,\nu^0_L\}}m'_{0,\nu}>
  \sum_{\nu\in\{\nu^1_1,\dots,\nu^1_{L'}\}}m'_{1,\nu}.
\end{equation}
Then\/ 
$\Gamma(|\{\lambda_{\mathbf m'}\}|)\cdot c(\lambda)$ is holomorphic
under the condition \eqref{eq:Cgeneric}.
\end{cor}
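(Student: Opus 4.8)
The plan is to show that, on the locus where \eqref{eq:Cgeneric} holds, the generalized connection coefficient $c(\lambda)$ is holomorphic and vanishes to order at least $1$ along every hyperplane $\{|\{\lambda_{\mathbf m'}\}|=-k\}$ with $k\in\mathbb Z_{\ge0}$; since $\Gamma(|\{\lambda_{\mathbf m'}\}|)$ has only simple poles and precisely along these hyperplanes, the product $\Gamma(|\{\lambda_{\mathbf m'}\}|)\,c(\lambda)$ is then holomorphic, which is the assertion. Holomorphy of $c(\lambda)$ under \eqref{eq:Cgeneric} is the easy half: when $\lambda_{j,\nu}-\lambda_{j,\nu'}\notin\mathbb Z$ for $j=0,1$, the normalized local solutions $u_{0,\nu}^{\lambda_{0,\nu}+k}$ and $u_{1,\nu}^{\lambda_{1,\nu}+k}$ depend holomorphically on $\lambda$ and form bases of the space of solutions near $x=0$ and $x=1$, the relevant Wronskians being non-vanishing by Corollary~\ref{cor:parapole}. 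Hence the connection matrix $(c^{\nu',k'}_{\nu,k}(\lambda))$, and therefore each of its minors, is holomorphic there.

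The heart of the matter is the vanishing. Fix $k\in\mathbb Z_{\ge0}$ and a generic point $\lambda$ of the hyperplane $|\{\lambda_{\mathbf m'}\}|=-k$ still satisfying \eqref{eq:Cgeneric}. By Remark~\ref{rem:generic} ii) the equation $P_{\mathbf m}u=0$ is reducible, and — combining Proposition~\ref{prop:subrep} with the rigidity of the decomposition $\mathbf m=\mathbf m'\oplus\mathbf m''$ — there is a monodromy-invariant subspace $W'$ of the solution space whose monodromy has spectral type $\mathbf m'$; in particular $\dim W'=\ord\mathbf m'$ and at each $c_j$ the monodromy generator restricted to $W'$ is conjugate to $L(\mathbf m'_j;\dots)$. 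Because \eqref{eq:Cgeneric} holds, the generalized eigenspace at $x=0$ attached to the exponent $\lambda_{0,\nu}$ is exactly $E_\nu:=\bigoplus_{0\le i<m_{0,\nu}}\mathbb C\,u_{0,\nu}^{\lambda_{0,\nu}+i}$, and invariance forces $\dim(W'\cap E_\nu)=m'_{0,\nu}$; similarly for the eigenspaces $F_\nu$ at $x=1$ spanned by the $u_{1,\nu}$'s, with $\dim(W'\cap F_\nu)=m'_{1,\nu}$. Writing $S_0$ for the span of the source solutions ($\nu\in\{\nu^0_1,\dots,\nu^0_L\}$) at $x=0$ and $T_1$ for the span of the non-target solutions ($\nu\notin\{\nu^1_1,\dots,\nu^1_{L'}\}$) at $x=1$, we get $\dim(W'\cap S_0)=\sum_i m'_{0,\nu^0_i}$ and $\dim(W'\cap T_1)=\ord\mathbf m'-\sum_j m'_{1,\nu^1_j}$, so inside $W'$
\[
 \dim(W'\cap S_0\cap T_1)\ \ge\ \dim(W'\cap S_0)+\dim(W'\cap T_1)-\dim W'\ =\ \sum_i m'_{0,\nu^0_i}-\sum_j m'_{1,\nu^1_j}\ \ge\ 1
\]
by the hypothesis $\sum_{\nu\in\{\nu^0_1,\dots,\nu^0_L\}}m'_{0,\nu}>\sum_{\nu\in\{\nu^1_1,\dots,\nu^1_{L'}\}}m'_{1,\nu}$. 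Hence $S_0\cap T_1\ne\{0\}$, and Proposition~\ref{prop:Czero} i) gives $c(\lambda)=0$ at this generic point; by meromorphy of $c$ it vanishes identically on the hyperplane $|\{\lambda_{\mathbf m'}\}|=-k$.

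Putting the two halves together: on the locus \eqref{eq:Cgeneric}, $c(\lambda)$ is holomorphic and vanishes to order $\ge 1$ along each simple-pole divisor of $\Gamma(|\{\lambda_{\mathbf m'}\}|)$, so $\Gamma(|\{\lambda_{\mathbf m'}\}|)\,c(\lambda)$ is holomorphic. The step I expect to cause the most trouble is the identification, for generic $\lambda$ on the hyperplane, of the invariant subspace $W'$ with the correct local monodromy structure at $c_0$ and $c_1$, i.e.\ that the reduction actually realized corresponds to this particular $\mathbf m=\mathbf m'\oplus\mathbf m''$ and not to some other (possibly non-rigid) decomposition defining the same hyperplane; here the rigidity of $\mathbf m'$ and $\mathbf m''$ and the analysis behind Proposition~\ref{prop:subrep} and Remark~\ref{rem:generic} ii) must be invoked carefully (and, if needed, one reduces to the case $k=0$ by an integral shift of an exponent attached to a part of $\mathbf m'$ equal to $1$, using that $\gcd\mathbf m'=1$). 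Once $W'$ is in hand, everything else is the elementary dimension count above, which only uses that \eqref{eq:Cgeneric} makes the eigenspace decompositions at $0$ and $1$ coincide with the spans of the normalized local solutions.
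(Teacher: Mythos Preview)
Your approach and the paper's are essentially the same, differing mainly in how the case $k\neq 0$ is handled. Both rest on Proposition~\ref{prop:Czero}~i) together with your dimension count inside an invariant subspace $W'$ of spectral type $\mathbf m'$; the paper's terse ``hence $c(\lambda)=0$'' for $|\{\lambda_{\mathbf m'}\}|=0$ hides exactly the computation you spell out. The difference is that the paper does not attempt to produce $W'$ directly for $k>0$: it treats $|\{\lambda_{\mathbf m'}\}|=0$ via the explicit factorization $P_{\mathbf m}=P_{\mathbf m''}P_{\mathbf m'}$ from Theorem~\ref{thm:prod}, and then reduces the general $k$ to $k=0$ by the shift argument you mention only as a fallback---choosing a compatible shift $\epsilon$ with $\sum_{j,\nu} m'_{j,\nu}\epsilon_{j,\nu}=1$ (possible since $\gcd\mathbf m'=1$), invoking Theorem~\ref{thm:isom}~ii) to see that the shift operator is bijective for generic $\lambda$ on the hyperplane when $k\neq 0$, and then applying Proposition~\ref{prop:Czero}~ii). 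So your ``fix'' is in fact the paper's main route.

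Your direct route for $k>0$ can be completed too, and you have correctly located the missing ingredient: the argument in the proof of Theorem~\ref{thm:isom}~ii) shows that, for $\lambda$ generic on the hyperplane, any irreducible subquotient has spectral type $\ell_1\mathbf m'+\ell_2\mathbf m''$ with $(\ell_1,\ell_2)\in\{(1,0),(0,1)\}$, and the compatibility condition in Proposition~\ref{prop:subrep} forces the submodule to satisfy $|\{\lambda_{\tilde{\mathbf m}}\}|\le 0$, which rules out $\mathbf m''$ since $|\{\lambda_{\mathbf m''}\}|=1+k>0$. One small correction: Corollary~\ref{cor:parapole} concerns holomorphy, not non-vanishing, of the Wronskians; the non-vanishing under \eqref{eq:Cgeneric} follows simply because the local solutions have pairwise non-congruent leading exponents.
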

\begin{proof}
When $|\{\lambda_{\mathbf m'}\}|$=0, we have
the decomposition $P_{\mathbf m}=P_{\mathbf m''}P_{\mathbf m'}$ 
and hence $c(\lambda)=0$.
There exists a shift $\epsilon$ compatible to $\mathbf m$
such that $\sum_{j=0}^p\sum_{\nu=1}^{n_j}m'_{j,\nu}\epsilon_{j,\nu}=1$.
Let $\lambda$ be generic under $|\{\lambda_{\mathbf m}\}|=0$ and
$|\{\lambda_{\mathbf m'}\}|\in\mathbb Z\setminus\{0\}$.
Then Theorem~\ref{thm:isom}) ii) assures
$c_{\mathbf m}(\epsilon;\lambda)\ne0$ and
Proposition~\ref{prop:Czero} proves the corollary.
\end{proof}
\begin{rem}\label{rem:Cgamma}
Suppose that Remark~\ref{rem:Cproc} (1) is established.
Then Proposition~\ref{prop:paradep} and Proposition~\ref{prop:Czero} with
Theorem~\ref{thm:shiftC} assure that the denominator and the numerator
of the rational function which equals $\frac{c(\lambda)}{c(\lambda+\bar\epsilon)}$
are products of certain linear functions of $\lambda$
and therefore \eqref{eq:Ccj} is valid with suitable linear functions
$b_i(\lambda)$ and $c_i(\lambda)$ of $\lambda$ satisfying
$\sum_{i=1}^Nb_i(\lambda)=\sum_{i=1}^Nc_i(\lambda)$.
\end{rem}
\index{hypergeometric equation/function!generalized!connection coefficient}
\begin{exmp}[generalized hypergeometric function]
The generalized hypergeometric series \eqref{eq:IGHG} satisfies
the equation $P_n(\alpha;\beta)u=0$ given by \eqref{eq:GHP} and 
\cite[\S4.1.2 Example~9]{Kh} shows that the equation
is isomorphic to the Okubo system
\begin{equation}
 \begin{split}
  &\Bigl(x-\left(\begin{smallmatrix}
    1\\
    & 0\ \\
    & &\ \ddots\\
    & & &\  \ \ddots\\
    & & & & 0
    \end{smallmatrix}\right)\Bigr)
   \frac{d\tilde u}{dx}
  = \left(\begin{smallmatrix}
    -\beta_n & 1 \\
    \alpha_{2,1} & 0 &1\\
    \alpha_{3,1} &   & 1 & 1\\[-3pt]
    {\small \vdots}       &   & & \ {\small \ddots} &\ \ 
     {\small\ddots}\!\!\!\!\!\!\!\\
    \alpha_{n-1,1}&  & & & n-3 & 1\\
    \alpha_{n,1} & -c_{n-1} & -c_{n-2}&\cdots&-c_2 & -c_1+(n-2)
   \end{smallmatrix}\right)\tilde u
 \end{split}
\end{equation}
with
\[
 u=\begin{pmatrix}u_1\\ \vdots\\ u_n\end{pmatrix},\ u=u_1\text{ \ and \ }
 \sum_{\nu=1}^n\alpha_\nu=\sum_{\nu=1}^n\beta_\nu.
\]
Let us calculate the connection coefficient
\[
 c(0\!:\!0\rightsquigarrow 1\!:\!-\beta_n)
 =\lim_{x\to1-0}(1-x)^{\beta_n}
  {}_nF_{n-1}(\alpha_1,\dots,\alpha_n;\beta_1,\dots,\beta_{n-1};x)
  \quad(\RE\beta_n>0).
\]
Applying Theorem~\ref{thm:paralimits} to the system of Schlesinger canonical
form satisfied by $\Ad\bigl((1-x)^{\beta_n}\bigr)$, the connection
coefficient satisfies Remark~\ref{rem:Cproc} i) with 
$\bar c(\lambda)=1$, namely,
\begin{equation}\label{eq:CGHGL}
 \lim_{k\to+\infty}
  c(0\!:\!0\rightsquigarrow 1\!:\!-\beta_n)|
   _{\alpha_j\mapsto \alpha_j+k,\ \beta_j\mapsto \beta_j+k
   \ \ (1\le j\le n)}=1.
\end{equation}
Then Remark~\ref{rem:Cproc} ii) shows that 
$\prod_{j=1}^n\Gamma(\beta_j)^{-1}\cdot 
c(0\!:\!0\rightsquigarrow 1\!:\!-\beta_n)$ 
is a holomorphic function of $(\alpha,\beta)\in\mathbb C^{n+(n-1)}$.

Corresponding to the Riemann scheme \eqref{eq:GRSGHG}, 
the existence of rigid decompositions
\[
  \overbrace{1\cdots1}^n;n-11;\overbrace{1\cdots1}^n=
  \overbrace{0\cdots0}^{n-1}1;10;0\cdots\overset{\overset{i}\smallsmile}{1}
  \cdots0
  \oplus
  \overbrace{1\cdots1}^{n-1}0;n-11;1\cdots\overset{\overset{i}\smallsmile}0\cdots1
\]
for $i=1,\dots,n$ proves that $\prod_{i=1}^n\Gamma(\alpha_i)\cdot 
\prod_{j=1}^n\Gamma(\beta_j)^{-1}\cdot 
c(0\!:\!0\rightsquigarrow 1\!:\!-\beta_n)$ 
is also entire holomorphic.
Then the procedure given in Remark~\ref{rem:Cproc} assures
\begin{equation}\label{eq:CGHG}
 c(0\!:\!0\rightsquigarrow 1\!:\!-\beta_n)
 = \frac{\prod_{i=1}^n\Gamma(\beta_i)}
{\prod_{i=1}^n\Gamma(\alpha_i)}.
\end{equation}

We can also prove \eqref{eq:CGHG} as in the following way.
Since
\[\frac{d}{dx}F(\alpha;\beta;x)
 =\frac{\alpha_1\cdots\alpha_n}{\beta_1\cdots\beta_{n-1}}
  F(\alpha_1+1,\dots,\alpha_n+1;\beta_1+1,\dots,\beta_{n-1}+1;x)
\]
and 
\[\frac{d}{dx}\bigl(1-x\bigr)^{-\beta_n}
  \bigl(1+(1-x)\mathcal O_1\bigr)=\beta_n
  \bigl(1-x\bigr)^{-\beta_n-1}\bigl(1+(1-x)\mathcal O_1\bigr),
\]
we have
\[
\frac{c(0\!:\!0\rightsquigarrow 1\!:\!-\beta_n)}
 {c(0\!:\!0\rightsquigarrow 1\!:\!-\beta_n)|
  _{\alpha_j\mapsto \alpha_j+1,\ \beta_j\mapsto \beta_j+1}}
 =\frac{\alpha_1\dots\alpha_n}{\beta_1\dots\beta_n},
\]
which proves \eqref{eq:CGHG} because of \eqref{eq:CGHGL}.
\end{exmp}

A further study of generalized connection coefficients will be developed
in another paper.
In this paper we will only give some examples in \S\ref{sec:EOEx}
and \S\ref{sec:eq211}.
\section{Examples}\label{sec:ex}
When we classify tuples of partitions in this section, we identify the tuples 
which are isomorphic to each other.
For example, $21,111,111$ is isomorphic to any one of
$12,111,111$ and $111,21,111$ and $21,3,111,111$.

Most of our results in this note are constructible and can be implemented
in computer programs. 
Several reductions and constructions and 
decompositions of tuples of partitions and connections coefficients 
associated with Riemann schemes etc.\ can be computed by a program
\texttt{okubo} written by the author (cf.~\S\ref{sec:okubo}).

In \S\ref{sec:basicEx} and \S\ref{sec:rigidEx} we list
fundamental and rigid tuples respectively, most of which are
obtained by the program \texttt{okubo}.

In \S\ref{sec:PoEx} and \S\ref{sec:GHG} we apply our fractional calculus
to Jordan-Pochhammer equations and a hypergeometric family 
(generalized hypergeometric equations), respectively.
Most of the results in these sections are known but it will be useful to 
understand our unifying interpretation and apply it to general Fuchsian 
equations.

In \S\ref{sec:EOEx} we study an even family and an odd family
corresponding to Simpson's list \cite{Si}.
The differential equations of an even family appear in suitable
restrictions of Heckman-Opdam hypergeometric systems and
in particular the explicit calculation of a connection coefficient
for an even family was the original motivation for the 
study of Fuchsian differential equations developed in this note
(cf.~\cite{OS}).
We also calculate a generalized connection coefficient for an even family
of order 4.

In \S\ref{sec:4Ex}, \S\ref{sec:ord6Ex} and \S\ref{sec:Rob} we study the
rigid Fuchsian differential equations of order not larger than 4
and those of order 5 or 6 and the equations 
belonging to 12 maximal series and some minimal series classified by \cite{Ro}
which include the equations in Yokoyama's list \cite{Yo}.
We list sufficient data from which we get some connection coefficients and
the necessary and sufficient conditions for the irreducibility of the equations
as is explained in \S\ref{sec:RobEx}.

In \S\ref{sec:TriEx} we give some interesting identities of 
trigonometric functions as a consequence of the explicit value of 
connection coefficients.

We examine Appell hypergeometric equations in \S\ref{sec:ApEx}, which
will be further discussed in another paper.

In \S\ref{sec:okubo} we explain computer programs \texttt{okubo} and a library of
\texttt{risa/asir} which calculate
the results described in this paper.
\subsection{Basic tuples}\label{sec:basicEx}\label{sec:Exbasic}
\index{tuple of partitions!basic}
The number of basic tuples and fundamental tuples 
(cf.~Definition~\ref{def:fund}) with a given $\Pidx$ are as follows.
\smallskip

\noindent
{
\begin{tabular}{|c|r|r|r|r|r|r|r|r|r|r|r|r|}\hline
$\Pidx$          &\ \,0\ & \ \,1\ & $2$ & $3$ & $4$ & $5$ & $6$ & $7$ & $8$ & $9$ & $10$ & $11$
\\ \hline\hline
$\#$ fund.\ tuples & \ \,1\ & \ \,4\ & 13 & 36 & 67 & 103 & 162 & 243 & 305 & 456 & 578 & 720
\\ \hline
$\#$ basic tuples & \ \,0\ & \ \,4\ & 13 & 36 & 67 & 90 & 162 & 243 & 305 & 420 & 565 & 720
\\ \hline
$\#$ basic triplets & \ \,0\ & \ \,3\ & \ \,9&24& 44 & 56 & \ \,97& 144 & 163 & 223 & 291 & 342
\\ \hline
$\#$ basic 4-tuples& \ \,0\ & \ \,1\ & \ \,3& \ \,9& 17 & \,24 & \ \,45& \ \,68 & \ \,95 & 128 & 169 & 239
\\ \hline
maximal order & \ \,6\ & 12 & 18 & 24 & 30 & 36  & 42 & 48 & 54 & 60 & 66 & 72
\\ \hline
\end{tabular}}
\medskip

Note that if $\mathbf m$ is a basic tuple with $\idx\mathbf m<0$, then
\begin{equation}
 \Pidx k\mathbf m= 1+k^2(\Pidx\mathbf m -1)\qquad(k=1,2,\ldots).
\end{equation}
Hence the non-trivial fundamental tuple $\mathbf m$ with $\Pidx\mathbf m\le 4$ 
or equivalently $\idx\mathbf m\ge-6$ is always basic.

The tuple $2\mathbf m$ with a basic tuple $\mathbf m$ satisfying 
$\Pidx\mathbf m=2$ is a fundamental tuple and $\Pidx2\mathbf m=5$.
The tuple $422,44,44,44$ is this example.

\subsubsection{$\Pidx\mathbf m=1$}
\index{tuple of partitions!index of rigidity!$=0$}
There exist 4 basic tuples:
(cf.~\cite{Ko3}, Corollary~\ref{cor:idx0})
\index{00D4@$\tilde D_4,\ \tilde E_6,\ \tilde E_7,\ \tilde E_8$}

$\tilde D_4$:\ 11,11,11,11\ \ \ \ $\tilde E_6$:\ 111,111,111\ \ \ \ 
$\tilde E_7$:\ 22,1111,1111\ \ \ \ 
$\tilde E_8$:\ 33,222,111111
\smallskip

They are not of Okubo type.
The tuples of partitions of Okubo type with minimal order which are reduced 
to the above basic tuples are as follows.

$\tilde D_4$:\ 21,21,21,111\ \ \ \ $\tilde E_6$:\ 211,211,1111\ \ \ \ 
$\tilde E_7$:\ 32,2111,11111\ \ \ \ 
$\tilde E_8$:\ 43,322,1111111
\medskip

The list of simply reducible tuples of partitions whose indices of rigidity equal 
$0$ is given in Example~\ref{ex:SR0}.

We list the number of realizable tuples of partitions whose indices of rigidity
equal 0 according to their orders and the corresponding fundamental tuple.
\medskip

\begin{tabular}{|r|r|r|r|r|r|}\hline
ord & {\small 11,11,11,11} & {\small 111,111,111} & 
{\small 22,1111,1111} & {\small 33,222,111111} & total\\
\hline\hline
2 &     1 &       &      &     &    1  \\ \hline
3 &     1 &     1 &      &     &    2 \\ \hline
4 &     4 &     1 &    1 &     &    6 \\ \hline
5 &     6 &     3 &    1 &     &   10 \\ \hline
6 &    21 &     8 &    5 &   1 &   35 \\ \hline
7 &    28 &    15 &    6 &   1 &   50 \\ \hline
8 &    74 &    31 &   21 &   4 &  130 \\ \hline
9 &   107 &    65 &   26 &   5 &  203 \\ \hline
10&   223 &   113 &   69 &  12 &  417 \\ \hline
11&   315 &   204 &   90 &  14 &  623 \\ \hline
12&   616 &   361 &  205 &  37 & 1219 \\ \hline
13&   808 &   588 &  256 &  36 & 1688 \\ \hline
14&  1432 &   948 &  517 &  80 & 2977 \\ \hline
15&  1951 &  1508 &  659 & 100 & 4218 \\ \hline
16&  3148 &  2324 & 1214 & 179 & 6865 \\ \hline
17&  4064 &  3482 & 1531 & 194 & 9271 \\ \hline
18&  6425 &  5205 & 2641 & 389 &14660 \\ \hline
19&  8067 &  7503 & 3246 & 395 &19211 \\ \hline
20& 12233 & 10794 & 5400 & 715 &29142 \\ \hline
\end{tabular}{}

\subsubsection{$\Pidx\mathbf m=2$}\label{sec:idx-2}
\index{tuple of partitions!basic!index of rigidity$\ \ge-6$}
There are 13 basic tuples
(cf.~Proposition~\ref{prop:bas2}, \cite[Proposition~8.4]{O3}):\\[-10pt]
\begin{verbatim}
 +2:11,11,11,11,11     3:111,111,21,21      *4:211,22,22,22
  4:1111,22,22,31      4:1111,1111,211       5:11111,11111,32
  5:11111,221,221      6:111111,2211,33     *6:2211,222,222
 *8:22211,2222,44      8:11111111,332,44    10:22222,3331,55   
*12:2222211,444,66
\end{verbatim}
Here the number preceding to a tuple is the order of the tuple and
the sign \texttt{*} means that the tuple is the one given in 
Example~\ref{eq:Qsp} ($D_4^{(m)}$, $E_6^{(m)}$, $E_7^{(m)}$ and
$E_8^{(m)}$) and the sign $+$ means $d(\mathbf m)<0$.

\subsubsection{$\Pidx\mathbf m=3$} There are 36 basic tuples\\[-10pt]
{
\begin{verbatim}
 +2:11,11,11,11,11,11  3:111,21,21,21,21     4:22,22,22,31,31
 +3:111,111,111,21    +4:1111,22,22,22       4:1111,1111,31,31
  4:211,211,22,22      4:1111,211,22,31     *6:321,33,33,33
  6:222,222,33,51     +4:1111,1111,1111      5:11111,11111,311
  5:11111,2111,221     6:111111,222,321      6:111111,21111,33
  6:21111,222,222      6:111111,111111,42    6:222,33,33,42
  6:111111,33,33,51    6:2211,2211,222       7:1111111,2221,43
  7:1111111,331,331    7:2221,2221,331       8:11111111,3311,44
  8:221111,2222,44     8:22211,22211,44     *9:3321,333,333
  9:111111111,333,54   9:22221,333,441      10:1111111111,442,55 
 10:22222,3322,55     10:222211,3331,55     12:22221111,444,66
*12:33321,3333,66     14:2222222,554,77    *18:3333321,666,99
\end{verbatim}}

\subsubsection{$\Pidx\mathbf m=4$} There are 67 basic tuples\\[-10pt]
{\small \begin{verbatim}
 +2:11,11,11,11,11,11,11    3:21,21,21,21,21,21       +3:111,111,21,21,21
 +4:22,22,22,22,31          4:211,22,22,31,31          4:1111,22,31,31,31
 +3:111,111,111,111        +4:1111,1111,22,31          4:1111,211,22,22
  4:211,211,211,22          4:1111,211,211,31          5:11111,11111,41,41
  5:11111,221,32,41         5:221,221,221,41           5:11111,32,32,32
  5:221,221,32,32           6:3111,33,33,33            6:2211,2211,2211
 +6:222,33,33,33            6:222,33,33,411            6:2211,222,33,51
 *8:431,44,44,44            8:11111111,44,44,71        5:11111,11111,221
  5:11111,2111,2111        +6:111111,111111,33        +6:111111,222,222
  6:111111,111111,411       6:111111,222,3111          6:21111,2211,222
  6:111111,2211,321         6:2211,33,33,42            7:1111111,1111111,52
  7:1111111,322,331         7:2221,2221,322            7:1111111,22111,43
  7:22111,2221,331          8:11111111,3221,44         8:11111111,2222,53
  8:2222,2222,431           8:2111111,2222,44          8:221111,22211,44
  9:33111,333,333           9:3222,333,333             9:22221,22221,54
  9:222111,333,441          9:111111111,441,441       10:22222,33211,55
 10:1111111111,433,55      10:1111111111,4411,55      10:2221111,3331,55
 10:222211,3322,55         12:222111111,444,66        12:333111,3333,66
 12:33222,3333,66          12:222222,4431,66         *12:4431,444,444 
 12:111111111111,552,66    12:3333,444,552            14:33332,4442,77
 14:22222211,554,77        15:33333,555,771          *16:44431,4444,88
 16:333331,5551,88         18:33333111,666,99         18:3333222,666,99
*24:4444431,888,cc
\end{verbatim}}

Here $\verb|a|,\verb|b|,\verb|c|,\ldots$ represent 10,11,12,\ldots, respectively.

\subsubsection{Dynkin diagrams of basic tuples whose indices of rigidity equals
$-2$}
We express the basic root $\alpha_{\mathbf m}$ for
$\Pidx\mathbf m=2$ using the Dynkin diagram 
(See \eqref{eq:Dynkinidx0} for $\Pidx\mathbf m=1$).
The circles in the diagram represent the simple roots in 
$\supp\alpha_{\mathbf m}$ and two circles are connected by a 
line if the inner product of the corresponding simple roots is not zero.
The number attached to a circle is the corresponding coefficient 
$n$ or $n_{j,\nu}$ in the expression \eqref{eq:Kazpart}.

For example, if $\mathbf m=22,22,22,211$, then 
$\alpha_{\mathbf m}=4\alpha_0+2\alpha_{0,1}+2\alpha_{1,1}
+2\alpha_{2,1}+2\alpha_{3,1}+\alpha_{3,2}$, 
which corresponds to the second diagram in the following.

The circle with a dot at the center means a simple root whose inner product
with $\alpha_{\mathbf m}$ does not vanish.
Moreover the type of the root system $\Pi(\mathbf m)$ 
(cf.~\eqref{eq:Pim}) corresponding to the simple roots without a dot is given.

\vspace{-.3cm}
\index{tuple of partitions!basic!Dynkin diagram}\index{Dynkin diagram}
{\small\begin{gather*}
\begin{xy}
\ar@{-}      (10,0)       *++!D{1}  *\cir<4pt>{};
             (0,0)        *+!D+!L{2}*{\cdot}*\cir<4pt>{}="A",
\ar@{-} "A"; (3.09,9.51)  *++!L{1}  *\cir<4pt>{};
\ar@{-} "A"; (-8.09,5.88) *++!D{1}  *\cir<4pt>{};
\ar@{-} "A"; (3.09,-9.51) *++!L{1}  *\cir<4pt>{};
\ar@{-} "A"; (-8.09,-5.88)*++!D{1}  *\cir<4pt>{};
\ar@{}  (0,-14) *{11,11,11,11,11\quad 5A_1}
\end{xy}\qquad
\begin{xy}
\ar@{-}               *++!D{2}  *\cir<4pt>{};
             (10,0)   *+!L+!D{4}*\cir<4pt>{}="A",
\ar@{-} "A"; (20,0)   *++!D{2}  *{\cdot} *\cir<4pt>{}="B",
\ar@{-} "B"; (30,0)   *++!D{1}  *\cir<4pt>{},
\ar@{-} "A"; (10,-10) *++!L{2}  *\cir<4pt>{},
\ar@{-} "A"; (10,10)  *++!L{2}  *\cir<4pt>{};
\ar@{}  (15,-14) *{22,22,22,211\quad D_4+A_1}
\end{xy}
\\[.1cm]
\begin{xy}
\ar@{-}               *++!D{1}  *\cir<4pt>{};
             (10,0)   *++!D{2}  *\cir<4pt>{}="A",
\ar@{-} "A"; (20,0)   *+!L+!D{3}*\cir<4pt>{}="B",
\ar@{-} "B"; (30,0)   *++!D{2}  *\cir<4pt>{}="C",
\ar@{-} "C"; (40,0)   *++!D{1}  *\cir<4pt>{},
\ar@{-} "B"; (20,-10) *++!L{1}  *{\cdot}*\cir<4pt>{}="C",
\ar@{-} "B"; (20,10)  *++!L{1}  *{\cdot}*\cir<4pt>{};
\ar@{}  (22,-14) *{21,21,111,111\quad A_5}
\end{xy}
\qquad
\begin{xy}
\ar@{-}               *++!D{1}  *{\cdot}*\cir<4pt>{};
             (10,0)   *+!L+!D{4}*\cir<4pt>{}="A",
\ar@{-} "A"; (20,0)   *++!D{3}  *\cir<4pt>{}="B",
\ar@{-} "B"; (30,0)   *++!D{2}  *\cir<4pt>{}="C",
\ar@{-} "C"; (40,0)   *++!D{1}  *\cir<4pt>{},
\ar@{-} "A"; (10,-10) *++!L{2}  *\cir<4pt>{},
\ar@{-} "A"; (10,10)  *++!L{2}  *\cir<4pt>{},
\ar@{}  (15,-14) *{31,22,22,1111\quad D_6}
\end{xy}
\\
\begin{xy}
\ar@{-}               *++!D{2}  *\cir<4pt>{};
             (10,0)   *++!D{4}  *\cir<4pt>{}="A",
\ar@{-} "A"; (20,0)   *+!L+!D{6}*\cir<4pt>{}="B",
\ar@{-} "B"; (30,0)   *++!D{4}  *\cir<4pt>{}="C",
\ar@{-} "C"; (40,0)   *++!D{2}  *{\cdot}*\cir<4pt>{}="D",
\ar@{-} "D"; (50,0)   *++!D{1}  *\cir<4pt>{},
\ar@{-} "B"; (20,10)  *++!L{4}  *\cir<4pt>{}="F",
\ar@{-} "F"; (20,20)  *++!L{2}  *\cir<4pt>{},
\ar@{}  (25,-5) *{222,222,2211\quad E_6+A_1}
\end{xy}\quad
\begin{xy}
\ar@{-}               *++!D{1}  *\cir<4pt>{};
             (10,0)   *++!D{2}  *\cir<4pt>{}="A",
\ar@{-} "A"; (20,0)   *++!D{3}  *\cir<4pt>{}="B",
\ar@{-} "B"; (30,0)   *+!L+!D{4}*\cir<4pt>{}="C",
\ar@{-} "C"; (40,0)   *++!D{3}  *\cir<4pt>{}="D",
\ar@{-} "D"; (50,0)   *++!D{2}  *\cir<4pt>{}="E",
\ar@{-} "E"; (60,0)   *++!D{1}  *\cir<4pt>{},
\ar@{-} "C"; (30,10)  *++!L{2}  *{\cdot}*\cir<4pt>{}="F",
\ar@{-} "F"; (30,20)  *++!L{1}  *\cir<4pt>{},
\ar@{}  (35,-4) *{211,1111,1111\quad A_7+A_1}
\end{xy}
\allowdisplaybreaks\\[-.1cm]
\begin{xy}
\ar@{-}               *++!D{1}  *{\cdot}*\cir<4pt>{};
             (10,0)   *++!D{3}  *\cir<4pt>{}="A",
\ar@{-} "A"; (20,0)   *+!L+!D{5}*\cir<4pt>{}="B",
\ar@{-} "B"; (30,0)   *++!D{4}  *\cir<4pt>{}="C",
\ar@{-} "C"; (40,0)   *++!D{3}  *\cir<4pt>{}="D",
\ar@{-} "D"; (50,0)   *++!D{2}  *\cir<4pt>{}="E",
\ar@{-} "E"; (60,0)   *++!D{1}  *\cir<4pt>{},
\ar@{-} "B"; (20,10)  *++!L{3}  *\cir<4pt>{}="F",
\ar@{-} "F"; (20,20)  *++!L{1}  *{\cdot}*\cir<4pt>{},
\ar@{}  (25,-5) *{221,221,11111\quad D_7}
\end{xy}\allowdisplaybreaks\\
\begin{xy}
\ar@{-}               *++!D{2}  *\cir<4pt>{};
             (10,0)   *++!D{4}  *\cir<4pt>{}="A",
\ar@{-} "A"; (20,0)   *++!D{6}  *\cir<4pt>{}="B",
\ar@{-} "B"; (30,0)   *+!L+!D{8}*\cir<4pt>{}="C",
\ar@{-} "C"; (40,0)   *++!D{6}  *\cir<4pt>{}="D",
\ar@{-} "D"; (50,0)   *++!D{4}  *\cir<4pt>{}="E",
\ar@{-} "E"; (60,0)   *++!D{2}  *{\cdot}*\cir<4pt>{}="F",
\ar@{-} "F"; (70,0)   *++!D{1}  *\cir<4pt>{}.
\ar@{-} "C"; (30,10)  *++!L{4}  *\cir<4pt>{},
\ar@{}  (35,-4) *{44,2222,22211\quad E_7+A_1}
\end{xy}\allowdisplaybreaks\\
\begin{xy}
\ar@{-}               *++!D{1}   *{\cdot}*\cir<4pt>{};
             (10,0)   *++!D{4}   *\cir<4pt>{}="A",
\ar@{-} "A"; (20,0)   *++!D{7}   *\cir<4pt>{}="B",
\ar@{-} "B"; (30,0)   *+!L+!D{10}*\cir<4pt>{}="C",
\ar@{-} "C"; (40,0)   *++!D{8}   *\cir<4pt>{}="D",
\ar@{-} "D"; (50,0)   *++!D{6}   *\cir<4pt>{}="E",
\ar@{-} "E"; (60,0)   *++!D{4}   *\cir<4pt>{}="F",
\ar@{-} "F"; (70,0)   *++!D{2}   *\cir<4pt>{}.
\ar@{-} "C"; (30,10)  *++!L{5}   *\cir<4pt>{},
\ar@{}  (35,-4) *{55,3331,22222\quad E_8}
\end{xy}\allowdisplaybreaks\\
\begin{xy}
\ar@{-}               *++!D{1}  *\cir<4pt>{};
             (10,0)   *++!D{2}  *\cir<4pt>{}="A",
\ar@{-} "A"; (20,0)   *++!D{3}  *\cir<4pt>{}="B",
\ar@{-} "B"; (30,0)   *++!D{4}  *\cir<4pt>{}="C",
\ar@{-} "C"; (40,0)   *+!L+!D{5}*\cir<4pt>{}="D",
\ar@{-} "D"; (50,0)   *++!D{4}  *\cir<4pt>{}="E",
\ar@{-} "E"; (60,0)   *++!D{3}  *\cir<4pt>{}="F",
\ar@{-} "F"; (70,0)   *++!D{2}  *\cir<4pt>{}="G".
\ar@{-} "G"; (80,0)   *++!D{1}  *\cir<4pt>{}.
\ar@{-} "D"; (40,10)  *++!L{2}  *{\cdot}*\cir<4pt>{},
\ar@{}  (45,-4) *{32,11111,111111\quad A_9}
\end{xy}\allowdisplaybreaks\\
\begin{xy}
\ar@{-}               *++!D{1}  *\cir<4pt>{};
             (10,0)   *++!D{2}  *{\cdot}*\cir<4pt>{}="A",
\ar@{-} "A"; (20,0)   *++!D{4}  *\cir<4pt>{}="B",
\ar@{-} "B"; (30,0)   *+!L+!D{6}*\cir<4pt>{}="C",
\ar@{-} "C"; (40,0)   *++!D{5}  *\cir<4pt>{}="D",
\ar@{-} "D"; (50,0)   *++!D{4}  *\cir<4pt>{}="E",
\ar@{-} "E"; (60,0)   *++!D{3}  *\cir<4pt>{}="F",
\ar@{-} "F"; (70,0)   *++!D{2}  *\cir<4pt>{}="G".
\ar@{-} "G"; (80,0)   *++!D{1}  *\cir<4pt>{}.
\ar@{-} "C"; (30,10)  *++!L{3}  *\cir<4pt>{},
\ar@{}  (35,-4) *{33,2211,111111\quad D_8+A_1}
\end{xy}\allowdisplaybreaks\\
\begin{xy}
\ar@{-}               *++!D{4}   *\cir<4pt>{};
             (10,0)   *++!D{8}   *\cir<4pt>{}="A",
\ar@{-} "A"; (20,0)   *+!L+!D{12}*\cir<4pt>{}="B",
\ar@{-} "B"; (30,0)   *++!D{10}  *\cir<4pt>{}="C",
\ar@{-} "C"; (40,0)   *++!D{8}   *\cir<4pt>{}="D",
\ar@{-} "D"; (50,0)   *++!D{6}   *\cir<4pt>{}="E",
\ar@{-} "E"; (60,0)   *++!D{4}   *\cir<4pt>{}="F",
\ar@{-} "F"; (70,0)   *++!D{2}   *{\cdot}*\cir<4pt>{}="G".
\ar@{-} "G"; (80,0)   *++!D{1}   *\cir<4pt>{}.
\ar@{-} "B"; (20,10)  *++!L{6}   *\cir<4pt>{},
\ar@{}  (25,-4) *{66,444,2222211\quad E_8+A_1}
\end{xy}\allowdisplaybreaks\\
\begin{xy}
\ar@{-}               *++!D{2}  *{\cdot}*\cir<4pt>{};
             (10,0)   *++!D{5}  *\cir<4pt>{}="A",
\ar@{-} "A"; (20,0)   *+!L+!D{8}*\cir<4pt>{}="B",
\ar@{-} "B"; (30,0)   *++!D{7}  *\cir<4pt>{}="C",
\ar@{-} "C"; (40,0)   *++!D{6}  *\cir<4pt>{}="D",
\ar@{-} "D"; (50,0)   *++!D{5}  *\cir<4pt>{}="E",
\ar@{-} "E"; (60,0)   *++!D{4}  *\cir<4pt>{}="F",
\ar@{-} "F"; (70,0)   *++!D{3}  *\cir<4pt>{}="G".
\ar@{-} "G"; (80,0)   *++!D{2}  *\cir<4pt>{}="H".
\ar@{-} "H"; (90,0)   *++!D{1}  *\cir<4pt>{}.
\ar@{-} "B"; (20,10)  *++!L{4}  *\cir<4pt>{},
\ar@{}  (25,-4) *{44,332,11111111\quad D_{10}}
\end{xy}
\end{gather*}}

\subsection{Rigid tuples}\label{sec:rigidEx}
\index{tuple of partitions!rigid}
\subsubsection{Simpson's list}
Simpson \cite{Si} classified the rigid tuples containing the partition 
$11\cdots1$ into 4 types (Simpson's list), which follows from 
Proposition~\ref{prop:sred}.
They are $H_n$, $EO_{2m}$, $EO_{2m+1}$ and $X_6$ in the following table.
\index{Simpson's list}%
\index{tuple of partitions!rigid!Simpson's list}

See Remark~\ref{rem:length} ii) for $[\Delta(\mathbf m)]$
with these rigid tuples $\mathbf m$.

\index{tuple of partitions!rigid!21111,222,33}
The simply reducible rigid tuple (cf.~\S\ref{sec:simpred})
which is not in Simpson's list is isomorphic
to $21111,222,33$.
\smallskip

\begin{tabular}{|c|c|c|c|}\hline
order&type &name &partitions\\ \hline\hline
$n$&$H_n$&hypergeometric family&$1^n,1^n,n-11$\\ \hline
$2m$&$EO_{2m}$&even family& $1^{2m},mm-11,mm$\\ \hline
$2m+1$&$EO_{2m+1}$&odd family& $1^{2m+1},mm1,m+1m$\\ \hline
$6$&$X_6=\gamma_{6,2}$&extra case&$111111,222,42$\\ \hline
$6$&$\gamma_{6,6}$&&$21111,222,33$\\ \hline
$n$&$P_n$&Jordan Pochhammer&$n-11,n-11,\ldots\in\mathcal P_{n+1}^{(n)}$\\ \hline
\end{tabular}
\smallskip

$H_1=EO_1$, $H_2=EO_2=P_2$, $H_3=EO_3$.

\subsubsection{Isomorphic classes of rigid tuples}
Let $\mathcal R_{p+1}^{(n)}$ be the set of rigid tuples in 
$\mathcal P^{(n)}_{p+1}$.
Put $\mathcal R_{p+1}=\bigcup_{n=1}^\infty \mathcal R_{p+1}^{(n)}$,
$\mathcal R^{(n)}=\bigcup_{p=2}^\infty \mathcal R_{p+1}^{(n)}$ and
$\mathcal R=\bigcup_{n=1}^\infty\mathcal R^{(n)}$.
The sets of isomorphic classes of the elements of $\mathcal R_{p+1}^{(n)}$
(resp.~$\mathcal R_{p+1}$, $\mathcal R^{(n)}$ and $\mathcal R$)
are denoted $\bar{\mathcal R}_{p+1}^{(n)}$
(resp.~$\bar{\mathcal R}_{p+1}$, $\bar{\mathcal R}^{(n)}$ and 
$\bar{\mathcal R}$).
Then the number of the elements of $\bar{\mathcal R}^{(n)}$ are as follows.
\nopagebreak
\medskip

{
\begin{tabular}{|r|r|r||r|r|r||r|r|r|}\hline
{$n$} & ${\#\bar{\mathcal R}}_3^{(n)}$
 & ${\#\bar{\mathcal R}^{(n)}}$ &
{$n$} & ${\#\bar{\mathcal R}^{(n)}_3}$ & $\#\bar{\mathcal R}^{(n)}$ &
{$n$} & ${\#\bar{\mathcal R}^{(n)}_3}$ & $\#\bar{\mathcal R}^{(n)}$ 
 \\ \hline
2 & 1 & 1 & 15&1481&2841 &28&114600&190465\\ \hline
3 & 1 & 2 & 16&2388&4644 &29&143075&230110\\ \hline
4 & 3 & 6 & 17&3276&6128 &30&190766&310804\\ \hline
5 & 5 & 11& 18&5186&9790 &31&235543&371773\\ \hline
6 & 13& 28& 19&6954&12595&32&309156&493620\\ \hline
7 & 20& 44& 20&10517&19269&33&378063&588359\\ \hline
8 & 45& 96& 21&14040&24748&34&487081&763126\\ \hline
9 & 74&157& 22&20210&36078&35&591733&903597\\ \hline
10&142&306& 23&26432&45391&36&756752&1170966\\ \hline
11&212&441& 24&37815&65814&37&907150&1365027\\ \hline
12&421&857& 25&48103&80690&38&1143180&1734857\\ \hline
13&588&1177& 26&66409&112636&39&1365511&2031018\\ \hline
14&1004&2032& 27&84644&139350&40&1704287&2554015\\ \hline
\end{tabular}}

\subsubsection{Rigid tuples of order at most 8}%
\index{tuple of partitions!rigid!order$\ \le8$}
We show all the rigid tuples whose orders are not larger than 8.
\noindent{\begin{longtable}{ll}
\texttt{2:11,11,11} \ ($H_2$: Gauss) \\
\\
\texttt{3:111,111,21} \ ($H_3: {}_3F_2$) &
\texttt{3:21,21,21,21} \ ($P_3$) \\
\\
\texttt{4:1111,1111,31} \ ($H_4: {}_4F_3)$ &
\texttt{4:1111,211,22} \ ($EO_4$: even)\\
\texttt{4:211,211,211} \ 
($B_4$, $\text{II}_2$, $\alpha_4$)&
\texttt{4:211,22,31,31} \ ($I_4,\, \text{II}^*_2$)\\
\texttt{4:\underline{22,22,22,31}} \ ($P_{4,4}$) &
\texttt{4:31,31,31,31,31} \ ($P_4$) \\
\\
\texttt{5:11111,11111,41} \ ($H_5: {}_5F_4$) & 
\texttt{5:11111,221,32} \ ($EO_5$: odd)  \\
\texttt{5:2111,2111,32} \ ($C_5$) &
\texttt{5:2111,221,311} \ ($B_5$, $\text{III}_2$) \\
\texttt{5:\underline{221,221,221}} \ ($\alpha_5$) &
\texttt{5:221,221,41,41} \ ($J_5$)  \\
\texttt{5:221,32,32,41} &
\texttt{5:311,311,32,41} \ ($I_5,\, \text{III}^*_2$) \\
\texttt{5:\underline{32,32,32,32}} \ ($P_{4,5}$) &
\texttt{5:\underline{32,32,41,41,41}} \ ($M_5$) \\
\texttt{5:41,41,41,41,41,41} \ ($P_5$)\\
\\
\texttt{6:111111,111111,51} \ ($H_6: {}_6F_5$)\phantom{A}  
&\texttt{6:111111,222,42} ($D_6=X_6$: extra)\\
\texttt{6:111111,321,33} \ ($EO_6$: even)&\texttt{6:21111,2211,42} \ ($E_6$)\\
\texttt{6:\underline{21111,222,33}} \ ($\gamma_{6,6}$)&\texttt{6:21111,222,411} \ ($F_6$, $\text{IV}$)\\
\texttt{6:21111,3111,33} \ ($C_6$) &\texttt{6:\underline{2211,2211,33}} \ ($\beta_6$)\\
\texttt{6:2211,2211,411} \ ($G_6$) &\texttt{6:2211,321,321}\\
\texttt{6:\underline{222,222,321}} \ ($\alpha_6$)&\texttt{6:222,3111,321}\\
\texttt{6:3111,3111,321} \ ($B_6$, $\text{II}_3$) &\texttt{6:2211,222,51,51} \ ($J_6$)\\
\texttt{6:2211,33,42,51}       &\texttt{6:\underline{222,33,33,51}}\\
\texttt{6:222,33,411,51}       &\texttt{6:3111,33,411,51}\ ($I_6,\,\text{II}^*_3$)\\
\texttt{6:321,321,42,51}       &\texttt{6:321,42,42,42}\\
\texttt{6:\underline{33,33,33,42}} \ ($P_{4,6}$)
                          &\texttt{6:\underline{33,33,411,42}}\\
\texttt{6:33,411,411,42}  &\texttt{6:411,411,411,42} \ ($N_6,\,\text{IV}^*$)\\
\texttt{6:33,42,42,51,51} \ ($M_6$)&\texttt{6:321,33,51,51,51} \ ($K_6$)\\
\texttt{6:411,42,42,51,51}     &\texttt{6:51,51,51,51,51,51,51} \ ($P_6$)\\
\\
\texttt{7:1111111,1111111,61} \ ($H_7$)&  \texttt{7:1111111,331,43} \ ($EO_7$)\\
\texttt{7:211111,2221,52} \ ($D_7$) & \texttt{7:211111,322,43} \ ($\gamma_7$)\\
\texttt{7:22111,22111,52} \ ($E_7$) & \texttt{7:22111,2221,511} \ ($F_7$)\\
\texttt{7:22111,3211,43}  & \texttt{7:22111,331,421}\\
\texttt{7:\underline{2221,2221,43}} \ ($\beta_7$) &
\texttt{7:2221,31111,43} \\
\texttt{7:2221,322,421} & \texttt{7:\underline{2221,331,331}}\\
\texttt{7:2221,331,4111} &  \texttt{7:31111,31111,43} \ ($C_7$)\\
\texttt{7:31111,322,421} & \texttt{7:31111,331,4111} \ ($B_7,\ \text{III}_3$)\\
\texttt{7:3211,3211,421} & \texttt{7:\underline{3211,322,331}}\\
\texttt{7:3211,322,4111} & \texttt{7:\underline{322,322,322}} \ ($\alpha_7$)\\
\texttt{7:2221,2221,61,61} \ ($J_7$) & \texttt{7:2221,43,43,61} \\
\texttt{7:3211,331,52,61} &  \texttt{7:322,322,52,61}\\
\texttt{7:322,331,511,61} & \texttt{7:322,421,43,61}\\
\texttt{7:322,43,52,52} & \texttt{7:\underline{331,331,43,61}}\\
\texttt{7:331,43,511,52} & \texttt{7:4111,4111,43,61} \ ($I_7,\,\text{III}_3^*$) \\
\texttt{7:4111,43,511,52} & \texttt{7:421,421,421,61} \\
\texttt{7:421,421,52,52} & \texttt{7:\underline{421,43,43,52}} \\
\texttt{7:\underline{43,43,43,43}} \ ($P_{4,7}$) & \texttt{7:421,43,511,511} \\
\texttt{7:331,331,61,61,61} \ ($L_7$) & \texttt{7:421,43,52,61,61} \\
\texttt{7:\underline{43,43,43,61,61}} & \texttt{7:43,52,52,52,61} \\
 \texttt{7:511,511,52,52,61} \ ($N_7$) &  \texttt{7:43,43,61,61,61,61} \ ($K_7$)\\
\texttt{7:52,52,52,61,61,61} \ ($M_7$) & \texttt{7:61,61,61,61,61,61,61,61} \ ($P_7$)\\
\\
\texttt{8:11111111,11111111,71} \ ($H_8$) &
\texttt{8:11111111,431,44} \ ($EO_8$)\\
\texttt{8:2111111,2222,62} \ ($D_8$) &
\texttt{8:2111111,332,53} \\ \texttt{8:2111111,422,44}& 
\texttt{8:221111,22211,62} \ ($E_8$) \\  
\texttt{8:221111,2222,611} \ ($F_8$)&
\texttt{8:221111,3311,53} \\ 
\texttt{8:\underline{221111,332,44}} \ ($\gamma_8$)&
\texttt{8:221111,4211,44} \\ 
\texttt{8:22211,22211,611} \ ($G_8$)&
\texttt{8:22211,3221,53}\\
\texttt{8:\underline{22211,3311,44}} & 
\texttt{8:22211,332,521}\\
\texttt{8:22211,41111,44}&
\texttt{8:22211,431,431} \\ 
\texttt{8:22211,44,53,71}&
\texttt{8:\underline{2222,2222,53}} \ ($\beta_{8,2}$) \\ 
\texttt{8:2222,32111,53}&
\texttt{8:\underline{2222,3221,44}} \ ($\beta_{8,4}$) \\  
\texttt{8:2222,3311,521}&
\texttt{8:2222,332,5111}\\ 
\texttt{8:2222,422,431}&
\texttt{8:311111,3221,53} \\
\texttt{8:311111,332,521} & 
\texttt{8:311111,41111,44} \ ($C_8$)\\
\texttt{8:32111,32111,53} & 
\texttt{8:\underline{32111,3221,44}}\\
\texttt{8:32111,3311,521} & 
\texttt{8:32111,332,5111}\\
\texttt{8:32111,422,431} & 
\texttt{8:3221,3221,521}\\
\texttt{8:3221,3311,5111} & 
\texttt{8:\underline{3221,332,431}}\\
\texttt{8:\underline{332,332,332}} \ ($\alpha_8$)& 
\texttt{8:\underline{332,332,4211}}\\
\texttt{8:332,41111,422} &
\texttt{8:332,4211,4211} \\ 
\texttt{8:3221,4211,431} &
\texttt{8:\underline{3311,3311,431}} \\
\texttt{8:\underline{3311,332,422}} &
\texttt{8:3221,422,422} \\
\texttt{8:3311,4211,422} &
\texttt{8:41111,41111,431} \ ($B_8,\,II_4$) \\
\texttt{8:41111,4211,422} &
\texttt{8:4211,4211,4211} \\
\texttt{8:22211,2222,71,71} \ ($J_8$) &
\texttt{8:\underline{2222,44,44,71}} \\
\texttt{8:3221,332,62,71} &
\texttt{8:3221,44,521,71} \\
\texttt{8:3221,44,62,62}  &
\texttt{8:3311,3311,62,71} \\
\texttt{8:3311,332,611,71} & 
\texttt{8:3311,431,53,71} \\
\texttt{8:3311,44,611,62} &
\texttt{8:332,422,53,71} \\
\texttt{8:\underline{332,431,44,71}} & 
\texttt{8:332,44,611,611}\\
\texttt{8:332,53,53,62} &
\texttt{8:41111,44,5111,71} \ ($I_8,\,II_4^*$) \\ 
\texttt{8:41111,44,611,62} &
\texttt{8:4211,422,53,71} \\
\texttt{8:4211,44,611,611} &
\texttt{8:4211,53,53,62} \\
\texttt{8:\underline{422,422,44,71}} &
\texttt{8:422,431,521,71} \\
\texttt{8:422,431,62,62} &
\texttt{8:\underline{422,44,53,62}} \\
\texttt{8:\underline{431,44,44,62}} & 
\texttt{8:\underline{431,44,53,611}}\\
\texttt{8:422,53,53,611} & 
\texttt{8:431,431,611,62}\\
\texttt{8:431,521,53,62} &
\texttt{8:\underline{44,44,44,53}} \ ($P_{4,8}$) \\
\texttt{8:44,5111,521,62} &
\texttt{8:44,521,521,611} \\ 
\texttt{8:\underline{44,521,53,53}} &
\texttt{8:5111,5111,53,62} \\
\texttt{8:5111,521,53,611} & 
\texttt{8:521,521,521,62} \\
\texttt{8:332,332,71,71,71}  &
\texttt{8:332,44,62,71,71}  \\
\texttt{8:4211,44,62,71,71} &
\texttt{8:422,44,611,71,71} \\
\texttt{8:431,53,53,71,71}  &
\texttt{8:\underline{44,44,62,62,71}} \\
\texttt{8:44,53,611,62,71} &
\texttt{8:521,521,53,71,71} \\ 
\texttt{8:521,53,62,62,71} &
\texttt{8:53,53,611,611,71} \\ 
\texttt{8:53,62,62,62,62} &
\texttt{8:611,611,611,62,62} \ ($N_8$) \\ 
\texttt{8:53,53,62,71,71,71} &
\texttt{8:431,44,71,71,71,71} \ ($K_8$) \\
\texttt{8:611,62,62,62,71,71} \ ($M_8$)& 
\texttt{8:71,71,71,71,71,71,71,71,71} \ ($P_8$)
\end{longtable}}

\vspace*{-10pt}
Here the underlined tuples are not of Okubo type (cf.~\eqref{eq:OkuboT}).

The tuples $H_n$, $EO_n$ and $X_6$ are tuples in Simpson's list.
The series $A_n=EO_n$, $B_n$, $C_n$, $D_n$, $E_n$, $F_n$, $G_{2m}$, $I_n$, $J_n$, 
$K_n$, $L_{2m+1}$, $M_n$ and $N_n$ are given in \cite{Ro} 
and called submaximal series.   
The Jordan-Pochhammer tuples are denoted by $P_n$
and the series $H_n$ and $P_n$ are called maximal series by \cite{Ro}.
The series $\alpha_n,\beta_n,\gamma_n$ and $\delta_n$ are given in \cite{Ro} and
called minimal series.  See \S\ref{sec:Rob} for these series introduced by \cite{Ro}.
Then $\delta_n=P_{4,n}$ and they are generalized Jordan-Pochhammer tuples
(cf.~Example~\ref{ex:JPH} and \S\ref{sec:minseries}). 
Moreover $\text{II}_n$, $\text{II}^*_n$, $\text{III}_n$, $\text{III}^*_n$, IV and
$\text{IV}^*$ are in Yokoyama's list in \cite{Yo}
(cf.~\S\ref{sec:Bseries}).
\medskip

\centerline{\bf Hierarchy of rigid triplets}
\nopagebreak
\noindent
{\small\xymatrix{
{1^2,1^2,1^2}\ar[r]\ar[ddrr]&{21,1^3,1^3}\ar[r]\ar[dr]\ar[ddr]
 &{31,1^4,1^4}\ar[r]\ar[ddr]&{41,1^5,1^5}\ar[r]
 &{51,1^6,1^6}\\
{1,1,1}\ar[u]&&{2^2,21^2,1^4}\ar[r]\ar[dr]\ar[ddr]
 &{32,2^21,1^5}\ar[r]\ar[dr]\ar[ddr]&{3^2,321,1^6}\\
&&21^2,21^2,21^2\ar[dr]\ar[ddr]\ar[drr]&32,21^3,21^3\ar[dr]
 &{42,2^3,1^6}\\
&&              &31^2,2^21,21^3\ar[dr]&321,31^3,2^3\\
&&              &\underline{2^21,2^21,2^21}\ar[r]&321,321,2^21^2
}
\\[-.2cm]
\rightline{$\vdots$\phantom{ABCDEFGH}}
}

Here the arrows represent certain operations $\p_\ell$ of tuples
given by Definition~\ref{def:pell}.

\subsection{Jordan-Pochhammer family}
\label{sec:PoEx} $P_n$\index{Jordan-Pochhammer}%
\index{000Delta1@$[\Delta(\mathbf m)]$}%
\index{00P@$P_{p+1,n},\ P_n$}

We have studied the the Riemann scheme of this family in
Example~\ref{ex:midconv} iii). 

$\mathbf m=(p-11,p-11,\dots,p-11)\in\mathcal P_{p+1}^{(p)}$
\begin{align*}
&\begin{Bmatrix}
     x=0         &1=\frac1{c_1} & \cdots & \frac1{c_{p-1}}&\infty\\
     [0]_{(p-1)}  & [0]_{(p-1)}   & \cdots & [0]_{(p-1)}&[1-\mu]_{(p-1)}\\
     \lambda_0+\mu& \lambda_1+\mu & \cdots & \lambda_{p-1}+\mu&-\lambda_0-\dots-\lambda_{p-1}-\mu
    \end{Bmatrix}\\
 \Delta(\mathbf m)&=\{\alpha_0,\,\alpha_0+\alpha_{j,1}\,;\,j=0,\dots,p\}\\
[\Delta(\mathbf m)]&=1^{p+1}\cdot(p-1)\\
P_p&=H_1\oplus P_{p-1}:p+1=(p-1)H_1\oplus H_1:1
\end{align*}
Here the number of the decompositions of a given type is shown after
the decompositions.  For example, $P_p=H_1\oplus P_{p-1}:p+1=(p-1)H_1\oplus H_1:1$ 
represents the decompositions
\[
 \begin{split}
  \mathbf m&=10,\dots,\overset{\underset{\smallsmile}\nu}{01},\dots,10
  \oplus p-21,\dots,\overset{\underset{\smallsmile}\nu}{p-10},\ldots, p-21
  \qquad(\nu=0,\dots,p)\\
   &=(p-1)(10,\dots,10)\oplus 01,\dots,01.
 \end{split}
\]

The differential equation $P_{P_p}(\lambda,\mu)u=0$ 
with this Riemann scheme is given by
\[
P_{P_{p}}(\lambda,\mu):=\RAd\bigl(\p^{-\mu}\bigr)\circ
 \RAd\Bigl(x^{\lambda_0}\prod_{j=1}^{p-1}(1-c_jx)^{\lambda_j}\Bigr)\p
\]
and then
\begin{equation}\label{eq:Poch}
 \begin{split}
 P_{P_{p}}(\lambda,\mu)&=\sum_{k=0}^p p_k(x)\p^{p-k},\\
 p_k(x):\!&=\binom{-\mu+p-1}{k}
  p_0^{(k)}(x) + 
  \binom{-\mu+p-1}{k-1}
  q^{(k-1)}(x)
\end{split}\end{equation}
with
\begin{equation}
p_0(x)=x\prod_{j=1}^{p-1}(1-c_jx),\quad
q(x)=p_0(x)\Bigl(-\frac{\lambda_0}{x}+\sum_{j=1}^{p-1}
 \frac{c_j\lambda_j}{1-c_jx}\Bigr).
\end{equation}
It follows from Theorem~\ref{thm:irred} that the equation is 
irreducible if and only if
\begin{equation}
 \lambda_j\notin\mathbb Z\ \ (j=0,\dots,p-1),\ 
 \mu\notin\mathbb Z\text{ \ and \ } \lambda_0+\cdots+\lambda_{p-1}
 +\mu\notin\mathbb Z.
\end{equation}
It follows from Proposition~\ref{prop:shift} that the shift operator defined by the map 
$u\mapsto \p u$ is bijective if and only if
\begin{equation}
 \mu\notin\{1,2,\dots,p-1\}\text{ \ and \ }
 \lambda_0+\cdots+\lambda_{p-1}+\mu\ne0.
\end{equation}
The normalized solution at $0$ corresponding to the exponent 
$\lambda_0+\mu$ is
\begin{align*}
u_0^{\lambda_0+\mu}(x)&=\frac{\Gamma(\lambda_0+\mu+1)}
    {\Gamma(\lambda_0+1)\Gamma(\mu)}\int_0^x\Bigl(t^{\lambda_0}
  \prod_{j=1}^{p-1}(1-c_jt)^{\lambda_j}\Bigr)(x-t)^{\mu-1}dt
\allowdisplaybreaks\\
&=\frac{\Gamma(\lambda_0+\mu+1)}
    {\Gamma(\lambda_0+1)\Gamma(\mu)}\int_0^x\sum_{m_1=0}^\infty
 \cdots\sum_{m_{p-1}=0}^\infty
 \frac{(-\lambda_1)_{m_1}\cdots(-\lambda_{p-1})_{m_{p-1}}}
 {m_1!\cdots m_{p-1}!}\\
 &\qquad{}c_2^{m_2}\cdots c_{p-1}^{m_{p-1}}
    t^{\lambda_0+m_1+\cdots+m_{p-1}}(x-t)^{\mu-1}dt
\allowdisplaybreaks\\
 &=\sum_{m_1=0}^\infty
 \cdots\sum_{m_{p-1}=0}^\infty
 \frac{(\lambda_0+1)_{m_1+\cdots+m_{p-1}}
  (-\lambda_1)_{m_1}\cdots(-\lambda_{p-1})_{m_{p-1}}}
 {(\lambda_0+\mu+1)_{m_1+\cdots+m_{p-1}}m_1!\cdots m_{p-1}!}\\
 &\qquad
 c_2^{m_2}\cdots c_{p-1}^{m_{p-1}}
 x^{\lambda_0+\mu+m_1+\cdots+m_{p-1}}\\
 &=x^{\lambda_0+\mu}\Bigl
 (1-\frac{(\lambda_0+1)(\lambda_1c_1+\cdots+\lambda_{p-1}c_{p-1})}
  {\lambda_0+\mu+1}x+\cdots\Bigr).
\end{align*}
This series expansion of the solution is easily obtained from the
formula in \S\ref{sec:series} (cf.~Theorem~\ref{thm:expsol})
and Theorem~\ref{thm:shifm1} gives
the recurrence relation
\begin{equation}
 u_0^{\lambda_0+\mu}(x)= u_0^{\lambda_0+\mu}(x)
  \!\bigm|_{\lambda_1\mapsto\lambda_1-1}
  - \Bigl(\frac{\lambda_0}{\lambda_0+\mu}u_0^{\lambda_0+\mu}(x)\Bigr)
    \!\Bigm|_{\substack{\lambda_0\mapsto\lambda_0+1\\\lambda_1\mapsto\lambda_1-1}}.
\end{equation}
Lemma~\ref{lem:conn} with $a=\lambda_0$, $b=\lambda_1$
and $u(x)=\prod_{j=2}^{p-1}(1-c_jx)^{\lambda_j}$ gives
the following connection coefficients
\begin{align*}
c(0:\lambda_0+\mu\!\rightsquigarrow\!1:\lambda_1+\mu)&=
 \frac{\Gamma(\lambda_0+\mu+1)\Gamma(-\lambda_1-\mu)}
 {\Gamma(\lambda_0+1)\Gamma(-\lambda_1)}\prod_{j=2}^{p-1}(1-c_j)^{\lambda_j},
 \allowdisplaybreaks\\
c(0:\lambda_0+\mu\!\rightsquigarrow\!1:0)&=
 \frac{\Gamma(\lambda_0+\mu+1)}{\Gamma(\mu)\Gamma(\lambda_0+1)}\int_0^1
 t^{\lambda_0}(1-t)^{\lambda_1+\mu-1}\prod_{j=2}^{p-1}(1-c_jt)^{\lambda_j}dt\\
 &\hspace{-80pt}=\frac{\Gamma(\lambda_0+\mu+1)\Gamma(\lambda_1+\mu)}
   {\Gamma(\mu)\Gamma(\lambda_0+\lambda_1+\mu+1)}
   F(\lambda_0+1,-\lambda_2,\lambda_0+\lambda_1+\mu+1;c_2)\qquad(p=3).
\end{align*}
Here we have
\begin{equation}
 u_0^{\lambda_0+\mu}(x)=\sum_{k=0}^\infty
   C_k(x-1)^k+ \sum_{k=0}^\infty C'_k(x-1)^{\lambda_1+\mu+k}
\end{equation}
for $0< x< 1$ with $C_0=c(0:\lambda_0+\mu\!\rightsquigarrow\!1:0)$ and 
$C'_0=c(0:\lambda_0+\mu\!\rightsquigarrow\!1:\lambda_1+\mu)$.
Since  $\frac {d^ku_0^{\lambda_0+\mu}}{dx^k}$ is a solution of the 
equation $P_{P_p}(\lambda,\mu-k)u=0$, we have
\begin{equation}
 C_k =  \frac{\Gamma(\lambda_0+\mu+1)}{\Gamma(\mu-k)\Gamma(\lambda_0+1)k!}
 \int_0^1
 t^{\lambda_0}(1-t)^{\lambda_1+\mu-k-1}\prod_{j=2}^{p-1}(1-c_jt)^{\lambda_j}dt.
\end{equation}
When $p=3$, 
\[C_k=\frac{\Gamma(\lambda_0+\mu+1)\Gamma(\lambda_1+\mu-k)}
        {\Gamma(\mu-k)\Gamma(\lambda_0+\lambda_1+\mu+1-k)k!}
   F(\lambda_0+1,-\lambda_2,\lambda_0+\lambda_1+\mu+1-k;c_2).
\]

Put
\[
\begin{split}
  u_{\lambda,\mu}(x)&=\frac1{\Gamma(\mu)}
  \int_0^x\Bigl(t^{\lambda_0}
  \prod_{j=1}^{p-1}(1-c_jt)^{\lambda_j}\Bigr)(x-t)^{\mu-1}dt
  =\p^{-\mu}v_{\lambda},\\
  v_{\lambda}(x):\!&= x^{\lambda_0}\prod_{j=1}^{p-1}(1-c_jx)^{\lambda_j}.
\end{split}
\]
We have
\begin{equation}\label{eq:PoCon0}
\begin{split}
 u_{\lambda,\mu+1}&=\p^{-\mu-1}v_{\lambda}=\p^{-1}\p^{-\mu}v_{\lambda}
 =\p^{-1}u_{\lambda,\mu},\\
 u_{\lambda_0+1,\lambda_1,\ldots,\mu}&
 =\p^{-\mu}v_{\lambda_0+1,\lambda_1,\ldots}=\p^{-\mu}xv_\lambda
 =-\mu\p^{-\mu-1}v_{\lambda}+x\p^{-\mu}v_\lambda\\
 &=-\mu\p^{-1}u_{\lambda,\mu}+xu_{\lambda,\mu},\\
 u_{\ldots,\lambda_j+1,\ldots}&=\p^{-\mu}(1-c_jx)v_{\lambda}=
 \p^{-\mu}v_\lambda +c_j\mu\p^{-\mu-1}v_\lambda - c_jx\p^{-\mu}v_\lambda\\
 &=(1-c_jx)u_{\lambda,\mu}+c_j\mu\p^{-1}u_{\lambda,\mu}.
\end{split}\end{equation}
From these relations with $P_{P_p}u_{\lambda,\mu}=0$ 
we have all the contiguity relations.
For example
\begin{align}
 \p u_{\lambda_0,\dots,\lambda_{p-1},\mu+1}&= u_{\lambda,\mu},\\
 \p u_{\lambda_0+1,\ldots,\lambda_{p-1},\mu}&=
     (x\p +1-\mu) u_{\lambda,\mu},\notag\\
 \p u_{\ldots,\lambda_j+1,\ldots,\mu}&=
    \bigl((1-c_jx)\p - c_j(1-\mu)\bigr)u_{\lambda,\mu}\notag
\end{align}
and
\begin{align*}
 P_{P_p}(\lambda,\mu+1)&=\sum_{j=0}^{p-1}p_j(x)\p^{p-j} + p_n\\[-5pt]
 p_n &= (-1)^{p-1}c_1\dots c_{p-1}\Bigl((-\mu-1)_p+
    (-\mu)_{p-1} \sum_{j=0}^{p-1}\lambda_j\Bigr)\\
    &=c_1\cdots c_{p-1}(\mu+2-p)_{p-1}
        (\lambda_0+\cdots+\lambda_{p-1}-\mu-1)
\end{align*}
and hence
\begin{align*}
 \Bigl(\sum_{j=0}^{p-1}p_j(x)\p^{p-j-1}\Bigr)u_{\lambda,\mu}&=
 -p_n u_{\lambda,\mu+1} = -p_n \p^{-1}u_{\lambda,\mu}.
\end{align*}
Substituting this equation to \eqref{eq:PoCon0}, we have
$Q_j\in W(x;\lambda,\mu)$ such that $Q_ju_{\lambda,\mu}$
equals  
$u_{(\lambda_\nu+\delta_{\nu,j})_{\nu=0,\dots,p-1},\mu}$ for $j=0,\dots,p-1$,
respectively.
The operators $R_j\in W(x;\lambda,\mu)$ satisfying 
$R_jQ_ju_{\lambda,\mu}=u_{\lambda,\mu}$ are calculated by the Euclid 
algorithm, namely, we find $S_j\in W(x;\lambda,\mu)$ so that 
$R_jQ_j+S_jP_{P_p}=1$.
Thus we also have
$T_j\in W(x;\lambda,\mu)$ such that $T_ju_{\lambda,\mu}$
equals  
$u_{(\lambda_\nu-\delta_{\nu,j})_{\nu=0,\dots,p-1},\mu}$ for $j=0,\dots,p-1$,
respectively.

As is shown in \S\ref{sec:Versal} the 
\textsl{Versal Jordan-Pochhammer operator} $\tilde P_{P_p}$ is given by 
\eqref{eq:Poch} with 
\index{Jordan-Pochhammer!versal}
\begin{equation}
 p_0(x)=\prod_{j=1}^{p}(1-c_jx),\quad
 q(x)=\sum_{k=1}^p\lambda_kx^{k-1}\prod_{j=k+1}^p(1-c_jx).
\end{equation}
If $c_1,\dots,c_p$ are different to each other, the Riemann scheme
of $\tilde P_{P_p}$ is
\[
  \begin{Bmatrix} 
    x=\frac1{c_j}\ (j=1,\dots,p) & \infty\\
    [0]_{(p-1)} & [1-\mu]_{(p-1)}\\
    \displaystyle\sum_{k=j}^p
     \frac{\lambda_k}{c_j\prod_{\substack{1\le\nu\le k\\\nu\ne j}}
     (c_j-c_\nu)}+\mu& 
    \displaystyle\sum_{k=1}^p\frac{(-1)^k\lambda_k}{c_1\dots c_k}-\mu
  \end{Bmatrix}.
\]
The solution of $\tilde P_{P_p}u=0$ is given by
\[
 u_C(x) = \int_C\Bigl(\exp
        \int_0^t\sum_{j=1}^p\frac{-\lambda_j s^{j-1}}{\prod_{1\le\nu\le j}
         (1-c_\nu s)}ds\Bigr)(x-t)^{\mu-1}dt.
\]
Here the path $C$ starting from a singular point and ending at
a singular point is chosen so that the integration has a meaning.
In particular when $c_1=\cdots=c_p=0$, we have
\[
  u_C(x)=\int_C\exp\Bigl(-\sum_{j=1}^p\frac{\lambda_jt^j}{j!}\Bigr)
         (x-t)^{\mu-1}dt
\]
and if $\lambda_p\ne0$, the path $C$ starts from $\infty$ to one of the 
$p$ independent directions $\lambda_p^{-1}e^{\frac{2\pi\nu\sqrt{-1}}{p}+t}$ 
$(t\gg1,\ \nu=0,1,\dots,p-1)$ and ends at $x$.

Suppose $n=2$.  
The corresponding Riemann scheme for the generic characteristic exponents 
and its construction from the Riemann scheme of the trivial equation 
$u'=0$ is as follows:
\begin{align*}\begin{split}
 &\begin{Bmatrix}
x=0 & 1& \infty\\
 b_0 & c_0 & a_0\\
 b_1 & c_1 & a_1\\
\end{Bmatrix}\qquad(\text{Fuchs relation: } a_0+a_1+b_0+b_1+c_0+c_1=1)\\
&\quad\xleftarrow{x^{b_0}(1-x)^{c_0}\p^{-a_1-b_1-c_1}}
 \begin{Bmatrix}
x=0 & 1& \infty\\
 -a_1-b_0-c_1 & -a_1-b_1-c_0 & -a_0+a_1+1\\
\end{Bmatrix}\\
&\quad\xleftarrow{x^{-a_1-b_0-c_1}(1-x)^{-a_1-b_1-c_0}}
 \begin{Bmatrix}
x=0 & 1& \infty\\
 0 & 0 & 0\\
\end{Bmatrix}.
\end{split}\end{align*}
Then our fractional calculus gives the corresponding equation
\begin{align}\begin{split}
 &x^2(1-x)^2u''-x(1-x)\bigl((a_0+a_1+1)x+b_0+b_1-1\bigr)u'\\
 &\quad{}+\bigl(a_0a_1x^2-(a_0a_1+b_0b_1-c_0c_1)x+b_0b_1\bigr)u=0,
\end{split}\end{align}
the connection formula
\begin{align}\begin{split}
 &c(0\!:\!b_1 \rightsquigarrow 1\!:\!c_1)=\frac{
  \Gamma(c_0-c_1)
  \Gamma(b_1-b_0+1)
 }{
  \Gamma(a_0+b_1+c_0)
  \Gamma(a_1+b_1+c_0)
 }
\end{split}\end{align}
and expressions of its solution by the integral representation
\begin{align}\label{eq:GGI}\begin{split}
 &\int_0^{x}x^{b_0}(1-x)^{c_0}(x-s)^{a_1+b_1+c_1-1}
 s^{-a_1-c_1-b_0}(1-s)^{-a_1-b_1-c_0-}ds\\
 &\quad=\frac{
  \Gamma(a_0+b_1+c_0)
  \Gamma(a_1+b_1+c_1)
 }{
  \Gamma(b_1-b_0+1)
 }x^{b_1}\phi_{b_1}(x)
\end{split}\end{align}
and the series expansion
\begin{align}\label{eq:GGS}\begin{split}
 &\sum_{n\ge0}\frac{(a_0+b_1+c_0)_n(a_1+b_1+c_0)_n}{(b_1-b_0+1)_nn!}(1-x)^{c_0}x^{b_1+n}\\
 &\quad=(1-x)^{c_0}x^{b_1}F(a_0+b_1+c_0,a_1+b_1+c_0,b_1-b_0-1;x).
\end{split}\end{align}
Here $\phi_{b_1}(x)$ is a holomorphic function in a neighborhood of 
$0$ satisfying $\phi_{b_1}(0)=1$ for generic spectral parameters.
We note that the transposition of $c_0$ and $c_1$ in \eqref{eq:GGS}
gives a nontrivial equality, which corresponds to Kummer's 
relation of Gauss hypergeometric function and the similar statement
is true for \eqref{eq:GGI}.
In general, different procedures of reduction of a equation give
different expressions of its solution.
\subsection{Hypergeometric family} $H_n$\label{sec:GHG}
\index{hypergeometric equation/function!generalized}%
\index{00Hn@$H_n$}

We examine the hypergeometric family which corresponds to the 
equations satisfied by the generalized hypergeometric series 
\eqref{eq:IGHG}.  Its spectral type is in the Simpson's list
(cf.~\S\ref{sec:rigidEx}).

$\mathbf m=(1^n,n-11,1^n)$ : ${}_nF_{n-1}(\alpha,\beta;z)$
\begin{align*}
 1^n,n-11,1^n&=1,10,1\oplus 1^{n-1},n-21,1^{n-1}\\
 \Delta(\mathbf m)&=\{\alpha_0+\alpha_{0,1}
  +\cdots+\alpha_{0,\nu}+\alpha_{2,1}+\cdots+\alpha_{2,\nu'}\,;\,\\
 &\qquad
  0\le\nu<n,\ 0\le\nu'< n\}\\
 [\Delta(\mathbf m)]&=1^{n^2}\\
 H_n &= H_1 \oplus H_{n-1}:n^2\\
 H_n &\overset{1}{\underset{R2E0}\longrightarrow} H_{n-1}
\end{align*}

Since $\mathbf m$ is of Okubo type, we have a system 
of Okubo normal form with the spectral type $\mathbf m$.
Then the above $R2E0$ represents the reduction of systems of
equations of Okubo normal form due to Yokoyama \cite{Yo2}.
The number $1$ on the arrow represents a reduction by a middle
convolution and the number shows the difference of the orders.
\begin{gather}\label{eq:HnR}
\begin{Bmatrix}
     x=0 & 1 & \infty\\
     \lambda_{0,1} & [\lambda_{1,1}]_{(n-1)}   & \lambda_{2,1}\\
     \vdots        &              & \vdots \\
     \lambda_{0,n-1}&  &\lambda_{2,n-1}\\
     \lambda_{0,n}  & \lambda_{1,2} &\lambda_{2,n}
    \end{Bmatrix}
,\quad \begin{Bmatrix}
    x = 0 & 1 & \infty\\
     1-\beta_1    & [0]_{(n-1)}   & \alpha_1\\
     \vdots       &     & \vdots \\
     1-\beta_{n-1}&     & \alpha_{n-1}\\
     0 &   -\beta_n    & \alpha_n
    \end{Bmatrix}\\
  \sum_{\nu=1}^n(\lambda_{0,\nu}+\lambda_{2,\nu})
  +(n-1)\lambda_{1,1}+\lambda_{1,2}=n-1,\notag\\
  \alpha_1+\cdots+\alpha_n=\beta_1+\cdots+\beta_n.\notag
\end{gather}

It follows from Theorem~\ref{thm:sftUniv} that the universal operators
\[
 P_{H_1}^0(\lambda),\ P_{H_1}^{2}(\lambda),\ P_{H_{n-1}}^0(\lambda),\ 
 P_{H_{n-1}}^1(\lambda),\ P_{H_{n-1}}^2(\lambda).
\]
are shift operators for the universal model $P_{H_n}(\lambda)u=0$.

The Riemann scheme of the operator
\[
P=\RAd(\p^{-\mu_{n-1}})\circ\RAd(x^{\gamma_{n-1}})\circ\cdots
   \circ\RAd(\p^{-\mu_1})\circ\RAd(x^{\gamma_1}(1-x)^{\gamma'})\p
\]
equals
\begin{equation}\label{eq:HGRS0}
{
 \begin{Bmatrix}
  x=0 & 1 & \infty\\
  0  & [0]_{(n-1)} & 1-\mu_{n-1}\\
  (\gamma_{n-1}+\mu_{n-1}) & & 1-(\gamma_{n-1} + \mu_{n-1}) - \mu_{n-2}\\
  \displaystyle\sum_{j={n-2}}^{n-1}(\gamma_j+\mu_j)&&
  1 - \displaystyle\sum_{j={n-2}}^{n-1}(\gamma_j+\mu_j) - \mu_{n-3}\\
  \vdots && \vdots\\
 \displaystyle\sum_{j=2}^{n-1}(\gamma_j+\mu_j)&&
  1 - \displaystyle\sum_{j=2}^{n-1}(\gamma_j+\mu_j) -\mu_1\\
 \displaystyle\sum_{j=1}^{n-1}(\gamma_j+\mu_j)&
   \gamma'+\displaystyle\sum_{j=1}^{n-1}\mu_j&
    -\gamma'-\displaystyle\sum_{j=1}^{n-1}(\gamma_j+\mu_j)
 \end{Bmatrix},}
\end{equation}
which is obtained by the induction on $n$ with 
Theorem~\ref{thm:GRSmid} and corresponds to the second Riemann scheme
in \eqref{eq:HnR} by putting
\begin{equation}\begin{aligned}
 \gamma_j&=\alpha_{j+1}-\beta_j&&(j=1,\dots,n-2),&
 \gamma'&=-\alpha_1+\beta_1-1,\\
 \mu_j &= -\alpha_{j+1}+\beta_{j+1}&&(j=1,\dots,n-1),&
 \mu_{n-1} &= 1-\alpha_n.
\end{aligned}\end{equation}
The integral representation of the local solutions at $x=0$ (resp.\ 1 and 
$\infty$) corresponding
to the exponents $\sum_{j=1}^{n-1}(\gamma_j+\mu_j)$ 
(resp.\ $\gamma'+\sum_{j=1}^{n-1}\mu_j$ and $-\gamma'-\sum_{j=1}^{n-1}(\gamma_j+\mu_j)$
are given by
\begin{equation}
   I_c^{\mu_{n-1}}x^{\gamma_{n-1}}I_c^{\mu_{n-2}}\cdots
   I_c^{\mu_1}x^{\gamma_1}(1-x)^{\gamma'}
\end{equation}
by putting $c=0$ (resp.\ 1 and $\infty$).

For simplicity we express this construction using additions and 
middle convolutions by
\begin{equation}
  u=\p^{-\mu_{n-1}}x^{\gamma_{n-1}}\cdots\p^{-\mu_2}x^{\gamma_2}
    \p^{-\mu_2}x^{\gamma_1}(1-x)^{\gamma'}.
\end{equation}

For example, when $n=3$, we have the solution
\[
  \int_c^x t^{\alpha_3-\beta_2}(x-t)^{1-\alpha_3}dt
  \int_c^t s^{\alpha_2-\beta_1}(1-s)^{-\alpha_1+\beta_1-1}
(t-s)^{-\alpha_2-\beta_2}ds.
\]

The operator corresponding to the second Riemann scheme is
\begin{equation}\label{eq:GHP}
  P_n(\alpha;\beta):=\prod_{j=1}^{n-1}(\vartheta-\beta_j)\cdot \p - \prod_{j=1}^{n}(\vartheta-\alpha_j).
\end{equation}
This is clear when $n=1$. In general, we have
\begin{align*}
 &\RAd(\p^{-\mu})\circ \RAd(x^\gamma)P_n(\alpha,\beta)\\
 &=
 \RAd(\p^{-\mu})\circ \Ad(x^\gamma)\Bigl(\prod_{j=1}^{n-1}x(\vartheta+\beta_j)\cdot \p - \prod_{j=1}^{n}x(\vartheta+\alpha_j)\Bigr)\\
 &= \RAd(\p^{-\mu})\Bigl(\prod_{j=1}^{n-1}(\vartheta+\beta_j-1-\gamma)
  (\vartheta-\gamma) - \prod_{j=1}^{n}x(\vartheta+\alpha_j-\gamma)\Bigr)\\
 &=\Ad(\p^{-\mu})\Bigl(\prod_{j=1}^{n-1}(\vartheta+\beta_j-\gamma)\cdot
  (\vartheta-\gamma+1)\p - \prod_{j=1}^{n}(\vartheta+1)(\vartheta+\alpha_j-\gamma)\Bigr)\\
 &=\prod_{j=1}^{n-1}(\vartheta+\beta_j-\gamma-\mu)\cdot
  (\vartheta-\gamma-\mu+1)\p
 -\prod_{j=1}^{n}(\vartheta+1-\mu)\cdot(\vartheta+\alpha_j-\gamma-\mu)
\end{align*}
and therefore we have \eqref{eq:GHP} by the correspondence of the Riemann 
schemes with $\gamma=\gamma_n$ and $\mu=\mu_n$.

Suppose $\lambda_{1,1}=0$.  We will show that 
\index{hypergeometric equation/function!generalized!local solution}
\begin{equation}\label{eq:HGS}
\begin{split}
  &\sum_{k=0}^\infty
  \frac{\prod_{j=1}^n(\lambda_{2,j}-\lambda_{0,n})_k}
       {\prod_{j=1}^{n-1}(\lambda_{0,n}-\lambda_{0,j}+1)_k k!}
  x^{\lambda_{0,n}+k}\\
  &\quad= x^{\lambda_{0,n}}{}_{n}F_{n-1}\bigl(
   (\lambda_{2,j}-\lambda_{0,n})_{j=1,\dots,n},
   (\lambda_{0,n}-\lambda_{0,j}+1)_{j=1,\dots,n-1};x\bigr)
\end{split}
\end{equation}
is the local solution at the origin corresponding to the exponent
$\lambda_{0,n}$.
Here
\begin{equation}
 {}_nF_{n-1}(\alpha_1,\dots,\alpha_n,\beta_1,\dots,\beta_{n-1};x)
 =\sum_{k=0}^\infty
  \frac{(\alpha_1)_k\cdots(\alpha_{n-1})_k(\alpha_n)_k}{
        (\beta_1)_k\cdots(\beta_{n-1})_kk!}x^k.
\end{equation}

We may assume $\lambda_{0,1}=0$ for the proof of \eqref{eq:HGS}.
When $n=1$, the corresponding solution equals $(1-x)^{-\lambda_{2,1}}$
and we have \eqref{eq:HGS}.  
Note that
\begin{align*}
 &I_0^\mu x^\gamma   \sum_{k=0}^\infty
  \frac{\prod_{j=1}^n(\lambda_{2,j}-\lambda_{0,n})_k}
       {\prod_{j=1}^{n-1}(\lambda_{0,n}-\lambda_{0,j}+1)_k k!}
  x^{\lambda_{0,n}+k}\\
 &=\sum_{k=0}^\infty
  \frac{\prod_{j=1}^n(\lambda_{2,j}-\lambda_{0,n})_k}
       {\prod_{j=1}^{n-1}(\lambda_{0,n}-\lambda_{0,j}+1)_k k!}
  \frac{\Gamma(\lambda_{0,n}+\gamma+k+1)}
    {\Gamma(\lambda_{0,n}+\gamma+\mu+k+1)}x^{\lambda_{0,n}+\gamma+\mu+k}\\
 &=\frac{\Gamma(\lambda_{0,n}+\gamma+1)}
   {\Gamma(\lambda_{0,n}+\gamma+\mu+1)}\sum_{k=0}^\infty
  \frac{\prod_{j=1}^n(\lambda_{2,j}-\lambda_{0,n})_k
        \cdot(\lambda_{0,n}+\gamma+1)_k
        \cdot x^{\lambda_{0,n}+\gamma+\mu+k}}
       {\prod_{j=1}^{n-1}(\lambda_{0,n}-\lambda_{0,j}+1)_k
        \cdot(\lambda_{0,n}+\gamma+\mu+1)_k k!}.
\end{align*}
Comparing \eqref{eq:HGRS0} with 
the first Riemann scheme under $\lambda_{0,1}=\lambda_{1,1}=0$
and $\gamma=\gamma_n$ and $\mu=\mu_n$,
we have the solution \eqref{eq:HGS} by the induction on $n$.
The recurrence relation in Theorem~\ref{thm:shifm1} corresponds to the identity
\index{hypergeometric equation/function!generalized!recurrence relation}
\begin{equation}
\begin{split}
&{}_nF_{n-1}(\alpha_1,\dots,\alpha_{n-1},\alpha_n+1;\beta_1,\dots,\beta_{n-1};x)\\
&\quad={}_nF_{n-1}(\alpha_1,\dots,\alpha_n;\beta_1,\dots,\beta_{n-1};x)\\
&\quad\quad{}+\frac{\alpha_1\cdots\alpha_{n-1}}{\beta_1\cdots\beta_{n-1}}
x\cdot{}_nF_{n-1}(\alpha_1+1,\dots,\alpha_n+1;\beta_1+1,\dots,\beta_{n-1}+1;x).
\end{split}
\end{equation}

The series expansion of the local solution at $x=1$ 
corresponding to the exponent $\gamma'+\mu_1+\cdots+\mu_{n-1}$ is 
a little more complicated.  

For the Riemann scheme
\begin{align*}
&\qquad  \begin{Bmatrix}
   x=\infty&0&1\\
   -\mu_2+1 & [0]_{(2)} & 0\\
   1-\gamma_2-\mu_1-\mu_2 & &\gamma_2+\mu_2\\
   -\gamma'-\gamma_1-\gamma_2-\mu_1-\mu_2
   &\underline{\gamma'+\mu_1+\mu_2}&\gamma_1+\gamma_2+\mu_1+\mu_2
 \end{Bmatrix}\allowdisplaybreaks,
\intertext{we have the local solution at $x=0$}
 &I_0^{\mu_2}(1-x)^{\gamma_2}I_0^{\mu_1}x^{\gamma'}(1-x)^{\gamma_1}
  =I_0^{\mu_2}(1-x)^{\gamma_2}\sum_{n=0}^\infty
  \frac{(-\gamma_1)_n}{n!}x^n
 \\&=
 I_0^{\mu_2}\sum_{n=0}^\infty \frac{\Gamma(\gamma'+1+n)
     (-\gamma_1)_n}{\Gamma(\gamma'+\mu_1+1+n)n!}
   x^{\gamma'+\mu_1+n}(1-x)^{\gamma_2}
\allowdisplaybreaks\\
 &=I_0^{\mu_2}\sum_{m,n=0}^\infty
   \frac{\Gamma(\gamma'+1+n)(-\gamma_1)_n(-\gamma_2)_m}
  {\Gamma(\gamma'+\mu_1+1+n)m!n!}
   x^{\gamma'+\mu_1+m+n}
\allowdisplaybreaks\\
 &=\sum_{m,n=0}^\infty
   \frac{\Gamma(\gamma'+\mu_1+1+m+n)\Gamma(\gamma'+1+n)
   (-\gamma_1)_n(-\gamma_2)_m x^{\gamma'+\mu_1+\mu_2+m+n}}
  {\Gamma(\gamma'+\mu_1+\mu_2+1+m+n)\Gamma(\gamma'+\mu_1+1+n)
   m!n!}
\allowdisplaybreaks\\
 &=\frac{\Gamma(\gamma'+1)x^{\gamma'+\mu_1+\mu_2}}
  {\Gamma(\gamma'+\mu_1+\mu_2+1)}
  \sum_{m,n=0}^\infty
  \frac{(\gamma'+\mu_1+1)_{m+n}(\gamma'+1)_n(-\gamma_1)_n
     (-\gamma_2)_m x^{m+n}}
  {(\gamma'+\mu_1+\mu_2+1)_{m+n}(\gamma'+\mu_1+1)_nm!n!}.
\intertext{Applying the last equality in \eqref{eq:I0H} to 
the above second equality, we have}
 &I_0^{\mu_2}(1-x)^{\gamma_2}I_0^{\mu_1}x^{\gamma'}(1-x)^{\gamma_1}
 \\
 &=\sum_{n=0}^\infty \frac{\Gamma(\gamma'+1+n)(-\gamma_1)_n}
  {\Gamma(\gamma'+\mu_1+1+n)n!}
  x^{\gamma'+\mu_1+\mu_2+n}(1-x)^{-\gamma_2}
\allowdisplaybreaks\\
 &\quad\cdot\sum_{m=0}^\infty
  \frac{\Gamma(\gamma'+\mu_1+1+n)}
  {\Gamma(\gamma'+\mu_1+\mu_2+1+n)}
  \frac{(\mu_2)_m(-\gamma_2)_m}
  {(\gamma'+\mu_1+n+\mu_2+1)_mm!}
  \Bigl(\frac{x}{x-1}\Bigr)^m
\allowdisplaybreaks\\
 &=\frac{\Gamma(\gamma'+1)
  x^{\gamma'+\mu_1+\mu_2}(1-x)^{-\gamma_2}}
  {\Gamma(\gamma'+\mu_1+\mu_2+1)}\!
  \sum_{m,n=0}^\infty\!\!
  \frac{(\gamma'+1)_n(-\gamma_1)_n(-\gamma_2)_m(\mu_2)_m}
  {(\gamma'+\mu_1+\mu_2+1)_{m+n}m!n!}x^n
  \Bigl(\frac x{x-1}\Bigr)^m
\allowdisplaybreaks\\
 &=\frac{\Gamma(\gamma'+1)}
  {\Gamma(\gamma'+\mu_1+\mu_2+1)}\\
 &\quad\cdot
  x^{\gamma'+\mu_1+\mu_2}(1-x)^{-\gamma_2}
  F_3\bigl(-\gamma_2,-\gamma_1,\mu_2,\gamma'+1;\gamma'+\mu_1+\mu_2+1;
   x,\frac{x}{x-1}\bigr),
\end{align*}
where 
$F_3$ is Appell's hypergeometric function \eqref{eq:F3}.

Let $u_1^{-\beta_n}(\alpha_1,\dots,\alpha_n;\beta_1,\dots,\beta_{n-1};x)$ be 
the local solution of $P_n(\alpha,\beta)u=0$ at $x=1$ such that
$u_1^{-\beta_n}
(\alpha;\beta;x)\equiv (x-1)^{-\beta_n}\mod (x-1)^{1-\beta_n}\mathcal O_1$
for generic $\alpha$ and $\beta$.
Since the reduction
\[
 \begin{Bmatrix}
  \lambda_{0,1} & [0]_{(n-1)} & \lambda_{2,1}\\
   \vdots &      & \vdots\\
 \lambda_{0,n} &  \lambda_{1,2} & \lambda_{2,n}
 \end{Bmatrix}
 \xrightarrow{\p_{max}}
 \begin{Bmatrix}
    \lambda_{0,1}' & [0]_{(n-2)} & \lambda_{2,1}'\\
   \vdots &      & \vdots\\
 \lambda_{0,n-1}' &  \lambda_{1,2}' & \lambda_{2,n-1}'
 \end{Bmatrix}
\]
satisfies $\lambda_{1,2}'=\lambda_{1,2}+\lambda_{0,1}+\lambda_{0,2}-1$
and $\lambda_{0,j}'+\lambda_{2,j}'=\lambda_{0,j+1}+\lambda_{2,j+1}$
for $j=1,\dots,n-1$, Theorem~\ref{thm:shifm1} proves
\index{hypergeometric equation/function!generalized!recurrence relation}
\begin{equation}
\begin{split}
 u_1^{-\beta_n}(\alpha;\beta;x)&=
 u_1^{-\beta_n}(\alpha_1,\dots,\alpha_n+1;\beta_1,\dots,\beta_{n-1}+1;x)\\
 &\quad{}+ \frac{\beta_{n-1}-\alpha_n}{1-\beta_n}
 u_1^{1-\beta_n}(\alpha;\beta_1,\dots,\beta_{n-1}+1;x).
\end{split}
\end{equation}

The condition for the irreducibility of the equation equals
\begin{equation}
  \lambda_{0,\nu}+\lambda_{1,1}+\lambda_{2,\nu'}\notin\mathbb Z
  \qquad(1\le\nu\le n,\ 1\le  \nu'\le n),
\end{equation}
which is easily proved by the induction on $n$ (cf.~Example~\ref{exmp:irred} ii)).
The shift operator under a compatible shift $(\epsilon_{j,\nu})$ 
is bijective if and only if 
\begin{equation}
 \lambda_{0,\nu}+\lambda_{1,1}+\lambda_{2,\nu'}
\text{ \ and \ }
\lambda_{0,\nu}+\epsilon_{0,\nu}+\lambda_{1,1}+\epsilon_{1,1}+\lambda_{2,\nu'}
+\epsilon_{2,\nu'}
\end{equation}
are simultaneously not integers or positive integers or non-positive integers 
for each $\nu\in\{1,\dots,n\}$ and $\nu'\in\{1,\dots,n\}$.

Connection coefficients in this example are 
calculated by \cite{Le} and \cite{OTY} etc.
In this paper we get them by Theorem~\ref{thm:c}.

\smallskip
There are the following direct decompositions
$(\nu=1,\dots,n)$.
\begin{gather*}
\begin{split}
1\dots1\overline{1};n-1\underline{1};1\dots1 &=
0\dots0\overline{1};\ \ 1\ \ \ \,\underline{0};0\dots0\overset{\underset{\smallsmile}\nu}10\dots0\\[-5pt]
      &\,\oplus 1\dots1\overline{0};n-2\underline{1};1\dots101\dots1.
\end{split}
\end{gather*}
These $n$ decompositions $\mathbf m=\mathbf m'\oplus\mathbf m''$
satisfy the condition $m'_{0,n_0}=m''_{1,n_1}=1$ in 
\eqref{eq:connection}, where $n_0=n$ and $n_1=2$.
Since $n_0+n_1-2=n$, Remark~\ref{rem:conn} i) shows that 
these decompositions give all the 
decompositions appearing in \eqref{eq:connection}.
Thus we have
\index{hypergeometric equation/function!generalized!connection coefficient}
\begin{gather*}
c(\lambda_{0,n}\rightsquigarrow\lambda_{1,2})
=\frac{\displaystyle\prod_{\nu=1}^{n-1}
  \Gamma({\lambda_{0,n}}-\lambda_{0,\nu}+1)
   \cdot\Gamma(\lambda_{1,1}-{\lambda_{1,2}})}
  {\displaystyle\prod_{\nu=1}^n\Gamma({\lambda_{0,n}}+\lambda_{1,1}+\lambda_{2,\nu})}
=\displaystyle\prod_{\nu=1}^n\frac{\Gamma(\beta_\nu)}
  {\Gamma(\alpha_\nu)}\\
\qquad\qquad\ \ =\lim_{x\to 1-0}(1-x)^{\beta_n}
  {}_nF_{n-1}(\alpha,\beta;x)
  \qquad(\RE \beta_n > 0).
\end{gather*}

Other connection coefficients are obtained by the similar way.
\begin{align*}
c(\lambda_{0,n}\rightsquigarrow\lambda_{2,n}&):\quad \text{When }n=3,
\text{ we have}
\\
 11\overline{1},21,11\underline{1}
 &\!\!=\!00{1},10,10{0}\!
     \quad\,00{1},10,01{0}\!
     \quad\,10{1},11,11{0}\!
     \quad\,01{1},11,11{0}\\[-5pt]
 &\!\oplus\!11{0},11,01{1}\!
  =\!11{0},11,10{1}\!
  =\!01{0},10,00{1}\!
  =\!10{0},10,00{1}
\end{align*}
In general, by the rigid decompositions
\begin{align*}
 1\cdots1\overline{1}\,,\,n-11\,,\,1\cdots1\underline{1}
 &\!=0\cdots0\overline{1}\,,\,\ \ \ 1\,\ \ 0\,,\,0\ldots0\overset{\underset{\smallsmile}i}10\cdots0\underline{0}
\\[-8pt]
 &\oplus1\cdots1\overline{0}\,,\,n-21\,,\,1\cdots101\cdots1\underline{1}\\
 &\!=1\cdots1\overset{\underset\smallsmile i}01\cdots1\overline{1}\,,\,n-21\,,\,1\cdots1\underline{0}
\\[-8pt]
 &\oplus0\ldots010\cdots0\overline{0}\,,\,\ \ \ 1\ \ \,0\,,\,0\cdots0\underline{1}
\end{align*}
for $i=1,\dots,n-1$ we have
\begin{align*}
c(\lambda_{0,n}\rightsquigarrow\lambda_{2,n})
&=\prod_{k=1}^{n-1}\frac{
    \Gamma(\lambda_{2,k}-\lambda_{2,n})}
  {\Gamma\bigl(
   \left|\begin{Bmatrix}
     \lambda_{0,n}
   & \lambda_{1,1}
   & \lambda_{2,k}
   \end{Bmatrix}\right|
   \bigr)}\\
 &\quad\cdot\prod_{k=1}^{n-1}\frac{
    \Gamma(\lambda_{0,n}-\lambda_{0,k}+1)}
  {\Gamma\bigl(
   \left|\begin{Bmatrix}
    (\lambda_{0,\nu})_{\substack{1\le\nu\le n\\ \nu\ne k}}
   & [\lambda_{1,1}]_{(n-2)} & 
   & \!\!\!\!(\lambda_{2,\nu})_{1\le\nu\le n-1}\\
      & \lambda_{1,2}
   \end{Bmatrix}\right|
   \bigr)}\\
 &=\prod_{k=1}^{n-1}\frac{\Gamma(\beta_k)\Gamma(\alpha_k-\alpha_n)}
   {\Gamma(\alpha_k)\Gamma(\beta_k-\alpha_n)}.
\intertext{Moreover we have}
  c(\lambda_{1,2}\!\rightsquigarrow\!\lambda_{0,n})
 &=\frac{\Gamma\bigl(\lambda_{1,2}-\lambda_{1,1}+1\bigr)\cdot
   \prod_{\nu=1}^{n-1}\Gamma\bigl(\lambda_{0,\nu}-\lambda_{0,n}
  \bigr)}
  {\displaystyle\prod_{j=1}^n
  \Gamma\bigl(
   \left|\begin{Bmatrix}
    (\lambda_{0,\nu})_{1\le\nu\le n-1}
   & [\lambda_{1,1}]_{(n-2)} & 
   & \!\!\!\!(\lambda_{2,\nu})_{1\le\nu\le n,\ \nu\ne j}\\
   & \lambda_{1,2}
   \end{Bmatrix}\right|
  \bigr)}\\
 &=\prod_{\nu=1}^n\frac{\Gamma(1-\beta_\nu)}{\Gamma(1-\alpha_\nu)}.
\end{align*}
Here we denote
\[
  (\mu_\nu)_{1\le\nu\le n}
  =\left(\begin{smallmatrix}\mu_1\\ \mu_2\\ \vdots\\ \mu_n\end{smallmatrix}\right)\in\mathbb C^n
  \text{\ \ and \ \ }
  (\mu_\nu)_{\substack{1\le\nu\le n\\ \nu\ne i}}
  =\left(\begin{smallmatrix}\mu_1\\ \vdots\\ \mu_{i-1}\\ \mu_{i+1}\\ \vdots\\ \mu_n\
   \end{smallmatrix}\right)\in\mathbb C^{n-1}
\]
for complex numbers $\mu_1,\dots,\mu_n$.

These connection coefficients were obtained by \cite{Le} and \cite{Yos} etc.

We have
\begin{equation}
 \begin{split}
 {}_nF_{n-1}(\alpha,\beta;x)&=\sum_{k=0}^\infty C_k(1-x)^k +
                   \sum_{k=0}^\infty C'_k(1-x)^{k-\beta_n},\\
 C_0 &= {}_nF_{n-1}(\alpha,\beta;1)\quad(\RE\beta_n < 0),\\
 C'_0 &= \prod_{\nu=1}^n\frac{\Gamma(\beta_\nu)}{\Gamma(\alpha_\nu)}
 \end{split}
\end{equation}
for $0<x<1$ if $\alpha$ and $\beta$ are generic.
Since 
\begin{multline*}
\frac{d^k}{dx^k}{}_nF_{n-1}(\alpha,\beta;x)\\=
 \frac{(\alpha_1)_k\cdots(\alpha_n)_k}
{(\beta_1)_k\cdots(\beta_{n-1})_k}
{}_nF_{n-1}(\alpha_1+k,\dots,\alpha_n+k,\beta_1+k,\dots,\beta_{n-1}+k;x),
\end{multline*}
we have
\begin{equation}
 C_k = 
 \frac{(\alpha_1)_k\cdots(\alpha_n)_k}
{(\beta_1)_k\cdots(\beta_{n-1})_kk!}
 {}_nF_{n-1}(\alpha_1+k,\dots,\alpha_n+k,\beta_1+k,\dots,\beta_{n-1}+k;1).
\end{equation}

We examine the monodromy generators for the solutions of the generalized 
hypergeometric equation.
For simplicity we assume $\beta_i\notin\mathbb Z$ and 
$\beta_i-\beta_j\notin\mathbb Z$ for $i\ne j$.
Then $u=(u_0^{\lambda_{0,1}},\ldots,u_0^{\lambda_{0,n}})$
is a base of local solution at $0$ and the corresponding monodromy generator
around $0$ with respect to this base equals
\[
  M_0=\begin{pmatrix}
       e^{2\pi\sqrt{-1}\lambda_{0,1}}\\
       &\ddots\\
       &&e^{2\pi\sqrt{-1}\lambda_{0,n}}\\
      \end{pmatrix}
\]
and that around $\infty$ equals
\begin{align*}
  M_\infty &=\biggl(\sum_{k=1}^n
  e^{2\pi\sqrt{-1}\lambda_{2,\nu}}
  c(\lambda_{0,i}\rightsquigarrow\lambda_{2,k})
  c(\lambda_{2,k}\rightsquigarrow\lambda_{k,j})
  \biggr)_{\substack{1\le i\le n\\1\le j\le n}}\\
  &=\biggl(\sum_{k=1}^n
   e^{2\pi{\sqrt{-1}\lambda_{2,\nu}}}
  \prod_{\nu\in\{1,\dots,n\}\setminus\{k\}}
  \frac{\sin2\pi
   (\lambda_{0,i}+\lambda_{1,1}+\lambda_{2,\nu})}
   {\sin2\pi(\lambda_{0,k}-\lambda_{0,\nu})}\\
  &\quad\cdot
 \prod_{\nu\in\{1,\dots,n\}\setminus\{j\}}\!\!
 \frac{\sin2\pi(\lambda_{0,i}+\lambda_{1,1}+\lambda_{2,\nu})}
   {\sin2\pi(\lambda_{2,j}-\lambda_{2,\nu})}
    \biggr)_{\substack{1\le i\le n\\1\le j\le n}}.
\end{align*}

Lastly we remark that the versal generalized hypergeometric 
operator is
\index{hypergeometric equation/function!generalized!versal}
\[
\begin{split}
\tilde P&=\RAd(\p^{-\mu_{n-1}})\circ
   \RAd\bigl((1-c_1x)^{\frac{\gamma_{n-1}}{c_1}}\bigr)
   \circ\cdots
   \circ\RAd(\p^{-\mu_1})\\
    &\quad\circ
   \RAd\left((1-c_1x)^{\frac{\gamma_1}{c_1}+\frac{\gamma'}{c_1(c_1-c_2)}}
   (1-c_2x)^{\frac{\gamma'}{c_2(c_2-c_1)}}\right)\p\\
 &=\RAd(\p^{-\mu_{n-1}})\circ
   \RAdei\bigl(\frac{\gamma_{n-1}}{1-c_1x}\bigr)
   \circ\cdots
   \circ\RAd(\p^{-\mu_1})\\
    &\quad\circ
   \RAdei\left(\frac{\gamma_1}{1-c_1x}
   + \frac{\gamma'x}{(1-c_1x)(1-c_2x)}\right)\p
\end{split}
\] 
and when $n=3$, we have the integral representation of the solutions
\[
\begin{split}
  \int_c^x\int_c^t\exp\Bigl(
  -\int_c^s\frac{\gamma_1(1-c_2u)+\gamma'u}{(1-c_1u)(1-c_2u)}du\Bigr)
   (t-s)^{\mu_1-1}\bigl(1-c_1t\bigr)^{\frac{\gamma_2}{c_1}}
   (x-t)^{\mu_2-1} ds\,dt.
\end{split}
\]
Here $c$ equals $\frac1{c_1}$ or $\frac1{c_2}$ or $\infty$.
\subsection{Even/Odd family} $EO_n$%
\label{sec:EOEx}%
\index{00EOn@$EO_n$}

The system of differential equations of Schlesinger canonical form
belonging to an even or odd family is concretely given by \cite{Gl}.
We will examine concrete connection coefficients of solutions of
the single differential equation  belonging to an even or odd family.
The corresponding tuples of partitions and their reductions and
decompositions are as follows.
\begin{align*}
 m+1m,m^21,1^{2m+1}&=10,10,1\oplus m^2,mm-11,1^{2m}\\
                &=1^2,1^20,1^2\oplus mm-1,(m-2)^21,1^{2m-1}\\
 m^2,mm-11,1^{2m}&=1,100,1\oplus mm-1,(m-1)^21,1^{2m-1}\\
                 &=1^2,110,1^2\oplus (m-1)^2,m-1m-21,1^{2m-2}\\
 EO_n &= H_1\oplus EO_{n-1}:2n = H_2\oplus EO_{n-2}:\binom n2\\
[\Delta(\mathbf m)]&=1^{\binom n2+2n}\\
 EO_n &\overset{1}{\underset{R1E0R0E0}\longrightarrow}EO_{n-1}\\
 EO_2&=H_2,\quad EO_3=H_3
\end{align*}
The following operators are shift operators of the universal model $P_{EO_n}(\lambda)u=0$:
\[
 P_{H_1}^2(\lambda),\ P_{EO_{n-1}}^1(\lambda),\ P_{EO_{n-1}}^2(\lambda),\ 
 P_{H_2}^2(\lambda),\ P_{EO_{n-2}}^1(\lambda),\ P_{EO_{n-2}}^2(\lambda).
\]
\noindent
$EO_{2m}$ ($\mathbf m=(1^{2m},mm-11,mm)$ :\ even family)
\index{even family}
\begin{gather*}
 \begin{Bmatrix}
     x=\infty & 0 & 1\\
     \lambda_{0,1} & [\lambda_{1,1}]_{(m)}   & [\lambda_{2,1}]_{(m)}\\
     \vdots        & [\lambda_{1,2}]_{(m-1)} & [\lambda_{2,2}]_{(m)}\\
     \lambda_{0,2m} & \lambda_{1,3}
    \end{Bmatrix},\\
\sum_{\nu=1}^{2m}\lambda_{0,\nu}
+m(\lambda_{1,1}+\lambda_{2,1}+\lambda_{2,2})
+(m-1)\lambda_{1,2}+\lambda_{1,3}=2m-1.
\end{gather*}
The rigid decompositions
\begin{align*}
 &1\cdots1\overline1\,,\,mm-1\underline1\,,\,mm\\
  &=0\cdots0\overline1\,,\,10\underline0\,,\,
    \overset{\underset{\smallsmile} i}10\oplus
    1\cdots1\overline0\,,\,m-1m-1\underline1\,,
    \,\overset{\underset{\smallsmile} i}01\\[-3pt]
  &=0\cdots\overset{\underset{\smallsmile} j}1
    \overline1\,,\,11\underline0\,,\,11\oplus
    1\cdots\overset{\underset{\smallsmile} j}0
    \overline0\,,\,m-1m-2\underline1\,,\,m-1m-1,
\end{align*}
which are expressed by 
$EO_{2m}=H_1\oplus EO_{2m-1}=H_2\oplus EO_{2m-2}$,
give
\index{even family!connection coefficient}
\begin{align*}
 c(\lambda_{0,2m}\rightsquigarrow\lambda_{1,3})
 &=\prod_{i=1}^2\frac
 {\Gamma\bigl(\lambda_{1,i}-\lambda_{1,3}\bigr)
  }{\Gamma\bigl(
    \left|\begin{Bmatrix}
   \lambda_{0,2m} &\ \lambda_{1,1}\ &\lambda_{2,i}
    \end{Bmatrix}\right|
  \bigr)}
  \cdot
   \prod_{j=1}^{2m-1}\frac
  {\Gamma\bigl(\lambda_{0,2m}-\lambda_{0,j}+1)}
  {\Gamma\bigl(
    \left|\begin{Bmatrix}
    \lambda_{0,j} & \lambda_{1,1} & \lambda_{2,1}\\
    \lambda_{0,2m} &\ \lambda_{1,2}\ & \lambda_{2,2}
  \end{Bmatrix}\right|
   \bigr)},\allowdisplaybreaks\\
 c(\lambda_{1,3}\rightsquigarrow\lambda_{0,2m})
 &=\displaystyle\prod_{i=1}^2\frac
 {\Gamma\bigl(\lambda_{1,3}-\lambda_{1,i}+1\bigr)
  }{\Gamma\bigl(
   \left|\begin{Bmatrix}
    &[\lambda_{1,1}]_{(m-1)}&[\lambda_{2,\nu}]_{(m)}\\
    (\lambda_{0,\nu})_{1\le\nu\le 2m-1}
   &\ [\lambda_{1,2}]_{(m-1)}\ &[\lambda_{2,3-i}]_{(m-1)}
     \\
    &\lambda_{1,3}
  \end{Bmatrix}\right|
  \bigr)}
  \\
 &\quad
  \cdot
   \prod_{j=1}^{2m-1}\frac
  {\Gamma\bigl(\lambda_{0,j}-\lambda_{0,2m})}
  {\Gamma\bigl(
     \left|\begin{Bmatrix}
   & [\lambda_{1,1}]_{(m-1)}&[\lambda_{2,1}]_{(m-1)}\\
   (\lambda_{0,\nu})_{\substack{1\le\nu\le 2m-1\\ \nu\ne j}}
  &\ [\lambda_{1,2}]_{(m-2)}\ &[\lambda_{2,2}]_{(m-1)}\\[-4pt]
  &\lambda_{1,3}&
  \end{Bmatrix}\right|
   \bigr)}.
\end{align*}
These formulas were obtained by the author in 2007 (cf.~\cite{O3}), 
which is a main motivation for the study in this paper.
The condition for the irreducibility is
\index{even family!reducibility}
\begin{equation*}
 \begin{cases}
  \lambda_{0,\nu}+\lambda_{1,1}+\lambda_{2,k}\notin\mathbb Z
  &(1\le\nu\le 2m,\ k=1,2),\\
  \lambda_{0,\nu}+\lambda_{0,\nu'}+\lambda_{1,1}+\lambda_{1,2}+\lambda_{2,1}
  +\lambda_{2,2}-1\notin\mathbb Z
    &(1\le\nu<\nu'\le 2m,\ k=1,2).
 \end{cases}
\end{equation*}
The shift operator for a compatible shift $(\epsilon_{j,\mu})$ is bijective
if and only if the values of each linear function in the above 
satisfy \eqref{eq:non-pos}.

For the Fuchsian equation $\tilde Pu=0$ of type $EO_4$ 
with the Riemann scheme\index{even family!$EO_4$}
\begin{align}\label{eq:EO4GRS}
\begin{split}
 &\begin{Bmatrix}
x=\infty & 0 & 1\\
[a_1]_{(2)} & b_1 & [0]_{(2)}&\!\!;x\\
[a_2]_{(2)} & b_2 & c_1\\
 & b_3 & c_2\\
 & 0 & \\
\end{Bmatrix}
\end{split}\end{align}
and the Fuchs relation
\begin{align}\label{eq:e4Fuchs}\begin{split}
 &2a_1+2a_2+b_1+b_2+b_3+c_1+c_2=3
\end{split}\end{align}
we have the connection formula
\begin{align}\begin{split}
 &c(0\!:\!0 \rightsquigarrow 1\!:\!c_2)
 =\frac{
  \Gamma(c_1-c_2)
  \Gamma(-c_2)
  \prod_{\nu=1}^3\Gamma(1-b_\nu)
 }{
  \Gamma(a_1)
  \Gamma(a_2)
  \prod_{\nu=1}^3\Gamma(a_1+a_2+b_\nu+c_1-1)
 }.
\end{split}\label{eq:Ceo4}\end{align}

Let $\tilde Q$ be the Gauss hypergeometric operator with the Riemann scheme
\[
 \begin{Bmatrix}
 x=\infty & 0 & 1\\
 a_1 &  1-a_1-a_2-c_1  & 0\\
 a_2 &  0 & c_1
 \end{Bmatrix}.
\]
We may normalize the operators by
\[
 \tilde P=x^3(1-x)\p^4+\cdots\text{ \ and \ }
 \tilde Q=x(1-x)\p^2+\cdots.
\]
Then
\[
 \begin{split}
 \tilde P&=\tilde S\tilde Q -\prod_{\nu=1}^3(a_1+a_2+b_\nu+c_1-1)\cdot\p\\
 \tilde Q&=\bigl(x(1-x)\p+(a_1+a_2+c_1-(a_1+a_2+1)x)\bigr)\p-a_1a_2
 \end{split}
\]
with a suitable $\tilde S,\ \tilde T\in W[x]$ and $e\in\mathbb C$ 
and as is mentioned in Theorem~\ref{thm:sftUniv}, $\tilde Q$ is a 
shift operator satisfying\index{even family!$EO_4$!shift operator}%
\index{shift operator}
\begin{equation}\label{eq:EO4sht}
\begin{Bmatrix}
x=\infty & 0 & 1\\
[a_1]_{(2)} & b_1 & [0]_{(2)}&\!\!;x\\
[a_2]_{(2)} & b_2 & c_1\\
 & b_3 & c_2\\
 & 0 & \\
\end{Bmatrix} \ \xrightarrow{\tilde Q} \ 
\begin{Bmatrix}
x=\infty & 0 & 1\\
[a_1+1]_{(2)} & b_1-1 & [0]_{(2)}&\!\!;x\\
[a_2+1]_{(2)} & b_2-1 & c_1\\
 & b_3-1 & c_2-1\\
 & 0 & \\
\end{Bmatrix}.
\end{equation}
Let $u_0^0=1+\cdots$ and $u_1^{c_2}=(1-x)^{c_2}+\cdots$ be 
the normalized local solutions of
$\tilde Pu=0$ corresponding to the characteristic exponents
$0$ at $0$ and $c_2$ at $1$, respectively.
Then the direct calculation shows
\[
 \begin{split}
 \tilde Q u_0^0&=\frac{a_1a_2\prod_{\nu=1}^3(a_1+a_2+b_\nu+c_1-1)}
  {\prod_{\nu=1}^3(1-b_\nu)}+\cdots,\\
 \tilde Qu_1^{c_2}&=c_2(c_2-c_1)(1-x)^{c_2-1}+\cdots.
 \end{split}
\]
Denoting by $c(a_1,a_2,b_1,b_2,b_3,c_1,c_2)$ 
the connection coefficient $c(0\!:\!0 \rightsquigarrow 1\!:\!c_2)$ 
for the equation with the Riemann scheme \eqref{eq:EO4GRS}, we have
\[
 \frac{c(a_1,a_2,b_1,b_2,b_3,c_1,c_2)}
      {c(a_1+1,a_2+1,b_1-1,b_2-1,b_3-1,c_1,c_2-1)}
 =\frac{a_1a_2\displaystyle\prod_{\nu=1}^3(a_1+a_2+b_\nu+c_1-1)}
       {(c_1-c_2)(-c_2)\displaystyle\prod_{\nu=1}^3(1-b_\nu)},
\]
which proves \eqref{eq:Ceo4} since
$\lim_{k\to\infty}c(a_1+k,a_2+k,b_1-k,b_2-k,b_3-k,c_1,c_2-k)=1$.
Note that the shift operator \eqref{eq:EO4sht} is not bijective if
and only if
\[
 \tilde Qu=\prod_{\nu=1}^3(a_1+a_2+b_\nu+c_1-1)\cdot\p u=0
\]
has a non-zero solution, which is equivalent to
\[
 a_1a_2\prod_{\nu=1}^3(a_1+a_2+b_\nu+c_1-1)=0.
\]

By the transformation $x\mapsto\frac{x}{x-1}$ we have
\begin{align*}
&\begin{Bmatrix}
  x = \infty & 0 & 1\\
  [0]_{(2)} & 0 & [a_1]_{(2)}\\
  c_1 & b_1 & [a_2]_{(2)}\\
  c_2 & b_2\\
      & b_3
 \end{Bmatrix}\\[-.3cm]
&\qquad \xrightarrow{(1-x)^{a_1}\p^{1-a_1}(1-x)^{-a_1}}
 \begin{Bmatrix}
 x = \infty & 0 & 1\\
 2-2a_1 &   & a_1\\
 1+c_1-a_1 & a_1+b_1-1  & [a_1+a_2-1]_{(2)}\\
 1+c_2-a_1 & a_1+b_2-1 \\
           & a_1+b_3-1
 \end{Bmatrix}\\
&\qquad \xrightarrow{x^{1-a_1-b_1}(1-x)^{1-a_1-a_2}}
\begin{Bmatrix}
 x = \infty & 0 & 1\\
 a_2+b_1 &   & 1-a_2\\
 a_1+a_2+b_1+c_1-1 & 0  & [0]_{(2)}\\
 a_1+a_2+b_1+c_2-1 & b_2-b_1 \\
           & b_3-b_1
 \end{Bmatrix}
\end{align*}
and therefore Theorem~\ref{thm:GC} gives the following connection formula for 
\eqref{eq:EO4GRS}:
\begin{align*}
 c(0\!:\!b_1\rightsquigarrow \infty\!:\!a_2)&=
 \frac{\Gamma(b_1+1)\Gamma(a_1-a_2)}{\Gamma(a_1+b_1)\Gamma(1-a_2)}\cdot
 {}_3F_2(a_2+b_1, a_1+a_2+b_1+c_1-1,
 \\&\qquad\qquad a_1+a_2+b_1+c_2-1; b_1-b_2-1, b_1-b_3-1;1).
\intertext{In the same way, we have}
  c(1\!:\!c_1\rightsquigarrow \infty\!:\!a_2)&=
 \frac{\Gamma(c_1+1)\Gamma(a_1-a_2)}{\Gamma(a_1+c_1)\Gamma(1-a_2)}\cdot
 {}_3F_2(b_1-c_1,b_2-c_1,b_3-c_1;
 \\&\qquad\qquad a_1+c_1, c_1-c_2+1;1).
\end{align*} 

\index{Wronskian}%
\index{connection coefficient!generalized}%
\index{even family!connection coefficient!generalized}%
We will calculate generalized connection coefficients
defined in Definition~\ref{def:GC}.
In fact, we get
\begin{align}
 c(1\!:\![0]_{(2)}\rightsquigarrow \infty\!:\![a_2]_{(2)})&=
 \frac{\prod_{\nu=1}^2\Gamma(2-c_\nu)\cdot\prod_{i=1}^2\Gamma(a_1-a_2+i)}
 {\Gamma(a_1)\prod_{\nu=1}^3\Gamma(a_1+b_\nu)},\label{eq:e4dc1}\\
 c(\infty\!:\![a_2]_{(2)}\rightsquigarrow1\!:\![0]_{(2)})&=
 \frac{\prod_{\nu=1}^2\Gamma(c_\nu-1)\cdot\prod_{i=0}^1\Gamma(a_2-a_1-i)}
 {\Gamma(1-a_1)\prod_{\nu=1}^3\Gamma(1-a_1-b_\nu)}\label{eq:e4dc2}
\end{align}
according to the procedure given in Remark~\ref{rem:Cproc},
which we will explain.

The differential equation with the Riemann scheme
$\begin{Bmatrix}
 x=\infty & 0 & 1\\
 \alpha_1 &[0]_{(2)}&[0]_{(2)}\\
 \alpha_2 &[\beta]_{(2)}&\gamma_1\\
 \alpha_3 &&\gamma_2\\
 \alpha_4 \\
\end{Bmatrix}$ is  $Pu=0$ with
\index{tuple of partitions!rigid!1111,211,22}
\begin{equation}\begin{split}
 P=&\prod_{j=1}^4\bigl(\vartheta+\alpha_j\bigr)
 +\p\bigl(\vartheta-\beta\bigr)\bigl((\p-2\vartheta+\gamma_1+\gamma_2-1)(\vartheta-\beta)\\
 &+\sum_{1\le i<j\le 3}\!\!\alpha_i\alpha_j
 -(\beta-2\gamma_1-2\gamma_2-4)(\beta-1)
 -\gamma_1\gamma_2+1\bigr).
\end{split}\end{equation}
The equation $Pu=0$ is isomorphic to the
system
\begin{equation}
\begin{gathered}
 \frac{d\tilde u}{dx}=\frac{A}x\tilde u+\frac{B}{x-1}\tilde u,\\
 A=\begin{pmatrix}
    0 & 0 & 1 & 0\\
    0 & 0 & 0 & 1\\
    0 & 0 & c & 0\\
    0 & 0 & 0 & c
    \end{pmatrix},\ 
 B=\begin{pmatrix}
    0 & 0 & 0 & 0\\
    0 & 0 & 0 & 0\\
    s & 1 & a & 0\\
    r & t & 0 & b
    \end{pmatrix},\ 
 \tilde u=\begin{pmatrix}
    u_1\\u_2\\u_3\\u_4
 \end{pmatrix}
\end{gathered}
\end{equation}
by the correspondence
\begin{align*}
 &\begin{cases}u_1=u,\\
 u_2=(x-1)xu''+\bigl((1-a-c)x+a-1\bigr)u'-su,\\
 u_3=xu',\\
 u_4=x^2(x-1)u'''+\bigl((3-a-c)x^2+(a-2)x\bigr)u''+(1-a-c-s)xu',
 \end{cases}
\end{align*}
where we may assume $\RE\gamma_1\ge\RE\gamma_2$ and
\[
 \begin{split}
 \beta&=c,\ 
 \gamma_1=a+1,\ 
 \gamma_2=b+2,\\
 \prod_{\nu=1}^4(\xi-\alpha_\nu)&=\xi^4+(a+b+2c)\xi^3
  +\bigl((a+c)(b+c)-s-t\bigr)\xi^2\\
 &\quad{}-\bigl((b+c)s+(a+c)t\bigr)\xi+st-r.
 \end{split}
\]
Here $s$, $t$ and $r$ are uniquely determined from
$\alpha_1,\alpha_2,\alpha_3,\alpha_4,\beta,\gamma_1,\gamma_2$ because $b+c\ne a+c$.
We remark that $\Ad(x^{-c})\tilde u$ satisfies a system of Okubo normal form.

Note that the shift of parameters 
$(\alpha_1,\dots,\alpha_4,\beta,\gamma_1,\gamma_2)
\mapsto(\alpha_1,\dots,\alpha_4,\beta-1,\gamma_1+1,\gamma_2+1)$ corresponds to
the shift $(a,b,c,s,t,r)\mapsto(a+1,b+1,c-1,s,t,r)$.

Let $u^j_{\alpha_1,\dots,\alpha_4,\beta,\gamma_1,\gamma_2}(x)$ 
be local holomorphic 
solutions of $Pu=0$ in a neighborhood of $x=0$ determined by
\begin{align*} 
 u^j_{\alpha_1,\dots,\alpha_4,\beta,\gamma_1,\gamma_2}(0)&=\delta_{j,0},\\
 \bigl(\tfrac{d}{dx}
 u^j_{\alpha_1,\dots,\alpha_4,\beta,\gamma_1,\gamma_2}\bigr)
 (0)&=\delta_{j,1}
\end{align*}
for $j=0$ and $1$.
Then Theorem~\ref{thm:paralimits} proves
\begin{align*}
 \lim_{k\to\infty}\tfrac{d^\nu}{dx^\nu}
 u^0_{\alpha,\beta-k,\gamma_1+k,\gamma_1+k}(x)&=\delta_{0,\nu}
 \quad(\nu=0,1,2,\ldots)
\end{align*}
uniformly on $\overline D=\{x\in\mathbb C\,;\,|x|\le 1\}$.

Put $u=v_{\alpha,\beta,\gamma_1,\gamma_2}=
(\gamma_1-2)^{-1}u^1_{\alpha,\beta,\gamma}$.
Then Theorem~\ref{thm:paralimits} proves
\begin{align*}
  &\lim_{k\to\infty}\tfrac{d^\nu}{dx^\nu}
   v_{\alpha,\beta-k,\gamma_1+k,\gamma_2+k}(x)=0\quad(\nu=0,1,2,\ldots),\\
  &\lim_{k\to\infty}\Bigl(
(x-1)x\tfrac{d^2}{dx^2}+\bigl((2-\beta-\gamma_1)x+\gamma_1+k-2\bigr)\tfrac{d}{dx}
 -s\Bigr)
   v_{\alpha,\beta-k,\gamma_1+k,\gamma_2+k}(x)=1
\end{align*}
uniformly on $\overline D$. Hence
\begin{align*}
 \lim_{k\to\infty}\bigl(\tfrac{d}{dx}u^1_{\alpha,\beta-k,\gamma+1,\gamma_1+k}\bigr)
  (x)=1
\end{align*}
uniformly on $\overline D$.
Thus we obtain
\[
 \lim_{k\to\infty}
 c(\infty\!:\![a_2]_{(2)}\rightsquigarrow1\!:\![0]_{(2)})
 |_{a_1\mapsto a_1-k,\ c_1\mapsto c_1+k,\ c_2\mapsto c_2+k}=1
\]
for the connection coefficient in \eqref{eq:e4dc2}.
Then the procedure given in Remark~\ref{rem:Cproc} and
Corollary~\ref{cor:zeroC} with the rigid decompositions
\begin{align*}
2\underline{2},1111,\overline{2}11
           &=1\underline{2},0111,\overline{1}11
              \oplus 1\underline{0},1000,100
           =1\underline{2},1011,\overline{1}11
              \oplus 1\underline{0},0100,\overline{1}00\\
           &=1\underline{2},1101,\overline{1}11
               \oplus 1\underline{0},0010,\overline{1}00
           =1\underline{2},1101,\overline{1}11
                \oplus 1\underline{0},0010,\overline{1}00
\end{align*}
prove \eqref{eq:e4dc2}.
Corresponding to Remark~\ref{rem:Cproc} (4), we note
\[ \sum_{\nu=1}^2(c_\nu-1)+\sum_{i=0}^1(a_2-a_1-i)=
 (1-a_1)+\sum_{\nu=1}^3(1-a_1-b_\nu)
\]
because of the Fuchs relation \eqref{eq:e4Fuchs}.
We can similarly obtain \eqref{eq:e4dc1}.

The holomorphic solution of $\tilde Pu=0$ at the origin is given by
\begin{align*}\begin{split}
 &u_0(x)=\sum_{m\ge0,\ n\ge0}\\
 &\frac{(a_1+a_2+b_3+c_2-1)_{n}
 \prod_{\nu=1}^2\bigl((a_\nu)_{m+n}(a_1+a_2+b_\nu+c_1-1)_{m}\bigr)
}
 {(1-b_1)_{m+n}(1-b_2)_{m+n}(1-b_3)_{m}m!n!}x^{m+n}
\end{split}\end{align*}
and it has the integral representation
\begin{align*}\begin{split}
 u_0(x)&=\frac{
  \prod_{\nu=1}^3\Gamma(1-b_\nu)
 }{
  \prod_{\nu=1}^2\bigl(\Gamma(a_\nu)\Gamma(1-a_\nu-b_\nu)
  \Gamma(b_\nu+c_\nu+a_1+a_2-1)\bigr)}\\
 &\quad\int_0^{x}\int_0^{s_0}\int_0^{s_1}x^{b_1}(x-s_0)^{-b_1-a_1}
 s_0^{b_2+a_1-1}(s_0-s_1)^{-b_2-a_2}\\
 &\quad\cdot s_1^{b_3+a_2-1}(1-s_1)^{-b_3-c_1-a_2-a_1+1}
  (s_1-s_2)^{c_1+b_1+a_2+a_1-2}\\
 &\quad\cdot s_2^{b_2+c_2+a_2+a_1-2}(1-s_2)^{-c_2-b_1-a_2-a_1+1}ds_2ds_1ds_0.
\end{split}\end{align*}
The equation is irreducible if and only if 
any value of the following linear functions is not an integer.
\index{even family!reducibility}
\begin{align*}\begin{split}
 &a_1\quad a_2\\%
 &a_1+b_1\quad a_1+b_2\quad a_1+b_3\quad a_2+b_1\quad a_2+b_2\quad a_2+b_3\\%
 &a_1+a_2+b_1+c_1-1\quad a_1+a_2+b_1+c_2-1\quad a_1+a_2+b_2+c_1-1\\
 &a_1+a_2+b_2+c_2-1\quad a_1+a_2+b_3+c_1-1\quad a_1+a_2+b_2+c_2-1.
\end{split}\end{align*}

In the same way we have the connection coefficients for
odd family.

\noindent
\index{odd family!connection coefficient}
$EO_{2m+1}$ ($\mathbf m=(1^{2m+1},mm1,m+1m)$ :\ odd family)
\index{odd family}
\[
 \begin{Bmatrix}
     x=\infty & 0 & 1\\
     \lambda_{0,1} & [\lambda_{1,1}]_{(m)}   & [\lambda_{2,1}]_{(m+1)}\\
     \vdots        & [\lambda_{1,2}]_{(m)} & [\lambda_{2,2}]_{(m)}\\
     \lambda_{0,2m+1} & \lambda_{1,3}
    \end{Bmatrix}
\]
\hfill$\sum_{\nu=1}^{2m+1}\lambda_{0,\nu}
+m(\lambda_{1,1}+\lambda_{1,2}+\lambda_{2,2})
+(m+1)\lambda_{2,1}+\lambda_{1,3}=2m.$\hfill\phantom{.}
\begin{align*}
 c(\lambda_{0,2m+1}\rightsquigarrow\lambda_{1,3})
 &=\prod_{k=1}^2
   \frac{\Gamma\bigl(\lambda_{1,k}-\lambda_{1,3}\bigr)
  }{\Gamma\bigl(
    \left|\begin{Bmatrix}
   \lambda_{0,2m+1} & \lambda_{1,k} &\lambda_{2,1}
    \end{Bmatrix}\right|
  \bigr)}
  \\
 &\quad
  \cdot
   \prod_{k=1}^{2m}\frac
  {\Gamma\bigl(\lambda_{0,2m+1}-\lambda_{0,k}+1)}
  {\Gamma\bigl(
    \left|\begin{Bmatrix}
    \lambda_{0,k} & \lambda_{1,1} & \lambda_{2,1}\\
    \lambda_{0,2m+1} & \lambda_{1,2} & \lambda_{2,2}
  \end{Bmatrix}\right|
   \bigr)},\allowdisplaybreaks\\
 c(\lambda_{1,3}\rightsquigarrow\lambda_{0,2m+1})
 &=\displaystyle\prod_{k=1}^2\frac
 {\Gamma\bigl(\lambda_{1,3}-\lambda_{1,k}+1\bigr)
  }{\Gamma\bigl(
   \left|\begin{Bmatrix}
    &[\lambda_{1,k}]_{(m)}&[\lambda_{2,1}]_{(m)}\\
    (\lambda_{0,\nu})_{1\le \nu\le 2m}
   &[\lambda_{1,3-k}]_{(m-1)}&[\lambda_{2,2}]_{(m)}
     \\
    &\lambda_{1,3}
  \end{Bmatrix}\right|
  \bigr)}
  \\
 &\quad
  \cdot
   \prod_{k=1}^{2m}\frac
  {\Gamma\bigl(\lambda_{0,k}-\lambda_{0,2m+1})}
  {\Gamma\bigl(
     \left|\begin{Bmatrix}
   & [\lambda_{1,1}]_{(m-1)}&[\lambda_{2,1}]_{(m)}\\
   (\lambda_{0,\nu})_{\substack{1\le \nu\le 2m\\ \nu\ne k}}
  &[\lambda_{1,2}]_{(m-1)}&[\lambda_{2,2}]_{(m-1)}\\
  &\lambda_{1,3}&
  \end{Bmatrix}\right|
   \bigr)}.
\end{align*}

The condition for the irreducibility is
\index{odd family!reducibility}
\begin{equation*}
 \begin{cases}
  \lambda_{0,\nu}+\lambda_{1,k}+\lambda_{2,1}\notin\mathbb Z
  &(1\le\nu\le 2m+1,\ k=1,2),\\
  \lambda_{0,\nu}+\lambda_{0,\nu'}+\lambda_{1,1}+\lambda_{1,2}+\lambda_{2,1}
  +\lambda_{2,2}-1\notin\mathbb Z
    &(1\le\nu<\nu'\le 2m+1,\ k=1,2).
 \end{cases}
\end{equation*}
The same statement using the above linear functions
as in the case of even family is valid for the bijectivity of 
the shift operator with respect to compatible shift $(\epsilon_{j,\nu})$.

We note that the operation $\RAd(\p^{-\mu})\circ
\RAd\bigl(x^{-\lambda_{1,2}}(1-x)^{-\lambda_{2,2}}\bigr)$ transforms 
the operator and solutions 
with the above Riemann scheme of type $EO_n$ into those of type $EO_{n+1}$:
\begin{align*}
 &\begin{Bmatrix}
  \lambda_{0,1} & [\lambda_{1,1}]_{([\frac n2])}
   &[\lambda_{2,1}]_{([\frac{n+1}2])}\\
  \vdots & [\lambda_{1,2}]_{([\frac {n-1}2])}
   &[\lambda_{2,2}]_{([\frac{n}2])}\\
   \lambda_{0,n}&\lambda_{1,3}\\
 \end{Bmatrix}
\allowdisplaybreaks\\ 
&\xrightarrow{x^{-\lambda_{1,2}}(1-x)^{-\lambda_{2,2}}}
\begin{Bmatrix}
  \lambda_{0,1}+\lambda_{1,2}+\lambda_{2,2} & [\lambda_{1,1}-\lambda_{1,2}]_{([\frac n2])}
   &[\lambda_{2,1}-\lambda_{2,2}]_{([\frac{n+1}2])}\\
  \vdots & [0]_{([\frac {n-1}2])}
   &[0]_{([\frac{n}2])}\\
   \lambda_{0,n}+\lambda_{1,2}+\lambda_{2,2}&\lambda_{1,3}-\lambda_{1,2}\\
 \end{Bmatrix}
\allowdisplaybreaks\\
&\xrightarrow{\p^{-\mu}}
\begin{Bmatrix}
  \lambda_{0,1}+\lambda_{1,2}+\lambda_{2,2}-\mu 
   & [\lambda_{1,1}-\lambda_{1,2}+\mu]_{([\frac n2])}
    &[\lambda_{2,1}-\lambda_{2,2}+\mu]_{([\frac{n+1}2])}\\
  \vdots & [\mu]_{([\frac {n+1}2])}
   &[\mu]_{([\frac{n+2}2])}\\
  \lambda_{0,n}+\lambda_{1,2}+\lambda_{2,2}-\mu
   &\lambda_{1,3}-\lambda_{1,2}+\mu\\
  1-\mu
 \end{Bmatrix}.
\end{align*}
\subsection{Trigonometric identities}\label{sec:TriEx}
The connection coefficients corresponding to the Riemann scheme of 
the hypergeometric family in \S\ref{sec:GHG} satisfy
\[
\begin{split}
\sum_{\nu=1}^nc(1:\lambda_{1,2}\!\rightsquigarrow\!0:\lambda_{0,\nu})
\cdot c(0:\lambda_{1,\nu}\!\rightsquigarrow\!1:\lambda_{1,2})=1,\\
\sum_{\nu=1}^nc(\infty:\lambda_{2,i}\!\rightsquigarrow\!0:\lambda_{0,\nu})
\cdot c(0:\lambda_{0,\nu}\!\rightsquigarrow\!\infty:\lambda_{2,j})=\delta_{ij}.
\end{split}
\]
These equations with Remark~\ref{rem:conn} iii) give the identities
\begin{gather*}
\sum_{k=1}^n\frac{\prod_{\nu\in\{1,\dots,n\}}\sin(x_k-y_\nu)}
  {\prod_{\nu\in\{1,\ldots,n\}\setminus\{k\}}\sin(x_k-x_\nu)}
  = \sin\Bigl(\sum_{\nu=1}^n x_\nu-\sum_{\nu=1}^n y_\nu\Bigr),\\
\sum_{k=1}^n\prod_{\nu\in\{1,\dots,n\}\setminus\{k\}}
 \frac{\sin(y_i-x_\nu)}{\sin(x_k-x_\nu)}
 \prod_{\nu\in\{1,\dots,n\}\setminus\{j\}}
 \frac{\sin(x_k-y_\nu)}{\sin(y_j-y_\nu)}
  = \delta_{ij}\quad(1\le i,\,j\le n).
\end{gather*}
We have the following identity from the connection coefficients
of even/odd families.
\begin{align*}
 &\sum_{k=1}^n\sin(x_k+s)\cdot\sin(x_k+t)\cdot
     \prod_{\nu\in\{1,\ldots,n\}\setminus\{k\}}
     \frac{\sin(x_k+x_\nu+2u)}{\sin(x_k-x_\nu)}\\
 &=\begin{cases}
   \sin\Bigl(nu+\displaystyle\sum_{\nu=1}^nx_\nu\Bigr)\cdot
   \sin\Bigl(s+t+(n-2)u+\sum_{\nu=1}^nx_\nu\Bigr)
   \qquad\qquad\ \,\text{if \ }n=2m,\\[5pt]
   \sin\Bigl(s+(n-1)u+\displaystyle\sum_{\nu=1}^nx_\nu\Bigr)\cdot
   \sin\Bigl(t+(n-1)u+\sum_{\nu=1}^nx_\nu\Bigr)
   \qquad\text{if \ }n=2m+1.
   \end{cases}\notag
\end{align*}
The direct proof of these identities using residue calculus is given by 
\cite{Oc}.
It is interesting that similar identities of rational functions are given 
in \cite[Appendix]{Gl}
which studies the systems of Schlesinger canonical form corresponding to
Simpson's list (cf.~\S\ref{sec:rigidEx}).
\subsection{Rigid examples of order at most 4}\label{sec:4Ex}
\index{tuple of partitions!rigid!order$\ \le4$}%
\index{000Delta1@$[\Delta(\mathbf m)]$}%
\subsubsection{order $1$}
\underline{$1,1,1$}
\begin{equation*}
     u(x)=x^{\lambda_1}(1-x)^{\lambda_2}
    \qquad
    \begin{Bmatrix}-\lambda_1-\lambda_2&\lambda_1&\lambda_2\end{Bmatrix}
\end{equation*}
\subsubsection{order $2$}
\underline{$11,11,11$} : $H_2$ (Gauss)\qquad$[\Delta(\mathbf m)]=1^4$
\begin{equation*}
  u_{H_2} = \p^{-\mu_1}u(x)
    \qquad
    \begin{Bmatrix}
     -\mu_1+1& 0& 0\\
     -\lambda_1-\lambda_2-\mu_1&\lambda_1+\mu_1&\lambda_2+\mu_1
    \end{Bmatrix}
\end{equation*}
\subsubsection{order $3$} There are two types.

$\underline{111,21,111} : H_3\ ({}_3F_2)$ \qquad$[\Delta(\mathbf m)]=1^9$
\begin{align*}
&u_{H_3} = \p^{-\mu_2}x^{\lambda_3}u_{H_2}
\\
&  \begin{Bmatrix}
      1-\mu_2                &0 & [0]_{(2)}\\
      -\lambda_3-\mu_1-\mu_2+1& \lambda_3+\mu_2\\
      -\lambda_1-\lambda_2-\lambda_3-\mu_1-\mu_2
        &\lambda_1+\lambda_3+\mu_1+\mu_2&\lambda_2+\mu_1+\mu_2
    \end{Bmatrix}
\end{align*}

\underline{$21,21,21,21$} : $P_3$ (Jordan-Pochhammer)
\qquad$[\Delta(\mathbf m)]=1^4\cdot2$
\begin{align*}
  &u_{P_3} = \p^{-\mu}x^{\lambda_0}(1-x)^{\lambda_1}(c_2-x)^{\lambda_2}\\
  &
       \begin{Bmatrix}
         [1-\mu]_{(2)}& [0]_{(2)} & [0]_{(2)} & [0]_{(2)}\\
         -\lambda_0-\lambda_1-\lambda_2-\mu & 
         \lambda_0+\mu &\lambda_1+\mu &\lambda_2+\mu 
       \end{Bmatrix}
\end{align*}
\subsubsection{order 4} There are 6 types.

\index{tuple of partitions!rigid!211,211,211}
\underline{$211,211,211$}: $\alpha_2$
\qquad$[\Delta(\mathbf m)]=1^{10}\cdot2$
\begin{align*}
   &\p^{-\mu_2}x^{\lambda_3}(1-x)^{\lambda_4}u_{H_2}\\
   &\begin{Bmatrix}
     [-\mu_2+1]_{(2)} & [0]_{(2)}& [0]_{(2)}\\
     -\mu_1-\lambda_3-\lambda_4-\mu_2+1& \lambda_3+\mu_2& \lambda_4+\mu_2\\
     -\lambda_1-\lambda_2-\lambda_3-\lambda_4-\mu_1-\mu_2
       &\lambda_1+\lambda_3+\mu_1+\mu_2
         &\lambda_2+\lambda_4+\mu_1+\mu_2
    \end{Bmatrix}
\end{align*}

\underline{$1111,31,1111$} : $H_4\ ({}_4F_3)$ 
\qquad$[\Delta(\mathbf m)]=1^{16}$
{\small\begin{align*}
  &\p^{-\mu_3}x^{\lambda_4}u_{H_3}
  \\
  &\begin{Bmatrix}
      -\mu_3+1&0& [0]_{(3)}\\
      -\lambda_4-\mu_2-\mu_3+1                &\lambda_4 \\
      -\lambda_3-\lambda_4-\mu_1-\mu_2-\mu_3+1
        & \lambda_3+\lambda_4+\mu_2+\mu_3\\
      -\lambda_1-\cdots-\lambda_4-\mu_1-\mu_2-\mu_3
        &\lambda_1+\cdots+\lambda_4+\mu_1+\mu_2+\mu_3&\lambda_2+\mu_1+\mu_2+\mu_3
    \end{Bmatrix}
\end{align*}}

\underline{$211,22,1111$} : $EO_4$
\qquad$[\Delta(\mathbf m)]=1^{14}$
{\small\begin{align*}
  &\p^{-\mu_3}(1-x)^{-\lambda'}u_{H_3},\quad
  \lambda'=\lambda_2+\mu_1+\mu_2\\
  &
 \begin{Bmatrix}
      \lambda_2+\mu_1-\mu_2-\mu_3+1&[0]_{(2)}
        & [-\lambda_2-\mu_1-\mu_2+\mu_3]_{(2)}\\
      \lambda_2-\lambda_3-\mu_3+1& \lambda_3+\mu_2+\mu_3\\
      -\lambda_1-\lambda_3-\mu_3
        &\lambda_1+\lambda_3+\mu_1+\mu_2+\mu_3&[0]_{(2)}\\
      -\mu_3+1\\
    \end{Bmatrix}
\end{align*}}%
We have the integral representation of the local solution
corresponding to the exponent at 0:
\[
 \int_0^x\int_0^t\int_0^s (1-t)^{-\lambda_2-\mu_1-\mu_2}(x-t)^{\mu_3-1}
 s^{\lambda_3}(t-s)^{\mu_2-1}
 u^{\lambda_1}(1-u)^{\lambda_2}(s-u)^{\mu-1}du\,ds\,dt.
\]

\underline{$211,22,31,31$}: $I_4$
\qquad$[\Delta(\mathbf m)]=1^6\cdot 2^2${\small
\begin{align*}
  &\p^{-\mu_2}(c_2-x)^{\lambda_3}u_{H_2}\\
  &\begin{Bmatrix}
     [-\mu_2+1]_{(2)}&[0]_{(3)}&[0]_{(3)}&[0]_{(2)}\\
     -\lambda_3-\mu_1-\mu_2+1& & 
        &[\lambda_3+\mu_2]_{(2)}\\
     -\lambda_1-\lambda_2-\lambda_3-\mu_1-\mu_2
      &\lambda_1+\mu_1+\mu_2&\lambda_2+\mu_1+\mu_2
   \end{Bmatrix}
\end{align*}}

\underline{$31,31,31,31,31$}: $P_4$ 
\qquad$[\Delta(\mathbf m)]=1^5\cdot3$
\begin{align*}
 &u_{P_4}=\p^{-\mu}x^{\lambda_0}(1-x)^{\lambda_1}(c_2-x)^{\lambda_2}(c_3-x)^{\lambda_3}
 \\&
 \begin{Bmatrix}
   [-\mu+1]_{(3)} & [0]_{(3)} & [0]_{(3)} & [0]_{(3)} & [0]_{(3)}\\
   -\lambda_0-\lambda_2-\lambda_3-\mu
   &\lambda_0+\mu & \lambda_1+\mu & \lambda_2+\mu & \lambda_3+\mu
 \end{Bmatrix}
\end{align*}

\underline{$22,22,22,31$}: $P_{4,4}$
\qquad$[\Delta(\mathbf m)]=1^8\cdot2$
\begin{gather*}
    \p^{-\mu'}x^{-\lambda_0'}(1-x)^{-\lambda_1'}(c_2-x)^{-\lambda_2'}u_{P_3},
    \quad
    \lambda_j' = \lambda_j+\mu,\quad
    \mu' = \lambda_0+\lambda_1+\lambda_2+2\mu\\
 \begin{Bmatrix}
 [1-\mu']_{(3)}& [\lambda_1+\lambda_2+\mu]_{(2)} & [\lambda_0+\lambda_2+\mu]_{(2)} & [\lambda_0+\lambda_1+\mu]_{(2)}\\
 -\lambda_0-\lambda_1-\lambda_2& [0]_{(2)} &[0]_{(2)} &[0]_{(2)}
 \end{Bmatrix}
\end{gather*}
\subsubsection{Tuple of partitions $:\ 211,211,211$}\label{sec:eq211}
\index{tuple of partitions!rigid!211,211,211}
\qquad$[\Delta(\mathbf m)]=1^{10}\cdot 2$
\[211,211,211 = H_1\oplus H_3:6 = H_2\oplus H_2:4
    = 2H_1\oplus H_2:1\]
From the operations
\begin{align*}
 &\begin{Bmatrix}
   x=\infty&0&1\\
   1-\mu_1 & 0 & 0\\
   -\alpha_1-\beta_1-\mu_1&\underline{\alpha_1+\mu_1}&\beta_1+\mu_1
 \end{Bmatrix}\\
 &\xrightarrow{x^{\alpha_2}(1-x)^{\beta_2}}{}
 \begin{Bmatrix}
   x=\infty&0&1\\
   1-\alpha_2-\beta_2-\mu_1 & \alpha_2 & \beta_2\\
   -\alpha_1-\alpha_2-\beta_1-\beta_2-\mu_1&
      \underline{\alpha_1+\alpha_2+\mu_1}
         &\beta_1+\beta_2+\mu_1
 \end{Bmatrix}\\
  &\xrightarrow{\p^{-\mu_2}}{}
  \begin{Bmatrix}
   x=\infty&0&1\\
   [-\mu_2+1]_{(2)} & [0]_{(2)} & [0]_{(2)}\\
   1-\beta_2-\mu_1-\mu_2 &\alpha_2+\mu_2 &\beta_2+\mu_2\\
   -\alpha_1-\beta_1-\beta_2-\mu_1-\mu_2
   &\underline{\alpha_1+\mu_1+\mu_2}&\beta_1+\beta_2+\mu_1+\mu_2
 \end{Bmatrix}\allowdisplaybreaks\\
  &\longrightarrow
  \begin{Bmatrix}
   x=\infty&0&1\\
   [\lambda_{2,1}]_{(2)} & [\lambda_{0,1}]_{(2)} & [\lambda_{1,1}]_{(2)}\\
   \lambda_{2,2}        & \lambda_{0,2} &\lambda_{1,2}\\
   \lambda_{2,3}        &\lambda_{0,3}&\lambda_{1,3}
 \end{Bmatrix}\quad\text{with}\quad
 \sum_{j=0}^2
 (2\lambda_{j,1}+\lambda_{j,2}+\lambda_{j,3})=3,
\end{align*}
we have the integral representation of the solutions as in the case of 
other examples we have explained and so here we will not discuss them.
The universal operator of type $11,11,11$ is
\begin{align*}
 Q=x^2(1-x)^2\p^2 -(ax+b)x(1-x)\p + (cx^2+dx+e).
\end{align*}
Here we have
\begin{align*}
 b&=\lambda_{0,1}'+\lambda_{0,2}'-1,&e&=\lambda_{0,1}'\lambda_{0,2}',\\
 -a-b&=\lambda_{1,1}'+\lambda_{1,2}'-1,&c+d+e&=\lambda_{1,1}'\lambda_{1,2}',\\
 &&c&=\lambda_{2,1}'\lambda_{2,2}',\\
\lambda_{0,1}'&=\alpha_2,&\lambda_{0,2}'&=\alpha_1+\alpha_2+\mu_1,\\
 \lambda_{1,1}'&=\beta_2,&\lambda_{1,2}'&=\beta_1+\beta_2+\mu_2,\\
  \lambda_{2,1}'&=1-\beta_2-\mu_1-\mu_2,&
  \lambda_{2,2}'&=-\alpha_1-\beta_1-\beta_2-\mu_1-\mu_2
\end{align*}
corresponding to the above second Riemann scheme.
The operator corresponding to the tuple $211,211,211$ is
\begin{align*}
P&=\RAd(\p^{-\mu_2}) Q\allowdisplaybreaks\\
 &=\RAd(\p^{-\mu_2})\Bigl(
   (\vartheta-\lambda_{0,1}')(\vartheta-\lambda_{0,2}')\\
&\quad{}
 +x\bigl(-2\vartheta^2+(2\lambda_{0,1}'+2\lambda_{0,2}'+\lambda_{1,1}'
  +\lambda_{1,2}'-1)\vartheta+\lambda_{1,1}'\lambda_{1,2}'
  -\lambda_{0,1}'\lambda_{0,2}'-\lambda_{2,1}'\lambda_{2,2}'\bigr)
 \\
 &\quad{}
 + x^2(\vartheta+\lambda_{2,1}')(\vartheta+\lambda_{2,2}')\Bigr)
 \allowdisplaybreaks\\
 &=\p^2 (\vartheta-\lambda_{0,1}'-\mu_2)
  (\vartheta-\lambda_{0,2}'-\mu_2)\\
&\quad{}
 +\p(\vartheta-\mu_2+1)
  \bigl(-2(\vartheta-\mu_2)^2+(2\lambda_{0,1}'+2\lambda_{0,2}'+\lambda_{1,1}'
  +\lambda_{1,2}'-1)(\vartheta-\mu_2)\\
 &\quad{}+\lambda_{1,1}'\lambda_{1,2}'
  -\lambda_{0,1}'\lambda_{0,2}'-\lambda_{2,1}'\lambda_{2,2}'\bigr)\\
 &\quad{}
 + (\vartheta-\mu_2+1)(\vartheta-\mu_2+2)(\vartheta+\lambda_{2,1}'-\mu_2)(\vartheta+\lambda_{2,2}'-\mu_2).
\end{align*}

\noindent
The condition for the irreducibility:
\begin{equation*}
 \begin{cases}
 \lambda_{0,1}+\lambda_{1,1}+\lambda_{2,1}\notin\mathbb Z,\\
 \lambda_{0,\nu}+\lambda_{1,1}+\lambda_{2,1}\notin\mathbb Z,\ 
 \lambda_{0,1}+\lambda_{1,\nu}+\lambda_{2,1}\notin\mathbb Z,\ 
 \lambda_{0,1}+\lambda_{1,1}+\lambda_{2,\nu}\notin\mathbb Z\ \ (\nu=2,3),\\
 \lambda_{0,1}+\lambda_{0,2}+\lambda_{1,1}+\lambda_{1,\nu}+\lambda_{2,1}
 +\lambda_{2,\nu'}\notin\mathbb Z\ \ (\nu,\,\nu'\in\{2,3\}).
 \end{cases}
\end{equation*}
There exist three types of direct decompositions of the tuple
and there are 4 direct decompositions which give the 
connection coefficient 
$c(\lambda_{0,3}\!\rightsquigarrow\!\lambda_{1,3})$
by the formula \eqref{eq:connection} in Theorem~\ref{thm:c}:
\begin{align*}
 21\overline{1},21\underline{1},211
 &=00\overline{1},100,100\oplus210,11\underline{1},111\\
 &=11\overline{1},210,111\oplus100,00\underline{1},100\\
 &=10\overline{1},110,110\oplus110,10\underline{1},101\\
 &=10\overline{1},110,101\oplus110,10\underline{1},110
\end{align*}
Thus we have
\begin{align*}
 &c(\lambda_{0,3}\!\rightsquigarrow\!\lambda_{1,3})
 =\frac{\prod_{\nu=1}^2\Gamma(\lambda_{0,3}-\lambda_{0,\nu}+1)}
  {\Gamma(\lambda_{0,3}+\lambda_{1,1}+\lambda_{2,1})\cdot
   \Gamma(1-\lambda_{0,1}-\lambda_{1,3}-\lambda_{2,1})}\\
 &\phantom{c(\lambda_{0,3}\!\rightsquigarrow\!\lambda_{1,3})=}
  \quad\cdot
  \frac{\prod_{\nu=1}^2\Gamma(\lambda_{1,\nu}-\Gamma_{1,3})}
   {\prod_{\nu=2}^3
   \Gamma(\lambda_{0,1}+\lambda_{0,3}+\lambda_{1,1}+\lambda_{1,2}
  +\lambda_{2,1}+\lambda_{2,\nu}-1)}.
\end{align*}

\index{Wronskian}
We can also calculate generalized connection coefficient
defined in Definition~\ref{def:GC}:
\index{connection coefficient!generalized}
\begin{align*}
 &c([\lambda_{0,1}]_{(2)}\!\rightsquigarrow\![\lambda_{1,1}]_{(2)})
 =\frac{
    \prod_{\nu=2}^3\bigl(\Gamma(\lambda_{0,1}-\lambda_{0,\nu}+2)
    \cdot\Gamma(\lambda_{1,\nu}-\lambda_{1,1}-1)\bigr)
  }{
    \prod_{\nu=2}^3\bigl(
    \Gamma(\lambda_{0,1}+\lambda_{1,\nu}+\lambda_{2,1})
    \cdot\Gamma(1-\lambda_{0,\nu}-\lambda_{1,1}-\lambda_{2,1})
    \bigr)
  }.
\end{align*}
This can be proved by the procedure given in Remark~\ref{rem:Cproc} as in the
case of the formula \eqref{eq:e4dc2}.
Note that the gamma functions in the numerator of this formula 
correspond to Remark~\ref{rem:Cproc} (2) and those 
in the denominator correspond to the rigid decompositions
\[
 \begin{split}
  \underline{2}11,\overline{2}11,211
  &=\underline{1}00,\overline{0}10,100\oplus
     \underline{1}11,\overline{2}01,111
  =\underline{1}00,\overline{0}01,100\oplus
     \underline{1}11,\overline{2}10,111\\
  &=\underline{2}10,\overline{1}11,111\oplus
     \underline{0}01,\overline{1}00,100
  =\underline{2}01,\overline{1}11,111\oplus
     \underline{0}10,\overline{1}00,100.
 \end{split}
\]
The equation $Pu=0$ with the Riemann scheme 
$\begin{Bmatrix}
x=\infty & 0 & 1\\
[\lambda_{0,1}]_{(2)} & [0]_{(2)} & [0]_{(2)}\\
\lambda_{0,2} & \lambda_{1,2} & \lambda_{2,2}\\
\lambda_{0,3} & \lambda_{1,3} & \lambda_{2,3}
\end{Bmatrix}$ is isomorphic to the system
\begin{align*}
 \tilde u'&=\frac Ax\tilde u+\frac B{x-1}\tilde u,\quad
 \tilde u=\begin{pmatrix}u_1\\u_2\\u_3\\u_4\end{pmatrix},\ \ u_1=u,
\allowdisplaybreaks\\
 A&=\begin{pmatrix}
    0 & 0 & c_1 & 0\\
    0 & 0 & 0 & c_1\\
    0 & 0 & a_1 & b_1-b_2-c_2\\
    0 & 0 & 0 & a_2
   \end{pmatrix},
\allowdisplaybreaks\\ 
 B&=\begin{pmatrix}
    0 & 0 & 0 & 0\\
    0 & 0 & 0 & 0\\
    -a_1-b_2+c_1& -b_1+b_2+c_2& b_2 &0 \\
    -a_1+a_2+c_2& -a_2-b_1+c_1& a_1-a_2-c_2& b_1
   \end{pmatrix},
\allowdisplaybreaks\\
 &\begin{cases}
  a_1=\lambda_{1,2},\\
  a_2=\lambda_{1,3},\\
  b_1=\lambda_{2,2}-2,\\
  b_2=\lambda_{2,3}-1,\\
  c_1 = -\lambda_{0,1},\\
  c_2=\lambda_{0,1}+\lambda_{0,2}+\lambda_{1,2}+\lambda_{2,2}-1
 \end{cases}
\end{align*}
when $\lambda_{0,1}(\lambda_{0,1}+\lambda_{2,2})
(\lambda_{0,1}+\lambda_{0,2}+\lambda_{1,2}+\lambda_{2,3}-2)\ne0$.
Let $u(x)$ be a holomorphic solution of $Pu=0$ in a neighborhood of $x=0$.
By a direct calculation we have
\begin{align*}
 &u_1(0)=\frac{(a_1-1)(a_2-1)}{(b_1-c_1+1)(b_1-b_2-c_2)c_1}u'(0)+\\
&\ \ 
\frac
{(a_2+b_2+c_2-1)a_1-(c_1+c_2)a_2+(a_2-a_1+c_2)b_1-(c_2+1)b_2-c_2^2+c_1}{
(b_1-c_1+1)(b_1-b_2-c_2)}u(0).
\end{align*}
Since the shift described in Remark~\ref{rem:Cproc} (1) corresponds to the 
shift
\[
 (a_1,a_2,b_1,b_2,c_1,c_2)\mapsto
 (a_1-k,a_2-k,b_1+k,b_2+k,c_1,c_2),
\]
it follows from Theorem~\ref{thm:paralimits} that
\[
 \lim_{k\to\infty}
 c([\lambda_{0,1}]_{(2)}\!\rightsquigarrow\![\lambda_{1,1}]_{(2)})
 \Bigl|_{\substack{\substack\lambda_{0,2}\mapsto \lambda_{0,2}-k,\ 
 \substack\lambda_{0,3}\mapsto \lambda_{0,3}-k\\ 
 \lambda_{1,2}\mapsto \lambda_{1,2}+k,\ 
 \lambda_{1,3}\mapsto \lambda_{1,3}+k}}=1
\]
as in the proof of \eqref{eq:e4dc2} because 
$u_1(0)\sim \frac k{(b_1-b_2-c_2)c_1}u'(0)+Cu(0)$ 
with $C\in \mathbb C$  when $k\to\infty$.
Thus we can calculate this generalized connection coefficient by the procedure 
described in Remark~\ref{rem:Cproc}.

Using \eqref{eq:I0H}, 
we have the series expansion of the local solution at $x=0$ corresponding 
to the exponent $\alpha_1+\mu_1+\mu_2$ for the Riemann 
scheme parametrized by $\alpha_i$, $\beta_i$ and $\mu_i$ with $i=1,\,2$.
\begin{align*}
 &I_0^{\mu_2}x^{\alpha_2}(1-x)^{\beta_2}I_0^{\mu_1}x^{\alpha_1}(1-x)^{\beta_1}
 \allowdisplaybreaks\\&
 =I_0^{\mu_2} \frac{\Gamma(\alpha_1+1)}{\Gamma(\alpha_1+\mu+1)}
  \sum_{n=0}^\infty
  \frac{(\alpha_1+1)_n(-\beta_1)_n}{(\alpha_1+\mu+1)_nn!}
    x^{\alpha_2}(1-x)^{\beta_2}x^{\alpha_1+\mu+n}
\allowdisplaybreaks\\
&=\frac{\Gamma(\alpha_1+1)\Gamma(\alpha_1+\alpha_2+\mu_1+1)
  x^{\alpha_1+\alpha_2+\mu_1+\mu_2}}
  {\Gamma(\alpha_1+\mu_1+1) \Gamma(\alpha_1+\alpha_2+\mu_1+\mu_2+1)}\\
&\quad\cdot
 \sum_{m,n=0}^\infty \frac{(\alpha_1+1)_n(\alpha_1+\alpha_2+\mu_1+1)_{m+n}
   (-\beta_1)_n(-\beta_2)_m}
  {(\alpha_1+\mu_1+1)_{n} (\alpha_1+\alpha_2+\mu_1+\mu_2+1)_{m+n}n!m!}x^{m+n}
\allowdisplaybreaks\\
 &=\frac{\Gamma(\alpha_1+1)\Gamma(\alpha_1+\alpha_2+\mu_1+1)
  x^{\alpha_1+\alpha_2+\mu_1+\mu_2}(1-x)^{-\beta_2}}
  {\Gamma(\alpha_1+\mu_1+1) \Gamma(\alpha_1+\alpha_2+\mu_1+\mu_2+1)}\\
 &\quad \cdot\sum_{m,\,n=0}^\infty
  \frac{(\alpha_1+1)_n(\alpha_1+\alpha_2+\mu_1+1)_n(\mu_2)_m(-\beta_1)_n
  (-\beta_2)_m}
  {(\alpha_1+\mu_1+1)_n(\alpha_1+\alpha_2+\mu_1+\mu_2+1)_{m+n}m!n!}
  x^n\Bigl(\frac x{x-1}\Bigr)^m.
\end{align*}
Note that when $\beta_2=0$, the local solution is reduced to a local solution 
of the equation at $x=0$ satisfied by the hypergeometric series 
${}_3F_2(\alpha'_1,\alpha'_2,\alpha'_3;\beta'_1,\beta'_2;x)$ and when
$\alpha_2=0$, it is reduced to a local solution of the equation corresponding 
to the exponent at $x=1$ with free multiplicity.

Let $u_0(\alpha_1,\alpha_2,\beta_1,\beta_2,\mu_1,\mu_2;x)$ be the local solution
normalized by 
\[
 u_0(\alpha,\beta,\mu;x) - x^{\alpha_1+\alpha_2+\mu_1+\mu_2}
 \in x^{\alpha_1+\alpha_2+\mu_1+\mu_2+1}\mathcal O_0
\]
for generic $\alpha,\beta,\mu$. 
Then we have the recurrence relation
\[
\begin{split}
  u_0(\alpha,\beta_1-1,\beta_2,\mu;x) &= u_0(\alpha,\beta,\mu;x)
  +\frac{(\alpha_1+1)(\alpha_1+\alpha_2+\mu_1+1)}
   {(\alpha_1+\mu_1+1)(\alpha_1+\alpha_2+\mu_1+\mu_2+1)}\\
  &\qquad\cdot u_0(\alpha_1+1,\alpha_2,\beta_ 1-1,\beta_2,\mu;x).
\end{split}
\]

\subsubsection{Tuple of partitions $:\ 211,22,31,31$}
\index{tuple of partitions!rigid!211,22,31,31}
\qquad$[\Delta(\mathbf m)]=1^6\cdot 2$
\begin{align*}
  211,22,31,31
 &=H_1\oplus P_3:4=H_2\oplus H_2:2=2H_1\oplus H_2:2\\
 &=010,10,10,10\oplus 201,12,21,21
 =010,01,10,10\oplus 201,21,21,21\\
 &=001,10,10,10\oplus 210,12,21,21
 =001,01,10,10\oplus 210,21,21,21\\
 &=110,11,11,20\oplus 101,11,20,11
 =110,11,20,11\oplus 101,11,11,20\\
 &=200,20,20,20\oplus 011,02,11,11\\
 &\xrightarrow{\p_{max}}011,02,11,11
\end{align*}
\begin{align*}
&\begin{Bmatrix}
 x=0 & \frac1{c_1} & \frac1{c_2} & \infty\\
 [\lambda_{0,1}]_{(3)} & [\lambda_{1,1}]_{(3)} & [\lambda_{2,1}]_{(2)} & 
 [\lambda_{3,1}]_{(2)}\\
 \lambda_{0,2} & \lambda_{1,2} & \lambda_{2,2} &
 [\lambda_{3,2}]_{(2)}\\
 & & \lambda_{2,3}
\end{Bmatrix}
\allowdisplaybreaks\\
&\xrightarrow{x^{-\lambda_{0,1}}
 (1-c_1x)^{-\lambda_{1,1}}(1-c_2x)^{-\lambda_{2,1}}
 }{}\\
&
\begin{Bmatrix}
 x=0 & \frac1{c_1} & \frac1{c_2} & \infty\\
 [0]_{(3)} & [0]_{(3)} & [0]_{(2)} & 
 [\lambda_{3,1}+\lambda_{0,1}+\lambda_{1,1}+\lambda_{2,1}]_{(2)}\\
 \lambda_{0,2}-\lambda_{0,1} & \lambda_{1,2}-\lambda_{1,1} & 
 \lambda_{2,2}-\lambda_{2,1} &
 [\lambda_{3,2}+\lambda_{0,1}+\lambda_{1,1}+\lambda_{2,1}]_{(2)}\\
 & & \lambda_{2,3}-\lambda_{2,1}
\end{Bmatrix}\\
&\xrightarrow{\p^{-\lambda_1'}}{}
\\
&
\begin{Bmatrix}
 x=0 & \frac1{c_1} & \frac1{c_2} & \infty\\
  0 &  0 \\
 \lambda_{0,2}+\lambda_1'-\lambda_{0,1} 
 & \lambda_{1,2}+\lambda_1'-\lambda_{1,1}& 
 \lambda_{2,2}+\lambda_1'- \lambda_{2,1}&
 [\lambda_{3,2}-\lambda_{3,1}+1]_{(2)}\\
 & & \lambda_{2,3}+\lambda_1'-\lambda_{2,1}
\end{Bmatrix}
\end{align*}
The condition for the irreducibility:
\begin{equation*}
\begin{cases}
 \lambda_{0,1}+\lambda_{1,1}+\lambda_{2,\nu}+\lambda_{3,\nu'}\notin\mathbb Z
 \ \ (\nu\in\{1,2,3\},\ \nu'\in\{1,2\}),\\
 \lambda_{0,1}+\lambda_{0,2}+2\lambda_{1,1}+\lambda_{2,1}+\lambda_{2,\nu}
 +\lambda_{3,1}+\lambda_{3,2}
\notin\mathbb Z \ \ (\nu\in\{2,3\}),
\end{cases}
\end{equation*}
\begin{align*}
 c(\lambda_{0,2}\!\rightsquigarrow\!\lambda_{1,2})
 &=\frac{\Gamma(\lambda_{0,2}-\lambda_{0,1}+1)
        \Gamma(\lambda_{1,2}-\lambda_{1,1})
 (1-\frac{c_2}{c_1})^{\lambda_{2,1}}}
  {\prod_{\nu=2}^3
    \Gamma(\lambda_{0,1}+\lambda_{0,2}+2\lambda_{1,1}+\lambda_{2,1}
    +\lambda_{2,\nu}+\lambda_{3,1}+\lambda_{3,2}-1)},
\allowdisplaybreaks\\
 c(\lambda_{0,2}\!\rightsquigarrow\!\lambda_{2,3})
 &=\prod_{\nu=1}^2
   \frac{\Gamma(\lambda_{2,3}-\lambda_{2,\nu})}
   {\Gamma(1-\lambda_{0,1}-\lambda_{1,1}-\lambda_{2,3}-\lambda_{3,\nu})}\\
 &\quad\cdot
\frac{\Gamma(\lambda_{0,2}-\lambda_{0,1}+1)
 (1-\frac{c_1}{c_2})^{\lambda_{1,1}}}
  {
    \Gamma(\lambda_{0,1}+\lambda_{0,2}+2\lambda_{1,1}+\lambda_{2,1}
    +\lambda_{2,2}+\lambda_{3,1}+\lambda_{3,2}-1)}.
\end{align*}
\subsubsection{Tuple of partitions $:\ 22,22,22,31$}
\index{tuple of partitions!rigid!22,22,22,31}
\qquad$[\Delta(\mathbf m)]=1^8\cdot2$
\begin{align*}
 22,22,22,31
 &=H_1\oplus P_3:8=2(11,11,11,20)\oplus00,00,00,(-1)1\\
 &=10,10,10,10\oplus12,12,12,21
 =10,10,01,10\oplus12,12,21,21\\
 &=10,01,10,10\oplus12,21,12,21
 =10,01,01,10\oplus12,21,21,21\\
 &=01,10,10,10\oplus21,12,12,21
 =01,10,01,10\oplus21,12,21,21\\
 &=01,01,10,10\oplus21,21,12,21
 =01,01,01,10\oplus21,21,21,21\\
 &\overset2\to12,12,12,21
\end{align*}
The condition for the irreducibility:
\begin{equation*}
 \begin{cases}
 \lambda_{0,i}+\lambda_{1,j}+\lambda_{2,k}+\lambda_{3,1}\notin\mathbb Z
 \quad(i,\,j,\,k\in\{1,2\}),\\
 \lambda_{0,1}+\lambda_{0,2}+\lambda_{1,1}+\lambda_{1,2}+
 \lambda_{2,1}+\lambda_{2,2}+\lambda_{3,1}+\lambda_{3,2}\notin\mathbb Z.
 \end{cases}
\end{equation*}
\subsection{Other rigid examples with a small order}\label{sec:ord6Ex}
First we give an example which is not of Okubo type.

\subsubsection{$221,221,221$}
\index{tuple of partitions!rigid!221,221,221}
The Riemann Scheme and the direct decompositions are
\begin{gather*}
 \begin{Bmatrix}
  x=0 &  1 & \infty\\
 [\lambda_{0,1}]_{(2)} & [\lambda_{1,1}]_{(2)} & [\lambda_{2,1}]_{(2)}\\
 [\lambda_{0,2}]_{(2)} & [\lambda_{1,2}]_{(2)} & [\lambda_{2,2}]_{(2)}\\
 \lambda_{0,3} & \lambda_{1,3} & \lambda_{2,3} 
 \end{Bmatrix},\qquad
  \sum_{j=0}^2(2\lambda_{j,1}+2\lambda_{j,2}+\lambda_{j,3})=4,
 \allowdisplaybreaks\\
\begin{aligned}
 \phantom{a}
 [\Delta(\mathbf m)]&=1^{14}\cdot2\\
 22\overline1,22\underline1,221
            &=H_1\oplus 211,211,211:8\qquad\ 6=|2,2,2|\\
            &=H_2\oplus H_3:6\qquad\qquad\qquad 11=|21,22,22|\\
            &=2H_2\oplus H_1:1\\
            &=10\overline1,110,110\oplus120,11\underline1,111
             =01\overline1,110,110\oplus210,11\underline1,111\\
            &=11\overline1,120,111\oplus110,10\underline1,110
             =11\overline1,210,111\oplus110,01\underline1,110\\
            &\to121,121,121
\end{aligned}
\end{gather*}
and a connection coefficient is give by
\begin{align*}
 c(\lambda_{0,3}\!\rightsquigarrow\!\lambda_{1,3})
 &=\prod_{\nu=1}^{2}\biggl(
  \frac{\Gamma(\lambda_{0,3}-\lambda_{0,\nu}+1)}
       {\Gamma(\lambda_{0,\nu}+\lambda_{0,3}+\lambda_{1,1}+\lambda_{1,2}+\lambda_{2,1}+\lambda_{2,2}-1) }\\
 &\qquad\cdot
  \frac{\Gamma(\lambda_{1,\nu}-\lambda_{1,3})}
  {\Gamma(2-\lambda_{0,1}-\lambda_{0,2}-\lambda_{1,\nu}
   -\lambda_{1,3}-\lambda_{2,1}-\lambda_{2,2})}\biggr).
\end{align*}

Using this example we explain an idea to get all 
the rigid decompositions $\mathbf m=\mathbf m'\oplus\mathbf m''$.
Here we note that $\idx(\mathbf m,\mathbf m')=1$.
Put $\mathbf m=221,221,221$.  We may assume 
$\ord\mathbf m'\le\ord\mathbf m''$.

Suppose $\ord\mathbf m'=1$.
Then $\mathbf m'$ is isomorphic to $1,1,1$ and
there exists tuples of indices $(\ell_0,\ell_1,\ell_2)$ such that 
$m'_{j,\nu}=\delta_{j,\ell_j}$.
Then $\idx(\mathbf m,\mathbf m')=m_{0,\ell_0}+m_{1,\ell_1}+m_{1,\ell_2}
 - (3-2)\ord\mathbf m\cdot\ord\mathbf m'$ and we have
$m_{0,\ell_0}+m_{1,\ell_1}+m_{1,\ell_2}=6$.
Hence $(m_{0,\ell_0},m_{1,\ell_1},m_{1,\ell_2})=(2,2,2)$, which
is expressed by $6=|2,2,2|$ in the above.  Since $\ell_j=1$ or $2$ for 
$0\le j\le 2$, it is clear that there exist 8 rigid decompositions with 
$\ord\mathbf m'=1$.

Suppose $\ord\mathbf m'=2$.
Then $\mathbf m'$ is isomorphic to $11,11,11$
and there exists tuples of indices
$(\ell_{0,1},\ell_{0,2},\ell_{1,1},\ell_{1,2},\ell_{2,1},\ell_{2,2})$
which satisfies $\sum_{j=0}^2\sum_{\nu=1}^2 m_{j,\ell_\nu}=
(3-2)\ord\mathbf m\cdot\ord\mathbf m'+1=11$.
Hence we may assume 
$(\ell_{0,1},\ell_{0,2},\ell_{1,1},\ell_{1,2},\ell_{2,1},\ell_{2,2})
=(2,1,2,2,2,2)$ modulo obvious symmetries, which is expressed by
$11=|21,22,22|$.
There exist 6 rigid decompositions with $\ord\mathbf m'=2$.

In general, this method to get all the rigid decompositions of $\mathbf m$
is useful when $\ord\mathbf m$ is not big.
For example if $\ord\mathbf m\le 7$, $\mathbf m'$ is isomorphic
to $1,1,1$ or $11,11,11$ or $21,111,111$.

The condition for the irreducibility is given by Theorem~\ref{thm:irred}
and it is
\[
  \begin{cases}
   \lambda_{0,i}+\lambda_{1,j}+\lambda_{2,k}\notin\mathbb Z&
   (i,\,j,\,k\in\{1,2\}),\\
   \sum_{j=0}^2\sum_{\nu=1}^2\lambda_{j,\nu}
   +(\lambda_{i,3}-\lambda_{i,k})\notin\mathbb Z&
   (i\in\{0,1,2\},\ k\in\{1,2\}).
  \end{cases}
\]
\subsubsection{Other examples}
Theorem~\ref{thm:c} shows that 
the connection coefficients between local solutions of rigid
differential equations which correspond to the 
eigenvalues of local monodromies with free multiplicities 
are given by direct decompositions of the tuples of partitions
$\mathbf m$ describing their spectral types.

We list the rigid decompositions 
$\mathbf m=\mathbf m'\oplus \mathbf m''$
of rigid indivisible $\mathbf m$ in 
$\mathcal P^{(5)}\cup\mathcal P^{(6)}_3$ satisfying
$m_{0,n_0}=m_{1,n_1}=m'_{0,n_0}=m''_{1,n_1}=1$.
The positions of $m_{0,n_0}$ and $m_{1,n_1}$ in $\mathbf m$ to which 
Theorem~\ref{thm:c} applies are indicated by an overline and 
an underline, respectively.
The number of decompositions in each case equals $n_0+n_1-2$ and therefore
the validity of the following list is easily verified.

We show the tuple $\p_{max}\mathbf m$ after $\to$.
The type $[\Delta(\mathbf m)]$ of $\Delta(\mathbf m)$ is calculated by
\eqref{eq:DmInd}, which is also indicated in the following
with this calculation.
For example, when $\mathbf m=311,221,2111$, we have 
$d(\mathbf m)=2$, $\mathbf m'=\p\mathbf m=111,021,0111$,
$[\Delta(s(111,021,0111))]=1^9$, 
$\{m'_{j,\nu}-m'_{j,1}\in\mathbb Z_{>0}\}\cup\{2\}=\{1,1,1,1,2,2\}$
and hence $[\Delta(\mathbf m)]=1^9\times 1^4\cdot 2^2=1^{13}\cdot2^2$,
which is a partition of $h(\mathbf m)-1=17$.
Here we note that $h(\mathbf m)$ is the sum of the numbers
attached the Dynkin diagram \ 
\begin{xy}
 \ar@{-}  *+!D{1} *{\circ} ;
  (5,0)   *+!D{2} *{\circ}="A"
 \ar@{-} "A";(10,0) *+!LD{5} *{\circ}="B"
 \ar@{-} "B";(15,0) *+!D{3} *{\circ}="C"
 \ar@{-} "C";(20,0) *+!D{2} *{\circ}="C"
 \ar@{-} "C";(25,0) *+!D{1} *{\circ}="C"
 \ar@{-} "B";(10,5) *+!LD{3} *{\circ}="D"
 \ar@{-} "D";(10,10) *+!LD{1} *{\circ}
\end{xy} \ corresponding to $\alpha_{\mathbf m}\in\Delta_+$.

All the decompositions of the tuple $\mathbf m$ corresponding to the elements 
in $\Delta(\mathbf m)$ are given, by which we easily get 
the necessary and sufficient condition for the irreducibility
(cf.~Theorem~\ref{thm:irrKac} and \S\ref{sec:RobEx}).%
\index{000Delta1@$[\Delta(\mathbf m)]$}

{\small\begin{align*}
&\text{\large\qquad$\ord\mathbf m=5$}\\
 311,221,2111&=100,010,0001\oplus211,211,2110\qquad\qquad\quad\ \, 6=|3,2,1|\\
             &=100,001,1000\oplus211,220,1111\qquad\qquad\quad\ \, 6=|3,1,2|\\
             &=101,110,1001\oplus210,111,1110\qquad\qquad 11=|31,22,21|\\
             &=2(100,100,1000)\oplus111,021,0111\\
             &\overset{2}\to111,021,0111
\allowdisplaybreaks\\
[\Delta(\mathbf m)]&=1^9\times1^4\cdot2^2=1^{13}\cdot2^2\\
\mathbf m&=H_1\oplus211,211,211:6=H_1\oplus EO_4:1=H_2\oplus H_3:6
 =2H_1\oplus H_3:2
\allowdisplaybreaks\\
 311,22\overline{1},211\underline{1}
  &=211,211,2110\oplus100,010,0001
   =211,121,2110\oplus100,100,0001\\
  &=100,001,1000\oplus211,220,1111\\
  &=210,111,1110\oplus101,110,1001
   =201,111,1110\oplus110,110,1001
\allowdisplaybreaks\\
 31\overline{1},221,211\underline{1}
  &=211,211,2110\oplus100,010,0001
   =211,121,2110\oplus100,100,0001\\
  &=201,111,1110\oplus110,110,1001\\
  &=101,110,1010\oplus210,111,1101
   =101,110,1100\oplus210,111,1011
\allowdisplaybreaks\\[5pt]
 32,211\overline{1},211\underline{1}
  &=22,1111,2110\oplus10,1000,0001
   =10,0001,1000\oplus22,2110,1111\\
  &=11,1001,1010\oplus21,1110,1101
   =11,1001,1100\oplus21,1110,1011\\
  &=21,1101,1110\oplus11,1010,1001
   =21,1011,1110\oplus11,1100,1001\\
  &\overset2\to12,0111,0111\\
[\Delta(\mathbf m)]&=1^9\times 1^7\cdot 2=1^{16}\cdot2\\
\mathbf m&=H_1\oplus H_4:1=H_1\oplus EO_4:6=H_2\oplus H_3:9
  =2H_1\oplus H_3:1
\allowdisplaybreaks\\[5pt]
22\overline{1},22\underline{1},41,41
  &=001,100,10,10\oplus220,121,31,31=001,010,10,10\oplus220,211,31,31\\
  &=211,220,31,31\oplus010,001,10,10=121,220,31,31\oplus100,001,10,10\\
  &\overset2\to021,021,21,21\\
[\Delta(\mathbf m)]&=1^4\cdot2\times1^4\cdot2^3=1^{6}\cdot2^4\\
\mathbf m&=H_1\oplus22,211,31,31:4=H_2\oplus H_3:2=2H_1\oplus P_3:4
\allowdisplaybreaks\\
22\overline{1},221,4\underline{1},41
  &=001,100,10,10\oplus220,121,31,31=001,010,10,10\oplus220,211,31,31\\
  &=111,111,30,21\oplus110,110,11,20
\allowdisplaybreaks\\[5pt]
22\overline{1},32,32,4\underline{1}
  &=101,11,11,20\oplus120,21,21,21=011,11,11,20\oplus210,21,21,21\\
  &=001,10,10,10\oplus 220,22,22,31\\
  &\overset2\to021,12,12,21
\allowdisplaybreaks\\
[\Delta(\mathbf m)]&=1^4\cdot2\times1^3\cdot2^2=1^{7}\cdot2^3
\allowdisplaybreaks\\
\mathbf m&=H_1\oplus22,22,22,31:1=H_1\oplus211,22,31,31:4=
 H_2\oplus P_3:2\\
 &=2H_1\oplus P_3:2
\allowdisplaybreaks\\[5pt]
31\overline{1},31\underline{1},32,41
  &=001,100,10,10\oplus310,211,22,31
   =211,301,22,31\oplus100,001,10,10\\
  &=101,110,11,20\oplus210,201,21,21
   =201,210,21,21\oplus110,101,11,20\\
  &\overset3\to011,011,02,11\\
[\Delta(\mathbf m)]&=1^4\times1^4\cdot2\cdot3=1^8\cdot2\cdot3\\
\mathbf m&=H_1\oplus211,31,22,31:4=H_2\oplus P_3:4\\
  &=2H_1\oplus H_3:1=3H_1\oplus H_2:1
\allowdisplaybreaks\\ 
31\overline{1},311,32,4\underline{1}
  &=001,100,10,10\oplus301,211,22,31\\
  &=101,110,11,20\oplus210,201,21,21
   =101,101,11,20\oplus210,210,21,21
\allowdisplaybreaks\\[5pt]
32,32,4\overline{1},4\underline{1},41
  &=11,11,11,20,20\oplus21,21,30,21,21\\
  &=21,21,21,30,21\oplus11,11,20,11,20\\
  &\overset3\to02,02,11,11,11\\
[\Delta(\mathbf m)]&=1^4\times2^2\cdot 3=1^4\cdot2^2\cdot3\\
\mathbf m&=H_1\oplus P_4:1=H_2\oplus P_3:3=2H_1\oplus P_3:2=3H_1\oplus H_2:1
\allowdisplaybreaks\\[10pt]
&\text{\qquad\large$\ord\mathbf m=6$ and $\mathbf m\in\mathcal P_3$}\\
321,3111,222&=311,2111,221\oplus010,1000,001\qquad\qquad\quad\ \,7=|2,3,2|\\
 &=211,2110,211\oplus110,1001,011\qquad\qquad13=|32,31,22|\\
 &=210,1110,111\oplus111,2001,111\\
 &\overset2\to121,1111,022\to111,0111,012\\
[\Delta(\mathbf m)]&=1^{14}\times1\cdot2^3=1^{15}\cdot2^3\\
\mathbf m&=H_1\oplus311,2111,221:3=
 H_2\oplus211,211,211:6=H_3\oplus H_3:6\\
 &= 2H_1\oplus EO_4:3
\allowdisplaybreaks\\
32\overline{1},311\underline{1},222
            &=211,2110,211\oplus110,1001,011
             =211,2110,121\oplus110,1001,101\\
            &=211,2110,112\oplus110,1001,110\\
            &=111,2100,111\oplus210,1011,111
             =111,2010,111\oplus210,1101,111
\allowdisplaybreaks\\
321,311\overline{1},311\underline{1}
            &=221,2111,3110\oplus100,1000,0001
             =100,0001,1000\oplus221,3110,2111\\
            &=211,2101,2110\oplus110,1010,1001
             =211,2011,2110\oplus110,1101,1001\\
            &=110,1001,1100\oplus211,2110,2011
             =110,1001,1010\oplus211,2110,2101\\
            &\overset3\to021,0111,0111\\
[\Delta(\mathbf m)]&=1^9\times1^7\cdot 2\cdot3=1^{16}\cdot2\cdot3\\
\mathbf m&=H_1\oplus221,2111,311:6=H_1\oplus32,2111,2111:1\\
&=H_2\oplus211,211,211:9=2H_1\oplus H_4:1= 3H_1\oplus H_3:1\\
\allowdisplaybreaks\\
32\overline{1},311\underline{1},3111
            &=221,3110,2111\oplus100,0001,1000
             =001,1000,1000\oplus320,2111,2111\\
            &=211,2110,2110\oplus110,1001,1001
             =211,2110,2011\oplus110,1001,1100\\
            &=211,2110,2011\oplus110,1001,1100
\allowdisplaybreaks\\[5pt]
32\overline{1},32\underline{1},2211
            &=211,220,1111\oplus110,101,1100
             =101,110,1100\oplus220,211,1111\\
            &=111,210,1110\oplus210,111,1101
             =111,210,1101\oplus210,111,1110\\
            &\overset2\to121,121,0211\to101,101,0011\\
[\Delta(\mathbf m)]&=1^{10}\cdot2\times1^4\cdot2^2=1^{14}\cdot2^3\\
\mathbf m&=H_1\oplus311,221,2111:4=H_1\oplus 221,221,221:2\\
 &=H_2\oplus EO_4:2= H_2\oplus211,211,211:4=H_3\oplus H_3:2\\
 &=2H_1\oplus211,211,211:2=2(110,110,1100)\oplus101,101,0011:1
\allowdisplaybreaks\\
321,32\overline{1},221\underline{1}
            &=221,221,2210\oplus100,100,0001
             =110,101,1100\oplus211,220,1111\\
            &=211,211,2110\oplus110,110,0101
             =211,211,1210\oplus110,110,1001\\
            &=210,111,1110\oplus111,210,1101
\allowdisplaybreaks\\[5pt]
41\overline{1},221\underline{1},2211
             &=311,2210,2111\oplus100,0001,0100
              =311,2210,1211\oplus100,0001,1000\\
             &=101,1100,1100\oplus310,1111,1111
              =201,1110,1110\oplus210,1101,1101\\
             &=201,1110,1101\oplus210,1101,1110\\
             &\overset2\to211,0211,0211\to011,001,0011\\
[\Delta(\mathbf m)]&=1^{10}\cdot2\times1^4\cdot 2^3=1^{14}\cdot2^4\\
\mathbf m&=H_1\oplus311,221,2211:8=H_2\oplus H_4:2=H_3\oplus H_3:4\\
&=2H_1\oplus211,211,211:4
\allowdisplaybreaks\\
411,221\overline{1},221\underline{1}
             &=311,2111,2210\oplus100,0100,0001
              =311,1211,2210\oplus100,1000,0001\\
             &=100,0001,0100\oplus311,2210,2111
              =100,0001,1000\oplus311,2210,1211\\
             &=201,1101,1110\oplus210,1110,1101
              =210,1101,1110\oplus201,1110,1101
\allowdisplaybreaks\\[5pt]
41\overline{1},222,2111\underline{1}
             &=311,221,21110\oplus100,001,00001
              =311,212,21110\oplus100,010,00001\\
             &=311,122,21110\oplus100,100,00001
              =201,111,11100\oplus210,111,10011\\
             &=201,111,11010\oplus210,111,10101
              =201,111,10110\oplus210,111,11001\\
             &\overset2\to211,022,01111\to111,012,00111\\
[\Delta(\mathbf m)]&=1^{14}\times1^4\cdot 2^3=1^{18}\cdot2^3\\
\mathbf m&=H_1\oplus311,221,2111:12=H_3\oplus H_3:6=2H_1\oplus EO_4:3
\allowdisplaybreaks\\[5pt]
42,221\overline{1},2111\underline{1}
             &=32,2111,21110\oplus10,0100,00001
              =32,1211,21110\oplus10,1000,00001\\
             &=10,0001,10000\oplus32,2210,11111
              =31,1111,11110\oplus11,1100,10001\\
             &=21,1101,11100\oplus21,1110,10011
              =21,1101,11010\oplus21,1110,10101\\
             &=21,1101,10110\oplus21,1110,11001\\
             &\overset2\to22,0211,01111\to12,0111,00111\\
[\Delta(\mathbf m)]&=1^{14}\times1^6\cdot 2^2=1^{20}\cdot2^2\\
\mathbf m&=H_1\oplus32,2111,2111:8=H_1\oplus EO_4:2
 =H_2\oplus H_4:4\\
&=H_3\oplus H_3:6=2H_1\oplus EO_4:2
\allowdisplaybreaks\\[5pt]
33,311\overline{1},2111\underline{1}
             &=32,2111,21110\oplus01,1000,00001
              =23,2111,21110\oplus10,1000,00001\\
             &=22,2101,11110\oplus11,1010,10001
              =22,2011,11110\oplus11,1100,10001\\
             &=11,1001,11000\oplus22,2110,10111
              =11,1001,10100\oplus22,2110,11011
\allowdisplaybreaks\\
             &=11,1001,10010\oplus22,2110,11101\\
             &\overset2\to13,1111,01111
\allowdisplaybreaks\\
[\Delta(\mathbf m)]&=1^{16}\times1^4\cdot2^2=1^{20}\cdot2^2\\
\mathbf m&=H_1\oplus32,2111,2111:8=H_2\oplus EO_4:12
=2H_1\oplus H_4:2
\allowdisplaybreaks\\[5pt]
32\overline{1},311\underline{1},3111
              &=221,3110,2111\oplus100,0001,1000
               =001,1000,1000\oplus320,2111,2111\\
              &=211,2110,2110\oplus110,1001,1001
               =211,2110,2101\oplus110,1001,1010\\
              &=211,2110,2011\oplus110,1001,1100\\
              &\overset3\to021,0111,0111\\
[\Delta(\mathbf m)]&=1^{9}\times1^7\cdot2\cdot3=1^{16}\cdot2\cdot3\\
\mathbf m&=H_1\oplus221,2111,311:6=H_1\oplus32,2111,2111:1\\
&=H_2\oplus 211,211,211:9=2H_1\oplus H_4:1=3H_1\oplus H_3:1
\allowdisplaybreaks\\
321,311\overline{1},311\underline{1}
              &=100,0001,1000\oplus221,3110,2111
               =221,2111,3110\oplus100,1000,0001\\
              &=211,2101,2110\oplus110,1010,1001
               =211,2011,2110\oplus110,1100,1001\\
              &=110,1001,1100\oplus211,2110,2011
               =110,1001,1010\oplus211,2110,2101
\allowdisplaybreaks\\[5pt]
33,221\overline{1},221\underline{1}
            &=22,1111,2110\oplus11,1100,1001
             =22,1111,1210\oplus11,1100,0101\\
            &=21,1101,1110\oplus12,1110,1011
             =12,1101,1110\oplus21,1110,1011\\
            &=11,1001,1100\oplus22,1210,1111
             =11,0101,1100\oplus22,2110,1111\\
            &\overset1\to23,1211,1211\to21,1011,1011\\
[\Delta(\mathbf m)]&=1^{16}\cdot2\times1^4=1^{20}\cdot2\\
\mathbf m&=H_1\oplus32,2111,2111:8=H_2\oplus EO_4:8=
H_3\oplus H_3:4\\
&=2(11,1100,1100)\oplus11,0011,0011:1
\end{align*}}

We show all the rigid decompositions of the following 
simply reducible partitions of order 6, which also correspond to the 
reducibility of the universal models.\index{tuple of partitions!simply reducible}
{\begin{align*}
 42,222,111111
            &=32,122,011111\oplus 10,100,100000\\
            &=21,111,111000\oplus 21,111,000111\\
            &\overset1\to32,122,011111\to22,112,001111\to12,111,000111\\
[\Delta(\mathbf m)]&=1^{28}\\
\mathbf m&=H_1\oplus EO_5:18=H_3\oplus H_3:10
\allowdisplaybreaks\\[5pt]
  33,222,21111
  &=23,122,11111\oplus10,100,10000\\  
  &=22,112,10111\oplus11,110,11000\\
  &=21,111,11100\oplus12,111,10011\\
  &\overset1\to23,122,11111\to22,112,01111\to12,111,00111\\
[\Delta(\mathbf m)]&=1^{24}\\
\mathbf m&=H_1\oplus EO_5:6=H_2\oplus EO_4:12=H_3\oplus H_3:6
\end{align*}}
\subsection{Submaximal series and minimal series }\label{sec:Rob}
\index{tuple of partitions!rigid!(sub)maximal series}
The rigid tuples $\mathbf m=\{m_{j,\nu}\}$ satisfying
\begin{equation}
  \#\{m_{j,\nu}\,;\,0<m_{j,\nu}<\ord\mathbf m\}\ge\ord\mathbf m +5
\end{equation}
are classified by Roberts \cite{Ro}.
They are the tuples of type $H_n$ and $P_n$ which satisfy
\begin{equation}
  \#\{m_{j,\nu}\,;\,0<m_{j,\nu}<\ord\mathbf m\}=2\ord\mathbf m+2
\end{equation}
and those of 13 series $A_n=EO_n$, 
$B_n$, $C_n$, $D_n$, $E_n$, $F_n$, $G_{2m}$, $I_n$, $J_n$, $K_n$, $L_{2m+1}$, 
$M_n$, $N_n$ called \textsl{submaximal series}
\index{tuple of partitions!rigid!(sub)maximal series} which satisfy
\begin{equation}
 \#\{m_{j,\nu}\,;\,0<m_{j,\nu}<\ord \mathbf m\}=\ord\mathbf m+5.
\end{equation}
The series $H_n$ and $P_n$ are called \textsl{maximal series}.

We examine these rigid series
and give enough information to analyze the series,
which will be sufficient to construct differential equations including
their confluences,
integral representation and series expansion of solutions and
get connection coefficients and the condition of their reducibility.

In fact from the following list we easily get 
all the direct decompositions and 
Katz's operations decreasing the order.
The number over an arrow indicates the difference of the orders.
We also indicate Yokoyama's reduction for systems of Okubo normal form
using extension and restriction,
which are denoted $E_i$ and $R_i$ $(i=0,1,2)$, respectively (cf.~\cite{Yo2}).
Note that the inverse operations of $E_i$ are $R_i$, respectively.
In the following we put
\begin{equation}
\begin{split}
 u_{P_m}&=\p^{-\mu}x^{\lambda_0}(1-x)^{\lambda_1}(c_2-x)^{\lambda_2}
          \cdots(c_{m-1}-x)^{\lambda_{m-1}},\\
 u_{H_2}&=u_{P_2},\\
 u_{H_{m+1}}&=\p^{-\mu^{(m)}}x^{\lambda^{(m)}_0}u_{H_m}.
\end{split}
\end{equation}
We give all the decompositions
\begin{equation}\label{eq:allDec}
 \mathbf m=\bigl(\idx(\mathbf m',\mathbf m)\cdot\mathbf m'\bigr)\oplus\mathbf m''
\end{equation}
for $\alpha_{\mathbf m'}\in\Delta(\mathbf m)$.
Here we will not distinguish between $\mathbf m'\oplus \mathbf m''$
and $\mathbf m''\oplus \mathbf m'$ when $\idx(\mathbf m',\mathbf m)=1$.
Moreover note that the inequality assumed for the formula $[\Delta(\mathbf m)]$ 
below assures that the given tuple of partition is monotone.
\subsubsection{$B_n$}\label{sec:Bn}
(${B_{2m+1}}=\text{III}_m$, $B_{2m}=\text{II}_m$, $B_3=H_3$, $B_2=H_2$)
\index{000Delta1@$[\Delta(\mathbf m)]$}%
\begin{align*}
 u_{B_{2m+1}}&=\p^{-\mu'}(1-x)^{\lambda'}u_{H_{m+1}}\\
 m^21,m+11^m,m1^{m+1} &=10,10,01\oplus mm-11,m1^m,m1^m\\
 &=01,10,10\oplus m^2,m1^m,m-11^{m+1}\\
 &=1^20,11,11\oplus (m-1)^21,m1^{m-1},m-11^m
\allowdisplaybreaks\\
 [\Delta(B_{2m+1})]&=1^{(m+1)^2}\times1^{m+2}\cdot m^2
 =1^{m^2+3m+3}\cdot m^2\\
&\hspace{-1.08cm}
\begin{aligned}
 B_{2m+1}&=H_1\oplus B_{2m}&&:2(m+1) \\
  &=H_1\oplus C_{2m}&&:1\\
  &=H_2\oplus B_{2m-1}&&:m(m+1)\\
  &=mH_1\oplus H_{m+1}&&:2
\end{aligned}
\allowdisplaybreaks\\[5pt]
 u_{B_{2m}}&=\p^{-\mu'}x^{\lambda'}(1-x)^{\lambda''}u_{H_m}\\
 mm-11,m1^m,m1^m&=100,01,10\oplus (m-1)^21,m1^{m-1},m-11^m\\
 &=001,10,10\oplus mm-10,m-11^m,m-11^m\\
 &=110,11,11\oplus m-1m-21,m-11^{m-1},m-11^{m-1}
\allowdisplaybreaks\\
[\Delta(B_{2m})]&=1^{m^2}\times1^{2m+1}\cdot(m-1)=1^{(m+1)^2}\cdot(m-1)\cdot m
\\
&\hspace{-.72cm}
\begin{aligned} B_{2m}&=H_1\oplus B_{2m-1}&&:2m \\
 &= H_1\oplus C_{2m-2}&&:1 \\
 &= H_2\oplus B_{2m-2}&&:m^2\\
 &= (m-1)H_1\oplus H_{m+1}&&:1\\
 &= mH_1\oplus H_m&&:1
\end{aligned}
\allowdisplaybreaks\\
 B_{2m+1} &\overset{m}{\underset{R2E0}\longrightarrow} H_{m+1},\quad
 B_n\overset{1}\longrightarrow B_{n-1},\quad B_n\overset{1}\longrightarrow C_{n-1}
\allowdisplaybreaks\\
 B_{2m} &\overset{m}{\underset{R1E0}\longrightarrow} H_{m},\quad
 B_{2m} \overset{m-1}\longrightarrow H_{m+1}
\end{align*}

\subsubsection{An example}\label{sec:RobEx}
Using the example of type $B_{2m+1}$, 
we explain how we get explicit results from the data written in \S\ref{sec:Bn}.

The Riemann scheme of type $B_{2m+1}$ is
\begin{gather*}
 \begin{Bmatrix}
 \infty & 0 & 1\\
 [\lambda_{0,1}]_{(m)} & [\lambda_{1,1}]_{(m+1)} & [\lambda_{2,1}]_{(m)}\\
 [\lambda_{0,2}]_{(m)} &  \lambda_{1,2}          & \lambda_{2,2}\\
 \lambda_{0,3}         &  \vdots                 & \vdots\\
                       & \lambda_{1,m+1}         & \lambda_{2,m+2}
 \end{Bmatrix},
 \allowdisplaybreaks\\
 \sum_{j=0}^p\sum_{\nu=1}^{n_j} m_{j,\nu}\lambda_{j,\nu}=2m
 \qquad(\text{Fuchs relation}).
\end{gather*}

Theorem~\ref{thm:irrKac} says that 
the corresponding equation is irreducible if and only if any value of the 
following linear functions is not an integer.
\begin{align*}
 L^{(1)}_{i,\nu}&:=
   \lambda_{0,i}+\lambda_{1,1}+\lambda_{2,\nu}\qquad(i=1,2,\ \ \nu=2,\dots,m+2),\\
 L^{(2)}&:=
  \lambda_{0,3}+\lambda_{1,1}+\lambda_{2,1},\\
 L^{(3)}_{\mu,\nu}&:=
  \lambda_{0,1}+\lambda_{0,2}+\lambda_{1,1}+\lambda_{1,\mu}
   +\lambda_{2,1}+\lambda_{2,\nu}-1\\
 &\qquad\qquad\qquad\quad(\mu=2,\dots,m+1,\ \ \nu=2,\dots,m+2),\\
 L^{(4)}_i&:=
   \lambda_{0,i}+\lambda_{1,1}+\lambda_{2,1}\qquad(i=1,2).
\end{align*}
Here $L^{(1)}_{i,\nu}$ (resp.~$L^{(2)}$ etc.) correspond to the terms 
$10,01,01$ and $H_1\oplus B_{2m}:2(m+1)$ 
(resp.~$01,10,10$ and $H_1\oplus C_{2m}:1$ etc.) in \S\ref{sec:Bn}.

It follows from Theorem~\ref{thm:univmodel}  and Theorem~\ref{thm:irrKac} 
that the Fuchsian differential equation with the above Riemann scheme belongs 
to the universal equation $P_{B_{2m+1}}(\lambda)u=0$ if
\[
  L^{(4)}_i\notin\{-1,-2,\ldots,1-m\}\quad(i=1,\ 2).
\]

Theorem~\ref{thm:c} says that the connection coefficient
$c(\lambda_{1,m+1}\rightsquigarrow\lambda_{2,m+2})$ equals
\[
 \frac
 {\prod_{\mu=1}^m\Gamma(\lambda_{1,m+1}-\lambda_{1,\mu}+1)\cdot
  \prod_{\mu=1}^{m+1}\Gamma(\lambda_{2,\nu}-\lambda_{1,m+2})}
 {\prod_{i=1}^2\Gamma(1-L^{(1)}_{i,m+2})\cdot
  \prod_{\nu=2}^{m+1}\Gamma(L^{(3)}_{m+1,\nu})\cdot
  \prod_{\mu=2}^m\Gamma(1-L^{(3)}_{\mu,m+2})}
\]
and
\begin{align*}
  c(\lambda_{1,m+1}\rightsquigarrow\lambda_{0,3})&
 =\frac
 {\prod_{\mu=1}^m\Gamma(\lambda_{1,m+1}-\lambda_{1,\mu}+1)\cdot
  \prod_{i=1}^{2}\Gamma(\lambda_{0,i}-\lambda_{0,3})}
 {\Gamma(1-L^{(2)})\cdot
  \prod_{\nu=2}^{m+1}\Gamma(L^{(3)}_{m+1,\nu})},\\
   c(\lambda_{2,m+2}\rightsquigarrow\lambda_{0,3})&
 = \frac
   {\prod_{\nu=1}^{m+1}\Gamma(\lambda_{2,m+2}-\lambda_{1,\nu}+1)\cdot
   \prod_{i=1}^{2}\Gamma(\lambda_{0,i}-\lambda_{0,3})}
   {\prod_{i=1}^2 \Gamma(L^{(1)}_{m+2})\cdot
   \prod_{\nu=2}^{m+1}\Gamma(L^{(3)}_{m+1,\nu})}.
\end{align*}

It follows from Theorem~\ref{thm:sftUniv} that the universal operators%
\index{shift operator}%
\index{Fuchsian differential equation/operator!universal operator}
\[
\begin{aligned}
 &P_{H_1}^{0}(\lambda) && P_{H_1}^{2}(\lambda)
 &&P_{B_{2m}}^{0}(\lambda) && P_{B_{2m}}^{1}(\lambda)
 &&P_{B_{2m}}^{2}(\lambda)
 &&P_{C_{2m}}^{1}(\lambda)
 && P_{C_{2m}}^{2}(\lambda)\\  
 &P_{H_2}^{1}(\lambda) && P_{H_2}^{2}(\lambda) 
 &&P_{B_{2m-1}}^{0}(\lambda) && P_{B_{2m-1}}^{1}(\lambda)
 &&P_{B_{2m-1}}^{2}(\lambda)
\end{aligned}
\]
define shift operators $R_{B_{2m+1}}(\epsilon,\lambda)$ under
the notation in the theorem.

We also explain how we get the data in \S\ref{sec:Bn}.
Since $\p_{max}:B_{2m+1}=\mathbf m:=mm1,m+11^m,m1^{m+1}\to
H_{m+1}=\mathbf m':=0m1,11^m,01^{m+1}$, the equality \eqref{eq:DmInd} shows
\begin{align*}
 [\Delta(B_{2m+1})]&=[\Delta(H_{m+1})]
 \cup\{d_{1,1,1}(\mathbf m)\}\cup\{m'_{j,\nu}-m'_{j,1}>0\}\\
 &=1^{(m+1)^2}\times m^1\times 1^{m+2}\cdot m^1=1^{(m+1)^2}\times 1^{m+2}\cdot m^2
 = 1^{m^2+3m+3}\cdot m^2.
\end{align*}
Here we note that $\{m'_{j,\nu}-m'_{j,1}>0\}=\{m,1,1^{m+1}\}=1^{m+2}\cdot m^1$
and $[\Delta(H_{m+1})]$ is given in \S\ref{sec:GHG}.
\smallskip

We check \eqref{eq:sumDelta} for $\mathbf m$ as follows:\\
$h(\mathbf m)=%
2(1+\cdots+m)+(2m+1)+2(m+1)+1$\\
$\phantom{h(\mathbf m)}=m^2+5m+4,$\\
$\sum_{i\in[\Delta(\mathbf m)]}i =
 (m^2+3m+3)+2m=m^2+5m+3$.

\vspace{-1.7cm}\hspace{7.6cm}
{\small \begin{xy}
 \ar@{-}  *+!D{1} *{\circ} ;
  (5,0)   *+!D{2} *{\circ}="A"
 \ar@{-} "A";(8,0)
 \ar@{.} (8,0);(13,0)
 \ar@{-} (13,0);(16,0) *+!D{m} *{\circ}="A"
 \ar@{-} "A";(21,0) *+!U{2m+1} *{\circ}="A"
 \ar@{-} "A";(26,0) *+!D{\ \ m+1} *{\circ}="C"
 \ar@{-} "C";(29,0)
 \ar@{.} (29,0);(34,0)
 \ar@{-} (34,0);(37,0) *+!D{2} *{\circ}="C"
 \ar@{-} "C";(42,0) *+!D{1} *{\circ}="C"
 \ar@{-} "A";(21,5) *+!LD{m+1} *{\circ}="A"
 \ar@{-} "A";(21,10) *+!LD{1} *{\circ}
\end{xy}}

The decompositions $mH_1\oplus H_{m+1}$ and $H_1\oplus B_{2m}$ etc.\ in 
\S\ref{sec:Bn} are easily obtained and we should show that they are all 
the decompositions \eqref{eq:allDec}, whose number is given by 
$[\Delta(B_{2m+1})]$.
There are 2 decompositions of type $mH_1\oplus H_{m+1}$, namely,
$B_{2m+1}=mm1,m+11^m,m1^{m+1}=m(100,10,10)\oplus\cdots=m(010,10,10)\oplus\cdots$,
which correspond to $L^{(4)}_i$ for $i=1$ and $2$.
Then the other decompositions are of type $\mathbf m'\oplus\mathbf m''$
with rigid tuples $\mathbf m'$ and $\mathbf m''$ whose number equals $m^2+3m+3$.
The numbers of decompositions $H_1\oplus B_{2m}$ etc.\ given in \S\ref{sec:Bn}
are easily calculated which correspond to $L^{(1)}_{i,\nu}$ etc.\ and we can check 
that they give the required number of the decompositions.
\subsubsection{$C_n$}
($C_4=EO_4$, $C_3=H_3$, $C_2=H_2$)
\begin{align*}
 u_{C_{2m+1}}&=\p^{-\mu'}x^{\lambda'}u_{H_{m+1}}\\
 m+1m,m1^{m+1},m1^{m+1}&=10,01,10\oplus
  m^2,m1^m,m-11^{m+1}\\
 &=11,11,11\oplus m(m-1),m-11^mm-11^m
\allowdisplaybreaks\\
[\Delta(C_{2m+1})]&=1^{(m+1)^2}\times1^{2m+2}\cdot m\cdot(m-1)\\
&=1^{(m+1)(m+3)}\cdot m\cdot(m-1)\\
&\hspace{-1.08cm}
\begin{aligned}
 C_{2m+1}&=H_1\oplus C_{2m}&&:2m+2\\
 &=H_2\oplus C_{2m-2}&&:(m+1)^2\\
 &=mH_1\oplus H_{m+1}&&:1\\
 &=(m-1)H_1\oplus H_{m+2}&&:1
\end{aligned}
\allowdisplaybreaks\\[5pt]
 u_{C_{2m}}&=\p^{-\mu'}x^{\lambda'}
 (1-x)^{-\lambda_1-\mu-\mu^{(2)}-\cdots-\mu^{(m)}}
 u_{H_{m+1}}
\allowdisplaybreaks\\
 m^2,m1^m,m-11^{m+1}&=1,10,01\oplus mm-1,m-11^{m-1},m-11^{m-1}\\
 &=1^2,11,11\oplus (m-1)^2,m-11^{m-1},m-21^m
\allowdisplaybreaks\\
[\Delta(C_{2m})]&=1^{(m+1)^2}\times1^{m+1}\cdot (m-1)^2=1^{m^2+3m+2}\cdot(m-1)^2\\
&\hspace{-.725cm}
\begin{aligned}
 C_{2m}&=H_1\oplus C_{2m-1}&&:2m+2\\
 &=H_2\oplus C_{2m-2}&&:m(m+1)\\
 &=(m-1)H_1\oplus H_{m+1}&&:2
\end{aligned}
\allowdisplaybreaks\\
 C_{2m+1}&\overset{m}{\underset{R2E0R0E0}\longrightarrow} H_{m+1},\quad
 C_{2m+1}\overset{m-1}\longrightarrow H_{m+2}\\
 C_{2m}&\overset{m-1}{\underset{R1E0R0E0}\longrightarrow} H_{m+1},\quad
 C_n\overset{1}\longrightarrow C_{n-1}
\end{align*}

\subsubsection{$D_n$} ($D_6=X_6$ : Extra case, $D_5=EO_5$)
\begin{align*}
 u_{D_5}&=\p^{-\mu_5}(1-x)^{-\lambda_3-\mu_3-\mu_4}u_{E_4}\\
 u_{D_6}&=\p^{-\mu_6}(1-x)^{-\lambda_1-\mu-\mu_5}u_{D_5}\\
 u_{D_n}&=\p^{-\mu_n}(1-x)^{-\lambda'_n}u_{D_{n-2}}\quad(n\ge7)
\allowdisplaybreaks\\
 (2m-1)2,2^m1,2^{m-2}1^5&=10,01,10\oplus(2m-2)2,2^m,2^{m-3}1^6\\
 &=10,10,01\oplus(2m-2)2,2^{m-1}1^2,2^{m-3}1^4\\
 &=(m-1)1,1^m0,1^{m-2}1^2\oplus m1,1^m1,1^{m-2}1^3
\allowdisplaybreaks\\
m\ge2\ \Rightarrow\ [\Delta(D_{2m+1})]&=1^{6m+2}\cdot 2^{(m-1)(m-3)}\times
  1^6\cdot 2^{2m-3}=1^{6m+8}\cdot 2^{m(m-2)}\\
&\hspace{-1.12cm}
\begin{aligned}
 D_{2m+1}&=H_1\oplus D_{2m}&&:m-2\\
   &=H_1\oplus E_{2m}&&:5m\\
   &=H_m\oplus H_{m+1}&&:10\\
   &=2H_1\oplus D_{2m-1}&&:m(m-2)
\end{aligned}
\allowdisplaybreaks\\[5pt]
 (2m-2)2,2^m,2^{m-3}1^6&=10,1,01\oplus
 (2m-3)2,2^{m-1}1,2^{m-3}1^5\\
 &=(m-1)1,1^m,1^{m-3}1^3\oplus(m-1)1,1^m,1^{m-3}1^3
\allowdisplaybreaks\\
m\ge3\ \Rightarrow\ [\Delta(D_{2m})]&=1^{6m+6}\cdot 2^{(m-1)(m-4)}\times
  1^6\cdot 2^{2m-4}=1^{6m+10}\cdot 2^{m(m-3)}\\
&\hspace{-.74cm}
\begin{aligned}
  D_{2m}&=H_1\oplus D_{2m-1}&&:6m\\
  &=H_m\oplus H_m&&:10\\
  &=2H_1\oplus D_{2m-2}&&:m(m-3)
\end{aligned}
\allowdisplaybreaks\\
 D_n &\overset{2}{\underset{R2E0}\longrightarrow} D_{n-2},\quad
 D_n\overset{1}\longrightarrow D_{n-1},\quad
 D_{2m+1}\overset{1}\longrightarrow E_{2m}
\end{align*}

\subsubsection{$E_n$} ($E_5=C_5$, $E_4=EO_4$, $E_3=H_3$)
\begin{align*}
 u_{E_3}&= x^{-\lambda_0-\mu-\mu_3}\p^{-\mu_3}(1-x)^{\lambda'_3}u_{H_2}\\
 u_{E_4}&= \p^{-\mu_4}u_{E_3}\\
 u_{E_n}&=\p^{-\mu_n}(1-x)^{\lambda'_n}u_{E_{n-2}}\quad(n\ge 5)
\allowdisplaybreaks\\
 (2m-1)2,2^{m-1}1^3,2^{m-1}1^3&=10,01,10\oplus (2m-2)2,2^{m-1}1^2,2^{m-2}1^4\\
 &=(m-1)1,1^{m-1}1,1^{m-1}1\oplus m1,1^{m-1}1^3,1^{m-1}1^2\\
 &=(m-2)1,1^{m-1}0,1^{m-1}0\oplus (m+1)1,1^{m-1}1^2,1^{m-1}1^3
\allowdisplaybreaks\\
m\ge2\ \Rightarrow\ [\Delta(E_{2m+1})]&=1^{6m-2}\cdot 2^{(m-2)^2}\times
  1^6\cdot 2^{2m-3}=1^{6m+4}\cdot 2^{(m-1)^2}\\
&\hspace{-1.08cm}
\begin{aligned}
E_{2m+1}&=H_1\oplus E_{2m}&&:6(m-1)\\
 &=H_{m-1}\oplus H_{m+2}&&:1\\
 &=H_m\oplus H_{m+1}&&:9\\
 &=2H_1\oplus E_{2m-1}&&:(m-1)^2
\end{aligned}
\allowdisplaybreaks\\[5pt]
 (2m-2)2,2^{m-1}1^2,2^{m-2}1^4&=10,10,01\oplus (2m-3)2,2^{m-2}1^3,2^{m-2}1^3\\
 &=10,01,10\oplus (2m-3)2,2^{m-1}1,2^{m-3}1^5\allowdisplaybreaks\\
 &=(m-2)1,1^{m-1}0,1^{m-2}1\oplus m1,1^{m-1}1^2,1^{m-2}1^3\\
 &=(m-1)1,1^{m-1}1,1^{m-2}1^2\oplus (m-1)1,1^{m-1}1,1^{m-2}1^2
\allowdisplaybreaks\\
m\ge2\ \Rightarrow\ [\Delta(E_{2m})]&=1^{6m-4}\cdot 2^{(m-2)(m-3)}\times
  1^6\cdot 2^{2m-4}=1^{6m+2}\cdot 2^{(m-1)(m-2)}\\
&\hspace{-0.72cm}
\begin{aligned}
 E_{2m}&=H_1\oplus E_{2m-1}&&:4(m-1)\\
       &=H_1\oplus D_{2m-1}&&:2(m-2)\\
       &=H_{m-1}\oplus H_{m+1}&&:4\\
       &=H_m\oplus H_m&&:6\\
       &=2H_1\oplus E_{2m-2}&&:(m-1)(m-2)
\end{aligned}
\allowdisplaybreaks\\
 E_n &\overset{2}{\underset{R2E0}\longrightarrow} E_{n-2},\quad
 E_n\overset{1}\longrightarrow E_{n-1},\quad
 E_{2m}\overset{1}\longrightarrow D_{2m-1}
\end{align*}

\subsubsection{$F_n$} ($F_5=B_5$, $F_4=EO_4$, $F_3=H_3$)
\begin{align*}
 u_{F_3}&=u_{H_3}\\
 u_{F_4}&=\p^{-\mu_4}(1-x)^{-\lambda_1-\lambda_0^{(3)}-\mu^{(3)}}
          u_{F_3}\\
 u_{F_n}&=\p^{-\mu_n}(1-x)^{\lambda'_n}u_{F_{n-2}}\quad(n\ge 5)
\allowdisplaybreaks\\
 (2m-1)1^2,2^m1,2^{m-1}1^3&=10,10,01\oplus (2m-2)1^2,2^{m-1}1^2,2^{m-1}1^2\\
 &=10,01,10\oplus (2m-2)1^2,2^m,2^{m-2}1^4\\
 &=(m-1)1,1^m0,1^{m-1}1\oplus m1,1^m1,1^{m-1}1^2
\allowdisplaybreaks\\
m\ge1\ \Rightarrow\ [\Delta(F_{2m+1})]&=1^{4m+1}\cdot 2^{(m-1)(m-2)}\times
  1^4\cdot 2^{2m-2}=1^{4m+5}\cdot 2^{m(m-1)}\\
&\hspace{-1.05cm}
\begin{aligned}
 F_{2m+1}&=H_1\oplus G_{2m}&&:3m\\
         &=H_1\oplus F_{2m}&&:m-1\\
         &=H_m\oplus H_{m+1}&&:6\\
         &=2H_1\oplus F_{2m-1}&&:m(m-1)
\end{aligned}
\allowdisplaybreaks\\[5pt]
 (2m-2)1^2,2^m,2^{m-2}1^4&=10,1,01\oplus (2m-3)1^2,2^{m-1}1,2^{m-2}1^3\\
 &=(m-1)1,1^m,1^{m-2}1^2\oplus(m-1)1,1^m,1^{m-2}1^2
\allowdisplaybreaks\\
m\ge2\ \Rightarrow\ [\Delta(F_{2m})]&=1^{4m+2}\cdot 2^{(m-1)(m-3)}\times
  1^4\cdot 2^{2m-3}=1^{4m+6}\cdot 2^{m(m-2)}\\
&\hspace{-0.7cm}
\begin{aligned}
 F_{2m}&=H_1\oplus F_{2m-1}&&:4m\\
       &=H_m\oplus H_m&&:6\\
       &=2H_1\oplus F_{2m-2}&&:m(m-2)
\end{aligned}
\allowdisplaybreaks\\
 F_n &\overset{2}{\underset{R2E0}\longrightarrow} F_{n-2},\quad
 F_n\overset{1}\longrightarrow F_{n-1},\quad
 F_{2m+1}\overset{1}\longrightarrow G_{2m}
\end{align*}

\subsubsection{$G_{2m}$} ($G_4=B_4$)
\begin{align*}
 u_{G_2}&=u_{H_2}\\
 u_{G_{2m}}&=\p^{-\mu_{2m}}(1-x)^{\lambda'_{2m}}u_{G_{2m-2}}
\allowdisplaybreaks\\
 (2m-2)1^2,2^{m-1}1^2,2^{m-1}1^2&=10,01,01\oplus
  (2m-3)1^2,2^{m-1}1,2^{m-2}1^3\\
 &=(m-2)1,1^{m-1}0,1^{m-1}0\oplus m1,1^{m-1}1^2,1^{m-1}1^2
\allowdisplaybreaks\\
m\ge2\ \Rightarrow\ [\Delta(G_{2m})]&=1^{4m-2}\cdot 2^{(m-2)^2}\times
  1^4\cdot 2^{2m-3}=1^{4m+2}\cdot 2^{(m-1)^2}\\
&\hspace{-0.72cm}
\begin{aligned}
 G_{2m}&=H_1\oplus F_{2m-1}&&:4m\\
       &=H_{m-1}\oplus H_{m+1}&&:2\\
       &=2H_1\oplus G_{2m-2}&&:(m-1)^2
\end{aligned}
\allowdisplaybreaks\\
 G_{2m} &= H_1\oplus F_{2m-1} = H_{m-1}\oplus H_{m+1}
\allowdisplaybreaks\\
 G_{2m} &\overset{2}{\underset{R2E0}\longrightarrow} G_{2(m-1)},\quad
 G_{2m}\overset{1}\longrightarrow F_{2m-1}
\end{align*}

\subsubsection{$I_n$}
($I_{2m+1}=\text{III}^*_m$, $I_{2m}=\text{II}^*_m$, $I_3=P_3$)
\begin{align*}
 u_{I_{2m+1}}&=\p^{-\mu'}x^{\lambda'}(c-x)^{\lambda''}u_{H_{m}}
\allowdisplaybreaks\\
 &\hspace{-1cm}(2m)1,m+1m,m+11^{m},m+11^{m}\\
 &=10,10,10,01\oplus (2m-1)1,mm,m1^{m},m+11^{m-1}\\
 &=20,11,11,11\oplus (2m-2)1,mm-1,m1^{m-1},m1^{m-1}
\allowdisplaybreaks\\
 [\Delta(I_{2m+1})]&=1^{m^2}\times
  1^{2m}\cdot m\cdot(m+1)=1^{m^2+2m}\cdot m\cdot(m+1)\\
&\hspace{-0.9cm}
\begin{aligned}
 I_{2m+1}&=H_1\oplus I_{2m}&&:2m\\
  &=H_2\oplus I_{2m-1}&&:m^2\\
  &=mH_1\oplus H_{m+1}&&:1\\
  &=(m+1)H_1\oplus H_m&&:1
\end{aligned}
\allowdisplaybreaks\\[5pt]
 u_{I_{2m}}&=\p^{-\mu'}(1-cx)^{\lambda''}u_{H_{m}}\\
 &\hspace{-1cm}(2m-1)1,mm,m1^{m},m+11^{m-1}\\
 &=10,01,01,10\oplus (2m-2)1,mm-1,m1^{m-1},m1^{m-1}\\
 &=20,11,11,11\oplus (2m-3)1,m-1m-1,m-11^{m-1},m1^{m-2}
\allowdisplaybreaks\\
 [\Delta(I_{2m})]&=1^{m^2}\times
  1^{m}\cdot m^2=1^{m(m+1)}\cdot m^2\\
&\hspace{-0.6cm}
\begin{aligned}
 I_{2m}&=H_1\oplus I_{2m-1}&&:2m\\
       &=H_2\oplus I_{2m-2}&&:m(m-1)\\
       &=mH_1\oplus H_m&&:2
\end{aligned}
\allowdisplaybreaks\\
 I_{2m+1}&\overset{m+1}\longrightarrow H_m,\quad 
 I_{2m+1}\overset{m}\longrightarrow H_{m+1},\quad 
 I_{2m}\overset{m}\longrightarrow H_m,\quad 
 I_{n}\overset{1}\longrightarrow I_{n-1}\\
 I_{2m+1}&\underset{R1E0}\longrightarrow I_{2m}
 \underset{R2E0}\longrightarrow I_{2m-2}
\end{align*}
\subsubsection{$J_n$} ($J_4=I_4$, $J_3=P_3$)
\begin{align*}
 u_{J_2}&=(c-x)^{\lambda'}u_{H_2}\\
 u_{J_3}&=u_{P_3}\\
 u_{J_n}&=\p^{-\mu'_n}x^{\lambda'_n}u_{J_{n-2}}\quad(n\ge4)
\allowdisplaybreaks\\
 &\hspace{-1cm}(2m)1,(2m)1,2^{m}1,2^{m}1\\
 &=10,10,01,10\oplus (2m-1)1,(2m-1)1,2^{m},2^{m-1}11\\
 &=(m-1)1,m0,1^m0,1^m0\oplus(m+1),m1,1^m1,1^m1
\allowdisplaybreaks\\
[\Delta(J_{2m+1})]&=1^{2m}\cdot2^{(m-1)^2}\times
  1^2\cdot 2^{2m-1}=1^{2m+2}\cdot2^{m^2}\\
&\hspace{-1cm}
\begin{aligned}
 J_{2m+1}&=H_1\oplus J_{2m}&&:2m\allowdisplaybreaks\\
         &=H_m\oplus H_{m+1}&&:2\allowdisplaybreaks\\
         &=2H_1\oplus J_{2m-2}&&:m^2
\end{aligned}
\allowdisplaybreaks\\[5pt]
 &\hspace{-1cm}(2m-1)1,(2m-1)1,2^m,2^{m-1}1^2\\
 &=10,10,1,01\oplus (2m-2)1,(2m-2)1,2^{m-1}1,2^{m-1}1\\
 &=(m-1)1,m0,1^m,1^{m-1}1\oplus m0,(m-1)1,1^m,1^{m-1}1
\allowdisplaybreaks\\
 [\Delta(J_{2m})]&=1^{2m}\cdot2^{(m-1)(m-2)}\times
  1^2\cdot2^{2m-2}=1^{2m+2}\cdot2^{m(m-1)}\\
&\hspace{-.65cm}
\begin{aligned}
 J_{2m}&=H_1\oplus J_{2m-1}&&:2m\allowdisplaybreaks\\
       &=H_m\oplus H_m&&:2\allowdisplaybreaks\\
       &=2H_1\oplus J_{2m-2}&&:m(m-1)\\
\end{aligned}
\allowdisplaybreaks\\
 J_{n}&\overset{2}{\underset{R2E0}\longrightarrow} J_{n-2}\ (n\ge6),\quad
  J_{n}\overset{1}\longrightarrow J_{n-1}
\end{align*}
\subsubsection{$K_n$} ($K_5=M_5$, $K_4=I_4$, $K_3=P_3$)
\begin{align*}
 &u_{K_{2m+1}}=\p^{\mu+\lambda'+\lambda''}(c'-x)^{\lambda'}(c''-x)^{\lambda''}u_{P_m}
\allowdisplaybreaks\\
 &m+1m,m+1m,(2m)1,(2m)1,(2m)1,\ldots\in \mathcal P_{m+3}^{(2m+1)}\\
 &\quad= 11,11,11,20,20,\ldots\oplus mm-1,mm-1,(2m-1)0,(2m-2)1,(2m-2)1,\ldots
\allowdisplaybreaks\\
 &[\Delta(K_{2m+1})]=1^{m+1}\cdot(m-1)\times
  m^2\cdot(m+1)=1^{m+1}\cdot(m-1)\cdot m^2\cdot(m+1)\\
 &
\qquad\begin{aligned}
 K_{2m+1}&=H_2\oplus K_{2m-1}&&:m+1\\
         &=(m-1)H_1\oplus P_{m+2}&&:1\\
         &=mH_1\oplus P_{m+1}&&:2\\
         &=(m+1)H_1\oplus P_m&&:1
\end{aligned}
\allowdisplaybreaks\\[5pt]
 &u_{K_{2m}}=\p^{-\mu'}(c'-x)^{\lambda'}u_{P_m}\\
 &mm,mm-11,(2m-1)1,(2m-1)1,\ldots\in\mathcal P_{m+2}^{(2m)}\\
 &\quad=01,001,10,10,10,\ldots\oplus mm-1,mm-10,(2m-2)1,(2m-2)1,\ldots
\allowdisplaybreaks\\
 &\quad=11,110,11,20,20,\ldots\oplus m-1m-1,m-1m-21,(2m-2)0,(2m-3)1,\ldots
\allowdisplaybreaks\\
 &[\Delta(K_{2m})]=1^{m+1}\cdot(m-1)\times
  1\cdot(m-1)\cdot m^2=1^{m+2}\cdot(m-1)^2\cdot m^2
\allowdisplaybreaks\\
&\qquad\begin{aligned}
 K_{2m}&=H_1\oplus K_{2m-1}&&:2\\
       &=H_2\oplus K_{2m-2}&&:m\\
       &=(m-1)H_1\oplus P_{m+1}&&:2\\
       &=mH_1\oplus P_m&&:2
\end{aligned}
\allowdisplaybreaks\\
 &\ \ K_{2m+1}\overset{m+1}\longrightarrow P_{m},\quad
  K_{2m+1}\overset{m}{\underset{R1}\longrightarrow} P_{m+1},\quad
  K_{2m+1}\overset{m-1}\longrightarrow P_{m+2}\\
 &\ \ K_{2m}\overset{m}{\underset{R1}\longrightarrow} P_{m},\quad 
  K_{2m}\overset{m-1}\longrightarrow P_{m+1},\quad
  K_{2m}\overset{1}\longrightarrow K_{2m-1}
\end{align*}
\subsubsection{$L_{2m+1}$} ($L_5=J_5$, $L_3=H_3$)
\begin{align*}
 &u_{L_{2m+1}}=\p^{-\mu'}x^{\lambda'}u_{P_{m+1}}
\allowdisplaybreaks\\
 &mm1,mm1,(2m)1,(2m)1,\ldots\in\mathcal P_{m+2}^{(2m+1)}\\
 &\quad=001,010,10,10,\ldots\oplus mm0,mm-11,(2m-1)1,(2m-1)1,\ldots\\
 &\quad=110,110,11,20,\ldots\oplus m-1m-10,m-1m-11,(2m-1)0,(2m-2)1,\ldots
\allowdisplaybreaks\\
 &[\Delta(L_{2m+1})]=1^{m+2}\cdot m\times
  1^{2}\cdot m^3=1^{m+4}\cdot m^4
\\&\quad\ \ \ 
\begin{aligned}
 L_{2m+1}&=H_1\oplus K_{2m}&&:4\\
         &=H_2\oplus L_{2m-1}&&:m\\
         &=mH_1\oplus P_{m+1}&&:4
\end{aligned}
\allowdisplaybreaks\\
 &\ \ L_{2m+1}=H_1\oplus K_{2m},\quad L_{2m+1}=H_2\oplus L_{2m-1}\\
 &\ \ L_{2m+1}\overset{m}{\underset{R2E0}\longrightarrow} P_{m+1},\quad
  L_{2m+1}\overset{1}\longrightarrow K_{2m}
\end{align*}
\subsubsection{$M_n$} ($M_5=K_5$, $M_4=I_4$, $M_3=P_3$)
\begin{align*}
  &u_{M_{2m+1}}=\p^{\mu+\lambda'_3+\cdots+\lambda'_{m+2}}
    (c_3-x)^{\lambda'_3}\cdots(c_{m+2} -x)^{\lambda'_{m+2}}u_{H_2}
\allowdisplaybreaks\\
  &(2m)1,(2m)1,(2m)1,(2m-1)2,(2m-1)2,\ldots\in \mathcal P^{(2m+1)}_{m+3}\\
  &\quad=m-11,m0,m0,m-11,m-11,\ldots
   \oplus m+10,m1,m1,m1,m1,\ldots\\
  &\quad
   =m-10,m-10,m-10,m-21,m-21,\ldots\\
  &\quad\quad
   \oplus m+11,m+11,m+11,m+11,m+11,\ldots
\allowdisplaybreaks\\
 &[\Delta(M_{2m+1})]=1^{4}\times
  2^{m}\cdot(2m-1)=1^4\cdot2^m\cdot(2m-1)
\allowdisplaybreaks\\
&\qquad\begin{aligned}
 M_{2m+1}&=P_{m-1}\oplus P_{m+2} &&:1\\
         &=P_{m}\oplus P_{m+1} &&:3\\
         &=2H_1\oplus M_{2m-1} &&:m\\
         &=(2m-1)H_1\oplus H_2 &&:1
 \end{aligned}
\allowdisplaybreaks\\[5pt]
  &u_{M_{2m}}=\p^{-\mu'}(c_3-x)^{\lambda'_3}\cdots(c_{m+1} -x)^{\lambda'_{m+1}}
   u_{H_2}\\
  &(2m-2)1^2,(2m-1)1,(2m-1)1,(2m-2)2,\ldots\in\mathcal P^{(2m)}_{m+2}\\
  &\quad=01,10,10,10,\ldots\oplus (2m-2)1,(2m-2)1,(2m-2)1,(2m-3)2,\ldots
\allowdisplaybreaks\\
  &\quad=m-21,m-10,m-10,m-21,\ldots\oplus m1,m1,m1,m1,\ldots\\
  &\quad=m-11,m-11,m0,m-11,\ldots\oplus m-11,m0,m-11,m-11,\ldots
\allowdisplaybreaks\\
  &[\Delta(M_{2m})]=1^{4}\times
  1^{2}\cdot2^{m-1}\cdot(2m-2)=1^6\cdot2^{m-1}\cdot(2m-2)\\
&\qquad\begin{aligned}
  M_{2m}&=H_1\oplus M_{2m-1} &&:2 \\
        &=P_{m-1}\oplus P_{m+1} &&:2\\
        &=P_m\oplus P_m&&:2\\
        &=2H_1\oplus M_{2m-2} &&:m-1\\
        &=(2m-2)H_1\oplus H_2 &&:1\\
 \end{aligned}
\allowdisplaybreaks\\
   &\ \ M_{n}\overset{n-2}\longrightarrow H_2,\quad 
   M_{n}\overset{2}\longrightarrow M_{n-2},\quad
   M_{2m}\overset{1}{\underset{R1E0}\longrightarrow} M_{2m-1}
   \underset{R1}\longrightarrow M_{2m-3}
\end{align*}
\subsubsection{$N_n$}\label{sec:minseries}
($N_6=\text{IV}^*$, $N_5=I_5$, $N_4=G_4$, $N_3=H_3$)
\index{tuple of partitions!rigid!minimal series}%
\begin{align*}
 &u_{N_{2m+1}}=\p^{-\mu'}x^{\lambda'}(c_3-x)^{\lambda'_3}\cdots(c_{m+1}-x)^{\lambda'_{m+1}}u_{H_2}
\allowdisplaybreaks\\
 &(2m-1)1^2,(2m-1)1^2,(2m)1,(2m-1)2,(2m-1)2,\ldots\in \mathcal P^{(2m+1)}_{m+2}\\
 &\quad =10,01,10,10,10\ldots\\
 &\quad\quad\oplus
  (2m-2)1^2,(2m-1)1,(2m-1)1,(2m-2)2,(2m-2)2,\ldots\\
 &\quad =m-11,m-11,m0,m-11,m-11,\ldots
  \oplus
  m1,m1,m1,m1,m1,\ldots
\allowdisplaybreaks\\
 &[\Delta(N_{2m+1})]=1^{4}\times
  1^{4}\cdot2^{m-1}\cdot(2m-1)=1^{8}\cdot2^{m-1}\cdot(2m-1)\\
&\qquad\begin{aligned}
 N_{2m+1}&=H_1\oplus M_{2m}&&:4\\
         &=P_m\oplus P_{m+1}&&:4\\
         &=2H_1\oplus N_{2m-1}&&:m-1\\
         &=(2m-1)H_1\oplus H_2&&:1\\
\end{aligned}
\allowdisplaybreaks\\[5pt]
 &u_{N_{2m}}=\p^{-\mu'}x^{\lambda'_0}(1-x)^{\lambda'_1}
  (c_3-x)^{\lambda'_3}\cdots(c_{m}-x)^{\lambda'_{m}}u_{H_2}
 \quad(m\ge2)\\
 &(2m-2)1^2,(2m-2)1^2,(2m-2)1^2,(2m-2)2,(2m-2)2,\ldots\in \mathcal P^{(2m)}_{m+1}
 \\
 &\quad =01,10,10,10,10\ldots\\
 &\quad\quad
  \oplus (2m-2)1,(2m-3)1^2,(2m-3)1^2,(2m-3)2,(2m-3)2,\ldots\allowdisplaybreaks\\
 &\quad =m-11,m-11,m-11,m-11,m-11,\ldots\\
 &\quad\quad
  \oplus m-11,m-11,m-11,m-11,m-11,\ldots
\allowdisplaybreaks\\
 &[\Delta(N_{2m})]=1^{4}\times
  1^{6}\cdot 2^{m-2}\cdot(2m-2)=1^{10}\cdot2^{m-2}\cdot(2m-2)
\allowdisplaybreaks\\
&\qquad\begin{aligned}
 N_{2m}&=H_1\oplus N_{2m-1}&&:6\\
       &=P_m\oplus P_m&&:4\\
       &=2H_1\oplus N_{2m-2}&&:m-2\\
       &=(2m-2)H_1\oplus H_2&&:1
\end{aligned}
\allowdisplaybreaks\\
 &N_{n}\overset{n-2}\longrightarrow H_2,\quad
  N_{n}\overset{2}\longrightarrow N_{n-2},\quad
  N_{2m+1}\overset{1}{\underset{R1E0}\longrightarrow} M_{2m},\quad
  N_{2m}\overset{1}{\underset{R1E0}\longrightarrow} N_{2m-1}
\end{align*}
\subsubsection{minimal series}\label{sec:minS}%
\index{tuple of partitions!rigid!minimal series}
The tuple $11,11,11$ corresponds to Gauss hypergeometric series, which has
three parameters.  Since the action of additions is easily analyzed, we consider
the number of parameters of the equation corresponding to a rigid tuple
$\mathbf m=\bigl(m_{j,\nu}\bigr)_{\substack{0\le j\le p\\1\le \nu\le n_j}}
\in\mathcal P_{p+1}^{(n)}$ modulo additions and the Fuchs condition equals
\begin{equation}\label{eq:NSpara}
 n_0+n_1+\cdots+n_p - (p+1).
\end{equation}
Here we assume that $0<m_{j,\nu}<n$ for $1\le \nu\le n_j$ and $j=0,\dots,p$.

We call the number given by \eqref{eq:NSpara} the \textsl{effective length}\/ 
of $\mathbf m$.
The tuple $11,11,11$ is the unique rigid tuple of partitions whose effective 
length equals 3.
Since the reduction $\p_{max}$ never increase the effective length and 
the tuple $\mathbf m\in\mathcal P_3$ satisfying $\p_{max}=11,11,11$ is
$21,111,111$ or $211,211,211$, it is easy to see that the non-trivial
rigid tuple $\mathbf m\in\mathcal P_3$ whose effective length is smaller than 
$6$ is $H_2$ or $H_3$.

The rigid tuple of partitions with the effective length 4 is also uniquely
determined by its order, which is%
\index{00P@$P_{p+1,n},\ P_n$}
\begin{equation}
 \begin{split}
  P_{4,2m+1}&: m+1m,m+1m,m+1m,m+1m\\
  P_{4,2m}&:   m+1m-1,mm,mm,mm
 \end{split}
\end{equation}
with $m\in\mathbb Z_{>0}$.
Here $P_{4,2m+1}$ is a generalized Jordan-Pochhammer tuple in
Example~\ref{ex:JPH} i).\index{Jordan-Pochhammer!generalized}

In fact, if $\mathbf m\in\mathcal P$ is rigid with the effective length $4$,
the argument above shows $\mathbf m\in\mathcal P_4$ and $n_j=2$ for $j=0,\dots,3$.
Then $2 = \sum_{j=0}^3 m_{j,1}^2+\sum_{j=0}^3(n-m_{j,1})^2-2n^2$ and 
$\sum_{j=0}^3(n-2m_{j,1})^2=4$ and therefore $\mathbf m=P_{4,2m+1}$ or $P_{4,2m}$.

We give decompositions of $P_{4,n}$:
\begin{align*}
 &
 m+1,m;m+1,m;m+1,m;m+1,m\\
 &\quad=k,k+1;k+1,k;k+1,k;k+1,k\\
 &\qquad\oplus m-k+1,m-k-1;m-k,m-k;m-k,m-k;m-k,m-k\\
 &\quad=2(k+1,k;k+1,k;k+1,k;\ldots)\\
 &\qquad\oplus m-2k-1,m-2k;m-2k-1,m-2k;m-2k-1,m-2k;\ldots\\
 &[\Delta(P_{4,2m+1})]=
   1^{4m-4}\cdot2^{m-1}\times 1^4\cdot 2=1^{4m}\cdot2^m\\
 &\qquad\begin{aligned}
  P_{4,2m+1}&=P_{4,2k+1}\oplus P_{4,2(m-k)}&&:4\quad(k=0,\ldots,m-1)\\
   &=2P_{4,2k+1}\oplus P_{4,2m-4k-1}&&:1\quad(k=0,\ldots,m-1)
 \end{aligned}
\end{align*}
Here $P_{k,-n}=-P_{k,n}$ and in the above decompositions 
there appear ``tuples of partitions" with negative
entries corresponding formally to elements in $\Delta^{re}$ with 
\eqref{eq:Kazpart} (cf.~Remark~\ref{rem:length}~i)).

It follows from the above decompositions that the Fuchsian equation with 
the Riemann scheme
\begin{gather*}
 \begin{Bmatrix}
   \infty & 0 & 1 & c_3\\
   [\lambda_{0,1}]_{(m+1)} & [\lambda_{1,1}]_{(m+1)} 
     & [\lambda_{2,1}]_{(m+1)} & [\lambda_{3,1}]_{(m+1)}\\
   [\lambda_{0,2}]_{(m)} & [\lambda_{1,2}]_{(m)} 
     & [\lambda_{2,1}]_{(m)} & [\lambda_{3,2}]_{(m)}
 \end{Bmatrix}\\
 \sum_{j=0}^4\bigl((m+1)\lambda_{j,1}+m\lambda_{j,2}\bigr)=2m
 \qquad(\text{Fuchs relation}).
\end{gather*}
is irreducible if and only if
\[
  \sum_{j=0}^4\sum_{\nu=1}^2
  \bigl(k+\delta_{\nu,1}+(1-2\delta_{\nu,1})\delta_{j,i}\bigr)
  \lambda_{j,\nu}\notin\mathbb Z\qquad(i=0,1,\ldots,5,\ k=0,1,\ldots,m).
\]

When $\mathbf m=P_{4,2m}$, we have the following.
\begin{align*}
 &
 m+1,m-1;m,m;m,m;m,m\\
 &\quad=k+1,k;k+1,k;k+1,k;k+1,k\\
 &\qquad\oplus m-k,m-k-1;m-k-1,m-k;m-k-1,m-k;m-k-1,m-k\\
 &\quad=2(k+1,k-1;k,k;k,k;k,k)\\
 &\qquad\oplus m-2k-1,m-2k+1;m-2k,m-2k;m-2k,m-2k;m-2k;m-2k\\
 &[\Delta(P_{4,2m})]=
   1^{4m-4}\cdot 2^{m-1}\times 1^4=1^{4m}\cdot 2^{m-1}\\
 &\ \ \begin{aligned}
  P_{4,2m}&=P_{4,2k+1}\text{\small$(=k+1,k;k+1,k;\ldots)$}
    \oplus P_{4,2m-2k+1}&&\!:4\ \ (k=0,\ldots,m-1)\\
   &=2P_{4,2k}\oplus P_{4,2m-4k}&&\!:1\ \ (k=1,\ldots,m-1)
 \end{aligned}
\end{align*}
\[
 P_{4,n}\xrightarrow1 P_{4,n-1},\  P_{4,2m+1}\xrightarrow2 P_{4,2m-1}
\]
\smallskip

Roberts \cite{Ro} classifies the rigid tuples $\mathbf m\in\mathcal P_{p+1}$ 
so that 
\begin{equation}\index{tuple of partitions!rigid!minimal series}
 \frac1{n_0}+\cdots+\frac1{n_p}\ge p-1.
\end{equation}
They are tuples $\mathbf m$ in
4 series $\alpha$, $\beta$, $\gamma$, $\delta$, which
are close to the tuples $r\tilde E_6$, $r\tilde E_7$, $r\tilde E_8$ and 
$r\tilde D_4$, namely, $(n_0,\dots,n_p)=(3,3,3)$, $(2,2,4)$, $(2,3,6)$ and 
$(2,2,2,2)$, respectively (cf.~\eqref{eq:R2ineq}), 
and the series are called \textsl{minimal series}.
Then $\delta_n=P_{4,n}$ and the tuples in the other three series belong 
to $\mathcal P_3$.
For example, the tuples $\mathbf m$ of type $\alpha$ are
\begin{equation}
 \begin{aligned}
  \alpha_{3m}&=m+1mm-1,m^3,m^3,&\alpha_3&=H_3,\\
  \alpha_{3m\pm1}&=m^2m\pm1,m^2m\pm1,m^2m\pm1,&\alpha_4&=B_4,
 \end{aligned}
\end{equation}
which are characterized by the fact that their effective lengths 
equal 6 when $n\ge 4$.
As in other series, we have the following:
\begin{align*}
 \alpha_n&\xrightarrow 1\alpha_{n-1},\ \ \alpha_{3m+1}\xrightarrow2 \alpha_{3m-1}\\
 [\Delta(\alpha_{3m})]&=[\Delta(\alpha_{3m-1})]\times 1^5,\ 
 [\Delta(\alpha_{3m-1})]=[\Delta(\alpha_{3m-2})]\times 1^4,\\ 
 [\Delta(\alpha_{3m-2})]&=[\Delta(\alpha_{3m-4})]\times 1^6\cdot 2\\
 [\Delta(\alpha_{3m-1})]&=[\Delta(\alpha_2)]\times 1^{10(m-1)}\cdot 2^{m-1}
                        =1^{10m-6}\cdot 2^{m-1}\\
 [\Delta(\alpha_{3m})]&=1^{10m-1}\cdot2^{m-1}\\
 [\Delta(\alpha_{3m-2})]&=1^{10m-10}\cdot2^{m-1}
\end{align*}
\begin{align*}
 &\alpha_{3m}=m+1mm-1,m^3,m^3\\
   &\ \ =kkk-1,k^2k-1,k^2k-1\\
   &\quad\oplus (m-k+1)(m-k)(m-k),(m-k)^2(m-k+1),(m-k)^2(m-k+1)\\
   &\ \ =k+1k-1k,k^3,k^3\\
   &\quad\oplus(m-k+1)(m-k)(m-k-1),(m-k)^3,(m-k)^3\\
   &\ \ =2(k+1kk-1,k^3,k^3)\\
   &\quad\oplus(m-2k-1)(m-2k)(m-2k+1),(m-2k)^3,(m-2k)^3
\allowdisplaybreaks\\
 &\begin{aligned}
  \alpha_{3m}&=\alpha_{3k-1}\oplus\alpha_{3(m-k)+1}&&:9\quad(k=1,\dots,m)\\
  &=\alpha_{3k}\oplus\alpha_{3(m-k)}&&:1\quad(k=1,\ldots,m-1)\\
  &=2\alpha_{3k}\oplus\alpha_{3(m-2k)}&&:1\quad(k=1,\ldots,m-1)
 \end{aligned}\allowdisplaybreaks\\
 &\alpha_{3m-1}=mmm-1,mmm-1,mmm-1\\
   &\ \ =kk-1k-1,kk-1k-1,kk-1k-1\\
   &\quad\oplus (m-k)(m-k+1)(m-k),(m-k)(m-k+1)(m-k),\cdots\\
   &\ \ =k+1kk-1,k^3,k^3\\
   &\quad\oplus(m-k-1)(m-k)(m-k),(m-k)(m-k)(m-k-1),\cdots\\
   &\ \ =2(kkk-1,kkk-1,kkk-1)\\
   &\quad\oplus(m-2k)(m-2k)(m-2k+1),(m-2k)(m-2k)(m-2k+1),\cdots
\allowdisplaybreaks\\
  &\begin{aligned}
  \alpha_{3m-1}
    &=\alpha_{3k-2}(=k,k-1,k-1;\cdots)\oplus\alpha_{3(m-k)+1}&&:4\quad(k=1,\dots,m)\\
  &=\alpha_{3k}\oplus\alpha_{3(m-k)-1}&&:6\quad(k=1,\ldots,m-1)\\
  &=2\alpha_{3k-1}\oplus\alpha_{3(m-2k)+1}&&:1\quad(k=1,\ldots,m-1)
  \end{aligned}
\allowdisplaybreaks\\
 &\alpha_{3m-2}=mm-1m-1,mm-1m-1,mm-1m-1\\
   &\ \ =kkk-1,kkk-1,kkk-1\\
   &\quad\oplus (m-k)(m-k-1)(m-k),(m-k)(m-k-1)(m-k),\cdots\\
   &\ \ =k+1kk-1,k^3,k^3\\
   &\quad\oplus(m-k-1)(m-k-1)(m-k),(m-k)(m-k-1)(m-k-1),\cdots\\
   &\ \ =2(kk-1k-1,kk-1k-1,kk-1k-1)\\
   &\quad\oplus(m-2k)(m-2k+1)(m-2k+1),(m-2k)(m-2k+1)(m-2k+1),\cdots
\allowdisplaybreaks\\
  &\begin{aligned}
  \alpha_{3m-2}
    &=\alpha_{3k-1}(=k,k-1,k-1;\cdots)\oplus\alpha_{3(m-k)-1}&&:4\quad(k=1,\dots,m-1)
      \\
  &=\alpha_{3k}\oplus\alpha_{3(m-k)-2}&&:6\quad(k=1,\ldots,m-1)\\
  &=2\alpha_{3k-2}\oplus\alpha_{3(m-2k)+2}&&:1\quad(k=1,\ldots,m-1)
 \end{aligned}
\end{align*}

The analysis of the other minimal series
\begin{align*}
 \beta_{4m,2}&=(2m+1)(2m-1),m^4,m^4&\beta_{4,2}&=H_4\\
 \beta_{4m,4}&=(2m)^2,m^4,(m+1)m^2(m-1)&\beta_{4,4}&=EO_4\\
 \beta_{4m\pm1}&=(2m)(2m\pm1),(m\pm1)m^3,(m\pm1)m^3&\beta_5&=C_5,\ \beta_3=H_3\\
 \beta_{4m+2}&=(2m+1)^2,(m+1)^2m^2,(m+1)^2m^2
\allowdisplaybreaks\\[4pt]
 \gamma_{6m,2}&=(3m+1)(3m-1),(2m)^3,m^6&\gamma_{6,2}&=D_6=X_6\\
 \gamma_{6m,3}&=(3m)^2,(2m+1)(2m)(2m-1),m^6&\gamma_{6,3}&=EO_6\\
 \gamma_{6m,6}&=(3m)^2,(2m)^3,(m+1)m^4(m-1)
\allowdisplaybreaks\\
 \gamma_{6m\pm1}&=(3m)(3m\pm1),(2m)^2(2m\pm1),m^5(m\pm1)&\gamma_5&=EO_5
\allowdisplaybreaks\\
 \gamma_{6m\pm2}&=(3m\pm1)(3m\pm1),(2m)(2m\pm1)^2,m^4(m\pm1)^2&\gamma_4&=EO_4
\allowdisplaybreaks\\
 \gamma_{6m+3}&=(3m+2)(3m+1),(2m+1)^3,(m+1)^3m^3&\gamma_3&=H_3
\end{align*}
and general $P_{p+1,n}$ will be left to 
the reader as an exercise.
\subsubsection{Relation between series}\label{sec:Bseries}
We have studied the following sets of families of spectral types
of Fuchsian differential equations which are closed
under the irreducible subquotients in the Grothendieck group.
\begin{align*}
 &\{H_n\}&&\text{(hypergeometric family)}\allowdisplaybreaks\\
 &\{P_n\}&&\text{(Jordan-Pochhammer series)}\allowdisplaybreaks\\
 &\{A_n=EO_n\}&&\text{(even/odd family)}\allowdisplaybreaks\\
 &\{B_n,\ C_n,\ H_n\}&&\text{(3 singular points)}\allowdisplaybreaks\\
 &\{C_n,\ H_n\}&&\text{(3 singular points)}\allowdisplaybreaks\\
 &\{D_n,\ E_n,\ H_n\}&&\text{(3 singular points)}\allowdisplaybreaks\\
 &\{F_n,\ G_{2m},\ H_n\}&&\text{(3 singular points)}\allowdisplaybreaks\\
 &\{I_n,\ H_n\}&&\text{(4 singular points)}\allowdisplaybreaks\\
 &\{J_n,\ H_n\}&&\text{(4 singular points)}\allowdisplaybreaks\\
 &\{K_n,\ P_n\}&&\text{($[\tfrac{n+5}{2}]$ singular points)}\allowdisplaybreaks\\
 &\{L_{2m+1},\ K_n,\ P_n\}&&\text{($m+2$ singular points)}\allowdisplaybreaks\\
 &\{M_n,\ P_n\}&&\text{($[\tfrac{n+5}{2}]$ singular points)}
  \qquad\supset\{M_{2m+1},\ P_n\}\allowdisplaybreaks\\
 &\{N_n,\ M_n,\ P_n\}&&\text{($[\tfrac{n+3}{2}]$ singular points)}
  \qquad\supset\{N_{2m+1},\ M_n.\ P_n\}
  \allowdisplaybreaks\\
 &\{P_{4,n}=\delta_n\}&&\text{(4 effective parameters)}\allowdisplaybreaks\\
 &\{\alpha_n\}&&\text{(6 effective parameters and 3 singular points)}
\end{align*}

\index{tuple of partitions!rigid!Yokoyama's list}
Yokoyama classified $\mathbf m=
\bigl(m_{j,\nu}\bigr)_{\substack{0\le j\le p\\1\le \nu\le n_j}}
\in\mathcal P_{p+1}$ such that
\begin{align}
&\text{$\mathbf m$ is irreducibly realizable},
\allowdisplaybreaks\\
&m_{0,1}+\cdots+m_{p-1,1}=(p-1)\ord\mathbf m\quad
(\text{$\mathbf m$ is of Okubo type}),
\allowdisplaybreaks\\
&m_{j,\nu}=1\quad(0\le j\le p-1,\ 2\le\nu\le n_j).
\end{align}
\index{Okubo type}%
The tuple $\mathbf m$ satisfying the above conditions is
in the following list given by \cite[Theorem 2]{Yo}
(cf.~\cite{Ro}).
\medskip

\begin{tabular}{|c|c|c|c|c|}\hline
Yokoyama&type&order& p+1 &tuple of partitions\\ \hline\hline
$\text{I}_n$&$H_n$&$n$ & $3$ & $1^n,n-11,1^n$\\ \hline
$\text{I}^*_n$&$P_n$ & $n$ & $n+1$ & $n-11,n-11,\ldots,n-11$ \\ \hline
$\text{II}_n$&$B_{2n}$&$2n$&$3$ & $n1^n, n1^n,nn-11$\\ \hline
$\text{II}^*_n$ &$I_{2n}$ &$2n$&$4$ & $n1^n,n+11^{n-1},2n-11,nn$\\ \hline
$\text{III}_n$&$B_{2n+1}$&$2n+1$ &$3$&$ n1^{n+1},n+11^n,nn1$\\ \hline
$\text{III}^*_n$&$I_{2n+1}$&$2n+1$& $4$& $ n+11^n,n+11^n,(2n)1,n+1n$\\ \hline
$\text{IV}\phantom{^*}$&$F_6$&$6$& $3$ &$21111,411,222$\\ \hline
$\text{IV}^*$&$N_6$&$6$& $4$ &$411,411,411,42$\\ \hline
\end{tabular}
\subsection{Appell's hypergeometric functions}\label{sec:ApEx}
First we recall the Appell hypergeometric functions.
\index{hypergeometric equation/function!Appell}
\begin{align}
 F_1(\alpha;\beta,\beta';\gamma;x,y)&=\sum_{m,n=0}^\infty
 \frac{(\alpha)_{m+n}(\beta)_m(\beta')_n}
      {(\gamma)_{m+n}m!n!}x^my^n,\label{eq:ApF1}
\allowdisplaybreaks\\
 F_2(\alpha;\beta,\beta';\gamma,\gamma';x,y)&=\sum_{m,n=0}^\infty
 \frac{(\alpha)_{m+n}(\beta)_m(\beta')_n}
      {(\gamma)_m(\gamma')_nm!n!}x^my^n,
\allowdisplaybreaks\\
 F_3(\alpha,\alpha';\beta,\beta';\gamma;x,y)&=\sum_{m,n=0}^\infty
 \frac{(\alpha)_m(\alpha')_n(\beta)_m(\beta')_n}
      {(\gamma)_{m+n}m!n!}x^my^n,\label{eq:F3}
\allowdisplaybreaks\\
 F_4(\alpha;\beta;\gamma,\gamma';x,y)&=\sum_{m,n=0}^\infty
 \frac{(\alpha)_{m+n}(\beta)_{m+n}}
      {(\gamma)_m(\gamma')_nm!n!}x^my^n.
\end{align}
They satisfy the following equations
\begin{align}
  \Bigl((\vartheta_x+\vartheta_y+\alpha)(\vartheta_x+\beta)-\p_x(\vartheta_x+\vartheta_y+\gamma-1)\Bigr)F_1&=0,\\
  \Bigl((\vartheta_x+\vartheta_y+\alpha)(\vartheta_x+\beta)-\p_x(\vartheta_x+\gamma-1)\Bigr)F_2&=0,
\allowdisplaybreaks\\
  \Bigl((\vartheta_x+\alpha)(\vartheta_x+\beta)-\p_x(\vartheta_x+\vartheta_y+\gamma-1)\Bigr)F_3&=0,\\
  \Bigl((\vartheta_x+\vartheta_y+\alpha)(\vartheta_x+\vartheta_y+\beta)
  -\p_x(\vartheta_x+\gamma-1)\Bigr)F_4&=0.
\end{align}
Similar equations hold under the symmetry $x\leftrightarrow y$
with $(\alpha,\beta,\gamma)\leftrightarrow (\alpha',\beta',\gamma')$.

\subsubsection{Appell's $F_1$}
\index{hypergeometric equation/function!Appell!$F_1$}
First we examine $F_1$.  Put
\begin{align*}
u(x,y)&:=\int_0^xt^\alpha(1-t)^\beta(y-t)^{\gamma-1}
      (x-t)^{\lambda-1} dt\qquad(t=xs)\\
      &=\int_0^1 x^{\alpha+\lambda+1}s^\alpha(1-xs)^\beta(y-xs)^{\gamma-1}
        (1-s)^{\lambda-1} ds
        \\
      &=x^{\alpha+\lambda}y^{\gamma-1}\int_0^1
       s^\alpha(1-s)^{\lambda-1}(1-xs)^\beta\Bigl(1-\frac yxs\Bigr)^{\gamma-1} ds,\\
h_x&:= x^\alpha(x-1)^\beta(x-y)^{\gamma-1}.
\end{align*}
Since the left ideal of $\overline W[x,y]$ is not necessarily generated
by a single element, we want to have good generators of
$\RAd(\p_x^{-\lambda})\circ \RAd(h_x)\bigl(W[x,y]\p_x+W[x,y]\p_y\bigr)$
and we have
\begin{align*}
P&:=\Ad(h_x)\p_x=\p_x-\frac{\alpha}{x}-\frac{\beta}{x-1}-\frac{\gamma-1}{x-y},\\
Q&:=\Ad(h_x)\p_y=\p_y+\frac{\gamma-1}{x-y},\\
R&:=xP+yQ=x\p_x+y\p_y-(\alpha+\gamma-1)-\frac{\beta x}{x-1},
\allowdisplaybreaks\\
S&:=\p_x(x-1)R=
(\vartheta_x+1)(\vartheta_x+\vartheta_y-\alpha-\beta-\gamma+1)
-\p_x(\vartheta_x+\vartheta_y-\alpha-\gamma+1)
\allowdisplaybreaks\\
T&:=\p_x^{-\lambda}\circ S\circ\p_x^{\lambda}\\
 &=(\vartheta_x-\lambda+1)(\vartheta_x+\vartheta_y-\alpha-\beta-\gamma-\lambda+1)
-\p_x(\vartheta_x+\vartheta_y-\alpha-\gamma-\lambda+1)
\end{align*}
with
\begin{align*}
a=-\alpha-\beta-\gamma-\lambda+1,\ b=1-\lambda,\ 
   c=2-\alpha-\gamma-\lambda.
\end{align*}
This calculation shows the equation $Tu(x,y)=0$ and 
we have a similar equation by changing 
$(x,y,\gamma,\lambda)\mapsto(y,x,\lambda,\gamma)$.
Note that $TF_1(a;b,b';c;x,y)=0$ with $b'=1-\gamma$.

Putting
\begin{align*}
v(x,z)&=I_{0,x}^\mu(x^\alpha(1-x)^\beta(1-zx)^{\gamma-1})\\
    &=\int_0^xt^\alpha(1-t)^\beta(1-zt)^{\gamma-1}
 (x-t)^{\mu-1} dt\\
      &=x^{\alpha+\mu}\int_0^1s^\alpha(1-xs)^\beta(1-xzs)^{\gamma-1}
 (1-s)^{\mu-1} ds,\allowdisplaybreaks
\intertext{we have}
 &\qquad u(x,y)=y^{\gamma-1}v(x,\tfrac1y),\\
t^\alpha(1-t)^\beta&(1-zt)^{\gamma-1}=
 \sum_{m,n=0}^\infty
  \frac{(-\beta)_m(1-\gamma)_n}{m!n!}
   t^{\alpha+m+n}z^n,\allowdisplaybreaks\\
v(x,z)&=\sum_{m,n=0}^\infty
    \frac{\Gamma(\alpha+m+n+1)(-\beta)_m(1-\gamma)_n}
    {\Gamma(\alpha+\mu+m+n+1)m!n!}
    x^{\alpha+\gamma+m+n}z^n\\
  &=x^{\alpha+\mu}\frac{\Gamma(\alpha+1)}{\Gamma(\alpha+\mu+1)}
    \sum_{m,n=0}^\infty
    \frac{(\alpha+1)_{m+n}(-\beta)_m(1-\gamma)_n}
    {(\alpha+\mu+1)_{m+n}m!n!}x^{m+n}z^n\\
  &=x^{\alpha+\mu}\frac{\Gamma(\alpha+1)}{\Gamma(\alpha+\mu+1)}
    F_1(\alpha+1;-\beta,1-\gamma;\alpha+\mu+1;x,xz).
\end{align*}

Using a versal addition to get the Kummer equation, we introduce
the functions
\index{hypergeometric equation/function!Appell!confluence}
\begin{align*}
 v_c(x,y)&:=\int_0^xt^\alpha(1-ct)^{\frac \beta c}(y-t)^{\gamma-1}
 (x-t)^{\lambda-1},\\
 h_{c,x}&:=x^\alpha(1-cx)^{\frac\beta c}(x-y)^{\gamma-1}.
\end{align*}
Then we have
\begin{align*}
 &R:=\Ad(h_{c,x})(\vartheta_x+\vartheta_y)
 =\vartheta_x+\vartheta_y-(\alpha+\gamma-1)+\frac{\beta x}{1-cx},
 \allowdisplaybreaks\\
 &S:=\p_x(1-cx)R\\
 &\ \ \ =(\vartheta_x+1)
  \bigl(\beta-c(\vartheta_x+\vartheta_y-\alpha-\gamma+1)
  \bigr) + \p_x(\vartheta_x+\vartheta_y-\alpha-\gamma+1),
 \allowdisplaybreaks\\
 &T:=\Ad(\p^{-\lambda})R\\
 &\ \ \ =(\vartheta_x-\lambda+1)
  \bigl(\beta-c(\vartheta_x+\vartheta_y-\lambda-\alpha-\gamma+1)
  \bigr) + \p_x(\vartheta_x+\vartheta_y-\lambda-\alpha-\gamma+1)
\end{align*}
and hence $u_c(x,y)$ satisfies the differential equation
\begin{align*}
 &\Bigl(x(1-cx)\p_x^2+y(1-cx)\p_x\p_y\\
 &\quad{}+\bigl(2-\alpha-\gamma-\lambda
  +(\beta+\lambda-2+c(\alpha+\gamma+\lambda-1))x\bigr)\p_x
  +(\lambda-1)\p_y\\
 &\quad{}-
  (\lambda-1)(\beta+c(\alpha+\gamma+\lambda-1))\Bigr)u=0.
\end{align*}
\subsubsection{Appell's $F_4$}
\index{hypergeometric equation/function!Appell!$F_4$}
To examine $F_4$ we consider the function
\[
 v(x,y):=\int_\Delta s^{\lambda_1}t^{\lambda_2}(st-s-t)^{\lambda_3}
 (1-sx-ty)^\mu ds\,dt
\]
and the transformation
\begin{equation}\label{eq:1-tx}
  J^\mu_x(u)(x)
 :=\int_\Delta u(t_1,\dots,t_n)(1-t_1x_1-\dots-t_nx_n)^\mu
  dt_1\cdots dt_n
\end{equation}
for function $u(x_1,\dots,u_n)$.
For example the region $\Delta$ is given by
\begin{align*}
 v(x,y)&=
 \int_{s\le0,\ t\le0} 
s^{\lambda_1}t^{\lambda_2}(st-s-t)^{\lambda_3}
 (1-sx-ty)^\mu ds\,dt.
\end{align*}
Putting $s\mapsto s^{-1}$, $t\mapsto t^{-1}$ and $|x|+|y|<c<\frac{1}{2}$, Aomoto \cite{Ao} shows
\begin{equation}
 \begin{split}
 &\int_{c-\infty i}^{c+\infty i}
  \int_{c-\infty i}^{c+\infty i} s^{-\gamma}t^{-\gamma^\prime}
 (1-s-t)^{\gamma+\gamma^\prime-\alpha-2}
 \left(1-\frac{x}{s}-\frac{y}{t}\right)^{-\beta}ds\,dt\\
 &\quad=-\frac{4\pi^2\Gamma(\alpha)}{\Gamma(\gamma)\Gamma(\gamma^\prime)
  \Gamma(\alpha-\gamma-\gamma^\prime+2)}
  F_4(\alpha;\beta;\gamma,\gamma^\prime;x,y),
 \end{split}
\end{equation}
which follows from the integral formula
\begin{equation}
 \begin{split}
  &\frac1{(2\pi i)^n}\int_{\frac1{n+1}-\infty i}^{\frac1{n+1}+\infty i}\cdots
   \int_{\frac1{n+1}-\infty i}^{\frac1{n+1}+\infty i}
   \prod_{j=1}^{n}t_j^{-\alpha_j}
   \Bigl(1-\sum_{j=1}^nt_j\Bigr)^{-\alpha_{n+1}}dt_1\cdots dt_n\\
  &\qquad=\frac{\Gamma\bigl(\sum_{j=1}^{n+1}\alpha_j-n\bigr)}
  {\prod_{j=1}^{n+1}\Gamma\bigl(\alpha_j\bigr)}.
 \end{split}
\end{equation}

Since
\begin{align*}
J^\mu_x(u)&=J^{\mu-1}_x(u) - \sum x_\nu J^{\mu-1}_x(x_\nu u)
\intertext{and}
\frac{d}{dt_i}\bigl(u(t)(1-\sum t_\nu x_\nu)^{\mu}\bigr)\\
&\hspace{-2cm}=\frac{d u}{dt_i}(t)(1-\sum t_\nu x_\nu)^{\mu}
 - \mu u(t)x_i(1-\sum t_\nu x_\nu)^{\mu-1},
\end{align*}
we have
\begin{align}
 J^\mu_x(\p_i u)(x)
 &=\mu x_i J^{\mu-1}_x(u)(x)\notag\\
 &=-x_i\int t_i^{-1}u(t)\frac{d}{dx_i}
   \bigl(1-\textstyle\sum x_\nu t_\nu\bigr)^{\mu}dt\notag\\
 &=-x_i\frac{d}{d x_i} J^\mu_x\bigl(\frac u{x_i}\bigr)(x),\notag\\
 J^\mu_x\bigl(\p_i(x_iu)\bigr)&=-x_i\p_i J^\mu_x(u)
 ,\notag\\
 J^\mu_x(\p_i u)&=\mu x_iJ^{\mu-1}_x(u)\notag\\
 &=\mu x_iJ^\mu_x(u)+\mu x_i\textstyle\sum x_\nu J^{\mu-1}_x(x_\nu u)\notag\\
 &=\mu x_iJ^\mu_x(u)+x_i\textstyle\sum J^\mu_x\bigl(\p_\nu(x_\nu u)\bigr)
\notag\\
 &=\mu x_iJ^\mu_x(u) - x_i\textstyle\sum x_\nu \p_\nu J^\mu_x(u)\notag
\intertext{and therefore}
 J^\mu_x(x_i\p_i u)&=(-1-x_i\p_i)J^\mu_x(u),\\
 J^\mu_x(\p_i u)
 &=x_i\bigl(\mu-\textstyle\sum x_\nu\p_\nu\bigr)J^\mu_x(u).
\end{align}
Thus we have
\begin{prop}\label{prop:1-tx}
For a differential operator
\begin{equation}
  P =\sum_{\substack{\alpha=(\alpha_1,\dots,\alpha_n)\in\mathbb Z^n_{\ge0}\\
   \beta=(\beta_1,\dots,\beta_n)\in\mathbb Z^n_{\ge0}}}
   c_{\alpha,\beta}
    \p_{1}^{\alpha_1}\cdots\p_{n}^{\alpha_n}
    \vartheta_{1}^{\beta_1}\cdots\vartheta_{n}^{\beta_n},
\end{equation}
we have 
\begin{equation}
 \begin{split}
 J^\mu_x\bigl(Pu(x)\bigr)&=J^\mu_x(P)J^\mu_x\bigl(u(x)\bigr),\\
 J^\mu_x(P) :\!&= \sum_{\alpha,\,\beta}
   c_{\alpha,\beta}
  \prod_{k=1}^n\bigl(x_k(\mu-\sum_{\nu=1}^n\vartheta_{\nu})\bigr)^{\alpha_k}
  \prod_{k=1}^n\bigl(-\vartheta_{k}-1\bigr)^{\beta_k}.
\end{split}
\end{equation}
\end{prop}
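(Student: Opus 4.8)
\textbf{Proof plan for Proposition~\ref{prop:1-tx}.}
The strategy is to reduce the identity for a general operator $P$ to the two basic cases $P=\p_i$ and $P=\vartheta_i$, which have already been recorded in the two displayed formulas
\[
 J^\mu_x(x_i\p_i u)=(-1-x_i\p_i)J^\mu_x(u),\qquad
 J^\mu_x(\p_i u)=x_i\bigl(\mu-\textstyle\sum_\nu x_\nu\p_\nu\bigr)J^\mu_x(u)
\]
together with the trivial observation $\vartheta_i=x_i\p_i$.
First I would check that $J^\mu_x$ is $\mathbb{C}$-linear, so that it suffices to treat a single monomial
$P=\p_1^{\alpha_1}\cdots\p_n^{\alpha_n}\vartheta_1^{\beta_1}\cdots\vartheta_n^{\beta_n}$; indeed it suffices to prove that for any operator $Q$ in the Weyl algebra one has $J^\mu_x(\p_i Q u)=\bigl(x_i(\mu-\sum_\nu\vartheta_\nu)\bigr)J^\mu_x(Qu)$ and $J^\mu_x(\vartheta_i Q u)=(-\vartheta_i-1)J^\mu_x(Qu)$, where on the right $\vartheta_\nu$ now acts on the variables of $J^\mu_x(Qu)$. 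Applying these two relations repeatedly, peeling off one factor of $\p_i$ or $\vartheta_i$ at a time from the left, builds up exactly the product $\prod_k\bigl(x_k(\mu-\sum_\nu\vartheta_\nu)\bigr)^{\alpha_k}\prod_k(-\vartheta_k-1)^{\beta_k}$ claimed in the statement, reading the factors in the same left-to-right order as in $P$.

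The one subtlety, and the step I expect to require the most care, is that the two elementary identities above were derived assuming the operator stands alone, i.e. for $J^\mu_x(\p_i u)$ and $J^\mu_x(\vartheta_i u)$ rather than $J^\mu_x(\p_i v)$ with $v=Qu$ an arbitrary function. So the first thing to verify is that these identities hold verbatim with $u$ replaced by any (sufficiently regular) function $v$ on the domain of integration, not only by a solution of the original system. Inspecting the derivation in the excerpt, the identities $J^\mu_x(\p_i v)=\mu x_i J^{\mu-1}_x(v)$, $J^\mu_x(v)=J^{\mu-1}_x(v)-\sum_\nu x_\nu J^{\mu-1}_x(x_\nu v)$, and the integration-by-parts step $J^\mu_x\bigl(\p_i(x_i v)\bigr)=-x_i\p_i J^\mu_x(v)$ are all purely formal consequences of differentiation under the integral sign and of $\frac{d}{dt_i}\bigl(v(t)(1-\sum t_\nu x_\nu)^\mu\bigr)=(\p_i v)(t)(1-\sum t_\nu x_\nu)^\mu-\mu x_i v(t)(1-\sum t_\nu x_\nu)^{\mu-1}$, provided the boundary terms from integration by parts over $\Delta$ vanish. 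Hence I would state the proposition under the implicit hypothesis that $v$, $\mu$ and $\Delta$ are such that all the integrals converge and the boundary contributions in the integrations by parts vanish (for instance $\RE\mu$ large and $v$ vanishing to sufficient order on $\p\Delta$, the general case following by analytic continuation in $\mu$ and in auxiliary parameters, exactly as in Remark~\ref{rem:defIc}); granting this, the elementary identities hold for arbitrary such $v$.

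With that in hand the proof is a short induction. The base case is $P=1$, which is trivial. For the inductive step, write $P=D\,P'$ where $D$ is either $\p_i$ or $\vartheta_i$ and $P'$ has one fewer factor; set $v=P'u$, apply the appropriate elementary identity to get $J^\mu_x(DP'u)=A_D\,J^\mu_x(P'u)$ with $A_D=x_i(\mu-\sum_\nu\vartheta_\nu)$ or $A_D=-\vartheta_i-1$ acting on the $x$-variables, and then invoke the inductive hypothesis $J^\mu_x(P'u)=J^\mu_x(P')\,J^\mu_x(u)$. Since $J^\mu_x(P')$ is by construction the product $\prod\bigl(x_k(\mu-\sum_\nu\vartheta_\nu)\bigr)^{\alpha'_k}\prod(-\vartheta_k-1)^{\beta'_k}$ corresponding to $P'$, and multiplying on the left by $A_D$ is precisely the operation of prepending the extra factor, we obtain $J^\mu_x(P)\,J^\mu_x(u)$ with $J^\mu_x(P)$ the asserted product. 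One should be a little careful that in passing $D$ through $J^\mu_x$ the resulting operator $A_D$ is a differential operator in $x$ that commutes correctly with the already-accumulated factors only in the sense that the total product is read in the fixed order $\p$'s-then-$\vartheta$'s as written in the definition of $J^\mu_x(P)$; no commutator corrections arise because the recursion simply stacks factors on the left in the order dictated by $P$. This completes the proof.
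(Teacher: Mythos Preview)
Your proposal is correct and matches the paper's approach: the paper derives the two elementary identities $J^\mu_x(\vartheta_i u)=(-\vartheta_i-1)J^\mu_x(u)$ and $J^\mu_x(\p_i u)=x_i(\mu-\sum_\nu\vartheta_\nu)J^\mu_x(u)$ and then simply states the proposition with ``Thus we have'', leaving the induction you spelled out to the reader. Your observation that these identities hold for arbitrary sufficiently regular $v$ (since the derivation uses only differentiation under the integral and integration by parts with vanishing boundary terms) is exactly the point needed to make the induction go through.
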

Using this proposition, 
we obtain the system of differential equations satisfied by $J^\mu_x(u)$
from that satisfied by $u(x)$. 
Denoting the Laplace transform of the variable
$x=(x_1,\dots,x_n)$ by $\Lap_x$ (cf.\ Definition~\ref{def:Lap}), we have
\begin{equation}
J_x^\mu L_x^{-1}(\vartheta_i)=\vartheta_i,\quad
J_x^\mu L_x^{-1}(x_i)=x_i\bigl(\mu-\sum_{\nu=1}^n\vartheta_\nu\bigr).
\end{equation}

We have
\begin{align}
 \Ad\bigl(x^{\lambda_1} y^{\lambda_2} (xy - x - y)^{\lambda_3}\bigr)\p_x
&=\p_x -\frac{\lambda_1}x - \frac{\lambda_3(y-1)}{xy-x-y},
\allowdisplaybreaks\notag\\
 \Ad\bigl(x^{\lambda_1} y^{\lambda_2} (xy - x - y)^{\lambda_3}\bigr)\p_y
&=\p_y -\frac{\lambda_2}y - \frac{\lambda_3(x-1)}{xy-x-y},
\allowdisplaybreaks\notag\\
 \Ad\bigl(x^{\lambda_1} y^{\lambda_2} (xy - x - y)^{\lambda_3}\bigr)
  &\bigl(x(x-1)\p_x\bigr)\notag\\
  &\hspace{-2cm}=x(x-1)\p_x-\lambda_1(x-1)
 - \frac{\lambda_3(x-1)(xy-x)}{xy-x-y},
\allowdisplaybreaks\notag\\
 \Ad\bigl(x^{\lambda_1} y^{\lambda_2} (xy - x - y)^{\lambda_3}\bigr)
 &\bigl(x(x-1)\p_x-y\p_y\bigr)\notag\\
 &\hspace{-2cm}=x(x-1)\p_x-y\p_y-\lambda_1(x-1)
 -\lambda_2 - \lambda_3(x-1)\notag\\
 &\hspace{-2cm}=x\vartheta_x-\vartheta_x-\vartheta_y-(\lambda_1+\lambda_3)x
  +\lambda_1-\lambda_2+\lambda_3,
\allowdisplaybreaks\notag\\
 \begin{split}
 \p_x\Ad\bigl(x^{\lambda_1} y^{\lambda_2} (xy - x - y)^{\lambda_3}\bigr)
 &\bigl(x(x-1)\p_x-y\p_y\bigr)
\\
 &\hspace{-2cm}=\p_xx(\vartheta_x-\lambda_1-\lambda_3)
   -\p_x(\vartheta_x+\vartheta_y-\lambda_1+\lambda_2-\lambda_3)
 \end{split}\label{eq:AP40}
\end{align}
and
\begin{align*}
 &J^\mu_{x,y}\bigl
 (\p_xx(\vartheta_x-\lambda_1-\lambda_3)
  -\p_x(\vartheta_x+\vartheta_y-\lambda_1+\lambda_2-\lambda_3)\bigr)\\
 &\quad=\vartheta_x(1+\vartheta_x+\lambda_1+\lambda_3)
   -x(-\mu+\vartheta_x+\vartheta_y)(2+\vartheta_x+\vartheta_y+\lambda_1-\lambda_2+\lambda_3).
\end{align*}
Putting
\begin{align*}
 T&:=(\vartheta_x+\vartheta_y-\mu)
    (\vartheta_x+\vartheta_y+\lambda_1-\lambda_2+\lambda_3+2)
    -\p_x(\vartheta_x+\lambda_1+\lambda_3+1)
\end{align*}
with
\begin{align*}
\quad\alpha=-\mu,\quad \beta=\lambda_1-\lambda_2+\lambda_3+2,\quad
\gamma=\lambda_1+\lambda_3+2,
\end{align*}
we have $Tv(x,y)=0$ and moreover it satisfies a similar equation by replacing
$(x,y,\lambda_1,\lambda_3,\gamma)$ by $(y,x,\lambda_3,\lambda_1,\gamma')$.
Hence $v(x,y)$ is a solution of the system of differential
equations satisfied by $F_4(\alpha;\beta;\gamma,\gamma';x,y)$.

In the same way we have
\begin{gather}
 \Ad\bigl(x^{\beta-1}y^{\beta'-1}(1-x-y)^{\gamma-\beta-\beta'-1}\bigr)\vartheta_x
 =\vartheta_x-\beta+1+\frac{(\gamma-\beta-\beta'-1) x}{1-x-y},\notag
 \allowdisplaybreaks\\
 \begin{split}
 &\Ad\bigl(x^{\beta-1}y^{\beta'-1}(1-x-y)^{\gamma-\beta-\beta'-1}\bigr)
 (\vartheta_x-x(\vartheta_x+\vartheta_y))\\
 &\quad=\vartheta_x-x(\vartheta_x+\vartheta_y)
 -\beta+1+(\gamma-3)x\\
 &\quad=(\vartheta_x-\beta+1)-x(\vartheta_x+\vartheta_y-\gamma+3),
 \end{split}\label{eq:F120}\allowdisplaybreaks\\
 \begin{split}
 &J^\mu_{x,y}\bigl(\p_x(\vartheta_x-\beta+1)-\p_xx(\vartheta_x+\vartheta_y-\gamma+3)\bigr)
 \notag\\
 &\quad=x(-\vartheta_x-\vartheta_y+\mu)(-\vartheta_x-\beta)
  +\vartheta_x(-2-\vartheta_x-\vartheta_y-\gamma+3)
   \notag\\
 &\quad=x\Bigl((\vartheta_x+\vartheta_y-\mu)(\vartheta_x+\beta)
  -\p_x(\vartheta_x+\vartheta_y+\gamma-1)\Bigr).\notag
 \end{split}
\end{gather}
which is a differential operator killing $F_1(\alpha;\beta,\beta';\gamma;x,y)$
by putting $\mu=-\alpha$ and in fact we have
\begin{align*}
 &\iint_{\substack{s\ge 0,\ t\ge 0\\1-s-t\ge0}}
  s^{\beta-1}t^{\beta'-1}(1-s-t)^{\gamma-\beta-\beta'-1}
  (1-sx-ty)^{-\alpha}ds\,dt\allowdisplaybreaks\\
 &\quad=\iint_{\substack{s\ge 0,\ t\ge 0\\1-s-t\ge0}}\sum_{m,\,n=0}^\infty
    s^{\beta+m-1}t^{\beta'+n-1}(1-s-t)^{\gamma-\beta-\beta'-1}
    \frac{(\alpha)_{m+n}x^my^n}{m!n!}ds\,dt\allowdisplaybreaks\\
 &\quad=\sum_{m,\,n=0}^\infty
   \frac{\Gamma(\beta+m)\Gamma(\beta'+n)\Gamma(\gamma-\beta-\beta')}
   {\Gamma(\gamma+m+n)}\cdot\frac{(\alpha)_{m+n}}{m!n!}x^my^n\allowdisplaybreaks\\
 &\quad=\frac{\Gamma(\beta)\Gamma(\beta')\Gamma(\gamma-\beta-\beta')}
  {\Gamma(\gamma)} F_1(\alpha;\beta,\beta';\gamma;x,y).
\end{align*}
Here we use the formula
\begin{equation}
 \iint_{\substack{s\ge 0,\ t\ge 0\\1-s-t\ge0}}
  s^{\lambda_1-1}t^{\lambda_2-1}(1-s-t)^{\lambda_3-1}ds\,dt
 =\frac{\Gamma(\lambda_1)\Gamma(\lambda_2)\Gamma(\lambda_3)}
  {\Gamma(\lambda_1+\lambda_2+\lambda_3)}.
\end{equation}
\subsubsection{Appell's $F_3$}
\index{hypergeometric equation/function!Appell!$F_3$}
Since
\begin{align*}
 T_3:\!&=
 J^{-\alpha'}_yx^{-1} J^{-\alpha}_x
 \bigl(\p_x(\vartheta_x-\beta+1)-\p_xx(\vartheta_x+\vartheta_y-\gamma+3)\bigr)\\
 &=J^{-\alpha'}_y\bigl((-\vartheta_x-\alpha)(-\vartheta_x-\beta)
  +\p_x(-\vartheta_x+\vartheta_y-\gamma+2)\bigr)\\
 &=(\vartheta_x+\alpha)(\vartheta_x+\beta)
  -\p_x(\vartheta_x+\vartheta_y+\gamma-1)
\end{align*}
with \eqref{eq:F120}, the operator $T_3$ kills the function
\begin{align*}
 &\iint_{\substack{s\ge 0,\ t\ge 0\\ 1-s-t\ge 0}}
 s^{\beta-1}t^{\beta'-1}(1-s-t)^{\gamma-\beta-\beta'-1}(1-xs)^{-\alpha}
 (1-yt)^{-\alpha'}ds\,dt\\
 &\quad=\iint_{\substack{s\ge 0,\ t\ge 0\\ 1-s-t\ge 0}}\sum_{m,\,n=0}^\infty
 s^{\beta+m-1}t^{\beta'+n-1}(1-s-t)^{\gamma-\beta-\beta'-1}
 \frac{(\alpha)_m(\alpha')_nx^my^n}{m!n!}ds\,dt\\
 &\quad =\sum_{m,\,n=0}^\infty
 \frac{\Gamma(\beta+m)\Gamma(\beta'+n)\Gamma(\gamma-\beta-\beta')(\alpha)_m(\alpha')_n}
 {\Gamma(\gamma+m+n)m!n!}x^my^n\\
 &\quad=\frac{\Gamma(\beta)\Gamma(\beta')\Gamma(\gamma-\beta-\beta')}
  {\Gamma(\gamma)}
  F_3(\alpha,\alpha';\beta,\beta';\gamma;x,y).
\end{align*}
Moreover since
\begin{align*}
T'_3:\!&=\Ad(\p_x^{-\mu})\Ad(\p_y^{-\mu'})
 \bigl
 ((\vartheta_x+1)(\vartheta_x-\lambda_1-\lambda_3)
  -\p_x(\vartheta_x+\vartheta_y-\lambda_1+\lambda_2-\lambda_3)\bigr)\\
 &=(\vartheta_x+1-\mu)(\vartheta_x-\lambda_1-\lambda_3-\mu)
  -\p_x(\vartheta_x+\vartheta_y-\lambda_1+\lambda_2-\lambda_3-\mu-\mu')
\end{align*}
with \eqref{eq:AP40} and
\begin{align*}
\alpha&=-\lambda_1-\lambda_3-\mu,\quad
\beta=1-\mu,\quad
\gamma=-\lambda_1+\lambda_2-\lambda_3-\mu-\mu'+1,
\end{align*}
the function
\begin{equation}\label{eq:F3I2}
 u_3(x,y)
  :=\int_\infty^y\int_\infty^x s^{\lambda_1} t^{\lambda_2}(st-s-t)^{\lambda_3}
  (x-s)^{\mu-1}(y-t)^{\mu'-1}ds\,dt
\end{equation}
satisfies $T'_3u_3(x,y)=0$. Hence $u_3(x,y)$ is a solution of
the system of the equations that 
$F_3(\alpha,\alpha';\beta,\beta';\gamma;x,y)$ satisfies.
\subsubsection{Appell's $F_2$}
\index{hypergeometric equation/function!Appell!$F_2$}
Since
\begin{align*}
 &\p_x\Ad\bigl(x^{\lambda_1-1}(1-x^{\lambda_2-1})\bigr)x(1-x)\p_x\\
 &\quad=
 \p_x x(1-x)\p_x-(\lambda_1-1)\p_x+\p_x(\lambda_1+\lambda_2-2)x\\
 &\quad=\p_xx(-\vartheta_x+\lambda_1+\lambda_2-2)
   +\p_x(\vartheta-\lambda_1+1)
\end{align*}
and
\begin{align*}
T_2:\!&=J^\mu_{x,y}\bigl(\p_xx(-\vartheta_x+\lambda_1+\lambda_2-2)
   +\p_x(\vartheta_x-\lambda_1+1)\bigr)\\
&
 =-\vartheta_x(\vartheta_x+1+\lambda_1+\lambda_2-2)
  +x(\mu - \vartheta_x-\vartheta_y)(-1-\vartheta_x-\lambda_1+1)
\\&
 =x\Bigl((\vartheta_x+\lambda_1)(\vartheta_x+\vartheta_y-\mu)
  -\p_x(\vartheta_x+\lambda_1+\lambda_2-1)\Bigr)
\end{align*}
with
\[
\alpha=-\mu,\quad\beta=\lambda_1,\quad\gamma=\lambda_1+\lambda_2,
\]
the function
\begin{align*}
 u_2(x,y)&:=\int_0^1\!\int_0^1
 s^{\lambda_1-1}(1-s)^{\lambda_2-1}
 t^{\lambda'_1-1}(1-t)^{\lambda'_2-1}
 (1-xs-yt)^{\mu}ds\,dt\\
 &=\int_0^1\!\int_0^1\sum_{m,\,n=0}^\infty s^{\lambda_1+m-1}(1-s)^{\lambda_2-1}
    t^{\lambda_1'+n-1}(1-t)^{\lambda_2'-1}\frac{(-\mu)_{m+n}}{m!n!}x^my^n ds\,dt\\
 &=\sum_{m,\,n=0}^\infty
  \frac{\Gamma(\lambda_1+m)\Gamma(\lambda_2)}
    {\Gamma(\lambda_1+\lambda_2+m)}
  \frac{\Gamma(\lambda_1'+n)\Gamma(\lambda_2')}
    {\Gamma(\lambda_1'+\lambda_2'+m)}
  \frac{(-\mu)_{m+n}}{m!n!}x^my^n\\
 &=\frac{\Gamma(\lambda_1)\Gamma(\lambda_2)
   \Gamma(\lambda_1')\Gamma(\lambda_2')}
   {\Gamma(\lambda_1+\lambda_2)\Gamma(\lambda_1'+\lambda_2')}
   \sum_{m,\,n=0}^\infty\frac{(\lambda_1)_m(\lambda_1')_n(-\mu)_{m+n}}
   {(\lambda_1+\lambda_2)_m(\lambda_1'+\lambda_2')_nm!n!}x^my^n
\end{align*}
is a solution of the equation $T_2u=0$
that $F_2(\alpha;\beta,\beta';\gamma,\gamma';x,y)$ satisfies.

Note that the operator $\tilde T_3$ transformed from $T'_3$ by the coordinate 
transformation $(x,y)\mapsto (\frac1x,\frac1y)$ equals
\begin{align*}
 \tilde T_3&=
  (-\vartheta_x+\alpha)(-\vartheta_x+\beta)
  -x(-\vartheta_x)(-\vartheta_x-\vartheta_y+\gamma-1)\\
  &=(\vartheta_x-\alpha)(\vartheta_x-\beta)
  -x\vartheta_x(\vartheta_x+\vartheta_y-\gamma+1)
\end{align*}
and the operator
\begin{align*}
 \Ad(x^{-\alpha} y^{-\alpha'})\tilde T_3&=
  \vartheta_x(\vartheta_x+\alpha-\beta)
  -x(\vartheta_x+\alpha)(\vartheta_x+\vartheta_y+\alpha+\alpha'-\gamma+1)
\end{align*}
together with the operator obtained by the transpositions
$x\leftrightarrow y$, $\alpha\leftrightarrow\alpha'$ and 
$\beta\leftrightarrow \beta'$
defines the system of the equations satisfied by the functions
\begin{equation}
 \begin{cases}
   F_2(\alpha+\alpha'-\gamma+1;\alpha,\alpha';
  \alpha-\beta+1,\alpha'-\beta'+1;x,y),\\
   x^{-\alpha'}y^{-\alpha'}F_3(\alpha,\alpha';\beta,\beta';\gamma;\frac1x,\frac1y),
 \end{cases}
\end{equation}
which also follows from the integral representation \eqref{eq:F3I2}
with the transformation $(x,y,s,t)\mapsto(\frac1x,\frac1y,\frac1s,\frac1t)$.
\subsection{\texttt{Okubo} and \texttt{Risa/Asir}}\label{sec:okubo}
Most of our results in this paper are constructible and
they can be explicitly calculated and implemented in computer
programs.

The computer program \texttt{okubo} \cite{O5} written by 
the author handles combinatorial calculations in this paper 
related to tuples of partitions.
It generates basic tuples (cf.~\S\ref{sec:Exbasic})
and rigid tuples (cf.~\S\ref{sec:rigidEx}),
calculates the reductions originated by Katz and Yokoyama,
the position of accessory parameters in the universal operator
(cf.~Theorem~\ref{thm:univmodel} iv)) and direct decompositions etc.

The author presented Theorem~\ref{thm:c} in the case when $p=3$
as a conjecture in the fall of 2007, which was proved in May in
2008 by a completely different way from the proof given in 
\S\ref{sec:C1}, which is a generalization of the original proof of 
Gauss's summation formula of the hypergeometric series
explained in \S\ref{sec:C2}. 
The original proof of Theorem~\ref{thm:c} in the case when $p=3$
was reduced to the combinatorial equality \eqref{eq:numdec}.
The author verified \eqref{eq:numdec} by \texttt{okubo} 
and got the concrete connection coefficients for the rigid
tuples $\mathbf m$ satisfying $\ord\mathbf m\le 40$.
Under these conditions 
($\ord\mathbf m\le 40,\, p=3,\, m_{0,n_0}=m_{1,n_1}=1$) 
there are 4,111,704 independent connection coefficients 
modulo obvious symmetries  and it took
about one day to got all of them by a personal computer with \texttt{okubo}. 

Several operations on differential operators such as
additions and middle convolutions defined in \S\ref{sec:frac}
can be calculated by a computer algebra and the author wrote a 
program for their results under \texttt{Risa/Asir}, 
which gives a reduction procedure of the operators 
(cf.~Definition~\ref{def:redGRS}),
integral representations and series expansions of the 
solutions (cf.~Theorem~\ref{thm:expsol}), 
connection formulas (cf.~Theorem~\ref{thm:conG}), 
differential operators (cf.~Theorem~\ref{thm:univmodel} iv)), 
the condition of their reducibility (cf.~Corollary~\ref{cor:irred} i)), 
recurrence relations (cf.~Theorem~\ref{thm:shifm1} ii))
etc.\ for any given spectral type or Riemann scheme \eqref{eq:IGRS} and 
displays the results using \TeX.
This program for Risa/Asir written by the author contains many useful functions
calculating rational functions, Weyl algebra and matrices.
These programs can be obtained from

\quad\texttt{http://www.math.kobe-u.ac.jp/Asir/asir.html}

\quad\texttt{ftp://akagi.ms.u-tokyo.ac.jp/pub/math/muldif}

\quad\texttt{ftp://akagi.ms.u-tokyo.ac.jp/pub/math/okubo}.
\section{Further problems}\label{sec:prob}
\subsection{Multiplicities of spectral parameters}
Suppose a Fuchsian differential equation and its middle convolution
are given.
Then we can analyze the corresponding transformation of a global 
structure of its local solution associated with an 
eigenvalue of the monodromy generator at a singular point if 
the eigenvalue is free of multiplicity.

When the multiplicity of the eigenvalue is larger than one,
we have not a satisfactory result for the transformation
(cf.~Theorem~\ref{thm:conG}).
The value of a generalized connection coefficient defined by 
Definition~\ref{def:GC} may be interesting.
Is the procedure in Remark~\ref{rem:Cproc}  always valid?
In particular, is there a general result assuring 
Remark~\ref{rem:Cproc} (1) (cf.~Remark~\ref{rem:Cgamma})?
Are the multiplicities of zeros of the generalized connection 
coefficients of a rigid Fuchsian differential equation free?
\subsection{Schlesinger canonical form}\label{prob:Sch}
Can we define a natural \textsl{universal} Fuchsian system of Schlesinger 
canonical form \eqref{eq:SCF} with a given realizable spectral type?
Here we recall Example~\ref{ex:univSch}.

Let $P_{\mathbf m}$ be the universal operator in
Theorem~\ref{thm:univmodel}.
Is there a natural system of Schlesinger canonical form 
which is isomorphic to the equation $P_{\mathbf m}u=0$ 
together with the explicit correspondence between them?
\subsection{Apparent singularities}
Katz \cite{Kz} proved that any irreducible rigid local system 
is constructed from the trivial system by successive applications of
middle convolutions and additions and it is proved in this paper 
that the system is realized by a single differential equation 
without an apparent singularity.

In general, an irreducible local system cannot be realized by a
single differential equation without an apparent singularity but
it is realized by that with apparent singularities.
Hence it is expected that there exist some natural operations of
single differential equations with apparent singularities which
correspond to middle convolutions of local systems or systems 
of Schlesinger canonical form.

The Fuchsian ordinary differential equation satisfied by an 
important special function often hasn't an apparent singularity 
even if the spectral type of the equation is not rigid.
Can we understand the condition that a $W(x)$-module 
has a generator so that it satisfies a differential equation 
without an apparent singularity?
Moreover it may be interesting to study the existing of contiguous 
relations among differential equations with fundamental spectral 
types which have no apparent singularity. 
\subsection{Irregular singularities}
Our fractional operations defined in \S\ref{sec:frac} give transformations
of ordinary differential operators with polynomial coefficients, which have
irregular singularities in general.
The reduction of ordinary differential equations under these operations is 
a problem to be studied.
Note that versal additions and middle convolutions construct such differential 
operators from the trivial equation.

A similar result as in this paper is obtained for certain classes of 
ordinary differential equations with irregular singularities (cf.~\cite{Hi}).

A ``versal" path of integral in an integral representation of the solution
and a ``versal" connection coefficient and Stokes multiplier should be studied.  
Here ``versal" means a natural expression corresponding to the versal addition.

We define a complete model with a given spectral type as follows.
For simplicity we consider differential operators without singularities 
at the origin.
For a realizable irreducible tuple of partitions
$\mathbf m=\bigl(m_{j,\nu}\bigr)_{\substack{0\le j\le p\\1\le\nu\le n_j}}$ 
of a positive integer $n$ Theorem~\ref{thm:univmodel} constructs the universal
differential operator
\begin{equation}\label{eq:univcj}
 P_{\mathbf m}=\prod_{j=1}^p(1-c_jx)^n
\cdot \frac{d^n}{dx^n}+\sum_{k=0}^{n-1} a_k(x,c,\lambda,g)\frac{d^k}{dx^k}
\end{equation}
with the Riemann scheme
\begin{align*}
 &\begin{Bmatrix}
  x=\infty & \frac1{c_1} & \cdots &\frac1{c_p}\\
   [\lambda_{0,1}]_{(m_{0,1})} & [\lambda_{1,1}]_{(m_{1,1})} 
    & \cdots &[\lambda_{p,1}]_{(m_{p,1})} \\
  \vdots & \vdots & \vdots & \vdots\\
  [\lambda_{0,n_0}]_{(m_{0,n_0})}  & [\lambda_{1,n_1}]_{(m_{1,n_1})} & 
    \cdots &[\lambda_{p,n_p}]_{(m_{p,n_p})} 
 \end{Bmatrix}
\end{align*}
and the Fuchs relation
\begin{align*}
\sum_{j=0}^p\sum_{\nu=1}^{n_j}m_{j,\nu}
  \lambda_{j,\nu}=n-\frac{\idx\mathbf m}2.
\end{align*}
Here $c=(c_0,\dots,c_p)$, $\lambda=(\lambda_{j,\nu})$ and
$g=(g_1,\dots,g_N)$ are parameters. 
We have $c_ic_j(c_i-c_j)\ne 0$ for $0\le i<j\le p$. 
The parameters $g_j$ are called accessory parameters and we have
$\idx\mathbf m=2-2N$.
We call the Zariski closure $\overline P_{\mathbf m}$ of $P_{\mathbf m}$
in $W[x]$ the \textsl{complete model}
\index{differential equation/operator!complete model} of differential
operators with the spectral type $\mathbf m$, whose dimension equals
$p+\sum_{j=0}^p n_j + N-1$.
It is an interesting problem to analyze the complete model 
$\overline P_{\mathbf m}$.

When $\mathbf m=11,11,11$, the complete model equals
\[
  (1-c_1x)^2(1-c_2x)^2\tfrac{d^2}{dx^2}
 -(1-c_1x)(1-c_2x)(a_{1,1}x+a_{1,0})\tfrac d{dx}
  +a_{0,2}x^2+a_{0,1}x+a_{0,0},
\]
whose dimension equals 7.
Any differential equation defined by the operator belonging
to this complete model is transformed into a Gauss hypergeometric
equation, a Kummer equation, an Hermite equation or an airy equation
by a suitable gauge transformation and a coordinate transformation.
A good understanding together with a certain completion of our operators
is required even in this fundamental example.
It is needless to say that the good understanding is important in 
the case when $\mathbf m$ is fundamental.
\subsection{Special parameters}
Let $P_{\mathbf m}$ be the universal operator of the form \eqref{eq:univcj}
for an irreducible tuple of partition $\mathbf m$.
When a decomposition $\mathbf m=\mathbf m'+\mathbf m''$ with realizable
tuples of partitions $\mathbf m'$ and $\mathbf m''$ is given, 
Theorem~\ref{thm:prod} gives the values of the parameters of
$P_\mathbf m$ corresponding to the product $P_{\mathbf m'}P_{\mathbf m''}$.
A $W(x,\xi)$-automorphism of $P_{\mathbf m}u=0$ gives 
a transformation of the parameters $(\lambda,g)$, which is a contiguous 
relation and called Schlesinger transformation in the case of systems of 
Schlesinger canonical form. 
How can we describe the values of the parameters obtained in this way 
and characterize their position in all the values of the parameters when 
the universal operator is reducible?
In general, they are not all even in a rigid differential equation.
A direct decomposition $32,32,32,32=12,12,12,12\oplus
2(10,10,10,10)$ of a rigid tuples $32,32,32,32$ gives this example
(cf.~\eqref{eq:32idx}).

Analyse the reducible differential equation with an irreducibly realizable
spectral type.
This is interesting even when $\mathbf m$ is a rigid tuple.
For example, describe the monodromy of its solutions.

Describe the characteristic exponents of the generalized Riemann scheme
with an irreducibly realizable spectral type such that there exists a 
differential operator with the Riemann scheme which is
outside the universal operator (cf.~Example \ref{ex:outuniv} and 
Remark~\ref{rem:inuniv}).
In particular, when the spectral type is not fundamental nor simply reducible,
does there exist such a differential operator?

The classification of rigid and simply reducible spectral types coincides with
that of indecomposable objects described in \cite[Theorem~2.4]{MWZ}.
Is there some meaning in this coincidence?

Has the condition \eqref{eq:CSR} a similar meaning in the case of 
Schlesinger canonical form?
What is the condition on the local system or a (single) Fuchsian differential 
equation which has a realization of a system of Schlesinger canonical form? 

Give the condition so that the monodromy group is finite.
Give the condition so that the centralizer of the monodromy
is the set of scalar multiplications.

Suppose $\mathbf m$ is fundamental.
Study the condition so that the connection coefficients is a quotient
of the products of gamma functions as in Theorem~\ref{thm:c}
or the solution has an integral representation only by using elementary
functions.

\subsection{Shift operators}
Calculate the polynomial function $c_{\mathbf m}(\epsilon;\lambda)$ of $\lambda$
defined in Theorem~\ref{thm:shiftC}.
Is it square free?
See Conjecture~\ref{conj:shift}.

Is the shift operator $R_{\mathbf m}(\epsilon,\lambda)$ Fuchsian?

Study the shift operators given in Theorem~\ref{thm:sftUniv}.

Study the condition on the characteristic exponents and accessory parameters 
assuring the existence of a shift operator for a Fuchsian differential operator 
with a fundamental spectral type.

Study the shift operator or Schlesinger transformation of a system of 
Schlesinger canonical form with a fundamental spectral type.
When is it not defined or when is it not bijective?

\subsection{Several variables}
We have analyzed Appell hypergeometric equations in \S\ref{sec:ApEx}.
What should be the geometric structure of singularities of more general 
system of equations when it has a good theory?

Describe or define operations of differential operators that are
fundamental to analyze good systems of differential equations.

A series expansion of a local solution of a rigid ordinal differential equation
indicates that it may be natural to think that the solution is a restriction 
of a solution of a system of differential equations with several variables
(cf.~Theorem~\ref{thm:expsol} and \S\ref{sec:PoEx}--\ref{sec:GHG}).
Study the system.

\subsection{Other problems}
\begin{itemize}
\item
For a rigid decomposition $\mathbf m=\mathbf m'\oplus\mathbf m''$, 
can we determine whether $\alpha_{\mathbf m'}\in \Delta(\mathbf m)$ or
$\alpha_{\mathbf m''}\in \Delta(\mathbf m)$ (cf.~Proposition~\ref{prop:wm} iv))?

\item
Are there analyzable series $\mathcal L$ of rigid tuples of partitions
different from the series given in \S\ref{sec:Rob}?
Namely, $\mathcal L\subset \mathcal P$, the elements of $\mathcal L$
are rigid, the number of isomorphic classes of
$\mathcal L\cap \mathcal P^{(n)}$ are bounded for $n\in\mathbb Z_{>0}$
and the following condition is valid.

Let $\mathbf m=k\mathbf m'+\mathbf m''$ with $k\in\mathbb Z_{>0}$ and
rigid tuples of partitions $\mathbf m$, $\mathbf m'$ and $\mathbf m''$.
If $\mathbf m\in\mathcal L$, 
then $\mathbf m'\in\mathcal L$ and $\mathbf m''\in\mathcal L$.
Moreover for any $\mathbf m''\in\mathcal L$, this decomposition 
$\mathbf m=k\mathbf m'+\mathbf m''$ exists with 
$\mathbf m\in\mathcal L$,  $\mathbf m'\in\mathcal L$ and $k\in\mathbb Z_{>0}$.
Furthermore $\mathcal L$ is indecomposable.  Namely, if 
$\mathcal L=\mathcal L'\cup\mathcal L''$ so that 
$\mathcal L'$ and $\mathcal L''$
satisfy these conditions, then $\mathcal L'=\mathcal L$ or 
$\mathcal L''=\mathcal L$.

\item
Characterize the ring of automorphisms and that of endomorphisms
of the localized Weyl algebra $W(x)$.

\item
In general, different procedures of the reduction of the universal
operator $P_{\mathbf m}u=0$ give different integral representations
and series expansions of its solution (cf.~Example~\ref{ex:localGauss},
Remark~\ref{rem:Irep} and the last part of \S\ref{sec:PoEx}).  
Analyze the difference.
\end{itemize}
\section{Appendix}\label{sec:appendix}
In this section we give a theorem which is proved by K.~Nuida.
The author greatly thanks to K.~Nuida for allowing the author 
to put the theorem with its proof in this section.

Let $(W,S)$ be a \textsl{Coxeter system}.
\index{Coxeter system}
Namely, $W$ is a group with the set $S$ of generators and
under the notation $S=\{s_i\,;\,i\in I\}$, 
the fundamental relations among the generators are
\begin{equation}
 s_k^2=(s_is_j)^{m_{i,j}}=e\text{ \ and \ }m_{i,j}=m_{j,i}
 \text{ \ for \  }\forall i,\ j,\ k\in I\text{ \ satisfying \ }i\ne j.
\end{equation}
Here $m_{i,j}\in\{2,3,4,\ldots\}\cup\{\infty\}$ and the condition
$m_{i,j}=\infty$ means $(s_is_j)^m\ne e$ for any $m\in\mathbb Z_{>0}$. 
Let $E$ be a real vector space with the basis set $\Pi=\{\alpha_i\,;\,i\in I\}$
and define a symmetric bilinear form $(\ \ |\ \ )$ on $E$ by
\begin{equation}
 (\alpha_i|\alpha_i)=2\text{ \ and \ }
 (\alpha_i|\alpha_j)=-2\cos\frac{\pi}{m_{i,j}}.
\end{equation}
Then the \textsl{Coxeter group} $W$ is naturally identified with the 
reflection group generated by the reflections $s_{\alpha_i}$ with respect to 
$\alpha_i$ $(i\in I)$.
The set $\Delta_\Pi$ of the roots of $(W,S)$ equals $W\Pi$,
which is a disjoint union of the set of positive roots $\Delta_\Pi^+:=
\Delta_\Pi\cap \sum_{\alpha\in\Pi}\mathbb Z_{\ge 0}\alpha$ and the set of negative
roots $\Delta_\Pi^-:=-\Delta_\Pi^+$.
For $w\in W$ the length $L(w)$ is the minimal number $k$
with the expression $w=s_{i_1}s_{i_2}\cdots s_{i_k}$ ($i_1,\dots,i_k\in I$).
Defining $\Delta_\Pi(w):=\Delta_\Pi^+\cap w^{-1}\Delta_\Pi^-$, 
we have $L(w)=\#\Delta_\Pi(w)$.

Fix $\beta$ and $\beta'\in\Delta_\Pi$ and put
\begin{equation}
W^\beta_{\!\beta'}:=\{w\in W\,;\,\beta'=w\beta\}
\text{ \ and \ }W^\beta:=W^\beta_\beta.
\end{equation}

\begin{thm}[K.~Nuida]\label{thm:Nuida}
Retain the notation above.
Suppose\/ $W^\beta_{\!\beta'}\ne\emptyset$ and
\begin{equation}\label{eq:noloop}
\begin{split}
  &\text{there exist no sequence $s_{i_1},s_{i_2},\ldots s_{i_k}$ of 
  elements of $S$ such that}\\
  &\begin{cases}
   k\ge 3,\\
   s_{i_\nu}\ne s_{i_\nu'}\quad(1\le \nu<\nu'\le k),\\
    m_{i_\nu, i_{\nu+1}}\text{ and }m_{i_1,i_k}
    \text{\ are odd integers}\quad(1\le\nu<k).
  \end{cases}
\end{split}
\end{equation}
Then an element $w\in W^\beta_{\!\beta'}$ is uniquely determined
by the condition 
\begin{equation}\label{eq:minCox}
  L(w)\le L(v)\quad(\forall v\in W^\beta_{\!\beta'}).
\end{equation}
\end{thm}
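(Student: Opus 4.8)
The plan is to read \eqref{eq:minCox} as a statement about minimal--length representatives of a coset of a reflection subgroup. Since $w_0\beta=\beta'$ for some $w_0\in W^\beta_{\!\beta'}$ by assumption, we have $W^\beta_{\!\beta'}=w_0W^\beta$ where $W^\beta=\{w\in W:w\beta=\beta\}$, and there is no loss in taking $\beta\in\Delta_\Pi^+$. The crucial first step is the structural lemma that, \emph{under hypothesis \eqref{eq:noloop}},
\begin{equation*}
 W^\beta \;=\; \bigl\langle\, s_\gamma \,;\, \gamma\in\Delta_\Pi^+,\ (\gamma|\beta)=0 \,\bigr\rangle,
\end{equation*}
that is, the stabilizer of $\beta$ is generated by the reflections it contains. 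Without \eqref{eq:noloop} this can fail --- for $W$ of type $\widetilde{A}_2$ the stabilizer of a real root is infinite cyclic and contains no reflection, and there the minimal--length representative is indeed not unique --- so this lemma is precisely where the hypothesis is used.

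Granting the lemma, the rest is standard reflection--subgroup theory (Deodhar, Dyer): $W^\beta$ is then a Coxeter group with a canonical simple system $\Pi_\beta\subset\Delta_\Pi^+\cap\beta^\perp$, and an element $w$ of $W$ is of minimal length in its coset $wW^\beta$ if and only if $w(\gamma)\in\Delta_\Pi^+$ for every $\gamma\in\Pi_\beta$; for such a $w$ one has $\Delta_\Pi(wu)=\Delta_\Pi(u)\sqcup u^{-1}\Delta_\Pi(w)$ for all $u\in W^\beta$ (the two sets are disjoint because $\Delta_\Pi(w)$ meets no root of $W^\beta$ while $\Delta_\Pi(u)$ consists of such roots), hence $L(wu)=L(w)+L(u)$. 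Now if $w_1$ and $w_2$ both realize the minimum in $W^\beta_{\!\beta'}$, write $w_2=w_1u$ with $u\in W^\beta$; then $L(w_2)=L(w_1)+L(u)$ and, applying the same to $w_1=w_2u^{-1}$, also $L(w_1)=L(w_2)+L(u^{-1})$. Adding the two equations forces $L(u)+L(u^{-1})=0$, hence $u=e$ and $w_1=w_2$.

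Thus the whole difficulty is concentrated in the lemma, and that is the step I expect to be the main obstacle. I would prove it by induction on $L(w)$ for $w\in W^\beta$. Given a reduced expression $w=s_{i_1}\cdots s_{i_k}$, one tracks the trajectory $\beta=\beta_0\mapsto\beta_1=s_{i_k}\beta_0\mapsto\cdots\mapsto\beta_k=w\beta=\beta$, at each step either fixing or strictly moving the current root, and tries to locate an index $j$ with $(\alpha_{i_j}|\beta)=0$ that can be brought to an end of the word by braid relations, so that $s_{i_j}$ splits off as one of the generators on the right--hand side of the asserted equality and the induction applies to a shorter element. The obstruction to carrying this out is, after restricting to the standard parabolic subgroup of rank $\le 3$ spanned by the generators the trajectory actually uses, exactly a configuration in which these generators form a cycle all of whose bonds $m_{i,j}$ are odd --- which \eqref{eq:noloop} forbids. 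The rank $\le 2$ case is trivial since a root of a dihedral group has trivial stabilizer, so the technical heart is the rank--$3$ local analysis (the essential bad case being $\widetilde{A}_2$) together with the combinatorial bookkeeping reducing a general word in $W^\beta$ to it.
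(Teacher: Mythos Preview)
Your overall strategy matches the paper's: use hypothesis \eqref{eq:noloop} to prove that $W^\beta$ is generated by the reflections $\{s_\gamma:\gamma\in\Delta_\Pi^\beta\}$ with $\Delta_\Pi^\beta:=\{\gamma\in\Delta_\Pi^+:(\gamma|\beta)=0\}$ (the paper cites Brink \cite{Br} for this rather than reproving it), then use the Dyer/Deodhar picture of $W^\beta$ as a Coxeter group with positive system $\Delta_\Pi^\beta$ to handle minimal coset representatives. The characterization ``$w$ minimal in $wW^\beta$ iff $w\Delta_\Pi^\beta\subset\Delta_\Pi^+$'' that you invoke is exactly the paper's Lemma~\ref{lem:Coxeter}.

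The gap is in your deduction of uniqueness from this characterization. The identity $\Delta_\Pi(wu)=\Delta_\Pi(u)\sqcup u^{-1}\Delta_\Pi(w)$ and the resulting $L(wu)=L(w)+L(u)$ are \emph{false} for non-parabolic reflection subgroups: your justification ``$\Delta_\Pi(u)$ consists of roots of $W^\beta$'' fails because an element $u\in W^\beta$ can invert positive roots outside $\Delta_\Pi^\beta$. Concretely, in type $A_3$ with $\beta=\alpha_2$ one has $\Delta_\Pi^\beta=\{\alpha_1+\alpha_2+\alpha_3\}$ and $W^\beta=\langle s_{\alpha_1+\alpha_2+\alpha_3}\rangle$; taking $w=s_1$ (minimal in $s_1W^\beta$, length $1$) and $u=s_{\alpha_1+\alpha_2+\alpha_3}$ (length $5$) gives $L(wu)=4\ne 1+5$, since $\alpha_1\in\Delta_\Pi(w)\cap\Delta_\Pi(u)$. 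The paper's correct endgame avoids length additivity entirely: if $w_1,w_2$ are both minimal and $u:=w_1^{-1}w_2\ne e$, then since $(W^\beta,S^\beta)$ is Coxeter with positive roots $\Delta_\Pi^\beta$, some $\gamma\in\Delta_\Pi^\beta$ has $u\gamma<0$; then $-u\gamma\in\Delta_\Pi^\beta$, so the Lemma applied to $w_1$ gives $w_1(-u\gamma)>0$, i.e.\ $w_2\gamma<0$, contradicting the Lemma applied to $w_2$. Replacing your additivity step with this two-line argument repairs the proof.
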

\begin{proof}
Put $\Delta_\Pi^\beta:=\{\gamma\in\Delta_\Pi^+\,;\,(\beta|\gamma)=0\}$.
First note that the following lemma.
\begin{lem}\label{lem:Coxeter}
If\/ $w\in W^\beta_{\!\beta'}$ satisfies \eqref{eq:minCox}, then\/
$w\Delta_\Pi^\beta\subset\Delta_\Pi^+$.
\end{lem}

In fact, if $w\in W^\beta_{\beta'}$ satisfies \eqref{eq:minCox} and
there exists $\gamma\in\Delta_\Pi^\beta$ satisfying $w\gamma\in\Delta_\Pi^-$,
then there exists $j$ for a minimal expression 
$w=s_{i_1}\cdots s_{i_{L_\Pi(w)}}$ such that 
$s_{i_{j+1}}\cdots s_{j_{L_\Pi(w)}}\gamma=\alpha_{i_j}$, which implies 
$W^\beta_{\beta'}\ni v
:=ws_\gamma=s_{i_1}\cdots s_{i_{j-1}}s_{i_{j+1}}\cdots s_{i_{L_\Pi(w)}}$
and contradicts to \eqref{eq:minCox}.

It follows from \cite{Br} that the assumption \eqref{eq:noloop}
implies that $W^\beta$ is generated by 
$\{s_\gamma\,;\,\gamma\in \Delta_\Pi^\beta\}$.
Putting $\Pi^\beta=\Delta_\Pi^\beta\setminus\{r_1\gamma_1+r_2\gamma_2\in
\Delta_\Pi^\beta \,;\,\gamma_2\notin\mathbb R \gamma_1,\ 
\gamma_j\in \Delta_\Pi^\beta\text{ and }r_j>0\text{ for }j=1,2\}$ and
$S^\beta=\{s_\gamma\,;\,\gamma\in \Pi^\beta\}$, the pair $(W^\beta,S^\beta)$ is 
a Coxeter system and moreover the minimal length of the expression of 
$w\in W^\beta$ by the product of the elements of $S^\beta$ equals 
$\#\bigl(\Delta_\Pi^\beta\cap w^{-1}\Delta_\Pi^-\bigr)$ 
(cf.~\cite[Theorem~2.3]{Nu}).

Suppose there exist two elements $w_1$ and $w_2\in W^\beta_{\!\beta'}$ satisfying
$L(w_j)\le L(v)$ for any $v\in W^\beta_{\!\beta'}$ and $j=1$, $2$.
Since $e\ne w_1^{-1}w_2\in W^\beta$, 
there exists $\gamma\in \Delta_\Pi^\beta$ such that 
$w_1^{-1}w_2\gamma \in \Delta_\Pi^-$.
Since $-w_1^{-1}w_2\gamma\in \Delta^\beta_\Pi$,
Lemma~\ref{lem:Coxeter} assures 
$-w_2\gamma=w_1(-w_1^{-1}w_2\gamma)\in\Delta_\Pi^+$, 
which contradicts to Lemma~\ref{lem:Coxeter}.
\end{proof}
The above proof shows the following corollary.
\begin{cor}\label{cor:Nuida}
Retain the assumption in\/ {\rm Theorem~\ref{thm:Nuida}.}
For an element $w\in W^\beta_{\beta'}$,
the condition \eqref{eq:minCox} is equivalent to 
$w\Delta^\beta_\Pi\subset \Delta_\Pi^+$.

Let $w\in W^\beta_{\beta'}$ satisfying \eqref{eq:minCox}.
Then
\begin{equation}
 W^\beta_{\beta'}=w\bigl\langle s_\gamma\,;\,(\gamma|\beta)=0,\ 
\gamma\in\Delta_\Pi^+\bigr\rangle.
\end{equation}
\end{cor}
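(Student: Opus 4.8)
The plan is to read off both assertions of Corollary~\ref{cor:Nuida} directly from the proof of Theorem~\ref{thm:Nuida}, together with Lemma~\ref{lem:Coxeter} and Brink's description of $W^\beta$; almost nothing new is required, only a careful repackaging.

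First I would establish the equivalence in the first assertion. The implication ``\eqref{eq:minCox} $\Rightarrow$ $w\Delta^\beta_\Pi\subset\Delta_\Pi^+$'' is exactly Lemma~\ref{lem:Coxeter}, since $\Delta^\beta_\Pi=\{\gamma\in\Delta_\Pi^+\,;\,(\gamma|\beta)=0\}$ so that $w\Delta^\beta_\Pi\subset\Delta_\Pi^+$ is literally the conclusion of that lemma. For the converse, suppose $w\in W^\beta_{\beta'}$ satisfies $w\Delta^\beta_\Pi\subset\Delta_\Pi^+$, and let $w_0\in W^\beta_{\beta'}$ be the element singled out by \eqref{eq:minCox}, whose existence and uniqueness are provided by Theorem~\ref{thm:Nuida}; by Lemma~\ref{lem:Coxeter}, $w_0$ also satisfies $w_0\Delta^\beta_\Pi\subset\Delta_\Pi^+$. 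The key point is that the final contradiction step in the proof of Theorem~\ref{thm:Nuida} uses only the property $w_i\Delta^\beta_\Pi\subset\Delta_\Pi^+$ of the two competing elements, not the minimality \eqref{eq:minCox} itself: if $w\ne w_0$, then $e\ne w_0^{-1}w\in W^\beta$ because $w\beta=\beta'=w_0\beta$, and since $(W^\beta,S^\beta)$ is a Coxeter system with root system $\Delta^\beta_\Pi$ there is $\gamma\in\Delta^\beta_\Pi$ with $w_0^{-1}w\gamma\in\Delta_\Pi^-$; then $-w_0^{-1}w\gamma\in\Delta^\beta_\Pi$, hence $-w\gamma=w_0(-w_0^{-1}w\gamma)\in\Delta_\Pi^+$ by the hypothesis on $w_0$, contradicting $w\gamma\in\Delta_\Pi^+$, which holds by the hypothesis on $w$. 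Therefore $w=w_0$, and in particular $w$ satisfies \eqref{eq:minCox}.

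Next I would prove the second assertion. Fix $w\in W^\beta_{\beta'}$ satisfying \eqref{eq:minCox}. Since $\beta'=w\beta$, an element $v\in W$ belongs to $W^\beta_{\beta'}$ if and only if $w^{-1}v\beta=\beta$, i.e.\ $w^{-1}v\in W^\beta$; equivalently $W^\beta_{\beta'}=wW^\beta$. By Brink's theorem (\cite{Br}), which applies under the hypothesis \eqref{eq:noloop}, the group $W^\beta$ is generated by the reflections $s_\gamma$ with $\gamma\in\Delta^\beta_\Pi$, that is, by $\{s_\gamma\,;\,\gamma\in\Delta_\Pi^+,\ (\gamma|\beta)=0\}$. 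Combining these two facts yields $W^\beta_{\beta'}=w\bigl\langle s_\gamma\,;\,(\gamma|\beta)=0,\ \gamma\in\Delta_\Pi^+\bigr\rangle$, as claimed.

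The only delicate point — and the one I expect to be the main obstacle when writing this up cleanly — is isolating precisely which properties of the two competing elements are invoked in the uniqueness argument of Theorem~\ref{thm:Nuida}, so that it can be reused to prove the ``$\Leftarrow$'' direction of the equivalence without circular dependence on \eqref{eq:minCox}; once that is phrased carefully, everything else is immediate from Lemma~\ref{lem:Coxeter} and \cite{Br}.
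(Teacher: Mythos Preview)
Your argument is correct and is exactly what the paper intends: the paper simply states that ``the above proof shows the following corollary,'' and you have unpacked this by observing that Lemma~\ref{lem:Coxeter} gives one direction of the equivalence, that the contradiction step in the proof of Theorem~\ref{thm:Nuida} uses only the inclusion $w_i\Delta^\beta_\Pi\subset\Delta_\Pi^+$ (not minimality) and hence gives the converse, and that $W^\beta_{\beta'}=wW^\beta$ combined with Brink's description of $W^\beta$ gives the second assertion.
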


\printindex
\end{document}